\documentclass[11pt]{article}

\usepackage{a4wide,color,eucal,mathrsfs,dsfont}
\usepackage{makeidx}
\usepackage{mathtools}
\usepackage[normalem]{ulem}
\usepackage{amsmath,amsthm,amssymb,epsfig,bbm,graphicx}
\numberwithin{equation}{section}
\usepackage[nottoc,notlot,notlof]{tocbibind}
\usepackage[colorlinks=true]{hyperref}
\hypersetup{linkcolor=blue, urlcolor=blue, citecolor=red}

\usepackage{xcolor}

\usepackage{pdfsync}

\usepackage{times}
\makeindex

\usepackage[shortlabels]{enumitem}

\setlist[enumerate]{font={\upshape}}
\setlist[enumerate,1]{label={(\alph*)}}

\pagestyle{myheadings}

\makeatletter
\def\author#1{\gdef\autrun{\def\and{\unskip, }#1}\gdef\@author{#1}}

\makeatother


\newtheorem{theorem}{Theorem}[section]
\newtheorem{corollary}[theorem]{Corollary}
\newtheorem{lemma}[theorem]{Lemma}

\newtheorem{proposition}[theorem]{Proposition}
\newtheorem{definition}[theorem]{Definition}



\theoremstyle{definition}
\newtheorem{remark}[theorem]{Remark}
\newtheorem{example}[theorem]{Example}


\newtheorem*{Assumption}{Assumption}

\numberwithin{equation}{section}

\newcommand{\Md}{\operatorname{Mod}_{p}}
\newcommand{\Mdm}{\mathsf{Mod}_{p}}
\newcommand{\tMd}{\operatorname{\widetilde{Mod}}_{p}}
\newcommand{\Mdc}{\operatorname{Mod}_{p,c}}
\newcommand{\Mdmc}{\mathsf{Mod}_{p,c}}
\newcommand{\tMdc}{\operatorname{\widetilde{Mod}}_{p,c}}

\newcommand{\A}{\mathbb{A}}
\newcommand{\B}{\mathbb{B}}

\newcommand{\D}{\mathbb{D}}

\newcommand{\N}{\mathbb{N}}
\newcommand{\Q}{\mathbb{Q}}
\newcommand{\R}{\mathbb{R}}
\newcommand{\X}{\mathbb{X}}
\newcommand{\Y}{\mathbb{Y}}

\renewcommand{\AA}{\mathscr{A}}
\newcommand{\sAA}{{\scriptscriptstyle \AA}}
\newcommand{\BB}{\mathscr{B}}

\newcommand{\DD}{\mathscr{D}}

\newcommand{\FF}{\mathscr{F}}

\newcommand{\LL}{\mathscr{L}}

\renewcommand{\SS}{\mathscr{S}}
\newcommand{\TT}{\mathscr{T}}
\newcommand{\UU}{\mathscr{U}}

\newcommand{\cA}{{\ensuremath{\mathcal A}}}
\newcommand{\cB}{{\ensuremath{\mathcal B}}}
\newcommand{\calC}{{\ensuremath{\mathcal C}}}

\newcommand{\cE}{{\ensuremath{\mathcal E}}}
\newcommand{\cF}{{\ensuremath{\mathcal F}}}

\newcommand{\cK}{{\ensuremath{\mathcal K}}}

\newcommand{\cL}{{\ensuremath{\mathcal L}}}
\newcommand{\cM}{{\ensuremath{\mathcal M}}}
\newcommand{\cMp}{\mathcal M_+}

\newcommand{\cP}{{\ensuremath{\mathcal P}}}
\newcommand{\cT}{{\ensuremath{\mathcal T}}}

\newcommand{\calT}{\cT}

\newcommand{\cU}{{\ensuremath{\mathcal U}}}


\newcommand{\ff}{{\mbox{\boldmath$f$}}}
\renewcommand{\gg}{{\mbox{\boldmath$g$}}}
\newcommand{\hh}{{\mbox{\boldmath$h$}}}

\newcommand{\mm}{{\mbox{\boldmath$m$}}}

\newcommand{\shh}{{\mbox{\scriptsize\boldmath$h$}}}

\newcommand{\smm}{{\mbox{\scriptsize\boldmath$m$}}}





\newcommand{\mmu}{{\mbox{\boldmath$\mu$}}}
\newcommand{\nnu}{{\mbox{\boldmath$\nu$}}}
\newcommand{\ppi}{{\mbox{\boldmath$\pi$}}}

\newcommand{\ssigma}{{\mbox{\boldmath$\sigma$}}}

\newcommand{\sssigma}{{\mbox{\scriptsize\boldmath$\sigma$}}}



\newcommand{\sfd}{{\sf d}}
\newcommand{\sfe}{{\sf e}}

\newcommand{\sfh}{{\sf h}}

\newcommand{\sfq}{{\sf q}}

\newcommand{\sft}{{\sf t}}

\newcommand{\sfD}{{\sf D}}

\newcommand{\sfK}{{\sf K}}
\newcommand{\sfJ}{{\sf J}}

\newcommand{\sfQ}{{\mathsf Q}}

\newcommand{\fri}{{\mathfrak i}}

\newcommand{\rmd}{{\mathrm d}}

\newcommand{\rmp}{{\mathrm p}}

\newcommand{\rmx}{{\mathrm x}}

\newcommand{\rmA}{{\mathrm A}}
\newcommand{\rmN}{{\mathrm N}}

\newcommand{\rmK}{{\mathrm K}}
\newcommand{\rmH}{{\mathrm H}}
\newcommand{\rmC}{{\mathrm C}}
\newcommand{\rmE}{{\mathrm E}}
\newcommand{\rmB}{{\mathrm B}}
\newcommand{\rmD}{{\mathrm D}}
\newcommand{\rmF}{{\mathrm F}}
\newcommand{\rmG}{{\mathrm G}}

\newcommand{\rmM}{{\mathrm M}}
\newcommand{\rmS}{{\mathrm S}}
\newcommand{\rmT}{{\mathrm T}}
\newcommand{\rmU}{{\mathrm U}}

\newcommand{\Kliminf}{K\kern-3pt-\kern-2pt\mathop{\rm lim\,inf}\limits}  
\newcommand{\supp}{\mathop{\rm supp}\nolimits}   
\newcommand{\Lip}{\mathop{\rm Lip}\nolimits}          
\newcommand{\Lipb}{\mathop{\rm Lip}_b\nolimits}          

\newcommand{\Lipbu}[1]{\mathop{\rm Lip}_{b,#1}\nolimits}          

\newcommand{\lip}{\mathop{\rm lip}\nolimits}          
\newcommand{\lipdlt}{\mathop{\rm lip}\nolimits_\delta}
\newcommand{\lipd}{\mathop{\rm lip}}  
\renewcommand{\d}{{\mathrm d}}
\newcommand{\dt}{{\d t}}

\newcommand{\restr}[1]{\lower3pt\hbox{$|_{#1}$}}
\newcommand{\topref}[2]{\stackrel{\eqref{#1}}#2}
\newcommand{\Leb}[1]{{\mathscr L}^{#1}}      
\newcommand{\la}{{\langle}}                  
\newcommand{\ra}{{\rangle}}
\newcommand{\down}{\downarrow}              
\newcommand{\up}{\uparrow}
\newcommand{\eps}{\varepsilon}  
\newcommand{\nchi}{{\raise.3ex\hbox{$\chi$}}}
\newcommand{\weakto}{\rightharpoonup}
%


\newcommand{\BorelSets}[1]{\BB(#1)}


\newcommand{\e}{{\rm{e}}}                           
\renewcommand{\mm}{\mathfrak m}
\renewcommand{\smm}{{\mbox{\scriptsize$\mm$}}}

\newcommand{\ep}{\varepsilon}
\newcommand{\Ldp}{\mathcal{L}^p_+(X, \mm)}

\newcommand{\res}{\mathop{\hbox{\vrule height 7pt width .5pt depth 0pt
\vrule height .5pt width 6pt depth 0pt}}\nolimits}

\newcommand{\GGG}{\relax}  
\newcommand{\nc}{\normalcolor}

\newcommand{\KK}{\mathscr K}
\newcommand{\AC}{\mathrm{AC}}
\newcommand{\BV}{\mathrm{BV}}
\newcommand{\BVC}{\mathrm{BVC}}
\newcommand{\Var}{\mathrm{Var}}
\newcommand{\RectArc}{\mathrm{RA}}
\newcommand{\RA}{\RectArc}

\newcommand{\Arc}{\mathrm{A}}

\newcommand{\sppi}{{\mbox{\scriptsize\boldmath$\pi$}}}

\newcommand{\J}I
\setcounter{tocdepth}{2}
\newcommand{\cI}{{\ensuremath{\mathcal I}}}

\newcommand{\Osc}[2]{\operatorname{Osc}(#1,{#2})}


\newcommand{\Ch}{\mathsf{Ch}}
\newcommand{\CE}{\mathsf{C\kern-1pt E}}
\newcommand{\wCE}{\mathsf{wC\kern-1pt E}}

\newcommand{\pCE}{\mathsf{pC\kern-1pt E}}
\newcommand{\Mod}{\operatorname{Mod}}

\newcommand{\brq}[1]{\operatorname{Bar}_{#1}}
\newcommand{\tbrq}[1]{\operatorname{\widetilde{Bar}}_{#1}}

\renewcommand{\Bar}[2]{\cB_{#1}}
\newcommand{\Cont}[1]{\operatorname{Cont}_{#1}}
\newcommand{\tCont}[1]{\operatorname{\widetilde{Cont}}_{#1}}

\newcommand{\unit}{\mathds 1}
\newcommand{\proj}[1]{\operatorname{Proj}(#1)}

\newcommand{\Quot}{\mathfrak{q}}

\newcommand{\Al}{R}
\newcommand{\kkappa}{\kappa}
\newcommand{\Sob}{H}
\newcommand{\relgrad}[1]{|\rmD #1|_\star}
\newcommand{\weakgrad}[1]{|\rmD #1|_w}
\newcommand{\weakgradTq}[1]{|\rmD #1|_{w,\calT_q}}
\newcommand{\weakgradTqhat}[1]{|\rmD #1|_{w,\calT_q'}}
\newcommand{\relgradA}[1]{|\rmD #1|_{\star,{\scriptscriptstyle \AA}}}
\newcommand{\gsfe}{\hat \sfe}
\newcommand{\Restr}[2]{\operatorname{Restr}_{#1}^{#2}}

\title{Sobolev spaces in extended metric-measure spaces}
\author{Giuseppe Savar\'e\\
\small Universit\`a di Pavia, Dipartimento di Matematica
  ``F. Casorati''\\
  \small email: \textsl{giuseppe.savare@unipv.it}}

\begin{document}

\maketitle
\begin{abstract}
  These lecture notes contain an extended version of the material
presented in the C.I.M.E.~summer course in 2017.
The aim is to give a detailed introduction to the
metric Sobolev theory.

The notes are divided in four main parts.
The first one is devoted to a preliminary and detailed study of the
underlying topological, metric, and measure-theoretic aspects
needed for the development of the theory
in a general \emph{extended metric-topological measure space}
$\X=(X,\tau,\sfd,\mm)$.

The \emph{second part} is devoted to the construction of the
Cheeger energy,
initially defined on a distinguished unital algebra $\AA$ of bounded,
$\tau$-continuous and $\sfd$-Lipschitz
functions.

The \emph{third part} 
deals with the basic tools needed for the dual characterization of the
Sobolev spaces: the notion of $p$-Modulus
of a collection
of (nonparametric) rectifiable arcs
and its duality with the class 
of nonparametric dynamic plans, i.e.~Radon measures
on the space of rectifiable arcs
with finite $q$-barycentric entropy with respect to
$\mm$.

The \emph{final part} of the notes is devoted to the dual/weak formulation
of the Sobolev spaces $W^{1,p}(\X)$ in terms of
nonparametric dynamic plans and to their relations with the
Newtonian spaces $N^{1,p}(\X)$ and with the
spaces $\Sob^{1,p}(\X)$ obtained by the Cheeger
construction.
In particular, when $(X,\sfd)$ is complete,
a new proof of the equivalence between these different approaches
is given by a direct duality argument.

A substantial part of these Lecture notes relies on well established
theories.
New contributions concern the extended metric setting, the role of general compatible algebras
of Lipschitz functions and their density w.r.t.~the Sobolev energy,
a general embedding/compactification trick,
the study of reflexivity and infinitesimal Hilbertianity inherited
from the underlying space, 
and the use of nonparametric dynamic plans for the
definition of weak upper gradients.
\end{abstract}

\tableofcontents
\newpage
\section{Introduction}
These lecture notes contain an extended version of the material
presented in the C.I.M.E.~summer course in 2017.
The aim is to give a detailed introduction to the
metric Sobolev theory,
trying to unify at least two of the main approaches
leading to the construction of the Sobolev spaces 
in general metric-measure spaces.
\medskip

The notes are divided in four main parts.
The first one is devoted to a preliminary and detailed study of the
underlying topological, metric, and measure-theoretic aspects
needed for the development of the general theory.
In order to cover a wide class of examples, including genuinely
infinite dimensional cases, we consider a
general \emph{extended metric-topological measure space}
\cite{AES16} 
$\X=(X,\tau,\sfd,\mm)$, where $\sfd$ is an extended distance on $X$,
$\tau$ is an auxiliary weaker topology compatible with $\sfd$ and $\mm$ is
a Radon measure in $(X,\tau)$. The simplest example is a complete and
separable metric space $(X,\sfd)$ where $\tau$ is the
topology induced by the distance, but more general situations
as duals of separable Banach spaces or Wiener spaces can be included
as well.

The use of an auxiliary weaker (usually Polish or Souslin) topology $\tau$
has many technical advantages: first of all,
it is easier to check the Radon property of the finite Borel measure
$\mm$,
one of our crucial structural assumptions.
A second advantage is to add more flexibility
in the choice of well behaved sub-algebras
of Lipschitz functions and to allow for a powerful compactification
method. As a reward, roughly speaking,
many results which can be proved for a compact topology, can be
extended to the case of a complete metric space $(X,\sfd)$ without too much effort.
Therefore, for a first reading, it would not be too restrictive to
assume compactness of the
underlying topology, in order to avoid cumbersome technicalities.

The first part also includes a careful analysis of the
topological-metric properties of the path space (Section \ref{sec:curves})
in particular concerning invariant properties with respect to
parametrizations.
We first recall the compact-open topology of $\rmC([0,1];(X,\tau))$
and the induced quotient space of arcs, obtained by identifying two
curves $\gamma_1,\gamma_2\in \rmC([0,1];(X,\tau))$ if
there exist continuous, nondecreasing and surjective maps
$\sigma_1,\sigma_2:
[0,1]\to[0,1]$ such that
$\gamma_1\circ\sigma_1=\gamma_2\circ\sigma_2$. 
This provides the natural quotient topology for the
space of \emph{continuous and $\sfd$-rectifiable arcs} $\RA(X)$, for
which the length
\begin{equation}
  \label{eq:622}
  \ell(\gamma):=
    \sup\Big\{\sum_{j=1}^N
    \sfd(\gamma(t_j),\gamma(t_{j-1})): \{t_j\}_{j=0}^N\subset [0,1],\quad
  t_0<t_1<\cdots<t_N\ \Big\}
\end{equation}
is finite.
It results a natural metric-topological structure for
the space $\RA(X)$,
where the distance $\sfd$ characterizes the length and the integrals,
whereas the topology $\tau$ induces the appropriate notion of convergence.
This analysis plays a crucial role, since one of
the main tools for studying Sobolev spaces involves \emph{dynamic
  plans}, i.e.~Radon measures on $\RA(X)$. 
It is also the natural setting to study
the properties of length-conformal distances (Section
\ref{sec:length-Finsler}).
\medskip

\noindent
The \emph{second part} is devoted to the construction of the
Cheeger energy \cite{Cheeger00,AGS14I} (Section \ref{sec:Cheeger}), 
the $L^p(X,\mm)$-relaxation of the energy functional
\begin{equation}
  \label{eq:611}
  \int_X \big(\lip f(x)\big)^p\,\d\mm(x),\quad
  \lip f(x):=\limsup_{y,z\to x}\frac{|f(y)-f(x)|}{\sfd(y,z)},
\end{equation}
initially defined on a distinguished unital algebra $\AA$ of bounded,
$\tau$-continuous and $\sfd$-Lipschitz
functions satisfying the approximation property
\begin{equation}
  \label{eq:612}
  \sfd(x,y)=\sup\Big\{f(x)-f(y):f\in \AA,\ |f(x')-f(y')|\le
  \sfd(x',y')
  \text{ for every }x',y'\in X\Big\}
\end{equation}
for every couple of points $x,y\in X$.
This gives raise to the Cheeger energy
\begin{equation}
  \label{eq:613}
  \CE_{p,\sAA}(f):=\inf\Big\{\int_X \big(\lip f_n\big)^p\,\d\mm:
  f_n\in \AA,\ f_n\to f\ \text{in }L^p(X,\mm)\Big\},
\end{equation}
whose proper domain 
characterizes the strongest Sobolev space (deeply inspired by the
Cheeger approach \cite{Cheeger00})
\begin{equation}
  H^{1,p}(\X,\AA):=
  \Big\{f\in L^p(X,\mm): \CE_{p,\sAA}(f)<\infty\Big\}.
  \label{eq:617}
\end{equation}
We will discuss various useful properties of the Cheeger energy, in
particular
its local representation in terms of the minimal relaxed gradient
$\relgradA f$ as
\begin{equation}
  \label{eq:614}
    \CE_{p,\sAA}(f)=\int_X \relgradA f^p(x)\,\d\mm(x),
\end{equation}
the non-smooth first-order calculus properties of $\relgradA f$, and
the invariance properties of $\CE_{p,\sAA}$ with respect to
measure-preserving isometric imbedding of $X$.

A first, non obvious, important result is the independence of
$\CE_{p,\sAA}$
with respect to $\AA$, at least when $(X,\tau)$ is compact.
It is a consequence of a delicate and powerful approximation method
based on the metric Hopf-Lax flow
\begin{equation}
  \label{eq:615}
  \sfQ_t f(x):=\inf_{y\in X}f(y)+\frac 1{qt^{q-1}}\sfd^q(x,y),\quad t>0,
\end{equation}
which we will discuss in great detail in Section \ref{sec:Invariance}.
\medskip

\noindent
The \emph{third part} of these notes
deals with the basic tools needed for the dual characterization of the
Sobolev spaces. First of all the notion of $p$-Modulus
\cite{Fuglede57,Heinonen-Koskela98,Koskela-MacManus98,Shanmugalingam00} (Section
\ref{sec:Modulus}) of a collection
of (nonparametric) rectifiable arcs $\Gamma\subset \RA(X)$,
\begin{equation}
  \label{eq:616}
  \Md (\Gamma):=\inf\Big\{\int_X f^p\,\d\mm:\ 
  f:X\to [0,\infty]\text{ Borel},
  \quad
  \int_\gamma f\ge 1\text{ for every }\gamma\in \Gamma\Big\},
\end{equation}
which is mainly used to give a precise meaning to negligible sets.
It will be put in duality with the class $\Bar q\mm$
of Radon measures
$\ppi$ on $\RA(X)$ 
with finite $q$-barycentric entropy $\brq q(\ppi)$ with respect to
$\mm$ \cite{ADS15}
(Section \ref{sec:DynamicPlans}). Every
$\ppi\in \Bar q\mm$ induces a measure $\mu_\sppi=h_\sppi \mm$
with density $h_\sppi\in L^q(X,\mm)$ such that
for every bounded Borel function $\zeta:X\to \R$
\begin{equation}
  \label{eq:618}
  \int_{\RA(X)}\int_\gamma \zeta\,\d\ppi(\gamma)=\int_X \zeta\,\d\mu_\sppi=
  \int_X \zeta\,h_\sppi\,\d\mm,\quad
  \brq q^q(\ppi):=\int_X h_\sppi^q\,\d\mm.
\end{equation}
At least when $(X,\sfd)$ is complete, we will show (Section
\ref{sec:Equivalence}) that the Modulus of
a Borel subset $\Gamma\subset \RA(X)$ can be essentially identified
with
the conjugate of the $q$-barycentric entropy:
\begin{equation}
  \label{eq:619}
  \frac 1p\Md(\Gamma)=\sup_{\sppi\in \Bar q\mm}\ppi(\Gamma)-\frac
  1q\brq q^q(\ppi).
\end{equation}
The duality formula shows that a Borel set $\Gamma\subset \RA(X)$ is $\Md$-negligible 
if and only if it is $\ppi$-negligible for every dynamic plan $\ppi$ with finite
$q$-barycentric entropy.
\medskip

\noindent
The \emph{final part} of the notes is devoted to the dual/weak formulation
of the Sobolev spaces $W^{1,p}(\X)$ and to their relations with the
spaces $\Sob^{1,p}(\X)$ obtained by the Cheeger construction.
The crucial concept here is the notion of \emph{upper gradient}
\cite{Heinonen-Koskela98,Koskela-MacManus98,Cheeger00} of a function
$f:X\to \R$:
it is a nonnegative Borel function $g:X\to[0,+\infty]$ such
that
\begin{equation}
  \label{eq:620}
  |f(\gamma_1)-f(\gamma_0)|\le \int_\gamma g
\end{equation}
for every rectifiable arc $\gamma\in \RA(X)$;
$\gamma_0$ and $\gamma_1$ in \eqref{eq:620} denote the initial and final points
of $\gamma$. As suggested by the theory of Newtonian spaces \cite{Shanmugalingam00},
it is possible to adapt the notion of upper gradient to Sobolev functions
by asking that $g\in L^p(X,\mm)$,
by selecting a corresponding notion of ``exceptional'' or
``negligible''
sets of rectifiable arcs
in $\RA(X)$, 
and by imposing that the set of curves
where \eqref{eq:620} does not hold is
exceptional.

According to the classic approach leading to Newtonian spaces, a subset
$\Gamma\subset \RA(X)$
is negligible if $\Md(\Gamma)=0$. This important notion, however, is
not
invariant with respect to modification of $f$ and $g$ in
$\mm$-negligible sets. Here we present a different construction,
based on the new class $\calT_q$ of
dynamic plans $\ppi$ with finite $q$-barycentric entropy and
with finite $q$-entropy of the initial and final distribution of
points,
\begin{equation}
  \label{eq:623}
  \ppi\in \calT_q\quad\Leftrightarrow\quad
  \brq q(\ppi)<\infty,\quad
  (\sfe_i)_{\sharp}\ppi=h_i\mm\quad\text{for some }h_i\in L^q(X,\mm),\quad i=0,1,
\end{equation}
where $\sfe_i(\gamma)=\gamma_i$. The last condition requires that
there exist functions $h_i\in L^q(X,\mm)$ such that
\begin{equation}
  \label{eq:624}
  \int_{\RA(X)} \zeta(\gamma_i)\,\d\ppi(\gamma)=\int_X \zeta(x)\,h_i\,\d\mm\quad
  \text{for every bounded Borel function $\zeta:X\to \R$}.
\end{equation}
A collection $\Gamma\subset \RA(X)$ is $\calT_q$-negligible if it is
$\ppi$-negligible
for every $\ppi\in \calT_q$.
The Sobolev space $W^{1,p}(\X.\calT_q)$ precisely contains 
all the functions $f\in L^p(X,\mm)$ with a $\calT_q$-weak upper gradient $g\in
L^p(X,\mm)$, so that
\begin{equation}
  \label{eq:620bis}
  |f(\gamma_1)-f(\gamma_0)|\le \int_\gamma g\quad
  \text{ for $\calT_q$-a.e.~arc }\gamma\in \RA(X).
\end{equation}
Among all the $\calT_q$-weak upper gradient $g$ of $f$ it is possible
to select the minimal one, denoted by $\weakgradTq f$, such that
$\weakgradTq f\le g$ for every $\calT_q$-weak upper gradient $g$.
The norm of $W^{1,p}(\X,\calT_q)$ is then given by
\begin{equation}
  \label{eq:625}
  \|f\|_{W^{1,p}(\X,\calT_q)}^p:=
  \int_X\Big(|f|^p+\weakgradTq f^p\Big)\,\d\mm.
\end{equation}
Differently from the Newtonian weak upper gradient, the notion of
$\calT_q$-weak upper gradient
is invariant w.r.t.~modifications of $f$ and $g$ in $\mm$-negligible
sets; moreover it is possible to prove that functions in
$W^{1,p}(\X,\calT_q)$ are Sobolev along $\calT_q$-a.e.~arc $\gamma$
with distributional derivative bounded by $g\circ \gamma$.
The link with the Newtonian theory appears more clearly by a further
properties of
functions in $W^{1,p}(\X,\calT_q)$, at least when $(X,\sfd)$ is
complete:
for every function $f\in W^{1,p}(\X,\calT_q)$
it is possible to find a ``good representative'' $\tilde f$ 
(so that $\{\tilde f\neq f\}$ is $\mm$-negligible), so that the modified function $\tilde f$ is
absolutely continuous along $\Md$-a.e.~rectifiable curve. In this way,
the a-priori weaker approach by $\calT_q$-dynamic plans
is equivalent to the Newtonian one and it is possible to identify $
W^{1,p}(\X,\calT_q)$ with $N^{1,p}(\X)$.
We will also show in that the approach by nonparametric dynamic plans
is equivalent
to the definition by parametric $q$-test plans of \cite{AGS14I,AGS13}.

A further main identification result is stated in Section
\ref{sec:Identification}:
when $(X,\sfd)$ is complete, we can show that
$W^{1,p}(\X,\calT_q)$ coincides with $\Sob^{1,p}(\X,\AA)$.
This fact (originally proved by \cite{Cheeger00,Shanmugalingam00}
in the case of doubling-Poincar\'e spaces)
can be interpreted as a density result of
a compatible algebra of functions $\AA$ in $W^{1,p}(\X,\calT_q)$
and has important consequences, some of them recalled in the last
section of the notes.
Differently from other recent approaches \cite{AGS14I,AGS13}
the proof arises from a direct application of the Von Neumann min-max
principle
and relies on two equivalent characterizations of the dual Cheeger
energy $\CE_p^*(h)$ 
for functions $h\in L^q(X,\mm)$
\begin{equation}
  \label{eq:590}
  \frac 1q\CE_p^*(h)=\sup_{h\in \Sob^{1,p}(\X)} \int_X fh\,\d\mm-\frac
  1p\CE_p(f)\quad
  h\in L^q(X,\mm),\ \int_X h\,\d\mm=0.
\end{equation}
When $(X,\tau)$ is compact we can prove that 
\begin{equation}
  \label{eq:621}
  \CE_p^*(h)=\sup\Big\{\brq q^q(\ppi):
  (\sfe_0)_\sharp\ppi=h_-\mm,\quad
  (\sfe_1)_\sharp\ppi=h_+\mm\Big\},
\end{equation}
$h_-,h_+$ being the negative and positive parts of $h$, and
\begin{equation}
  \label{eq:621}
  \frac 1q\CE_p^*(h)=\sup\Big\{\sfK_{\sfd_g}(h_-\mm,h_+\mm)-\frac 1q
    \int_X g^q\,\d\mm:g\in \rmC_b(X),\ \inf_X g>0\Big\},
\end{equation}
where $\sfK_{\sfd_g}$ is the Kantorovich-Rubinstein distance induced
by the cost
\begin{equation}
  \label{eq:626}
  \sfd_g(x_0,x_1):=\inf\Big\{\int_\gamma g:\gamma\in \RA(X),\
  \gamma_0=x_0,\
  \gamma_1=x_1\Big\}.
\end{equation}
Thanks to the identification Theorem and the compactification method,
we obtain that for a general complete space $(X,\sfd)$
\begin{equation}
  \label{eq:627}
  \Sob^{1,p}(\X,\AA)=\Sob^{1,p}(\X)=W^{1,p}(\X,\calT_q)=N^{1,p}(\X)
\end{equation}
(the last identity holds up to the selection
of a good representative)
with equality of the corresponding minimal gradients.
As a consequence, all the approaches lead to one canonical object
and this property does not rely on the validity of doubling properties or Poincar\'e
inequalities for $\mm$.

In the last Section \ref{sec:Examples} we will show
various invariance properties of the Cheeger energy and the metric
Sobolev spaces.
In particular, when the underlying space $X$ has a linear structure, we
show that the metric approach coincides with
more classic definitions of Sobolev spaces
(e.g.~the weighted Sobolev spaces in $\R^d$ \cite{HKM93}
or the Sobolev spaces associated to a log-concave measure
in a Banach-Hilbert space), obtaining the reflexivity
(resp.~the Hilbertianity) of $W^{1,p}(\X)$ whenever $X$ is a reflexive
Banach (resp.~Hilbert) space.
\medskip

\noindent
A substantial part of these Lecture notes relies on well established
theories:
our main sources have been
\cite{AGS14I,AGS13,AGS15} (for the parts concerning the Cheeger energy,
the weak upper gradients, 
and the properties of the Hopf-Lax flow),
\cite{Bjorn-Bjorn11,HKST15,ADS15} (for the notion of the $p$-Modulus
and the Newtonian spaces),
\cite{ADS15} (for the notion of nonparametric dynamic plans
in $\Bar q\mm$, the dual characterization of the $p$-Modulus and
the selection of a good representative of a Sobolev function),
\cite{AES16} (for the extended
metric-topological structures), \cite{AGS08,Villani09} (for the
results involving the Kantorovich-Rubinstein distances of Optimal
Transport),
\cite{Schwartz73} for the theoretic aspects of Radon measures.
Further bibliographical notes are added to each Section
with more detailed comments.
We also refer to the overviews and
lecture notes \cite{Ambrosio-Tilli04,Heinonen07,Ambrosio-Ghezzi16,Gigli18II}.

New contributions concern the role of general compatible algebras
of Lipschitz functions and their invariance in the construction of
the Cheeger energy, the embedding/compactification tricks,
the use of nonparametric dynamic plans for the
definition of weak upper gradients,
the characterization of the dual Cheeger energy and the proof
of the identification theorem $H=W$ by a direct duality argument.

Of course, there are many important 
aspects that we did not include in these notes:
just to name a few of them at the
level of the Sobolev construction we quote
\begin{itemize}[-]\itemsep-3pt
\item the Haj\l asz's Sobolev spaces \cite{Hajlasz96},
\item the theoretical aspects related to the doubling
  and to the Poincar\'e inequality assumptions \cite{Bjorn-Bjorn11,HKST15},
\item the point of view of 
  \emph{parametric} dynamic plans (i.e.~Radon measures on the space of
  parametric curves with finite $q$-energy) \cite{AGS14I,AGS13}
  (but see the discussion in Section \ref{subsec:parametric}),
\item the properties of the
  $L^2$-gradient flow of the Cheeger energy,
\item the original proof of the ``$H=W$'' 
  Theorem \cite{AGS14I,AGS13}
  by a dynamic approach based on the identification of the
  $L^2$-gradient flow of the Cheeger energy with the
  Kantorovich-Wasserstein gradient flow of the Shannon-R\'eny
  entropies,
\item the approach \cite{DiMarino14,Ambrosio-Ghezzi16} by
  derivations and integration by parts,
\item the Gigli's nonsmooth differential structures
  \cite{Gigli15,Gigli18,Gigli18II} (see also
  \cite{Gigli-Pasqualetto18,Gigli-Pasqualetto-Soultanis18}),
\item the applications to metric measure spaces satisfying a lower Ricci
  curvature bounds
  \cite{Sturm06I,Sturm06II,Lott-Villani09,AGS14D,AGMR15,AGS15,EKS15,AMS15preprint}.
\end{itemize}

\newpage

\subsection{Main notation}

\smallskip
\halign{$#$\hfil\ &#\hfil
  \cr
  (X,\tau)&Hausdorff topological space\cr
  (X,\tau,\sfd)&Extended metric-topological (e.m.t.) space, see \S
  \ref{subsec:extended-def} and Definition \ref{def:luft1}
  \cr
  \X=(X,\tau,\sfd,\mm)&Extended metric-topological measure (e.m.t.m.) space, see \S
  \ref{subsec:extended-def}
  \cr
  \cM(X),\ \cMp(X)&Signed and positive Radon measures on a Hausdorff topological space
  $X$
  \cr
\cP(X)&Radon probability measures on $X$ 
\cr
\FF(X),\ \KK(X),\ \BorelSets X,\ \SS(X)&Closed, compact, Borel and
Souslin subsets of $X$
\cr
\supp(\mu)&Support of a Radon measure, see p.~\pageref{page:support}
\cr
f_\sharp\mu&Push forward of $\mu\in \cM(X)$ by a (Lusin
$\mu$-measurable) map $f:X\to Y$,
\eqref{eq:487}
\cr 
\rmC_b(X,\tau),\ \rmC_b(X)&$\tau$-continuous and bounded real functions on $X$\cr
\rmB_b(X,\tau),\ \rmB_b(X)&Bounded $\tau$-Borel 
real functions
\cr
\Lip(f,A,\delta)&Lipschitz constant of $f$ on $A$ w.r.t.~the extended
semidistance $\delta$, \eqref{eq:204}
\cr
\Lip_b(X,\tau,\delta)&Bounded, $\tau$-continuous and $\delta$-Lipschitz 
real functions on $X$, \eqref{eq:221}
\cr
\Lip_{b,\kappa}(X,\tau,\delta)&Functions in $\Lip_b(X,\tau,\delta)$
with Lipschitz constant bounded by $\kappa$, \eqref{eq:230}
\cr
\lipdlt f(x)&Asymptotic Lipschitz constant of $f$ at a point $x$,
\S\,\ref{subsec:aslip}
\cr
\AA,\AA_1&Compatible unital sub-algebra of $\Lip_b(X,\tau,\sfd)$,
\S\,\ref{subsec:compatible-A}
\cr
\cL^p(X,\mm) &Space of $p$-summable Borel functions
\cr 
\LL^q(\gamma|\mu)&Entropy
functionals on Radon measures
\cr
\sfK_\delta(\mu_1,\mu_2)&Kantorovich-Rubinstein extended distance in
$\cMp(X)$, \S\,\ref{subsec:KR}
\cr
\rmC([a,b];(X,\tau)),\ \rmC([a,b];X)&$\tau$-continuous curves defined
in $[a,b]$ with values in $X$, \ref{subsec:continuous_curves}
\cr
\tau_\rmC,\ \sfd_\rmC&Compact open topology and
extended distance on $\rmC([a,b];X)$, \ref{subsec:continuous_curves}
\cr
\Arc(X,\tau),\ \Arc(X)&Space of arcs, classes of curves
equivalent up to a reparametrization, \ref{subsec:arcs}
\cr
\Arc(X,\sfd)&Space of arcs with a $\sfd$-continuous reparametrization,
\ref{subsec:arcs}
\cr
\tau_\rmA,\ \sfd_\rmA&Quotient topology and
extended distance on $\Arc(X,\sfd)$, \ref{subsec:arcs}
\cr
\mathrm{BV}([a,b];(X,\delta))&Curves $\gamma:[a,b]\to X$ with finite
total variation w.r.t.~$\delta$,
\ref{subsec:rectifiable_arcs}
\cr
\mathrm{BVC}([a,b];(X,\sfd))&Continuous curves in
$\BV([a,b];(X,\sfd))$
\ref{subsec:rectifiable_arcs}
\cr
\RA(X,\sfd),\ \RA(X)&Continuous and rectifiable arcs,
\ref{subsec:rectifiable_arcs}
\cr
\Al_\gamma&Arc-length reparametrization of a rectifiable arc $\gamma$,
\ref{subsec:rectifiable_arcs}
\cr
\int_\gamma f&Integral of a function $f$ along a rectifiable curve (or
arc) $\gamma$, 
\ref{subsec:rectifiable_arcs}
\cr
\ell(\gamma)&length of $\gamma$
\ref{subsec:rectifiable_arcs}
\cr
\nu_\gamma&Radon measure in $\cMp(X)$ induced
by integration along a rectifiable arc $\gamma$,
\ref{subsec:rectifiable_arcs}
\cr
\sfd_\ell,\ \sfd_g&Length and conformal distances generated by $\sfd$,
\S\, \ref{sec:length-Finsler}
\cr
\pCE_{p},\ \CE_{p,\sAA}&(pre)Cheeger energy, Definition
\ref{def:Cheeger}
\cr
\Sob^{1,p}(\X,\AA)&Metric Sobolev space induced by the Cheeger energy,
Definition \ref{def:SobolevH}
\cr
\relgradA f&Minimal $(p,\AA)$-relaxed gradient,
\S\,\ref{subsec:relaxed}
\cr
\sfQ_t^{K,\delta}(f),\sfQ_t(f)&(Generalized) Hopf-Lax flow,
\S\,\ref{subsec:HL}
\cr
\Mdm(\Sigma),\ \Mdmc
(\Sigma)&$p$-Modulus of a collection
of measures $\Sigma\subset
\cMp(X)$,
\eqref{eqn:mod2} and \eqref{eqn:mod2c}
\cr
\Md(\Gamma),\ \tMd(\Gamma)&$p$-Moduli of a collection $\Gamma\subset
\RA(X)$,
\eqref{eqn:mod2gamma} and \eqref{eqn:mod2gammabis}
\cr
\brq q(\ppi)&$q$-barycentric entropy of a dynamic plan, Definition
\ref{def:barycenter}
\cr
\Bar q\mm&Plans with barycenter in $L^q(X,\mm)$, Definition
\ref{def:barycenter}
\cr
\Cont p(\Gamma)&$p$-Content of a family of arcs, Definition
\ref{def:pcontent}
\cr
\calT_q,\ \calT_q^*&nonparametric $q$-test plans, Definition \ref{def:testplan}
\cr
\weakgrad f,\ \weakgradTq f&Minimal $\calT_q$-weak upper gradient,
Definition \ref{def:minimalwug}
\cr
\wCE_{p,\calT_q},\ W^{1,p}(\X,\calT_q)&Weak $(p,\calT_q)$-energy and weak
Sobolev space, Definition \ref{def:weakCheeger}
\cr
\DD_q(\mu_0,\mu_1)&Dual dynamic cost, \eqref{eq:191}
\cr
}

\part{Topological and Metric-Measure structures}
\section{Metric-measure structures}
\label{sec:Mms}
\GGG
In this section we will recall the main notion and facts we will use
in the sequel. Our main ingredients are
\begin{itemize}
\item a Hausdorff topological space $(X,\tau)$,
\item an extended distance $\sfd:X\times X\to[0,\infty]$,
\item a finite Radon measure $\mm$ on $(X,\tau)$,
\item an algebra $\cA$ of $\tau$-continuous, $\sfd$-Lipschitz, 
  bounded real functions defined in $X.$
\end{itemize}
All these objects will satisfy suitable compatibility conditions,
which we are going to explain. We will call the system
\index{Extended metric-topological measure (e.m.t.m.) space}
\begin{equation}
\text{$\X=
(X,\tau,\sfd,\mm)$,
an \emph{extended metric-topological measure (e.m.t.m.) space.}}
\label{eq:512}
\end{equation}
The choice of $\cA$ will
play a role in the construction of the Cheeger energy.

Let us first consider
the topological and measurable side of this structure.

\subsection{Topological and measure theoretic notions}
\label{subsec:measure}

Let $(X,\tau)$ be a Hausdorff topological space. We will denote 
by $\rmC_b(X,\tau)$ (resp.~$\rmB_b(X,\tau)$)
the space of $\tau$-continuous (resp.~Borel) and bounded
real functions defined on $X$.
$\BB(X,\tau)$ is the collection of
the Borel subsets of $X$. For every $x\in X$, 
$\UU_x$ will denote the system of neighborhoods of $x$.
We will often omit the explicit indication of the topology $\tau$, when
it will be clear from the context.

We will always deal with a \emph{completely regular} \index{Completely
  regular topology, \eqref{eq:CR}} topology, i.e. 
\begin{equation}
  \label{eq:CR}
  \begin{gathered}
    \text{for any closed set $F\subset X$ and any $x_0\in X\setminus F$}\\
    \text{there exists $f\in \rmC_b(X,\tau)$ with $f(x_0)>0$ and $f\equiv 0$ on $F$.} 
  \end{gathered}
\end{equation}
We can always assume that $f$ takes values in $[0,1]$ and
$f(x_0)=1$. An immediate consequence of \eqref{eq:CR} is that 
for every open subset $G\subset X$ its characteristic function
$\nchi_G$ can be represented as
\begin{equation}
  \label{eq:324}
  \nchi_G(x)=\sup\Big\{\varphi(x):\varphi\in \rmC_b(X,\tau),\ 
  0\le \varphi\le \nchi_G\Big\},
\end{equation}
and the same representation holds for every nonnegative 
lower semicontinuous (l.s.c.) $f:X\to [0,+\infty]$:
\begin{equation}
  \label{eq:324lsc}
  f(x)=\sup\Big\{\varphi(x):\varphi\in \rmC_b(X,\tau),\ 
  0\le \varphi\le f\Big\},\quad
  f: X\to [0,+\infty]\ \text{l.s.c.}.
\end{equation}

\begin{definition}[Radon measures {\cite[Chap.~I,
    Sect. 2]{Schwartz73}}]
  \label{def:Radon}
  A finite Radon measure
  \index{Radon measure, Definition \ref{def:Radon}} $\mu:\BB(X,\tau)\to[0,+\infty)$ is a
  Borel nonnegative   $\sigma$-additive  finite measure satisfying the following
  inner regularity property:
    \begin{equation}
      \label{eq:214}
      \forall\,B\in \BB(X,\tau):\quad
      \mu(B)=\sup\Big\{\mu(K):K\subset B,\ \text{$K$ compact}\Big\}.
    \end{equation}
\end{definition}
\noindent
A finite Radon measure $\mu$ is
also outer regular:
\begin{equation}
  \label{eq:241}
  \forall\,B\in \BB(X,\tau):\quad
  \mu(B)=\inf\Big\{\mu(O):O\subset X,\  \text{$O$ open}\Big\}.
\end{equation}
We will denote by $\cMp(X)$ (resp.~$\cP(X)$) the collection of all
finite (resp.~Probability) Radon measures on $X$.
By the very definition of Radon topological space \cite[Ch.~II, Sect. 3]{Schwartz73}, 
every Borel measure in a Radon space is Radon: such class of spaces includes
locally compact spaces with a countable base of open sets,
Polish, Lusin and Souslin spaces.
In particular the notation of $\cP(X)$
is consistent with the standard one adopted e.g.~in
\cite{AGS11a,Ambrosio-Gigli-Savare08,Villani03}, where Polish or second countable locally compact spaces are
considered.

\eqref{eq:214} implies in particular that a Radon measure is tight:
\index{Tightness of a measure}
\begin{equation}
  \label{eq:279}
  \forall\,\eps>0\quad \exists\,K_\eps\subset X\quad\text{compact such
    that}\quad
  \mu(X\setminus K_\eps)\le \eps.
\end{equation}
\label{page:support}
We can also define in the usual way the support $\supp\mu$ of a Radon
measure as the set of points $x\in X$ such that every neighborhood $U\in
\UU_x$ has strictly positive measure $\mu(U)>0$. Thanks to
\eqref{eq:214},
one can verify that $\mu(X\setminus \supp(\mu))=0$ \cite[p.~60]{Schwartz73}.

Radon measures have stronger additivity and continuity properties in connection with open
sets and lower semicontinuous functions; in particular we shall use this version of the monotone convergence theorem
(see \cite[Lemma~7.2.6]{Bogachev07})
\begin{equation}\label{eq:Beppo_Levi_general}
\lim_{i\in I}\int f_i\,\d\mu=\int\lim_{i\in I}f_i\,\d\mu
\end{equation}
valid for Radon measures $\mu$ and for nondecreasing nets $i\mapsto
f_i$, $i\in I$, of
$\tau$-lower semicontinuous and equibounded functions
$f_i:X\to[0,\infty]$.
Here $I$ is a directed set with a partial order $\preceq$ satisfying
$i\preceq j\ \Rightarrow\ f_i\le f_j$, see the Appendix \ref{subsec:nets}.

The weak (or narrow) topology
\index{Weak topology in $\cMp(X)$}
\index{Narrow topology in $\cMp(X)$}
$\tau_{\cMp}$ on $\cMp(X)$ can be defined as the coarsest topology for which all maps 
\begin{equation}
  \label{eq:5}
  \mu\mapsto \int h\,\d\mu\qquad\text{from $\cMp(X)$ into $\R$}
\end{equation}
are continuous as $h:X\to \R$ varies in $\rmC_b(X,\tau)$ \cite[p. 370,
371]{Schwartz73}.

Prokhorov Theorem provides a sufficient condition for compactness
w.r.t.~the weak topology:
\cite[Theorem 3, p.~379]{Schwartz73}.
\begin{theorem}[Prokhorov]
  \label{thm:compa-Riesz}
  Let $(X,\tau)$ be a completely regular 
  Hausdorff topological space. Assume that a collection ${\mathcal K}\subset\cMp(X)$ 
  is uniformly bounded and equi-tight, i.e. \index{Equi-tightness of a collection of measures}
\begin{gather}
  \label{eq:511}
  \sup_{\mu\in \cK}\mu(X)<\infty,\\
    \label{eq:22}
  \text{for every $\eps>0$ there exists a compact set $K_\eps\subset
    X$ such that}\quad
  \sup_{\mu\in \mathcal K}\mu(X\setminus K_\eps)\le \eps\; .
\end{gather}
Then ${\mathcal K}$ has limit points in the class $\cMp(X)$ w.r.t.~the 
 weak topology.
\end{theorem}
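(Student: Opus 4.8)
The plan is to reduce the statement to the classical Riesz representation theorem combined with a diagonal/compactification argument. First I would show that, after passing to a subnet, the maps $\mu \mapsto \int h \, \d\mu$ converge for every $h \in \rmC_b(X,\tau)$. The uniform bound \eqref{eq:511} says all the numbers $\int h \, \d\mu$ lie in a fixed compact interval of $\R$ (of radius $\|h\|_\infty \sup_{\mu}\mu(X)$), so the product space $\prod_{h \in \rmC_b(X)} [-\|h\|_\infty C, \|h\|_\infty C]$, with $C := \sup_\mu \mu(X)$, is compact by Tychonoff. The net $(\mu_\alpha)$ embeds into this product via $\mu \mapsto (\int h \, \d\mu)_h$, hence has a convergent subnet; call the limit functional $L : \rmC_b(X,\tau) \to \R$. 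It is clearly linear, positive (each $\int h \, \d\mu \ge 0$ when $h \ge 0$), and bounded with $\|L\| \le C$.

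The heart of the matter is to produce a Radon measure $\mu$ representing $L$, i.e. $L(h) = \int h \, \d\mu$ for all $h \in \rmC_b(X,\tau)$, and this is where equi-tightness \eqref{eq:22} is essential. Without tightness $L$ need only correspond to a finitely additive set function or to a measure on some compactification, not to a genuine Radon measure on $X$. The route I would take: for each $\eps > 0$ let $K_\eps$ be the compact set from \eqref{eq:22}; restricting test functions to $K_\eps$, the functional $h \mapsto L(h)$ controlled on $K_\eps$ gives (by the Riesz–Markov theorem on the compact Hausdorff space $K_\eps$, applied to $\rmC(K_\eps)$ after using the Tietze-type extension available from complete regularity) a Radon measure $\mu_\eps$ on $K_\eps$ with total mass $\le C$. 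Taking $\eps = 1/n$ and using that $K_{1/n}$ can be arranged increasing (replace $K_{1/n}$ by $\bigcup_{k \le n} K_{1/k}$), the measures $\mu_{1/n}$ are consistent and their supremum defines a Borel measure $\mu$ on $X$ which is inner regular by construction, hence Radon; one checks $\mu(X \setminus K_{1/n}) \le 1/n$ so $\mu(X) \le C < \infty$.

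It remains to verify $L(h) = \int h \, \d\mu$ for every $h \in \rmC_b(X,\tau)$, which is a routine $\eps$-argument: split $\int h \, \d\mu_\alpha = \int_{K_\eps} h \, \d\mu_\alpha + \int_{X \setminus K_\eps} h \, \d\mu_\alpha$, bound the second term by $\|h\|_\infty \eps$ uniformly in $\alpha$ using \eqref{eq:22}, pass to the limit in the first term using weak convergence on $K_\eps$ (here one must be slightly careful that $h|_{K_\eps}$ is a legitimate test function and that the restricted measures converge weakly on $K_\eps$ — this follows from the construction of $\mu_\eps$), and then let $\eps \to 0$. This shows $\mu$ is a weak limit point of the original collection in $\cMp(X)$. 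The main obstacle, as indicated, is the passage from the abstract limit functional $L$ to an honest Radon measure: one must exploit \eqref{eq:22} together with inner regularity \eqref{eq:214} carefully, and handle the bookkeeping of the exhausting compact sets so that the pieces $\mu_\eps$ glue to a single $\sigma$-additive Borel measure. I would also remark that complete regularity \eqref{eq:CR} is used precisely to guarantee enough continuous functions (e.g. to separate points of $K_\eps$ and to extend functions from $K_\eps$ to $X$), so that the Riesz representation on each $K_\eps$ genuinely captures the functional $L$.
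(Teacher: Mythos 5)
First, note that the paper does not prove this statement at all: Theorem \ref{thm:compa-Riesz} is quoted with a citation to Schwartz, so there is no internal proof to compare with; your proposal has to stand on its own. Its overall strategy (extract a subnet so that $L(h):=\lim_\alpha\int h\,\d\mu_\alpha$ exists for all $h\in\rmC_b(X,\tau)$, then use equi-tightness to represent the positive functional $L$ by a Radon measure) is indeed the standard route, and the first step via Tychonoff is fine.

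The genuine gap is in the central step, where you manufacture the representing measure. The functional you propose on $\rmC(K_\eps)$, namely $g\mapsto L(\tilde g)$ for a continuous bounded extension $\tilde g$ of $g$, is not well defined: two extensions $\tilde g_1,\tilde g_2$ agree on $K_\eps$ but differ elsewhere, and all tightness gives you is $|L(\tilde g_1)-L(\tilde g_2)|\le 2\eps\,\|g\|_\infty$, not equality. So Riesz--Markov on $K_\eps$ cannot be applied as stated, and the objects $\mu_\eps$ are only determined up to errors of order $\eps$. Consequently the next two assertions are unjustified: the family $(\mu_{1/n})_n$ need not be \emph{exactly} consistent (even for honest weak limits of the restrictions $\mu_\alpha\restr{K_{1/n}}$, mass can appear on $K_{1/n}$ from outside in the limit, and one only gets inequalities up to $1/n$), so ``their supremum defines a Borel measure'' does not follow; and the claim that the restricted measures converge weakly on $K_\eps$ to $\mu_\eps$ does not follow from your construction, since $L$ only controls integrals over all of $X$ up to the same $O(\eps)$ error. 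The argument is repairable, but it needs a different implementation of this step: either define the limit measure directly from the \emph{tight} functional $L$ (e.g.\ $\mu(G):=\sup\{L(h):0\le h\le \nchi_G,\ h\in\rmC_b(X)\}$ for $G$ open, then Carath\'eodory, using $|L(h)|\le\eps\|h\|_\infty$ for $h$ vanishing on $K_\eps$), or push the net forward to a compactification of $X$ (where every $h\in\rmC_b(X)$ extends by construction, so no extension ambiguity arises), take a weak$^*$ limit there, and use \eqref{eq:22} via the portmanteau inequality for open sets to see that the limit is concentrated on $\bigcup_n K_{1/n}\subset X$ and hence restricts to a Radon measure on $X$; the latter is exactly in the spirit of the compactification machinery of \S\,\ref{subsec:compactification}.
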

Recall that a set $A\subset X$ is $\mm$-measurable
\index{Measurable sets}
if there
exist Borel sets $B_1,B_2\in \BorelSets{X,\tau}$ such that
$B_1\subset A\subset B_2$ and $\mm(B_2\setminus B_1)=0$.
$\mm$-measurable sets form a $\sigma$-algebra $\mathscr B_\mm(X).$ 
A set is called \emph{universally (Radon) measurable}
\index{Universally measurable sets} if it is
$\mu$-measurable
for every Radon measure $\mu\in \cMp(X)$.

Let $(Y,\tau_Y)$ be a Hausdorff topological space.
A map $f:X\to Y$ is Borel (resp.~Borel $\mm$-measurable)
\index{Borel maps, Borel $\mm$-measurable maps}
if for every $B\in \BorelSets Y$
$f^{-1}(B)\in \BorelSets X$ (resp.~$f^{-1}(B)$ is $\mm$-measurable).
$f$ is Lusin $\mm$-measurable
\index{Lusin $\mm$-measurable maps}
if
for every $\eps>0$ there exists a compact set $K_\eps\subset X$
such that $\mm(X\setminus K_\eps)\le \eps$ and
the restriction of $f$ to $K_\eps$ is continuous.
A map $f:X\to Y$ is called \emph{universally measurable}
\index{Universally measurable maps} if it is
Lusin $\mu$-measurable
for every Radon measure $\mu\in \cMp(X)$.

Every Lusin $\mm$-measurable map is also Borel $\mm$-measurable;
the converse is true if, e.g., the topology $\tau_Y$ is metrizable and
separable
\cite[Chap.~I Section 1.5, Theorem 5]{Schwartz73}.
Whenever $f$ is Lusin $\mm$-measurable, its push-forward
\index{Push forward of a measure}
\begin{equation}
  \label{eq:487}
  f_\sharp\mm\in \cMp(Y),\quad
  f_\sharp\mm(B):=\mm(f^{-1}(B))\quad
  \text{for every Borel subset }B\subset \BorelSets Y
\end{equation}
induces a Radon measure in $Y$.

Given a power $p\in (1,\infty)$ and a Radon measure $\mm$ in
$(X,\tau)$
we will denote by $L^p(X,\mm)$ the usual Lebesgue space
of class of $p$-summable $\mm$-measurable functions
defined up to $\mm$-negligible sets.
We will also set
\begin{equation}\label{eq:callp}
\mathcal{L}^p_+(X, \mm) := \left\{ f:X \to [0,\infty] \; : \;
  \text{$f$ is Borel,    $\int_X f^p \,\d\mm < \infty$} \right\};
\end{equation}
this space is not quotiented under any equivalence relation.
We will
keep using the notation
$$ \| f\|_p=
\|f\|_{L^p(X,\mm)}:= \left( \int_X |f|^p \, \d \mm \right)^{1/p}  $$
as a seminorm on $\Ldp$ and a norm in $L^p(X,\mm)$.

\subsection{Extended metric-topological (measure) spaces}
\label{subsec:extended-def}
Let $(X,\tau)$ be a Hausdorff topological space.

An extended semidistance \index{Extended distances and semidistances}
is a symmetric map $\delta:X\times X\to[0,\infty]$
satisfying the triangle inequality; $\delta$ is an extended distance if
it also satisfies the
property 
$\delta(x,y)=0$ iff $x=y$ in $X$: in this case, we call $(X,\delta)$ 
an extended metric space. 
We will omit the adjective
``extended'' if $\delta$ takes real values.

Whenever $f:X\to \R$ is a given function, $A\subset X$, and $\delta$
is an extended semidistance on $X$, we set
\begin{equation}
  \label{eq:204}
  \Lip(f,A,\delta):=\inf\Big\{L\in [0,\infty]:|f(y)-f(z)|\le L\delta(y,z)\quad
  \text{for every }y,z\in A\Big\}.
\end{equation}
We adopt the convention to omit the set $A$ when $A=X$.
We consider 
the class of $\tau$-continuous and $\delta$-Lipschitz
functions
%
\begin{equation}
  \label{eq:221}
  \begin{aligned}
    \Lipb(X,\tau,\delta):={}&\Big\{f\in \rmC_b(X,\tau):
    \Lip(f,\delta)<\infty\Big\},
  \end{aligned}
\end{equation}
and for every $\kappa>0$ we will also consider the subsets
\begin{equation}
  \label{eq:230}
  \begin{aligned}
      \Lipbu\kappa(X,\tau,\delta):={}&
    \Big\{f\in \rmC_b(X,\tau):
    \Lip(f,\delta)\le \kappa\Big\}.
  \end{aligned}
\end{equation}
%
%
A particular role will be played by $\Lipbu1(X,\tau,\delta)$.
We will sometimes omit to indicate the explicit dependence on $\tau$
and $\delta$ whenever it will be clear from the context.
It is easy to check that $\Lipb(X,\tau,\delta)$
is a real and
commutative sub-algebras of $\rmC_b(X,\tau)$ with unit.

According to \cite[Definition 4.1]{AES16}, 
an extended metric-topological space (e.m.t.~space) 
$(X,\tau,\sfd)$ is
characterized by a Hausdorff topology $\tau$ and 
an extended distance $\sfd$
satisfying a suitable compatibility condition.
\index{Extended metric-topological (e.m.t.) space}
%
\begin{definition}[Extended metric-topological spaces]\label{def:luft1}
Let $(X,\sfd)$ be an extended metric space, let
$\tau$ be a Hausdorff topology in $X$.
We say that $(X,\tau,\sfd)$ is an extended
metric-topological (e.m.t.) space if:
\index{Initial topology}
\begin{enumerate}[\rm (X1)]
\item the topology $\tau$ is generated by the family of functions
  $\Lipb(X,\tau,\sfd)$ 
  (see the Appendix \ref{subsec:initial});
\item
  the distance $\sfd$ can be recovered by the functions in
  $\Lipbu1(X,\tau,\sfd)$ through the formula
  \begin{equation}
    \label{eq:220}
    \sfd(x,y)=\sup_{f\in \Lipbu1(X,\tau,\sfd)}|f(x)-f(y)|
    \quad\text{for every }x,y\in X.
  \end{equation}
\end{enumerate}
\index{Polish space}
\index{Lusin space}
\index{Souslin space}
We will say that $(X,\tau,\sfd)$ is \emph{complete} if $\sfd$-Cauchy
sequences are $\sfd$-convergent. All the other topological properties
(as compactness, separability, metrizability, Borel,
Polish-Lusin-Souslin, etc) usually refers to $(X,\tau)$.
\end{definition}
The previous assumptions guarantee that $(X,\tau)$ is completely
regular, according to \eqref{eq:CR} (see the Appendix
\ref{subsec:initial}).
As in \eqref{eq:512}, when an e.m.t.~space $(X,\tau,\sfd)$ is provided
by a positive Radon measure $\mm\in \cMp(X,\tau)$
we will call the system $\X=(X,\tau,\sfd,\mm)$
an extended metric-topological measure (e.m.t.m.) space.
\index{Extended metric-topological measure (e.m.t.m.) space}

Definition \ref{def:luft1} yields two important properties linking
$\sfd$ and $\tau$: first of all
\begin{equation}
  \label{eq:243}
  \sfd\text{ is $\tau\times\tau$-lower semicontinuous in $X\times X$},
\end{equation}
since it is the supremum of a family of continuo
us maps by
\eqref{eq:220}.
On the other hand, every $\sfd$-converging net $(x_j)_{j\in J}$
indexed by a directed set $J$ is also $\tau$-convergent: 
\begin{equation}
  \label{eq:244}
  \lim_{j\in J}\sfd(x_j,x)=0\quad\Rightarrow\quad
  \lim_{j\in J}x_j=x\quad\text{w.r.t.~$\tau$.}
\end{equation}
It is sufficient to observe that $\tau$ is the initial topology
generated by $\Lipb(X,\tau,\sfd)$ so that a net $(x_j)$ is convergent to
a point $x$ if
and only if 
\begin{equation}
  \label{eq:252}
  \lim_{j\in J}f(x_j)=f(x)\quad\text{for every }f\in \Lipb(X,\tau,\sfd).
\end{equation}
A basis of neighborhoods for the
$\tau$-topology at a point $x\in X$ is given by the sets of the form
\begin{equation}
  \label{eq:245}
  U_{F,\eps}(x):=\Big\{y\in X:\sup_{f\in F}|f(y)-f(x)|<\eps\Big\}\quad
  F\subset \Lipbu1(X,\tau,\sfd)\text{ finite, }\eps>0.
\end{equation}
Definition \ref{def:luft1} is in fact equivalent to other seemingly
stronger assumptions, 
as we discuss in the following Lemma.
\begin{lemma}[Monotone approximations of the distance]
  \label{rem:monotonicity}
  Let $(X,\tau,\sfd)$ be an e.m.t.~space,
  let us denote by $\Lambda$ the collection of all the finite subsets in 
  $\Lipbu1(X,\tau,\sfd)$, a directed set ordered by
  inclusion, and let us define
  \begin{equation}
    \label{eq:222}
    \sfd_\lambda(x,y):=\sup_{f\in \lambda}|f(x)-f(y)|,\quad \lambda\in \Lambda,\ x,y\in X.
  \end{equation}
  The family $(\sfd_\lambda)_{\lambda\in \Lambda}$ is a monotone
  collection of $\tau$ continuous and bounded semidistances
  on $X$
  generating the $\tau$-topology and the extended distance $\sfd$, in
  the sense that for every net $(x_j)_{j\in J}$ in $X$
  \begin{subequations}
  \begin{gather}
    \label{eq:328}
    x_j\stackrel\tau\to x\quad\Leftrightarrow\quad
    \lim_{j\in J}\sfd_\lambda(x_j,x)=0\quad\text{for every }\lambda\in
    \Lambda,
    \intertext{and}
    \label{eq:329}
    \sfd(x,y)=\sup_{\lambda\in \Lambda} \sfd_\lambda(x,y)=\lim_{\lambda\in
      \Lambda}\sfd_\lambda(x,y)\quad
    \text{for every }x,y\in X.
  \end{gather}
  \end{subequations}
  Conversely, suppose that  $(\sfd_i)_{i\in I}$ is a directed family
  of real functions on $X\times X$ satisfying
  \begin{subequations}
    \label{eq:monotone}
  \begin{gather}
    \label{eq:330}
    \sfd_i:X\times X\to [0,+\infty)
    \quad\text{is a bounded and continuous semidistance for
      every $i\in I$},\\
    \label{eq:326}
    i\preceq j\quad\Rightarrow\quad
    \sfd_i\le \sfd_j,\\
    \label{eq:259}
    x_j\stackrel\tau\to x\quad\Leftrightarrow\quad
    \lim_{j\in J}\sfd_i(x_j,x)=0\quad\text{for every }i\in I,\\
    \label{eq:260}
    \sfd(x,y)=\sup_{i\in I}\sfd_i(x,y)=\lim_{i\in I}\sfd_i(x,y)\quad
    \text{for every }x,y\in X,
  \end{gather}
  \end{subequations}
  then $(X,\tau,\sfd)$ is an extended
  metric-topological space.
\end{lemma}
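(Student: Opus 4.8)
The plan is to treat the two implications separately. For the \emph{direct statement}, all four assertions should be read off more or less directly from Definition~\ref{def:luft1}. First I would dispatch the elementary properties: each $\sfd_\lambda$ is a bounded semidistance, being the supremum of the finitely many symmetric maps $(x,y)\mapsto|f(x)-f(y)|$, $f\in\lambda$, each of which satisfies the triangle inequality and is bounded by $2\|f\|_\infty$; and $\sfd_\lambda$ is $\tau\times\tau$-continuous as a finite maximum of $\tau\times\tau$-continuous functions, since $\lambda\subset\rmC_b(X,\tau)$. Monotonicity $\lambda\subseteq\lambda'\Rightarrow\sfd_\lambda\le\sfd_{\lambda'}$ and directedness of $\Lambda$ under inclusion are immediate. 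For \eqref{eq:329} I would use that $\Lipbu1(X,\tau,\sfd)=\bigcup_{\lambda\in\Lambda}\lambda$, so \eqref{eq:220} becomes exactly $\sfd(x,y)=\sup_{\lambda\in\Lambda}\sfd_\lambda(x,y)$, and the supremum of the monotone net $(\sfd_\lambda)_\lambda$ coincides with its limit. For \eqref{eq:328} I would invoke (X1): $\tau$ is the initial topology of $\Lipb(X,\tau,\sfd)$, so $x_j\stackrel\tau\to x$ iff $f(x_j)\to f(x)$ for every $f\in\Lipb(X,\tau,\sfd)$ (cf.\ \eqref{eq:252}). If $x_j\stackrel\tau\to x$, the finitely many functions in any $\lambda$ converge, hence $\sfd_\lambda(x_j,x)\to0$; conversely, if $\sfd_\lambda(x_j,x)\to0$ for all $\lambda$, then for $f\in\Lipb(X,\tau,\sfd)$ the function $g:=f/\max(\Lip(f,\sfd),1)$ lies in $\Lipbu1(X,\tau,\sfd)$, so $\{g\}\in\Lambda$ and $|f(x_j)-f(x)|\le\max(\Lip(f,\sfd),1)\,\sfd_{\{g\}}(x_j,x)\to0$, whence $x_j\stackrel\tau\to x$.

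For the \emph{converse}, assume \eqref{eq:monotone}. I would first check that $\sfd=\sup_i\sfd_i$ is an extended distance: symmetry and the triangle inequality descend from the $\sfd_i$ using the pointwise bound $\sfd_i\le\sfd$ from \eqref{eq:260}; and if $\sfd(x,y)=0$, then $\sfd_i(y,x)=0$ for every $i$, so the constant net $x_j\equiv y$ satisfies the left-hand condition in \eqref{eq:259}, forcing $x_j\stackrel\tau\to x$, hence $x=y$ by Hausdorffness of $\tau$. The pivotal observation — the one I expect to carry the whole argument — is that for every $i\in I$ and every $x_0\in X$ the function $\sfd_i(\cdot,x_0)$ belongs to $\Lipbu1(X,\tau,\sfd)$: it is bounded and $\tau$-continuous by \eqref{eq:330}, and the inequality $|\sfd_i(x,x_0)-\sfd_i(y,x_0)|\le\sfd_i(x,y)\le\sfd(x,y)$ makes it $1$-Lipschitz with respect to $\sfd$.

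Granting this, (X2) follows at once: taking $f=\sfd_i(\cdot,x)$ gives $|f(x)-f(y)|=\sfd_i(x,y)$, so $\sup_{f\in\Lipbu1(X,\tau,\sfd)}|f(x)-f(y)|\ge\sup_i\sfd_i(x,y)=\sfd(x,y)$, while the opposite inequality is the defining property of $\Lipbu1$. For (X1) I would let $\tau'$ be the initial topology generated by $\Lipb(X,\tau,\sfd)$; since those functions are $\tau$-continuous, $\tau'\subseteq\tau$, and it remains to prove $\tau\subseteq\tau'$. Because a topology is determined by its convergent nets and $\tau'\subseteq\tau$, it suffices to show that every $\tau'$-convergent net is $\tau$-convergent to the same limit: if $f(x_j)\to f(x)$ for all $f\in\Lipb(X,\tau,\sfd)$, then applying this to $f=\sfd_i(\cdot,x)$ yields $\sfd_i(x_j,x)\to\sfd_i(x,x)=0$ for each $i\in I$, hence $x_j\stackrel\tau\to x$ by \eqref{eq:259}. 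Thus $\tau=\tau'$, i.e.\ (X1) holds, and $(X,\tau,\sfd)$ is an e.m.t.\ space.

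The main obstacle is precisely this last identification of $\tau$ with the initial topology of $\Lipb(X,\tau,\sfd)$ in the converse: one must both manufacture enough $\sfd$-Lipschitz functions (the slices $\sfd_i(\cdot,x_0)$) and then upgrade the coincidence of net-convergences into an equality of topologies, which is where hypothesis \eqref{eq:259} and the ``a topology is its convergent nets'' principle are essential. Everything else reduces to bookkeeping with suprema of semidistances.
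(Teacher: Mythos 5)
Your proof is correct and follows essentially the same route as the paper: the direct part is read off Definition \ref{def:luft1}, and the converse hinges on the same key observation that the slices $\sfd_i(\cdot,x_0)$ belong to $\Lipbu1(X,\tau,\sfd)$ and, via \eqref{eq:259}, generate the topology $\tau$, which gives (X1) and (X2). Your additional verification that $\sfd=\sup_i\sfd_i$ is an extended distance is a harmless elaboration of what the paper leaves implicit.
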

\begin{proof}
  \eqref{eq:328} and \eqref{eq:329} are immediate consequence of the
  Definition
  \eqref{def:luft1}. In order to prove the second statement, we simply
  observe that the collection of functions
  $\cF:=\{\sfd_i(y,\cdot):i\in I,\ y\in X\}$
  is included in $\Lipbu1(X,\tau,\sfd)$ and generates the topology
  $\tau$
  thanks to \eqref{eq:259}. A fortiori, $\Lipbu1(X,\tau,\sfd) $
  satisfies conditions (X1) and (X2) of 
  Definition \ref{def:luft1}.
\end{proof}
We will often use the following simple and useful property
involving a directed family of semidistances $(\sfd_i)_{i\in I}$ 
satisfying (\ref{eq:monotone}a,b,c,d):
whenever $\fri:J\to I$ is a subnet and $x_j,y_j$, $j\in J$, are $\tau$-converging 
to $x,y$ respectively, we have
\begin{equation}
  \label{eq:321}
  \liminf_{j\in J}\sfd_{\fri(j)}(x_j,y_j)\ge \sfd(x,y).
\end{equation}
It follows easily by the continuity of $\sfd_i$ and \eqref{eq:326},
since for every $i\in I$
\begin{displaymath}
  \liminf_{j\in J}\sfd_{\fri(j)}(x_j,y_j)\ge
  \liminf_{j\in J}\sfd_{i}(x_j,y_j)\ge
  \sfd_i(x,y);
\end{displaymath}
\eqref{eq:321} then follows by taking the supremum w.r.t.~$i\in I$.
\begin{remark}
  \label{rem:compact-case}
  \upshape
  Notice that if $K$ is a Souslin subset of $X$
  (in particular $K=X$ if $(X,\tau)$ is Souslin) then $K\times K$ is
  Souslin as well, so that by Lemma \ref{le:Souslin-useful}(b)
  there exists a countable collection $\cF=(f_n)_{n\in \N}\subset
  \Lip_{b,1}(X,\tau,\sfd)$ such that
  \begin{equation}
    \label{eq:553}
    \sfd(x,y)=\sup_{n\in \N}|f_n(x)-f_n(y)|\quad
    \text{for every }x,y\in K.
  \end{equation}
  If $\tau'$ is the initial topology generated by $\cF$,
  $(K,\tau',\sfd)$ is an e.m.t.~space
  whose topology $\tau'$ is coarser than $\tau$.
  $\tau'$ is also metrizable and separable:
  it is sufficient to choose
  an increasing 1-Lipschitz homeomorphism
  $\vartheta:\R\to ]0,1/12[$
  and setting $f_n':=\vartheta\circ f_n$;
  the family $(f_n')_{n\in \N}$
  induces the same topology $\tau'$,
  it separates the points of $K$, and
  the distance
    \begin{equation}
    \label{eq:251}
    \sfd'(x,y):=\sum_{n=1}^\infty 2^{-n}|f_n'(x)-f_n'(y)|
  \end{equation}
  is a bounded $\tau$-continuous semidistance dominated by $\sfd$ 
  whose restriction to $K\times K$ is a distance inducing the topology $\tau'$.
  If $K$ is also compact, than 
  $\tau$ coincides with the topology induced by $\sfd'$.
\end{remark}
Let us recap a useful property discussed in 
the previous Remark.
\index{Auxiliary topology}
\begin{definition}[Auxiliary topologies]
  \label{def:auxiliary}
  Let $(X,\tau,\sfd)$ be an e.m.t.~space. We say that
  $\tau'$ is an \emph{auxiliary} topology if
  there exist a countable collection $\cF=(f_n)_{n\in \N}\subset
  \Lip_b(X,\tau,\sfd)$ such that
  $\tau'$ is generated by $\cF$
  and
  \begin{equation}
    \label{eq:554}
    \sfd(x,y)=\sup_{n\in \N}|f_n(x)-f_n(y)|.
  \end{equation}
  Equivalently 
  \begin{enumerate}[({A}1)]
  \item $\tau'$ is coarser than $\tau$,
  \item $\tau'$ is separable and metrizable
    by a bounded $\tau$-continuous distance $\sfd'\le \sfd$,
  \item there exists a sequence $f_n\in
    \Lip_b(X,\tau',\sfd)$ such that
    \eqref{eq:554} holds.
  \end{enumerate}
  In particular, $(X,\tau',\sfd)$ is an e.m.t.~space.
\end{definition}
If $\tau'$ is generated by a countable collection
$\cF\subset \Lip_b(X,\tau,\sfd)$ satisfying \eqref{eq:554}
then properties (A1,2,3) obviously hold by the discussion of Remark
\ref{rem:compact-case}. Conversely,
if $\tau'$ satisfies (A1,2,3) then
one can consider the countable collection $\cF$
resulting by the union of $(f_n)_{n\in \N}$ given in (A3) and
the set $\{\sfd'(x_n,\cdot)\}_{n\in \N}$ where $(x_n)_{n\in \N}$ is a
$\tau'$ dense subset of $X$ and $\sfd'$ is given by (A2).
It is clear that $\tau'$ is the initial topology of $\cF$
and \eqref{eq:554} holds.

By setting 
\begin{displaymath}
  \sfd_n(x,y):=\sup_{1\le k\le n}|f_k(x)-f_k(y)|
\end{displaymath}
one can easily see that (A3) is in fact equivalent to 
\begin{enumerate}[(A3')]
\item There exists an increasing sequence of $\tau'$ continuous and bounded
  (semi)distances $(\sfd_n)_{n\in \N}$ such that
  \begin{equation}
    \label{eq:555}
    \tag{\ref{eq:554}'}
    \sfd(x,y)=\sup_{n\in\N}\sfd_n(x,y)=
    \lim_{n\to\infty}\sfd_n(x,y)\quad\text{for every }x,y\in X.
  \end{equation}
  It is also possible to assume $\sfd_n\ge \sfd'$ for every $n\in \N$.
\end{enumerate}
As a consequence of Remark \ref{rem:compact-case} we have:
\begin{corollary}[Auxiliary topologies for Souslin e.m.t.~spaces]
  \label{cor:auxiliary-Souslin}
  If $(X,\tau,\sfd)$ is a Souslin e.m.t.~space (i.e.~$(X,\tau)$ is
  Souslin)
  then it admits an auxiliary topology $\tau'$ according
  to Definition \ref{def:auxiliary}.
\end{corollary}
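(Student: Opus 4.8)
The plan is to deduce the corollary directly from Remark~\ref{rem:compact-case} together with the characterization of auxiliary topologies recorded after Definition~\ref{def:auxiliary}; in fact the statement was essentially already proved there, and the proof amounts to isolating the relevant observation.

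First I would note that since $(X,\tau)$ is Souslin, the product $(X\times X,\tau\times\tau)$ is again Souslin, being a finite product of Souslin spaces. By \eqref{eq:220} the extended distance satisfies $\sfd(x,y)=\sup\{|f(x)-f(y)|:f\in\Lipbu1(X,\tau,\sfd)\}$, so $(x,y)\mapsto\sfd(x,y)$ is the supremum of a family of $\tau\times\tau$-continuous functions on the Souslin space $X\times X$. Invoking Lemma~\ref{le:Souslin-useful}(b) — as already done in Remark~\ref{rem:compact-case} with $K=X$ — I obtain a countable subcollection $\cF=(f_n)_{n\in\N}\subset\Lip_{b,1}(X,\tau,\sfd)$ with $\sfd(x,y)=\sup_{n\in\N}|f_n(x)-f_n(y)|$ for every $x,y\in X$, which is precisely \eqref{eq:554}.

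It then remains to take $\tau'$ to be the initial topology generated by $\cF$. By construction $\tau'$ is coarser than $\tau$, $\cF\subset\Lip_b(X,\tau,\sfd)$, and \eqref{eq:554} holds; this is exactly the defining requirement in Definition~\ref{def:auxiliary}. The equivalent conditions (A1)--(A3) — in particular the metrizability and separability of $\tau'$ via a bounded $\tau$-continuous distance $\sfd'\le\sfd$ — follow from the construction already carried out in Remark~\ref{rem:compact-case}: one replaces each $f_n$ by $\vartheta\circ f_n$ for a fixed increasing $1$-Lipschitz homeomorphism $\vartheta:\R\to{}]0,1/12[$ and forms the summable semidistance \eqref{eq:251}; this changes neither the topology generated nor the validity of \eqref{eq:554}. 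Consequently $(X,\tau',\sfd)$ is an e.m.t.~space and $\tau'$ is an auxiliary topology, which is the assertion.

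The only genuinely nontrivial ingredient is the extraction of the countable family $\cF$, i.e.~Lemma~\ref{le:Souslin-useful}(b); the point behind it is that Souslin spaces are hereditarily Lindel\"of, so the open sets witnessing $\sfd(x,y)>\alpha$ for the various $f\in\Lipbu1(X,\tau,\sfd)$ admit a countable subfamily still realizing the supremum \eqref{eq:220}. Everything after that is bookkeeping already performed in Remark~\ref{rem:compact-case}, so I expect no further obstacle.
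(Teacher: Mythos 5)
Your proposal is correct and follows the paper's own route: the paper derives the corollary exactly as you do, by applying Remark~\ref{rem:compact-case} with $K=X$ (i.e.\ Lemma~\ref{le:Souslin-useful}(b) on the Souslin space $X\times X$) to extract a countable family $\cF\subset\Lip_{b,1}(X,\tau,\sfd)$ satisfying \eqref{eq:554}, and then invoking the observation after Definition~\ref{def:auxiliary} that the initial topology of such a family is an auxiliary topology. No gaps; the only soft spot is your one-line heuristic for Lemma~\ref{le:Souslin-useful}(b), but since you cite that lemma as given, this does not affect the argument.
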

Notice that if $\tau'$ is an auxiliary topology of a Souslin e.m.t.~space
$(X,\tau,\sfd)$,
$(X,\tau')$ is Souslin as well. If $\mm$ is a Radon measure in
$(X,\tau)$
then it is Radon also w.r.t.~$\tau'$.
An important consequence of the existence of an 
auxiliary topology is the following fact:
\begin{lemma}
  \label{le:useful-auxiliary}
  If $(X,\tau,\sfd)$ admits an auxiliary topology $\tau'$
  then every $\tau$-compact set $K\subset X$
  is a Polish space (with the relative topology).
\end{lemma}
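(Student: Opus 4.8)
The plan is to exploit the rigidity of compact sets under coarsening of the topology: on a $\tau$-compact set the auxiliary metrizable topology $\tau'$ must agree with $\tau$, and from the metrizable side the Polish property is immediate.

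First I would isolate the only features of an auxiliary topology that are needed. By (A1) we have $\tau'\subseteq\tau$, and by (A2) the topology $\tau'$ is induced by an \emph{honest} bounded distance $\sfd'$ with $\sfd'\le\sfd$ and $\sfd'$ being $\tau$-continuous; in particular $(X,\tau')$ is metrizable, hence Hausdorff. (Property (A3), which concerns the recovery of $\sfd$, plays no role here.)

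Next, fix a $\tau$-compact set $K\subset X$. Since $\tau'$ is coarser than $\tau$, the set $K$ is $\tau'$-compact as well, and the identity map $\iota\colon (K,\tau|_K)\to (K,\tau'|_K)$ is a continuous bijection. A continuous bijection from a compact space onto a Hausdorff space is a homeomorphism: it maps $\tau|_K$-closed (hence $\tau|_K$-compact) sets to $\tau'|_K$-compact, hence $\tau'|_K$-closed, sets, so $\iota^{-1}$ is continuous. Therefore $\tau|_K=\tau'|_K$, and the relative topology of $K$ is metrized by the restriction of $\sfd'$ to $K\times K$.

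Finally, $(K,\sfd'|_{K\times K})$ is a compact metric space, hence automatically complete and separable, so it is a Polish space; consequently $(K,\tau|_K)$ is Polish. I do not expect a genuine obstacle: the one point worth stressing is that completeness of $\sfd'$ on $K$ comes for free from $\tau$-compactness, and does \emph{not} require $(X,\sfd)$ to be complete; beyond that, the argument is just the standard compact-to-Hausdorff homeomorphism lemma combined with the two defining properties of an auxiliary topology.
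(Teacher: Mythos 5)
Your proof is correct and follows essentially the same route as the paper, which simply notes that $\tau$ and $\tau'$ induce the same topology on $K$ (precisely via the compact-to-Hausdorff argument you spell out) and that $\tau'$ is metrizable and separable, so the compact metrizable $K$ is Polish. The extra details you give — the homeomorphism lemma and the free completeness of $\sfd'|_K$ from compactness — are just an expanded version of the paper's one-line argument.
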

\begin{proof}
  It is sufficient to note that $\tau$ and $\tau'$ induces the same
  topology on $K$ and that $\tau'$ is metrizable and separable.
\end{proof}
\subsection{Examples}
\label{subsec:Examples}
\begin{example}[Complete and separable metric
  spaces]
  \label{ex:polish}
  The most important and common example is provided by 
  a 
  complete and separable metric space $(X,\sfd)$; in this case, 
  the canonical choice of $\tau$ is the (Polish) topology induced by
  $\sfd$.
  Any positive and finite Borel measure $\mm$ on $(X,\sfd)$ is
  a Radon measure so that $(X,\sfd,\mm)$ is a Polish metric measure
  space.
  This case cover the Euclidean spaces $\R^d$,
  the complete Riemannian or Finsler manifolds, the separable Banach
  spaces
  and their closed subsets.
\end{example}
In some situation, however, when $(X,\sfd)$
is not separable or $\sfd$ takes the value $+\infty$,
it could be useful to distinguish
between the topological and the metric aspects. This will
particularly important when a measure will be involved, since
the Radon property with respect to a coarser topology is
less restrictive.
\begin{example}[Dual of a Banach space]
  \label{ex:dual}
  A typical example is provided by the dual $X=B'$ of a separable
  Banach space: in this case the distance $\sfd$ is induced by the
  dual norm $\|\cdot\|_{B'}$ of $B'$ (which may not be separable) and
  the topology $\tau$ is the weak$^*$ topology of $B'$, which is Lusin
  \cite[Corollary 1, p.~115]{Schwartz73}.
  All the functions of the form
  $f(x;y,v,r):=\sft(\la x-y,v\ra)$ where $y\in B'$,
  $\sft:\R\to\R$ is a bounded $1$-Lipschitz map and $v\in B$
  with $\|v\|_B\le 1$
  clearly belong to $\Lip_b(X,\tau,\sfd)$ and are sufficient to
  recover the distance $\sfd$ since
  \begin{displaymath}
    \sfd(x,y)=\|x-y\|_{B'}=
    \sup_{\|v\|_B\le 1}\la x-y,v\ra
    =\sup_{\|v\|\le 1} f(x;y,v,\sft),\quad
    \sft(r):=0\lor r\land (2\|x-y\|).
  \end{displaymath}
\end{example}
A slight modification of the previous setting leads
to a somehow universal model:
we will see
in \S\,\ref{subsec:compactification}
that every e.m.t.~space can be
isometrically and continuously embedded
in such a framework and
every metric Sobolev space has an isomorphic representation in this
setting
(see Corollary \ref{cor:compact-r}).
\begin{example}
  \label{ex:model}
  Let $X$ be weakly$^*$ compact subset of
  a dual Banach space $B'$ endowed with the
  weak$^*$ topology $\tau$
  and a Radon measure $\mm$.
  We select a strongly closed
  and symmetric convex set $L\subset B$
  containing $0$ and separating the
  points of $B'$ and we set
  \begin{equation}
    \label{eq:528}
    \theta(z):=\sup_{f\in L}\la z,f\ra,\quad
    \sfd(x,y):=\theta(x-y).
  \end{equation}
  It is immediate to check that $(X,\tau,\sfd)$ is an
  e.m.t.~space.
  Notice that
  $\theta$ 
  is $1$-homogeneous and convex, therefore
  it is an ``extended'' norm 
  (possibly assuming the value $+\infty$),
  so that $\sfd$ is translation invariant.
  The previous Example \ref{ex:dual} correspond
  to the case when $L$ is the unit ball of $B$.
\end{example}
\begin{example}[Abstract Wiener spaces]
  \label{ex:Wiener}
  Let $(X,\|\cdot\|_X)$ be a separable Banach space endowed with a Radon measure $\mm$
  and let $(W,|\cdot|_W)$ be a reflexive Banach space
  (in particular an Hilbert space)
  densely and continuously
  included in $X$, so that there exists a constant $C>0$ such that
  \begin{equation}
    \label{eq:513}
    \|h\|_X\le C|h|_W\quad
    \text{for every $h\in W$.}
  \end{equation}
  We call $\tau$ the Polish topology of $X$ induced by the Banach norm
  and for every $x,y,z\in X$ we set
  \begin{equation}
    \label{eq:514}
    \phi(z):=
    \begin{cases}
      |z|_W&\text{if }z\in W,\\
      +\infty&\text{otherwise},
    \end{cases}
    \qquad
    \sfd(x,y):=\phi(x-y)=
    \begin{cases}
      |x-y|_W&\text{if }x-y\in W,\\
      +\infty&\text{otherwise},
    \end{cases}
  \end{equation}
  The functional $z\mapsto \phi(z)$ is $1$-homogenous,
  convex and lower semicontinuous
  in $X$ (thanks to the reflexivity of $W$)
  so that setting
  \begin{displaymath}
    L:=\Big\{f\in X':\la f,z\ra\le |z|_B\ \text{for every }z\in B\Big\},
  \end{displaymath}
  Fenchel duality yields
  \begin{equation}
    \label{eq:515}
    \phi(z)=\sup_{f\in L}\la f,z\ra,\quad
    \sfd(x,y)=\sup_{f\in L}\la f,x-y\ra;
  \end{equation}
  the same truncation trick of Example \ref{ex:dual} shows that
  \eqref{eq:220} is satisfied. On the other hand, the
  distance functions $x\mapsto \|x-z\|_X$, $z\in X$,
  induced by the norm in $X$ belong to $\Lip_b(X,\tau,\sfd)$ so that
  the first condition of Definition \ref{def:luft1} is satisfied as
  well.
  This setting covers the important case of an abstract Wiener space, when
  $\mm$ is a Gaussian measure in $X$ and
  $W$ is the Cameron-Martin space, see e.g.~\cite{Bogachev98}.
\end{example}
\begin{example}
  \label{ex:length-sub}
  Let $X:=\R^d$ and let $h:X\times \R^d\to
  [0,+\infty]$ be a lower semicontinuous function
  such that for every $x\in X$ 
  \begin{displaymath}
    h(x,\cdot)\quad\text{is $1$-homogeneous and convex,}\quad
    h(x,v)>h(x,0)=0\quad \text{for every }x\in X,\ v\in \R^d\setminus\{0\}.
  \end{displaymath}
  We can define the extended ``Finsler'' distance
  \begin{equation}
    \label{eq:517}
    \begin{aligned}
      \sfd(x_0,x_1):=\inf\Big \{
      &\int_0^1 h(\rmx (t),\rmx'(t))\,\d t:
      \rmx\in \Lip([0,1],\R^d),\quad \rmx(i)=x_i,\ i=0,1\Big\}
    \end{aligned}
  \end{equation}
  with the convention that $\sfd(x_0,x_1)=+\infty$ if there is no
  Lipschitz curve connecting $x_0$ to $x_1$ with a finite cost.
  When there exist constants $C_0,C_1>0$ such that
  \begin{equation}
    \label{eq:519}
   C_0|v|\le h(x,v)\le C_1|v|\quad\text{for every }x,v\in \R^d, 
  \end{equation}
  $\sfd$ is the ``Finsler'' distance induced by the family of norms
  $\big(h(x,\cdot)\big)_{x\in \R^d}$, inducing the usual topology of
  $\R^d$.

  When
  \begin{equation}
    \label{eq:520}
    h(x,v)=
    \begin{cases}
      |v|&\text{if }x\in X_0,\\
      +\infty&\text{if }x\in \R^d\setminus X_0,\ v\neq 0,
    \end{cases}\qquad
    X_0\quad\text{is a closed subset of }\R^d,
  \end{equation}
  then $\sfd$ is the ``geodesic extended distance''
  induced by the Euclidean tensor on $X_0$.
  When
  $h$ is expressed in terms of a
  smooth family of bounded vector fields $(X_j)_{j=1}^J$, $X_j:\R^d\to\R^d$,
  by the formula
  \begin{equation}
    \label{eq:518}
    h^2(x,v):=\inf\Big\{\sum_{j=1}^Ju_j^2:
    \sum_{j=1}^J u_jX_j(x)=v\Big\}
  \end{equation}
  we obtain the Carnot-Caratheodory distance
  induced by the vector fields $X_j$.
  In all these cases, we can approximate $h$ by its
  Yosida regularization:
  \begin{equation}
    \label{eq:521}
    h_\eps^2(x,v):=\inf_{w\in \R^d} h^2(x,w)+\frac1{2\eps}|w-v|^2
    \quad x,v\in \R^d,\ \eps>0,
  \end{equation}
  which satisfy
  \begin{equation}
    0<h^2_\eps(x,v)\le \frac 1{2\eps}|v|^2,\quad
  \lim_{\eps\downarrow0}h_\eps(x,v)=
  h(x,v)\quad\text{for every $v\in
  \R^d\setminus\{0\}$}.\label{eq:522}
\end{equation}
If we define the Finsler distance $\sfd_\eps$ as in \eqref{eq:517} in
terms of $h_\eps$ we can easily see that
\begin{equation}
  \label{eq:523}
  0<\sfd^2_\eps(x,y)\le
  \frac 1{2\eps}|x-y|^2\quad
  \lim_{\eps\down0}\sfd_\eps(x,y)=\sfd(x,y)\quad
  \text{for every }x,y\in \R^d,\ x\neq y.
\end{equation}
If $\tau$ is the usual Euclidean topology, we
obtain that $(\R^d,\tau,\sfd)$ is an extended metric-topological space.
\end{example}
\subsection{The Kantorovich-Rubinstein distance}
\label{subsec:KR}
\index{Kantorovich-Rubinstein distance and duality}
Let $(X,\tau,\sfd)$ be an extended metric-topological space.
We want to lift the same structure to the space of Radon probability
measures $\cP(X)$. We introduce the main definitions for 
of couple of measures
$\mu_0,\mu_1\in \cMp(X)$ with the same mass $\mu_0(X)=\mu_1(X)$.

We denote by $\Gamma(\mu_0,\mu_1)$ the collection of
plans $\mmu\in \cMp(X\times X)$ whose marginals are $\mu_0$ and
$\mu_1$ respectively:
\begin{equation}
  \label{eq:488}
  \Gamma(\mu_0,\mu_1):=\Big\{
  \mmu\in \cMp(X\times X):\pi^i_\sharp\mmu=\mu_i\Big\},\quad
  \pi^i(x_0,x_1):=x_i.
\end{equation}
It is not difficult to check that $\Gamma(\mu_0,\mu_1)$
is a nonempty (it always contains $\mu_0^{-1}(X)\, \mu_0\otimes
\mu_1$) and compact subset of $\cMp(X\times X)$.

Let $\delta:X\times X\to [0,+\infty]$ be a lower semicontinuous
extended semi distance. 
The Kantorovich formulation of
the optimal transport problem with cost $\delta$
induces the celebrated Kantorovich-Rubinstein (extended,
semi-)distance $\sfK_\delta$
in $\cP(X)$
\cite[Chap.~7]{Villani03}
\begin{equation}
  \label{eq:489}
  \sfK_\delta(\mu_0,\mu_1):=\inf
  \Big\{\int_{X\times X} \delta(x_0,x_1)\,\d\mmu(x_0,x_1):
  \mmu\in \Gamma(\mu_0,\mu_1)\Big\}.
\end{equation}
\begin{proposition}
  Let $\mu_0,\mu_1\in \cMp(X)$ with the same mass.
  \begin{enumerate}
  \item If $\sfK_\delta(\mu_0,\mu_1)$ is finite then the infimum in
    \eqref{eq:489}
    is attained. In particular, this holds if $\delta$ is bounded.
  \item {\upshape [Kantorovich-Rubinstein duality]}
    If $\delta$ is a bounded continuous (semi)-distance in
    $(X,\tau)$ then $\sfK_\delta$ is a bounded continuous
    (semi)-distance
    in $\cP(X)$ and
    \begin{align}
      \notag
      \sfK_\delta(\mu_0,\mu_1)
      &=
        \sup\Big\{\int \phi_0\,\d\mu_0-
        \int \phi_1\,\d\mu_1: \phi_i\in \rmC_b(X,\tau),
        \\&\qquad\qquad
      \phi_0(x_0)-\phi_1(x_1)\le \delta(x_0,x_1)\quad
      \text{for every }x_o,x_1\in X\Big\}
      \label{eq:490}
      \\&= \sup\Big\{\int \phi\,\d(\mu_0-\mu_1):
      \phi\in \Lip_{b,1}(X,\tau,\delta)\Big\}
      \label{eq:492}
    \end{align}
  \item
    If $(\sfd_i)_{i\in I}$ is a directed collection of bounded
    continuous semidistances satisfying $\lim_{i\in I}\sfd_i=\sfd$ then
    \begin{equation}
      \label{eq:491}
      \sfK_\sfd(\mu_0,\mu_1)=\lim_{i\in I}
      \sfK_{\sfd_i}(\mu_0,\mu_1).
    \end{equation}
  \item If $(X,\tau,\sfd)$ is an extended metric-topological space
    \begin{align}
      \notag
      \sfK_\sfd(\mu_0,\mu_1)
      &=
        \sup\Big\{\int \phi_0\,\d\mu_0-
        \int \phi_1\,\d\mu_1: \phi_i\in \rmC_b(X,\tau),
        \\&\qquad\qquad
      \phi_0(x_0)-\phi_1(x_1)\le \sfd(x_0,x_1)\quad
      \text{for every }x_0,x_1\in X\Big\}
      \label{eq:490d}
      \\&= \sup\Big\{\int \phi\,\d(\mu_0-\mu_1):
      \phi\in \Lip_{b,1}(X,\tau,\sfd)\Big\}
      \label{eq:493}
    \end{align}
  \end{enumerate}
\end{proposition}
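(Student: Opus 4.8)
The plan is to prove the four items in order, the recurring tools being the narrow compactness of $\Gamma(\mu_0,\mu_1)$ (recalled above), lower semicontinuity of the transport cost, the monotone convergence theorem \eqref{eq:Beppo_Levi_general}, and --- for the duality itself --- a classical Kantorovich--Rubinstein theorem. For \emph{(a)}, I would first show that $\mmu\mapsto\int_{X\times X}\delta\,\d\mmu$ is lower semicontinuous on $\cMp(X\times X)$ for the weak topology. Indeed $\delta$ is $\tau\times\tau$-lower semicontinuous, so each truncation $\delta\wedge N$ is l.s.c.\ and, by \eqref{eq:324lsc} on the completely regular product $X\times X$, it is the supremum of the family $\{\varphi\in\rmC_b(X\times X):0\le\varphi\le\delta\wedge N\}$, which is directed by pointwise maximum and equibounded by $N$; hence, by \eqref{eq:Beppo_Levi_general} and the definition of the weak topology, $\mmu\mapsto\int(\delta\wedge N)\,\d\mmu$ is a supremum of weakly continuous maps, thus weakly l.s.c., and letting $N\uparrow\infty$ the same holds for $\int\delta\,\d\mmu$. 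A l.s.c.\ functional attains its infimum on the compact set $\Gamma(\mu_0,\mu_1)$, so if $\sfK_\delta(\mu_0,\mu_1)$ is finite the minimizer is the desired optimal plan; and $\delta$ bounded forces $\sfK_\delta(\mu_0,\mu_1)\le\|\delta\|_\infty\,\mu_0(X)<\infty$.

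For \emph{(b)}, the inequality ``$\ge$'' in \eqref{eq:490} and \eqref{eq:492} is immediate: integrating $\phi_0(x_0)-\phi_1(x_1)\le\delta(x_0,x_1)$ against any $\mmu\in\Gamma(\mu_0,\mu_1)$ gives $\int\phi_0\,\d\mu_0-\int\phi_1\,\d\mu_1\le\int\delta\,\d\mmu$, and one takes the infimum over $\mmu$. The equality of \eqref{eq:490} and \eqref{eq:492} is a $\delta$-transform collapse: from an admissible pair $(\phi_0,\phi_1)$ I would pass to $\psi(x):=\inf_{y\in X}\big(\phi_1(y)+\delta(x,y)\big)$ and check that $\phi_0\le\psi\le\phi_1$ (the second bound uses $\delta(x,x)=0$), that $\psi$ is bounded, and that $|\psi(x)-\psi(x')|\le\delta(x,x')$; this last estimate makes $\psi$ $\tau$-continuous, since $\delta$ is $\tau$-continuous and vanishes on the diagonal, so $\psi\in\Lip_{b,1}(X,\tau,\delta)$ and it improves on $(\phi_0,\phi_1)$. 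The remaining and genuinely analytic ingredient is the reverse inequality $\sfK_\delta(\mu_0,\mu_1)\le\sup\{\cdots\}$, i.e.\ Kantorovich--Rubinstein duality for the bounded continuous cost $\delta$; I would derive it from the classical statement \cite{Villani03,AGS08} after reducing, via tightness of the Radon marginals, to the metric quotient of $(X,\delta)$ and its completion --- a Polish space carrying the pushed-forward measures --- where the theorem is available. Granted duality, the semidistance axioms for $\sfK_\delta$ on $\cP(X)$ follow from \eqref{eq:492} in the form $\sup_{\phi\in\Lip_{b,1}(X,\tau,\delta)}\int\phi\,\d(\mu_0-\mu_1)$ (symmetry, triangle inequality), its boundedness from part~(a), and its continuity for the weak topology by combining that lower semicontinuity with the upper semicontinuity coming from part~(a) and the narrow continuity of $\mmu\mapsto\int\delta\,\d\mmu$.

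For \emph{(c)}, monotonicity of the cost gives $\sfK_{\sfd_i}\le\sfK_{\sfd_j}$ for $i\preceq j$ and $\sfK_{\sfd_i}\le\sfK_\sfd$, so $L:=\lim_i\sfK_{\sfd_i}(\mu_0,\mu_1)=\sup_i\sfK_{\sfd_i}(\mu_0,\mu_1)\le\sfK_\sfd(\mu_0,\mu_1)$. For the converse I may assume $L<\infty$; then part~(a) (each $\sfd_i$ is bounded) gives optimal plans $\mmu_i\in\Gamma(\mu_0,\mu_1)$ with $\int\sfd_i\,\d\mmu_i\le L$, and weak compactness of $\Gamma(\mu_0,\mu_1)$ produces a subnet $\fri(k)$ with $\mmu_{\fri(k)}\to\mmu_\infty\in\Gamma(\mu_0,\mu_1)$. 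For each fixed $j$, eventually $\sfd_j\le\sfd_{\fri(k)}$, hence $\int\sfd_j\,\d\mmu_{\fri(k)}\le L$; letting $k$ run ($\sfd_j$ bounded continuous) yields $\int\sfd_j\,\d\mmu_\infty\le L$, and then \eqref{eq:Beppo_Levi_general} applied to the equibounded truncations $\sfd_j\wedge N$, followed by $N\uparrow\infty$, gives $\int\sfd\,\d\mmu_\infty=\sup_j\int\sfd_j\,\d\mmu_\infty\le L$. Thus $\sfK_\sfd(\mu_0,\mu_1)\le\int\sfd\,\d\mmu_\infty\le L$, which is \eqref{eq:491}.

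For \emph{(d)}, Lemma~\ref{rem:monotonicity} shows that the family $(\sfd_\lambda)_{\lambda\in\Lambda}$ of \eqref{eq:222} is a directed family of bounded $\tau$-continuous semidistances with $\sfd=\lim_\lambda\sfd_\lambda$, so part~(c) gives $\sfK_\sfd(\mu_0,\mu_1)=\lim_\lambda\sfK_{\sfd_\lambda}(\mu_0,\mu_1)$, and part~(b) applied to each $\sfd_\lambda$ expresses the right-hand side through \eqref{eq:490}--\eqref{eq:492}. Since $\sfd_\lambda\le\sfd$ entails $\Lip_{b,1}(X,\tau,\sfd_\lambda)\subset\Lip_{b,1}(X,\tau,\sfd)$, we get $\sfK_\sfd(\mu_0,\mu_1)\le\sup\{\int\phi\,\d(\mu_0-\mu_1):\phi\in\Lip_{b,1}(X,\tau,\sfd)\}$; this is $\le$ the right-hand side of \eqref{eq:490d} (take $\phi_0=\phi_1=\phi$), which is $\le\sfK_\sfd(\mu_0,\mu_1)$ by the elementary direction of part~(b), so all three coincide. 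The one real obstacle throughout is the black box imported in part~(b): a Kantorovich--Rubinstein duality for a bounded continuous cost on a completely regular space with Radon marginals. Its Fenchel--Rockafellar/minimax proof stumbles on the failure of weak compactness of norm-bounded sets of measures without tightness, which is precisely why I would route it through the reduction to the Polish setting; the rest is soft, resting only on the compactness of $\Gamma(\mu_0,\mu_1)$ and on monotone convergence.
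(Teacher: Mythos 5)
Your proposal is correct and takes essentially the same route as the paper: (a) lower semicontinuity of the cost plus compactness of $\Gamma(\mu_0,\mu_1)$, (b) deferral of the core duality to the classical Kantorovich--Rubinstein theorem (the paper simply cites Villani), (c) optimal plans for the $\sfd_i$, weak compactness, and the joint lower semicontinuity $\liminf_{j}\int\sfd_{i(j)}\,\d\mmu_j\ge\int\sfd\,\d\mmu$ along subnets, and (d) combining (c) with (b) applied to the approximating semidistances, the converse inequality being elementary. The only difference is that you spell out details the paper leaves to its references, notably the monotone-convergence/truncation steps and the reduction of (b) to a Polish quotient of $(X,\delta)$ (where one should also note that near-optimal plans on the quotient lift back to $X$ via disintegration of the Radon marginals, so that the primal values agree).
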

\begin{proof}
  \textbf{(a)}
  follows by the lower semicontinuity of $\delta$ and the compactness
  of $\Gamma$.
  \medskip
  
  \noindent
  \textbf{(b)}
  we refer to \cite[Chap.~7]{Villani03}.
  \medskip
  
  \noindent
  \textbf{(c)} follows by the property
  \begin{equation}
    \label{eq:524}
    \liminf_{j\in J}\int \sfd_{i(j)}\,\d\ppi_j\ge
    \int \sfd\,\d\ppi
  \end{equation}
  whenever $(\ppi_j)_{j\in J}$ is a net in 
  $\Gamma(\mu_0,\mu_1)$ converging weakly to $\ppi$ and
  $j\mapsto i(j)$ is a subnet in $I$. See \cite[Theorem 5.1]{AES16}
    \medskip
  
  \noindent
    \textbf{(d)}
    is an immediate consequence of \eqref{eq:491} and Claim (b),
    which yields that $\sfK_\sfd$ is less or equal than the
    two expression in the right-hand side of \eqref{eq:490d} and
    \eqref{eq:493}. The converse inequality is obvious.
  \end{proof}
  \begin{remark}
    \upshape
    Thanks to the previous proposition, it would not be difficult to
    check that $(\cP(X),\tau_\cP,\sfK_\sfd)$ is an
    extended metric-topological space as well.
  \end{remark}
  \subsection{The asymptotic Lipschitz constant}
  \label{subsec:aslip}
  \index{Asymptotic Lipschitz constant}
  Whenever $\delta$ is an extended, $\tau$-lower semicontinuous
semidistance, 
and $f:X\to \R$, we set
\begin{equation}
  \label{eq:55}
  \lip_\delta f(x):= \lim_{U\in \UU_x}\Lip(f,U,\delta)=
  \inf_{U\in \UU_x}\Lip(f,U,\delta)\quad
  x\in X;
\end{equation}
recall that $\UU_x$ is the directed set of all the $\tau$-neighborhood
of $x$.
Notice that $\Lip(f,\{x\})=0$ and therefore $\lip f(x)=0$
if $x$ is an isolated point of $X$.
We will often omit the index $\delta$ when $\delta=\sfd$.
When $\delta$ is a distance, we can also define
$\lip_\delta$ as
\begin{equation}
  \label{eq:265}
  \lip_\delta f(x)=\limsup_{y,z\to x\atop y\neq z}\frac{|f(y)-f(z)|}{\delta(y,z)};
\end{equation}
in particular,
\begin{equation}
  \label{eq:266}
  \lip_\delta f(x)\ge |\rmD_\delta f|(x):=\limsup_{y\to x}\frac{|f(y)-f(x)|}{\delta(x,y)}.
\end{equation}
It is not difficult to check that $x\mapsto \lip_\delta f(x)$ is a $\tau$-upper
semicontinuous map and $f$ is locally $\delta$-Lipschitz in $X$ iff $\lip_\delta
f(x)<\infty$ for every $x\in X$.
When $(X,\delta)$ is a length space, 
$\lipdlt f$ coincides with the upper semicontinuous
envelope of the local Lipschitz constant \eqref{eq:266}.

We collect in the next useful lemma
the basic calculus properties of
$\lipdlt f$.
\begin{lemma}
  \label{le:Locality}
  For every $f,g,\nchi\in \rmC_b(X)$ with $\nchi(X)\subset [0,1]$ we have
  \begin{subequations}
    \begin{align}
    \label{eq:57}
    \lipdlt (\alpha f+\beta g)&\le |\alpha|\, \lipdlt f+|\beta|\, \lipdlt
                             g\quad\text{for every }\alpha,\beta\in
                                \R,\\
      \label{eq:306}
    \lipdlt(fg)&\le |f|\lipdlt g+|g|\lipdlt f,\\
      \label{eq:307}
    \lipdlt((1-\nchi)f+\nchi g)&\le (1-\nchi)\lipdlt f+\nchi\lipdlt
                              g+
                              \lipdlt \nchi|f-g|.
                              \intertext{Moreover, whenever $\phi\in\rmC^1(\R)$}
    \label{eq:309}\lipdlt(\phi\circ f)&=|\phi'\circ f|\lipdlt f
                       \intertext{and 
                       for every convex and nondecreasing function
                                        $\psi:[0,\infty)\to \R$
                                        and every map $\zeta\in
                                        \rmC^1(\R)$ with $0\le
                                        \zeta'\le 1$,
                                        the transformation}
                       \notag
    \tilde f:=f+\zeta(g-f),&\quad
    \tilde g:=g+\zeta(f-g)
                             \intertext{satisfies}
\label{eq:308}
    \psi(\lipdlt \tilde f)+
                          \psi(\lipdlt \tilde g)&\le 
                            \psi(\lipdlt f)+\psi(\lipdlt g).
  \end{align}
  \end{subequations}
\end{lemma}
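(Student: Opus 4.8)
The common strategy for all five estimates is to prove the corresponding inequality first for the localized Lipschitz constant $\Lip(\cdot,U,\delta)$ on a $\tau$-neighbourhood $U\in\UU_x$ of a fixed point $x$, and then to let $U$ shrink to $x$ along the directed set $\UU_x$. The only soft ingredient in this passage to the limit is that $U\mapsto\Lip(h,U,\delta)$ is nonincreasing as $U$ shrinks, so that $\lipdlt h(x)=\inf_{U\in\UU_x}\Lip(h,U,\delta)$ by \eqref{eq:55} is a monotone-net limit; consequently the infimum over $\UU_x$ of a finite sum (or product) of nonnegative monotone quantities equals the sum (product) of the limits, and $\sup_U|h|\to|h(x)|$ by $\tau$-continuity of $h$. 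Throughout I may assume the asymptotic Lipschitz constants on the right-hand sides are finite, the other cases being trivial. For \eqref{eq:57}, \eqref{eq:306}, \eqref{eq:307} I would only record, for $y,z\in U$, the pointwise bounds $|(\alpha f+\beta g)(y)-(\alpha f+\beta g)(z)|\le|\alpha|\,|f(y)-f(z)|+|\beta|\,|g(y)-g(z)|$ and $|(fg)(y)-(fg)(z)|\le|f(y)|\,|g(y)-g(z)|+|g(z)|\,|f(y)-f(z)|$, together with the telescoping identity (after writing $h:=(1-\nchi)f+\nchi g=f+\nchi(g-f)$)
\[ h(y)-h(z)=(1-\nchi(y))(f(y)-f(z))+\nchi(y)(g(y)-g(z))+(\nchi(y)-\nchi(z))(g(z)-f(z)). \]
Dividing by $\delta(y,z)$ and taking $\sup_{y,z\in U}$ yields the $\Lip(\cdot,U,\delta)$ forms of the three inequalities, with $\sup_U|f|,\sup_U|g|,\sup_U|f-g|$ in place of the values at $x$; letting $U\downarrow x$ and using continuity of $f,g,\nchi$ gives \eqref{eq:57}, \eqref{eq:306}, \eqref{eq:307}.

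For the chain rule \eqref{eq:309} the inequality ``$\le$'' follows from the mean value theorem: $|\phi(f(y))-\phi(f(z))|\le\big(\sup_{t\in f(U)}|\phi'(t)|\big)|f(y)-f(z)|$ for $y,z\in U$, hence $\Lip(\phi\circ f,U,\delta)\le\big(\sup_{f(U)}|\phi'|\big)\Lip(f,U,\delta)$, and I let $U\downarrow x$ using continuity of $\phi'$ and of $f$. For the converse I may assume $\phi'(f(x))\ne0$ (otherwise ``$\le$'' already forces equality); then $\phi$ restricts to a $\rmC^1$-diffeomorphism of an open interval $I\ni f(x)$, and extending $(\phi|_I)^{-1}$ to some $\Psi\in\rmC^1(\R)$ one has $f=\Psi\circ(\phi\circ f)$ on a sufficiently small $U\in\UU_x$ (by continuity of $\phi\circ f$ at $x$); applying ``$\le$'' to $\Psi$ and using $\Psi'(\phi(f(x)))=\phi'(f(x))^{-1}$ gives $\lipdlt f(x)\le|\phi'(f(x))|^{-1}\,\lipdlt(\phi\circ f)(x)$, which is the reverse bound.

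The genuinely delicate point — where I expect the real work — is \eqref{eq:308}. The plan is to ``freeze the derivative of $\zeta$'' at $x$: set $\theta:=\zeta'\big((g-f)(x)\big)\in[0,1]$ and compare $\tilde f,\tilde g$ with the fixed convex combinations $\hat f:=(1-\theta)f+\theta g$, $\hat g:=\theta f+(1-\theta)g$. Writing $\chi(t):=\zeta(t)-\theta t\in\rmC^1(\R)$, one has $\tilde f-\hat f=\chi(g-f)$ and $\tilde g-\hat g=\chi(f-g)$, and $\chi'$ vanishes at $(g-f)(x)$ — and also at $(f-g)(x)$, once one uses that $\zeta$ is odd (equivalently $\zeta'$ even), so that $\zeta'$ takes the same value $\theta$ at both points; this is the case in the applications, where $\zeta$ is an odd truncation. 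Hence \eqref{eq:309}, applied to $\chi$ and to $g-f$, resp.\ $f-g$ (whose asymptotic Lipschitz constants at $x$ are finite by \eqref{eq:57}), gives $\lipdlt(\tilde f-\hat f)(x)=\lipdlt(\tilde g-\hat g)(x)=0$, and then \eqref{eq:57} yields $\lipdlt\tilde f(x)=\lipdlt\hat f(x)$ and $\lipdlt\tilde g(x)=\lipdlt\hat g(x)$. Setting $a:=\lipdlt f(x)$, $b:=\lipdlt g(x)$, a further application of \eqref{eq:57} gives $\lipdlt\hat f(x)\le(1-\theta)a+\theta b=:a_1$ and $\lipdlt\hat g(x)\le\theta a+(1-\theta)b=:a_2$, so that $a_1+a_2=a+b$ and $\max\{a_1,a_2\}\le\max\{a,b\}$, i.e.\ $(a_1,a_2)$ is majorized by $(a,b)$.

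I would then conclude with the elementary majorization (Hardy--Littlewood--P\'olya / Karamata) inequality: whenever $0\le a_1,a_2$ and $0\le b_1,b_2$ satisfy $a_1+a_2\le b_1+b_2$ and $\max\{a_1,a_2\}\le\max\{b_1,b_2\}$, then $\psi(a_1)+\psi(a_2)\le\psi(b_1)+\psi(b_2)$ for every convex nondecreasing $\psi$ — which is checked by distinguishing the case $a_2\le b_2$ (use monotonicity of $\psi$ in each coordinate) from $a_2>b_2$ (use that $t\mapsto\psi(t+h)-\psi(t)$ is nondecreasing). Applying this with $(b_1,b_2)=(a,b)$, and using the monotonicity of $\psi$ together with $\lipdlt\hat f(x)\le a_1$, $\lipdlt\hat g(x)\le a_2$, gives \eqref{eq:308}. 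The main obstacle is therefore the bookkeeping in this ``freezing'' reduction; a secondary point I would want to make explicit is precisely which symmetry of $\zeta$ is invoked — oddness of $\zeta$ (evenness of $\zeta'$) suffices and appears to be what is intended.
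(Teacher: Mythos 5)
Your treatment of \eqref{eq:57}, \eqref{eq:306}, \eqref{eq:307} and \eqref{eq:309} is essentially the paper's own argument: the same pointwise estimates on $U\in\UU_x$, the monotone passage to the limit along $\UU_x$, and for the chain rule the same local $\rmC^1$ inverse of $\phi$ when $\phi'(f(x))\neq 0$. Nothing to add there.

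For \eqref{eq:308} your route differs from the paper's, and your worry about the symmetry of $\zeta$ is exactly the right one. The paper argues directly: by the mean value theorem, for $y,z\in U$ one has $\tilde f(y)-\tilde f(z)=(1-\alpha)(f(y)-f(z))+\alpha(g(y)-g(z))$ with $\alpha=\zeta'(\theta_{y,z})\in[0,1]$, whence in the limit $\lipdlt \tilde f(x)\le (1-\theta_1)\lipdlt f(x)+\theta_1\lipdlt g(x)$ with $\theta_1=\zeta'(g(x)-f(x))$, and analogously $\lipdlt \tilde g(x)\le (1-\theta_2)\lipdlt g(x)+\theta_2\lipdlt f(x)$ with $\theta_2=\zeta'(f(x)-g(x))$; it then concludes ``since $\psi$ is convex and nondecreasing''. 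That final step, like the closing step of your argument, only works when $\theta_1=\theta_2$ (and then plain convexity of $\psi$ suffices, so your Karamata-type majorization lemma, though correct, is not needed, nor is the freezing of $\zeta'$ via \eqref{eq:309}: the two one-sided bounds plus convexity already give the claim). When $\theta_1\neq\theta_2$ the step fails already for $\psi=\mathrm{id}$, and in fact \eqref{eq:308} itself is false as printed: take $X=[-1,1]$ with the Euclidean distance, $f\equiv 0$, $g(t)=2+t$, and $\zeta\in\rmC^1(\R)$ with $0\le\zeta'\le 1$, $\zeta\equiv 0$ on $(-\infty,0]$ and $\zeta'\equiv 1$ on $[1,\infty)$; then $\tilde f(t)=\zeta(2+t)$ and $\tilde g(t)=2+t$ have $\lipdlt\tilde f\equiv\lipdlt\tilde g\equiv 1$, while $\lipdlt f\equiv 0$, $\lipdlt g\equiv 1$, so the inequality fails with $\psi(r)=r$. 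Hence the statement needs either the evenness of $\zeta'$ you impose (oddness of $\zeta$), or — presumably the intended reading, which matches the classical lattice case $\zeta(r)=r_+$, $\tilde f=f\vee g$, $\tilde g=f\wedge g$ — the transformation $\tilde g:=g-\zeta(g-f)$, for which the mean value theorem yields the \emph{same} coefficient $\alpha_{y,z}$ for $\tilde f$ and $\tilde g$ and both the paper's argument and a streamlined version of yours go through. In short: your steps are sound, the extra hypothesis you flag is genuinely needed for the lemma as stated, and what you have located is a defect in the statement and in the paper's own final step rather than a gap in your proof.
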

\begin{proof}
  \eqref{eq:57} follows by the obvious inequalities
  \begin{displaymath}
    \Lip(\alpha f+\beta g,U,\delta)
    \le |\alpha|\Lip(f,U,\delta)+
    |\beta|\Lip(g,U,\delta)
  \end{displaymath}
  for every subset $U\subset X$. Similarly, for every $y,z\in U$
  \begin{align*}
    |f(y)g(y)-f(z)g(z)|
    &\le 
    |(f(y)-f(z))g(y)|+|(g(y)-g(z))f(z)|
    \\&\le \Big(\Lip(f,U,\delta)\sup_U|g|+
    \Lip(g,U,\delta)\sup_U|f|\Big)\delta(z,y)
  \end{align*}
  and we obtain \eqref{eq:306} passing to the limit w.r.t.~$U\in
  \UU_x$.
  Setting $\tilde\nchi:=1-\nchi$, \eqref{eq:307} 
  follows by
  \begin{align*}
    |\nchi(y)f(y)&+\tilde\nchi(y)g(y)-
    \nchi(z)f(z)+\tilde\nchi(z)g(z)|
                   \\&\le
      |\nchi(y)(f(y)-f(z))|+|\tilde\nchi(y)(g(y)-g(z))|
      +|(\nchi(y)-\nchi(z))(f(z)-g(z))|
    \\&\le
        \Big(\sup_U\nchi\,\Lip(f,U,\delta)+\sup_U\tilde\nchi\Lip(g,U,\delta)+\sup_U|f-g|
        \Lip(\nchi,U,\delta)\Big)\delta(y,z)
  \end{align*}
  and passing to the limit w.r.t.~$U\in \UU_x$.

  Concerning \eqref{eq:309},
  for every $y,z\in U$ we get
  \begin{displaymath}
    |\phi(f(y))-\phi(f(z))|\le \Lip(\phi,f(U))\,\Lip(f,U,\delta)\delta(y,z)
  \end{displaymath}
  which easily yields $\lipdlt \phi\circ f(x)\le |\phi'(f(x))|\lipdlt
  f(x)$. If $\phi'(f(x))\neq 0$, we can find a $\rmC^1$ function
  $\psi:\R\to\R$
  such that $\psi(\phi(r))=r$ in a neighborhood of $f(x)$, so that 
  the same property yields
  $\lipdlt f(x)\le \frac1{|\phi'(f(x))|}\lipdlt f\circ \phi(x)$ and
  the identity in \eqref{eq:309}.

  Let us eventually consider \eqref{eq:308}. As usual, we consider
  arbitrary points $y,z\in U$, $U\in \UU_x$ obtaining
  \begin{align*}
    |\tilde f(y)-\tilde f(z)|&=
                               |f(y)-f(z)+\zeta(g(y)-f(y))-\zeta(g(z)-f(z))|
    \\&=|f(y)-f(z)+\alpha\big((g(y)-g(z)-(f(y)-f(z))|
     \\&=
         |(1-\alpha)
         (f(y)-f(z))+\alpha (g(y)-g(z))|
    \\&\le \Big((1-\alpha)\Lip(f,U,\delta)+\alpha \Lip(g,U,\delta)\Big)\delta(y,z)
  \end{align*}
  for some $\alpha=\alpha_{y,z}=\zeta'(\theta_{y,z})\in [0,1]$,
  where $\theta_{y,z}$ is a convex combination of $g(y)-f(y)$ and
  $g(z)-f(z)$. Passing to the limit w.r.t.~$U$ and observing that
  $\alpha\to \zeta'(g(x)-f(x))$ we get
  \begin{displaymath}
    \lipdlt \tilde f(x)\le (1-\zeta'(g(x)-f(x)))\lipdlt
    f(x)+\zeta'(g(x)-f(x))\lipdlt g(x).
  \end{displaymath}
  A similar argument yields
  \begin{displaymath}
    \lipdlt \tilde g(x)\le (1-\zeta'(f(x)-g(x)))\lipdlt
    g(x)+\zeta'(f(x)-g(x))\lipdlt f(x).
  \end{displaymath}
  Since $\psi$ is convex and nondecreasing, we obtain \eqref{eq:308}.
\end{proof}

    \GGG
    \subsection{Compatible algebra of functions}
\label{subsec:compatible-A}
\index{Compatible algebra of Lipschitz functions}
\index{Adapted algebra of Lipschitz functions}
We have seen in Section \ref{subsec:extended-def}
the important role played by the algebra of function
$\Lipb(X,\tau,\sfd)$.
In many situations it could be useful to consider smaller subalgebras
which are however sufficiently rich to recover the
metric properties of an extended metric topological space
$(X,\tau,\sfd)$.
\begin{definition}[Compatible algebras of Lipschitz functions]
  \label{def:A-compatibility}
  Let $\AA$ be a unital subalgebra of\\
  $\Lipb(X,\tau,\sfd)$
  and let us set
  $\AA_\kappa:=\AA\cap \Lipbu\kappa(X,\tau,\sfd)$.
  
  We say that
  $\AA$ is \emph{compatible} with the metric-topological structure
  $(X,\tau,\sfd)$ if
  \begin{gather}
    \label{eq:214bis}
    \sfd(x,y)=\sup_{f\in \AA_1}|f(x)-f(y)|\quad
    \text{for every }x,y\in X.
  \end{gather}
  In particular, $\AA$ separates the points of $X$.\\
  We say that $\AA$ is \emph{adapted} to $(X,\tau,\sfd)$ if
  $\AA$ is compatible with $(X,\tau,\sfd)$ and
  it generates the topology $\tau$.
\end{definition}
If we do not make a different explicit choice, we will always assume that
an e.t.m.m.~space $\X$ is endowed with the canonical algebra
$\AA(\X):=\Lipb(X,\tau,\sfd)$.

\begin{remark}[Coarser topologies and countably generated algebras]
  \label{rem:coarser}
  Suppose that $\AA\subset \Lip_b(X,\tau,\sfd)$ is an algebra
  compatible with $(X,\tau,\sfd)$ and let $\tau_\AA$ be the
  initial topology generated by $\AA$ (see
  \ref{subsec:initial} in the Appendix).
  Then $(X,\tau_\AA,\sfd)$ is an e.m.t.~space as well
  and $\AA$ is adapted to $(X,\tau_\AA,\sfd)$;
  a Radon measure $\mm\in \cMp(X,\tau)$ is also Radon in
  $(X,\tau_\AA)$.\\
  This property shows that there is some flexibility in the choice of
  the topology $\tau$, as long as $\tau$-continuous functions
  are sufficiently rich to generate the distance $\sfd$.
  An interesting example occurs when $(X,\tau)$ is a Souslin space.
  By Remark \ref{rem:compact-case} we can always find
  a countable collection $(f_n)_{n\in \N}$ of $\Lip_b(X,\tau,\sfd)$ (or of
  a compatible algebra $\AA$) satisfying \eqref{eq:553}.
  If we denote by $\AA'$ the
  algebra generated by the functions $f_n$, $n\in\N$,
  we obtain a countably generated algebra
  and an auxiliary topology $\tau'=\tau_{\AA'}$ according to
  Definition
  \ref{def:auxiliary}.
\end{remark}
\subsubsection{Examples}
\begin{example}[Cylindrical functions
  in Banach spaces and their dual]
  \label{ex:cylindrical}
  \index{Cylindrical functions}
  Let $(X,\|\cdot\|)$ be a Banach space
  (in particular the space $\R^d$ with any norm)
  endowed with its weak topology (or the dual of a Banach space $B$
  with the weak$^*$ topology)
  and let $\AA$
  be the set of smooth cylindrical functions:
  a function $f:X\to \R$ belongs to $\AA$ if there exists
  $\psi\in\rmC^\infty(\R^d)$ with
  bounded derivatives of every order and $d$ linear functionals
  $\sfh_1,\cdots,\sfh_d\in X'$ (resp.~in $B$ if the weak$^*$ topology is
  considered) such that
  \begin{equation}
    \label{eq:525}
    f(x)=\psi(\la \sfh_1,x\ra,\la \sfh_2,x\ra,\cdots,\la \sfh_d,x\ra).
  \end{equation}
  It is not difficult to check check that $\AA\subset
  \Lip_b(X,\tau,\sfd)$.
  In order to approximate the distance $\sfd(x,y)=\|x-y\|$
  between two points in $X$ we can argue as in
  Example \ref{ex:dual} by choosing
  functions of the form
  $f(x):=\sft_\eps(\la \sfh,x\ra-\la \sfh,y\ra)$
  where $\sfh$ belongs to the dual (resp.~predual) unit ball
  of $X'$ (resp.~$B$)
  and $\sft_\eps(r)$ is a smooth regularization
  of $\sft(r):=0\lor r\land 2\|x-y\|$ coinciding with $r$ in the interval
  $[\eps,\|x-y\|]$.
  In the case of Example \ref{ex:model} it is sufficient to choose
  $\sfh$
  in the convex set $L$.

  The same approach can be adapted to the
  ``Wiener'' construction of Example \ref{ex:Wiener}:
  in this case one can use linear functionals in $X'$.
  
  In the case $X$ is separable (resp.~$X=B
  '$ and $B$ is separable) any Borel
  (resp.~weakly$^*$ Borel) measure
  is Radon.
\end{example}
\begin{example}
  A compatible algebra is provided by 
\begin{equation}\label{eq:defliptaui}
  \begin{aligned}
    \Lipb(X,\tau,(\sfd_i)):={}&\left\{f\in
      \rmC_b(X,\tau):\exists\, i\in I: \Lip(f,\sfd_i)<\infty
    \right\},
  \end{aligned}
\end{equation}
whenever $(\sfd_i)_{i\in I}$ is a directed family 
satisfying (\ref{eq:monotone}a,b,c,d).
One can also consider the smaller unital algebra of functions
generated by the collection of distance functions
\begin{equation}
  \label{eq:58}
  \Big\{\sfd_i(\cdot,y):y\in X,\ i\in I\Big\}.
\end{equation}
\end{example}
\begin{example}[Cartesian products]
  \label{ex:cartesian}
  Let us consider two e.t.m.~spaces $(X',\tau',\sfd')$
  and $(X'',\tau'',\sfd'')$ with two compatible algebras $\AA',\AA''$.
  For every $p\in [1,+\infty]$
  we can consider the product space
  $(X,\tau,\sfd_p)$ where
  $X=X'\times X''$, $\tau$ is the product topology of $\tau'$ and
  $\tau''$, and
  \begin{equation}
    \label{eq:526}
    \begin{aligned}
      \sfd_p((x',x''),(y',y'')):={}&\Big(\sfd'(x',y')^p+
      \sfd''(x'',y'')^p\Big)^{1/p}\quad\text{if }p<\infty,
      \\
      \sfd_\infty((x',x''),(y',y'')):={}&\max\Big(\sfd'(x',y'),
      \sfd''(x'',y'')\Big).
    \end{aligned}
  \end{equation}
  The algebra $\AA=\AA'\otimes\AA''$ generated by functions
  $f'\in \AA'$ and $f''\in \AA''$
  (an element of $\AA$ is
  a linear combination of functions of the form
  $f(x',x''):=f'(x')f''(x'')$)
  is compatible with $(X,\tau,\sfd_p)$.
  In order to prove that \eqref{eq:214bis} holds,
  let $q$ be the conjugate exponent of $p$ and let
  us introduce the convex subset of $\R^2$
  $C_q:=\{(\alpha,\beta)\in \R^2:
  \alpha^q+\beta^q\le 1\}$
  (with obvious modification when $q=\infty$).
  For every couple of point $(x',x''),(y',y'')$ in $X$
  we can find $(\alpha,\beta)\in C_q$ such that
  \begin{displaymath}
    \sfd_p((x',x''),(y',y''))=\alpha \sfd'(x',y')+\beta\sfd''(x'',y'').
  \end{displaymath}
  It is easy to check that for every $f'\in \AA_1'$ and
  $f''\in \AA_1''$ the function
  $f(z',z''):=\alpha f'(z')+\beta f''(z'')$ belongs to $\AA_1$.
  Since $\AA'$ and $\AA''$ are compatible in the respective spaces,
  we then get
    \begin{align*}
      \sfd_p((x',x''),(y',y''))&=
      \sup_{f'\in \AA'_1,f''\in \AA_1''}\alpha (f'(x')-f'(y'))+
                                 \beta(f''(x'')-f''(y''))
                                 \\&=
      \sup_{f'\in \AA'_1,f''\in \AA_1''}\alpha f'(x')+\beta f''(x'')-
      \big(\alpha f'(y')+\beta f''(y'')).
  \end{align*}
\end{example}
\begin{remark}
  The previous example \ref{ex:cartesian} shows in particular
  that the cartesian product of two e.t.m.~spaces
  is also an e.t.m.~space, a property that one can also directly check
  by using the approximating semidistance functions
  $(\sfd_i')_{i\in I},(\sfd_j'')_{j\in J}$.
\end{remark}
%

\GGG
In order to deal with functions in $\AA$ it will be useful to have
suitable polynomial approximations of the usual truncation maps.
\begin{lemma}[Polynomial approximation]
  \label{le:polynomial}
  Let $c>0$, $a_i,b_i\in \R$, and $\phi:\R\to \R$ be a Lipschitz function 
  satisfying 
  \begin{equation}
    \label{eq:335}
    a_0\le \phi\le b_0\quad \text{in $[-c,c]$},\quad
    a_1\le \phi'\le b_1\quad \text{$\LL^1$-a.e.~in $[-c,c]$}.
  \end{equation}
  There exists a sequence $(P_n)_{n\in \N}$ of polynomials
  such that 
  \begin{equation}
    \label{eq:336}
    \lim_{n\to\infty}\sup_{[-c,c]}|P_n-\phi|=0,\quad
    a_0\le P\le b_0,\ a_1\le P'\le b_1\quad\text{in $[-c,c]$},
  \end{equation}
  and
  \begin{equation}
    \label{eq:573bis}
    \lim_{n\to\infty}|P_n'(r)-\phi'(r)|=0\quad\text{for every }r\in
    [-c,c]\text{ where $\phi$ is differentiable.}
  \end{equation}
  If moreover $\phi\in\rmC^1([-c,c])$ we also have
  \begin{equation}
    \label{eq:573tris}
    \lim_{n\to\infty}\sup_{r\in [-c,c]}
    |P_n'(r)-\phi'(r)|=0.
  \end{equation}
\end{lemma}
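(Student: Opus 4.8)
The plan is to take for $P_n$ the \emph{Bernstein polynomials} of $\phi$ on $[-c,c]$. They are the natural candidates here: nonnegativity of the Bernstein basis, the partition-of-unity identity, and the explicit formula for the derivative of the basis transmit two-sided bounds from $\phi$ to $P_n$ and from the difference quotients of $\phi$ to $P_n'$ automatically, which is precisely what \eqref{eq:336} demands. (A smoothing-then-Weierstrass approach is less convenient: the hypothesis $a_1\le\phi'\le b_1$ only holds on $[-c,c]$ while $\phi$ is Lipschitz on all of $\R$, so a convolution would involve uncontrolled values near $\pm c$; moreover a point of differentiability of $\phi$ need not be a Lebesgue point of $\phi'$, which is what \eqref{eq:573bis} would require of a mollification.) Concretely, with $x_k:=-c+\tfrac{2ck}{n}$ and
\begin{equation*}
  \beta_{k,n}(x):=\binom{n}{k}\Big(\tfrac{x+c}{2c}\Big)^{k}\Big(\tfrac{c-x}{2c}\Big)^{n-k},
\end{equation*}
which are polynomials, nonnegative on $[-c,c]$, with $\sum_{k=0}^{n}\beta_{k,n}\equiv 1$ there, one puts $P_n(x):=\sum_{k=0}^{n}\phi(x_k)\,\beta_{k,n}(x)$.

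Then the routine part runs as follows. Since $P_n$ is a convex combination of the values $\phi(x_k)\in[a_0,b_0]$, we get $a_0\le P_n\le b_0$ on $[-c,c]$; and $\sup_{[-c,c]}|P_n-\phi|\to 0$ is Bernstein's classical proof of the Weierstrass theorem, which uses only uniform continuity of $\phi$ on $[-c,c]$. Differentiating and using $\beta_{k,n}'=\tfrac{n}{2c}(\beta_{k-1,n-1}-\beta_{k,n-1})$,
\begin{equation*}
  P_n'(x)=\sum_{k=0}^{n-1}\frac{\phi(x_{k+1})-\phi(x_k)}{x_{k+1}-x_k}\,\beta_{k,n-1}(x),
\end{equation*}
and because $\phi$ is absolutely continuous with $a_1\le\phi'\le b_1$ a.e.\ on $[-c,c]\supset[x_k,x_{k+1}]$, every difference quotient equals $\tfrac{n}{2c}\int_{x_k}^{x_{k+1}}\phi'\,\d t\in[a_1,b_1]$, so another convex-combination argument gives $a_1\le P_n'\le b_1$ on $[-c,c]$ and completes \eqref{eq:336}. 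For \eqref{eq:573tris}, when $\phi\in\rmC^1([-c,c])$ the mean value theorem turns each difference quotient into $\phi'(\xi_k)$ with $\xi_k\in(x_k,x_{k+1})$, and uniform continuity of $\phi'$ makes this $\phi'(x_k)+o(1)$ uniformly in $k$; hence $P_n'$ differs by a uniform $o(1)$ from a Bernstein approximant of $\phi'$ and so converges to $\phi'$ uniformly.

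The genuine difficulty --- the step I expect to be the main obstacle --- is \eqref{eq:573bis}: pointwise convergence $P_n'(r)\to\phi'(r)$ at an \emph{arbitrary} differentiability point $r$ of the merely Lipschitz $\phi$. The properties already established do not force this by themselves (simple examples show that $\|P_n-\phi\|_\infty\to0$ together with $a_1\le P_n'\le b_1$ still allows $P_n'$ to oscillate), so the Bernstein structure must be used. This is a classical fact about Bernstein polynomials (see e.g.\ G.G.\ Lorentz, \emph{Bernstein polynomials}): if $\phi$ is bounded and differentiable at $r$, then $P_n'(r)\to\phi'(r)$. I would reproduce its proof. After an affine change of variable (which maps polynomials to polynomials and rescales $a_1,b_1$ by a positive constant) we may work on $[0,1]$ with the standard basis $p_{k,n}$; from $p_{k,n}'(x)=\tfrac{k-nx}{x(1-x)}p_{k,n}(x)$ and $\sum_k(k/n-x)p_{k,n}(x)=0$ one gets, for $r\in(0,1)$,
\begin{equation*}
  P_n'(r)=\frac{n}{r(1-r)}\sum_{k=0}^{n}\big(\phi(k/n)-\phi(r)\big)\big(k/n-r\big)\,p_{k,n}(r).
\end{equation*}
Writing $\phi(k/n)-\phi(r)=\phi'(r)(k/n-r)+E(k/n)$ with $|E(y)|\le\eta|y-r|$ for $|y-r|\le\rho$ and $E$ bounded in general, the principal term reproduces $\phi'(r)$ exactly since $\sum_k(k/n-r)^2p_{k,n}(r)=\tfrac{r(1-r)}{n}$, the part of the remainder with $|y-r|\le\rho$ is bounded by $\eta$, and the part with $|y-r|>\rho$ tends to $0$ by Hoeffding's exponential tail estimate for the binomial (Chebyshev's inequality is too weak, since differentiation brings in an extra factor $n$); letting $n\to\infty$ and then $\eta\downarrow0$ yields the claim. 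The two endpoints are immediate, as $P_n'$ there is literally a mesh-$1/n$ difference quotient of $\phi$.
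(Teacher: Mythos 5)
Your proof is correct and follows the same route as the paper: the paper also takes $P_n$ to be the Bernstein polynomials of $\phi$ on $[-c,c]$, citing Lorentz's monograph (Sect.~1.7) for the preservation of the bounds on $\phi$ and $\phi'$ and for the convergence properties of $P_n$ and $P_n'$. The only difference is that you supply self-contained proofs (convex-combination bounds, the difference-quotient formula for $P_n'$, and the classical second-moment/tail argument for pointwise convergence of $P_n'$ at differentiability points) of exactly the facts the paper quotes from Lorentz.
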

\begin{proof}
  In order to prove the first statement of the lemma,
  it is sufficient to use the Bernstein polynomials of degree $2n$ 
  on the interval $[-c,c]$ given by the formula
  \begin{equation}
    \label{eq:337}
    P_n(r):=\frac1{(2c)^n}\sum_{k=-n}^n\phi(k/n)\left({{2n}\atop {k+n}}\right)(r+c)^{n+k}(c-r)^{n-k}
  \end{equation}
  recalling that $P_n$ uniformly converge to $\phi$ in $[-c,c]$ as
  $n\to\infty$
  and that formula \eqref{eq:337} preserves the bounds on $\phi$ and
  $\phi'$
  \cite[Sect. 1.7]{Lorentz86}.
\end{proof}
Applying the previous Lemma to the the function $\phi(r):=\alpha\lor r\land
\beta$ (with $a_0=\alpha$, $b_0=\beta$, $a_1=0$, $a_2=1$), we immediately get the following property.
  \begin{corollary}
      \label{cor:trunc1}
  For every interval $[-c,c]$, $c>0$, $\alpha,\beta\in \R$ with
  $\alpha<\beta$, and every $\eps>0$ 
  there exists a polynomial $P_\eps=P_{\eps}^{c,\alpha,\beta}$ such
  that
  \begin{equation}
    \label{eq:356}
    \begin{gathered}
      |P_\eps(r)-\alpha\lor r\land \beta|\le \eps,\quad
      \alpha\le P_\eps(r)\le \beta,\quad
      0\le P_\eps'(r)\le
      1\quad\text{for every }r\in [-c,c],\\
      \lim_{\eps\down0}P_\eps'(r)=
      \begin{cases}
        1&\text{if }\alpha<r<\beta\\
        0&\text{if }r<\alpha\text{ or }r>\beta.
      \end{cases}
    \end{gathered}
  \end{equation}
  If $\alpha=-\beta$ we can also find an odd $P_\eps$, thus satisfying $P_\eps(0)=0$.
\end{corollary}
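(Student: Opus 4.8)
The statement is essentially a specialization of Lemma~\ref{le:polynomial}, so the plan is to apply that lemma to the concrete function $\phi(r):=\alpha\lor r\land\beta$ and then simply read off the conclusions. First I would verify the hypotheses \eqref{eq:335}: the function $\phi$ is globally $1$-Lipschitz, and since $\alpha<\beta$ one has $\alpha\le\phi(r)\le\beta$ for every $r\in\R$ (in particular on $[-c,c]$, whatever the size of $c$ relative to $\alpha,\beta$), while $\phi'(r)$ exists at every $r\ne\alpha,\beta$ and equals $1$ on $(\alpha,\beta)$ and $0$ outside $[\alpha,\beta]$, so $0\le\phi'\le1$ $\LL^1$-a.e.\ on $[-c,c]$. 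Hence Lemma~\ref{le:polynomial} applies with $a_0=\alpha$, $b_0=\beta$, $a_1=0$, $b_1=1$, producing polynomials $(P_n)_{n\in\N}$ with $\sup_{[-c,c]}|P_n-\phi|\to0$, with $\alpha\le P_n\le\beta$ and $0\le P_n'\le1$ on $[-c,c]$, and, by \eqref{eq:573bis}, with $P_n'(r)\to\phi'(r)$ at every $r\in[-c,c]$ other than the two corner points $\alpha,\beta$.

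Next I would fix the dependence on $\eps$. For each $\eps>0$ I choose an index $n(\eps)$ so large that $\sup_{[-c,c]}|P_{n(\eps)}-\phi|\le\eps$ and $n(\eps)\ge 1/\eps$; this is possible by the uniform convergence just quoted, and the second condition forces $n(\eps)\to\infty$ as $\eps\downarrow0$. Setting $P_\eps:=P_{n(\eps)}$, the first line of \eqref{eq:356} is immediate from the bounds above. For the pointwise limit in the second line, fix $r\in[-c,c]$ with $r\ne\alpha,\beta$: as $\eps\downarrow0$ the indices $n(\eps)$ run through a sequence tending to $\infty$, so $P_\eps'(r)=P_{n(\eps)}'(r)\to\phi'(r)$, which is $1$ if $\alpha<r<\beta$ and $0$ if $r<\alpha$ or $r>\beta$, exactly as claimed.

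For the last assertion, when $\alpha=-\beta$ the function $\phi$ is odd and the interval $[-c,c]$ is symmetric about $0$; the Bernstein-type polynomials \eqref{eq:337} used in the proof of Lemma~\ref{le:polynomial} then inherit this symmetry, since substituting $r\mapsto-r$ interchanges the factors $(r+c)$ and $(c-r)$, and after reindexing $k\mapsto-k$ and using $\phi(-k/n)=-\phi(k/n)$ together with $\binom{2n}{n-k}=\binom{2n}{n+k}$ one gets $P_n(-r)=-P_n(r)$; hence each $P_\eps$ is odd and $P_\eps(0)=0$. The only step that requires any care is the $\eps\downarrow0$ limit, and that is handled by the degree-growing choice of $n(\eps)$ together with the observation that $\phi$ is differentiable off the two corner points; otherwise there is no substantial obstacle, the corollary being a direct reading of Lemma~\ref{le:polynomial}.
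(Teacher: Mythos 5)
Your proof is correct and follows essentially the same route as the paper, which simply applies Lemma~\ref{le:polynomial} to $\phi(r)=\alpha\lor r\land\beta$ with $a_0=\alpha$, $b_0=\beta$, $a_1=0$, $b_1=1$. Your extra details (the choice of $n(\eps)$ forcing the degree to grow as $\eps\downarrow0$, and the symmetry of the Bernstein polynomials \eqref{eq:337} giving oddness when $\alpha=-\beta$) are exactly the points the paper leaves implicit, and they are argued correctly.
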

A more refined argument yields:
\begin{corollary}
  \label{cor:trunc2}
  For every interval $[-c,c]\subset \R$ and every
  $\eps>0$, there exists a polynomial $Q_\eps=Q_\eps^c:\R\times
  \R\to \R$ such that 
  \begin{gather}
    \label{eq:339}
    r\land s\le Q_\eps(r,s)\le r\lor s,\quad
    |Q_\eps(r,s)-r\lor s|\le \eps\quad\text{for every }r,s\in
    [-c,c],\\
    \label{eq:340}
    0\le \partial_rQ_\eps\le 1,\quad
    0\le \partial_sQ_\eps\le 1\quad\text{in }[-c,c]\times [-c,c],\\
    \label{eq:341}
    |Q_\eps(r_2,s_2)-Q_\eps(r_1,s_1)|\le \max\big(|r_2-r_1|,|s_2-s_1|\big)\quad
    \text{for every }r_i,s_i\in
    [-c,c].
  \end{gather}
\end{corollary}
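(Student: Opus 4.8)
The plan is to reduce the two--variable problem to a one--variable approximation via the identities $r\vee s=\tfrac12(r+s)+\tfrac12|r-s|$ and $r\wedge s=\tfrac12(r+s)-\tfrac12|r-s|$. First I would look for a polynomial $R$ of one real variable such that, on the interval $[-2c,2c]$ (which contains $r-s$ whenever $r,s\in[-c,c]$),
\[
  R(0)=0,\qquad \sup_{[-2c,2c]}|R'|\le 1,\qquad \sup_{[-2c,2c]}\bigl||t|-R(t)\bigr|\le\eps .
\]
Granting such an $R$, I would set $Q_\eps(r,s):=\tfrac12(r+s)+\tfrac12 R(r-s)$, a polynomial on $\R\times\R$, and check \eqref{eq:339}--\eqref{eq:341} by elementary computations. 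The key remark is that $R(0)=0$ together with $|R'|\le1$ on $[-2c,2c]$ force $-|t|\le R(t)\le|t|$ there (just integrate $R'$ from $0$); hence $r\wedge s\le Q_\eps(r,s)\le r\vee s$, and $|Q_\eps(r,s)-r\vee s|=\tfrac12\bigl(|r-s|-R(r-s)\bigr)\le\tfrac12\eps$. The partial derivatives are $\partial_rQ_\eps=\tfrac12\bigl(1+R'(r-s)\bigr)\in[0,1]$ and $\partial_sQ_\eps=\tfrac12\bigl(1-R'(r-s)\bigr)\in[0,1]$, which gives \eqref{eq:340}. Finally, writing $a:=r_2-r_1$, $b:=s_2-s_1$, the bound $|R(r_2-s_2)-R(r_1-s_1)|\le|a-b|$ together with the identity $\tfrac12|a+b|+\tfrac12|a-b|=\max(|a|,|b|)$ yields \eqref{eq:341}.

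The real task is therefore the construction of $R$, and here I would first smooth out the corner of $|t|$. Fix a small $\delta>0$ and set $\psi(t):=\delta\log\cosh(t/\delta)$ on $[-2c,2c]$: it is even, $C^\infty$ (hence Lipschitz), with $\psi(0)=0$, $\psi'(t)=\tanh(t/\delta)\in(-1,1)$, $0\le\psi(t)\le|t|\le 2c$, and $0\le|t|-\psi(t)=\int_0^{|t|}\bigl(1-\tanh(r/\delta)\bigr)\,\d r\le\delta\log2$. Then I apply Lemma \ref{le:polynomial} to $\psi$ on $[-2c,2c]$ with the bounds $0\le\psi\le2c$ and $-1\le\psi'\le1$, obtaining polynomials $P_n\to\psi$ uniformly on $[-2c,2c]$ with $-1\le P_n'\le1$ there. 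For $n$ large I would set $R:=P_n-P_n(0)$: this makes $R(0)=0$ exactly, keeps $R'=P_n'$ so that $|R'|\le1$ on $[-2c,2c]$, and gives $\|R-\psi\|_{\infty,[-2c,2c]}\le\|P_n-\psi\|_\infty+|P_n(0)|\le2\|P_n-\psi\|_\infty$. Combining the two estimates, $\sup_{[-2c,2c]}\bigl||t|-R(t)\bigr|\le\delta\log2+2\|P_n-\psi\|_\infty$, which I make $\le\eps$ by first choosing $\delta$ small and then $n$ large.

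The delicate point --- and the reason this is ``a more refined argument'' than Corollary \ref{cor:trunc1} --- is that the sandwich $r\wedge s\le Q_\eps\le r\vee s$ must hold \emph{exactly}, i.e.\ one needs $-|t|\le R(t)\le|t|$ and in particular $R(0)=0$. One cannot simply approximate $|t|$ by a polynomial and hope for this: a Bernstein approximation of the convex function $|t|$ overshoots it, and any uniform approximation is squeezed between $\pm|t|$ near the corner $t=0$, where there is no room for error. The construction above sidesteps this by performing the polynomial approximation on the smoothed $\psi$ --- which genuinely satisfies $\psi\le|t|$ and $|\psi'|<1$ with strict slack --- then restoring $R(0)=0$ by subtracting a constant, and finally exploiting the self-improving observation that $R(0)=0$ and $|R'|\le1$ already pin $R$ inside the cone $|R(t)|\le|t|$. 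Thus the cone constraint is automatic and only the closeness $\bigl||t|-R(t)\bigr|\le\eps$ depends on the quality of the approximation. I expect the bookkeeping of the various small constants to be the only remaining, and routine, chore.
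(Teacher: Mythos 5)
Your proof is correct, and it reaches the statement by a noticeably different route than the paper, although both ultimately rest on Lemma \ref{le:polynomial} applied to a one-variable function with a kink. You use the symmetric decomposition $r\lor s=\tfrac12(r+s)+\tfrac12|r-s|$, approximate $|t|$ by a polynomial $R$ normalized so that $R(0)=0$ and $|R'|\le 1$, obtain the sandwich \eqref{eq:339} from the resulting cone bound $|R(t)|\le|t|$, and get \eqref{eq:341} from the algebraic identity $\max(|a|,|b|)=\tfrac12|a+b|+\tfrac12|a-b|$; all these verifications are sound. The paper instead applies Lemma \ref{le:polynomial} to $\phi(t)=t_+$ on $[-4c,4c]$, sets $Q_\eps(r,s):=r+P_\eps(s-r)-P_\eps(0)$, deduces \eqref{eq:339} from the joint monotonicity $\partial_rQ_\eps=1-P_\eps'\ge0$, $\partial_sQ_\eps=P_\eps'\ge0$ together with $Q_\eps(t,t)=t$, and proves \eqref{eq:341} by integrating $\partial_rQ_\eps+\partial_sQ_\eps\equiv1$ along a diagonal segment; your identity-based argument for \eqref{eq:341} is arguably slicker, while the paper's monotonicity argument is what gets reused elsewhere (e.g.\ in Lemma \ref{le:supremum}). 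One remark: your $\log\cosh$ smoothing step, and the accompanying claim that ``one cannot simply approximate $|t|$ by a polynomial,'' are unnecessary. Lemma \ref{le:polynomial} only requires a Lipschitz $\phi$ with a.e.\ derivative bounds, so it applies directly to $\phi(t)=|t|$ (with $a_1=-1$, $b_1=1$); after subtracting $P_n(0)$ the cone constraint $|R(t)|\le|t|$ follows from exactly the self-improving observation you state yourself ($R(0)=0$ and $|R'|\le1$), with no need for the approximant to lie below $|t|$. This is precisely how the paper handles the corner of $t_+$, so your detour costs nothing but can be deleted.
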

\begin{proof}
  We apply Lemma
  \ref{le:polynomial} to the function $\phi(r):=r_+$ in the interval
  $[-4c,4c]$
  (with $a_0=a_1=0$, $b_0=4c$ and $b_1=1$)
  obtaining a polynomial $P_\eps$ such that
  \begin{equation}
    \label{eq:334}
    |P_\eps(r)-r_+|\le \eps,\quad
    0\le P_\eps(r)\le 4c,\quad 0\le P_\eps'(r)\le 1\quad \text{for every
    }r\in [-4c,4c].
  \end{equation}
  We set $Q_\eps(r,s):=r+P_\eps(s-r)-P_\eps(0)$.
  Notice that $Q_\eps$ is increasing w.r.t.~$r,s$ in $[-2c,2c]\times [-2c,2c]$ since
  \begin{displaymath}
    \partial_r Q_\eps(r,s)=1-P_\eps'(s-r)\ge 0,\quad
    \partial_s Q_\eps(r,s)=P_\eps'(s-r)\ge 0;
  \end{displaymath}
  in particular
  \begin{displaymath}
    Q_\eps(r,s)\ge Q_\eps(r\land s,r\land s)=r\land s,\quad
    Q_\eps(r,s)\le Q_\eps(r\lor s,r\lor s)=r\lor s.
  \end{displaymath}
  By construction, if $r,s\in [-c,c]$ then
  \begin{displaymath}
    |Q_\eps(r,s)-r\lor s|=
    |r+P_\eps(s-r)-(r+(s-r)_+)|=
    |P_\eps(s-r)-(s-r)_+|\le \eps.
  \end{displaymath}
  Concerning the Lipschitz estimate, let us consider 
  points $(r_1,s_1),(r_2,s_2)\in [-c,c]$. Up to inverting the order
  of the couples, it is not restrictive to assume that
  $Q_\eps(r_1,s_1)\ge Q_\eps(r_2,s_2)$. Setting $ r_-:=r_1\land r_2$,
  $ r_+:=r_1\lor r_2$,
  $ s_-:=s_1\land s_2$,   $ s_+:=s_1\lor s_2$,
  $\bar r:=(r_+-r_-)\le 2c$,
  $\bar s=s_+-s_-\le 2c$, $\bar z:=\bar r\lor
  \bar s=\max(|r_2-r_1|,|s_2-s_1|)\le 2c$, the partial monotonicity of $Q_\eps$ yields
  \begin{align*}
    |Q_\eps(r_1,s_1)-Q_\eps(r_2,s_2)|&\le 
    Q_\eps(r_+,s_+)-Q_\eps(r_-, s_-)\le 
    Q_\eps(r_-+\bar z,s_-+\bar z)-Q_\eps(r_-, s_-)
                                 \\&=
    \int_0^{\bar z}\Big(\partial_r Q_\eps(r_-+z,s_-+z)+
    \partial_s Q_\eps(r_-+z,s_-+z)\Big)\,\d z
                                 \\&=
                                     \int_0^{\bar
                                     z}\Big(1-P'_\eps(s_--r_-)+P'_\eps(s_--r_-)\Big)\,\d
                                     z=\bar z.\qedhere
  \end{align*}
\end{proof}
\noindent
The next result shows how to obtain good approximations of the maximum of a finite
number
of functions in $\AA$.
\begin{lemma}
  \label{le:supremum}
  Let $f^1,f^2,\cdots, f^M\in \AA$ and let $f:=\max(f^1,f^2,\cdots,f^M)$.
  Then for every $\eps>0$ there exists a sequence $f_\eps\in \AA$ such that
  \begin{equation}
    \label{eq:338}
    \min_m f^m(x)\le f_\eps(x)\le \max_m f^m(x)\quad
    \text{for every }x\in X,\quad
    \sup_X |f_\eps-f|\le \eps.
  \end{equation}
  If moreover
  $\Lip (f^m,A,\delta)\le L$ for $1\le m\le M$ where $A\subset X$ and
  $\delta$ is an extended semidistance on $X$, then 
  $\Lip (f_\eps,A,\delta)\le L$ for every $n\in \N$.
\end{lemma}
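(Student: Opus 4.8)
The plan is to reduce the $M$-fold maximum to an iterated binary maximum and to substitute for each binary maximum the polynomial $Q_\eps$ furnished by Corollary \ref{cor:trunc2}. The key structural remark is that, since $\AA$ is a \emph{unital} algebra, $Q_\eps(g,h)\in\AA$ whenever $g,h\in\AA$: indeed $Q_\eps$ is a polynomial in two real variables, so $Q_\eps(g,h)$ is a finite linear combination of products of $g$ and $h$, the constant term being absorbed by the unit of $\AA$. I would fix $c:=\max_{1\le m\le M}\sup_X|f^m|$, which is finite because each $f^m\in\AA\subset\Lipb(X,\tau,\sfd)$ is bounded, so that every $f^m$ takes values in $[-c,c]$.

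I would then argue by induction on $M$. For $M=1$ one simply takes $f_\eps:=f^1$. For $M\ge 2$, apply the inductive hypothesis to $f^1,\dots,f^{M-1}$ with tolerance $\eps/2$: this yields $h\in\AA$ with
\[
  \min_{1\le m\le M-1}f^m\le h\le \max_{1\le m\le M-1}f^m\ \text{ on }X,\qquad
  \sup_X\Big|h-\max_{1\le m\le M-1}f^m\Big|\le \eps/2,
\]
and, whenever $\Lip(f^m,A,\delta)\le L$ for all $m\le M-1$, also $\Lip(h,A,\delta)\le L$. In particular $h$ takes values in $[-c,c]$. I then set $f_\eps:=Q_{\eps/2}^{c}(h,f^M)\in\AA$, with $Q^c_{\eps/2}$ as in Corollary \ref{cor:trunc2}.

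The three conclusions now follow directly. The sandwich estimate $r\land s\le Q^c_{\eps/2}(r,s)\le r\lor s$ on $[-c,c]\times[-c,c]$ gives $\min(h,f^M)\le f_\eps\le\max(h,f^M)$ pointwise, which combined with the bounds on $h$ yields $\min_m f^m\le f_\eps\le\max_m f^m$ on $X$; in particular $f_\eps$ again takes values in $[-c,c]$, so that the hypotheses of Corollary \ref{cor:trunc2} stay available at the next level. For the uniform estimate, $|Q^c_{\eps/2}(r,s)-r\lor s|\le\eps/2$ on $[-c,c]^2$ gives $\sup_X|f_\eps-h\lor f^M|\le\eps/2$, while the elementary inequality $|a\lor c-b\lor c|\le|a-b|$ gives $\sup_X|h\lor f^M-f|\le\sup_X|h-\max_{m\le M-1}f^m|\le\eps/2$; hence $\sup_X|f_\eps-f|\le\eps$. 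Finally, for $x,y\in A$, inequality \eqref{eq:341} applied with $r_1=h(x)$, $s_1=f^M(x)$, $r_2=h(y)$, $s_2=f^M(y)\in[-c,c]$ yields
\[
  |f_\eps(x)-f_\eps(y)|\le\max\big(|h(x)-h(y)|,\,|f^M(x)-f^M(y)|\big)\le L\,\delta(x,y),
\]
so that $\Lip(f_\eps,A,\delta)\le L$.

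There is no real obstacle here; the only point that needs attention is the bookkeeping that keeps all intermediate functions inside the fixed interval $[-c,c]$ on which Corollary \ref{cor:trunc2} is valid — which is guaranteed precisely by the sandwich bound $\min\le\cdot\le\max$ — together with the observation that both the uniform error and the Lipschitz constant propagate harmlessly through the $M-1$ binary steps, the accumulated error being kept below $\eps$ by halving the tolerance at each level of the recursion.
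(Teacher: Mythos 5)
Your proof is correct and follows essentially the same route as the paper: induction on $M$, replacing each binary maximum by the polynomial $Q_{\eps/2}$ of Corollary \ref{cor:trunc2} with halved tolerance, and using the sandwich bound to keep all intermediate functions in $[-c,c]$ and \eqref{eq:341} to propagate the Lipschitz bound. The only (immaterial) difference is in how you split the uniform error — via the $1$-Lipschitz character of the map $a\mapsto a\lor b$ rather than of $Q_{\eps/2}$ in its first argument, as the paper does.
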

\begin{proof}
  We split the proof in two steps.
  \medskip

  \noindent
  \textbf{1.} \emph{The thesis of the Lemma holds for $M=2$.}
  We set $c>0$ so that $f^m(X)\subset [-c,c]$
  and then we define
  $f_\eps:=Q_{\eps}(f^1,f^2)$, where $Q_\eps$ has been provided by
  Corollary \ref{cor:trunc2}.
  \eqref{eq:338} follows immediately by \eqref{eq:339}.
  \eqref{eq:341} yields for every $x,y\in X$
  \begin{displaymath}
    |f_\eps(x)-f_\eps(y)|=
    |Q_\eps(f^1(x),f^2(x))-Q_\eps(f^1(y),f^2(y))|
    \le \max\big(|f^1(x)-f^1(y)|,|f^2(x)-f^2(y)|\big)
  \end{displaymath}
   so that the composition with $Q_\eps$ preserve the Lipschitz constant
   w.r.t.~arbitrary sets and semidistances.
   \medskip

   \noindent
   \textbf{2.} \emph{The thesis of the Lemma holds for arbitrary $M\in
     \N$.} 
   We argue by
  induction,
  assuming that the result is true for $M-1$. 
  We fix a constant $c$ so that $f^m(X)\subset [-c,c]$ for $1\le m\le
  M$. We thus find 
  $h_{\eps/2}\subset \AA$ satisfying 
  \begin{displaymath}
    \min_{1\le m\le M-1} f^m(x)\le h_{\eps/2}(x)\le \max_{1\le m\le M-1} f^m(x)\quad
    \text{for every }x\in X,\quad
    \sup_X |h_{\eps/2}-\tilde f|\le \eps/2, 
  \end{displaymath}
  where $\tilde f:=f^1\lor \cdots\lor f^{M-1}$; in particular
  $h_{\eps/2}(X)\subset [-c,c]$.
  We then set 
  $f_\eps:=Q_{\eps/2}(h_{\eps/2},f^M)$; clearly for every $x\in X$
  \begin{displaymath}
    \min_{1\le m \le M}f^m(x)\le h_{\eps/2}(x)\land f^M(x)\le 
    f_\eps(x)\le 
    h_{\eps/2}(x)\lor f^M(x)\le 
    \max_{1\le m\le M} f^m(x);
  \end{displaymath}
  moreover
  \begin{align*}
    |f_\eps-f|&\le 
    |Q_{\eps/2}(h_{\eps/2},f^M) -f|
    \\&\le 
    |Q_{\eps/2}(h_{\eps/2},f^M) -Q_{\eps/2}(\tilde f,f^M)|
      +
    |Q_{\eps/2}(\tilde f,f^M)-\tilde f\lor f^M|\\&
    \le |h_{\eps/2}-\tilde f|
    +\eps/2\le \eps/2+\eps/2\le \eps.\qedhere
  \end{align*}  
\end{proof}
We conclude this section by a simple density results that will be useful in the
following.
\begin{lemma}[Density of $\AA$ in $L^p(X,\mm)$]
  \label{le:density}
  Let $p\in [1,+\infty)$ and let $I$ a  closed (possibly unbounded) interval of $\R$.
  If $\AA$ is a compatible sub-algebra of $\Lip_b(X,\tau,\sfd)$,
  then for every $f\in \cL^p(X,\mm)$
  with values in $I$
  there exists a sequence $f_n\in \AA$ with values in $I$
  such that $\int_X |f-f_n|^p\,\d\mm\to0$. 
\end{lemma}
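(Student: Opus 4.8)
The plan is to reduce to a bounded range by truncation, then on a large compact set combine Lusin's theorem with the Stone--Weierstrass theorem to approximate $f$ uniformly by an element of $\AA$, and finally compose with a polynomial truncation (Corollary~\ref{cor:trunc1}) to push the approximant back into the prescribed interval while keeping the error under control off the compact set. \emph{Reduction to a bounded interval.} Fix a point $a\in I$. For $R$ so large that $a\in I_R:=I\cap[-R,R]$, let $f_R:=\pi_{I_R}\circ f$, where $\pi_{I_R}\colon\R\to I_R$ is the nearest-point projection onto the bounded closed interval $I_R$; then $f_R$ is Borel with values in $I_R\subseteq I$, one has $|f_R-a|\le|f-a|$ since $\pi_{I_R}$ is $1$-Lipschitz and fixes $a$, and $f_R\to f$ pointwise as $R\to\infty$. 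Since $\mm$ is finite and $f\in\cL^p(X,\mm)$, the function $|f-a|^p$ is $\mm$-integrable, so $|f-f_R|^p\le 2^p|f-a|^p$ and dominated convergence gives $\|f-f_R\|_p\to0$. Hence it suffices to approximate a function $f$ with values in a fixed bounded closed interval $[\alpha,\beta]\subseteq I$.

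\emph{Uniform approximation on a large compact set.} Fix $\eps>0$. By tightness of $\mm$ (see \eqref{eq:279}) there is a compact $K_1\subset X$ with $\mm(X\setminus K_1)\le\eps$; since $f$ is Borel and $\R$ is metrizable and separable, $f$ is Lusin $\mm$-measurable, so there is a compact $K\subseteq K_1$ with $\mm(K_1\setminus K)\le\eps$ and $f|_K$ continuous. Thus $\mm(X\setminus K)\le 2\eps$ and $f|_K\in\rmC(K,\R)$. The restriction $\AA|_K$ is a unital subalgebra of $\rmC(K,\R)$ separating the points of the compact Hausdorff space $K$: indeed, if $x\neq y$ in $K$ then $\sfd(x,y)>0$, so by the compatibility identity \eqref{eq:214bis} some $h\in\AA_1\subseteq\AA$ has $h(x)\neq h(y)$. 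By the (real) Stone--Weierstrass theorem $\AA|_K$ is uniformly dense in $\rmC(K,\R)$, and we choose $g\in\AA$ with $\sup_K|g-f|\le\eps$.

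\emph{Polynomial truncation and the $L^p$ estimate.} Let $c:=\sup_X|g|<\infty$ and apply Corollary~\ref{cor:trunc1} to obtain a polynomial $P:=P_\eps^{c,\alpha,\beta}$ with $\alpha\le P\le\beta$ and $|P(r)-\alpha\lor r\land\beta|\le\eps$ on $[-c,c]$. Because $\AA$ is a unital algebra, $\tilde g:=P\circ g\in\AA$, and $\tilde g$ has values in $[\alpha,\beta]\subseteq I$. On $K$, using that $f(x)\in[\alpha,\beta]$ so the truncation does not increase the error,
\begin{equation*}
  |\tilde g-f|\le|P\circ g-\alpha\lor g\land\beta|+|\alpha\lor g\land\beta-f|\le\eps+|g-f|\le 2\eps,
\end{equation*}
while on $X\setminus K$ we only use $|\tilde g-f|\le\beta-\alpha$. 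Hence
\begin{equation*}
  \int_X|\tilde g-f|^p\,\d\mm\le (2\eps)^p\,\mm(X)+(\beta-\alpha)^p\,2\eps,
\end{equation*}
which tends to $0$ as $\eps\downarrow0$. Combining this with the first step through a diagonal argument (recall that $L^p(X,\mm)$ is metric) yields a sequence $f_n\in\AA$ with values in $I$ and $\|f-f_n\|_p\to0$.

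\emph{Main obstacle.} The delicate point is the last step: Stone--Weierstrass only controls $g$ on the compact set $K$, and a priori $\sup_X|g|$ is not bounded uniformly in $\eps$, so the naive estimate $\mm(X\setminus K)\,\sup_X|g-f|^p$ is useless. Composing with the polynomial truncation $P_\eps^{c,\alpha,\beta}$ — which lands in $\AA$ precisely because $\AA$ is an algebra, and which does not spoil the estimate on $K$ precisely because $f$ itself takes values in $[\alpha,\beta]$ — is what makes the contribution off $K$ harmless.
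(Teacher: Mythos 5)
Your proof is correct and follows essentially the same route as the paper: reduce to a bounded interval by truncation and dominated convergence, use Lusin measurability plus Stone--Weierstrass to approximate uniformly on a large compact set by an element of $\AA$, and then compose with the polynomial truncation of Corollary~\ref{cor:trunc1} to restore the range constraint while keeping the contribution off the compact set bounded by $(\beta-\alpha)^p\,\mm(X\setminus K)$. The only cosmetic difference is that you control the error on $K$ via the $1$-Lipschitz projection applied to $g$, whereas the paper applies the $1$-Lipschitz bound on $P_\eps$ to both $\tilde f_\eps$ and $f$; the estimates are interchangeable.
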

\begin{proof}
  By standard approximation,
  it is not restrictive to assume that $I=[\alpha,\beta]$ for some
  $\alpha,\beta\in \R$; we set $\gamma:=|\alpha|\lor |\beta|$.
  Since $\mm$ is Radon, every $\mm$-measurable function
  $f$ is Lusin $\mm$-measurable: thus for every $\eps>0$ 
  there exists a compact $K\subset X$ such that $f\restr{K}$ 
  is continuous and $\mm(X\setminus K)\le \eps$.
  
  Since $\AA$ 
  contains the constants and separates the points of $K$, 
  the restriction of $\AA$ to $K$ is uniformly
  dense in $\rmC_b(K,\tau)$ by Stone-Weierstrass Theorem:
  we thus find $\tilde f_\eps\in \AA$ such that
  $\sup_{x\in K}|f(x)-\tilde f_\eps(x)|\le \eps$. If $c:=\sup_X
  |\tilde
  f_\eps|\lor \gamma$,
  applying Corollary \ref{cor:trunc1} we can find a polynomial
  $P_\eps$ satisfying \eqref{eq:356}, so that
  $f_\eps:=P_\eps\circ \tilde f_\eps$ belongs to $\AA$,
  takes values in
  $[\alpha,\beta]$, and satisfies
  \begin{equation}
    \label{eq:357}
    | f_\eps -f|\le |P_\eps (\tilde f_\eps)-P_\eps(f)|+|P_\eps(f)-f|\le
    2\eps\quad\text{in }K,
  \end{equation}
  so that
  \begin{equation}
    \label{eq:358}
    \int_X | f_\eps -f|^p\,\d\mm
    \le (2\eps)^p \mm(X)+(\beta-\alpha)^p\eps.
  \end{equation}
  Choosing a sequence $f_n:=f_{\eps_n}$ corresponding to
  a vanishing sequence $\eps_n\downarrow0$ we conclude.
\end{proof}

\subsection{Embeddings and compactification of
  extended metric-measure spaces}
\label{subsec:compactification}

\newcommand{\hhat}[1]{#1'}
Let $\X=(X,\tau,\sfd,\mm)$ and $\hhat \X=(\hhat X,\hhat \tau,\hhat \sfd,\hhat\mm)$  be two
extended metric measure spaces, endowed with compatible algebras
$\AA,\hhat\AA$ according to definition \ref{def:A-compatibility}.
\begin{definition}[Embedding, compactification, and isomorphism]
  \label{def:embeddings}
  We say that a map $\iota:X\to \hhat X$ is a measure-preserving embedding
  of $(\X,\AA)$ into $(\hhat \X,\hhat\AA)$ if
  \begin{enumerate}[\rm (E1)]
  \item $\iota$ is a 
    continuous and injective map of $(X,\tau)$ into $(\hhat X,\hhat
    \tau)$;
  \item $\iota$ is an isometry, in the sense that
    \begin{equation}
      \label{eq:347}
      \hhat \sfd(\iota(x),\iota(y))=\sfd(x,y)\quad\text{for every
      }x,y\in X.
    \end{equation}
  \item $\iota_\sharp\mm=\hhat \mm.$
  \item For every $\hhat f\in \AA'$ the function $\iota^* \hhat
    f:=\hhat f\circ\iota$
    belongs to $\AA$.
  \end{enumerate}
  We say that $(\hhat \X,\hhat \AA)$ is a compactification of $(\X,\AA)$ if 
  $(\hhat X,\hhat \tau)$ is compact and there exists a
  measure-preserving embedding of $(\X,\AA)$ into $(\hhat \X,\hhat
  \AA)$.\\
  We say that a measure-preserving embedding
  $\iota$ is an isomorphism of $(\X,\AA)$ onto $(\hat
  \X,\hat\AA)$
  if $\iota$ is an homeomorphism of $(X,\tau)$ onto $(\hat X,\hat
  \tau)$
  and $\iota^*(\hhat \AA)=\AA$.
\end{definition}
\begin{remark}[Canonical Lipschitz algebra]
  \label{rem:canonical}
  \upshape
  When $\AA=\Lip_b(X,\tau,\sfd)$ and $\AA'=\Lip_b(X',\tau',\sfd')$
  we simply say that $\iota$ is a measure preserving embedding of $\X$
  into $\X'$. In this case it is sufficient to check conditions
  (E1-2-3), since condition (E4) is a consequence of (E1-2).
  In this case $\iota$ is an isomorphism of
  $(\X,\AA)$ onto $(\iota(X),\hhat \tau,\hhat \sfd,\hhat \mm)$.
\end{remark}
\begin{example}
  \label{ex:embeddings}
  Let us show three simple examples of embeddings
  involving an e.m.t.m.~space $\X=(X,\tau,\sfd,\mm)$.
  \begin{enumerate}
  \item Let 
    $\tau'$ be a weaker topology than $\tau$, such that
    $(X,\tau',\sfd)$ is an e.t.m.~space.
    The identity map provides a measure-preserving embedding of
    $\X$ into $(X,\tau',\sfd,\mm)$.
  \item Let $\AA'\subset \AA$
    be two compatible sub-algebras of $\X$.
    The identity map provides a measure-preserving embedding of
    $(\X,\AA)$ into $(\X,\AA')$ (in particular
    when $\AA=\Lip_b(X,\tau,\sfd)$).
  \item By Corollary \ref{cor:auxiliary-Souslin},
    if $(X,\tau)$ is a
    Souslin space, one can always find
    a measure preserving embedding
    in the space $(X,\tau',\AA',\sfd)$ where
    $\AA'\subset \AA$ is countably generated and $\tau'$ is an
    auxiliary topology (thus
    metrizable
    and separable, coarser than $\tau$).
  \item Let $Y$ be any $\mm$-measurable subset of
    $X$ such that $\mm(X\setminus Y)=0$;
    we denote by $\tau_Y$ the relative topology of $Y$,
    $\sfd_Y$ the restriction of $\sfd$ to $Y\times Y$
    and $\mm_Y:=\mm\restr Y$. If $\AA$ is a compatible algebra for
    $\X$
    we define $\AA_Y:=\{f\restr Y:f\in \AA\}$ as
    the algebra obtained by
    the restriction to $Y$ of the elements of $\AA$.
    It is easy to check that
    $\Y=(Y,\tau_Y,\sfd,\mm\restr Y)$
    is an e.m.t.m.~space with a compatible algebra $\AA_Y$
    and the inclusion map
    $\iota:Y\to X$ is a measure-preserving embedding
    of $(\Y,\AA_Y)$ into $(\X,\AA)$.
  \end{enumerate}
\end{example}
Let us collect a few simple results concerning the corresponding
between measurable functions induced by a measure-preserving
embedding. Whenever $\hhat f:\hhat X\to \R$ we write
\begin{equation}
  \label{eq:349}
  f=\iota^* \hhat f:=\hhat f\circ\iota.
\end{equation}
Note that if $\iota(X)$ is $\hhat\tau$-dense in $\hhat X$
the pull back map $\iota^*$ is injective.  Independently from this
property, we will show that $\iota^*$ induces an isomorphism between
classes of measurable functions identified by $\hhat \mm$ and
$\mm$-a.e.~equivalence respectively.
We write $f_1\sim_\mm f_2$ if $\mm(\{f_1\neq f_2\})=0$
and we will denote by $[f]_\mm$ the equivalence class of a
$\mm$-measurable map $f$.
\begin{lemma}
  \label{le:measurable}
  Let $\iota:X\to \hhat X$ be a measure-preserving embedding of
  $(\X,\AA)$ into $(\hhat \X,\hhat \AA)$ according to the previous
  definition.
  \begin{enumerate}
  \item For every $\hhat \mm$-measurable function $f':X'\to \R$
    the function $\iota^* \hhat f$ is $\mm$-measurable and we have
    \begin{equation}
      \hhat {f_1}\sim_{\hhat \mm} \hhat {f_2}\quad
      \Leftrightarrow\quad
      \iota^*\hhat
    {f_1}
    \sim_\mm \iota^*\hhat{f_2}
    \label{eq:359}
  \end{equation}
\item The algebra $\AA^*:=\iota^*(\hhat \AA)$ is a sub-algebra of
    $\AA$ 
    which is compatible with the extended metric-measure space
    $\X$. 
  \item For every $p\in [1,+\infty]$
    $\iota^*$ induces a linear isomorphism between $L^p(\hhat
    X,\hhat\mm)$ and $L^p(X,\mm)$, whose inverse is denoted by
    $\iota_*$.
    For every $f\in \AA^*/\sim_{\mm}$
    the class $\iota_* f$ contains all the elements
    $f'\in \hhat\AA$ satisfying $\iota^*\hhat f=f$.
  \end{enumerate}
\end{lemma}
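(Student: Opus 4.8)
The plan is to reduce all three assertions to the defining properties (E1)--(E4) of a measure-preserving embedding. The preliminary remark I will use throughout is that, since $\iota$ is continuous (hence Borel), property (E3) gives $\mm(\iota^{-1}(\hhat B))=\hhat\mm(\hhat B)$ for every Borel $\hhat B\subset \hhat X$, and this extends to arbitrary $\hhat\mm$-measurable sets: choosing Borel sets $\hhat B_1\subset \hhat A\subset \hhat B_2$ with $\hhat\mm(\hhat B_2\setminus \hhat B_1)=0$ shows that $\iota^{-1}(\hhat A)$ is $\mm$-measurable with $\mm(\iota^{-1}(\hhat A))=\hhat\mm(\hhat A)$. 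Part (a) is then formal: if $\hhat f$ is $\hhat\mm$-measurable, then for every Borel $E\subset\R$ the set $(\iota^*\hhat f)^{-1}(E)=\iota^{-1}(\{\hhat f\in E\})$ is $\mm$-measurable, so $\iota^*\hhat f$ is $\mm$-measurable; and applying the displayed identity to the $\hhat\mm$-measurable set $\{\hhat f_1\ne \hhat f_2\}$, whose $\iota$-preimage is exactly $\{\iota^*\hhat f_1\ne\iota^*\hhat f_2\}$, yields $\mm(\{\iota^*\hhat f_1\ne\iota^*\hhat f_2\})=\hhat\mm(\{\hhat f_1\ne\hhat f_2\})$, which is the stated equivalence.

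For part (b), note that $\iota^*$ is a unital $\R$-algebra homomorphism (pull-back preserves linear combinations, products and the unit), so $\AA^*=\iota^*(\hhat\AA)$ is a unital subalgebra of $\AA$ by (E4), hence of $\Lipb(X,\tau,\sfd)$. To verify the compatibility identity \eqref{eq:214bis}, fix $x,y\in X$ and evaluate the compatibility identity of $\hhat\AA$ at the points $\iota(x),\iota(y)$, using the isometry (E2):
\[
  \sfd(x,y)=\hhat\sfd(\iota(x),\iota(y))
  =\sup_{\hhat f\in\hhat\AA_1}\big|\hhat f(\iota(x))-\hhat f(\iota(y))\big|
  =\sup_{\hhat f\in\hhat\AA_1}\big|\iota^*\hhat f(x)-\iota^*\hhat f(y)\big|.
\]
By (E2) once more, $|\iota^*\hhat f(x)-\iota^*\hhat f(y)|=|\hhat f(\iota(x))-\hhat f(\iota(y))|\le\hhat\sfd(\iota(x),\iota(y))=\sfd(x,y)$, so every $\iota^*\hhat f$ with $\hhat f\in\hhat\AA_1$ lies in $\AA^*_1$; hence the right-hand side above is squeezed between $\sfd(x,y)$ and $\sup_{g\in\AA^*_1}|g(x)-g(y)|\le\sfd(x,y)$, and \eqref{eq:214bis} holds for $\AA^*$.

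For part (c), the change-of-variables formula and (E3) give $\int_X|\iota^*\hhat f|^p\,\d\mm=\int_{\hhat X}|\hhat f|^p\,\d\hhat\mm$ for all $\hhat f$ and $p\in[1,\infty)$, while for $p=\infty$ the measurable-set identity from the first paragraph gives $\esssup_\mm|\iota^*\hhat f|=\esssup_{\hhat\mm}|\hhat f|$; together with part (a) this shows that $\iota^*$ descends to a well-defined linear isometry of $L^p(\hhat X,\hhat\mm)$ into $L^p(X,\mm)$, in particular injective. Its image is a closed subspace of $L^p(X,\mm)$ containing $\AA^*=\iota^*(\hhat\AA)$ (note $\hhat\AA\subset L^p(\hhat X,\hhat\mm)$, since its functions are bounded and $\hhat\mm$ is finite). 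For $p\in[1,\infty)$, part (b) makes $\AA^*$ a compatible subalgebra, so Lemma \ref{le:density} makes it dense in $L^p(X,\mm)$; a closed subspace containing a dense set is the whole space, so $\iota^*$ is onto and hence the announced isomorphism, with inverse $\iota_*$. The case $p=\infty$ reduces to $p=1$: given $f\in L^\infty(X,\mm)\subset L^1(X,\mm)$, pick $\hhat f\in L^1(\hhat X,\hhat\mm)$ with $\iota^*\hhat f=f$; then $\|\hhat f\|_{L^\infty(\hhat X,\hhat\mm)}=\|f\|_{L^\infty(X,\mm)}<\infty$, so $\hhat f\in L^\infty(\hhat X,\hhat\mm)$. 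The final assertion is immediate from (a): if $f\in\AA^*$ and $\hhat f,\hhat g\in\hhat\AA$ both satisfy $\iota^*\hhat f=\iota^*\hhat g=f$, then $\hhat f\sim_{\hhat\mm}\hhat g$, so $\iota_* f=[\hhat f]_{\hhat\mm}$ contains every such $\hhat g$.

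The only non-formal point is the surjectivity of $\iota^*$ onto $L^p(X,\mm)$; the route above funnels it through Lemma \ref{le:density} (so it is stated for $p<\infty$, with $p=\infty$ reduced to $L^1$). Should one prefer a self-contained argument, one can instead push a Borel representative $g$ of $f$ forward by hand: tightness of $\mm$ yields compacts $K_n\uparrow$ with $\mm(X\setminus K_n)\to0$; each $\iota|_{K_n}$ is a homeomorphism onto the compact set $\iota(K_n)$, so $\hhat S:=\bigcup_n\iota(K_n)$ is Borel with $\hhat\mm(\hhat X\setminus\hhat S)=0$ (here injectivity of $\iota$ gives $\iota^{-1}(\iota(K_n))=K_n$); then $\hhat g$, defined as $g\circ(\iota|_{\hhat S})^{-1}$ on $\hhat S$ and $0$ elsewhere, is Borel, belongs to $L^p(\hhat X,\hhat\mm)$, and satisfies $\iota^*\hhat g=g=f$ $\mm$-a.e. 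This is where the content of the lemma lies; everything else is bookkeeping with (E1)--(E4).
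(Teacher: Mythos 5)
Your proof is correct and follows essentially the same route as the paper: part (a) via the push-forward identity applied to the disagreement set $\{\hhat f_1\neq\hhat f_2\}$, part (b) by pulling back the compatibility identity of $\hhat\AA$ through the isometry (E2), and part (c) by combining the change-of-variables isometry from (E3) with the density of $\AA^*$ in $L^p(X,\mm)$ provided by Lemma \ref{le:density}. The extra details you supply — the extension of $\mm(\iota^{-1}(\hhat B))=\hhat\mm(\hhat B)$ to $\hhat\mm$-measurable sets, the explicit treatment of $p=\infty$ by reduction to $L^1$, and the optional hands-on construction of a Borel preimage via compacts $K_n$ — are sound refinements of points the paper leaves implicit, not a different argument.
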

\begin{proof}
  The proof of \textbf{(a)} is immediate: \eqref{eq:359} is a consequence of
  the fact that
  the set $\hhat N:=\{\hhat f_1\neq \hhat f_2\}$ satisfies
  \begin{displaymath}
    \iota^{-1}\hhat N= \{\iota^* f_1\neq \iota^* f_2\},\quad
    \mm(\iota^{-1}\hhat N)=\hhat\mm(N).
  \end{displaymath}
  \textbf{(b)}
  It is immediate to check that $\AA^*$ is a unital algebra included
  in $\AA$.
  It is not difficult to check that $\AA^*$ satisfies
  \eqref{eq:214bis}:
  by \eqref{eq:347} if $\hhat f\in \hhat\AA_1$ then
  $\iota^*\hhat f\in \AA^*_1$ and since $\hhat \AA$ is
  compatible with $\hhat \X$, for every $x,y\in X$
  \begin{align*}
    \sfd(x,y)\topref{eq:347}=&
    \hhat\sfd(\iota(x),\iota(y))
    =\sup_{\hhat f\in \hhat \AA_1}|\hhat f(\iota(x))-\hhat
    f(\iota(y))|
    =
    \sup_{\hhat f\in \hhat \AA_1}|\iota^*\hhat f(x)-\iota^*\hhat
    f(y)|
    \\=&
           \sup_{f\in \AA^*_1}|f(x)-f(y)|.
  \end{align*}
  %
%
  \textbf{(c)}
  Thanks to property (E3) of Definition \ref{def:embeddings} we
  have
  \begin{equation}
    \label{eq:355}
    \int_{\hhat X}\Phi(\hhat f)\,\d\hhat \mm=
    \int_{X}\Phi(\hhat f\circ \iota)\,\d \mm=
    \int_{X}\Phi(\iota^*\hhat f)\,\d \mm
  \end{equation}
  for every nonnegative continuous function $\Phi:\R\to [0,+\infty)$,
  so that $\iota^*$ induces a linear isometry from each $L^p(\hhat
  X,\hhat \mm)$ into $L^p(X,\mm)$. It is therefore sufficient to prove
  that $\iota^*$ is surjective; since $\iota^*$ is an isometry with
  respect to the $L^1$-norm,
  this is equivalent to the density of the image of $\iota^*$
  in $L^1(X,\mm)$.
  Since the image contains (the equivalence classes of elements in) $\AA^*$,
  the density follows by (b) and Lemma \ref{le:density}.
  The last statement is a consequence of \eqref{eq:359}.
\end{proof}
On of the most useful application
of the concept of measure-preserving embeddings
is the possibility to
construct a compactification $\X'$ of
$\X$ starting from a compatible algebra $\AA$.
As a byproduct, we will obtain a compatible algebra $\hhat \AA$ in $\hhat
\X$ such that
\begin{equation}
  \label{eq:352}
  \hhat f\in \hhat\AA\quad\Leftrightarrow\quad
  f=\hhat f\circ\iota\in \AA.
\end{equation}
As a general fact, every completely regular space $(X,\tau)$ can be
homeomorphically imbedded as a dense subset of a compact Hausdorff
space $\beta X$ (called the
Stone-Cech compactification, \cite[\S\,38]{Munkres00}),
where every function $f\in \rmC_b(X)$
admits a unique continuous extension. The Gelfand theory of Banach algebras applied to
$\rmC_b(X)$ provides one of the most effective construction of such a
compactification
and has the advantage to be well adapted to the 
setting of extended metric-topological spaces and compatible sub-algebras.

Let us briefly recall
the construction.
We consider $\AA$ as a vector subspace
of $\rmC_b(X,\tau)$
endowed with the $\sup$ norm $\|\cdot\|_\infty$
and we call $\overline \AA$ the (strong) closure of $\AA$
in $\rmC_b(X,\tau)$.
Since $(\AA,\|\cdot\|_\infty)$ is a normed space we can
consider the dual Banach space
$(\AA^*,\|\cdot\|_{\AA^*})$ endowed with the weak$^*$ topology $\hat\tau$
and the distinguished subset of characters.
\begin{definition}[Characters]
  A character of $\AA$ is an element $\varphi$ of $\AA^*\setminus \{0\}$
  satisfying
  \begin{equation}
    \label{eq:215}
    \varphi(fg)=\varphi(f)\varphi(g)\quad\text{for every }f,g\in \AA.
  \end{equation}
  We will denote by $\hat X$ the 
  subset of the characters of $\AA$. 
\end{definition}
\noindent
Let us first recall a preliminary list of useful properties of $\hat X$.
\begin{proposition}
  \label{prop:iota-description}
  Let us consider the set
\begin{equation}
  \label{eq:275}
  \Sigma:=\{\psi\in \AA^*: \|\psi\|_{\AA^*}\le 1,\
  \psi(f)\ge0
  \text{ for every }f\ge0,\ \psi(\unit)=1\}.
\end{equation}
  \begin{enumerate}
  \item $\Sigma$ is a weakly$^*$ compact convex subset of $\AA^*$
    contained in $  \{\psi\in \AA^*:\|\psi\|_{\AA^*}=1\}.$
  \item
    $\hat X$ is a (weakly$^*$) compact subset of $\Sigma$.
  \item Every point of $\hat X$ is an extremal point of $\Sigma$.
  \end{enumerate}
\end{proposition}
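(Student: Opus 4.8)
The plan is to establish the three claims in order using only Alaoglu's theorem, weak$^*$ continuity of evaluations, and the algebraic identities forced on a character, deliberately \emph{not} invoking Gelfand theory (which would require completeness of $(\AA,\|\cdot\|_\infty)$, not available here).

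For \emph{(a)}, convexity of $\Sigma$ is immediate, since each defining condition $\|\psi\|_{\AA^*}\le1$, $\psi(f)\ge0$ for $f\ge0$, and $\psi(\unit)=1$ is convex. For weak$^*$ compactness I would note that $\Sigma$ is contained in the closed unit ball $B$ of $\AA^*$, which is weak$^*$ compact by Banach--Alaoglu, and that $\Sigma$ is weak$^*$ closed: it is the intersection of $B$ with $\{\psi:\psi(\unit)=1\}$ and with the sets $\{\psi:\psi(f)\ge0\}$, for $f\in\AA$ with $f\ge0$, each of which is the preimage of a closed subset of $\R$ under the weak$^*$ continuous map $\psi\mapsto\psi(f)$. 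Finally, for $\psi\in\Sigma$ we have $1=\psi(\unit)\le\|\psi\|_{\AA^*}\|\unit\|_\infty=\|\psi\|_{\AA^*}\le1$, so $\|\psi\|_{\AA^*}=1$.

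For \emph{(b)} the first step is the inclusion $\hat X\subset\Sigma$. Given a character $\psi$, idempotence gives $\psi(\unit)=\psi(\unit)^2$, and $\psi\neq0$ rules out $\psi(\unit)=0$ (otherwise $\psi(f)=\psi(f)\psi(\unit)=0$ for all $f$), so $\psi(\unit)=1$. For the norm bound, for any $f\in\AA$ with $c:=\|f\|_\infty$ and any $n\in\N$ one has $|\psi(f)|^n=|\psi(f^n)|\le\|\psi\|_{\AA^*}\|f^n\|_\infty\le\|\psi\|_{\AA^*}\,c^n$, hence $|\psi(f)|\le c\,\|\psi\|_{\AA^*}^{1/n}\to c$, so $|\psi(f)|\le\|f\|_\infty$ and $\|\psi\|_{\AA^*}\le1$. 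For positivity, if $f\in\AA$ with $f\ge0$ and $c=\|f\|_\infty$, then $g:=c\unit-f$ has $\|g\|_\infty\le c$, so $|c-\psi(f)|=|\psi(g)|\le c$, i.e.\ $\psi(f)\ge0$; thus $\psi\in\Sigma$. Conversely every $\psi\in\Sigma$ that is multiplicative has $\psi(\unit)=1\neq0$, hence lies in $\hat X$, so $\hat X=\Sigma\cap\bigcap_{f,g\in\AA}\{\psi:\psi(fg)=\psi(f)\psi(g)\}$; each of these sets is weak$^*$ closed since $\psi\mapsto\psi(fg)-\psi(f)\psi(g)$ is weak$^*$ continuous. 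Therefore $\hat X$ is a weak$^*$ closed subset of the compact set $\Sigma$, hence weak$^*$ compact.

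For \emph{(c)}, let $\psi\in\hat X$ and suppose $\psi=t\psi_1+(1-t)\psi_2$ with $t\in(0,1)$ and $\psi_1,\psi_2\in\Sigma$; I must show $\psi_1=\psi_2=\psi$. Each $\psi_i$ is positive with $\psi_i(\unit)=1$, so $(f,g)\mapsto\psi_i(fg)$ is a symmetric positive semidefinite bilinear form on $\AA$ (nonnegativity because $h^2\ge0$ pointwise on $X$), and Cauchy--Schwarz yields $\psi_i(g)^2=\psi_i(g\cdot\unit)^2\le\psi_i(g^2)\psi_i(\unit)=\psi_i(g^2)$ for every $g\in\AA$. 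Now fix $f\in\AA$ and put $g:=f-\psi(f)\unit$; then $\psi(g)=0$ and, $\psi$ being multiplicative, $\psi(g^2)=\psi(g)^2=0$, so $0=t\psi_1(g^2)+(1-t)\psi_2(g^2)$ forces $\psi_1(g^2)=\psi_2(g^2)=0$. Cauchy--Schwarz then gives $\psi_i(g)=0$, i.e.\ $\psi_i(f)=\psi(f)\psi_i(\unit)=\psi(f)$; since $f$ was arbitrary, $\psi_1=\psi_2=\psi$, so $\psi$ is an extreme point of $\Sigma$.

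The step I expect to be the main obstacle is the norm bound and positivity in \emph{(b)}: because $\AA$ is only a normed (not complete) algebra, the automatic continuity of characters on a Banach algebra is not directly available, so these facts must be extracted by hand through the power identity $|\psi(f)|^n=|\psi(f^n)|$ and the truncation $g=c\unit-f$. One must also keep in mind throughout that ``$f\ge0$'' and the positivity of the $\psi_i$ refer to the pointwise order of functions on $X$, which is exactly what makes the positive semidefiniteness used in \emph{(c)} available.
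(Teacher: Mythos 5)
Your proof is correct, and in part (b) it takes a genuinely different route from the paper. The paper establishes positivity of a character first: it constructs, for uniformly positive $f\in\AA$, a square root $g\in\overline\AA$ via the binomial power series, gets $\varphi(f)=(\varphi(g))^2\ge0$ (implicitly extending $\varphi$ by continuity and multiplicativity to the closure $\overline\AA$), and only then deduces the norm bound $|\varphi(f)|\le\|f\|_\infty$ by comparison of $f$ with the constants $\pm\|f\|_\infty\unit$. You reverse the order: the spectral-radius-type estimate $|\psi(f)|^n=|\psi(f^n)|\le\|\psi\|_{\AA^*}\,\|f\|_\infty^n$ gives $\|\psi\|_{\AA^*}\le1$ directly, and positivity then follows from the truncation $g=\|f\|_\infty\unit-f$. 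This buys a more elementary, self-contained argument that never leaves $\AA$ and needs no square-root construction; what the paper's route buys is the explicit comparison principle \eqref{eq:274} ($-\alpha\le f\le\beta\Rightarrow-\alpha\le\varphi(f)\le\beta$), which it records for later use. (Your caveat about completeness is not really an obstruction to the paper's argument, since $\overline\AA$ is a closed subalgebra of $\rmC_b(X,\tau)$ and hence complete, and $\varphi\in\AA^*$ extends continuously; but your version avoids this extension step altogether.) Parts (a) and (c) are essentially the paper's arguments: in (c) the paper derives $(\psi(f))^2\le\psi(f^2)$ for $\psi\in\Sigma$ by the discriminant trick and expands the midpoint combination, while you invoke Cauchy--Schwarz for the form $(f,g)\mapsto\psi_i(fg)$ and center $f$ at $\psi(f)\unit$; these are the same inequality used in slightly different packaging, and your version handles an arbitrary coefficient $t\in(0,1)$ rather than only the (equivalent) midpoint case.
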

\begin{proof}
  \textbf{(a)}
  is an immediate consequence of Banach-Alaouglu-Bourbaki theorem.
  \medskip
  
  \noindent
  \textbf{(b)}  
  It is not difficult to check that every element $f\in \AA$ with
  $0<m\le f\le M$
  admits a nonnegative square root $g\in \overline \AA$
  such that $g^2=f$:
  it is sufficient to define $h:=1-f/M\in \AA$
  taking values in $[0,1-m/M]$
  and use the power series expansion of the square root function in $]0,2[$:
  \begin{displaymath}
    g=\sqrt M \sum_{n=0}^\infty \left({1/2\atop n}\right)
    (-1)^nh^n=
    \sum_{n=0}^\infty \left({1/2\atop n}\right)
    M^{1/2-n}(f-M)^n.
  \end{displaymath}
  The relation $\varphi(f)=(\varphi(g))^2$
  shows that $\varphi f\ge0$ and since
  every nonnegative $f\in \AA$ can be strongly approximated
  by uniformly positive elements
  we obtain that every $\varphi\in \hat X$ satisfies
  $\varphi(f)\ge0$ for every nonnegative $f\in \AA$.
  Moreover, since $\varphi\neq 0$,
  there exists an element $f\in \AA$ such that $\varphi(f)\neq0$;
  from
  $0\neq \varphi(f)=\varphi(f\unit)=\varphi(f)\varphi(\unit)$
  we deduce that $\varphi(\unit)=1$.
  By comparison we obtain
  \begin{equation}
    \label{eq:274}
    \varphi(\gamma\unit)=\gamma\quad\text{for every }\gamma\in \R;
    \quad
    -\alpha\le f\le \beta\quad\Rightarrow\quad
    -\alpha\le \varphi(f)\le \beta,
  \end{equation}
  and in particular
  \begin{equation}
    \label{eq:527}
    -\|f\|_\infty\le \varphi(f)\le \|f\|_\infty,
  \end{equation}
  so that every element of $\hat X$ is included in the weakly$^*$
  compact set $\Sigma$.
  Since condition \eqref{eq:215}
  characterize a weakly$^*$ closed set,
  we conclude that $\hat X$ is a compact Hausdorff space
  endowed with the weak$^*$ topology of $\AA^*$.
  \medskip
  
  \noindent\textbf{(c)}
  Whenever $\psi\in \Sigma$ and $f\in \AA$, the inequality
  \begin{displaymath}
    0\le \psi((f+\kappa \unit)^2)=
    \psi(f^2)+2\kappa \psi(f)+\kappa^2\psi(\unit)= 
    \psi(f^2)+2\kappa \psi(f)+\kappa^2
  \end{displaymath}
  for every $\kappa\in \R$ shows that
  \begin{displaymath}
    \big(\psi(f)\big)^2\le \psi(f^2)\quad\text{for every }\psi\in \Sigma.
  \end{displaymath}
  If $\hat X\ni \varphi=\frac 12\varphi_1+\frac 12\varphi_2$ with
  $\varphi_i\in \Sigma$, we obtain
  \begin{align*}
    \frac12 \varphi_1(f^2)+\frac12 \varphi_2(f^2)&=
                                                   \varphi(f^2)=
                                                   \big(\varphi(f)\big)^2=
                                                   \frac14 \big(\varphi_1(f)\big)^2+\frac14 \big(\varphi_2(f)\big)^2+
                                                   \frac 12 \varphi_1(f)\varphi_2(f)
    \\&\le 
    \frac14 \varphi_1(f^2)+\frac14 \varphi_2(f^2)+
    \frac 12 \varphi_1(f)\varphi_2(f)    
  \end{align*}
  thus showing that
  \begin{align*}
    \Big(\frac 12 \varphi_1(f)-\frac 12 \varphi_2(f)\Big)^2
    =\frac14 \varphi_1(f^2)+\frac14 \varphi_2(f^2)-
    \frac 12 \varphi_1(f)\varphi_2(f)
    \le 0
  \end{align*}
  for every $f\in \AA$ and therefore $\varphi_1=\varphi_2$.
\end{proof}
We can now define a canonical embedding
$\iota:X\to \hat X$ by
\begin{equation}
  \label{eq:216}
  \iota(x)=\hat x,\quad
  \hat x(f):=f(x)\quad\text{for every }x\in X,\ f\in \AA,
\end{equation}
and we call $\Gamma:\overline\AA\to \rmC_b(\hat X,\hat \tau)$ the
Gelfand transform
\index{Gelfand transform}
\index{Compactification of e.m.t.m.~spaces}
\begin{equation}
  \label{eq:218}
  \hat f=\Gamma f,\quad
  \hat f(\varphi):=\varphi(f)\quad\text{for every }\varphi\in \hat X.
\end{equation}
We will set $\hat \AA:=\Gamma(\AA)$.
\begin{theorem}[Gelfand compactification of extended metric
  topological
  measure
  spaces]\ 
  \label{thm:G-compactification}
  \begin{enumerate}
  \item $\iota$ is a continuous and injective,
    $\iota(X)$
    is a dense subset of the compact Hausdorff space
    $\hat X$ endowed with the weak$^*$ topology $\hat \tau$.
    If $\AA$ is separable w.r.t.~the uniform norm
    then
    $\hat X$ is metrizable.
  \item $\iota$ is a homeomorphism between $X$ and $\iota(X)$
    if and only if $\AA$ is adapted, i.e.~it generates the topology $\tau$.
  \item Every function $f\in \overline \AA$ admits a unique extension
    $\hat f=\Gamma f$ to $\hat X$ and the unital algebra
    $\hat \AA:=\Gamma(\AA)$ is uniformly dense in $\rmC_b(\hat X,\hat
    \tau)$.
    The pull back algebra $\AA^*$ coincides with $\AA$.

    \item The measure $\hat \mm:=\iota_\sharp \mm$ is a Radon measure on
      $\hat X$ 
      concentrated on the $\hat \mm$-measurable subset
      $\iota(X)$.
    \item If $I$ is the directed set of all the finite collections
    of functions in $\AA_1$, setting
    \begin{align}
      \label{eq:276}
      \hat \sfd_i(\varphi_1,\varphi_2):= {}&
      \sup_{f\in i}|\varphi_1(f)-\varphi_2(f)|=
      \sup_{f\in i} |\hat f_i(\varphi_1)-\hat f_i(\varphi_2)|,\\
      \label{eq:353}
      \hat \sfd(\varphi_1,\varphi_2):={}&\sup_{f\in
        \AA_1}|\varphi_1(f)-\varphi_2(f)|
      =\sup_{i\in I}\hat \sfd_i(\varphi_1,\varphi_2),
    \end{align}
    $\hat \sfd_i$ are continuous and bounded semidistances on
    $\hat X$
    and $\sfd$ is an extended distance on $\hat X$ 
    satisfying
    \begin{equation}
      \label{eq:277}
      \hat \sfd_i(\hat x,\hat y)=\sfd_i(x,y),
      \quad
      \hat \sfd(\hat x,\hat y)=\sfd(x,y) \quad\text{for every $x,y\in
        X$}.
    \end{equation}
  \item
    $\iota$ is a measure preserving embedding of $(\X,\AA)$
    into the compact extended metric measure space
    $\hhat \X:=(\hat X,\hat\tau,\hat\sfd,\hat\mm)$ endowed with
    the compatible algebra $\hat \AA=\Gamma (\AA)$.
  \item The map $\iota_*:L^p(X,\mm)\to L^p(\hat X,\hat\mm)$
    is the unique linear isometric extension of $\Gamma:\AA\to
    \hat\AA$ to $L^p(X,\mm)$ for every $p\in [1,\infty[$.
  \end{enumerate}
\end{theorem}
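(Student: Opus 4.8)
The proof is the Gelfand representation of the commutative unital Banach algebra $\overline\AA$, the uniform closure of $\AA$ in $\rmC_b(X,\tau)$, with Proposition~\ref{prop:iota-description} supplying the essential preliminary facts: $\hat X$ is weakly$^*$ compact and every $\varphi\in\hat X$ satisfies $\varphi(\unit)=1$ and $|\varphi(f)|\le\|f\|_\infty$. For \textbf{(a)} and \textbf{(c)}: $\iota$ is $\hat\tau$--continuous because $\hat\tau$ is generated by the evaluations $\varphi\mapsto\varphi(f)$, $f\in\AA$, and each $x\mapsto\iota(x)(f)=f(x)$ is $\tau$-continuous; it is injective because $\AA$ separates points through \eqref{eq:214bis}. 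The algebra $\hat\AA=\Gamma(\AA)$ is unital ($\Gamma\unit=\unit$), consists of $\hat\tau$-continuous functions, and separates the points of $\hat X$ (two characters agreeing on $\AA$ are equal); since $\hat X$ is compact, Stone--Weierstrass gives that $\hat\AA$ is uniformly dense in $\rmC_b(\hat X,\hat\tau)$. Density of $\iota(X)$ in $\hat X$ is then the one genuinely non-formal point: if $\varphi_0\notin\overline{\iota(X)}$, Urysohn's lemma on the compact Hausdorff space $\hat X$ together with the density of $\hat\AA$ yields $g\in\AA$ with $\Gamma g(\varphi_0)>\tfrac12$ while $|g(x)|=|\Gamma g(\iota(x))|<\tfrac12$ on $X$, hence $\|g\|_\infty\le\tfrac12$, contradicting $|\varphi_0(g)|\le\|g\|_\infty$. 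The extension statement is $\Gamma f\circ\iota=f$, with uniqueness from density and continuity; $\AA^\ast=\iota^\ast(\hat\AA)=\AA$ follows from $\iota^\ast\Gamma f=f$ (which also gives injectivity of $\Gamma$ on $\AA$); and metrizability of $\hat X$ when $\AA$ is $\|\cdot\|_\infty$-separable is the standard fact that weak$^*$-compact subsets of the dual of a separable Banach space are metrizable.

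\textbf{Parts (b), (d), (e).}
For \textbf{(b)}: if $\AA$ is adapted, $\tau$ is by definition the initial topology of $\AA$, which is exactly the topology induced on $X$ by pulling $\hat\tau$ back along $\iota$, so $\iota$ is a homeomorphism onto $\iota(X)$; conversely such a homeomorphism forces $\tau$ to equal that pulled-back topology, i.e.\ the topology generated by $\AA$. For \textbf{(d)}: $\mm$ is Radon, hence tight, so $\iota$ (continuous, a fortiori continuous on each compact set) is Lusin $\mm$-measurable and $\hat\mm:=\iota_\sharp\mm$ is Radon by \eqref{eq:487}; choosing compacts $K_n$ with $\mm(X\setminus K_n)\le 1/n$, the Borel set $\bigcup_n\iota(K_n)\subseteq\iota(X)$ satisfies $\hat\mm(\iota(K_n))=\mm(K_n)$ (injectivity of $\iota$), so it carries full $\hat\mm$-mass and $\hat\mm$ is concentrated on $\iota(X)$. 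For \textbf{(e)}: each $\hat\sfd_i$ is a finite supremum of $\hat\tau$-continuous functions, hence a bounded continuous semidistance; $\hat\sfd=\sup_i\hat\sfd_i$ is a $\hat\tau$-l.s.c.\ extended distance, separating points because characters do (rescaling any separating $f\in\AA$ into $\AA_1$). The family $(\hat\sfd_i)_{i\in I}$ is directed, increasing, and converges pointwise to $\hat\sfd$, and it induces $\hat\tau$, since $\varphi_n\to\varphi$ weakly$^*$ iff $\varphi_n(f)\to\varphi(f)$ for all $f\in\AA$, equivalently for all $f\in\AA_1$, equivalently $\hat\sfd_i(\varphi_n,\varphi)\to0$ for all $i$. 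Thus the converse implication of Lemma~\ref{rem:monotonicity} applies and $(\hat X,\hat\tau,\hat\sfd)$ is an e.m.t.~space; the identities $\hat\sfd_i(\hat x,\hat y)=\sfd_i(x,y)$ and $\hat\sfd(\hat x,\hat y)=\sfd(x,y)$ are immediate from $\hat x(f)=f(x)$ together with \eqref{eq:214bis}.

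\textbf{Parts (f), (g).}
For \textbf{(f)} I check the axioms of Definition~\ref{def:embeddings}: (E1) and (E2) are (a) and (e), (E3) holds by the very definition $\hat\mm:=\iota_\sharp\mm$, and (E4) holds since $\iota^\ast\Gamma g=g\in\AA$. Moreover $\Gamma$ carries $\AA_1$ \emph{onto} $\hat\AA_1$, because $\Lip(\Gamma g,\hat\sfd)=\Lip(g,\sfd)$ (the inequality $\le$ from the definition of $\hat\sfd$, the inequality $\ge$ by restricting to pairs $\hat x,\hat y$ and using (e)); hence $\sup_{\hat f\in\hat\AA_1}|\hat f(\varphi_1)-\hat f(\varphi_2)|=\sup_{g\in\AA_1}|\varphi_1(g)-\varphi_2(g)|=\hat\sfd(\varphi_1,\varphi_2)$, so $\hat\AA$ is a compatible algebra for $\hat\X=(\hat X,\hat\tau,\hat\sfd,\hat\mm)$, and since $(\hat X,\hat\tau)$ is compact, $(\hat\X,\hat\AA)$ is a compactification of $(\X,\AA)$. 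For \textbf{(g)}: Lemma~\ref{le:measurable}(c) gives that $\iota^\ast$ is a linear isometry of $L^p(\hat X,\hat\mm)$ onto $L^p(X,\mm)$ for every $p\in[1,\infty)$, with inverse $\iota_\ast$; on $\AA$ one has $\iota_\ast f=\Gamma f$, and $\AA$ is dense in $L^p(X,\mm)$ by Lemma~\ref{le:density}, so $\iota_\ast$ is the unique continuous (hence isometric) extension of $\Gamma$.

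\textbf{Main obstacle.}
The only step that is not soft topology, Stone--Weierstrass, or a direct appeal to lemmas already proved is the density of $\iota(X)$ in $\hat X$, i.e.\ the statement that characters of $\AA$ are exhausted by limits of point evaluations. The argument above keeps this elementary by combining the density of $\hat\AA$ in $\rmC_b(\hat X)$ with the norm bound $|\varphi(f)|\le\|f\|_\infty$ of Proposition~\ref{prop:iota-description}; one could instead complexify $\overline\AA$ to a commutative unital C$^\ast$-algebra inside $\rmC_b(X,\C)$ and invoke Gelfand--Naimark directly, but that machinery is heavier than what is needed here.
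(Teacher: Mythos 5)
Your proposal is correct, and for everything except one step it runs parallel to the paper's proof: continuity and injectivity of $\iota$, Stone--Weierstrass for $\hat\AA$, the net/initial-topology characterization in (b), push-forward of a tight measure in (d), the directed family $(\hat\sfd_i)$ together with the converse part of Lemma~\ref{rem:monotonicity} in (e)--(f) (which the paper dismisses as ``immediate''), and the isometry-plus-density argument via Lemma~\ref{le:measurable} and Lemma~\ref{le:density} in (g) all match. The genuine divergence is the density of $\iota(X)$ in $\hat X$, which is indeed the only non-formal point. The paper first shows, via the second geometric form of Hahn--Banach and the positivity of characters, that every $\psi\in\hat X$ lies in the weak$^*$ closed convex hull of $\iota(X)$, and then passes from the convex hull back to the closure of $\iota(X)$ itself by invoking the extremality of characters in $\Sigma$ (Proposition~\ref{prop:iota-description}(c)) together with Milman's theorem. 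You instead exploit the fact that the Stone--Weierstrass density of $\hat\AA$ in $\rmC_b(\hat X,\hat\tau)$ is available \emph{before} knowing density of $\iota(X)$ (separation of characters by $\hat\AA$ is automatic), and combine it with Urysohn's lemma on the compact Hausdorff space $\hat X$ and the bound $|\varphi(f)|\le\|f\|_\infty$ of \eqref{eq:527}: a character vanishing-approximately on $\iota(X)$ but not at $\varphi_0$ would violate that bound. Your route is more elementary -- it needs neither Milman's theorem nor the extreme-point analysis, so Proposition~\ref{prop:iota-description}(c) becomes superfluous for this theorem -- while the paper's route yields as a by-product the structural fact that $\hat X$ sits in the extremal set of $\Sigma$, which is what the subsequent Remark on the ``universal model'' exploits. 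Both arguments use only facts already established at that point, so the logical order in your write-up (Stone--Weierstrass first, density second) is sound.
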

\begin{proof}
  \textbf{(a)}
  Since the weak$^*$ topology is the coarsest topology that makes all
  the maps $\varphi\mapsto \varphi(f)$ continuous for every
  $f\in \AA$, the continuity of $\iota$ is equivalent to check the
  continuity of $x\mapsto f(x)$ from $(X,\tau)$ to $\R$, which is
  guaranteed by the $\tau$-continuity of $f$.  $\iota$ is also
  injective, since $\AA$ separates the points of $X$.

  Let us now prove that $\iota(X)$ is dense in $\hat X$.  Let us
  denote by $Y$ the weak$^*$ closure of $\iota(X)$ and let us first
  consider the closed convex hull $K:=\overline{\mathrm{co}(Y)}$
  of $Y$
  in the weak$^*$ topology.  Since $K$ is bounded and weakly$^*$
  closed, $K$ is compact.  If a point $\psi\in \hat X$ does not
  belong to $K$ we can apply the second geometric form of Hahn-Banach
  Theorem \cite[Thm.~3.21]{Rudin91} to find $f\in \overline{\AA}$
  separating $\psi$ from $K$: there exists $\alpha\in \R$ such that
  $\zeta(f)\ge\alpha>\psi(f)$ for every
  $\zeta\in K$. 
  Choosing $\zeta=\hat z=\iota(z)$ for $z\in X$ we
  deduce that $f(z)\ge\alpha$ for every $z\in X$ and therefore
  $\psi(f)\ge \alpha$ since $\psi$ is
  a nonnegative functional.

  Thus $\psi\in \overline{\mathrm{co}(Y)}$; since $Y\subset \Sigma$,
  also $\overline{\mathrm{co}(Y)}\subset \Sigma$; since $\psi$ is an
  extreme point of $\Sigma$, we deduce that $\psi$ is an extreme point
  of $\overline{\mathrm{co}(Y)}$; applying Milman's theorem
  \cite[Thm.~3.25]{Rudin91} we conclude that $\psi\in Y$.

  Finally, if $\AA$ is separable then the unit ball of $\AA^*$
  endowed with the weak$^*$ topology is metrizable
  so that $\hat X$ is metrizable as well.
  \medskip
  
  \noindent
  \textbf{(b)}
  It is easy to check that $\hat f\in \rmC_b(\hat X)$ and that 
  $\hat \AA$ is an algebra. Clearly $\hat \unit$ is the unit function
  in $\rmC_b(\hat X)$ and $\hat \AA$ separates the points of
  $\hat X$: if $\varphi_i\in \hat X$ satisfies
  $\varphi_1(f)=\varphi_2(f)$ for every $f\in \AA$, then 
  $\varphi_1=\varphi_2$ in $\AA^*$. 
  Applying Stone-Weierstrass Theorem, we conclude that $\hat \AA$ 
  is dense in $\rmC_b(\hat X)$.
  By construction $\Gamma$ is a isomorphism between $\AA$ and $\hat
  \AA$,
  and $\iota^*$ is its inverse.
  \medskip
  
  \noindent
  \textbf{(c)}
  Since $(\hat X,\hat \tau)$ is compact
  and $\hat \AA$ separates the points of $\hat X$,
  $\hat \tau$ is the initial topology of $\hat \AA$.
  Thus a net $\hat x_i=\iota(x_i)$ in
  $\iota(X)$, $i\in I$, converges to $\hat x=\iota(x)$
  if and only if
  $\lim_{i\in I} \hat f(\hat x_i)=\hat f(\hat x)$ for every $\hat f\in
  \hat \AA$. By \eqref{eq:216} and \eqref{eq:218} the latter property
  is equivalent to
  $\lim_{i\in I}f(x_i)=f(x)$ for every $f\in \AA$,
  so $\iota$ is a homeomorphism between $X$ endowed with the initial
  topology of $\AA$ and $\iota(X)$.
  %
  %
  \medskip
  
  \noindent
  \textbf{(d)}
  This is a general property for the push-forward of Radon measures
  through a continuous map:
  since $\mm$ is tight, we can find an increasing sequence of compact
  sets
  $K_n\subset X$ such that $\mm(X\setminus K_n)\le 1/n$.
  Since $\iota$ is continuous, $\hat K_n=\iota(K_n)\subset \iota(X)$ are compact in
  $\hat X$ and $\hat \mm(\hat X\setminus \hat K_n)\le 1/n$
  so that $\hat\mm(\hat X \setminus \hat K)=0$ where
  $\hat K=\cup_n \hat K_n\subset \iota(X)$.
  \medskip
  
  \noindent
  \textbf{(e)} and \textbf{(f)} are immediate.
  \medskip
  
  \noindent\textbf{(g)}
  Thanks to Lemma \ref{le:measurable} the Gelfand isomorphism
  $\Gamma$ preserves the equivalence classes in the sense that
  for every $f_1,f_2\in \AA$,
  \begin{displaymath}
    f_1\sim_\mm f_2\quad\Leftrightarrow\quad
    \hat f_1\sim_{\hat \mm} \hat f_2
  \end{displaymath}
  so that
  \begin{displaymath}
    \iota_* ([f]_\mm)=[\hat f]_{\hat\mm}\quad\text{for every }f\in \AA.
  \end{displaymath}
  Since $\iota_*$ is a linear isometry
  and the equivalence classes of elements of
  $\AA$ are dense in $L^p(X,\mm)$ we conclude.  
\end{proof}
\begin{remark}[A universal model]
  The compactification $\hat \X=(\hat X,\hat \tau,\hat\sfd,\hat\mm)$
  with the Gelfand algebra $\hat \AA$
  is a particular case of the case considered Example \ref{ex:model},
  where $B$ is the space $\overline \AA$, $L=\overline{\AA_1}$
  and $\hat X$ is a weakly$^*$ compact subset of the dual ball of $B'$
  (in fact concentrated on its extremal set).
  It follows that any e.m.t.m.~space has a
  measure-preserving isometric immersion in a space
  with the characteristics of Example \ref{ex:model}.
\end{remark}
The previous construction is also useful to quickly get
a completion of an e.m.t.m.~space.
\index{Completion of an e.m.t.m.~space}
We start from the compactification
$\hat \X=(\hat X,\hat \tau,\hat\sfd,\hat\mm)$
of $(\X,\AA(\X))$ obtained by the canonical algebra
$\Lip_b(X,\sfd,\mm)$ and 
we set:
\begin{equation}
  \label{eq:529}
  \text{$\bar X:=$ the $\hat \sfd$-closure
  of $\iota(X)$ into $\hat X$}.
\end{equation}
We obviously define
$\bar \tau$, $\bar \sfd$ and $\bar \mm$
respectively as the restrictions of $\hat \tau$,
$\hat \sfd$, and $\hat \mm$ to $\bar X$.
\begin{corollary}[Completion]
  \label{cor:completion}
  The map $\iota:X\to \bar X$
  is a measure preserving embedding
  of $\X$ into the e.m.t.m.~space
  $\bar \X=(\bar X,\bar \tau,\bar \sfd,\bar\mm)$
  such that
  \begin{enumerate}[(C1)]
  \item $(\bar X,\bar \sfd)$ is complete,
  \item $\iota$ is a homeomorphism of
    $X$ onto $\iota(X)$,
  \item $\iota(X)$ is $\bar \sfd$-dense in $\bar X$,
  \item every function $f\in \Lip_b(X,\tau,\sfd)$ admits a unique
    extension to a function
    $\bar f\in \Lip_b(\bar X,\bar \tau,\bar \sfd)$.
    The map $f\mapsto \bar f$ is an isomorphism of
    $\Lip_b(X,\tau,\sfd)$ onto $\Lip_b(\bar X,\bar \tau,\bar \sfd)$.
  \end{enumerate}
  The space $\bar \X:=(\bar X,\hat \tau,\hat \sfd,\hat \mm)$ 
  is a completion of $\X$.
\end{corollary}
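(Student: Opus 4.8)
The plan is to deduce almost everything from the Gelfand compactification $\hat\X=(\hat X,\hat\tau,\hat\sfd,\hat\mm)$ of $(\X,\AA(\X))$ built in Theorem~\ref{thm:G-compactification} (with $\AA(\X)=\Lipb(X,\tau,\sfd)$), reserving the only genuine argument for the completeness claim. First I would record that $\bar\X=(\bar X,\bar\tau,\bar\sfd,\bar\mm)$ is an e.m.t.m.~space and that $\iota$ embeds $\X$ into it. Indeed, by Theorem~\ref{thm:G-compactification}(d) the set $\iota(X)$ is $\hat\mm$-measurable with $\hat\mm(\hat X\setminus\iota(X))=0$, hence so is $\bar X\supset\iota(X)$, and Example~\ref{ex:embeddings}(d) applied with $Y=\bar X$ gives that $(\bar X,\bar\tau,\bar\sfd,\bar\mm)$ (the restrictions of $\hat\tau,\hat\sfd,\hat\mm$) is an e.m.t.m.~space. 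Since $\iota(X)\subset\bar X$ by the definition \eqref{eq:529} of $\bar X$, the corestriction $\iota:X\to\bar X$ is $\tau$-to-$\bar\tau$ continuous and injective (Theorem~\ref{thm:G-compactification}(a)), is an isometry by \eqref{eq:277}, and satisfies $\iota_\sharp\mm=\hat\mm=\bar\mm$ (Theorem~\ref{thm:G-compactification}(d), as $\hat\mm$ is concentrated on $\iota(X)\subset\bar X$): these are (E1)--(E3), so $\iota$ is a measure-preserving embedding (Remark~\ref{rem:canonical}). Because $\Lipb(X,\tau,\sfd)$ generates $\tau$ by axiom (X1), i.e.~is adapted, Theorem~\ref{thm:G-compactification}(b) yields that $\iota$ is a homeomorphism of $X$ onto $\iota(X)$, which is (C2); and (C3) is immediate from \eqref{eq:529}.

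The core of the proof is (C1). Let $(x_n)\subset\bar X$ be $\bar\sfd$-Cauchy, hence $\hat\sfd$-Cauchy in $\hat X$. Since $(\hat X,\hat\tau)$ is compact, $(x_n)$ has a $\hat\tau$-cluster point $\bar x\in\hat X$. I claim $\hat\sfd(x_n,\bar x)\to 0$. Fix $\eps>0$ and $N$ with $\hat\sfd(x_n,x_m)\le\eps$ for all $n,m\ge N$. For each fixed $n\ge N$, the map $y\mapsto\hat\sfd(x_n,y)$ is $\hat\tau$-lower semicontinuous by \eqref{eq:243}, so $\{y\in\hat X:\hat\sfd(x_n,y)\le\eps\}$ is $\hat\tau$-closed and contains the tail $\{x_m:m\ge N\}$; hence it contains the cluster point $\bar x$, i.e.~$\hat\sfd(x_n,\bar x)\le\eps$. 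Letting $\eps\downarrow0$ gives $x_n\to\bar x$ in $\hat\sfd$. Finally $\bar x\in\bar X$: each $x_k$ is $\hat\sfd$-approximable by points of $\iota(X)$ and $\bar x$ is $\hat\sfd$-approximable by the $x_k$'s, so by the triangle inequality $\bar x$ lies in the $\hat\sfd$-closure of $\iota(X)$, which is $\bar X$. Thus every $\bar\sfd$-Cauchy sequence in $\bar X$ is $\bar\sfd$-convergent in $\bar X$.

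For (C4), note first that $\iota(X)$ is $\bar\tau$-dense in $\bar X$: a point of $\bar X$ is a $\hat\sfd$-limit of a sequence in $\iota(X)$, hence a $\hat\tau$-limit of it by \eqref{eq:244}. Given $f\in\Lipb(X,\tau,\sfd)$ with $L:=\Lip(f,\sfd)$, Theorem~\ref{thm:G-compactification}(c) provides $\hat f=\Gamma f\in\rmC_b(\hat X,\hat\tau)$ with $\hat f\circ\iota=f$; put $\bar f:=\hat f\restr{\bar X}\in\rmC_b(\bar X,\bar\tau)$. On $\iota(X)$ one has $|\bar f(\iota x)-\bar f(\iota y)|=|f(x)-f(y)|\le L\,\sfd(x,y)=L\,\hat\sfd(\iota x,\iota y)$ by \eqref{eq:277}; approximating arbitrary $\bar x,\bar y\in\bar X$ by $\iota(x_n),\iota(y_n)$ in $\hat\sfd$ (hence in $\hat\tau$), using the $\hat\tau$-continuity of $\hat f$ and the triangle inequality $\limsup_n\hat\sfd(\iota x_n,\iota y_n)\le\hat\sfd(\bar x,\bar y)$, one gets $\Lip(\bar f,\bar\sfd)\le L$; the reverse inequality follows from $f=\bar f\circ\iota$ and the fact that $\iota$ is an isometry, so $\Lip(\bar f,\bar\sfd)=\Lip(f,\sfd)$ and $\bar f\in\Lipb(\bar X,\bar\tau,\bar\sfd)$. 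Uniqueness of such an extension is immediate from $\bar\tau$-density of $\iota(X)$. The map $f\mapsto\bar f$ is then an algebra morphism (by uniqueness of extensions, e.g.~$\bar f+\bar g$ extends $f+g$, hence equals $\overline{f+g}$), injective since $f=\bar f\circ\iota$, and surjective: for $h\in\Lipb(\bar X,\bar\tau,\bar\sfd)$ the function $h\circ\iota$ lies in $\Lipb(X,\tau,\sfd)$ ($\iota$ is a continuous isometric injection) and $h$ is its unique extension. Together with (C1)--(C3) and the embedding property of $\iota$, this shows that $\bar\X$ (endowed with the restricted structure) is a completion of $\X$.

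The hard part is Step 2: everything else is bookkeeping on top of Theorem~\ref{thm:G-compactification} and Example~\ref{ex:embeddings}(d), whereas (C1) genuinely requires combining $\hat\tau$-compactness of the compactification with the $\hat\tau$-lower semicontinuity of the (possibly extended) distance $\hat\sfd$ to promote a $\hat\tau$-cluster point of a $\hat\sfd$-Cauchy sequence to a $\hat\sfd$-limit, and then checking it remains in the $\hat\sfd$-closed set $\bar X$.
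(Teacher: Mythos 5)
Your proposal is correct and follows exactly the route the paper intends: Corollary \ref{cor:completion} is stated without proof as an immediate consequence of the construction \eqref{eq:529} and Theorem \ref{thm:G-compactification}, which is precisely how you argue. The details you supply beyond that — promoting a $\hat\tau$-cluster point of a $\hat\sfd$-Cauchy sequence to a $\hat\sfd$-limit via compactness and the lower semicontinuity \eqref{eq:243} for (C1), and the density/extension argument with \eqref{eq:244} and \eqref{eq:277} for (C4) — are sound and fill in exactly what the paper leaves implicit.
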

Notice that in the simple case when $\tau$ is the topology induced by
the distance $\sfd$, the previous construction coincides with the
usual
completion of a metric space.
\begin{remark}
  \label{rem:better-interpretation}
  It is not difficult to check that if another e.m.t.m.~space
  $\X'$ satisfies the properties (C1)-(C4) of Corollary
  \ref{cor:completion} then $\X'$ is isomorphic to $\bar \X$,
  so that the completion of an e.m.t.m.~space is unique
  up to isomorphisms.
  We may identify $X$ with
  the $\bar \sfd$-dense subset $\iota(X)$ in $\bar X$,
  so that for every function $f':\bar X\to \R$,
  $\iota^* f'$ is just the restriction of $f'$
  to the $\bar\mm$-measurable set $X$ and $\mm$ can be considered
  as the restriction of $\bar \mm$ to $X$.
  Since $\bar\mm(\bar X\setminus X)=0$, we can
  identify $\bar \mm$ and $\mm$.
  Every function $f:X\to \R$
  can be considered defined $\bar\mm$-a.e.
  and the trivial extension
  provides a realization of $\iota_*$
  (which of course does not coincides with the extension
  $\bar f$ 
  of $f\in \Lip_b(X,\tau,\sfd)$ by continuity).
\end{remark}
We conclude this section with an example of application of
the compactification trick to prove
a useful approximation result.
\begin{corollary}
  \label{cor:adapted-is-better}
  Let $\X=(X,\tau,\sfd,\mm)$ be an e.m.t.m.~space
  endowed with an \emph{adapted} algebra $\AA$.
  Then for every bounded upper semicontinuous function $g:X\to \R$
  \begin{align}
    \label{eq:537}
    g(x)&=\inf\Big\{h(x): h\in \AA,\ h\ge g\Big\},
          \\
    \label{eq:533}
    \int_X g\,\d\mm&=
    \inf \Big\{\int_X h(x)\,\d\mm: h\in \AA,\ h\ge g\Big\}.
  \end{align}
\end{corollary}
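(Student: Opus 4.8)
The plan is to prove the two identities in succession; in each of them one inequality is immediate from $h\ge g$ (namely $g(x)\le h(x)$, resp.\ $\int_X g\,\d\mm\le \int_X h\,\d\mm$), so only the opposite inequalities need an argument. I would first establish the pointwise identity \eqref{eq:537} by an elementary separation argument that uses that $\AA$ is \emph{adapted}, and then derive the integral identity \eqref{eq:533} by transporting the question to the Gelfand compactification, where the ambient algebra $\rmC_b(\hat X)$ is a lattice and the net form of monotone convergence is available.

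For \eqref{eq:537}, fix $x_0\in X$, $\eps>0$ and $M:=\sup_X|g|$. Upper semicontinuity of $g$ makes $V:=\{x:g(x)<g(x_0)+\eps\}$ an open $\tau$-neighbourhood of $x_0$, and since $\AA$ generates $\tau$ there are $f_1,\dots,f_n\in\AA$ and $\delta>0$ with $\{x:|f_k(x)-f_k(x_0)|<\delta,\ 1\le k\le n\}\subseteq V$. Then $\psi:=\sum_{k=1}^n\bigl(f_k-f_k(x_0)\unit\bigr)^2$ belongs to $\AA$ (which is unital), is nonnegative, vanishes at $x_0$, and is $\ge\delta^2$ on $X\setminus V$; hence $h:=(g(x_0)+\eps)\unit+(3M/\delta^2)\,\psi\in\AA$ satisfies $h\ge g$ on $X$ (on $V$ because $\psi\ge0$ and $h\ge g(x_0)+\eps$; on $X\setminus V$ because there $h\ge g(x_0)+\eps+3M\ge M\ge g$) and $h(x_0)=g(x_0)+\eps$. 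Taking the infimum over such $h$ and letting $\eps\downarrow0$ yields \eqref{eq:537}.

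For \eqref{eq:533} I would invoke Theorem \ref{thm:G-compactification}: let $\iota\colon X\to\hat X$ be the Gelfand embedding associated with $\AA$, with $\hat X$ compact, $\hat\mm=\iota_\sharp\mm$ and $\hat\AA=\Gamma(\AA)$ uniformly dense in $\rmC_b(\hat X)$; the change-of-variables formula for push-forwards gives $\int_{\hat X}\hat\varphi\,\d\hat\mm=\int_X\hat\varphi\circ\iota\,\d\mm$ for every bounded Borel $\hat\varphi$, in particular $\int_X h\,\d\mm=\int_{\hat X}\Gamma h\,\d\hat\mm$ for $h\in\AA$ because $(\Gamma h)\circ\iota=h$. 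I would then set $\hat g:=\inf\{\Gamma h:h\in\AA,\ h\ge g\}$ on $\hat X$; being an infimum of continuous functions it is upper semicontinuous (hence Borel), it is bounded with $-M\le\hat g\le M$ (characters are order-preserving on $\AA$), and \eqref{eq:537} gives $\hat g\circ\iota=g$. Next I would note that $\hat g$ is the pointwise infimum of the family $\cG:=\{\varphi\in\rmC_b(\hat X):\hat g\le\varphi\le M\}$, which is nonempty, stable under pairwise minima, hence downward directed, and contains $\Gamma h\wedge M$ for every $h\ge g$. Applying the monotone convergence theorem for Radon measures \eqref{eq:Beppo_Levi_general} to the nondecreasing, equibounded net of lower semicontinuous functions $(M-\varphi)_{\varphi\in\cG}$ gives $\int_{\hat X}\hat g\,\d\hat\mm=\inf_{\varphi\in\cG}\int_{\hat X}\varphi\,\d\hat\mm$. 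Finally, for $\varphi\in\cG$ and $\eta>0$, uniform density of $\hat\AA$ produces $h\in\AA$ with $\|\Gamma h-\varphi\|_\infty\le\eta$, so $h+\eta\unit\in\AA$, $\Gamma(h+\eta\unit)=\Gamma h+\eta\ge\varphi\ge\hat g$, whence $h+\eta\unit\ge g$ on $X$ (restrict to $\iota(X)$) and $\int_X(h+\eta\unit)\,\d\mm=\int_{\hat X}(\Gamma h+\eta)\,\d\hat\mm\le\int_{\hat X}\varphi\,\d\hat\mm+2\eta\,\mm(X)$. Taking the infimum over $\varphi\in\cG$, letting $\eta\downarrow0$, and using $\int_{\hat X}\hat g\,\d\hat\mm=\int_X\hat g\circ\iota\,\d\mm=\int_X g\,\d\mm$ delivers the required inequality.

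The genuinely delicate point — and the reason for passing to the compactification at all — is that $\AA$ is an algebra but not a lattice, so the monotone limit cannot be performed inside $\AA$ directly; on the compact model $\rmC_b(\hat X)$ \emph{is} a lattice, the net version \eqref{eq:Beppo_Levi_general} of monotone convergence applies to it, and $\hat\AA$ enters only at the last, uniform-approximation step. The routine checks to watch are: truncating the approximating family at level $M$ so that it is equibounded (otherwise \eqref{eq:Beppo_Levi_general} does not apply), that $\hat g$ is bounded and upper semicontinuous, and that $\hat g\circ\iota=g$, which is precisely the content of \eqref{eq:537}.
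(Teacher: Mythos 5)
Your proof is correct. For the pointwise identity \eqref{eq:537} you take a genuinely different route from the paper: you give a direct separation argument inside $X$, exploiting that $\AA$ is unital and closed under products to build the penalization $\psi=\sum_k(f_k-f_k(x_0)\unit)^2$ from a finite family generating a basic $\tau$-neighbourhood of $x_0$ contained in $\{g<g(x_0)+\eps\}$; the paper instead obtains \eqref{eq:537} only through the Gelfand compactification, by identifying $g$ with the restriction of its u.s.c.\ envelope $\hat g$ on $\hat X$ (here adaptedness enters via the homeomorphism onto $\iota(X)$), applying the u.s.c.\ analogue of \eqref{eq:324lsc} on $\hat X$, and then using uniform density of $\Gamma(\AA)$ in $\rmC_b(\hat X)$. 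Your elementary argument is self-contained and shows that \eqref{eq:537} needs nothing beyond the algebra structure and the fact that $\AA$ generates $\tau$, while the paper's route has the advantage of treating \eqref{eq:537} and \eqref{eq:533} in one sweep. For the integral identity your argument essentially coincides with the paper's (push forward to $\hat X$, downward-directed family of continuous majorants, the net form \eqref{eq:Beppo_Levi_general} of monotone convergence, and a final uniform approximation by $\Gamma(\AA)$); in fact you are slightly more careful than the printed proof on two routine points, namely truncating the majorants at $M$ so the net is equibounded, and shifting the uniform approximant by $\eta\unit$ so that it dominates $g$, a step the paper's phrase ``approximated from above'' leaves implicit.
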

\begin{proof}
  Let $\iota:X\to \hat X$ be the compactification
  of $\X$ induced by $\AA$ and let $\hat g:\hat X\to \R$
  be the upper semicontinuous envelope of $g$ to $\hat X$,
  i.e.~the lowest upper semicontinuous function
  whose restriction to $X$ is greater or equal to $g$.
  We know that for every $\hat x=\iota(x)\in X$
  \begin{displaymath}
    \hat  g(\hat x)=
    \inf_{U\in \UU_{\hat x}} \sup \big\{g(y):y\in X,\ \iota(y)\in
    U\big\}
    =g(x)
  \end{displaymath}
  since $g$ is upper semicontinuous; here we use the fact that
  $\iota$ is an homeomorphism between $(X,\tau)$
  and $(\iota(X),\hat \tau)$, since $\AA$ is adapted).
  Thanks to (the u.s.c.~version of) \eqref{eq:324lsc}
  we deduce that
  \begin{displaymath}
    \hat g(\hat x)=\inf \big\{h(\hat x):h\in \rmC_b(\hat X,\hat
    \tau),\ h\ge \hat g\big\}
  \end{displaymath}
  and since $\hat \AA=\Gamma(\AA)$ is uniformly dense in $\rmC_b(\hat X,\hat
  \tau)$ we deduce the formula
  \begin{displaymath}
    \hat g(\hat x)=\inf \big\{\hat h(\hat x):h\in \AA,\ h\ge
    g\text{ in $X$}\big\}
  \end{displaymath}
  which \eqref{eq:537}.
  Since $H:=\{h\in \rmC_b(\hat X,\hat
  \tau),\ h\ge \hat g\big\}$ is a directed set
  by the order relation $h_0\prec h_1 \ \Leftrightarrow
  h_0(x)\le h_1(x)\text{ for every }x\in \hat X$,
  \eqref{eq:Beppo_Levi_general} yields
  \begin{equation}
    \label{eq:538}
    \int_{\hat X}\hat g\,\d\hat \mm=
    \lim_{h\in H}\int_{\hat X} h\,\d\hat \mm
    =
    \inf_{h\in H}\int_{\hat X}h \,\d\hat \mm.
  \end{equation}
  Since $\Gamma(\AA)$ is uniformly dense in $\rmC_b(\hat X,\hat
  \tau)$,
  every function $h\in H$ can be uniformly approximated
  from above by functions of the forms $\hat f=\Gamma(f)$ 
  for $f\in \AA$, $\Gamma(f)\ge h\ge \hat g$; we deduce that 
  \begin{displaymath}
    \int_{\hat X}\hat g\,\d\hat \mm
    =
    \inf_{f\in \AA,\ f\ge g}\int_{\hat X}\hat f \,\d\hat \mm=
    \inf_{f\in \AA,\ f\ge g}\int_{X} f \,\d \mm.\qedhere
  \end{displaymath}
\end{proof}

\nc
\newcommand{\Para}[1]{\medskip \noindent{\bfseries \S\ #1}}
\newenvironment{notes}{\small}{}
\subsection{Notes}
\label{subsec:notes2}
\begin{notes}

\Para{\ref{subsec:measure}}:
general references for measure theory are
\cite{Bogachev07,Schwartz73}; here we mainly followed the approach
to Radon measures given by \cite{Schwartz73}, trying to minimize the
topological assumptions. The main points are the \emph{complete
  regularity}
of $(X,\tau)$ (which is almost needed for the standard formulation of the weak convergence
of probability measures, see the Appendix of \cite{Schwartz73})
and the Radon property of the reference
measure $\mm$.
Complete regularity is in fact equivalent to the fact that
continuous functions characterizes the topology of $X$ (see \S\
\ref{subsec:initial} in the Appendix) and it
is also important for the formulation of the extended
metric-topological spaces and for the compactification argument of \S\ \ref{subsec:compactification}.

\Para{\ref{subsec:extended-def}}:
here we followed very closely the presentation of
\cite[Section 3]{AES16}. Extended metrics and the use of an auxiliary
topology have already been considered in \cite{AGS14I},
under a slightly different set of compatibility conditions.

\Para{\ref{subsec:KR}}:
the section just recalls the basic
properties of the
Kantorovich-Rubinstein distance,
adapted to the extended
setting, 
General references are
the books \cite{Villani03,Ambrosio-Gigli-Savare08,Villani09};
notice that $\sfK_\delta$ is sometimes called Kantorovich-Wasserstein
distance of order $1$ and denoted by $W_1$.
The approximation result is 
quite similar to \cite{AES16}, where the distance of order $2$
has been considered.

\Para{\ref{subsec:aslip}}:
most of the result are classic for local slope \eqref{eq:266}.
We adapted the same approach of \cite{AGS14I}
to the more refined (and stronger) local Lipschitz constant,
which in the present setting also depends on the topology $\tau$.
\eqref{eq:307} plays a crucial role in the locality property
of the Cheeger energy and \eqref{eq:308}
is quite useful to derive contraction estimate
for its $L^2$ gradient flow. It is in fact possible to prove
a more refined property, see \cite{Luise-Savare19}.

\Para{\ref{subsec:compatible-A}} contains
the main definition of algebras of functions compatible
with the extended metric-topological setting. The basic requirements
is that the algebra is sufficiently rich to recover the extended
distance.
We also collected a few results, mainly based on Bernstein
polynomials \cite{Lorentz86}, which will be quite useful
to replace Lipschitz truncations with smoother
polynomial maps preserving the algebraic structure.

\Para{\ref{subsec:compactification}}:
measure-preserving isometric embeddings play an important role
in the theory of metric-measure spaces, in particular
when one studies their convergence
(see \cite[Chap.~27]{Villani09}, \cite{Gromov99}).
Here we adapted this notion to the
presence of the auxiliary topology $\tau$ and of
the compatible algebra $\AA$.
Another typical application arises in regular representation
of Dirichlet spaces
(the so-called transfer method, 
\cite[Chap.~VI]{Ma-Rockner92});
the idea of using the Gelfand transform to
construct a suitable compactification is
taken from \cite{Fukushima71}, \cite[Appendix
A.4]{Fukushima-Oshima-Takeda11}
and it is based on one of the possible construction
of the Stone-Cech compactification of a completely regular
topological space.

\end{notes}

\section{Continuous curves and nonparametric arcs}
\label{sec:curves}
\GGG
This section mostly contains classic material on the topology of space
of curves, adapted to the extended metric-topological setting.
Its main goal is to construct a useful setting to deal with Radon
measures
on (non parametric) rectifiable curves. 
Differently from other approaches (see e.g.~\cite{Paolini-Stepanov12,HKST15,ADS15})
we first study class of equivalent curves (up to reparametrizations)
without assuming their
rectifiability.
In \S\,\ref{subsec:continuous_curves} we study
the natural e.m.t.~structure on $\rmC([a,b];(X,\tau))$,
and in \S\,\ref{subsec:arcs}
we consider the natural quotient space of (continuous) arcs $\Arc(X,\tau)$,
which behaves quite well with respect to topology and distance.
The last part \ref{subsec:rectifiable_arcs}
is focused in $\sfd$-rectifiable arcs $\RA(X,\sfd)$, considered as a
natural Borel subset of $\Arc(X,\tau)$. Here we have the arc-length
reparametrization
at our disposal, and we study
measurability properties of important operations, like
evaluations, integrals, length, and reparametrizations. We will also state
a useful compactness result, which
is a natural generalization of Arzel\`a-Ascoli Theorem.
Theorem \ref{thm:important-arcs} will collect
most of the main
properties we will use in the next chapters.

\subsection{Continuous curves}
\label{subsec:continuous_curves}
Let $(X,\tau)$ be a completely regular Hausdorff space.
We will denote by $\rmC([a,b];(X,\tau))$ the set of $\tau$-continuous curves
$\gamma:[a,b]\to X$ endowed with the compact-open topology
\index{Compact-open topology}
$\tau_\rmC$ (we will simply write $\rmC([a,b];X)$, when
the topology $\tau$ will be clear from the context).
By definition, a subbasis generating $\tau_C$ is given by 
the collection of sets 
\begin{equation}
  \label{eq:247}
  S(K,V):=\Big\{\gamma\in \rmC([a,b];X):\gamma(K)\subset
  V\Big\},\quad
  K\subset [a,b]\text{ compact,}\ 
  V\text{ open in $X$.}
\end{equation}
\begin{remark}
  \label{re:monotone-partitions}
  \upshape
Thanks to the particular structure of the domain $[a,b]$, we can
also consider an equivalent basis associated to partitions
$\cP=\{a=t_0<t_1<\cdots<t_J= b\}$:
\begin{equation}
  \label{eq:246}
  \cap_{j=1}^J \Big\{\gamma\in \rmC([a,b];X):\gamma([t_{j-1},t_j])\subset
  W_j\Big\},\quad
  t_j\in \cP,\ 
  W_j\text{ open in $X$.}
\end{equation}
It is also not restrictive to
consider partitions $\cP$ induced by rational points $t_j\in \Q\cap\,
]a,b[$,
$j=1,\cdots,J.$
It is sufficient to show that if $\gamma_0$ belongs to an open set
$U=\cap_{h=1}^HS(K_h,V_h)$ 
arising from the finite intersection of elements of the subbasis
\eqref{eq:247},
we can also find a set $U'$ of the form \eqref{eq:246} such that
$\gamma_0\in U'\subset U$. 
To this aim, it is not restrictive to add to the collection
$\{K_h,V_h\}_{h=1}^H$ 
 the couple $K_0=[a,b],\ V_0=X$;
we can cover each $K_h$ with a finite number of intervals
$I_{h,k}=[\alpha_{h,k},\beta_{h,k}]$, $1\le k\le k(h)$, such that
$\gamma_0(I_{h,k})\subset V_h$. We can then take the partition $\cP$ of
$[a,b]$ containing all the extrema of $I_{h,k}$
(notice that $a$ and $b$ are included).
If $t_j$, $j\ge 1$, is a point of the partition $\cP$, we set
$W_j:=\cap\big\{V_h:\exists k\in \{1,\cdots,k(h)\},\
[t_{j-1},t_j]\subset I_{h,k}\big\}. $
\end{remark}
Let us recall a few simple and useful properties of the compact-open
topology
\cite[\S\,46]{Munkres00}:
\begin{enumerate}[\rm (CO1)]
\item If the topology $\tau$ is induced by a distance $\delta$, 
then the topology $\tau_\rmC$ is induced by the uniform distance
$\sfd_\delta(\gamma,\gamma'):=\sup_{t\in
  [a,b]}\delta(\gamma(t),\gamma'(t))$ and convergence w.r.t.~the
compact-open 
topology coincides with the uniform convergence w.r.t.~$\delta$.
If moreover $\tau$ is separable then also $\tau_\rmC$ is separable.
\item 
  The evaluation map $\sfe:[a,b]\times\rmC([a,b];X)\to X$, 
$\sfe(t,\gamma):=\gamma(t)$, is continuous.
\item
If $f:X\to Y$ is a continuous function with values in 
a Hausdorff space $(Y,\tau_Y)$, the composition
map $\gamma\mapsto f\circ\gamma$ is continuous 
from $\rmC([a,b];X)$ to $\rmC([a,b];Y)$.
\item If $\sigma:[0,1]\to[0,1]$ is nondecreasing and continuous
  the map $\gamma\mapsto \gamma\circ\sigma$ 
  is continuous 
  from $\rmC([a,b];X)$ to $\rmC([a,b];X)$.
\item If $\tau$ is Polish, then
  $\tau_\rmC$ is Polish.
\end{enumerate}
The proof of the first two properties can be found, e.g., in
\cite[Thm. 46.8, 46.10]{Munkres00};
in the case when $\tau$ is metrizable and separable, the separability of
$\tau_\rmC$ follows by \cite[4.2.18]{Engelking89}. 

In order to prove (CO3) it is sufficient to show that
the inverse image of an arbitrary element $S(K,W)$ (as in
\eqref{eq:247}) 
of the subbasis
generating the compact-open topology of $\rmC([a,b];Y)$
is an element of the corresponding subbasis of the topology of
$\rmC([a,b];X)$.
In fact, $f\circ\gamma\in S(K,W)$ if and only if $\gamma\in
S(K,f^{-1}(W))$.
%
(CO4) can be proved by a similar argument: 
if $\gamma\circ\sigma \in S(K,V)$ if and only if $\gamma\in
S(\sigma(K),V)$.
%
Finally (CO5) is a consequence of (CO1).

We will denote by $\sfe(\gamma)$ the image $\gamma([a,b])$ in $X$.
\subsubsection*{(Semi)distances on $\rmC([a,b];X)$}
An extended semidistance $\delta:X\times X\to[0,\infty]$ 
induces an extended semidistance $\delta_\rmC$ in $\rmC([a,b];X)$ by
\begin{equation}
  \label{eq:235}
  \delta_\rmC(\gamma_1,\gamma_2):=\sup_{t\in [a,b]}
  \delta(\gamma_1(t),\gamma_2(t)).
\end{equation}
We have
\index{Auxiliary topology}
\begin{proposition}
  \label{prop:C-extended}
  If $(X,\tau,\sfd)$ is an extended metric-topological space,
  than also $(\rmC([a,b];(X,\tau)),\tau_\rmC,\sfd_\rmC)$ is an extended
  metric-topological space.\\
  If moreover $\tau'$ is an auxiliary topology for $(X,\tau,\sfd)$
  according to Definition \ref{def:auxiliary}, then
  $\tau_\rmC'$ is an auxiliary topology for
  $(\rmC([a,b];(X,\tau)),\tau_\rmC,\sfd_\rmC)$ as well.
\end{proposition}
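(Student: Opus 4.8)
The plan is to reduce both assertions to the converse half of Lemma~\ref{rem:monotonicity}: I would exhibit on $\rmC([a,b];(X,\tau))$ a directed family of bounded $\tau_\rmC$-continuous semidistances satisfying (\ref{eq:monotone}a,b,c,d) with limit $\sfd_\rmC$, and then read off the statement about auxiliary topologies from its countable version.

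\emph{Step 1: the approximating semidistances.} Let $\Lambda$ be the set of finite subsets of $\Lipbu1(X,\tau,\sfd)$ directed by inclusion and $\sfd_\lambda$ as in \eqref{eq:222}; following \eqref{eq:235} I would set $\sfd_{\lambda,\rmC}(\gamma_1,\gamma_2):=\sup_{t\in[a,b]}\sfd_\lambda(\gamma_1(t),\gamma_2(t))=\max_{f\in\lambda}\|f\circ\gamma_1-f\circ\gamma_2\|_\infty$. These are semidistances bounded by $2\max_{f\in\lambda}\|f\|_\infty$, monotone and cofinal in $\lambda$, so (\ref{eq:monotone}a) and (\ref{eq:monotone}b) hold; their $\tau_\rmC$-continuity follows since, for fixed $f$, $\gamma\mapsto f\circ\gamma$ is continuous into $\rmC([a,b];\R)$ by property (CO3) and the compact-open topology of $\rmC([a,b];\R)$ is the uniform one by (CO1), so $(\gamma_1,\gamma_2)\mapsto\|f\circ\gamma_1-f\circ\gamma_2\|_\infty$ is continuous and one takes a finite maximum. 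Exchanging the two suprema and invoking \eqref{eq:329},
\[
\sup_{\lambda\in\Lambda}\sfd_{\lambda,\rmC}(\gamma_1,\gamma_2)=\sup_{t\in[a,b]}\sup_{\lambda\in\Lambda}\sfd_\lambda(\gamma_1(t),\gamma_2(t))=\sup_{t\in[a,b]}\sfd(\gamma_1(t),\gamma_2(t))=\sfd_\rmC(\gamma_1,\gamma_2),
\]
and the supremum is a monotone limit, which is (\ref{eq:monotone}d).

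\emph{Step 2: the one nontrivial point, (\ref{eq:monotone}c).} The forward implication is automatic from $\tau_\rmC$-continuity. For the converse, let a net satisfy $\sfd_{\lambda,\rmC}(\gamma_j,\gamma)\to0$ for every $\lambda$ and fix a basic $\tau_\rmC$-neighbourhood $\bigcap_{h=1}^H S(K_h,V_h)$ of $\gamma$, a finite intersection of subbasic sets \eqref{eq:247}. The hard part is the familiar ``pointwise control on a compact set is uniform'' argument transported to the compact-open topology: for each $h$ and $t\in K_h$ I would use that $\sfd_\lambda$-balls form a $\tau$-neighbourhood basis (\eqref{eq:245}) to pick $\lambda_{h,t}\in\Lambda$ and $r_{h,t}>0$ with $\{x:\sfd_{\lambda_{h,t}}(x,\gamma(t))<2r_{h,t}\}\subset V_h$, then continuity of $\gamma$ and of $\sfd_{\lambda_{h,t}}$ to obtain an open $I_{h,t}\ni t$ with $\sfd_{\lambda_{h,t}}(\gamma(s),\gamma(t))<r_{h,t}$ for $s\in I_{h,t}$; compactness of the $K_h$ then yields a single finite $\lambda\in\Lambda$ and a single $r>0$ such that $\sfd_{\lambda,\rmC}(\gamma',\gamma)<r$ forces $\gamma'\in\bigcap_h S(K_h,V_h)$ by the triangle inequality. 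Hence $\gamma_j\to\gamma$ in $\tau_\rmC$. With (\ref{eq:monotone}a,b,c,d) verified (and $\tau_\rmC$ Hausdorff since $\tau$ is), Lemma~\ref{rem:monotonicity} gives that $(\rmC([a,b];(X,\tau)),\tau_\rmC,\sfd_\rmC)$ is an e.m.t.~space, and $\sfd_\rmC$ is an extended distance because $\sfd$ is.

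\emph{Step 3: auxiliary topologies.} Suppose $\tau'$ is auxiliary for $(X,\tau,\sfd)$, with a countable $\cF=(f_n)_{n\in\N}\subset\Lipb(X,\tau',\sfd)$ generating $\tau'$ and satisfying $\sfd=\sup_n|f_n(\cdot)-f_n(\cdot)|$, and a bounded $\tau$-continuous distance $\sfd'\le\sfd$ metrizing $\tau'$. I would verify (A1)--(A3) of Definition~\ref{def:auxiliary} for $\tau_\rmC'$. (A1): $\tau_\rmC'\subset\tau_\rmC$ by (CO3) applied to the continuous identity $(X,\tau)\to(X,\tau')$. (A2): the uniform distance $\sfd'_\rmC$ is bounded, $\le\sfd_\rmC$, metrizes $\tau_\rmC'$ by (CO1) (hence is $\tau_\rmC'$- and a fortiori $\tau_\rmC$-continuous) and $\tau_\rmC'$ is separable by (CO1) since $\tau'$ is separable metrizable. (A3): fix a countable dense $D\subset[a,b]$ and take the countable family $F_{n,s}(\gamma):=f_n(\gamma(s))$, $n\in\N$, $s\in D$; each lies in $\Lipbu1(\rmC([a,b];X),\tau_\rmC',\sfd_\rmC)$, being bounded by $\|f_n\|_\infty$, $\tau_\rmC'$-continuous as a composition $f_n\circ\sfe_s$ of the evaluation $\sfe_s\colon\gamma\mapsto\gamma(s)$ (continuous by (CO2)) with $f_n\in\rmC_b(X,\tau')$, and $1$-Lipschitz because $|f_n(\gamma_1(s))-f_n(\gamma_2(s))|\le\sfd(\gamma_1(s),\gamma_2(s))\le\sfd_\rmC(\gamma_1,\gamma_2)$ (the first inequality from $\sfd\ge|f_n(\cdot)-f_n(\cdot)|$). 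Finally
\[
\sup_{n,\,s\in D}|F_{n,s}(\gamma_1)-F_{n,s}(\gamma_2)|=\sup_{s\in D}\sfd(\gamma_1(s),\gamma_2(s))=\sup_{s\in[a,b]}\sfd(\gamma_1(s),\gamma_2(s))=\sfd_\rmC(\gamma_1,\gamma_2),
\]
the middle equality holding because $t\mapsto\sfd(\gamma_1(t),\gamma_2(t))$ is lower semicontinuous on $[a,b]$ (composition of the $\tau\times\tau$-l.s.c.\ map $\sfd$, see \eqref{eq:243}, with the continuous curve $t\mapsto(\gamma_1(t),\gamma_2(t))$) and a lower semicontinuous function on $[a,b]$ has the same supremum over a dense subset as over $[a,b]$. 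This is exactly \eqref{eq:554} for the family $(F_{n,s})$, so (A1)--(A3) hold and $\tau_\rmC'$ is an auxiliary topology for $(\rmC([a,b];(X,\tau)),\tau_\rmC,\sfd_\rmC)$.
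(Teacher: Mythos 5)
Your proof is correct, and for the first assertion it follows a genuinely different route from the paper. The paper verifies conditions (X1) and (X2) of Definition \ref{def:luft1} directly: (X2) via the functions $\mathsf f_t=f\circ\sfe_t$ and formula \eqref{eq:236}, and (X1) by building the explicit family $F_{\cK,\cF}(\gamma)=\max_h\max_{t\in K_h}f_h(\gamma(t))$ of bounded, $\tau_\rmC$-continuous, $\sfd_\rmC$-Lipschitz functions which separate a curve from a closed set, using complete regularity \eqref{eq:CR} to produce the $f_h$. You instead reduce to the converse half of Lemma \ref{rem:monotonicity} for the family $\sfd_{\lambda,\rmC}$, and the paper's separation step is replaced in your argument by the covering/Lebesgue-number argument based on the neighbourhood basis \eqref{eq:245} and compactness of the sets $K_h$; this is the same technical kernel (reducing a compact-open neighbourhood to finitely many Lipschitz data plus a triangle inequality), packaged differently. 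Your route is somewhat leaner for the statement at hand and is in fact closer in spirit to how the paper later proves property (X2) of Proposition \ref{prop:A-extended} via the semidistances $\sfd_{\rmC,i}$; what it does not produce is the function family $F_{\cK,\cF}$, whose quotient-invariant variant $F_\cF$ in \eqref{eq:240} the paper reuses to prove (X1) on the arc space, so the paper's construction pays off later. For the auxiliary-topology statement your argument coincides with the paper's (checking (A1)--(A3) with the countable family $f_n\circ\sfe_s$, $s$ in a countable dense set of $[a,b]$), the only cosmetic difference being that you justify replacing $[a,b]$ by the dense set through lower semicontinuity of $t\mapsto\sfd(\gamma_1(t),\gamma_2(t))$, while the paper uses continuity of $t\mapsto f_n(\gamma_i(t))$ for each fixed $n$ before taking the supremum over $n$; both are fine.
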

\begin{proof}
  Notice that for every $f\in \Lip_{b,1}(X,\tau,\sfd)$ and $t\in [a,b]$ the map
  $\mathsf f_t:=f\circ \sfe_t$ belongs to $\Lip_{b,1}
  (\rmC([a,b];X),\tau_\rmC,\sfd_\rmC)$
  and it is easy to check by \eqref{eq:220} that 
  \begin{equation}
    \label{eq:236}
    \sfd_\rmC(\gamma_1,\gamma_2)=\sup\Big\{|\mathsf
    f_t(\gamma_1)-\mathsf f_t(\gamma_2)|:
    f\in \Lipbu1(X,\tau,\sfd),\ t\in [a,b]\Big\}.
  \end{equation}
  For every finite collection $\cK:=\{K_h\}_{h=1}^H$ of compact
  subsets of $[a,b]$ 
  and for every finite collection $\cF:=(f_h)_{h=1}^H$ in
  $\Lipb(X,\tau,\sfd)$ let us consider the function $F=F_{\cK,\cF}$
  defined by
  \begin{equation}
    \label{eq:237}
    F(\gamma):=\max_{1\le h \le H}\max_{t\in K_h}f_h(\gamma(t)).
  \end{equation}
  It is easy to check that $F\in
  \Lipb(\rmC([a,b];X),\tau_\rmC,\sfd_\rmC)$; we want to show that
  this family of functions separates points from closed sets. 
  To this aim, we fix a closed set $\rmF\subset \rmC([a,b];X)$ 
  and a curve $\gamma_0\in \rmC([a,b];X)\setminus \rmF$. 

  By the definition of compact-open topology, we can find a
  collection $\cK =\{K_h\}_{h=1}^H$ of compact sets of $[a,b]$ and open sets $U_h\subset X$ such that 
  the open set $\rmU=\{\gamma:\gamma(K_h)\subset U_h,\ 1\le h\le H\}$ is
  included in $\rmC([a,b];X)\setminus \rmF$ and
  contains $\gamma_0$. By \eqref{eq:CR} and the compactness 
  we can find nonnegative functions $\cF=(f_h)_{h=1}^H\subset  \Lipb(X,\tau,\sfd)$ 
  such that $f_h\equiv 1$ on $X\setminus U_h$ and 
  $f_h\restr{\gamma_0(K_h)}\equiv 0$.
  It follows that $F=F_{\cK,\cF}$ satisfies $F(\gamma_0)=0$ and 
  $F(\gamma)\ge 1$ for every $\gamma$ in the complement of
  $\rmU$, in particular in $\rmF$.

  Let us eventually check the last statement, by checking that
  $\tau_\rmC'$ satisfies
  properties (A1,2,3) of Definition \ref{def:auxiliary}. (A1,2) are
  obvious.
  Concerning (A3) we select a sequence $f_n\in \Lip_b(X,\tau',\sfd)$
  satisfying \eqref{eq:554} and the countable collection of maps
  $\cF:=(\mathsf f_{n,t})_{n\in \N,\ t\in [a,b]\cap \Q}$,
  $\mathsf f_{n,t}:=f_n\circ \sfe_t$.
  It is clear that for every $\gamma_1,\gamma_2\in \rmC([a,b];X)$ and
  every $n\in \N$
  \begin{displaymath}
    \sup_{t\in [a,b]}|\mathsf f_{n,t}(\gamma_1)-\mathsf
    f_{n,t}(\gamma_2)|=
    \sup_{t\in [a,b]\cap \Q}|\mathsf f_{n,t}(\gamma_1)-\mathsf
    f_{n,t}(\gamma_2)|
  \end{displaymath}
  \eqref{eq:554} then yields
  \begin{displaymath}
    \sfd_\rmC(\gamma_1,\gamma_2)=
    \sup_{\mathsf f\in \cF}|\mathsf f(\gamma_1)-\mathsf f(\gamma_2)|.\qedhere
  \end{displaymath}
\end{proof}
We can state a useful compactness criterium, which is the natural
extension of Arzel\`a-Ascoli Theorem to extended metric-topological structures.
\begin{proposition}
  \label{prop:compactness}
  Let us suppose that $(X,\tau)$ is compact
  and let $\Gamma\subset \rmC([a,b];X)$ be $\sfd$-equicontinuous,
  i.e.~there exists $\omega:[0,\infty)\to[0,\infty)$ 
  concave, nondecreasing and continuous with $\omega(0)=0$ such that
  \begin{equation}
    \label{eq:228}
    \sfd(\gamma(r),\gamma(s))\le \omega(|r-s|)\quad\text{for every
    }r,s\in [a,b],\ \gamma\in \Gamma.
  \end{equation}
  Then $\Gamma$ is relatively compact with respect to $\tau_\rmC$.
\end{proposition}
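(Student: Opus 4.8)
The plan is to place $\Gamma$ inside a concrete $\tau_\rmC$-compact set and to show that on that set the compact–open topology agrees with the topology of pointwise convergence. Set
$\widehat\Gamma:=\{\gamma\in\rmC([a,b];(X,\tau)):\sfd(\gamma(r),\gamma(s))\le\omega(|r-s|)\text{ for all }r,s\in[a,b]\}$, so that $\Gamma\subseteq\widehat\Gamma$. Since $(X,\tau)$ is compact, $X^{[a,b]}$ is compact in the topology $\tau_{\mathrm{pw}}$ of pointwise convergence by Tychonoff's theorem, so it is enough to prove that $\widehat\Gamma$ is closed in $X^{[a,b]}$ and that $\tau_\rmC$ restricted to $\widehat\Gamma$ coincides with $\tau_{\mathrm{pw}}$: then $\widehat\Gamma$ is $\tau_\rmC$-compact, and since $\rmC([a,b];(X,\tau))$ is Hausdorff, $\overline{\Gamma}^{\,\tau_\rmC}\subseteq\widehat\Gamma$ is $\tau_\rmC$-compact as a closed subset of a compact set.

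For closedness of $\widehat\Gamma$ in $X^{[a,b]}$, let a net $(\gamma_k)$ in $\widehat\Gamma$ converge to $\gamma$ pointwise. The $\tau\times\tau$-lower semicontinuity of $\sfd$, see \eqref{eq:243}, gives $\sfd(\gamma(r),\gamma(s))\le\liminf_k\sfd(\gamma_k(r),\gamma_k(s))\le\omega(|r-s|)$; and for every net $s_j\to r$ in $[a,b]$ we have $\sfd(\gamma(s_j),\gamma(r))\le\omega(|s_j-r|)\to0$, hence $\gamma(s_j)\to\gamma(r)$ in $\tau$ by \eqref{eq:244}, so $\gamma$ is $\tau$-continuous and $\gamma\in\widehat\Gamma$. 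Thus $\widehat\Gamma$ is $\tau_{\mathrm{pw}}$-compact.

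For the equality of topologies on $\widehat\Gamma$, the inclusion $\tau_{\mathrm{pw}}\subseteq\tau_\rmC$ is clear since the $\tau_{\mathrm{pw}}$-subbasic sets have the form $S(\{t\},V)$, a subfamily of the subbasis \eqref{eq:247} of $\tau_\rmC$. Conversely, fix $\gamma\in\widehat\Gamma$ and a subbasic $S(L,V)\ni\gamma$ with $L\subseteq[a,b]$ compact and $V\subseteq X$ open. Since $\gamma(L)$ is $\tau$-compact and contained in $V$, the neighbourhood basis \eqref{eq:245} lets me cover $\gamma(L)$ by finitely many $U_{F_i,\eps_i}(x_i)$, $i=1,\dots,N$, chosen with $U_{F_i,2\eps_i}(x_i)\subseteq V$; then $F:=\bigcup_iF_i$ is finite, $\delta:=\min_i\eps_i>0$, and a two-term triangle inequality shows that any $\gamma'$ with $\sup_{t\in L}\max_{f\in F}|f(\gamma'(t))-f(\gamma(t))|<\delta$ satisfies $\gamma'(L)\subseteq V$. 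This estimate holds on a $\tau_{\mathrm{pw}}$-neighbourhood of $\gamma$ in $\widehat\Gamma$ by the classical passage from pointwise to uniform control on an equicontinuous family: pick $\delta'>0$ with $\omega(\delta')<\delta/3$, cover $[a,b]$ by finitely many intervals of radius $\delta'$ centred at $t_1,\dots,t_M$, and let $\mathcal O:=\{\gamma':\max_{f\in F}|f(\gamma'(t_m))-f(\gamma(t_m))|<\delta/3\text{ for }m=1,\dots,M\}$, which is $\tau_{\mathrm{pw}}$-open; since each $f\in F$ has $\Lip(f,\sfd)\le1$ and both $\gamma$ and any $\gamma'\in\widehat\Gamma$ have modulus $\omega$, one gets $\sup_{t\in L}\max_{f\in F}|f(\gamma'(t))-f(\gamma(t))|<\delta$ for $\gamma'\in\mathcal O\cap\widehat\Gamma$. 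Hence $\mathcal O\cap\widehat\Gamma\subseteq S(L,V)$, so $S(L,V)\cap\widehat\Gamma$ is $\tau_{\mathrm{pw}}$-open in $\widehat\Gamma$, and $\tau_\rmC=\tau_{\mathrm{pw}}$ on $\widehat\Gamma$, which finishes the argument.

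The real content is this last step, the coincidence of the compact–open and pointwise topologies on the equicontinuous hull $\widehat\Gamma$; everything else is Tychonoff's theorem together with the two structural facts \eqref{eq:243} and \eqref{eq:244}. Equivalently one may argue with nets: extract a pointwise-convergent subnet of a given net in $\Gamma$ by Tychonoff, identify its limit as an element of $\widehat\Gamma$ via \eqref{eq:243}–\eqref{eq:244}, and then upgrade pointwise to $\tau_\rmC$-convergence by the same equicontinuity estimate.
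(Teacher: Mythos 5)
Your proof is correct, but it is organized differently from the paper's. The paper argues with nets: it extracts a pointwise-convergent subnet using compactness of $X$, identifies the limit curve through the lower semicontinuity \eqref{eq:243} and \eqref{eq:244} exactly as you do, and then, for a fixed basic $\tau_\rmC$-neighbourhood of the limit, manufactures bump functions $f_h\in \Lipb(X,\tau,\sfd)$ via complete regularity (vanishing on $\gamma(K_h)$, equal to $1$ off $V_h$), composes them with the curves and invokes the classical Ascoli--Arzel\`a theorem in $\rmC([a,b];\R^H)$ to upgrade pointwise to uniform convergence. You instead work with the full equicontinuous hull $\widehat\Gamma$: Tychonoff plus closedness gives its pointwise compactness, and the heart of your argument is the coincidence of $\tau_\rmC$ and the pointwise topology on $\widehat\Gamma$, proved by a direct three-term estimate using the $1$-Lipschitz neighbourhood basis \eqref{eq:245} and the common modulus $\omega$ from \eqref{eq:228}. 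The underlying mechanism is the same in both proofs (Lipschitz test functions transfer the modulus $\omega$ to real-valued compositions), but your version buys a slightly stronger and reusable statement — the whole hull $\widehat\Gamma$ is $\tau_\rmC$-compact, and compact-open equals pointwise on any $\sfd$-equicontinuous family — while avoiding the auxiliary bump functions and the appeal to finite-dimensional Ascoli--Arzel\`a; the paper's route is more local (one neighbourhood at a time) and leans on complete regularity, which fits the way the same functions $F_{\cK,\cF}$ are reused in the surrounding Propositions.
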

\begin{proof}
  Let $\gamma_i$, $i\in I$, be a net in $\Gamma$. 
  Since $X$ is compact, we can find a subnet $h:J\to I$ and a limit
  curve $\gamma:[a,b]\to X$ such that 
  $\lim_{j\in J}\gamma_{h(j)}=\gamma$ 
  with respect to the topology of pointwise convergence.
  Passing to the limit in \eqref{eq:228} we immediately see that 
  $$\sfd(\gamma(r),\gamma(s))\le \liminf_{j\in
    J}\sfd(\gamma_{h(j)}(r),\gamma_{h(j)}(s))
  \le \omega(|r-s|)\quad\text{for every $r,s\in [a,b]$},$$
  so that 
  $\gamma\in \rmC([a,b];(X,
  \sfd))$.

  Let us now consider a neighborhood $\cU=\cap_{h=1}^H S(K_h,V_h)$ of
  $\gamma$: we will show that there exists $j_0\in J$ such that 
  $\gamma_{h(j)}\in \cU$ for every $j\succeq j_0$.
  Let us consider functions $f_h\in \Lipb(X,\tau,\sfd)$ satisfying
  $f_h\equiv 0$ on $\gamma(K_h)$ and $f_h\equiv 1$ on $X\setminus V_h$
  as in the previous Lemma and let $L>0$  be the maximum of the
  Lipschitz constants of $f_h$. We call $\ff_j:[a,b]\to \R^H$ 
  the family of curves
  $\ff_j(t):=(f_1(\gamma_{h(j)}(t)),f_2(\gamma_{h(j)}(t)),\cdot,f_H(\gamma_{h(j)}(t))$
  indexed by $j\in J$. Since $f_h$ are continuous, we have
  \begin{equation}
    \label{eq:229}
    \lim_{j\in J}\ff_j(t)=\ff(t)\quad\text{pointwise},\quad
    \ff(t):=(f_1(\gamma(t)),f_2(\gamma(t)),\cdot,f_H(\gamma(t)).
  \end{equation}
  On the other hand, we have
  \begin{equation}
    \label{eq:242}
    |\ff_j(r)-\ff_j(s)|\le HL\omega(|r-s|)
  \end{equation}
  so that Ascoli-Arzel\`a theorem (in $\rmC([a,b];\R^H)$) yields
  \begin{equation}
    \label{eq:264}
    \lim_{j\in J}\sup_{t\in [a,b]}|\ff_j(t)-\ff(t)|=0.
  \end{equation}
  Choosing $j_0\in J$ so that $\sup_{t\in [a,b]}|\ff_j(t)-\ff(t)|\le 1/2$
  for every $j\succeq j_0$,
  since $f_h(\gamma(t))\equiv 0$ whenever $t\in K_h$
  we deduce that $f_h(\gamma_j(t))\le 1/2$
  whenever $t\in K_h$ so that $\gamma_j(K_h)\subset V_h$.
\end{proof}

\subsection{Arcs}
\label{subsec:arcs}
Let us denote by $\Sigma$ 
the set of continuous, nondecreasing and
surjective map $\sigma:[0,1]\to[0,1]$ and
by $\Sigma'$ the subset of $\Sigma$ of the invertible maps,
thus increasing homeomorphisms of $[0,1]$.
We also set
\begin{equation}
  \label{eq:256}
  \Sigma_2:=\Big\{\sigma\in \Sigma:|\sigma(r)-\sigma(s)|\le 2|r-s|\Big\}.
\end{equation}
On $\rmC([0,1];(X,\tau))$
we introduce the symmetric and reflexive relation
\begin{equation}
  \label{eq:211}
  \begin{gathered}
    \gamma_1\sim \gamma_2\quad\text{if}\quad 
    \exists\,\sigma_i\in \Sigma:\ 
    {\gamma_1}\circ\sigma_1={\gamma_2}\circ\sigma_2.
  \end{gathered}
\end{equation}
Notice that $\gamma\sim\gamma\circ\sigma$ for every $\sigma\in
\Sigma$. 
It is also not difficult to check that if 
a map $\gamma:[0,1]\to X$ satisfies $\gamma\circ\sigma\in
\rmC([0,1];(X,\tau))$ for some $\sigma\in \Sigma$ then $\gamma\in
\rmC([0,1];(X,\tau))$. In fact, if $A\subset X$ is a closed set, then
$B:=(\gamma\circ\sigma)^{-1}(A)=\sigma^{-1}(\gamma^{-1}(A))$ is closed
in $[0,1]$ and therefore compact. Since $\sigma$ is surjective, for
every $Z\subset [0,1]$ we have $\sigma(\sigma^{-1}(Z))=Z$ so that 
$\gamma^{-1}(A)=\sigma(B)$ is closed since it is the continuous image
of a compact set.

We want to show that $\sim$ also satisfies the transitive
property, so that it is an equivalence relation. 

If $\delta:X\times X\to [0,+\infty]$ is a $\tau$-l.s.c.~extended
  semidistance we also introduce 
  \begin{equation}
    \delta_\rmA(\gamma_1,\gamma_2):=\inf_{\sigma_i\in \Sigma}
    \delta_\rmC(\gamma_1\circ\sigma_1,\gamma_2\circ\sigma_2)
    \quad\text{
      for every $\gamma_i\in \rmC([0,1];(X,\tau))$}\label{eq:278}
\end{equation}
and the set
\begin{equation}
  \label{eq:282}
  \rmC([0,1];(X,\tau,\delta)):=\big\{\gamma\in \rmC([0,1];(X,\tau)):
  \lim_{s\to t}\delta(\gamma(s),\gamma(t))=0\quad\ \text{for every
  }t\in [0,1]\big\}.
\end{equation}
It is not difficult to check that 
\begin{equation}
  \label{eq:282bis}
  \gamma\in \rmC([0,1];(X,\tau,\delta))
  \quad\Rightarrow\quad
  \lim_{r\down0}\sup_{|t-s|\le r}
  \delta(\gamma(s),\gamma(t))=0;
\end{equation}
in the case $\delta=\sfd$ we have
\begin{equation}
  \label{eq:530}
    \rmC([0,1];(X,\tau,\sfd))=\rmC([0,1];(X,\sfd)).
\end{equation}
\begin{theorem}[Reparametrizations and semidistances]
  \label{thm:reparam}
  Let $\delta:X\times X\to [0,+\infty]$ be a $\tau$-l.s.c.~extended
  semidistance and let
  $\gamma_i\in \rmC([0,1];(X,\tau,\delta))$, $i=1,2,3$.
  We have
\begin{subequations}
\label{eq:267}
  \begin{align}
    \label{eq:258}
    \delta_\rmA(\gamma_1,\gamma_2)                                      
    &=
      \inf_{\sigma\in
      \Sigma'}\delta_\rmC({\gamma_1},{\gamma_2}\circ\sigma)
    \\&=
          \min_{\sigma_i\in
      \Sigma_2}\delta_\rmC({\gamma_1}\circ\sigma_1,{\gamma_2}\circ\sigma_2)
          \label{eq:239tris}
    \\&=\inf_{\gamma_i'\sim\gamma_i}\delta_{\rmC}(\gamma_1',\gamma_2').
        \label{eq:280}
  \end{align}
\end{subequations}
In particular $\delta_\rmA$ satisfies the triangle inequality
\begin{equation}
  \label{eq:281}
  \delta_{\rmA}(\gamma_1,\gamma_3)\le 
    \delta_{\rmA}(\gamma_1,\gamma_2)+  \delta_{\rmA}(\gamma_2,\gamma_3).
\end{equation}
\end{theorem}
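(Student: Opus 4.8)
The aim is to identify the four quantities $A:=\delta_\rmA(\gamma_1,\gamma_2)$ and $B,C,D$ equal to the three right-hand sides of \eqref{eq:267}, read from top to bottom, and to deduce the triangle inequality. Three of the needed inequalities are free: from $\Sigma_2\subset\Sigma$ one gets $C\ge A$; from $\{(\mathrm{id},\sigma):\sigma\in\Sigma'\}\subset\Sigma\times\Sigma$ one gets $B\ge A$; and from $\{\gamma_i\circ\sigma_i:\sigma_i\in\Sigma\}\subset\{\gamma_i'\sim\gamma_i\}$ one gets $D\le A$. Throughout I use two elementary facts: $\Sigma$ is closed under composition, and for $\sigma\in\Sigma$ (hence surjective) one has $\delta_\rmC(\eta_1\circ\sigma,\eta_2\circ\sigma)=\delta_\rmC(\eta_1,\eta_2)$; moreover $\delta_\rmC$ inherits the triangle inequality from $\delta$.

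\emph{Reducing to $\Sigma_2$.} To prove $C\le A$, given $\sigma_1,\sigma_2\in\Sigma$ I set $L:=\tfrac12(\sigma_1+\sigma_2)\in\Sigma$. On each fibre of $L$ the sum $\sigma_1+\sigma_2$ is constant, and since the summands are nondecreasing each $\sigma_i$ is itself constant there, so $\sigma_i=\hat\sigma_i\circ L$ for suitable $\hat\sigma_i$; the identity $\hat\sigma_1+\hat\sigma_2=2\,\mathrm{id}$ together with monotonicity forces $0\le\hat\sigma_i(\ell')-\hat\sigma_i(\ell)\le 2(\ell'-\ell)$, so $\hat\sigma_i\in\Sigma_2$. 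Reparametrization invariance then gives $\delta_\rmC(\gamma_1\circ\sigma_1,\gamma_2\circ\sigma_2)=\delta_\rmC(\gamma_1\circ\hat\sigma_1,\gamma_2\circ\hat\sigma_2)\ge C$, and taking the infimum over $\sigma_1,\sigma_2$ yields $C\le A$, hence $C=A$. For the attainment of the infimum (so that it is a $\min$): $\Sigma_2$ is equi-Lipschitz and uniformly bounded, hence compact in $\rmC([0,1];[0,1])$ by Arzelà–Ascoli, while $(\sigma_1,\sigma_2)\mapsto\delta_\rmC(\gamma_1\circ\sigma_1,\gamma_2\circ\sigma_2)$ is lower semicontinuous for uniform convergence, since $\gamma_i\circ\sigma_i^n\to\gamma_i\circ\sigma_i$ pointwise in $\tau$ and $\delta$ is $\tau\times\tau$-lower semicontinuous; a minimizing sequence thus has a limit point realizing the minimum.

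\emph{Reducing to $\Sigma'$, and the triangle inequality.} For $B\le A$, given $\sigma_1,\sigma_2\in\Sigma$ and $\eps\in(0,1)$ the maps $\sigma_i^\eps:=(1-\eps)\sigma_i+\eps\,\mathrm{id}$ are strictly increasing continuous bijections of $[0,1]$, i.e.\ lie in $\Sigma'$, with $\sup|\sigma_i^\eps-\sigma_i|\le\eps$; reparametrizing by the homeomorphism $(\sigma_1^\eps)^{-1}$ gives $\delta_\rmC\bigl(\gamma_1,\gamma_2\circ(\sigma_2^\eps\circ(\sigma_1^\eps)^{-1})\bigr)=\delta_\rmC(\gamma_1\circ\sigma_1^\eps,\gamma_2\circ\sigma_2^\eps)$, and the triangle inequality for $\delta_\rmC$ together with the uniform $\delta$-continuity \eqref{eq:282bis} of $\gamma_1,\gamma_2$ bounds the right-hand side by $\delta_\rmC(\gamma_1\circ\sigma_1,\gamma_2\circ\sigma_2)+o(1)$ as $\eps\downarrow0$. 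Hence $B\le\delta_\rmC(\gamma_1\circ\sigma_1,\gamma_2\circ\sigma_2)$ for all $\sigma_1,\sigma_2\in\Sigma$, so $B\le A$ and $A=B=C$. The triangle inequality \eqref{eq:281} is then immediate from characterization $B$: choosing near-optimal $\sigma,\rho\in\Sigma'$ for $\delta_\rmA(\gamma_1,\gamma_2)$ and $\delta_\rmA(\gamma_2,\gamma_3)$, rewriting $\delta_\rmC(\gamma_2,\gamma_3\circ\rho)=\delta_\rmC(\gamma_2\circ\sigma,\gamma_3\circ\rho\circ\sigma)$ by reparametrization invariance, and applying the triangle inequality for $\delta_\rmC$ gives $\delta_\rmC(\gamma_1,\gamma_3\circ(\rho\circ\sigma))\le\delta_\rmC(\gamma_1,\gamma_2\circ\sigma)+\delta_\rmC(\gamma_2\circ\sigma,\gamma_3\circ\rho\circ\sigma)$ with $\rho\circ\sigma\in\Sigma'$, whence the claim after letting the two errors vanish.

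\emph{The $\sim$-characterization and the main obstacle.} It remains to show $D\ge A$. If $\gamma_i'\sim\gamma_i$, then $\gamma_i'\circ\alpha_i=\gamma_i\circ\beta_i$ for some $\alpha_i,\beta_i\in\Sigma$, and the key ingredient is the \emph{common reparametrization lemma}: any $\alpha_1,\alpha_2\in\Sigma$ admit $\mu_1,\mu_2\in\Sigma$ with $\alpha_1\circ\mu_1=\alpha_2\circ\mu_2$. Granting it and writing $\nu:=\alpha_1\circ\mu_1=\alpha_2\circ\mu_2\in\Sigma$, we have $\gamma_1'\circ\nu=\gamma_1\circ(\beta_1\circ\mu_1)$ and $\gamma_2'\circ\nu=\gamma_2\circ(\beta_2\circ\mu_2)$, so reparametrization invariance gives $\delta_\rmC(\gamma_1',\gamma_2')=\delta_\rmC(\gamma_1'\circ\nu,\gamma_2'\circ\nu)=\delta_\rmC\bigl(\gamma_1\circ(\beta_1\circ\mu_1),\gamma_2\circ(\beta_2\circ\mu_2)\bigr)\ge A$ since $\beta_i\circ\mu_i\in\Sigma$; the infimum over $\gamma_i'\sim\gamma_i$ then gives $D\ge A$, so $A=B=C=D$. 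I expect the common reparametrization lemma to be the main obstacle: it is obtained by inspecting the monotone ``staircase'' $Z=\{(u,v):\alpha_1(u)=\alpha_2(v)\}$ joining $(0,0)$ to $(1,1)$, selecting a monotone arc through it — the only care is needed on fibres where both $\alpha_i$ are constant, which produce a genuine rectangle in $Z$ — and using, say, $(u,v)\mapsto u+v+\alpha_1(u)$ as a strictly increasing ``time'' to define $\mu_1,\mu_2$; the rest of the argument is routine bookkeeping with reparametrizations.
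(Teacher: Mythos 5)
Your proof is correct, and in two places it takes a genuinely different route from the paper. For the reduction to $\Sigma_2$ in \eqref{eq:239tris} the paper reparametrizes through the maps $j_{\sigma,\eps},\sigma_\eps$ of \eqref{eq:254}--\eqref{eq:346} (with $\eps=1$), while you factor an arbitrary pair through the average $L=\frac12(\sigma_1+\sigma_2)\in\Sigma$, writing $\sigma_i=\hat\sigma_i\circ L$ with $\hat\sigma_1+\hat\sigma_2=2\,\mathrm{id}$ and hence $\hat\sigma_i\in\Sigma_2$: this is an exact, $\eps$-free factorization, arguably cleaner, and your compactness/lower-semicontinuity argument for attainment of the minimum is the same as the paper's. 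Your treatment of \eqref{eq:258} (approximating $\sigma$ by the homeomorphisms $(1-\eps)\sigma+\eps\,\mathrm{id}$ and absorbing the error via the moduli of continuity from \eqref{eq:282bis}) and of the triangle inequality \eqref{eq:281} coincides in substance with the paper's. The real divergence is \eqref{eq:280}: the paper obtains it in one line from \eqref{eq:281} together with $\delta_\rmA(\gamma_i',\gamma_i)=0$, whereas you prove a purely combinatorial common-reparametrization lemma (any $\alpha_1,\alpha_2\in\Sigma$ admit $\mu_1,\mu_2\in\Sigma$ with $\alpha_1\circ\mu_1=\alpha_2\circ\mu_2$) and conclude using only the invariance \eqref{eq:262}. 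That lemma is true and your staircase construction is the right one: join the diagonals of the rectangles $\alpha_1^{-1}(c)\times\alpha_2^{-1}(c)$ into a totally ordered compact arc and parametrize it by the strictly increasing clock $u+v$ (or your $u+v+\alpha_1(u)$); continuity and surjectivity of the resulting $\mu_i$ follow because the left/right limits of the rectangle corners match up. This is, however, the one place where your argument is only a sketch, and the bookkeeping (closedness of the selected arc, the clock being a homeomorphism onto an interval, monotonicity of its inverse) should be written out. As for what each route buys: the paper's derivation of \eqref{eq:280} is shorter and needs no combinatorics, but it is metric in nature, resting on \eqref{eq:258} and hence on the $\delta$-continuity of the curves; your lemma is independent of $\delta$, handles arbitrary equivalent parametrizations directly, and as a by-product yields the transitivity of $\sim$ outright — a fact the paper instead extracts afterwards (Corollary \ref{cor:tedious-arcs}) from \eqref{eq:239tris} applied to an auxiliary continuous semidistance.
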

\begin{proof}  
  If $\sigma\in \Sigma$ we will still denote by $\sigma$ its extension
  to $\R$ defined by the map $r\mapsto \sigma(0\lor r \land 1)$.
  We introduce the $(1+\eps)$-Lipschitz map $j_{\sigma,\eps}:\R\to\R$
  defined by
  \begin{equation}
    s=j_{\sigma,\eps}(r)\quad \Leftrightarrow\quad
    s+\eps\sigma(s)=(1+\eps)r  \label{eq:254}
  \end{equation}
  and the maps $\hat \sigma_\eps,\sigma_\eps:\R\to \R$
    \begin{align}
    \label{eq:253}
    \hat \sigma_\eps(r):={}&(1+\eps)^{-1}(\eps r+\sigma(r)),\quad
                             |\hat \sigma_\eps(r)-\sigma(r)|\le
                             \frac{\eps}{1+\eps}|r-\sigma(r)|\le
                             \eps,\\
      \sigma_\eps(r):={}& \eps^{-1}((1+\eps) r-j_{\sigma,\eps}(r))=\sigma( j_{\sigma,\eps}(r)).\label{eq:346}
    \end{align}
    Notice that the restrictions to $[0,1]$ of the maps 
    $\hat \sigma_\eps,j_{\sigma,\eps},\sigma_\eps$
    operate in $[0,1]$ and
    $\hat\sigma_\eps\in \Sigma'$,
    $j_{\sigma,\eps},\sigma_\eps\in \Sigma$,
    $\sigma_{\eps}$
    is $(1+1/\eps)$-Lipschitz.
  We also denote by $\omega_\gamma$ 
  the $\delta$-modulus of continuity of $\gamma\in \rmC([0,1];(X,\tau,\delta))$, i.e.
  \begin{equation}
    \label{eq:348}
    \omega_\gamma(r):=\sup \Big\{\delta(\gamma(s),\gamma(t)):s,t\in
    [0,1],\ |s-t|\le r\Big\},
  \end{equation}
  observing that $\lim_{r\down0}\omega_{\gamma_i}(r)=0$ in the case of
  the curves $\gamma_1,\gamma_2$ considered by the Lemma.
  
  In order to prove \eqref{eq:258}, we observe that 
  that $\delta_\rmC$ is invariant w.r.t.~composition with
  arbitrary $\sigma\in \Sigma$
  \begin{equation}
    \label{eq:262}
    \delta_\rmC(\gamma_1,\gamma_2)=\delta_\rmC(\gamma_1\circ\sigma,\gamma_2\circ\sigma)
    \quad\text{for every }\gamma_i\in \rmC([0,1];X),\ \sigma\in \Sigma,
  \end{equation}
  and every $\sigma\in \Sigma$ can be uniformly approximated
  by the increasing homeomorphisms $\hat\sigma_\eps\in \Sigma'$;
  we easily get
  \begin{equation}
    \label{eq:263}
    \delta_\rmC(\gamma\circ\sigma,\gamma\circ\hat\sigma_{\eps})
    \le \omega_\gamma\big(\sup_{r\in [0,1]}
    |\sigma(r)-\hat\sigma_{\eps}(r)|\big)
    \topref{eq:253}\le \omega_\gamma(\eps),
  \end{equation}
  so that the triangle inequality for $\delta_\rmC$ yields
  \begin{align*}
    \delta_\rmC({\gamma_1}\circ\sigma_1,{\gamma_2}\circ\sigma_2)
    &\ge
      \delta_\rmC({\gamma_1}\circ\hat\sigma_{1,\eps},{\gamma_2}\circ\sigma_{2})-
      \omega_{{\gamma_1}}(\eps)
      \ge 
        \delta_\rmC({\gamma_1},
      {\gamma_2}\circ\sigma_{2}')-
      \omega_{{\gamma_1}}(\eps)
    \\&\ge
         \delta_\rmC({\gamma_1},
        {\gamma_2}\circ\hat\sigma_{2,\eps}')-
        \omega_{{\gamma_1}}(\eps)-
        \omega_{{\gamma_2}}(\eps)
    \\&    \ge 
        \inf_{\sigma\in
        \Sigma'}\delta_\rmC({\gamma_1},{\gamma_2}\circ\sigma)-
        \omega_{{\gamma_1}}(\eps)-
        \omega_{{\gamma_2}}(\eps)
  \end{align*}
  where $\sigma_2':=\sigma_{2}\circ(\hat\sigma_{1,\eps})^{-1}$
  and $\hat\sigma_{2,\eps}'$ is obtained by $(\sigma_2')_\eps$ as in \eqref{eq:253}.
  Taking the infimum w.r.t.~$\sigma_1,\sigma_2\in \Sigma$ and
  passing to the limit as $\eps\down0$ we obtain 
  \begin{displaymath}
    \delta_{\rmA}(\gamma_1,\gamma_2)
    \ge
    \inf_{\sigma\in \Sigma'} \delta_{\rmC}({\gamma_1}
    ,{\gamma_2}\circ\sigma).
  \end{displaymath}
  Since the opposite inequality is obvious, we get \eqref{eq:258}.
 
  The triangle inequality is an immediate consequence of
  \eqref{eq:258}: if $\gamma_1,\gamma_2,\gamma_3\in \rmC([0,1];(X,\tau,\delta))$ and
  $\sigma_1,\sigma_3\in \Sigma'$ 
  we have
  \begin{align*}
    \delta_\rmA(\gamma_1,\gamma_3)\le 
    \delta_\rmC(\gamma_1\circ\sigma_1,\gamma_3\circ\sigma_3)
    \le 
    \delta_\rmC(\gamma_1\circ\sigma_1,\gamma_2)
    +
    \delta_\rmC(\gamma_2,\gamma_3\circ\sigma_3);
  \end{align*}
  taking the infimum w.r.t.~$\sigma_1,\sigma_3$ we obtain \eqref{eq:281}.

  Let us now prove \eqref{eq:239tris}, i.e.~the infimum in \eqref{eq:278}
  is attained
  by a couple $\varrho_i\in \Sigma$ given by $2$-Lipschitz maps.
  We observe that 
  \begin{equation}
    \label{eq:255}
    \delta_\rmC({\gamma_1},{\gamma_2}\circ\varrho)\topref{eq:262}=
    \delta_\rmC({\gamma_1}\circ
    j_{\varrho,\eps},{\gamma_2}\circ\varrho\circ j_{\varrho,\eps})\topref{eq:346}=
    \delta_\rmC({\gamma_1}\circ
    j_{\varrho,\eps},{\gamma_2}\circ\varrho_\eps)
  \end{equation}
  and $j_{\varrho,\eps}$ is $(1+\eps)$-Lipschitz, $\varrho_\eps$ is
  $(1+\eps^{-1})$-Lipschitz.
  Choosing $\eps=1$ we deduce that the infimum in \eqref{eq:278}
  can be restricted to $\Sigma_2$. Since $\Sigma_2$ 
  is compact w.r.t.~uniform convergence, ${\gamma_i}$ are 
  continuous and $\delta_\rmC$ is lower semicontinuous
  w.r.t.~$\tau_\rmC$, we obtain \eqref{eq:239tris}.

  Let us eventually consider \eqref{eq:280}; since 
  $\inf_{\gamma_i'\sim\gamma_i}\delta_{\rmC}(\gamma_1',\gamma_2')\le
  \delta_\rmA(\gamma_1,\gamma_2)$ it is sufficient to prove the
  opposite inequality. 
  By \eqref{eq:281} we have
  \begin{displaymath}
    \delta_{\rmC}(\gamma_1',\gamma_2')\ge
    \delta_{\rmA}(\gamma_1',\gamma_2')\ge
    -\delta_{\rmA}(\gamma_1',\gamma_1)
    +\delta_{\rmA}(\gamma_1,\gamma_2)
    -\delta_{\rmA}(\gamma_2,\gamma_2')=
    \delta_{\rmA}(\gamma_1,\gamma_2). \qedhere
  \end{displaymath}
\end{proof}
\begin{corollary}
  \label{cor:tedious-arcs}
  The relation $\sim$ satisfies the transitive property and it is an
  equivalent relation. 
  Moreover
  \begin{enumerate}
  \item The space $\Arc(X,\tau):=\rmC([0,1];(X,\tau))/\sim$ endowed with the
    quotient topology $\tau_\rmA$ is an Hausdorff space.
    We will denote by $[\gamma]$
    the corresponding equivalence class associated to $\gamma\in
    \rmC([0,1];(X,\tau))$ and by $\Quot: \rmC([0,1];(X,\tau))\to \Arc(X,\tau)$
    the quotient map $\gamma\mapsto [\gamma]$.
  \item If $\delta$ is a $\tau$-continuous semidistance, then
    $\delta_\rmA$ is a $\tau_\rmA$ continuous semidistance
    (considered as a function between equivalence classes of curves).
  \item If the
    topology $\tau$ is induced by the distance $\delta$ then the
    quotient topology $\tau_\rmA$ is induced by
    $\delta_\rmA$
    (considered as a distance between equivalence classes of curves).
  \item If $(X,\tau)$ is a Polish space, then
    $(\Arc(X,\tau),\tau_\rmA)$
    is a Souslin metrizable space.
  \end{enumerate}
\end{corollary}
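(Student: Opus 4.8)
The plan is to use Theorem \ref{thm:reparam} as the engine for essentially every claim, after first reducing to the case of a $\tau$-continuous (semi)distance. First I would handle the transitivity of $\sim$: given $\gamma_1\sim\gamma_2$ and $\gamma_2\sim\gamma_3$, I want to produce $\sigma_i\in\Sigma$ with $\gamma_1\circ\sigma_1=\gamma_3\circ\sigma_3$. The natural approach is to fix an auxiliary topology $\tau'$ (Corollary \ref{cor:auxiliary-Souslin}) — or, if $(X,\tau)$ is not Souslin, to work one compatible bounded continuous semidistance $\sfd_\lambda$ at a time — so that $\sfd'_\rmA$ becomes an honest distance on equivalence classes. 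Then $\gamma_1\sim\gamma_2$ forces $\sfd'_\rmA(\gamma_1,\gamma_2)=0$, and by the triangle inequality \eqref{eq:281} we get $\sfd'_\rmA(\gamma_1,\gamma_3)=0$; by \eqref{eq:239tris} the infimum defining $\sfd'_\rmA(\gamma_1,\gamma_3)$ is attained, so there are $\sigma_i\in\Sigma_2$ with $\sfd'_\rmC(\gamma_1\circ\sigma_1,\gamma_3\circ\sigma_3)=0$, i.e.\ $\gamma_1\circ\sigma_1=\gamma_3\circ\sigma_3$ pointwise (since $\sfd'$ separates points of any $\tau$-compact set, hence of the relatively compact image of the curves; more simply, one may add finitely many coordinate functions to force separation). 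This gives $\gamma_1\sim\gamma_3$, so $\sim$ is an equivalence relation.

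For part (a), $\tau_\rmA$ is by definition the quotient topology, and the only substantive point is that it is Hausdorff. The standard criterion is that the quotient of a topological space by an equivalence relation with closed graph, via an open quotient map, is Hausdorff; alternatively, since $\Arc$ carries the continuous semidistances $\delta_\rmA$ coming from a compatible family $(\sfd_\lambda)$, and these separate equivalence classes (two curves with $\sfd_\rmA^\lambda=0$ for all $\lambda$ agree up to reparametrization by the argument just given), one concludes Hausdorffness directly from the fact that a family of continuous semi-metrics separating points makes a space Hausdorff. I would also need that $\Quot$ maps open sets to open sets; this follows because for any $\sigma\in\Sigma'$ the map $\gamma\mapsto\gamma\circ\sigma$ is a homeomorphism of $\rmC([0,1];X)$ (by (CO4) applied to $\sigma$ and $\sigma^{-1}$), and a saturation of an open set is the union over a suitable family of such homeomorphic images — here Theorem \ref{thm:reparam}\eqref{eq:280} is what lets one control the saturation.

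For (b): if $\delta$ is a $\tau$-continuous semidistance, then $\delta_\rmC$ is $\tau_\rmC$-continuous on $\rmC([0,1];X)$, and $\delta_\rmA(\gamma_1,\gamma_2)=\inf_{\gamma_i'\sim\gamma_i}\delta_\rmC(\gamma_1',\gamma_2')$ by \eqref{eq:280}; an infimum of continuous functions pulled back along the quotient is upper semicontinuous, and for lower semicontinuity one uses the compactness in \eqref{eq:239tris} (the infimum is attained over the compact set $\Sigma_2\times\Sigma_2$) together with continuity of $(\sigma_1,\sigma_2,\gamma_1,\gamma_2)\mapsto\delta_\rmC(\gamma_1\circ\sigma_1,\gamma_2\circ\sigma_2)$, which follows from (CO4) and joint continuity of $\delta_\rmC$. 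This descends to a continuous function on $(\Arc,\tau_\rmA)^2$ because $\Quot\times\Quot$ is a quotient map (product of open quotient maps). For (c), when $\tau$ is induced by $\delta$, property (CO1) says $\tau_\rmC$ is induced by $\delta_\rmC$; then one checks the quotient topology agrees with the $\delta_\rmA$-topology — one inclusion is (b), and the other uses that $\delta_\rmA$-balls around $[\gamma]$ are saturations of $\delta_\rmC$-neighborhoods, again via \eqref{eq:280}. For (d), if $(X,\tau)$ is Polish then $\tau_\rmC$ is Polish by (CO5), so $\rmC([0,1];X)$ is Souslin; the continuous image $\Arc(X,\tau)=\Quot(\rmC([0,1];X))$ of a Souslin space under the continuous map $\Quot$ is Souslin, and metrizability follows from (c) since $\delta$ (being a genuine distance inducing $\tau$) gives a metric $\delta_\rmA$ inducing $\tau_\rmA$.

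The main obstacle I expect is the passage from "$\sfd_\rmA(\gamma_1,\gamma_2)=0$" to "$\gamma_1\circ\sigma_1=\gamma_2\circ\sigma_2$ for genuine $\sigma_i$": this requires the attainment statement \eqref{eq:239tris}, the separation of points of the (relatively $\tau$-compact) curve images by the chosen countable family of Lipschitz functions, and care that all of this is available even when $(X,\tau)$ is merely completely regular rather than metrizable — which is exactly why one first passes to an auxiliary topology, or works semidistance-by-semidistance and then intersects. The second, more bookkeeping-heavy obstacle is verifying that $\Quot$ is an \emph{open} map so that products of quotient maps remain quotient maps, allowing all the descent arguments in (b)–(c) to go through cleanly.
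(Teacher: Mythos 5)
Your overall strategy (reduce to a $\tau$-continuous semidistance, use the attainment \eqref{eq:239tris} and the triangle inequality \eqref{eq:281} from Theorem \ref{thm:reparam}, then descend to the quotient; get Souslin in (d) as a continuous image of the Polish space $\rmC([0,1];X)$ and metrizability from (c)) is the same as the paper's, but the key reduction step has a genuine gap. You invoke an auxiliary topology (Corollary \ref{cor:auxiliary-Souslin}) to get a single $\tau$-continuous \emph{distance} $\sfd'$ on $X$, which is only available when $(X,\tau)$ is Souslin, whereas the corollary (transitivity, (a), (b), (c)) is stated for a general e.m.t.\ space. Your two fallbacks for the non-Souslin case do not work as described: (i) working ``one semidistance $\sfd_\lambda$ at a time and then intersecting'' fails because the minimizing reparametrizations in \eqref{eq:239tris} depend on $\lambda$, so $\sfd_{\lambda,\rmA}(\gamma_1,\gamma_3)=0$ for every $\lambda$ does not by itself produce a single pair $\sigma_1,\sigma_3\in\Sigma$ with $\gamma_1\circ\sigma_1=\gamma_3\circ\sigma_3$ (one would need an extra subnet/compactness argument over $\Sigma_2$ that you do not supply); (ii) ``adding finitely many coordinate functions to force separation'' is false in general: finitely many continuous functions need not separate the points of a compact set. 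The paper's fix is exactly the observation you are missing: apply Remark \ref{rem:compact-case} not to $X$ but to $K:=\gamma_1([0,1])\cup\gamma_2([0,1])\cup\gamma_3([0,1])$, which is compact \emph{and} Souslin (continuous image of a Polish space) for \emph{any} Hausdorff $X$; this yields one bounded $\tau$-continuous semidistance $\delta$ on $X$ whose restriction to $K\times K$ is a distance inducing the topology there, and then $\delta_\rmA(\gamma_1,\gamma_3)=0$ plus attainment in $\Sigma_2$ gives $\gamma_1\sim\gamma_3$. The same device (with $K=\gamma_1([0,1])\cup\gamma_2([0,1])$ and upper semicontinuity of $\gamma\mapsto\delta_\rmA([\gamma_i],[\gamma])$, which makes the saturated sublevel sets open) is what gives Hausdorffness; your separation claim ``$\sfd^\lambda_\rmA=0$ for all $\lambda$ implies equivalence'' again hides the $\lambda$-dependence of the reparametrizations.

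A secondary problem is your reliance on $\Quot$ being an open map, both for the Hausdorff criterion and to make $\Quot\times\Quot$ a quotient map in (b)--(c). Openness of $\Quot$ is not justified: the saturation of an open set $U$ is $\{\gamma:\exists\sigma\in\Sigma,\ \gamma\circ\sigma\in\bigcup_{\sigma'\in\Sigma}R_{\sigma'}(U)\}$, and the inner set is an \emph{image} under the (non-invertible, non-open) precomposition maps, so it is not obviously open; \eqref{eq:280} does not repair this. Fortunately it is also unnecessary: since $\delta_\rmA$ satisfies the triangle inequality, joint continuity on $\Arc(X,\tau)\times\Arc(X,\tau)$ with the product of the quotient topologies follows from continuity of the lifted map in each variable separately (the sets $\{\gamma:\delta_\rmA([\gamma],[\gamma_0])<\eps\}$ are open and saturated), which is how the paper proceeds; likewise in (c) one only needs that every saturated $\tau_\rmC$-open set containing $\gamma_0$ contains a $\delta_\rmC$-ball, whose saturation is exactly the preimage of a $\delta_\rmA$-ball by \eqref{eq:258}. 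With these two repairs your outline matches the paper's proof.
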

\begin{proof}
  \textbf{(a)}
  Let us first prove the transitivity of $\sim$. Let $\gamma_i\in
  \rmC([0,1];X)$, $i=1,2,3$, such that
  $\gamma_1\sim\gamma_2$ and $\gamma_2\sim\gamma_3$
  and let $K:=\cup_{i=1}^3\gamma_i([0,1])$. 
  $K$ is a compact and separable set; applying Remark
  \ref{rem:compact-case} we can find a bounded and continuous
  semidistance $\delta$ whose restriction to $K\times K$ induces the
  $\tau$-topology.

  By the very definition \eqref{eq:278} of $\delta_\rmA$ we get $\delta_\rmA(\gamma_1,\gamma_2)=0$ and
  $\delta_{\rmA}(\gamma_2,\gamma_3)=0$, so that \eqref{eq:281} yields
  $\delta_\rmA(\gamma_1,\gamma_3)=0$ and \eqref{eq:239tris} yields
  $\gamma_1\sim\gamma_3$.

  Let us now show that $(\Arc(X),\tau_\rmA)$ is Hausdorff. 
  We fix two curves $\gamma_1,\gamma_2\in \rmC([0,1];X)$
  such that $[\gamma_1]\neq [\gamma_2]$,
  we consider the compact and separable subspace
  $K:=\gamma_1([0,1])\cup\gamma_2([0,1])$, and a bounded
  $\tau$-continuous
  semidistance $\delta$, generated as in Remark \ref{rem:compact-case},
  whose restriction to $K$ induces the $\tau$-topology.
  
  We notice that the maps $\gamma\mapsto \delta_{\rmC}(\gamma_i,\gamma)=\sup_{t\in
    [0,1]}\delta(\gamma_i(t),\gamma(t))$ 
  are continuous in $\rmC([0,1];X)$; since the composition maps
  $\gamma\mapsto
  \gamma\circ\varrho$, $\varrho\in \Sigma'$, are continuous, we deduce
  that the maps
  $\gamma\mapsto \delta_{\rmA}([\gamma_i],[\gamma])=
  \inf_{\varrho\in \Sigma'}\delta_{\rmC}(\gamma_i,\gamma\circ\varrho)$
  are upper semicontinuous from $\rmC([0,1];X)$ to $\R$.
  By the above discussion, if $[\gamma_1]\neq[\gamma_2]$ we get
  $\delta_{\rmA}(\gamma_1,\gamma_2)=\delta>0$ so that 
  the open sets $U_i:=\big\{\gamma\in \rmC([0,1];X):
  \delta_{\rmA}(\gamma_i,\gamma)<\delta/2\big\}$ are disjoint (by the
  triangle inequality) saturated open neighborhoods
  of $\gamma_i$.
  \medskip

  \noindent\textbf{(b)}
  Since $\delta_\rmA$ is
  continuous
  w.r.t.~$\tau_\rmC$ and it is invariant w.r.t.~the equivalence
  relation, it is clear that
  $\delta_\rmA$ is continuous w.r.t.~$\tau_\rmA$.
  \medskip

  \noindent\textbf{(c)}
  If $\gamma_0\in \rmC([0,1];X)$ 
  every $\tau_\rmA$ open neighborhood $U_0$ of $[\gamma_0]$ in
  $\Arc(X)$ correspond to a saturated open set $U$ of $\rmC([0,1];X)$
  containing $\gamma_0$. In particular, there exists $\eps>0$ such
  that $U$ contains all the curves 
  $\gamma\in \rmC([0,1];X)$ with $\delta_{\rmC}(\gamma,\gamma_0)<\eps$ 
  and all the curves of the form $\gamma\circ\sigma$,
  $\gamma_0\circ\sigma_0$ for arbitrary $\sigma,\sigma_0\in \Sigma$. 
  It follows that $U_0$ contains
  $\{[\gamma]:\delta_{\rmA}([\gamma],[\gamma_0])<\eps\}$.
  \medskip

  \noindent\textbf{(d)}
  The Souslin property is an immediate consequence of the fact
  that $(\Arc(X,\tau),\tau_\rmA)$ is obtained as a quotient space
  of the Polish space $(\rmC([0,1];(X,\tau)),\tau_\rmC)$.
  The metrizability follows by the previous claim.  
\end{proof}
The image of an arc $[\gamma]$
is independent of the parametrization and
it will still be denoted by $\sfe([\gamma])$;
we will also set $\sfe_i([\gamma]):=\sfe_i(\gamma)$, $i=0,1$.
Every function $f:\Arc(X,\tau)\to Y$ is associated to 
a ``lifted'' function $\tilde f:\rmC([0,1];(X,\tau))\to Y$, $\tilde
f(\gamma):=f([\gamma])$, whose values are invariant by
reparametrizations of $\gamma$.
$f$ is continuous (w.r.t.~the quotient
topology $\tau_\rmA$) if and only if $\tilde f$ is continuous 
(w.r.t.~the compact-open
topology $\tau_\rmC$).

Let us now consider the case of an extended metric-topological space.
We denote by $\Arc(X,\sfd)$ the subset of arcs admitting
an equivalent $\sfd$-continuous parametrization (or, equivalently, 
whose equivalent parametrizations are $\sfd$-continuous).
Notice that $\Arc(X,\sfd)\subset \Arc(X,\tau)$.
\begin{proposition}
  \label{prop:A-extended}
  Let $(X,\tau,\sfd)$ be an extended metric-topological space.
  $(\Arc(X,\sfd),\tau_\rmA,\sfd_\rmA)$ is an extended
  metric-topological space.
\end{proposition}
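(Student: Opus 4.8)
The plan is to verify the two defining conditions (X1) and (X2) of Definition \ref{def:luft1} for the system $(\Arc(X,\sfd),\tau_\rmA,\sfd_\rmA)$, exploiting the parallel structure already established for $\rmC([0,1];(X,\tau))$ in Proposition \ref{prop:C-extended} together with the metric properties of $\sfd_\rmA$ proved in Theorem \ref{thm:reparam} and Corollary \ref{cor:tedious-arcs}. First I would recall that $\sfd_\rmA$ is an extended distance on $\Arc(X,\sfd)$: symmetry is obvious from the definition \eqref{eq:278}, the triangle inequality is \eqref{eq:281}, and $\sfd_\rmA([\gamma_1],[\gamma_2])=0$ forces $\gamma_1\sim\gamma_2$ by \eqref{eq:239tris} (since the infimum in \eqref{eq:278} is attained), so $\sfd_\rmA$ separates distinct arcs. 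Thus $(\Arc(X,\sfd),\sfd_\rmA)$ is an extended metric space and $\tau_\rmA$ is an Hausdorff topology on it by Corollary \ref{cor:tedious-arcs}(a).

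For the approximation formula (X2), I would use Lemma \ref{rem:monotonicity}: it suffices to exhibit a directed family of bounded, $\tau_\rmA$-continuous semidistances $(\delta_i)$ on $\Arc(X,\sfd)$ satisfying \eqref{eq:monotone}. Take the monotone family $(\sfd_\lambda)_{\lambda\in\Lambda}$ of bounded $\tau$-continuous semidistances generating $\sfd$ on $X$ (from Lemma \ref{rem:monotonicity}), and for each $\lambda$ set $(\sfd_\lambda)_\rmA$ on $\Arc(X,\sfd)$ via \eqref{eq:278}. By Corollary \ref{cor:tedious-arcs}(b) each $(\sfd_\lambda)_\rmA$ is $\tau_\rmA$-continuous and bounded (bounded because $\sfd_\lambda$ is); monotonicity in $\lambda$ is inherited from $\sfd_\lambda\le\sfd_\mu$ when $\lambda\subseteq\mu$. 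The identity $\sfd_\rmA=\sup_\lambda(\sfd_\lambda)_\rmA=\lim_\lambda(\sfd_\lambda)_\rmA$ on $\Arc(X,\sfd)$ is the point requiring the most care: for fixed representatives $\gamma_1,\gamma_2\in\rmC([0,1];(X,\sfd))$ one has $\sfd_\rmC(\gamma_1\circ\sigma_1,\gamma_2\circ\sigma_2)=\sup_\lambda(\sfd_\lambda)_\rmC(\gamma_1\circ\sigma_1,\gamma_2\circ\sigma_2)$ pointwise in $t$ by \eqref{eq:329}, but exchanging this supremum with the infimum over $\sigma_1,\sigma_2\in\Sigma$ needs the compactness reduction of Theorem \ref{thm:reparam}: restrict to $\sigma_i\in\Sigma_2$ (a set compact for uniform convergence, by \eqref{eq:239tris}), on which each $(\sfd_\lambda)_\rmC(\gamma_1\circ\sigma_1,\gamma_2\circ\sigma_2)$ is continuous and the $\sfd_\rmC$-minimizing pair exists; a Dini-type argument then upgrades the monotone pointwise convergence to convergence of the minima. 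Finally \eqref{eq:259} — that a net $[\gamma_j]\to[\gamma]$ in $\tau_\rmA$ iff $(\sfd_\lambda)_\rmA([\gamma_j],[\gamma])\to0$ for every $\lambda$ — follows because $\tau_\rmA$ is the quotient of $\tau_\rmC$, $\tau_\rmC$ is the initial topology of the $(\sfd_\lambda)_\rmC$ by Proposition \ref{prop:C-extended}, and one checks (as in the proof of Corollary \ref{cor:tedious-arcs}) that the $(\sfd_\lambda)_\rmA$-balls form a neighborhood basis for $\tau_\rmA$.

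With $(\delta_i)=((\sfd_\lambda)_\rmA)_{\lambda\in\Lambda}$ satisfying \eqref{eq:330}--\eqref{eq:260}, Lemma \ref{rem:monotonicity} immediately yields that $(\Arc(X,\sfd),\tau_\rmA,\sfd_\rmA)$ is an extended metric-topological space, which also covers (X1) since $\Lipbu1(\Arc(X,\sfd),\tau_\rmA,\sfd_\rmA)$ contains the family $\{(\sfd_\lambda)_\rmA([\gamma'],\cdot):\lambda\in\Lambda,\gamma'\}$ generating $\tau_\rmA$. The main obstacle is the interchange of $\sup_\lambda$ and $\inf_{\sigma_i}$ in the verification of \eqref{eq:260}: this is where the compactness of $\Sigma_2$, the lower semicontinuity of the $(\sfd_\lambda)_\rmC$, and the monotone convergence on $X$ must be combined, essentially re-running the argument of Theorem \ref{thm:reparam} uniformly in $\lambda$.
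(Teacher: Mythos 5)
Your preliminary checks on $\sfd_\rmA$ (triangle inequality from \eqref{eq:281}, separation of arcs via the attained minimum \eqref{eq:239tris}, Hausdorffness from Corollary \ref{cor:tedious-arcs}(a)) and your treatment of condition \eqref{eq:260} are sound and essentially coincide with the paper's proof of property (X2): the paper likewise lifts the monotone family $(\sfd_i)$ to semidistances $\sfd_{\rmA,i}$, takes for each $i$ a minimizing pair of reparametrizations in $\Sigma_2$ provided by Theorem \ref{thm:reparam}, extracts a convergent subnet by compactness of $\Sigma_2$, and controls the error with the moduli of continuity of $\gamma_1,\gamma_2$ — this is exactly your ``Dini-type'' interchange of $\sup_\lambda$ with $\inf_{\Sigma_2\times\Sigma_2}$.

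The gap is in your verification of \eqref{eq:259}, i.e.\ property (X1). You claim that $\tau_\rmA$-convergence follows from $(\sfd_\lambda)_\rmA$-convergence for all $\lambda$ ``because $\tau_\rmC$ is the initial topology of the $(\sfd_\lambda)_\rmC$ by Proposition \ref{prop:C-extended}''. Proposition \ref{prop:C-extended} does not assert this: it only says that $\tau_\rmC$ is generated by the much larger family $\Lipb(\rmC([0,1];X),\tau_\rmC,\sfd_\rmC)$ (through the functions $F_{\cK,\cF}$), and in general the quotient of a topology generated by a family of functions need not be generated by those functions which pass to the quotient. This is precisely the point the paper refuses to take for granted: rather than showing that the $(\sfd_\lambda)_\rmA$-balls form a neighborhood basis of $\tau_\rmA$, it constructs the reparametrization-invariant functions $\bar F_\cF(\gamma)=\min_{P\in\cP_J}\max_j\max_{t\in[t_{j-1},t_j]}f_j(\gamma(t))$, proves they are $\tau_\rmA$-continuous and $\sfd_\rmA$-Lipschitz, and shows they separate an arc from a closed saturated set, which verifies (X1) directly. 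Your route can be repaired, but it requires two ingredients you do not supply: (i) the $(\sfd_\lambda)_\rmC$-balls form a neighborhood basis of the compact-open topology $\tau_\rmC$ — this is the classical identification, for maps on the compact domain $[0,1]$, of the compact-open topology with the topology of uniform convergence with respect to the uniformity generated by the $\sfd_\lambda$, which is compatible with $\tau$ by \eqref{eq:328}; and (ii) a saturated $\tau_\rmC$-open set $U\ni\gamma_0$ then absorbs a full $(\sfd_\lambda)_\rmA$-ball: if $(\sfd_\lambda)_\rmA([\gamma],[\gamma_0])<\eps$, the one-sided reparametrization formula \eqref{eq:258} applied with $\delta=\sfd_\lambda$ (legitimate, since $\sfd_\lambda$ is $\tau$-continuous and hence every curve belongs to $\rmC([0,1];(X,\tau,\sfd_\lambda))$) yields $\sigma\in\Sigma'$ with $(\sfd_\lambda)_\rmC(\gamma_0,\gamma\circ\sigma)<\eps$, and saturation gives $\gamma\in U$ — the argument of Corollary \ref{cor:tedious-arcs}(c) carried out uniformly over the family. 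With (i) and (ii) made explicit your plan closes and is a legitimate alternative to the paper's construction; as written, the pivotal claim is miscited and unproved.
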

\begin{proof}
  The fact that $\sfd_\rmA$ is an extended distance on $\Arc(X,\sfd)$ 
  is a consequence of the previous results (applied
  to the topology induced by $\sfd$). 
  Let us now consider the two properties (X1) and (X2)
  of Definition
  \ref{def:luft1} separately.
  
  \noindent
  \underline{\emph{Proof of property (X1).}}
  For every $J\in \N$, let us denote by $\cP_J\subset [0,1]^{J+1}$ the
  collection of
  all partitions 
  $P=(t_0,t_1,\cdots t_J)$ 
  with $0=t_0\le t_1\le t_2\le \cdots \le t_J=1$.
  For every $P\in \cP_J$ and every 
  finite collection of functions $\cF=(f_j)_{j=1}^J\subset
  \Lipb(X,\tau,\sfd)$ 
  we consider the functions $F_{P,\cF},\ F_{\cF}$ defined by
  \begin{equation}
    \label{eq:240}
    F_{(t_0,t_1,\cdots,t_J),\cF}(\gamma):=
    \max_{1\le j\le J}\max_{t\in [t_{j-1},t_j]}f_j(\gamma(t)),\quad 
    F_\cF(\gamma)=\min_{P \in \cP_J} F_{P,\cF}(\gamma).
  \end{equation}
  By fixing the uniform partition $P_J=(0,1/J,2/J,\cdots,j/J,\cdots,
  1)$ as a reference,
  it is not difficult to check that
  \begin{equation}
    \label{eq:249}
    F_\cF(\gamma)=\min_{\gamma'\sim\gamma}F_{P_J,\cF}(\gamma').
  \end{equation}
  Since $F_\cF$ only depends on the equivalence class of $\gamma$, 
  it induces a map (denoted by $\bar F_\cF$) on $\Arc(X,\sfd)$.

  \noindent \emph{Claim: $\bar F_\cF$ is $\tau_\rmA$-continuous.}
  It is sufficient to show that $F_\cF$ is $\tau_\rmC$ continuous. 
  Let us consider the corresponding map
  $H:\cP_J\times\rmC([0,1];\R^J)\to \R$, 
  \begin{equation}
    \label{eq:250}
    H((t_0,t_1,\cdots,t_N),\ff):=\max_{1\le j\le J}\max_{t\in
      [t_{j-1},t_j]}f_j(t),\quad
    \ff:=(f_1,\cdots,f_J)\in \rmC([0,1];\R^J).
  \end{equation}
  $H$ is continuous map (where $P_J$ is endowed with
  the product topology of $[0,1]^{J+1}$);
  since $P_J$ is compact, 
  also $\tilde H(\ff):=\min_{(t_0,t_1,\cdots,t_N)\in
    P_J}H((t_0,t_1,\cdots,t_N),\ff)$ is continuous.
  Since $F_\cF(\gamma)=\tilde H(\ff\circ\gamma)$ 
  we conclude that $F_\cF$ is $\tau_\rmC$-continuous.

  \noindent\emph{Claim: $\bar F_\cF$ is $\sfd_\rmA$-Lipschitz.}
  Let $[\gamma_i]\in \Arc(X,\sfd)$, $\eps>0$, and $\varrho\in\Sigma'$
  such that $\sfd_\rmA([\gamma_1],[\gamma_2])\ge
  \sfd_\rmC({\gamma_1}\circ\varrho,{\gamma_2})-\eps$. 
  We select a sequence $P\in \cP_J$ 
  such that $\bar F([\gamma_2])=F_{P,\cF}(\gamma_2)$ 
  and we observe that 
  \begin{align*}
    \bar F_\cF([\gamma_1])-\bar F_\cF([\gamma_2])&\le 
    F_{\varrho(P),\cF}({\gamma_1})-
    F_{P,\cF}({\gamma_2})
    \le 
    F_{P,\cF}({\gamma_1}\circ\varrho)-
    F_{P,\cF}({\gamma_2})
                                 \\&\le 
    L\sfd_\rmC({\gamma_1}\circ\varrho,{\gamma_2})
    \le 
                                     L\big(\sfd_\rmA([\gamma_1],[\gamma_2])+\eps\big),
  \end{align*}
  where $L$ is the greatest Lipschitz constant of the functions
  $f_j$. 
  Since $\eps>0$ is arbitrary and we can invert the order of
  $\gamma_1$ and $\gamma_2$ we conclude that $\bar F_\cF$ is
  $\sfd_\rmA$-Lipschitz.

  \noindent \emph{Conclusion.}
  Now, if $\rmU$ is saturated open set containing ${\gamma_0}$,
  thanks to Remark \ref{re:monotone-partitions} we can
  find
  a collection of open sets $U_j\subset X$ such that  
  $$\rmU\supset \rmU'=\{\gamma\in \rmC([a,b];X):
  \text{there exists }(t_0', t_1',\cdots,t_J')\in
  \cP_J:\gamma([t'_{j-1},t'_j])\subset U_j\},\quad
  {\gamma_0}\in \rmU';$$
  in particular, there exist $(t_0, t_1,
  \cdots, t_J)\in \cP_J$ such that 
  ${\gamma_0}([t_{j-1},t_j])\subset U_j$. 
  
  Selecting $f_j\in \Lip_b(X,\tau,\sfd)$ so that 
  $f_j\restr{X\setminus U_j}\equiv 1$,
  $f_j\restr{\gamma_0([t_{j-1},t_j])}\equiv 0$, as in the proof of Proposition
  \ref{prop:C-extended},
  we conclude that 
  $\bar F_{\cF}([\gamma_0])=F_\cF(\gamma_0)=0$,
  $\bar F_{\cF}\restr{\Arc(X,\sfd)\setminus
    \rmU}\ge 1$.
  
  \noindent
  \underline{\emph{Proof of property (X2).}}
  By Lemma \ref{rem:monotonicity} we can find a directed set $I$ 
  and a monotone family $(\sfd_i)_{i\in I}$ of 
  $\tau$-continuous, bounded semidistances such that $\sfd=\sup_I
  \sfd_i$. We can therefore introduce the corresponding continuous and bounded
  semidistances
  $\sfd_{\rmC,i}$ on $\rmC([0,1];X)$ and $\sfd_{\rmA,i}$ on $\Arc(X)$ by
  \begin{equation}
    \label{eq:268}
    \sfd_{\rmC,i}(\gamma_1,\gamma_2):=
    \sup_{t\in [0,1]}\sfd_i(\gamma_1(t),\gamma_2(t)),\quad
    \sfd_{\rmA,i}([\gamma_1],[\gamma_2]):=\inf\Big\{\sfd_{\rmC,i}(\gamma_1',\gamma_2'):
    \gamma_j'\sim\gamma_j\Big\}.
  \end{equation}
  Let us first notice that for every $\gamma_1,\gamma_2\in
  \rmC([0,1];X)$ we have
  \begin{equation}
    \label{eq:269}
    \sfd_\rmC(\gamma_1,\gamma_2)=\lim_{i\in
      I}\sfd_{\rmC,i}(\gamma_1,\gamma_2)=
    \sup_{i\in I}\sfd_{\rmC,i}(\gamma_1,\gamma_2).
  \end{equation}
  By claim (b) of Corollary \ref{cor:tedious-arcs} the semidistances $\sfd_{\rmA,i}$ are
  $\tau_\rmA$-continuous.
  Let us now fix $\gamma_1,\gamma_2\in \rmC([0,1];(X,\sfd))$
  with $\sfd$-modulus of continuity
  $\omega_{\gamma_1},\omega_{\gamma_2}$ as in \eqref{eq:348}; by the
  reparametrization Theorem \ref{thm:reparam} we can find $\varrho_{1,i},\varrho_{2,i}\in
  \Sigma_2$ such that 
  \begin{displaymath}
     \sfd_{\rmA,i}([\gamma_1],[\gamma_2])=
     \sfd_{\rmC,i}(\gamma_1\circ\varrho_{1,i},\gamma_2\circ\varrho_{2,i})
     \quad\text{for every }i\in I.
  \end{displaymath}
  Since $\Sigma_2$ is compact, we can find a directed subset $J$ 
  and a monotone final map $h:J\to I$ such that the subnets
  $j\mapsto\varrho_{1,h(j)}$ and
  $j\mapsto\varrho_{2,h(j)}$ are convergent to elements
  $\varrho_1,\varrho_2\in \Sigma_2$.
  By \eqref{eq:269}, for every $\eps>0$ we may find $i_0\in I$ such
  that 
  \begin{equation}
    \label{eq:270}
    \sfd_{\rmA}([\gamma_1],[\gamma_2])\le 
    \sfd_{\rmC}({\gamma_1}\circ\varrho_1,{\gamma_2}\circ\varrho_2)\le 
    \sfd_{\rmC,i}({\gamma_1}\circ\varrho_1,{\gamma_2}\circ\varrho_2)+\eps\quad
    \text{for every }i\succeq i_0.
  \end{equation}
  On the other hand, there exists $j_0\in J$ such that 
  $h(j_0)\succeq i_0$ and for every $j\succeq j_0$ 
  \begin{displaymath}
    \sup_{t\in [0,1]}|\varrho_{1,h(j)}(t)-\varrho_1(t)|\le \eps,\quad
    \sup_{t\in [0,1]}|\varrho_{2,h(j)}(t)-\varrho_2(t)|\le \eps,
  \end{displaymath}
  so that for every $j\succeq j_0$
  \begin{equation}
    \label{eq:271}
    \sfd_{\rmC,h(j)}({\gamma_1}\circ\varrho_{1,h(j)},{\gamma_2}\circ\varrho_{2,h(j)})
    \ge
    \sfd_{\rmC,h(j)}({\gamma_1}\circ\varrho_{1},{\gamma_2}\circ\varrho_{2})
    -\omega_{\gamma_1}(\eps)-\omega_{\gamma_2}(\eps).
  \end{equation}
  Combining \eqref{eq:270} with \eqref{eq:271} we thus obtain
  \begin{equation}
    \label{eq:272}
    \sfd_{\rmA}([\gamma_1],[\gamma_2])\le 
    \sfd_{\rmA,h(j)}([\gamma_1],[\gamma_2])+\eps+\omega_{\gamma_1}(\eps)+\omega_{\gamma_2}(\eps)
  \end{equation}
  for every $j\succeq j_0$. Passing to the limit w.r.t.~$j\in J$ we get 
  \begin{align*}
    \sfd_{\rmA}([\gamma_1],[\gamma_2])&\le 
    \lim_{j\in J}\sfd_{\rmA,h(j)}([\gamma_1],[\gamma_2])
    +\eps+\omega_{\gamma_1}(\eps)+\omega_{\gamma_2}(\eps)\\
   & = \lim_{i\in I}\sfd_{\rmA,i}([\gamma_1],[\gamma_2])+\eps+\omega_{\gamma_1}(\eps)+\omega_{\gamma_2}(\eps),
  \end{align*}
  which yields $ \sfd_{\rmA}([\gamma_1],[\gamma_2])=
  \lim_{i\in I}\sfd_{\rmA,i}([\gamma_1],[\gamma_2])$ since $\eps>0$ is arbitrary.
\end{proof}
\index{Auxiliary topology}
\begin{corollary}
  \label{cor:auxiliary-arc}
    If $\tau'$ is an auxiliary topology for
  $(X,\tau,\sfd)$ according to Definition
  \ref{def:auxiliary}
  then $\tau_\rmA'$ is an auxiliary topology for
  $(\Arc(X,\sfd),\tau_\rmA,\sfd_\rmA)$.
\end{corollary}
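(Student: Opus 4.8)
The plan is to verify, for the e.m.t.\ space $(\Arc(X,\sfd),\tau_\rmA,\sfd_\rmA)$ provided by Proposition~\ref{prop:A-extended}, the three conditions (A1)--(A3) of Definition~\ref{def:auxiliary} for the topology $\tau_\rmA'$, understood as the restriction to $\Arc(X,\sfd)$ of the quotient topology on $\Arc(X,\tau')=\rmC([0,1];(X,\tau'))/{\sim}$ induced by the compact--open topology $\tau_\rmC'$. Once (A1)--(A3) are established, the implication ``(A1,2,3) $\Rightarrow$ auxiliary'' recorded after Definition~\ref{def:auxiliary} (form the union of the sequence obtained in (A3) with the functions $(\sfd')_\rmA(\cdot,[\eta_n])$ for a countable $(\sfd')_\rmA$--dense set $([\eta_n])$) produces the desired countable generating family. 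Condition (A1) is immediate: since $\tau'\subset\tau$, each $\tau_\rmC'$--subbasic set $S(K,V')$ with $V'$ a $\tau'$--open set is $\tau_\rmC$--open, so $\tau_\rmC'\subset\tau_\rmC$, and passing to quotients and restricting to $\Arc(X,\sfd)$ gives $\tau_\rmA'\subset\tau_\rmA$.

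For (A2) I would take the arc distance $(\sfd')_\rmA$ associated with the bounded $\tau$--continuous distance $\sfd'\le\sfd$ inducing $\tau'$ (supplied by (A2) for $(X,\tau,\sfd)$). Since $\tau'$ is induced by the distance $\sfd'$, Corollary~\ref{cor:tedious-arcs}(c) applied to $(X,\tau',\sfd')$ gives that $(\sfd')_\rmA$ induces the quotient topology on $\Arc(X,\tau')$, hence $\tau_\rmA'$ on $\Arc(X,\sfd)$; Corollary~\ref{cor:tedious-arcs}(b) applied to the $\tau$--continuous semidistance $\sfd'$ gives that $(\sfd')_\rmA$ is $\tau_\rmA$--continuous; and $(\sfd')_\rmC\le\sfd_\rmC$ together with the monotonicity of $\delta\mapsto\delta_\rmA$ gives $(\sfd')_\rmA\le\sfd_\rmA$ and its boundedness. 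Separability of $\tau_\rmA'$ follows because $\rmC([0,1];(X,\tau'))$ is separable by (CO1) (as $\tau'$ is separable and metrizable), and a quotient and then a subspace of a separable metrizable space is separable.

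The substance is (A3). Fix a countable $\cG=(g_n)_{n\in\N}\subset\Lip_b(X,\tau',\sfd)$ with $\sfd(x,y)=\sup_n|g_n(x)-g_n(y)|$ (Definition~\ref{def:auxiliary}); then automatically $g_n\in\Lip_{b,1}(X,\tau',\sfd)$, and we may also throw the $-g_n$ into $\cG$. Set $\sfd_m:=\max_{1\le k\le m}|g_k(\cdot)-g_k(\cdot)|$, an increasing sequence of bounded $\tau'$--continuous (hence $\tau$--continuous) semidistances on $X$ with $\sup_m\sfd_m=\sfd$. The limiting argument of the proof of property (X2) of Proposition~\ref{prop:A-extended}---which uses only monotonicity, boundedness, $\tau$--continuity of the approximating semidistances and $\sup_m\sfd_m=\sfd$, plus the compactness of $\Sigma_2$ and the $\sfd$--continuity of the curves, never the generating property \eqref{eq:259}---yields $\sfd_\rmA([\gamma_1],[\gamma_2])=\sup_m(\sfd_m)_\rmA([\gamma_1],[\gamma_2])$ for all $[\gamma_1],[\gamma_2]\in\Arc(X,\sfd)$. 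Since $G_m:=(g_1,\dots,g_m)\colon X\to\R^m$ satisfies $(\sfd_m)_\rmC(\gamma_1,\gamma_2)=\sup_t|G_m(\gamma_1(t))-G_m(\gamma_2(t))|_\infty$, the map $\gamma\mapsto G_m\circ\gamma$ embeds $(\rmC([0,1];(X,\sfd)),(\sfd_m)_\rmC)$ isometrically into the separable space $\rmC([0,1];\R^m)$; combined with $(\sfd_m)_\rmA([\gamma],[\gamma'])\le(\sfd_m)_\rmC(\gamma,\gamma')$ this makes $(\Arc(X,\sfd),(\sfd_m)_\rmA)$ separable, so one may select a $(\sfd_m)_\rmA$--dense sequence $([\eta^m_k])_{k\in\N}$ in $\Arc(X,\sfd)$. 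Put $H^m_k([\gamma]):=(\sfd_m)_\rmA([\gamma],[\eta^m_k])$; this is bounded, $1$--Lipschitz for $\sfd_\rmA$ (since $(\sfd_m)_\rmA\le\sfd_\rmA$), and $\tau_\rmA'$--continuous by Corollary~\ref{cor:tedious-arcs}(b) applied to the $\tau'$--continuous semidistance $\sfd_m$, hence $H^m_k\in\Lip_b(\Arc(X,\sfd),\tau_\rmA',\sfd_\rmA)$. Finally, for $[\gamma_1],[\gamma_2]$ and $\eps>0$ choose $m$ with $(\sfd_m)_\rmA([\gamma_1],[\gamma_2])\ge\sfd_\rmA([\gamma_1],[\gamma_2])-\eps$ and then $k$ with $(\sfd_m)_\rmA([\gamma_2],[\eta^m_k])<\eps$: the triangle inequality yields $H^m_k([\gamma_1])-H^m_k([\gamma_2])>\sfd_\rmA([\gamma_1],[\gamma_2])-3\eps$, while $1$--Lipschitzianity yields the reverse bound, so $\sfd_\rmA([\gamma_1],[\gamma_2])=\sup_{m,k}|H^m_k([\gamma_1])-H^m_k([\gamma_2])|$. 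This is (A3) for the countable family $(H^m_k)_{m,k\in\N}$, which completes the verification.

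The step I expect to be most delicate is reusing the argument behind property (X2) of Proposition~\ref{prop:A-extended} with approximating semidistances $(\sfd_m)$ that generate only the coarser topology $\tau'$ rather than $\tau$: one must go through that proof and confirm that the generating condition \eqref{eq:259} is genuinely never invoked, only monotonicity, $\tau$--continuity, boundedness, $\sup_m\sfd_m=\sfd$, and compactness of $\Sigma_2$. A secondary point is the separability of $(\Arc(X,\sfd),(\sfd_m)_\rmA)$, handled above via the isometric factorization through $G_m$; alternatively one can invoke Corollary~\ref{cor:tedious-arcs}(d) for the Polish space $\rmC([0,1];\R^m)$.
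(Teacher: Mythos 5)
Your proof is correct and follows essentially the same route as the paper: the paper likewise treats (A1,2) as immediate and obtains (A3') by rerunning the limiting argument from the proof of property (X2) of Proposition \ref{prop:A-extended} with the increasing sequence of $\tau'$-continuous bounded semidistances — exactly the step you single out as delicate, and which indeed never invokes the generating condition \eqref{eq:259}. The only difference is that the paper stops at the identity $\sfd_\rmA=\sup_n \sfd_{\rmA,n}$, i.e.\ at (A3'), so your additional construction of the functions $H^m_k$ via $(\sfd_m)_\rmA$-dense sequences merely reproves, in the arc setting, the equivalence (A3)$\Leftrightarrow$(A3') already recorded after Definition \ref{def:auxiliary}.
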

\begin{proof}
  Properties (A1,2) of Definition \ref{def:auxiliary} are immediate.
  We now select an increasing sequence $\sfd_n$ of $\tau'$-continuous
  distances satisfying \eqref{eq:555}; arguing as in the previous
  proof
  we obtain that $\sfd_\rmA([\gamma_1],[\gamma_2])=
  \sup_{n\in\N}\sfd_{\rmA,n}([\gamma_1],[\gamma_2])$
  for every $\gamma_1,\gamma_2\in \rmC([0,1];(X,\sfd))$,
  which precisely yields (A3') for $\tau_\rmA'$.
\end{proof}

\subsection{Rectifiable arcs}
\label{subsec:rectifiable_arcs}

If $\delta$ is an extended semidistance in $X$, $\gamma:[a,b]\to X$ and $J\subset [a,b]$ we set
\begin{equation}
  \label{eq:208}
  \Var_\delta(\gamma;J):=\sup\Big\{\sum_{j=1}^N
  \delta(\gamma(t_j),\gamma(t_{j-1})): \{t_j\}_{j=0}^N\subset J,\quad
  t_0<t_1<\cdots<t_N\ \Big\}.
\end{equation}
$\BV([a,b];(X,\delta))$ will denote the space of maps
$\gamma:[a,b]\to X$
such that $\Var_\delta(\gamma;[a,b])<\infty$
(we will omit $\delta$ in the case of the distance $\sfd$).
We will set
\begin{equation}
\ell(\gamma):=\Var_\sfd(\gamma;[a,b]),\quad
V_\gamma(t):=\Var_\sfd(\gamma;[a,t])\quad t\in [a,b],\label{eq:531}
\end{equation}
 and
 \begin{equation}
   V_{\gamma,\ell}(t):=\ell(\gamma)^{-1}V_\gamma(t)\quad \text{whenever
$\ell(\gamma)>0$},\qquad
V_{\gamma,\ell}(t):=0\quad\text{if }
\ell(\gamma)=0;\label{eq:532}
\end{equation}
notice that if $\ell(\gamma)=0$ then $\gamma$ is constant.
\begin{lemma}
  \label{le:surprising-BVC}
  Let $(X,\tau,\sfd)$ be an extended metric-topological space.
  If $\gamma\in\rmC([a,b];(X,\tau))$ satisfies
  $\Var_\sfd(\gamma;[a,b])<\infty$ 
  then $\gamma\in \rmC([a,b];(X,\sfd))$,
  and its variation map $V_\gamma$ is continuous
  in $[a,b]$.
  We will set 
  $$\BVC([a,b];X):=\BV([a,b];(X,\sfd))\cap \rmC([a,b];(X,\tau))=
  \BV([a,b];(X,\sfd))\cap \rmC([a,b];(X,\sfd)).$$
\end{lemma}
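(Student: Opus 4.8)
The plan is to show two things: first that $\gamma$ is $\sfd$-continuous, and then that its variation function $V_\gamma$ is continuous on $[a,b]$. For the first claim, I would fix $t\in[a,b]$ and a sequence (or net) $s\to t$; the monotonicity of the total variation on subintervals gives
\begin{displaymath}
  \sfd(\gamma(s),\gamma(t))\le |V_\gamma(s)-V_\gamma(t)|,
\end{displaymath}
so it suffices to prove that $V_\gamma$ is continuous at $t$, which is exactly the second claim. Thus the whole lemma reduces to the continuity of $V_\gamma$, and I would prove this directly from the $\tau$-continuity of $\gamma$ together with the hypothesis $\ell(\gamma)=\Var_\sfd(\gamma;[a,b])<\infty$.

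For the continuity of $V_\gamma$, I would argue by contradiction (the classical argument for real-valued $\BV$ functions, adapted to the extended metric setting). Suppose $V_\gamma$ has a jump at some $t_0$, say a jump from the right of size $2\varepsilon>0$, i.e.\ $\lim_{s\downarrow t_0}V_\gamma(s)-V_\gamma(t_0)=2\varepsilon$ (the value exists since $V_\gamma$ is nondecreasing). By definition of $\Var_\sfd$ on $[t_0,s]$, for each $s>t_0$ close to $t_0$ there is a partition $t_0=r_0<r_1<\cdots<r_N=s$ with $\sum_{j}\sfd(\gamma(r_{j-1}),\gamma(r_j))\ge \varepsilon$; since $\sfd(\gamma(t_0),\gamma(r_1))\to 0$ as $r_1\downarrow t_0$ by $\tau$-continuity of $\gamma$ and \eqref{eq:243} (lower semicontinuity of $\sfd$) — or more simply, by first shrinking $r_1$ we may assume $\sfd(\gamma(t_0),\gamma(r_1))$ is as small as we like, so the ``mass'' $\varepsilon$ is essentially carried by the partition points in $(t_0,s]$. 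The key point: this produces, for a decreasing sequence $s_k\downarrow t_0$, finite partitions of disjoint intervals $(s_{k+1},s_k]$ each contributing variation $\ge\varepsilon/2$, hence $\Var_\sfd(\gamma;[t_0,s_1])\ge\sum_k \varepsilon/2=\infty$, contradicting $\ell(\gamma)<\infty$. The left-continuity at $t_0$ is symmetric. Continuity at the endpoints $a,b$ is the one-sided version of the same argument.

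One technical care point: here $\sfd$ is only $\tau\times\tau$-lower semicontinuous and extended-valued, not assumed finite or $\tau$-continuous, so I want to be sure the above only uses $\tau$-continuity of $\gamma$ in the form ``$\sfd(\gamma(t_0),\gamma(r))\to 0$ as $r\to t_0$'' — but that is exactly what needs justifying and is not immediate a priori. To get it, note that the hypothesis $\Var_\sfd(\gamma;[a,b])<\infty$ forces $V_\gamma(t)<\infty$ for all $t$, and for $r$ between $t_0$ and $s$ we have $\sfd(\gamma(t_0),\gamma(r))\le V_\gamma(r)-V_\gamma(t_0)\le V_\gamma(s)-V_\gamma(t_0)$; so once we know $V_\gamma$ has (finite) right and left limits — automatic by monotonicity — we already get that $\limsup_{r\downarrow t_0}\sfd(\gamma(t_0),\gamma(r))$ is bounded by the right jump of $V_\gamma$ at $t_0$, and the contradiction argument closes the loop by showing that jump is $0$. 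The remaining assertion that then $\gamma\in\rmC([a,b];(X,\sfd))$, and hence the displayed identity
\begin{displaymath}
  \BVC([a,b];X)=\BV([a,b];(X,\sfd))\cap \rmC([a,b];(X,\tau))=
  \BV([a,b];(X,\sfd))\cap \rmC([a,b];(X,\sfd)),
\end{displaymath}
follows because $\rmC([a,b];(X,\sfd))\subset\rmC([a,b];(X,\tau))$ by \eqref{eq:244}, so the intersection with $\rmC([a,b];(X,\tau))$ of a $\BV$ curve that is automatically $\sfd$-continuous equals the intersection with $\rmC([a,b];(X,\sfd))$.

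The main obstacle I expect is purely bookkeeping: making the ``disjoint intervals carrying uniformly positive variation'' construction airtight in the extended-valued setting, i.e.\ extracting from the definition of the supremum over partitions a \emph{localized} partition of $(s_{k+1},s_k]$ whose variation is $\ge\varepsilon/2$ while discarding the possibly-non-small first jump $\sfd(\gamma(t_0),\gamma(r_1))$. This is handled by the elementary observation that $\Var_\sfd(\gamma;[t_0,s])=\lim_{r\downarrow t_0}\big(\sfd(\gamma(t_0),\gamma(r))+\Var_\sfd(\gamma;[r,s])\big)$ together with additivity of $\Var_\sfd$ over adjacent intervals; no deep input beyond Lemma-level facts already in the excerpt (notably \eqref{eq:243} and \eqref{eq:244}) is needed.
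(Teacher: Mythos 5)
The reduction to continuity of $V_\gamma$ and the final identity for $\BVC$ are fine, but the core of your argument has a genuine gap, and it sits exactly where you flagged it. The step ``by first shrinking $r_1$ we may assume $\sfd(\gamma(t_0),\gamma(r_1))$ is as small as we like'' (equivalently, ``$\sfd(\gamma(t_0),\gamma(r_1))\to 0$ by $\tau$-continuity and \eqref{eq:243}'') is precisely the $\sfd$-continuity of $\gamma$ at $t_0$, i.e.\ the statement being proved: lower semicontinuity of $\sfd$ only gives $\sfd(\gamma(t_0),\gamma(t_0))\le\liminf_{r\downarrow t_0}\sfd(\gamma(t_0),\gamma(r))$, which is vacuous, not the upper bound you need. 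Your fallback does not close the loop, because the right jump $J:=V_\gamma(t_0^+)-V_\gamma(t_0)$ is in fact \emph{equal} to $\limsup_{r\downarrow t_0}\sfd(\gamma(t_0),\gamma(r))$, and a positive jump can be carried entirely by the single link issuing from $\gamma(t_0)$: one may have $\sfd(\gamma(t_0),\gamma(r))\ge J-\eps$ for all $r$ near $t_0$ while $\Var_\sfd(\gamma;[r,s])$ is arbitrarily small for $t_0<r<s$ near $t_0$ (your own identity $\Var_\sfd(\gamma;[t_0,s])=\lim_{r\downarrow t_0}\bigl(\sfd(\gamma(t_0),\gamma(r))+\Var_\sfd(\gamma;[r,s])\bigr)$ shows exactly this possibility). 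In that scenario your disjoint intervals $(s_{k+1},s_k]$ carry total variation at most $V_\gamma(s_1)-V_\gamma(t_0^+)<\infty$, so no contradiction with $\ell(\gamma)<\infty$ arises. In other words, finiteness of the variation plus $\tau$-continuity, used only through the trivial bound $\sfd(\gamma(t_0),\gamma(r))\le V_\gamma(r)-V_\gamma(t_0)$, cannot rule out a jump; indeed, if one drops the lower semicontinuity of $\sfd$ w.r.t.\ $\tau$, there are $\tau$-continuous curves of finite $\sfd$-variation whose variation function does jump, so some genuine use of \eqref{eq:243} is unavoidable.

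The missing idea is to use \eqref{eq:243} in the opposite direction, approaching $\gamma(t)$ along the curve from \emph{inside} the interval, which is how the paper argues. For $r<s<t$ one has $\sfd(\gamma(s),\gamma(r))\le V_\gamma(s)-V_\gamma(r)$; letting $s\uparrow t$, the $\tau$-continuity of $\gamma$ and the $\tau\times\tau$-lower semicontinuity of $\sfd$ give
\begin{displaymath}
  \sfd(\gamma(t),\gamma(r))\le \liminf_{s\uparrow t}\sfd(\gamma(s),\gamma(r))
  \le V_\gamma(t^-)-V_\gamma(r),
\end{displaymath}
so the possible jump of $V_\gamma$ at $t$ has been excised from the bound; letting $r\uparrow t$ yields $\sfd(\gamma(r),\gamma(t))\to0$, and the symmetric argument (with $s\downarrow t$, bounding by $V_\gamma(r)-V_\gamma(t^+)$ for $r>t$) gives right $\sfd$-continuity. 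Once $\gamma$ is $\sfd$-continuous at $t$, the identity between the one-sided jumps of $V_\gamma$ and the one-sided $\limsup$ of $\sfd(\gamma(\cdot),\gamma(t))$ (which you essentially derived) forces the jumps to vanish, i.e.\ $V_\gamma$ is continuous. If you insert this semicontinuity step in place of the circular ``shrink $r_1$'' step, the rest of your write-up goes through.
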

\begin{proof}
  Thanks to the obvious estimate
  \begin{equation}
    \label{eq:248}
    \sfd(\gamma(r),\gamma(s))\le V_\gamma(s)-V_\gamma(r)\quad
    \text{for every }r,s\in [a,b], \ r\le s,
  \end{equation}
  it is easy to check $\gamma$ is $\sfd$-continuous at every
  continuity point of $V_\gamma$. 
  Let us fix $r,s,t\in (a,b]$ with $r<s<t$; passing to the limit in
  the inequality \eqref{eq:248} keeping $r$ fixed and letting
  $s\uparrow t$ we get by the $\tau$-continuity of $\gamma$,
  the lower semicontinuity of $\sfd$ and the monotonicity of $V_\gamma$
  \begin{displaymath}
    \sfd(\gamma(t),\gamma(r))\le V_\gamma(t-)-V_\gamma(r)\quad
    V_\gamma(t-):=\lim_{s\up t}V_\gamma(s).
  \end{displaymath}
  We thus obtain $\lim_{r\uparrow t}\sfd(\gamma(t),\gamma(r))=0$.
  A similar argument yields the right continuity of $\gamma$ and
  therefore the continuity of $V_\gamma$.
\end{proof}
Since the length functional $\gamma\mapsto \ell(\gamma)$ is lower
semicontinuous with respect to the compact-open topology of 
$\rmC([a,b];(X,\tau))$, the set 
$\BVC([a,b];X)$ is an $F_\sigma$ in $\rmC([a,b];(X,\tau))$.
We consider two other subsets:
\begin{equation}
  \label{eq:556}
  \BVC_c([a,b];X):=\big\{\gamma\in \BVC([a,b];X):
  V_\gamma(t)=\ell(\gamma)(t-a)\big\}
\end{equation}
whose curves have constant velocity, and
\begin{equation}
  \label{eq:557}
  \BVC_k([a,b];X):=\big\{\gamma\in
  \BVC([a,b];X):\ell(\gamma)>k\big\}\quad
  k\in [0,+\infty[\,,
\end{equation}
whose curves are non-constant
$\BVC_c([a,b];X)$ is a Borel subset of $\BVC([a,b];X)$ since it can be
equivalently characterized by the condition
\begin{equation}
  \label{eq:558}
  \gamma\in \BVC_c([a,b];X)\quad\Leftrightarrow\quad
  \Lip(\gamma;[a,b])\le \ell(\gamma),
\end{equation}
and the maps $\Lip$ and $\ell$ are lower semicontinuous.
$\BVC_0([a,b];X)$ is open in $\BVC_c([a,b];X)$.
\begin{corollary}
  \label{cor:just-to-fix}
  If $(X,\tau,\sfd)$ is Polish then
  $\BVC([a,b];X),\ \BVC_c([a,b];X),\ \BVC_k([a,b];X)$, $k\ge 0$, are
  Lusin spaces. If $(X,\tau,\sfd)$ admits an auxiliary topology
  $\tau'$
  (in particular if $(X,\tau)$ is Souslin) then they
  are $\FF(\rmC([a,b];X),\tau_\rmC)$ analytic sets.
\end{corollary}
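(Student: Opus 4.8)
The plan is to exhibit, in both cases, the three sets as elements of a well‑behaved pointclass of the space $(\rmC([a,b];(X,\tau)),\tau_\rmC)$: genuine Borel sets when $(X,\tau)$ is Polish, and $\FF$‑analytic sets — that is, members of the class $\mathcal A(\FF(\rmC([a,b];X),\tau_\rmC))$ obtained by the Souslin operation from $\tau_\rmC$‑closed sets — in general. The two topological ingredients used throughout are \eqref{eq:243} (lower semicontinuity of $\sfd$) and the $\tau_\rmC$‑continuity of the evaluation maps $\sfe_t$ (property (CO2)): together they make $\gamma\mapsto\sfd(\gamma(s),\gamma(t))$ $\tau_\rmC$‑lower semicontinuous for every fixed $s,t$, and hence $\gamma\mapsto\ell(\gamma)=\Var_\sfd(\gamma;[a,b])$, $\gamma\mapsto V_\gamma(t)$ and $\gamma\mapsto\Lip(\gamma,[a,b],\sfd)$ are $\tau_\rmC$‑l.s.c., being suprema of $\tau_\rmC$‑l.s.c.\ functions; moreover, by Lemma \ref{le:surprising-BVC}, the set $\BV([a,b];(X,\sfd))\cap\rmC([a,b];(X,\tau))$ coincides with the $\tau_\rmC$‑set $\{\ell<\infty\}$.

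\emph{Polish case.} If $(X,\tau)$ is Polish then $(\rmC([a,b];(X,\tau)),\tau_\rmC)$ is Polish by (CO5). Since $\ell$ is $\tau_\rmC$‑l.s.c., $\BVC([a,b];X)=\{\ell<\infty\}=\bigcup_{n\in\N}\{\ell\le n\}$ is $F_\sigma$; likewise $\BVC_k([a,b];X)=\BVC([a,b];X)\cap\{\ell>k\}$ is $F_\sigma$, and by \eqref{eq:558} $\BVC_c([a,b];X)=\BVC([a,b];X)\cap\{\gamma:\Lip(\gamma,[a,b],\sfd)\le\ell(\gamma)\}$ is Borel, since $\ell$ and $\Lip(\cdot,[a,b],\sfd)$ restrict to real‑valued Borel functions on the Borel set $\BVC([a,b];X)$. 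Being Borel subsets of a Polish space, the three sets are Lusin spaces.

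\emph{Case of an auxiliary topology.} Here $\tau_\rmC$ need not be Polish, nor even metrizable; but by Corollary \ref{cor:auxiliary-Souslin} (applicable in particular when $(X,\tau)$ is Souslin), or directly by Definition \ref{def:auxiliary}, there is a \emph{countable} family $\cF=(f_n)_{n\in\N}\subset\Lip_b(X,\tau,\sfd)$ with $\sfd(x,y)=\sup_n|f_n(x)-f_n(y)|$. The idea is to use this countability to keep all the level sets entering the argument inside the class $\mathcal A(\FF(\rmC([a,b];X),\tau_\rmC))$, which contains every $F_{\sigma\delta}$ set and is closed under countable unions and intersections. For a partition $P=\{a=t_0<\dots<t_N=b\}$ one has
\[
\ell_P(\gamma):=\sum_{j=1}^{N}\sfd(\gamma(t_j),\gamma(t_{j-1}))=\sup_{(n_1,\dots,n_N)\in\N^N}\,\sum_{j=1}^{N}\bigl|f_{n_j}(\gamma(t_j))-f_{n_j}(\gamma(t_{j-1}))\bigr|,
\]
a \emph{countable} supremum of $\tau_\rmC$‑continuous functions, so $\{\ell_P>c\}$ is $F_\sigma$ for every $c\in\R$; since the l.s.c.\ of $\sfd$ yields $\ell=\sup_P\ell_P$ over partitions with rational interior nodes, $\{\ell>c\}$ is a countable union of $F_\sigma$ sets, hence $F_\sigma$. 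The same applies to $V_\gamma(t)$ and to $\Lip(\cdot,[a,b],\sfd)$: each of these functionals has $\tau_\rmC$‑closed lower level sets $\{\cdot\le c\}$ and $\tau_\rmC$‑$F_\sigma$ upper level sets $\{\cdot>c\}$, hence $\tau_\rmC$‑$F_{\sigma\delta}$ sets $\{\cdot\ge c\}$. Therefore $\BVC([a,b];X)=\bigcup_n\{\ell\le n\}$ and $\BVC_k([a,b];X)=\BVC([a,b];X)\cap\{\ell>k\}$ are $F_\sigma$, while, by \eqref{eq:558},
\[
\BVC_c([a,b];X)=\BVC([a,b];X)\cap\bigcap_{q\in\Q}\Bigl(\{\gamma:\Lip(\gamma,[a,b],\sfd)\le q\}\cup\{\ell\ge q\}\Bigr)
\]
is a countable intersection of unions of a $\tau_\rmC$‑closed set with a $\tau_\rmC$‑$F_{\sigma\delta}$ set; all three thus belong to $\mathcal A(\FF(\rmC([a,b];X),\tau_\rmC))$, proving the second statement.

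\emph{Main obstacle.} I expect the delicate point to be precisely the bookkeeping just sketched, rather than any deep fact about the ambient curve space (which in the general case is neither Polish nor Souslin): one must verify that the upper level sets $\{\ell>c\}$, $\{\Lip(\cdot,[a,b],\sfd)>c\}$, and their analogues are genuinely $F_\sigma$ — not merely $\tau_\rmC$‑open — so that the countable combinations defining $\BVC_c([a,b];X)$ remain inside $\mathcal A(\FF)$, a class that is not closed under complementation. This is exactly the role of the auxiliary topology: it trades the uncountable family $\Lip_{b,1}(X,\tau,\sfd)$ that represents $\sfd$ in \eqref{eq:220} for a countable one, turning functionals that are a priori only lower semicontinuous into countable suprema of $\tau_\rmC$‑continuous functions; the Polish case is the degenerate instance in which no such reduction is needed.
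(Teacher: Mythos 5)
Your proof is correct. For the Polish case you argue exactly as the paper does (the three sets are Borel in the Polish space $(\rmC([a,b];X),\tau_\rmC)$, and Borel subsets of Polish spaces are Lusin), but for the auxiliary-topology case you take a genuinely different route. The paper passes to the coarser curve topology $\tau_\rmC'$ induced by the auxiliary topology $\tau'$ (metrizable and separable by Proposition \ref{prop:C-extended}), observes that $\ell$ and $\Lip(\cdot,[a,b],\sfd)$ are lower semicontinuous also for $\tau_\rmC'$ so that the three sets are Borel there, invokes property (A3) of \S\,\ref{subsec:PLS} (in a metrizable space Borel sets are $\FF$-analytic), and concludes via $\FF(\tau_\rmC')\subset\FF(\tau_\rmC)$. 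You instead stay in $\tau_\rmC$ and use the countable family $(f_n)$ representing $\sfd$ to rewrite $\ell$ (through $\ell_P$ over rational partitions, which is legitimate by the $\tau$-continuity of the curves together with \eqref{eq:243}) and $\Lip$ as countable suprema of $\tau_\rmC$-continuous functions, so that the strict upper level sets become $F_\sigma$ rather than merely open; the identity $\BVC_c=\BVC\cap\bigcap_{q\in\Q}\bigl(\{\Lip\le q\}\cup\{\ell\ge q\}\bigr)$ then places $\BVC_c$ (and, more easily, $\BVC$ and $\BVC_k$) inside $\rmS(\FF(\rmC([a,b];X),\tau_\rmC))$, which is closed under countable unions and intersections by (A1)--(A2). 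Your version costs more bookkeeping but yields sharper information ($\BVC$ and $\BVC_k$ are $F_\sigma$ in $\tau_\rmC$, and $\BVC_c$ is a countable Boolean combination of closed and $F_{\sigma\delta}$ sets), whereas the paper's change-of-topology argument is shorter and reuses the general machinery of auxiliary topologies; note also that the $F_\sigma$ property of $\BVC$ itself needs no auxiliary topology (only the $\tau_\rmC$-lower semicontinuity of $\ell$), the countable representation being genuinely needed only for the sets involving $\{\ell>k\}$ and $\{\ell\ge q\}$, as you yourself point out.
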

\begin{proof}
  The first statement follows by the fact that Borel sets in a Polish
  space (in this case the space
  $(\rmC([a,b];(X,\tau),\tau_\rmC)$)
  are Lusin.

  The second claim
  is obvious for the $F_\sigma$ set $\BVC([a,b];X)$;
  in the case of 
  $\BVC_c([a,b];X)$ and $\BVC_k([a,b];X)$, it follows by the fact that they
  are Borel sets in
  the metrizable and separable space
  $(\rmC([a,b];(X,\tau'),\tau'_\rmC)$, thus are $\FF$-analytic
  (see (A3) in \S\,\ref{subsec:PLS}) for the coarser topology $\tau_\rmC'$,
  and thus $\FF$-analytic also with respect to $\tau_\rmC$.
\end{proof}

If $f\in \rmC(X)$ and $\gamma\in \BVC([a,b];X)$ then
the integral $\int_\gamma f$ is well defined by
Riemann-Stieltjes integration of $f\circ \gamma$ with respect to $\rmd
V_\gamma$; it can also be obtained as the limit of the Riemann sums
\begin{equation}
  \label{eq:209}
  \begin{aligned}
    \int_\gamma f=&\lim_{\tau(P)\down0} \sum_{j=1}^N
    f(\gamma(\xi_j))\sfd(\gamma(t_j),\gamma(t_{j-1})): \\P=&\{t_0=a\le
    \xi_1\le t_1\le\xi_2\le \cdots\le t_{N-1}\le \xi_N\le
    t_N=b\},\quad \tau(P):=\sup_j |t_j-t_{j-1}|
  \end{aligned}
\end{equation}
since in \eqref{eq:209} is equivalent to use
$\sfd(\gamma(t_j),\gamma(t_{j-1})) $ or
$V_\gamma(t_j)-V_\gamma(t_{j-1})$.

Notice that for every $\gamma\in\BVC([a,b];X)$
the map $V_\gamma:[a,b]\to [0,\ell(\gamma)]$ is continuous and surjective and
\begin{equation}
  \text{there exists a unique $\ell(\gamma)$-Lipschitz map
    $\Al_\gamma\in \BVC_c([0,1];X)$ such
  that $\gamma=\Al_\gamma\circ (V_{\gamma,\ell})$},
\label{eq:560}
\end{equation}
with $|\Al_\gamma'|(s)=\ell(\gamma)$ a.e.~and 
\begin{equation}
  \label{eq:210}
  \int_\gamma f=
  \int_0^{1} f(\Al_\gamma(s)) |\Al_\gamma'|(s)\,\d s
  =\ell(\gamma)\int_0^{1} f(\Al_\gamma(s))\,\d s.
\end{equation}
Denoting by $\vartheta_\gamma:[0,1]\to[a,b]$ the right-continuous pseudo
inverse of $V_{\gamma,\ell}$ (when $\ell(\gamma)>0$)
\begin{equation}
  \label{eq:257}
  \vartheta_\gamma(s):=\max\Big\{t\in
  [a,b]:V_{\gamma,\ell}(t)=s\Big\}\quad
  s\in [0,1],\quad
  \text{so that }V_{\gamma,\ell}(\vartheta_\gamma(s))=s\quad\text{in }[0,1],
\end{equation}
we have $\Al_\gamma=\gamma\circ \vartheta_\gamma$.
When $\ell(\gamma)=0$ we set $\vartheta_\gamma(s)\equiv b$,
and we still have $\Al_\gamma=\gamma\circ \vartheta_\gamma$.
We also notice that 
\begin{equation}
  \label{eq:232}
  \int_\gamma f=
  \int f\,\d\nu_\gamma\quad
  \text{where}\quad
  \nu_\gamma:=\ell(\gamma)(\Al_\gamma)_\sharp(\Leb 1\res{[0,1]});
\end{equation}
by \eqref{eq:232} it is possible to extend the integral to every
bounded or nonnegative Borel map $f:X
\to \R$.

  \begin{lemma}
    \label{le:trivial}
    Let $\gamma\in \BVC([0,1];X)$ and let $\vartheta:[0,1]\to[0,1]$ be
    an increasing map.
    \begin{enumerate}
    \item The map $\tilde\gamma:=\gamma\circ\vartheta$ belongs to
      $\BV([0,1];(X,\sfd))$ and
      \begin{equation}
        \label{eq:350}
        V_{\gamma\circ\vartheta,\ell}(t)\le
        V_{\gamma,\ell}(\vartheta(t))\quad\text{for every }t\in [0,1].
      \end{equation}
    \item If $\tilde\gamma\in \rmC([0,1];(X,\tau))$, $\ell(\tilde\gamma) =\ell(\gamma)$, 
      and $V_{\gamma,\ell}$ is strictly increasing, then
      $\vartheta\in \Sigma.$
    \item If $\vartheta\in \Sigma$ then $\tilde\gamma$
      still belongs to $\BVC([0,1];X)$ and 
      \begin{equation}
        \label{eq:284}
        V_{\tilde\gamma,\ell}=V_{\gamma\circ\vartheta,\ell}=V_{\gamma,\ell}\circ\vartheta
      \end{equation}
      and
      \begin{equation}
        \ell(\tilde\gamma)=\ell(\gamma),\quad
\Al_{\tilde\gamma}=\Al_\gamma,\quad
\int_\gamma f=\int_{\tilde\gamma} f.\label{eq:285}
\end{equation}
\end{enumerate}
\end{lemma}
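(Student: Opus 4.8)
The plan is to prove the three parts in order, each time reducing to the elementary additivity and monotonicity of the total variation functional together with Lemma \ref{le:surprising-BVC}, which guarantees that $\gamma$ is $\sfd$-continuous with continuous variation map $V_\gamma$. For part (a) I would fix $t\in[0,1]$ and an arbitrary partition $0=t_0<\dots<t_N=t$: since $\vartheta$ is nondecreasing, $\vartheta(t_0)\le\dots\le\vartheta(t_N)$ all lie in $[\vartheta(0),\vartheta(t)]\subset[0,\vartheta(t)]$, the summands with $\vartheta(t_j)=\vartheta(t_{j-1})$ vanish, and deleting them leaves an honest partition of a subinterval of $[0,\vartheta(t)]$; hence $\sum_j\sfd\big(\gamma(\vartheta(t_j)),\gamma(\vartheta(t_{j-1}))\big)\le V_\gamma(\vartheta(t))$, so $\Var_\sfd(\gamma\circ\vartheta;[0,t])\le V_\gamma(\vartheta(t))$. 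Taking $t=1$ gives $\ell(\gamma\circ\vartheta)\le\ell(\gamma)<\infty$, i.e.\ $\gamma\circ\vartheta\in\BV([0,1];(X,\sfd))$, and dividing the pointwise bound by $\ell(\gamma)$ yields \eqref{eq:350} (the case $\ell(\gamma)=0$ being trivial, both curves then being constant).

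For part (b) I would first locate the endpoints of $\vartheta$: by part (a), $\ell(\gamma)=\ell(\gamma\circ\vartheta)\le V_\gamma(\vartheta(1))-V_\gamma(\vartheta(0))\le V_\gamma(1)-V_\gamma(0)=\ell(\gamma)$, so $V_\gamma(\vartheta(0))=0$ and $V_\gamma(\vartheta(1))=\ell(\gamma)$; since $V_{\gamma,\ell}$, hence $V_\gamma$, is strictly increasing — which forces $\ell(\gamma)>0$ — this gives $\vartheta(0)=0$, $\vartheta(1)=1$, and it then suffices to prove $\vartheta$ continuous, because a nondecreasing surjection of $[0,1]$ onto itself lies in $\Sigma$. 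The continuity step is the one point that has to be run carefully. Suppose $\vartheta$ jumps at some $t_0$ and set $a:=\vartheta(t_0^-)$, $b:=\vartheta(t_0^+)$, so $a<b$ and $\vartheta$ assumes no value in $(a,b)$. As $s\up t_0$, $\vartheta(s)\to a$, hence $\gamma(\vartheta(s))\to\gamma(a)$ by $\sfd$-continuity of $\gamma$ and \eqref{eq:244}; matching this, via the Hausdorff property and the assumed $\tau$-continuity of $\gamma\circ\vartheta$ at $t_0$, with $(\gamma\circ\vartheta)(t_0)$, and doing the same from the right, gives $\gamma(a)=(\gamma\circ\vartheta)(t_0)=\gamma(b)$. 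Splitting the variation of $\gamma\circ\vartheta$ at $t_0$ and using $\vartheta([0,t_0))\subset[0,a]$, $\vartheta((t_0,1])\subset[b,1]$ together with $(\gamma\circ\vartheta)(t_0)=\gamma(a)=\gamma(b)$, one obtains $\ell(\gamma\circ\vartheta)\le V_\gamma(a)+\big(\ell(\gamma)-V_\gamma(b)\big)=\ell(\gamma)-\big(V_\gamma(b)-V_\gamma(a)\big)<\ell(\gamma)$ by strict monotonicity, contradicting $\ell(\gamma\circ\vartheta)=\ell(\gamma)$; hence $\vartheta$ is continuous and $\vartheta\in\Sigma$.

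For part (c), now $\vartheta\in\Sigma$ is continuous, nondecreasing and surjective, so $\vartheta(0)=0$, $\vartheta(1)=1$, and $\vartheta$ maps each $[0,t]$ onto $[0,\vartheta(t)]$. Then $\gamma\circ\vartheta$ is $\tau$-continuous (composition) and of finite variation by (a), hence lies in $\BVC([0,1];X)$ by Lemma \ref{le:surprising-BVC}. Surjectivity upgrades the inequality of (a) to equality — any partition of $[0,\vartheta(t)]$ lifts through $\vartheta|_{[0,t]}$ to a partition of $[0,t]$ with the same variation sum — so $V_{\gamma\circ\vartheta}=V_\gamma\circ\vartheta$; with $t=1$ this gives $\ell(\gamma\circ\vartheta)=\ell(\gamma)$, so the two normalizations coincide and \eqref{eq:284} follows. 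For the last two identities I would use the factorization \eqref{eq:560}: $\gamma=\Al_\gamma\circ V_{\gamma,\ell}$ gives $\gamma\circ\vartheta=\Al_\gamma\circ(V_{\gamma,\ell}\circ\vartheta)=\Al_\gamma\circ V_{\gamma\circ\vartheta,\ell}$, and since $\Al_\gamma\in\BVC_c([0,1];X)$ is $\ell(\gamma)=\ell(\gamma\circ\vartheta)$-Lipschitz, the uniqueness clause in \eqref{eq:560} forces $\Al_{\gamma\circ\vartheta}=\Al_\gamma$; then \eqref{eq:210} gives, for $f\in\rmC(X)$, $\int_{\gamma\circ\vartheta}f=\ell(\gamma\circ\vartheta)\int_0^1 f(\Al_{\gamma\circ\vartheta})\,\d s=\ell(\gamma)\int_0^1 f(\Al_\gamma)\,\d s=\int_\gamma f$, and the extension to bounded or nonnegative Borel $f$ follows from $\nu_{\gamma\circ\vartheta}=\nu_\gamma$ via \eqref{eq:232}. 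The main obstacle throughout is the continuity argument in (b); the rest is bookkeeping with the variation functional and with \eqref{eq:560}.
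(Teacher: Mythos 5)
Your treatment of parts (b) and (c) is correct and essentially the same as the paper's. For (b) the paper also takes the one-sided limits $r_\pm$ of $\vartheta$ at a putative jump, uses the $\tau$-continuity of $\tilde\gamma$ and the $\sfd$-continuity of $\gamma$ (Lemma \ref{le:surprising-BVC}) to get $\gamma(r_-)=\gamma(r_+)$, splits the variation at the jump to obtain $\ell(\tilde\gamma)\le \ell(\gamma)-\Var_\sfd(\gamma;[r_-,r_+])$, and concludes from the strict monotonicity of $V_{\gamma,\ell}$; your version is the contrapositive of the same computation, and your separate identification of $\vartheta(0)=0$, $\vartheta(1)=1$ (via the refined endpoint bound $\ell(\tilde\gamma)\le V_\gamma(\vartheta(1))-V_\gamma(\vartheta(0))$, which indeed follows from the same partition argument as your (a)) replaces the paper's ``similar argument at $i=0,1$''. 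For (c) the paper only appeals to the definition of the variation, the uniqueness in \eqref{eq:560} and \eqref{eq:210}, which is exactly what you carry out, including the lifting of partitions through the surjective $\vartheta|_{[0,t]}$ to turn the inequality into the equality $V_{\tilde\gamma}=V_\gamma\circ\vartheta$.

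The one step that does not work as written is the last line of (a): from $V_{\gamma\circ\vartheta}(t)\le V_\gamma(\vartheta(t))$ you cannot reach \eqref{eq:350} by ``dividing by $\ell(\gamma)$'', since the left-hand side of \eqref{eq:350} is, by \eqref{eq:532}, normalized by $\ell(\gamma\circ\vartheta)$, which may be strictly smaller than $\ell(\gamma)$; dividing yields only $\ell(\gamma)^{-1}V_{\gamma\circ\vartheta}(t)\le V_{\gamma,\ell}(\vartheta(t))$. In fact \eqref{eq:350}, read literally, fails for a nonconstant constant-speed $\gamma$ and $\vartheta(t)=t/2$: there $V_{\gamma\circ\vartheta,\ell}(t)=t$ while $V_{\gamma,\ell}(\vartheta(t))=t/2$. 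So this is a normalization glitch in the statement rather than a defect of your idea: what your partition argument correctly proves is the unnormalized bound $V_{\gamma\circ\vartheta}(t)\le V_\gamma(\vartheta(t))$ together with $\ell(\gamma\circ\vartheta)\le\ell(\gamma)$, and that is all that is used afterwards — in (b) through the endpoint refinement, and in (c), where surjectivity upgrades it to equality, gives $\ell(\tilde\gamma)=\ell(\gamma)$, and only then yields the normalized identity \eqref{eq:284}. It would be cleaner to state (a) in unnormalized form (or normalize both sides by $\ell(\gamma)$) and postpone the normalization to (c).
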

\begin{proof}
  Claims (a) and (c) follow easily by the definition \eqref{eq:208}, the
  characterization of $\Al_\gamma$ and \eqref{eq:210}.

  In order to check Claim (b), we choose a point
  $t\in (0,1)$ (the argument for the case $t=0$ or $t=1$ can be easily
  adapted)
  and we set
  $r_-=\lim_{s\uparrow t}\vartheta(s),$ $r_+=\lim_{s\downarrow
    t}\vartheta(s)$.
  The identity $\tilde\gamma=\gamma\circ\vartheta$ and the continuity
  of $\tilde\gamma$ yield
  $ \gamma(r_-)=\gamma(r_+)$ and
  \begin{align*}
    \ell(\tilde\gamma)&=\Var_\sfd(\tilde \gamma;[0,t])+
    \Var_\sfd(\tilde \gamma;[t,1])\le
    \Var_\sfd(\gamma;[0,r_-])+
    \Var_\sfd(\gamma;[r_+,1])\\&=
    \ell(\gamma)-\Var_\sfd(\gamma;[r_-,r_+])
    =\ell(\gamma)\big(1+V_{\gamma,\ell}(r_-)-V_{\gamma,\ell}(r_+)\big).
  \end{align*}
  We deduce that $V_{\gamma,\ell}(r_-)=V_{\gamma,\ell}(r_+)$ so that
  $r_-=r_+$. As similar argument shows that $\vartheta(i)=i$, $i=0,1$,
  so that $\vartheta$ is surjective.
\end{proof}
On $\BVC([0,1];X)$
we introduce the equivalence relation \eqref{eq:211}
and 
we will denote by $\RA(X,\sfd)$ (or simply $\RA(X)$)
the quotient space 
$\BVC([0,1];X)/\sim$ 
endowed with the quotient topology $\tau_\rmA$ induced by
$\rmC([0,1];(X,\tau))$ and with the extended distance
\begin{align}
  \label{eq:238}
  \sfd_\rmA([\gamma_1],[\gamma_2]):=
  {}&
      \inf\Big\{\sfd_\rmC(\gamma_1',\gamma_2'):
      \gamma_i'\sim\gamma_i\Big\}
      =\inf_{\varrho_i\in \Sigma} \sfd_{\rmC}({\gamma_1}\circ
      \varrho_1,{\gamma_2}\circ\varrho_2)
\end{align}
as in \eqref{eq:278}.
By Proposition \ref{prop:A-extended} the space 
$(\RA(X,\sfd),\tau_\rmA,\sfd_\rmA)$ is an extended metric-topological
space.
\begin{lemma}[Reparametrizations of rectifiable arcs]
  \label{le:reparam-arcs}
  Let $(X,\tau,\sfd)$ be an extended metric-topological space.
  We have:
  \begin{enumerate}
  \item If $\gamma\in \BVC([0,1];X)$, $\gamma'\in \rmC([0,1];X)$  and
    $\gamma'\sim\gamma$
    then $\gamma'\in \BVC([0,1];X)$.
  \item For every $\gamma,\gamma'\in \BVC([0,1];X)$ we have
    \begin{equation}
      \label{eq:283}
      \gamma\sim\gamma'\quad\Leftrightarrow\quad
      \Al_{\gamma}=\Al_{\gamma'},
    \end{equation}
    and all the curves $\gamma'$ equivalent to $\gamma$ can be
described as $\gamma'=\Al_\gamma\circ\sigma$ for some $\sigma\in
\Sigma$.
    \item
      For every $\gamma_i\in \BVC([0,1];X)$ the distance $\sfd_\rmA$
      satisfies
      {\em (\ref{eq:267}a,b,c)} and we have
\begin{subequations}
\label{eq:267bis}
  \begin{align}
    \label{eq:258bis}
    \sfd_\rmA(\gamma_1,\gamma_2)&=
                                      \inf_{\sigma\in
                                      \Sigma'}\sfd_\rmC(\Al_{\gamma_1},\Al_{\gamma_2}\circ\sigma)\\
    \label{eq:239bisbis}
                                &=\min_{\varrho_i\in
      \Sigma_2}\sfd_\rmC(\Al_{\gamma_1}\circ\varrho_1,\Al_{\gamma_2}\circ\varrho_2)
  \end{align}
\end{subequations}
\item
  The function $\ell$ and the evaluation maps $\sfe_0,\sfe_1$ are
  invariant w.r.t.~parametrizations, so that we will still denote by
  $\ell$
  and $\sfe_0,\sfe_1$ the corresponding quotient maps.
  $\ell:\Arc(X,\tau)\to [0,+\infty]$ is $\tau_\rmA$-lower
  semicontinuous
  and $\sfe_0,\sfe_1:\Arc(X,\tau)\to X$ are continuous.
  \item If
    $f:X\to[0,+\infty]$ is lower semicontinuous then the map
    $\gamma\mapsto \int_\gamma f$ only depends on $[\gamma]$ and it is
    lower semicontinuous w.r.t.~$\tau_\rmA$ in $\RA(X,\sfd)$.
\end{enumerate} 
\end{lemma}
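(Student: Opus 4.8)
**The plan is to prove the five assertions of Lemma~\ref{le:reparam-arcs} in order, each reducing to facts already established in the excerpt, principally Theorem~\ref{thm:reparam}, Corollary~\ref{cor:tedious-arcs}, Lemma~\ref{le:trivial}, and the properties (CO1)--(CO5) of the compact-open topology.**

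\emph{Proof of (a).} If $\gamma'\sim\gamma$ with $\gamma\in\BVC([0,1];X)$ and $\gamma'\in\rmC([0,1];X)$, pick $\sigma,\sigma'\in\Sigma$ with $\gamma\circ\sigma=\gamma'\circ\sigma'$. Since $\Var_\sfd$ is invariant under composition with a nondecreasing surjection of $[0,1]$, we get $\Var_\sfd(\gamma';[0,1])=\Var_\sfd(\gamma'\circ\sigma';[0,1])=\Var_\sfd(\gamma\circ\sigma;[0,1])=\Var_\sfd(\gamma;[0,1])=\ell(\gamma)<\infty$, so $\gamma'\in\BV([0,1];(X,\sfd))$; combined with $\gamma'\in\rmC([0,1];(X,\tau))$ and Lemma~\ref{le:surprising-BVC} this gives $\gamma'\in\BVC([0,1];X)$.

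\emph{Proof of (b).} Write $\Al_\gamma=\gamma\circ\vartheta_\gamma$ as in \eqref{eq:257}--\eqref{eq:560}; by \eqref{eq:560} we have $\gamma=\Al_\gamma\circ V_{\gamma,\ell}$ and $V_{\gamma,\ell}\in\Sigma$, so $\gamma\sim\Al_\gamma$. Thus if $\Al_\gamma=\Al_{\gamma'}$ then $\gamma\sim\Al_\gamma=\Al_{\gamma'}\sim\gamma'$, using transitivity from Corollary~\ref{cor:tedious-arcs}(a). Conversely suppose $\gamma\sim\gamma'$, say $\gamma\circ\sigma=\gamma'\circ\sigma'$; applying the arc-length construction and Lemma~\ref{le:trivial}(c) to $\sigma,\sigma'\in\Sigma$ gives $\Al_{\gamma\circ\sigma}=\Al_\gamma$ and $\Al_{\gamma'\circ\sigma'}=\Al_{\gamma'}$, whence $\Al_\gamma=\Al_{\gamma\circ\sigma}=\Al_{\gamma'\circ\sigma'}=\Al_{\gamma'}$. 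The description $\gamma'=\Al_\gamma\circ\sigma$ then follows: given $\gamma'\sim\gamma$ we have $\Al_{\gamma'}=\Al_\gamma$, and $\gamma'=\Al_{\gamma'}\circ V_{\gamma',\ell}=\Al_\gamma\circ V_{\gamma',\ell}$ with $\sigma:=V_{\gamma',\ell}\in\Sigma$; conversely any such $\gamma'$ is $\sim\gamma$ because $\Al_\gamma\sim\gamma$ and $\Al_\gamma\circ\sigma\sim\Al_\gamma$.

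\emph{Proof of (c).} Formulas (\ref{eq:267}a,b,c) are exactly Theorem~\ref{thm:reparam} applied with $\delta=\sfd$, since by Lemma~\ref{le:surprising-BVC} every $\gamma\in\BVC([0,1];X)$ lies in $\rmC([0,1];(X,\sfd))=\rmC([0,1];(X,\tau,\sfd))$ (cf.~\eqref{eq:530}). For \eqref{eq:258bis} and \eqref{eq:239bisbis} we use $\gamma_i\sim\Al_{\gamma_i}$ together with Theorem~\ref{thm:reparam}\eqref{eq:280}, which says $\sfd_\rmA(\gamma_1,\gamma_2)=\inf\{\sfd_\rmC(\gamma_1',\gamma_2'):\gamma_i'\sim\gamma_i\}$: restricting the infimum over representatives $\gamma_i'$ of the form $\Al_{\gamma_i}\circ\sigma_i$ (which by (b) exhausts all representatives up to the identity $\Al_{\gamma_1}\circ V_{\gamma_1,\ell}=\gamma_1$) and invoking \eqref{eq:258}, \eqref{eq:239tris} with $\gamma_i$ replaced by $\Al_{\gamma_i}$ yields the two displayed identities; note $\Al_{\gamma_i}\in\rmC([0,1];(X,\sfd))$ so Theorem~\ref{thm:reparam} applies, and the minimum in \eqref{eq:239bisbis} is attained by compactness of $\Sigma_2$ and $\tau_\rmC$-lower semicontinuity of $\sfd_\rmC$ exactly as in the proof of \eqref{eq:239tris}.

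\emph{Proof of (d) and (e).} Invariance of $\ell$, $\sfe_0$, $\sfe_1$ under $\sim$ is immediate: if $\gamma\circ\sigma=\gamma'\circ\sigma'$ then (using $\sigma(i)=\sigma'(i)=i$ for $i=0,1$) $\sfe_i(\gamma)=\gamma(\sigma(i))=\gamma'(\sigma'(i))=\sfe_i(\gamma')$, and $\ell$ is invariant by the variation computation in (a); hence they descend to the quotient. Lower semicontinuity of $\ell$ on $\rmC([0,1];(X,\tau))$ w.r.t.~$\tau_\rmC$ holds because each summand $\sfd(\gamma(t_j),\gamma(t_{j-1}))$ in \eqref{eq:208} is $\tau_\rmC$-lower semicontinuous by \eqref{eq:243} and (CO2), and a supremum of lower semicontinuous functions is lower semicontinuous; since $\ell$ is $\sim$-invariant and $\tau_\rmA$ is the quotient topology, lower semicontinuity passes to $\Arc(X,\tau)$. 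Continuity of $\sfe_0,\sfe_1$ on $\Arc(X,\tau)$ follows from continuity of $\gamma\mapsto\gamma(i)$ on $\rmC([0,1];(X,\tau))$ by (CO2), again using the quotient topology. Finally, for $f:X\to[0,+\infty]$ lower semicontinuous, $\gamma\mapsto\int_\gamma f$ depends only on $[\gamma]$ by \eqref{eq:285} in Lemma~\ref{le:trivial}(c); for lower semicontinuity w.r.t.~$\tau_\rmA$ on $\RA(X,\sfd)$, one writes $\int_\gamma f=\sup_P\sum_j m_j\,\bigl(V_\gamma(t_j)-V_\gamma(t_{j-1})\bigr)$ over partitions $P$ and intermediate infima $m_j=\inf\{f(\gamma(t)):t\in[t_{j-1},t_j]\}$ —more precisely, approximating $f$ from below by $\varphi\in\rmC_b(X,\tau)$ via \eqref{eq:324lsc} and using that $\gamma\mapsto\int_\gamma\varphi$ is the limit of Riemann sums \eqref{eq:209} that are $\tau_\rmC$-continuous in $\gamma$ once $V_\gamma$ is continuous— and takes the supremum over such $\varphi$; being $\sim$-invariant it descends to a $\tau_\rmA$-lower semicontinuous map on $\RA(X,\sfd)$.

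\textbf{The main obstacle} I expect is the lower semicontinuity claim in part (e): one must be careful that the Riemann--Stieltjes integral along a rectifiable arc is lower semicontinuous in the compact-open topology of $\rmC([0,1];(X,\tau))$ \emph{restricted to} $\BVC([0,1];X)$, where convergence does not control the length. The clean route is the monotone approximation $\int_\gamma f=\sup\{\int_\gamma\varphi:\varphi\in\rmC_b(X,\tau),\ 0\le\varphi\le f\}$ from \eqref{eq:324lsc} combined with $\int_\gamma\varphi=\ell(\gamma)\int_0^1\varphi(\Al_\gamma(s))\,\d s$ from \eqref{eq:210} and the joint lower semicontinuity of $(\ell,\Al_\gamma)$; handling the case $\ell(\gamma_n)\to\ell(\gamma)$ versus $\liminf\ell(\gamma_n)>\ell(\gamma)$ is the delicate point, but it is dispatched by the same Fatou-type argument used for $\ell$ itself together with $\tau_\rmC$-lower semicontinuity of $\sfd$.
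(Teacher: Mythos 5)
Parts (a)--(d) of your argument are correct and follow essentially the paper's own route (invariance of the variation under nondecreasing surjections, the identity $\gamma=\Al_\gamma\circ V_{\gamma,\ell}$ together with Lemma~\ref{le:trivial}, Theorem~\ref{thm:reparam} with $\delta=\sfd$, and descent to the quotient topology); the only omission there, the degenerate case $\ell(\gamma)=0$ in (b) where $V_{\gamma,\ell}\equiv0$ is not in $\Sigma$, is harmless since such curves are constant and coincide with their arc-length reparametrization.

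The genuine gap is in (e), exactly at the point you yourself flag as the obstacle. Your mechanism for the lower semicontinuity of $\gamma\mapsto\int_\gamma\varphi$ with $\varphi\in\rmC_b(X,\tau)$ --- that this map is a \emph{limit} of Riemann sums \eqref{eq:209} which are $\tau_\rmC$-\emph{continuous} in $\gamma$ --- fails on two counts: the sums in \eqref{eq:209} contain the factors $\sfd(\gamma(t_j),\gamma(t_{j-1}))$, which are only $\tau_\rmC$-lower semicontinuous in $\gamma$ because $\sfd$ is merely $\tau\times\tau$-l.s.c.\ by \eqref{eq:243} (it is not assumed continuous); and even if they were continuous, a pointwise limit of continuous functions need not be l.s.c.\ --- what one needs is a \emph{supremum} representation by l.s.c.\ functions. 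The alternative route in your closing paragraph, based on $\int_\gamma\varphi=\ell(\gamma)\int_0^1\varphi(\Al_\gamma(s))\,\d s$ and a claimed joint lower semicontinuity of $(\ell,\Al_\gamma)$, does not work as stated either: $\gamma\mapsto\Al_\gamma$ is \emph{not} $\tau_\rmA$-continuous; Theorem~\ref{thm:important-arcs} only gives continuity along nets with $\ell(\gamma_i)\to\ell(\gamma)$ (and universal Lusin measurability in general), and that statement is itself proved later via the compactification, so the unspecified Fatou-type argument for the case $\liminf_i\ell(\gamma_i)>\ell(\gamma)$ is precisely what is missing.

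The repair is the paper's argument: for nonnegative continuous $f$ write $\int_\gamma f$ as the supremum over partitions of the lower sums $\sum_j\bigl(\inf_{t\in[t_{j-1},t_j]}f(\gamma(t))\bigr)\,\sfd(\gamma(t_j),\gamma(t_{j-1}))$ as in \eqref{eq:209bis}; each such sum is $\tau_\rmC$-l.s.c.\ (a finite sum of products of nonnegative l.s.c.\ functions of $\gamma$, the infimum being l.s.c.\ by the compact-open subbasis and the openness of $\{f>c\}$), hence so is their supremum, and reparametrization invariance transfers the lower semicontinuity to $\tau_\rmA$. One then passes to a general l.s.c.\ $f\ge0$ via \eqref{eq:324lsc} and the Radon property of $\nu_\gamma$ from \eqref{eq:232}, i.e.\ $\int_\gamma f=\sup\{\int_\gamma g:\,g\in\rmC_b(X),\ 0\le g\le f\}$, which is a supremum of l.s.c.\ maps. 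Incidentally, your first displayed formula with the increments $V_\gamma(t_j)-V_\gamma(t_{j-1})=\Var_\sfd(\gamma;[t_{j-1},t_j])$ would also work, since each such increment is itself a supremum of $\tau_\rmC$-l.s.c.\ functions of $\gamma$; but you abandon it in favour of the flawed limit argument, so as written the proof of (e) is not complete.
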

\begin{proof}  
  \textbf{(a)} 
  Since $\gamma'\circ\sigma'=\gamma\circ\sigma$ for some
  $\sigma,\sigma'\in \Sigma$, we have
  $\ell(\gamma')=\ell(\gamma)<\infty$ by \eqref{eq:285}.
  \medskip

  \noindent
  \textbf{(b)} The right implication $\Rightarrow$ in \eqref{eq:283} follows by
  \eqref{eq:285}.
  In order to prove the converse implication it is not restrictive to
  suppose $\ell(\gamma)=\ell(\gamma')>0$; we observe that there
  exist $\sigma,\sigma'\in \Sigma$ so that 
  $\Al_\gamma\circ\sigma=\Al_{\gamma'}\circ\sigma'=\gamma''$ and therefore
  $\Al_{\gamma''}=\Al_\gamma\circ(\sigma\circ \vartheta_{\gamma''})$.
  Thanks to the second claim of Lemma \ref{le:trivial}
  we deduce that $\varrho:=\sigma\circ \vartheta_{\gamma''} \in \Sigma$
  is continuous and surjective. Recalling that for every $\gamma\in \BVC([0,1];X)$ 
  $V_{\Al_\gamma,\ell}(t)=t$ by construction, 
 \eqref{eq:284} yields
  \begin{displaymath}
    t=V_{\Al_{\gamma''},\ell}(t)=V_{\Al_\gamma\circ \varrho,\ell}(t)=
    V_{\Al_\gamma,\ell}(\varrho(t))=\varrho(t)\quad\text{for every }t\in [0,1].
  \end{displaymath}
  \textbf{(c)} is an immediate consequence of Theorem \ref{thm:reparam} for
  $\delta:=\sfd$.
  \medskip

  \noindent
  \textbf{(d)} The function $\ell$ is lower semicontinuous w.r.t.~the
  $\tau_\rmC$-topology
  being the supremum of lower semicontinuous functions by
  \eqref{eq:208}.
  Since $\ell(\gamma)$ is independent on the choice
  of a representative in $[\gamma]$, it is also lower
  semicontinuous w.r.t.~the $\tau_\rmA$ topology.
  A similar argument holds for the initial and final evaluation maps $\sfe_0,\sfe_1$.
  \medskip

  \noindent
  \textbf{(e)} If $f$ is continuous, we use the representation of the integral
  by Riemann sums
  \begin{equation}
    \label{eq:209bis}
    \begin{aligned}
      \int_\gamma f=&\sup\sum_{j=1}^N
      \Big(\inf_{t\in [t_{j-1},t_j]}f(\gamma(t))\Big)\sfd(\gamma(t_j),\gamma(t_{j-1})): t_0=0<
      t_1<\cdots<t_{N-1}<
      t_N=1
    \end{aligned}
  \end{equation}
  which exhibits $\int_\gamma f$ as the supremum of $\tau_\rmC$ lower
  semicontinuous functions. The invariance of the integral 
  w.r.t.~reparametrization yields the $\tau_\rmA$ lower
  semicontinuity.

  When $f$ is $\tau$-lower semicontinuous, we can represent it as the
  supremum
  of the (directed) set
  \begin{displaymath}
    f(x)=\sup_{g\in F} g(x),\quad
    F:=\Big\{g\in \rmC_b(X),\ 0\le g\le f\Big\}.
  \end{displaymath}
  Since $\nu_\gamma$ is a Radon measure, we have
  \begin{displaymath}
    \int_\gamma f=
    \int_X f\,\d\nu_\gamma=
    \sup_{g\in F}\int_X g\,\d\nu_\gamma=
    \sup_{g\in F}\int_\gamma g. \qedhere
  \end{displaymath}
  \end{proof}
  \begin{lemma}
    \label{le:just-to-fix2}
    $\RA(X,\sfd)$ is an $F_\sigma$-subset
    of $(\Arc(X,\tau),\tau_\rmA)$.\\
    If $(X,\tau)$ is a Polish space then
    $(\RA(X,\sfd),\tau_\rmA)$ is a Lusin space.
    If $(X,\tau,\sfd)$ admits an auxiliary topology $\tau'$
    (in particular if $(X,\tau)$ is Souslin)
    then  for every $k\ge0$ the (relatively) open subsets
    \begin{equation}
      \RA_k(X,\sfd):=
      \big\{\gamma\in \RA(X,\sfd):\ell(\gamma)>k\big\}
      \label{eq:RAk}
  \end{equation}
  are
    $\FF(\RA(X,\sfd))$-analytic set for the $\tau_\rmA'$
    and the $\tau_\rmA$-topology.
  \end{lemma}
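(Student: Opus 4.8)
The plan is to derive all three assertions from the lower semicontinuity of the length functional $\ell$, together with the structural facts about $\BVC([0,1];X)$ and $\BVC_c([0,1];X)$ already established in \S\ref{subsec:rectifiable_arcs} and in Corollaries \ref{cor:just-to-fix} and \ref{cor:tedious-arcs}.

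First I would record, using Lemma \ref{le:surprising-BVC}, that a curve of finite $\sfd$-variation is automatically $\sfd$-continuous, so that
\[
\RA(X,\sfd)=\bigl\{[\gamma]\in\Arc(X,\tau):\ell([\gamma])<\infty\bigr\}=\bigcup_{n\in\N}\bigl\{[\gamma]\in\Arc(X,\tau):\ell([\gamma])\le n\bigr\}.
\]
Since $\ell:\Arc(X,\tau)\to[0,+\infty]$ is $\tau_\rmA$-lower semicontinuous by Lemma \ref{le:reparam-arcs}(d), each sublevel set $\{\ell\le n\}$ is $\tau_\rmA$-closed, and the displayed identity exhibits $\RA(X,\sfd)$ as an $F_\sigma$-subset of $(\Arc(X,\tau),\tau_\rmA)$.

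For the Lusin claim, assume $(X,\tau)$ is Polish, so that $\BVC_c([0,1];X)$ is a Lusin space by Corollary \ref{cor:just-to-fix}. I would then check that the restriction of the quotient map $\Quot$ to $\BVC_c([0,1];X)$ is a continuous bijection onto $\RA(X,\sfd)$: continuity is immediate from the definition of the quotient topology $\tau_\rmA$; surjectivity holds because every class $[\gamma]\in\RA(X,\sfd)$ contains its arc-length representative $\Al_\gamma\in\BVC_c([0,1];X)$ (recall $\gamma=\Al_\gamma\circ V_{\gamma,\ell}$ with $V_{\gamma,\ell}\in\Sigma$, cf.\ \eqref{eq:560}); injectivity follows from Lemma \ref{le:reparam-arcs}(b), since two equivalent curves $\beta_1,\beta_2\in\BVC_c([0,1];X)$ satisfy $\beta_1=\Al_{\beta_1}=\Al_{\beta_2}=\beta_2$, a constant-velocity curve on $[0,1]$ being its own arc-length parametrization. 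As $(\RA(X,\sfd),\tau_\rmA)$ is Hausdorff (it is a subspace of $(\Arc(X,\tau),\tau_\rmA)$), composing the defining continuous bijection of $\BVC_c([0,1];X)$ from a Polish space with $\Quot|_{\BVC_c([0,1];X)}$ presents $\RA(X,\sfd)$ as a continuous injective image of a Polish space, hence as a Lusin space.

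Finally, for the $\FF$-analyticity of $\RA_k(X,\sfd)$ I would pass through the auxiliary topology. By lower semicontinuity of $\ell$ the set $\RA_k(X,\sfd)=\{\ell>k\}$ is $\tau_\rmA$-open. If $\tau'$ is an auxiliary topology for $(X,\tau,\sfd)$, then $\tau_\rmA'$ is an auxiliary topology for $(\Arc(X,\sfd),\tau_\rmA,\sfd_\rmA)$ by Corollary \ref{cor:auxiliary-arc}, so $(\RA(X,\sfd),\tau_\rmA')$ is separable and metrizable. Using the monotone approximation $\sfd=\sup_n\sfd_n$ by bounded $\tau'$-continuous semidistances (property (A3$'$) of Definition \ref{def:auxiliary}) one gets $\ell=\sup_n\Var_{\sfd_n}$, and for each $n$ the functional $\Var_{\sfd_n}$ is a countable supremum (over rational partitions) of $\tau_\rmC'$-continuous maps, hence $\tau_\rmC'$-lower semicontinuous; passing to the quotient, $\ell$ is $\tau_\rmA'$-lower semicontinuous, so $\RA_k(X,\sfd)$ is $\tau_\rmA'$-open. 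In the separable metrizable space $(\RA(X,\sfd),\tau_\rmA')$ it is therefore $F_\sigma$, hence $\FF$-analytic for $\tau_\rmA'$ by (A3) in \S\ref{subsec:PLS}; writing it as a countable union of $\tau_\rmA'$-closed, hence $\tau_\rmA$-closed, sets shows it is $\FF$-analytic for the finer topology $\tau_\rmA$ as well. (Alternatively one can transport the statement along the continuous bijection $\Quot|_{\BVC_c([0,1];X)}$, noting that $\RA_k(X,\sfd)$ corresponds to $\BVC_c([0,1];X)\cap\BVC_k([0,1];X)$, and invoke the analyticity proved in Corollary \ref{cor:just-to-fix}.)

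The step requiring care is this last one: one must make precise that lower semicontinuity of $\ell$ survives the passage to the coarser quotient topology $\tau_\rmA'$ — equivalently, that $\RA_k(X,\sfd)$ is $\tau_\rmA'$-open — which rests on identifying $\tau_\rmA'$ on $\RA(X,\sfd)$ with the quotient of $\tau_\rmC'$ and on the $\tau'$-continuous monotone approximation of $\sfd$. The remaining arguments are routine assemblies of already-available facts.
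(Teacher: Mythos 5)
Your proof is correct and follows essentially the same route as the paper: the $F_\sigma$ claim via $\tau_\rmA$-lower semicontinuity of $\ell$ and the sublevel decomposition, the Lusin claim via the continuous bijection $\Quot|_{\BVC_c([0,1];X)}$ from the Lusin space of Corollary \ref{cor:just-to-fix}, and the analyticity of $\RA_k(X,\sfd)$ via $\tau_\rmA'$-lower semicontinuity of $\ell$ in the metrizable auxiliary topology, so that these sets are $F_\sigma$ and hence $\FF$-analytic for both topologies. The only difference is that you spell out (via the monotone approximation $\ell=\sup_n\Var_{\sfd_n}$ and the quotient identification) the $\tau_\rmA'$-lower semicontinuity that the paper simply asserts.
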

  \begin{proof}
    Notice that $\RA(X,\sfd)$ can be equivalently identified with the
    $F_\sigma$-subset of $\Arc(X,\tau)$ and of $\Arc(X,\sfd)$ defined
    by $\{\gamma\in \Arc(X,\tau):\ell(\gamma)<\infty\}$ with the
    induced topology $\tau_\rmA$ and the extended distance $\sfd_\rmA$
    of $\Arc(X,\sfd)$.  From this point of view, $\RA(X,\sfd)$ is a
    $F_\sigma$ subset (i.e.~it is the countable union of closed sets
    and therefore it is also a Borel set) since
    $\RA(X,\sfd)=\cup_{k\in \N}\{\gamma\in
    \Arc(X,\tau):\ell(\gamma)\le k\}$ and the map
    $\ell:\Arc(X,\tau)\to[0,+\infty]$ is lower semicontinuous with
    respect to $\tau_\rmA$ thanks to (d) of the previous Lemma
    \ref{le:reparam-arcs}.
    Since $\ell$ is also $\tau_\rmA'$-l.s.c. and $\tau_\rmA'$ is metrizable,
    all the sets
    $\RA_k(X,\sfd)$ are $F_\sigma$ and thus $\FF$-analytic.

    Finally, if $(X,\tau)$ is Polish, Corollary \ref{cor:just-to-fix}
    shows
    that $(\BVC_c([0,1];X),\tau_\rmC)$ is a Lusin space. Lemma
    \ref{le:reparam-arcs}
    shows that the quotient map $\sfq$ is a continuous bijection
    of $(\BVC_c([0,1];X),\tau_\rmC)$ onto $(\RA(X,\sfd),\tau_\rmA)$,
    so that the latter is Lusin as well.
  \end{proof}

We conclude this section with a list of useful properties concerning
the compactness in $\RA(X,\sfd)$ and 
the continuity of the map $\gamma\mapsto\nu_\gamma$ 
defined by \eqref{eq:232}.
For every $t\in [0,1]$ we also introduce the
arc-length evaluation maps
\begin{equation}
\gsfe_t:\RA(X,\sfd)\to X,\quad
\gsfe_t:=\gsfe_t\circ \Al,\quad
\gsfe_t(\gamma)=\Al_\gamma(t)\quad\text{for every }\gamma\in \RA(X,\sfd).
\label{eq:431}  
\end{equation}
When $t=0,1$ we still keep the notation $\gamma_t$ for
the initial and final points
$\sfe_t(\gamma)=\hat \sfe_t(\gamma)$.
\begin{theorem}
  \label{thm:important-arcs}
  \begin{enumerate}
  \item If $\gamma_i$, $i\in I$, is a converging net in $\RA(X,\sfd)$
    with $\gamma=\lim_{i\in I}\gamma_i$ and
    $\lim_{i\in I}\ell(\gamma_i)=\ell(\gamma)$ then 
    \begin{equation}
      \label{eq:288}
      \lim_{i\in I}\Al_{\gamma_i}=\Al_\gamma\quad\text{in }\rmC([0,1];X),
      \quad
      \lim_{i\in I}\gsfe_t(\gamma_i)=\gsfe_t(\gamma)\quad\text{for
        every }t\in [0,1],
    \end{equation}
    and for every bounded and continuous function $f\in \rmC_b(X,\tau)$ we have
    \begin{equation}
      \label{eq:286}
      \lim_{i\in I}\int_{\gamma_i}f=\int_\gamma f.
    \end{equation}
    In particular, we have
    \begin{equation}
      \label{eq:200}
      \lim_{i\in I}\nu_{\gamma_i}=\nu_\gamma\quad\text{weakly in }\cMp(X).
    \end{equation}
    \item The map $\gamma\mapsto \nu_\gamma$ from $\RA(X,\sfd)$ to
      $\cMp(X)$ is
      universally Lusin measurable.
      \item If $i\mapsto\gamma_i$ converges to $\gamma$ 
        in $\RA(X)$, $\sup\ell(\gamma_i)<\infty$ and
        $\nu_{\gamma_i}\weakto \mu$ in $\cMp(X)$
        with $\mu(X)>0$, then
        $\supp(\mu)=\gamma([0,1])$.
        \item The map $[\gamma]\mapsto \Al_\gamma$  
           is
           universally Lusin measurable from $\RA(X,\sfd)$ to
      $\BVC_c([0,1];(X,\sfd))$
      endowed with the topology $\tau_\rmC$
      and it is also Borel if $\X$ has an auxiliary topology (in particular
      if $(X,\tau)$ is Souslin).
      For every $t\in [0,1]$ the maps $\gsfe_t:\RA(X,\sfd)\to X$
      are universally Lusin measurable
      (and Borel if $\X$ has an auxiliary topology).
    \item
      If $f\in \rmB_b(X)$ (or
$f:X\to[0,+\infty]$ Borel) 
the map $\gamma\mapsto \int_\gamma f$ is Borel.
In particular the family of measures
  $\{\nu_\gamma\}_{\gamma\in \RA(X)}$ is Borel.
    \item If $(X,\tau)$ is compact
      and $\Gamma\subset \RA(X,\sfd)$ satisfies
      $\sup_{\gamma\in \Gamma}\ell(\gamma)<+\infty$ then
      $\Gamma$ is relatively compact
      in $\RA(X,\sfd)$ w.r.t.~the
    $\tau_\rmA$ topology.
  \item If $(X,\sfd)$ is complete 
    and $\Gamma\subset \RA(X,\sfd)$ satisfies the following
    conditions:
    \begin{enumerate}[1.]
    \item
      $\sup_{\gamma\in \Gamma}\ell(\gamma)<+\infty$;
    \item there exists a $\tau$-compact set $K\subset X$ such that
      $\sfe(\gamma)\cap K\neq \emptyset$ for every $\gamma\in \Gamma$;
    \item $\{\nu_\gamma:\gamma\in \Gamma\}$ is equally tight, i.e.~for
      every $\eps>0$ there exists a $\tau$-compact set
      $K_\eps\subset X$ such that
      $\nu_\gamma(X\setminus K_\eps)\le \eps$ for every
      $\gamma\in \Gamma$,
    \end{enumerate}
    then $\Gamma$ is relatively compact in $\RA(X,\sfd)$ w.r.t.~the
    $\tau_\rmA$ topology.
  \end{enumerate}
\end{theorem}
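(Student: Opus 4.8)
The plan is to prove the seven items in the stated order, with (a) serving as the analytic engine: once (a) is available, the measurability statements (b), (d), (e) follow by a Lusin‑type localization on the level sets of $\ell$, and the two compactness statements (f), (g) reduce to the Arzel\`a--Ascoli Proposition~\ref{prop:compactness}, the second one through the Gelfand compactification of \S\ref{subsec:compactification}. For (a): using the reparametrization identity \eqref{eq:239bisbis} I would pick $\varrho_i,\sigma_i\in\Sigma_2$ with $\sfd_\rmC(\Al_{\gamma_i}\circ\varrho_i,\Al_\gamma\circ\sigma_i)\to0$; compactness of $\Sigma_2$ for uniform convergence lets me pass to a subnet with $\varrho_i\to\varrho$, $\sigma_i\to\sigma$ in $\Sigma_2$, so $\Al_{\gamma_i}\circ\varrho_i\to\mu:=\Al_\gamma\circ\sigma$ in $\tau_\rmC$ with $\mu\sim\gamma$. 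The role of the hypothesis $\ell(\gamma_i)\to\ell(\gamma)$ is to forbid loss of length: evaluating the length of $\Al_{\gamma_i}\circ\varrho_i$ on three consecutive subintervals and combining lower semicontinuity of the restricted length with $\ell(\Al_{\gamma_i}\circ\varrho_i)=\ell(\gamma_i)\to\ell(\gamma)=\ell(\mu)$ forces each partial length to converge; hence the normalized variation functions $V_{\Al_{\gamma_i}\circ\varrho_i,\ell}=\varrho_i$ converge pointwise, and then uniformly, to $V_{\mu,\ell}$ (monotone functions with continuous limit). Since $\Al_\mu=\Al_\gamma$ and $\mu=\Al_\gamma\circ V_{\mu,\ell}$, composing with the (pseudo‑)inverses of $\varrho_i$ and $V_{\mu,\ell}$ — the flat stretches of $V_{\mu,\ell}$, where $\mu$ is constant, do no harm — yields $\Al_{\gamma_i}\to\Al_\gamma$ in $\tau_\rmC$. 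The rest is then immediate: $\gsfe_t(\gamma_i)=\Al_{\gamma_i}(t)\to\Al_\gamma(t)$ by (CO2); $\int_{\gamma_i}f=\ell(\gamma_i)\int_0^1 f\circ\Al_{\gamma_i}\to\ell(\gamma)\int_0^1 f\circ\Al_\gamma=\int_\gamma f$ for $f\in\rmC_b(X,\tau)$ by \eqref{eq:210}; and $\nu_{\gamma_i}\weakto\nu_\gamma$ is just the definition of the weak topology on $\cMp(X)$.

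\textbf{(b), (d), (e).} For a Radon measure $\mu$ on $\RA(X,\sfd)$, Lusin's theorem applied to the Borel (indeed l.s.c.) and everywhere finite function $\ell$ gives, for each $\eps>0$, a compact $\tilde K\subset\RA(X,\sfd)$ with $\mu(\tilde K^c)\le\eps$ on which $\ell$ is continuous; on $\tilde K$ every $\tau_\rmA$‑convergent net automatically has converging lengths, so by (a) the maps $\gamma\mapsto\nu_\gamma$, $\gamma\mapsto\Al_\gamma$ (into $\BVC_c([0,1];X)$ with $\tau_\rmC$), and $\gamma\mapsto\gsfe_t(\gamma)=\sfe_t(\Al_\gamma)$ are continuous on $\tilde K$, proving their universal Lusin measurability. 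When $\X$ admits an auxiliary topology, $\BVC_c([0,1];X)$ and $\RA(X,\sfd)$ are Lusin (resp.\ $\FF$‑analytic) by Corollary~\ref{cor:just-to-fix} and Lemma~\ref{le:just-to-fix2}, and $\Quot$ restricts to a continuous bijection $\BVC_c([0,1];X)\to\RA(X,\sfd)$ (Lemma~\ref{le:reparam-arcs}(b)) whose inverse $\gamma\mapsto\Al_\gamma$ is Borel by the Lusin--Souslin theorem; hence $\gsfe_t$ is Borel as well. For (e) one starts from $f\in\rmC_b(X)$: since $f^\pm$ are nonnegative and l.s.c., $\gamma\mapsto\int_\gamma f^\pm$ are l.s.c.\ by Lemma~\ref{le:reparam-arcs}(e), so $\gamma\mapsto\int_\gamma f$ is Borel; a functional monotone‑class argument (using monotone convergence for $\nu_\gamma$) extends this to bounded Borel and then to nonnegative Borel $f$, and the last clause merely rereads this as Borel measurability of the family $\{\nu_\gamma\}$.

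\textbf{(c) and (f).} For (c) I would use the Portmanteau theorem together with the uniform convergence (after reparametrization, via Theorem~\ref{thm:reparam}) of suitable parametrizations of $\gamma_i$ to one of $\gamma$: the inclusion $\supp\mu\subseteq\gamma([0,1])$ follows because any $\tau$‑limit of points $\gamma_i(t_i)$ must lie on $\gamma$ (using \eqref{eq:244} and lower semicontinuity of $\sfd$), so eventually $\nu_{\gamma_i}$ assigns no mass to a neighbourhood of a point off $\gamma([0,1])$; and $\supp\mu\supseteq\gamma([0,1])$ follows from a dominated‑convergence lower bound $\liminf_i\nu_{\gamma_i}(C)>0$ for a closed set $C$ around any point of $\gamma([0,1])$, whence $\mu(C)>0$ by Portmanteau for closed sets. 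For (f), if $(X,\tau)$ is compact and $L:=\sup_{\gamma\in\Gamma}\ell(\gamma)<\infty$, the arc‑length parametrizations $\{\Al_\gamma:\gamma\in\Gamma\}$ are $L$‑Lipschitz, hence $\sfd$‑equicontinuous with common modulus $\omega(r)=Lr$, so by Proposition~\ref{prop:compactness} they form a relatively $\tau_\rmC$‑compact subset of $\rmC([0,1];X)$; applying the continuous quotient map $\Quot$ and using $\Quot(\Al_\gamma)=[\gamma]$ shows $\Gamma$ is relatively compact in $\Arc(X,\tau)$, and since $\ell$ is $\tau_\rmA$‑l.s.c.\ every limit arc $[\sigma]$ satisfies $\ell(\sigma)\le L<\infty$, so lies in $\RA(X,\sfd)$; thus $\Gamma$ is relatively compact in $\RA(X,\sfd)$ with the induced topology.

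\textbf{(g) and the main obstacle.} Here $(X,\tau)$ need not be compact, so I would pass to the Gelfand compactification $\hat\X=(\hat X,\hat\tau,\hat\sfd,\hat\mm)$ of \S\ref{subsec:compactification} via the isometric embedding $\iota$; the induced map $\gamma\mapsto\iota\circ\gamma$ is a topological embedding $\RA(X,\sfd)\hookrightarrow\RA(\hat X,\hat\sfd)$ preserving lengths and carrying $\nu_\gamma$ to $\iota_\sharp\nu_\gamma$. By (f) applied in the compact $\hat X$, the image $\hat\Gamma$ is relatively compact in $\RA(\hat X,\hat\sfd)$; so from any net in $\Gamma$ I extract a subnet with $\iota\circ\gamma_i\to[\hat\sigma]$ in $\RA(\hat X)$ and, by Prokhorov using hypothesis~(3), $\iota_\sharp\nu_{\gamma_i}\weakto\hat\mu$ in $\cMp(\hat X)$. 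Since $\iota$ is an embedding it suffices to show $[\hat\sigma]$ has a representative with values in $\iota(X)$. If $\hat\mu=0$ then $\ell(\gamma_i)\to0$, $\hat\sigma$ is constant, and hypothesis~(2) pins it inside $\iota(K)\subset\iota(X)$. If $\hat\mu\neq0$, then hypothesis~(3) forces $\hat\mu$ to be concentrated on $\bigcup_n\iota(K_{1/n})\subset\iota(X)$, while (c) applied in $\hat X$ gives $\hat\sigma([0,1])=\supp\hat\mu$; combined with hypothesis~(2), which puts $\hat\sigma(t_*)=\iota(x_*)$ for some $x_*\in K$, and with completeness of $(X,\sfd)$ — which, by Corollary~\ref{cor:completion}, makes $\iota(X)$ closed for the extended distance $\hat\sfd$ — a connectedness argument along the $\hat\sfd$‑continuous curve $\hat\sigma$ confines $\hat\sigma([0,1])$ to $\iota(X)$. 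The delicate point, and the step I expect to cost the most work, is precisely this last descent: turning the $\tau_\rmC$‑type convergence produced by Ascoli in $\hat X$ (which a priori controls only $\hat\tau$, not $\hat\sfd$) into enough $\hat\sfd$‑control to see that the limiting constant‑speed curve never leaves $\iota(X)$ — this is where hypotheses (2), (3) and completeness must genuinely be used together.
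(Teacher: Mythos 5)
There is a genuine gap, and it sits exactly in the step you call the ``analytic engine'', item (a). The hypothesis of (a) is convergence of the net $\gamma_i\to\gamma$ in $\RA(X,\sfd)$ for the quotient topology $\tau_\rmA$ (this is the form needed later: relative compactness in (f), (g) is $\tau_\rmA$-compactness, and in your own proofs of (b) and (d) you apply (a) to nets that converge merely in $\tau_\rmA$ on a compact set where $\ell$ is continuous). Your argument, however, starts by invoking \eqref{eq:239bisbis} to pick $\varrho_i,\sigma_i\in\Sigma_2$ with $\sfd_\rmC(\Al_{\gamma_i}\circ\varrho_i,\Al_\gamma\circ\sigma_i)\to0$; this presupposes $\sfd_\rmA(\gamma_i,\gamma)\to0$, which does not follow from $\tau_\rmA$-convergence (even with $\ell(\gamma_i)\to\ell(\gamma)$) in an extended metric-topological space, where $\tau$ is in general strictly coarser than the $\sfd$-topology. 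A concrete instance is Example \ref{ex:dual}: in a dual Banach space with the weak$^*$ topology, the constant arcs at an orthonormal-type sequence $e_i\rightharpoonup 0$ converge in $\tau_\rmA$ to the constant arc at $0$ with $\ell\equiv0$, yet $\sfd_\rmA(\gamma_i,\gamma)=\|e_i\|\not\to0$; the conclusion of (a) holds, but your method cannot even begin, since no reparametrizations make the curves uniformly $\sfd$-close. Everything downstream of this step (convergence of the partial variations, $\varrho_i\to V_{\mu,\ell}$, composition with pseudo-inverses) therefore proves at best an $\sfd_\rmA$-version of (a), which is not the statement and not what (b), (d), (e) need.

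The paper proves (a) differently: it embeds $\RA(X,\sfd)$ into the compactification $\RA(\hat X,\hat\sfd)$ of \S\,\ref{subsec:compactification}, applies the Arzel\`a--Ascoli Proposition \ref{prop:compactness} to the equi-Lipschitz arc-length parametrizations $\Al_{\hat\gamma_i}$ (this only needs $\hat\tau$-compactness plus $\hat\sfd$-equicontinuity), extracts a $\hat\tau_\rmC$-limit $\hat\Al_*$, uses continuity of the quotient map to see $[\Al_*]=\gamma$, and then identifies $\Al_*=\Al_\gamma$ by passing to the limit in $\sfd(\Al_{\gamma_i}(r),\Al_{\gamma_i}(s))\le\ell(\gamma_i)|r-s|$ and using $\ell(\gamma_i)\to\ell(\gamma)$; uniqueness of the limit upgrades subnet convergence to convergence of the whole net. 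You do deploy the compactification, but only in (g); it is needed already in (a). Once (a) is established this way, your reductions for (b), (d), (e) (Lusin localization on sets where $\ell$ is continuous, the Lusin--Souslin/Borel-graph argument in the auxiliary-topology case, the monotone-class extension) and your treatments of (f) and (g) do follow the paper's route and are essentially sound, though in (c) the phrase ``dominated-convergence lower bound'' for $\liminf_i\nu_{\gamma_i}(C)>0$ should be replaced by an explicit argument (the paper works with functions $f\in\Lip_b(X,\tau,\sfd)$ separating a point from $\gamma([0,1])$ and the open/closed-set Portmanteau inequalities, using that the masses $\nu_{\gamma_i}(X)=\ell(\gamma_i)$ converge to $\mu(X)$).
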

\begin{proof}
  \textbf{(a)}   In order to prove \eqref{eq:288} 
  we consider the 
  compactification
  $(\hat X,\hat \tau,\hat \sfd)$ 
  given by Theorem \ref{thm:G-compactification}
  (here we can choose, e.g., $\AA=\Lipb(X,\tau,\sfd)$; the measure $\mm$ does not play any
  role).
  Clearly the imbedding $\iota:X\to \hat X$ extends to a corresponding
  embedding of $\RA(X,\sfd)$ in $\RA(\hat X,\hat\sfd)$, simply 
  by setting $\hat\gamma(t):=\iota\circ \gamma(t)$ 
  and considering the corresponding equivalence class. 
  We can apply Proposition \ref{prop:compactness} to 
  the net $i\mapsto \Al_{\hat \gamma_i}=\hat \Al_{\gamma_i}$ 
  and we find a limit curve $\hat\Al_
  *\in \Lip([0,1];(\hat X,\hat\sfd))$ 
  with respect to the topology $\hat\tau_\rmC$.
  Since the projection from $\rmC([0,1];\hat X)$ to $\Arc(\hat X)$ is
  continuous, we deduce that $[\Al_*]=\hat\gamma$ 
  so that $\hat \Al_*$ takes values in $\iota(X)$ and therefore can be written
  as
  $\iota\circ \Al_*$ for a curve $\Al_*\in \Lip([0,1];(X,\sfd))$ which is the limit
  of $\Al_{\gamma_i}$ in $\rmC([0,1];X)$. 
  Passing to the limit in the identities
  \begin{equation}
    \label{eq:287}
    \sfd(\Al_{\gamma_i}(r),\Al_{\gamma_i}(s))\le 
    \ell(\gamma_i)|r-s|\quad\text{we get}\quad
    \sfd(\Al_*(r),\Al_*(s))\le 
    \ell(\gamma)|r-s|.
  \end{equation}
  so that $\Al_*=\Al_\gamma$.

  Let us prove \eqref{eq:286}. We set $m:=\inf f$ and $M=\sup f$ and we observe that 
  $\ell(\gamma)=\int_\gamma \unit$ so that 
  the thesis follows by applying the lower semicontinuity property of
  Lemma \ref{le:reparam-arcs} (e),
  to the functions $f-m$ and $M-f$.
  \medskip

  \noindent
  \textbf{(b)} Let $\mu\in \cP(\RA(X))$; since the function $\ell$ 
  is lower semicontinuous in $X$, it is Lusin $\mu$-measurable and 
  there exists a sequence of compact
  sets
  $K_n\subset X$ with $\lim_{n\to\infty}\mu(X\setminus K_n)=0$ 
  such that the restriction of $\ell$ to $K_n$ is continuous.
  By the previous claim, the restriction of $\nu$ to $K_n$ is also
  continuous.
  \medskip

  \noindent
  \textbf{(c)} Let $K:=\gamma([0,1])$; if $y\not\in K$ then we can find
  a function $f\in \Lipb(X,\tau,\sfd)$ with values in
  $[0,1]$ such that
  $f\restr K\equiv 0$ and $f(y)=1$. If $U:=\{x\in X:f(x)> 1/2\}$ 
  then there exists $i_0\in I$ such that 
  $\gamma_i([0,1])\cap \overline U=\emptyset$ for $i\succeq i_0$. 
  It follows that $\nu_{\gamma_i}(U)=0$ and therefore 
  \begin{displaymath}
    \mu(U)\le \liminf_{i\in I}\nu_{\gamma_i}(U)=0.
  \end{displaymath}
  This shows that $\supp(\mu)\subset K$. If $K$ consists of an
  isolated point, the thesis then follows.
  On the other hand, if $K$ contains at least two points and
  $y\in K$ then for every open neighborhood $U$ of $y$
  $\nu_\gamma(U)>0$ and therefore $\mu(U)>0$.
  \medskip

  \noindent
  \textbf{(d)} The proof of universal measurability follows as in Claim b), by using the continuity
  property \eqref{eq:288}.
  
  Let us now suppose that
  $\X$ admits
  an auxiliary topology $\tau'$ (thus metrizable and separable) and
  let us prove that $\Al$ is Borel from $\RA(X)$ endowed with
  $\tau_\rmA'$ to $\rmC([0,1];X)$ endowed with $\tau_\rmC$ (this
  implies
  the same property for the stronger topology $\tau_\rmA$ on
  $\RA(X)$).
  We observe that the map $J:\gamma\to (\gamma,\ell(\gamma))$ is Borel
  from $(\RA(X),\tau_\rmA')$ to $(\RA(X)\times \R,\tau'_\rmA\times
  \tau_\R)$ since the latter topology has a countable base of open
  sets
  (thus the Borel $\sigma$-algebra coincides with the product of the
  Borel $\sigma$-algebra of the factors) and each component of $J$ is
  Borel. On the other hand, 
  $G:=\{(\gamma,r)\in \RA(X)\times \R: r=\ell(\gamma)\}$
  is Borel in $\RA(X)\times \R$ (with the product topology
  $\tau_\rmA\times \tau_\R$)
  \cite[Chapter II, Lemma 12]{Schwartz73}
  and 
  Claim (a) shows that
  the map $\tilde R:G\to \RA(X)$, $\tilde R(\gamma,r):=R_\gamma$ is
  continuous in $G$,
  so that $\Al=\tilde R\circ J$ is a Borel map.
  Finally, since $\hat\sfe_t=\sfe_t\circ \Al$, the maps $\hat\sfe_t$
  are Borel as well.
  \medskip

  \noindent
  \textbf{(e)}
  Let us consider the set
  $H\subset \rmB_b(X)$ of functions $f$ such that $\gamma\mapsto
  \int_\gamma f$ is Borel.
  $H$ is clearly a vector space and contains the set $C:=\{\nchi_U,\
  U\text{ open in $X$}\}$, since the map $\gamma\mapsto \int_\gamma
  \nchi_U$ is lower semicontinuous. Since $C$ is closed under
  multiplication, we can apply the criterium \cite[Chap.~I, Theorem
  21]{Dellacherie-Meyer78},
  which shows that $H=\rmB_b(X)$. A simple truncation argument 
  extends this property to arbitrary nonnegative Borel functions.
  \medskip

  \noindent
  \textbf{(f)} The image of $\Gamma_*:=\Al(\Gamma)$ through
  the arc-length reparametrization $\Al$
  is relatively compact in $\rmC([0,1];(X,\tau))$
  by Proposition \ref{prop:compactness}. Since $\Gamma$
  is the image of $\Gamma_*$ through the quotient map
  $\sfq:\rmC([0,1];X)$ to $\Arc(X)$, $\Gamma$ is relatively compact as well.
  \medskip

  \noindent  
  \textbf{(g)} Let us consider the compactification $(\hat X,\hat \tau,\hat \sfd)$ 
  as in Theorem \ref{thm:G-compactification} and claim (a),
  and let
  $[\gamma_i]$ be a net in $\Gamma$ with $\mu_i:=\nu_{\gamma_i}$.
  We also set $\hat \mu_i=\iota_\sharp\mu_i=\nu_{\hat \gamma_i},$
  $\hat\gamma_i=\iota\circ\gamma_i$.
  It is not restrictive to assume $\gamma_i=\Al_{\gamma_i}$ so that 
  $\gamma_i$ is uniformly Lipschitz.
  We can then apply Proposition \ref{prop:compactness} to 
  the net $\hat \gamma_i$ in $\rmC([0,1];\hat X)$ 
  and find a subnet $j\mapsto h(j)$ and a limit curve $\gamma_*\in
  \Lip([0,1];(\hat X,\hat\sfd))$ such that 
  $j\mapsto \hat\gamma_{h(j)}$ converges to $\gamma_*$ with respect to
  $\hat \tau_\rmC$. Since the total mass of $\mu_{i}=\ell(\gamma_i)$ remains
  bounded,
  we can also find a further subnet (still denoted by $h$) and a limit
  probability measure $\mu$ such that $\mu_{h(j)}\weakto \mu$.
  Since $\iota$ is continuous, we have $\hat \mu_{h(j)}\weakto \hat
  \mu=\iota_\sharp \mu$ with $m:=\mu(X)=\hat\mu(\hat X)$.

  If $m=0$ then $\ell(\gamma_*)=0$ so that
  $\gamma_*$ is constant and coincides with a point $\hat x\in \hat X $.
  Since the image of every curve $\gamma_i$
  intersects the compact set $K$ we deduce that $\hat x=\iota(x)$ for
  some
  $x\in K$, so that $\gamma_{h(j)}$ converges to the constant curve
  $\gamma$, 
  $\gamma(t)\equiv x$ w.r.t.~$\tau_\rmC$ and
  $[\gamma_{h(j)}]$ converges to $[\gamma]$ in $\Arc(X)$.
  
  If $m>0$, the uniform tightness condition shows that $\mu$ is concentrated on 
  $\cup_{n\in \N}K_{1/n}$ so that $\hat\mu(\hat X\setminus
  \iota(X))=0$.
  It follows that $\iota(X)$ is dense in $\supp(\hat\mu)=
  \gamma_*([0,1])$.
  Since $\gamma_*$ is Lipschitz and $\iota(X)$ is complete, and thus
  $\sfd$-closed,
  we conclude that $\gamma_*([0,1])\subset \iota(X)$ and therefore
  $\gamma_*=\iota\circ\gamma$ for a curve $\gamma\in \Lip([0,1];X)$. 
  We deduce that $j\mapsto \gamma_{h(j)}$ converges to $\gamma$
  w.r.t.~the compact-open topology $\tau_\rmC$ and therefore
  $\lim_{j\in J}[\gamma_{h(j)}]=[\gamma]$ w.r.t.~$\tau_\rmA$.
\end{proof}
\subsection{Notes}
\label{subsec:notes3}
\begin{notes}
  \Para{\ref{subsec:continuous_curves}} contains standard material
  on the compact-open topology (which is well adapted to deal with
  general
  topologies $\tau$ on $X$) and its
  natural role in lifting the metric-topological structure of
  $(X,\tau,\sfd)$ to the space 
  $(\rmC([a,b];X),\tau_\rmC,\sfd_\rmC)$.
  The compactness result of Proposition \ref{prop:compactness}
  combines compactness w.r.t.~$\tau$ and equicontinuity w.r.t.~$\sfd$,
  see also \cite[Prop.~3.3.1]{Ambrosio-Gigli-Savare08}.

  \Para{\ref{subsec:arcs}} devotes some effort to
  construct a natural notion of invariance by parametrizations
  for arbitrary continuous curves. Since we did not assume
  rectifiability, the existence of a canonical arc-length
  parametrization
  is not guaranteed and one has to deal with a more general notion
  where arbitrary increasing, continuous and surjective
  change of variable are
  allowed (see \cite{Paolini-Stepanov12} for a similar approach).
  Here the main properties are provided by Theorem \ref{thm:reparam}.
  The construction of an extended metric-topological
  setting is presented in Proposition \ref{prop:A-extended}:
  although very natural, it requires a detailed proof.
  Everything becomes much simpler in the case
  of Example \ref{ex:polish}.

  \Para{\ref{subsec:rectifiable_arcs}} combines
  the two previous sections to deal with
  continuous rectifiable arcs.
  The presentation here slightly differs from \cite{ADS15}.  
\end{notes}

\section{Length and conformal distances}
\label{sec:length-Finsler}
\subsection{The length property}
\label{subsec:length}
To every extended metric space $(X,\sfd)$
it is possible to associate the length distance
\index{Length distance}
\index{Length spaces}
\begin{equation}
  \label{eq:233}
  \sfd_\ell(x,y):=\inf\Big\{\ell(\gamma):\gamma\in \RA(X),\ 
  \gamma_0=x,\ \gamma_1=y\Big\};
\end{equation}
$(X,\sfd)$ is a length space if $\sfd=\sfd_\ell$.
$(X,\sfd)$ is a \emph{geodesic space}
\index{Geodesic spaces}
if for every $x,y\in X$ with $\sfd(x,y)<\infty$ there
exists an arc $\gamma\in \RA(X)$ connecting $x$ to $y$ with
$\ell(\gamma)=\sfd(x,y)$.

It is not difficult to check that the classes of rectifiable arcs for
$\sfd$ and for $\sfd_\ell$ coincide, as well as the corresponding
notion of length and integral.

When $(X,\sfd)$ is complete, it is possible to give an equivalent
characterization
of the length property in terms of 
the approximate mid-point property: 
every couple of points $x,y\in X$ with $\sfd(x,y)<\infty$ admits
approximate midpoints
\begin{equation}
  \label{eq:223}
  \forall \, \theta>\frac 12\ 
  \exists\, z_\theta\in X:\quad
  \sfd(x,z_\theta)\lor \sfd(z_\theta,y)\le \theta\sfd(x,y).
\end{equation}
By iterating the middle point construction, it is possible to show
that for every $x,y\in X$ with $\sfd(x,y)<\infty$ and for every
$D>\sfd(x,y)$ there exists a map
$\gamma:\D\to X$ defined on the set of dyadic points in $[0,1]$,
$\D:=\{k/2^n:n,k\in \N,\ 0\le k\le 2^n\}$, satisfying
\begin{equation}
  \label{eq:226}
  \sfd(\gamma(s),\gamma(t))\le D|t-s|
  \quad
  \text{for every }
  s,t\in \D.
\end{equation}
Thus, if $(X,\sfd)$ is complete the curve $\gamma$ admits a unique
extension to a curve $\tilde\gamma\in \BVC([0,1];X)$ with 
$\ell(\tilde\gamma)\le D$. Since $D>\sfd(x,y)$ is arbitrary, we
conclude that $\sfd_\ell=\sfd$.

Notice that if $(X,\sfd)$ satisfies the approximate mid-point property
then for every $x,y\in X$, $\eps>0,$ and $L>1$ there exists a sequence
$(x_n)_{n=0}^N\subset X$
such that 
\begin{equation}
  \label{eq:219}
  x_0=x,\ x_N=y,\quad \sup_{1\le n\le N}\sfd(x_{n-1},x_{n})\le \eps,\quad
  \sum_{n=1}^N\sfd(x_{n-1},x_n)\le L\sfd(x,y).
\end{equation}

\subsection{Conformal distances}
\label{subsec:Finsler}
More generally, 
let $g:X\to(0,\infty)$ be a continuous function satisfying
\begin{equation}
  \label{eq:311}
  m_g:=\inf_X g>0,\quad
  M_g:=\sup_X g<\infty.
\end{equation}
We can consider $g$ as a conformal metric density,
inducing the 
length distance 
\begin{equation}
  \label{eq:207}
  \sfd_g(x,y):=
  \inf\Big\{\int_\gamma g:\gamma\in \RA(X),\
  \gamma(0)=x,\ \gamma(1)=y\Big\}.
\end{equation}
It is clear that $\sfd_g$ is 
an extended distance and satisfies 
\begin{equation}
  \label{eq:310}
  m_g\,\sfd_\ell(x,y)\le \sfd_g(x,y)\le M_g\,\sfd_\ell(x,y)\quad
  \text{for every }x,y\in X.
\end{equation}
By construction, $\sfd_g$ is a length distance, i.e.~$(\sfd_g)_\ell=\sfd_g;$
when $g\equiv 1$ we clearly have $\sfd_g=\sfd_\ell$.

We can introduce different inner approximations of $ \sfd_g$.
The first one, $\sfd_g'$, arises by the
the following procedure:
first of all we set
\begin{equation}
  \label{eq:203}
  \beta(x,y):=(g(x)\lor g(y))\sfd(x,y),\quad
  \beta_i(x,y):=(g(x)\lor g(y))\sfd_i(x,y),
\end{equation}
where $(\sfd_i)_{i\in I}$ is a directed family of
$\tau$-continuous bounded
semidistances generating $\sfd$ by $\sfd(x,y)=\lim_{i\in
  I}\sfd_i(x,y)$
as in Lemma \ref{rem:monotonicity};
for every $\eps\in (0,+\infty]$ we first set 
\begin{equation}
  \label{eq:186}
  \begin{aligned}
    \sfd_{g,i,\eps}(x,y):=\inf\Big\{&\sum_{n=1}^N\beta_i(x_{n-1},x_n):
    N\in \N,\ (x_n)_{n=0}^N\in X,\\
    &x_0=x,\ x_N=y,\ \sfd_i(x_{n-1},x_n)<
    \eps\Big\}\land (M_g\,\sup \sfd_i).
  \end{aligned}
\end{equation}
It is not difficult to check that $\sfd_{g,i,\eps}$ is a bounded $\tau$-continuous
semidistance with
$\sfd_{g,i,\eps}(x,y)\le \beta_i(x,y)$ whenever $\sfd_i(x,y)<\eps$;
moreover
it is easy to check that if $0<\eps<\eps'$ and $i\prec j$ we have
$m_g\sfd_i\le \sfd_{g,i,\eps'}\le \sfd_{g,j,\eps}\le
M_g\sfd_{j,\ell}\le M_g\sfd_\ell$. 
We need a more localized estimate involving the sets
\begin{equation}
  \label{eq:319}
  D_i(x,y):=\Big\{
  \sfd_i(z,x)\lor\sfd_i(z,y)\le\sfd_i(x,y)\Big\},\quad
  D(x,y):=\Big\{
    \sfd(z,x)\lor\sfd(z,y)\le\sfd(x,y)\Big\},
\end{equation}
where $x,y\in X$. Notice that $D_i(x,y)$ and $D(x,y)$
are closed sets containing $x$ and $y$.
\begin{lemma}
  \label{le:localized}\ 
  \begin{enumerate}
  \item For every $x,y\in X$ we have
    \begin{equation}
      \label{eq:313}
      \sfd_{g,i,\eps}(x,y)\ge \sfd_i(x,y)\inf_{D_i(x,y)} g.
    \end{equation}
    \item For every $z\in X$, $i\in I$ and $\eps>0$ the map
      $h:x\mapsto \sfd_{g,i,\eps}(x,z)$ belongs to $\Lip_b(X,\tau,\sfd_i)$
    with
    \begin{equation}
      \label{eq:486}
      \lip_{\sfd_i}h\le g\quad\text{in }X.
    \end{equation}
  \item
    If moreover $(X,\tau)$ is compact, then the infimum of $g$ on
    $D_i(x,y)$ and $D(x,y)$ is attained and
    \begin{equation}
      \label{eq:320}
      \liminf_{i\in I}\min_{D_i(x,y)}g\ge 
      \min_{D(x,y)}g\quad\text{for every }x,y\in X.
    \end{equation}
  \end{enumerate}
\end{lemma}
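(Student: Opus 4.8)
\emph{Preliminary remarks.} Everything rests on $\sfd_{g,i,\eps}$ being a bounded semidistance. Two observations suffice: concatenating an admissible chain from $x$ to $y$ with one from $y$ to $z$ gives an admissible chain from $x$ to $z$ whose $\beta_i$-sum is the sum of the two (all the $\sfd_i$-steps stay $<\eps$); and the one-step chain $x\to y$ is admissible as soon as $\sfd_i(x,y)<\eps$, so then $\sfd_{g,i,\eps}(x,y)\le \beta_i(x,y)=(g(x)\lor g(y))\,\sfd_i(x,y)$. Setting $C:=M_g\sup\sfd_i<\infty$ and using the elementary inequality $(a+b)\land C\le (a\land C)+(b\land C)$ for $a,b\ge0$, the first observation becomes the triangle inequality for $\sfd_{g,i,\eps}$; hence $\sfd_{g,i,\eps}$ is a symmetric $[0,C]$-valued function obeying the triangle inequality, and $|h(x)-h(x')|\le \sfd_{g,i,\eps}(x,x')$ for $h:=\sfd_{g,i,\eps}(\cdot,z)$.

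\emph{Part (b).} With $h$ as above, $0\le h\le C$. Since $\sfd_i(x,\cdot)$ is $\tau$-continuous, $x'\to x$ in $\tau$ forces $\sfd_i(x,x')\to0$, hence for $x'$ with $\sfd_i(x,x')<\eps$ we get $|h(x)-h(x')|\le (g(x)\lor g(x'))\sfd_i(x,x')\to0$ because $g$ is bounded; thus $h\in\rmC_b(X,\tau)$. Global $\sfd_i$-Lipschitzness follows by splitting: $|h(x)-h(x')|\le M_g\sfd_i(x,x')$ if $\sfd_i(x,x')<\eps$, while $|h(x)-h(x')|\le C\le (C/\eps)\,\sfd_i(x,x')$ if $\sfd_i(x,x')\ge\eps$; so $h\in\Lipb(X,\tau,\sfd_i)$. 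For the slope bound fix $x$ and $\eta>0$; by $\tau$-continuity of $g$ pick a $\tau$-neighborhood $V\ni x$ with $g<g(x)+\eta$ on $V$, and set $U:=V\cap\{w:\sfd_i(x,w)<\eps/2\}\in\UU_x$. For $w,w'\in U$ we have $\sfd_i(w,w')<\eps$, hence $|h(w)-h(w')|\le (g(w)\lor g(w'))\sfd_i(w,w')\le (g(x)+\eta)\,\sfd_i(w,w')$, i.e.\ $\Lip(h,U,\sfd_i)\le g(x)+\eta$; letting $\eta\downarrow0$ gives $\lip_{\sfd_i}h(x)\le g(x)$.

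\emph{Part (a).} Fix $x,y$, put $m:=\sfd_i(x,y)$ and $c:=\inf_{D_i(x,y)}g$; since $cm\le M_g\sup\sfd_i=C$ the truncation in the definition of $\sfd_{g,i,\eps}$ is immaterial, so it suffices to show $\sum_{n=1}^N(g(x_{n-1})\lor g(x_n))\,\sfd_i(x_{n-1},x_n)\ge cm$ for every admissible chain $x=x_0,\dots,x_N=y$. The natural tool is the progress function $\phi(w):=(m-\sfd_i(w,y))_+$, which is $1$-Lipschitz for $\sfd_i$, with $\phi(x)=0$ and $\phi(y)=m$, so $\sum_n|\phi(x_n)-\phi(x_{n-1})|\ge m$; and on every step having at least one endpoint in $D_i(x,y)$ one has $(g(x_{n-1})\lor g(x_n))\,\sfd_i(x_{n-1},x_n)\ge c\,|\phi(x_n)-\phi(x_{n-1})|$. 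I expect the main obstacle to be the steps both of whose endpoints lie outside $D_i(x,y)$, where $g$ may drop below $c$: one has to show that the net $\phi$-progress accumulated along such maximal sub-chains cannot be charged at a rate below $c$. I would deal with this by decomposing the chain at its successive entrances to and exits from $D_i(x,y)$ and estimating each excursion separately, exploiting that an excursion leaving $D_i(x,y)$ and returning contributes $\sfd_i$-length in excess of its net $\phi$-progress, so that the factor $c$ carried by the boundary steps absorbs the deficit.

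\emph{Part (c).} When $(X,\tau)$ is compact, $D_i(x,y)=\{z:\sfd_i(z,x)\le\sfd_i(x,y)\}\cap\{z:\sfd_i(z,y)\le\sfd_i(x,y)\}$ is $\tau$-closed (the $\sfd_i$ being $\tau$-continuous) and $D(x,y)$ is $\tau$-closed because $\sfd$ is $\tau$-lower semicontinuous by \eqref{eq:243}; both contain $x$ and $y$, hence are nonempty and $\tau$-compact, so $g$ attains its infimum on each. For the $\liminf$ inequality, let $z_i\in D_i(x,y)$ realize $\min_{D_i(x,y)}g$; pass to a subnet along which $g(z_i)\to\liminf_i\min_{D_i(x,y)}g$ and, by compactness, to a further subnet with $z_i\to z$ in $\tau$, so that $g(z)=\liminf_i\min_{D_i(x,y)}g$ by continuity of $g$. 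Along this subnet $\sfd_i(z_i,x)\le\sfd_i(x,y)\le\sfd(x,y)$ and likewise with $y$; applying \eqref{eq:321} to the subnet together with the constant nets $x,y$ yields $\sfd(z,x)\le\sfd(x,y)$ and $\sfd(z,y)\le\sfd(x,y)$, i.e.\ $z\in D(x,y)$, whence $g(z)\ge\min_{D(x,y)}g$. This is the claimed inequality.
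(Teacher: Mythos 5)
Your parts (b) and (c) are correct and follow essentially the paper's own arguments: for (b), the triangle inequality for $\sfd_{g,i,\eps}$ together with the two-regime bound gives membership in $\Lip_b(X,\tau,\sfd_i)$, and the local estimate $\sfd_{g,i,\eps}(w,w')\le (g(w)\lor g(w'))\,\sfd_i(w,w')$ on a small neighbourhood of $\bar x$, combined with the continuity of $g$ and $\sfd_i$, yields \eqref{eq:486}; for (c), closedness and compactness of $D_i(x,y)$ and $D(x,y)$, a minimizing subnet, and \eqref{eq:321} give \eqref{eq:320}, exactly as in the paper.

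Part (a), however, is not a proof. You establish the per-step bound $(g(x_{n-1})\lor g(x_n))\,\sfd_i(x_{n-1},x_n)\ge c\,|\phi(x_n)-\phi(x_{n-1})|$ only when at least one endpoint lies in $D_i(x,y)$, and you explicitly defer the steps whose endpoints both lie outside $D_i(x,y)$ to a hoped-for excursion decomposition (``I expect\dots'', ``one has to show\dots'', ``I would deal with this by\dots''). That deferred step is the whole content of \eqref{eq:313}, and the absorption mechanism you sketch does not hold up as described: a chain may leave $D_i(x,y)$ by an arbitrarily short step at $x$ (into the region where $\sfd_i(\cdot,y)>\sfd_i(x,y)$, on which $\phi=0$), travel entirely outside $D_i(x,y)$ up to a point which is $\sfd_i$-close to $y$ but farther than $\sfd_i(x,y)$ from $x$ --- so that $\phi$ has already climbed to almost $\sfd_i(x,y)$ while the chain is still outside $D_i(x,y)$ --- and re-enter by an arbitrarily short step at $y$. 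On such an excursion the interior steps, which carry essentially all of the $\phi$-progress, are controlled only by $\inf_X g$, while the two boundary steps, the only ones carrying the factor $c=\inf_{D_i(x,y)}g$, have negligible $\sfd_i$-length; the ``excess length'' of the excursion is weighted by $\inf_X g$, not by $c$, so nothing in your scheme forces the boundary steps to absorb the deficit. A genuinely new quantitative input would be needed here, and none is given.

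The paper's proof of (a) avoids this issue by a different and simpler device: it never estimates the whole chain. If some index satisfies $\sfd_i(x_n,x)>\sfd_i(x,y)$, the sum is truncated at the \emph{first} such index $\bar n$ (symmetrically, if some index satisfies $\sfd_i(x_n,y)>\sfd_i(x,y)$, one keeps only the final segment after the \emph{last} such index); on the retained initial segment every point lies within $\sfd_i$-distance $\sfd_i(x,y)$ of $x$, which is where the per-step bound \eqref{eq:485} is invoked, and the triangle inequality already gives $\sum_{n\le\bar n}\sfd_i(x_{n-1},x_n)\ge\sfd_i(x,x_{\bar n})\ge\sfd_i(x,y)$; if the chain never exits, the same bound applies to the whole chain. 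Note that in this argument the infimum of $g$ is only ever used at points whose $\sfd_i$-distance from $x$ (resp.\ from $y$) does not exceed $\sfd_i(x,y)$, which is precisely what makes the excursion bookkeeping you were attempting unnecessary.
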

\begin{proof}
  \textbf{(a)}
  Let $(x_n)_{n=0}^N$ be any sequence of points connecting
  $x$ to $y$ as in \eqref{eq:186}. If all the points $x_n$ belong to
  $D_i(x,y)$ then \eqref{eq:313} immediately follows by the inequality
  \begin{equation}
  \beta_i(x_{n-1},x_n)\ge \sfd_i(x_{n-1},x_n)(g(x_{n-1})\lor g(x_n))\ge
  \sfd_i(x_{n-1},x_n)\inf_{D_i(x,y)}g.\label{eq:485}
\end{equation}
  If
  not, there are indexes $n$ such that
  $\sfd_i(x_n,x)\lor \sfd_i(x_n,y)>\sfd_i(x,y)$.  Just to fix ideas,
  let us suppose that the set of indexes $n\in \{1,\cdots,N-1\}$ such
  that $\sfd_i(x_n,x)>\sfd_i(x,y)$ is not empty and let us call $\bar n$ its
  minimum, so that $x_n\in D_i(x,y)$ if $0\le n<\bar n$.  It follows
  that
  \begin{align*}
    \sum_{n=1}^N\beta_i(x_{n-1},x_n)
    &\ge 
      \sum_{n=1}^{\bar n}\beta_i(x_{n-1},x_n)
      \topref{eq:485}\ge \sum_{n=1}^{\bar n}
      \big(\inf_{D_i(x,y)}g\big)\sfd_i(x_{n-1},x_n)
    \\&\ge 
        \big(\inf_{D_i(x,y)}g\big)\sfd_i(x,x_{\bar n})
        \ge \big(\inf_{D_i(x,y)}g\big)\sfd_i(x,y).
  \end{align*}
  A similar argument holds if the set of indexes
  $n\in \{1,\cdots,N-1\}$ such that $\sfd_i(x_n,y)>\sfd_i(x,y)$ is not
  empty: in this case one can select the greatest index.
  \medskip

  \noindent
  \textbf{(b)}
  We first observe that 
  for every $z\in X$ and $\eps>0$
  the map $h:x\mapsto \sfd_{g,i,\eps}(z,x)$ 
  belongs to $\Lipb(X,\tau,\sfd_i)
  $ with Lipschitz constant bounded by $M_g\eps^{-2}$.
  In fact the triangle inequality yields
  \begin{displaymath}
    |\sfd_{g,i,\eps}(z,x)-\sfd_{g,i,\eps}(z,y)|\le \sfd_{g,i,\eps}(x,y)
  \end{displaymath}
  and
  \begin{displaymath}
    \sfd_{g,i,\eps}(x,y)\le
    \begin{cases}
      M_g \sfd_i(x,y)&\text{if }\sfd_i(x,y)<\eps;\\
      \frac{M_g \sup \sfd_i}\eps\sfd_i(x,y)&\text{if }\sfd_i(x,y)\ge \eps.
    \end{cases}
  \end{displaymath}
  On the other hand, for every $\bar x\in X$ the continuity of
  $\sfd_i$ and of $g$
  ensures that there exists a neighborhood $U\in \UU_{\bar x}$ such that 
  $\sfd_i(\bar x,y)<\eps/2$ and $g(y)\le g(\bar x)+\eps$ for every $y\in U$, so that
  \begin{displaymath}
    \sfd_{g,i,\eps}(x,y)\le \beta_i(x,y)\le \sfd_i(x,y)\sup_U g\le
    \sfd_i(x,y)(g(\bar x)+\eps)
    \quad
    \text{for every }x,y\in U
  \end{displaymath}
  and therefore
  $\Lip(h,U,\sfd_i)\le (g(\bar x)+\eps)$,
  $\lip_{\sfd_i}h(\bar x)\le g(\bar x)+\eps.$ Since $\eps>0$ is
  arbitrary we conclude.  
  \medskip

  \noindent
  \textbf{(c)}
  Concerning \eqref{eq:320},
  let $z_i\in D_i(x,y)$, $i\in I$, be a minimizer for $g$ in
  $D_i(x,y)$,
  whose existence follows by the compactness of $(X,\tau)$ (and
  therefore of $D_i(x,y)$) and the 
  continuity of $\sfd_i$.
  We can find a converging subnet $\alpha\mapsto i(\alpha)$, $\alpha\in
  A$, such that
  $z_{i(\alpha)}\to z$, $g(z_{i(\alpha)})\to g(z)=
  \liminf_{i\in I}g(z_i)$ and (recalling \eqref{eq:321})
  \begin{displaymath}
    \sfd(z,x)\lor\sfd(z,y)\le \liminf_{\alpha\in A}
    \sfd_{i(\alpha)}(z_{i(\alpha)},x)\lor\sfd_{i(\alpha)}(z_{i(\alpha)},y)
    \le 
    \liminf_{\alpha\in A}\sfd_{i(\alpha)}(x,y)=\sfd(x,y),
  \end{displaymath}
  so that $z\in D(x,y)$. It follows that 
  \begin{displaymath}
    \liminf_{i\in I}\min_{D_i(x,y)}g
    =\liminf_{i\in I}g(z_i)=g(z)\ge \min_{D(x,y)}g. \qedhere
  \end{displaymath} 
\end{proof}
We then define 
\begin{equation}
  \label{eq:188}
  \sfd_g'(x,y):=\lim_{\eps\down0,i\in I}\sfd_{g,i,\eps}(x,y)=
  \sup_{\eps>0,i\in I}\sfd_{g,i,\eps}(x,y).
\end{equation}
Different approximations of $\sfd_g$ are provided by the formula
  \begin{align}
    \notag
    \sfd_g''(x,y):={}\sup\Big\{&|f(x)-f(y)|:
                                 \exists\, i\in I\ \text{such that }
    \\&
    f\in
                                 \Lipb(X,\tau,\sfd_i)
                                 \text{ and } 
    \label{eq:234}
    \lip_{\sfd_i}f\le g \text{ in } X\Big\}\\\notag
    \sfd_g'''(x,y):={}\sup\Big\{&|f(x)-f(y)|:
    \\&f\in
                                     \Lipb(X,\tau,\sfd)\text{ and }
    \lip_\sfd f\le g \text{ in }X\Big\}.
    \label{eq:261}
  \end{align}
When $g\equiv 1$ we will also write $\sfd_\ell':=\sfd_1',$ $\sfd_\ell'':=\sfd_1''$, $\sfd_\ell''':=\sfd_1'''$.
In the next Lemma we collect a few results concerning these distances.
\newcommand{\gll}{g}
\begin{theorem}
  \label{thm:Finsler}
  \begin{enumerate}[(a)]
  \item If $(X,\tau,\sfd)$ is an extended metric-topological space,
    then also $(X,\tau,\sfd'_\gll)$, $(X,\tau,\sfd_\gll'')$
    and $(X,\tau,\sfd_\gll''')$ are extended
    metric-topological space and we have for every $x,y\in X$
    \begin{equation}
      \label{eq:224}
      \sfd_\gll'(x,y)\le       \sfd_\gll''(x,y)\le
      \sfd_\gll'''(x,y)\le \sfd_\gll(x,y).
    \end{equation}
  \item 
    \begin{equation}
      \label{eq:503}
      (\sfd_\gll')_\ell= (\sfd_\gll'')_\ell=
      (\sfd_\gll''')_\ell=\sfd_\gll.
    \end{equation}
  \item If $(X,\tau)$ is compact    
    then $\sfd_\gll'=\sfd_\gll''=\sfd_\gll'''=\sfd_\gll$.
    In particular, $(X,\tau,\sfd_\gll)$ is an extended
    metric-topological space and
    $(X,\sfd_\gll)$ is a geodesic space.
  \end{enumerate}
\end{theorem}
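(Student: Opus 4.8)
The first step is the chain $\sfd_g'\le\sfd_g''\le\sfd_g'''\le\sfd_g$. For $\sfd_g'''\le\sfd_g$ I would use that $\lip_\sfd f$ is an upper gradient: along any $\gamma\in\RA(X)$ the composition $f\circ\Al_\gamma$ is Lipschitz and $|f(\gamma_1)-f(\gamma_0)|\le\int_\gamma\lip_\sfd f\le\int_\gamma g$ whenever $\lip_\sfd f\le g$, so the supremum defining $\sfd_g'''(x,y)$ is $\le\int_\gamma g$ for every arc joining $x$ to $y$, hence $\le\sfd_g(x,y)$. Then $\sfd_g''\le\sfd_g'''$ follows from $\sfd_i\le\sfd$ (which gives $\Lipb(X,\tau,\sfd_i)\subset\Lipb(X,\tau,\sfd)$ and $\lip_\sfd f\le\lip_{\sfd_i}f$), and $\sfd_g'\le\sfd_g''$ is a direct consequence of Lemma~\ref{le:localized}(b): the function $x\mapsto\sfd_{g,i,\eps}(x,z)$ lies in $\Lipb(X,\tau,\sfd_i)$ with $\lip_{\sfd_i}$-slope $\le g$, so it is admissible in the definition of $\sfd_g''$; evaluating at $z=y$ gives $\sfd_{g,i,\eps}(x,y)\le\sfd_g''(x,y)$, and one takes the supremum over $i,\eps$. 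Since $\sfd_g'\ge m_g\sfd$ (from the estimates preceding Lemma~\ref{le:localized}) all three are genuine extended distances. To see each defines an e.m.t.\ structure, observe that the bounded $\tau$-continuous semidistances $\sfd_{g,i,\eps}(\cdot,y)$ are $1$-Lipschitz for each of $\sfd_g',\sfd_g'',\sfd_g'''$ (because $|\sfd_{g,i,\eps}(x,y)-\sfd_{g,i,\eps}(x',y)|\le\sfd_{g,i,\eps}(x,x')\le\sfd_g'(x,x')\le\cdots$), they generate $\tau$ (using $m_g\sfd_i\le\sfd_{g,i,\eps}$ and Lemma~\ref{rem:monotonicity}), and their differences recover $\sfd_g'$; property (X1) of Definition~\ref{def:luft1} then follows by squeezing the initial topology of $\Lipb(X,\tau,\delta)$ between that of this subfamily and $\tau$, and (X2) is automatic because any test function admissible in the definition of $\sfd_g''$ (resp.\ $\sfd_g'''$) is, by the very supremum, $1$-Lipschitz w.r.t.\ $\sfd_g''$ (resp.\ $\sfd_g'''$), while for $\sfd_g'$ one again uses the $\sfd_{g,i,\eps}(\cdot,y)$.

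\textbf{Plan for item (b).} Since length is monotone in the distance and $(\sfd_g)_\ell=\sfd_g$, item (a) gives $(\sfd_g')_\ell\le(\sfd_g'')_\ell\le(\sfd_g''')_\ell\le\sfd_g$, so it remains to prove $(\sfd_g')_\ell\ge\sfd_g$. As $m_g\sfd\le\sfd_g'\le M_g\sfd_\ell$, the $\sfd_g'$-rectifiable arcs coincide with the $\sfd$-rectifiable ones, and it suffices to show $\Var_{\sfd_g'}(\gamma)\ge\int_\gamma g$ for every $\gamma\in\RA(X)$: then for any arc from $x$ to $y$, $\Var_{\sfd_g'}(\gamma)\ge\int_\gamma g\ge\sfd_g(x,y)$, and taking the infimum over arcs concludes. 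I would reparametrize $\gamma$ by $\sfd$-arclength, so $\gamma=\Al_\gamma$ is $\ell(\gamma)$-Lipschitz for both $\sfd$ and $\sfd_g'$ (using $\sfd_g'\le M_g\sfd_\ell$); hence $\gamma$ is $\sfd_g'$-absolutely continuous, $\Var_{\sfd_g'}(\gamma)=\int_0^1|\dot\gamma|_{\sfd_g'}\,\d t$, and it is enough to prove the pointwise bound $|\dot\gamma|_{\sfd_g'}(t)\ge g(\gamma(t))\,|\dot\gamma|_\sfd(t)$ a.e. Using $\sfd_g'\ge\sfd_{g,i,\eps}$, Lemma~\ref{le:localized}(a), and the inclusion $D_i(\gamma(s),\gamma(t))\subset\{z:\sfd_i(z,\gamma(t))\le\sfd_i(\gamma(s),\gamma(t))\}$, one obtains $|\dot\gamma|_{\sfd_g'}(t)\ge\sup_i c_i(\gamma(t))\,|\dot\gamma|_{\sfd_i}(t)$, where $c_i(w):=\sup_{r>0}\inf\{g(z):\sfd_i(z,w)\le r\}$. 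The decisive point is that $\sup_i c_i(w)=g(w)$: the $\tau$-open set $\{g>g(w)-\delta\}$ is a $\tau$-neighborhood of $w$, hence by Lemma~\ref{rem:monotonicity} contains $\{z:\sfd_i(z,w)<\eta\}$ for some $i$ and $\eta>0$. Combining this with $\sup_i|\dot\gamma|_{\sfd_i}(t)=|\dot\gamma|_\sfd(t)$ a.e.\ (the metric speed w.r.t.\ the directed supremum of the $\sfd_i$ equals the supremum of the metric speeds, proved by reducing to a countable cofinal family via Remark~\ref{rem:compact-case}, which applies since $\gamma([0,1])$ is $\sfd$-compact hence $\sfd$-separable), and with the fact that both $i\mapsto c_i(\gamma(t))$ and $i\mapsto|\dot\gamma|_{\sfd_i}(t)$ are non-decreasing along the common directed set (so the supremum of their product is the product of their suprema), yields $|\dot\gamma|_{\sfd_g'}(t)\ge g(\gamma(t))|\dot\gamma|_\sfd(t)$. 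Integrating and invoking \eqref{eq:210} gives $\Var_{\sfd_g'}(\gamma)\ge\int_0^1 g(\gamma(t))\ell(\gamma)\,\d t=\int_\gamma g$.

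\textbf{Plan for item (c).} Assume $(X,\tau)$ compact. By the chain in (a) it suffices to show $\sfd_g'=\sfd_g$, and in view of (b) this reduces to proving that $\sfd_g'$ is a length distance. First, $(X,\sfd_g')$ is complete: a $\sfd_g'$-Cauchy sequence is $\sfd$-Cauchy, hence $\tau$-relatively compact, and any $\tau$-cluster point $x$ satisfies $\sfd_g'(x_n,x)\le\liminf_m\sfd_g'(x_n,x_m)\to0$ by the $\tau$-lower semicontinuity of $\sfd_g'$. Next I would show $\sfd_g'$ has exact midpoints: given $x,y$ with $D:=\sfd_g'(x,y)<\infty$ and any pair $(i,\eps)$, choose a chain $x=x_0,\dots,x_N=y$ with $\sfd_i$-steps $<\eps$ and $\sum_n\beta_i(x_{n-1},x_n)\le\sfd_{g,i,\eps}(x,y)+\eps$, and let $z^{i,\eps}$ be the first vertex whose $\beta_i$-partial sum reaches half the total; then $\sfd_{g,i,\eps}(x,z^{i,\eps})\le\tfrac12(D+\eps)+M_g\eps$ and $\sfd_{g,i,\eps}(z^{i,\eps},y)\le\tfrac12(D+\eps)$. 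Extracting by $\tau$-compactness a $\tau$-cluster point $z$ of the net $(z^{i,\eps})$, the $\tau$-continuity of each $\sfd_{g,j,\delta}$ and the monotonicity $\sfd_{g,j,\delta}\le\sfd_{g,i,\eps}$ for $(i,\eps)\succeq(j,\delta)$ force $\sfd_g'(x,z)\le D/2$ and $\sfd_g'(z,y)\le D/2$, hence equality by the triangle inequality. Completeness then upgrades exact midpoints to an honest geodesic via the dyadic construction of \S\,\ref{subsec:length}, so $(\sfd_g')_\ell=\sfd_g'$; by (b) this equals $\sfd_g$, and the chain in (a) yields $\sfd_g'=\sfd_g''=\sfd_g'''=\sfd_g$, whence $(X,\tau,\sfd_g)$ is an e.m.t.\ space by (a). Finally $(X,\sfd_g)$ is geodesic: for each $n$ take $\gamma_n\in\RA(X)$ from $x$ to $y$ with $\int_{\gamma_n}g\le\sfd_g(x,y)+1/n$; then $\ell(\gamma_n)\le m_g^{-1}\int_{\gamma_n}g$ is bounded, so by Theorem~\ref{thm:important-arcs}(f) a subnet converges in $\tau_\rmA$ to some $\gamma$ joining $x$ to $y$, and the $\tau_\rmA$-lower semicontinuity of $\gamma\mapsto\int_\gamma g$ (Lemma~\ref{le:reparam-arcs}(e)) gives $\int_\gamma g\le\sfd_g(x,y)$, hence $\int_\gamma g=\sfd_g(x,y)$.

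\textbf{Main obstacle.} The delicate part is the pointwise metric-derivative estimate in item (b): getting $\inf_{D_i}g$ to converge to $g$ along the curve requires simultaneously using that the $\sfd_i$ increase to $\sfd$ and generate $\tau$ and that $g$ is $\tau$-continuous, and interchanging the directed supremum over $i$ with the a.e.\ metric-speed computation needs the separability reduction on $\gamma([0,1])$. By comparison, the arguments in (a) and (c) are essentially bookkeeping on top of Lemma~\ref{le:localized} and $\tau$-compactness.
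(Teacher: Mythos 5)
Your items (a) and (c) are correct and follow essentially the paper's own route. In (a) the three inequalities are obtained exactly as in the paper (Lemma \ref{le:localized}(b) for $\sfd_g'\le\sfd_g''$, inclusion of the admissible classes for $\sfd_g''\le\sfd_g'''$, and the chain-rule/integration estimate along rectifiable arcs for $\sfd_g'''\le\sfd_g$), and your e.m.t.\ bookkeeping via the directed family $(\sfd_{g,i,\eps})$ and the converse part of Lemma \ref{rem:monotonicity} fills in what the paper dismisses as ``clear from the construction''. In (c) your chain-splitting construction of midpoints, the $\tau$-compact cluster point, and the monotonicity of $(i,\eps)\mapsto\sfd_{g,i,\eps}$ reproduce the paper's argument (you obtain exact rather than $\eps$-approximate midpoints, which is harmless), and your completeness and arc-compactness additions for the geodesic claim are correct. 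Item (b) is where you genuinely diverge: the paper fixes a near-optimal curve which is $D$-Lipschitz for $\sfd_g'$, bounds Riemann sums of $\int_\gamma g$ from below via Lemma \ref{le:localized}(a), and passes to the limit in $i$ through \eqref{eq:320}, i.e.\ Lemma \ref{le:localized}(c); you instead prove the pointwise bound $|\dot\gamma|_{\sfd_g'}\ge g(\gamma)\,|\dot\gamma|_\sfd$ a.e.\ and integrate, replacing \eqref{eq:320} by the observation that every $\tau$-neighbourhood of a point contains a single $\sfd_i$-ball (directedness plus \eqref{eq:259}), so that $\sup_i\sup_{r>0}\inf\{g(z):\sfd_i(z,w)\le r\}=g(w)$. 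This is a clean alternative, and it avoids the compactness hypothesis under which \eqref{eq:320} is stated.

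The one step that does not hold as you justify it is the identity $\sup_{i\in I}|\dot\gamma|_{\sfd_i}=|\dot\gamma|_\sfd$ a.e. Remark \ref{rem:compact-case} produces a countable family $(f_n)\subset\Lip_{b,1}(X,\tau,\sfd)$ recovering $\sfd$ on $\gamma([0,1])$, but the semidistances $\max_{k\le n}|f_k(x)-f_k(y)|$ need not be dominated by any member of the directed family $(\sfd_i)_{i\in I}$ entering \eqref{eq:186}; so for a general admissible family the reduction says nothing about the $\sfd_i$-speeds (it works verbatim only when one takes the canonical family $\Lambda$ of Lemma \ref{rem:monotonicity}). Moreover, since $I$ may be uncountable, ``for each $i$, for a.e.\ $t$'' does not yield ``for a.e.\ $t$, for all $i$'', so the product-of-suprema step also needs care. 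Both points can be repaired inside $I$ itself: note that $V(t):=\Var_\sfd(\gamma;[0,t])=\sup_i V_i(t)$ with $V_i(t):=\Var_{\sfd_i}(\gamma;[0,t])$ (finite sums commute with the directed supremum), choose by directedness an increasing sequence $i_n\in I$ with $V_{i_n}(q)\to V(q)$ at every rational $q$, use the continuity of $V$ (Lemma \ref{le:surprising-BVC}) to upgrade this to pointwise convergence everywhere, and apply Fubini's theorem on differentiation of monotone limits to get $V_{i_n}'\to V'=|\dot\gamma|_\sfd$ a.e.; finally, state your lower bound in terms of the everywhere-defined quantities $\liminf_{s\to t}\sfd_i(\gamma(s),\gamma(t))/|s-t|$, which are monotone in $i$, so that the factorization of the supremum of the product is applied off a single null set. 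With this replacement your proof of (b) is complete.
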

\begin{proof}
  \textbf{(a)} The fact that we are dealing with extended metric-topological spaces
  is clear from the construction.
  
  The first inequality $\sfd_\gll'\le \sfd_\gll''$ in
  \eqref{eq:224} follows immediately by Lemma \ref{le:localized}(b).

  The inequality $\sfd_\gll''\le \sfd_\gll'''$ is obvious since the latter is
  obtained
  by taking the supremum on a bigger set.
  
  The last inequality $\sfd_\gll'''\le \sfd_\gll$ easily follows since 
  for every $\gamma\in \BVC([0,1];X)$ and every map $f\in
  \Lip(X,\tau,\sfd)$ with $\lip_\sfd f\le g$, the composition
  $\mathrm f:=f\circ \Al_\gamma$ is Lipschitz with
  \begin{equation}
    \label{eq:56}
    \big|\mathrm f'(t)\big|\le \ell(\gamma)\,\lip_\sfd
    f(\Al_\gamma(t))\le \ell(\gamma)g(\Al_\gamma(t))
    \quad
    \text{$\Leb 1$-a.e.~in $[0,1]$}.
  \end{equation}
  An integration in the interval $[0,1]$ yields
  \begin{equation}
    \label{eq:201}
    \big|f(\gamma(1))-f(\gamma(0))\big|
    \le \int_\gamma g
  \end{equation}
  and a further minimization w.r.t.~all the curves $\gamma$
  connecting
  $x=\gamma(0)$ and $y=\gamma(1)$ 
  yields for every $f$ satisfying \eqref{eq:261}
  \begin{equation}
    \label{eq:312}
    |f(y)-f(x)|\le \sfd_g(x,y).
    \end{equation}
    Taking the supremum w.r.t.~$f$ we conclude.
    \medskip

    \noindent \textbf{(b)}
      Since $\sfd_g$ is a length distance, $(\sfd_g')_\ell\le \sfd_g$,
      so that it is sufficient to prove the converse inequality.
      Let $x,y\in X$ with $(\sfd_g')_\ell(x,y)<D$;
      we can find $\gamma\in \Lip([0,1];(X,\sfd))$ with $\gamma(0)=x$, 
      $\gamma(1)=y$, 
      and $\sfd_g'(\gamma(s),\gamma(t))\le  D|s-t|$
      for every $0\le s<t\le 1$. We want to show
  \begin{equation}
    \label{eq:314}
    I:=\int_\gamma g\le D.
  \end{equation}
  By \eqref{eq:310} 
  $\sfd(\gamma(s),\gamma(t))\le m_g^{-1}\,D|t-s|$ 
  so that $\gamma$ 
  is also $\sfd$-Lipschitz.
  The map $g\circ\gamma$ is uniformly continuous as well.
  A standard compactness argument shows that 
  for every $\eps>0$ there exists $\delta>0$ such that 
  \begin{equation}
    \label{eq:316}
    \inf \Big\{g(z):z\in X,\ \sfd(z,x)\le \delta\Big\}\ge g(x)-\eps
    \quad\text{for every }x\in \gamma([0,1]).
  \end{equation}
  By \eqref{eq:209bis} for every $I_1<I$ and $\eps>0$ we can find
  a subdivision $(t_n)_{n=0}^N$ of $[0,1]$ such that
  \begin{equation}
    \label{eq:315}
    \sum_{n=1}^N \big(\inf_{[t_{n-1},t_n]}g\circ\Al_\gamma\big)
    \,\sfd(\Al_\gamma(t_{n-1}),\Al_\gamma(t_n))> I_1,\quad
    D|t_n-t_{n-1}|\le (\delta\land \eps) m_g,
  \end{equation}
  so that, in particular,
  $\sfd(\Al_\gamma(t_{n-1}),\Al_\gamma(t_n))\le \eps$.
  We set
  \begin{equation}
    \label{eq:317}
    m_n:=\min_{[t_{n-1},t_n]}g,\quad
    m_{i,n}:=\inf_{D_i(\Al_\gamma(t_{n-1}),\Al_\gamma(t_n))}g,
    \quad i\in I,\ 1\le n\le N.
  \end{equation}
  We can then find $i_0\in I$ such that 
  for every $i\succeq i_0$ 
  \begin{equation}
    \label{eq:315i}
    \sum_{n=1}^N  m_n
    \,\sfd_i(\Al_\gamma(t_{n-1}),\Al_\gamma(t_n))\ge I_1.
  \end{equation}
  Applying \eqref{eq:313}
  we obtain
  \begin{align*}
    I_1&\le 
         \sum_{n=1}^N m_{i,n}
    \,\sfd_i(\Al_\gamma(t_{n-1}),\Al_\gamma(t_n))+
         \sum_{n=1}^N (m_n-m_{i,n})
    \,\sfd_i(\Al_\gamma(t_{n-1}),\Al_\gamma(t_n))
         \\&\le \sum_{n=1}^N m_{i,n}
    \,\sfd_i(\Al_\gamma(t_{n-1}),\Al_\gamma(t_n))+
         m_g^{-1} D \sup_{1\le n\le N}
    \Big(m_n-m_{i,n}\Big)
         \\&\le
             \sum_{n=1}^N \sfd_{\gll,i,\eps}
             (\Al_\gamma(t_{n-1}),\Al_\gamma(t_n))+
             m_g^{-1}  D \sup_{1\le n\le N}
    \Big(m_n-m_{i,n}\Big)
    \\&\le D+m_g^{-1}  D \sup_{1\le n\le N}
    \Big(m_n-m_{i,n}\Big)
  \end{align*}
  We can now pass to the limit w.r.t.~$i\in I$, observing that by
  \eqref{eq:320}
  and \eqref{eq:316}
  \begin{equation}
    \label{eq:318}
    \liminf_{i\in I} m_{i,n}\ge 
    \min_{D(\Al_\gamma(t_{n-1}),\Al_\gamma(t_n))}g
    \ge g(\Al_\gamma(t_n))-\eps
    \ge m_n-\eps
  \end{equation}
  since $\sfd(\Al_\gamma(t_{n-1}),\Al_\gamma(t_n))\le \delta$.
  It follows that 
  \begin{displaymath}
    I_1\le D+m_g^{-1}D\eps;
  \end{displaymath}
  since $I_1<I$ and $\eps>0$ are arbitrary, we conclude.
  \medskip

  \noindent
    \textbf{(c)} We will show that
    \emph{$(X,\sfd_\gll')$ is a geodesic space.}
    Since $(X,\tau)$ is compact, it is sufficient to prove
    that $(X,\sfd_\gll')$ satisfies the
    approximate mid-point property. In particular
    $(\sfd_g')_\ell=\sfd_g'$ and
    the claim will follow by the previous point (b).
    
    Let us fix couple $x,y\in X$ with $2D:=\sfd_\gll'(x,y)<\infty$.
  By definition, for every $\eps>0$ we can find 
  $\eta_0>0$ with $(M_g\lor 1)\eta_0<\eps$  and $i_0\in I$
  such that $2D-\eps<\sfd_{g,i,\eta}(x,y)\le 2D$ for every 
  $0<\eta\le \eta_0$ and $i\succeq i_0$. 
  Therefore, we find points $(x_n)_{n=0}^N\in X$
  (depending on $i,\eta$) such that 
  $\sfd_i(x_{n-1},x_n)<\eta$ and 
  $2D-\eps<\sum_{n=1}^N\beta_i(x_{n-1},x_n)\le
  \sfd_{g,i,\eta}(x,y) +\eps\le 2D+\eps$.
  If $k_0=\max\{k\le N: \sum_{n=1}^{k}\beta_i(x_{n-1},x_n)\le D\}$ 
  and $z_{i,\eta}:=x_{k_0+1}$, we clearly have
  \begin{align*}
    \sfd_{\gll,i,\eta}(x,z_{i,\eta})
    &\le
    D+\beta_i(x_{k_0},x_{k_0+1})\le
      D+M_g\eta\le D+\eps,
    \\
    \sfd_{g,i,\eta}(y,z_{i,\eta})
    &\le
      \sum_{n=k_0+1}^N \beta_i(x_{n-1},x_n) =
      \sum_{n=1}^N \beta_i(x_{n-1},x_n)-
      \sum_{n=1}^{k_0+1} \beta_i(x_{n-1},x_n)
      \\&\le 2D+\eps-D\le D+\eps.
  \end{align*}
  Let now $(h,k):J\to \{i\in I:i\succeq i_0\}\times (0,\eta_0)$ 
  be a monotone subnet such that $z_{(h(j),k(j))}$ converges to $z\in X$. 
  Since $(i,\eta)\mapsto \sfd_{g,i,\eta}$ is monotone,
  for every $i\in I$ and $\eta>0$ we have
  \begin{align*}
    \sfd_{g,i,\eta}(x,z)&= \lim_{j\in
      J}\sfd_{g,i,\eta}(x,z_{h(j),k(j)})
    \le 
    \limsup_{j\in
      J}\sfd_{g,h(j),k(j)}(x,z_{h(j),k(j)})\le D+\eps,\\
    \sfd_{g,i,\eta}(y,z)&= \lim_{j\in
      J}\sfd_{g,i,\eta}(y,z_{h(j),k(j)})
    \le 
    \limsup_{j\in
      J}\sfd_{g,h(j),k(j)}(y,z_{h(j),k(j)})\le D+\eps.
  \end{align*}
  Taking the supremum w.r.t.~$i\in I$ and $\eta>0$ we eventually get
  \begin{displaymath}
    \sfd_\gll'(x,z)\le D+\eps,\quad
    \sfd_\gll'(y,z)\le D+\eps
  \end{displaymath}
  so that $z$ is an $\eps$-approximate midpoint between $x$ and $y$.
\end{proof}
\begin{remark}
  \label{rem:coincindence}
  Notice that when $\sfd_g$ is $\tau$-continuous, then also $\sfd$ is $\tau$-continuous
  and $\sfd_g=\sfd_g''=\sfd_g'''$. In this case
  $(X,\tau,\sfd_g)$ is an extended metric-topological space.  
\end{remark}
\subsection{Duality for Kantorovich-Rubinstein cost functionals
  induced by conformal distances}
\label{subsec:Kantorovich-duality}
\index{Kantorovich-Rubinstein distance and duality}
We apply Theorem \ref{thm:Finsler} to obtain
a useful dual representation for
Kantorovich-Rubinstein distances.
\begin{proposition}
  \label{prop:eventually}
  Let us suppose that
  the extended distances $\sfd_g$ and $\sfd_g'$
  defined by \eqref{eq:207} and \eqref{eq:188} coincide
  (in particular when $(X,\tau)$ is compact) and
  let $\sfK_{\sfd_g}$ be the Kantorovich functional induced by
  $\sfd_g$. Then for every $\mu_0,\mu_1\in \cMp(X)$ with the same mass
      \begin{align}
      \notag
        \sfK_{\sfd_g}(\mu_0,\mu_1)
      &=
        \sup\Big\{\int \phi_0\,\d\mu_0-
        \int \phi_1\,\d\mu_1: \phi_i\in \rmC_b(X,\tau),
        \\&\qquad\qquad
      \phi_0(x_0)-\phi_1(x_1)\le \sfd_g(x_0,x_1)\quad
      \text{for every }x_0,x_1\in X\Big\}
      \label{eq:490dg}
      \\&= \sup\Big\{\int \phi\,\d(\mu_0-\mu_1):
        \phi\in \Lip_{b}(X,\tau,\sfd),\ \lip_\sfd \phi\le g\Big\}.
      \label{eq:493dg}
    \end{align}
  \end{proposition}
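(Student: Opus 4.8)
The plan is to establish both identities simultaneously through the chain of inequalities $A\le C\le B\le A$, where $A:=\sfK_{\sfd_g}(\mu_0,\mu_1)$, $B$ is the right-hand side of \eqref{eq:490dg}, and $C$ is the right-hand side of \eqref{eq:493dg}. Two of these are elementary. For $C\le B$: if $\phi\in\Lip_b(X,\tau,\sfd)$ satisfies $\lip_\sfd\phi\le g$, then $|\phi(x_0)-\phi(x_1)|\le\sfd_g(x_0,x_1)$ by \eqref{eq:312} (equivalently by the inequality $\sfd_g'''\le\sfd_g$ from \eqref{eq:224} together with the definition \eqref{eq:261} of $\sfd_g'''$), so the pair $\phi_0=\phi_1=\phi$ is admissible for $B$ and $\int\phi\,\d(\mu_0-\mu_1)\le B$; taking the supremum over such $\phi$ gives $C\le B$. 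For $B\le A$: under the standing hypothesis $\sfd_g=\sfd_g'=\sup_{i,\eps}\sfd_{g,i,\eps}$ (see \eqref{eq:188}) the cost $\sfd_g$ is $\tau$-lower semicontinuous, hence Borel, so $\int_{X\times X}\sfd_g\,\d\mmu$ makes sense in $[0,\infty]$ for every $\mmu\in\Gamma(\mu_0,\mu_1)$; integrating the pointwise bound $\phi_0(x_0)-\phi_1(x_1)\le\sfd_g(x_0,x_1)$ against such a $\mmu$ gives $\int\phi_0\,\d\mu_0-\int\phi_1\,\d\mu_1\le\int_{X\times X}\sfd_g\,\d\mmu$, and minimizing over $\mmu$ and then maximizing over the admissible pair yields $B\le A$.

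The substantial inequality is $A\le C$, and this is where the hypothesis $\sfd_g=\sfd_g'$ enters. Let $\Lambda:=I\times(0,\infty)$ be the directed set ordered by $(i,\eps)\preceq(j,\eta)$ iff $i\preceq j$ and $\eta\le\eps$. By the monotonicity relations recorded just after \eqref{eq:186}, the family $(\sfd_{g,i,\eps})_{(i,\eps)\in\Lambda}$ is a monotone directed family of bounded $\tau$-continuous semidistances whose pointwise supremum equals $\sfd_g'=\sfd_g$. Therefore \eqref{eq:491} applies to this reindexed family and gives $\sfK_{\sfd_g}(\mu_0,\mu_1)=\lim_{(i,\eps)\in\Lambda}\sfK_{\sfd_{g,i,\eps}}(\mu_0,\mu_1)$, while \eqref{eq:492} gives, for each $(i,\eps)$, the representation $\sfK_{\sfd_{g,i,\eps}}(\mu_0,\mu_1)=\sup\big\{\int\phi\,\d(\mu_0-\mu_1):\phi\in\Lipbu1(X,\tau,\sfd_{g,i,\eps})\big\}$. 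Thus it suffices to show that every $\phi\in\Lipbu1(X,\tau,\sfd_{g,i,\eps})$ lies in $\Lip_b(X,\tau,\sfd)$ and satisfies $\lip_\sfd\phi\le g$ on $X$; granting this, each of the suprema above is dominated by $C$, and passing to the limit over $\Lambda$ gives $A\le C$, hence $A=B=C$.

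To verify this last inclusion I would argue locally, essentially re-using the proof of Lemma \ref{le:localized}(b). From \eqref{eq:186}, the trivial one-step chain gives $\sfd_{g,i,\eps}(x,y)\le\beta_i(x,y)=(g(x)\lor g(y))\,\sfd_i(x,y)$ whenever $\sfd_i(x,y)<\eps$, and $\sfd_{g,i,\eps}(x,y)\le M_g(\sup\sfd_i)\eps^{-1}\sfd_i(x,y)$ otherwise; in particular $\sfd_{g,i,\eps}\le c_{i,\eps}\,\sfd_i\le c_{i,\eps}\,\sfd$ for a finite constant $c_{i,\eps}$, so any $\phi\in\Lipbu1(X,\tau,\sfd_{g,i,\eps})$ is bounded, $\tau$-continuous and $\sfd$-Lipschitz, i.e.\ $\phi\in\Lip_b(X,\tau,\sfd)$. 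Moreover, since $\sfd_i$ and $g$ are $\tau$-continuous, for every $x\in X$ and $\eta>0$ there is $U\in\UU_x$ with $\sfd_i(y,z)<\eps$ and $g(y)\le g(x)+\eta$ for all $y,z\in U$, so that $|\phi(y)-\phi(z)|\le\sfd_{g,i,\eps}(y,z)\le(g(x)+\eta)\,\sfd_i(y,z)$ and hence $\Lip(\phi,U,\sfd_i)\le g(x)+\eta$; taking the infimum over $U\in\UU_x$ and letting $\eta\downarrow0$ gives $\lip_{\sfd_i}\phi(x)\le g(x)$, and since $\sfd\ge\sfd_i$ we get $\lip_\sfd\phi\le\lip_{\sfd_i}\phi\le g$. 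This completes $A\le C$; when $(X,\tau)$ is compact the hypothesis $\sfd_g=\sfd_g'$ is automatic by Theorem \ref{thm:Finsler}(c). The only delicate point is the verification that $(\sfd_{g,i,\eps})_{(i,\eps)\in\Lambda}$ genuinely meets the hypotheses of \eqref{eq:491} — directedness and that its monotone limit is \emph{exactly} $\sfd_g$ — which is precisely the content of the standing assumption $\sfd_g=\sfd_g'$; the remaining steps are routine bookkeeping with the results already established.
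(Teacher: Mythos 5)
Your proof is correct and takes essentially the same route as the paper: the substantial inequality $\sfK_{\sfd_g}\le$ RHS of \eqref{eq:493dg} is obtained exactly as in the paper's proof, via the approximation \eqref{eq:491} applied to the directed family $(\sfd_{g,i,\eps})$ (which equals the hypothesis $\sfd_g=\sfd_g'$), the dual representation \eqref{eq:492}, and the local estimate of Lemma \ref{le:localized}(b) showing $\phi\in\Lip_{b,1}(X,\tau,\sfd_{g,i,\eps})\Rightarrow\lip_\sfd\phi\le\lip_{\sfd_i}\phi\le g$. The only (cosmetic) difference is organizational: you recover \eqref{eq:490dg} as a byproduct of the chain $A\le C\le B\le A$ through the elementary direction of Kantorovich duality, whereas the paper obtains it by invoking \eqref{eq:490d} for the e.m.t.\ space $(X,\tau,\sfd_g)$ — itself proved by the same approximation mechanism — so the content is the same.
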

  \begin{proof}
    \eqref{eq:490dg} is a particular case of
    \eqref{eq:490d} for the extended metric-topological space
    $(X,\tau,\sfd_g)$, thanks to Theorem \ref{thm:Finsler}(c).

    Concerning \eqref{eq:493dg}, we can first observe that the right
    hand side is dominated
    by $\sfK_{dg}$ since every function
    $\phi\in \Lip_{b}(X,\tau,\sfd)$ with $\lip_\sfd \phi\le g$
    belongs to $\Lip_{b,1}(X,\tau,\sfd_g)$
    thanks to \eqref{eq:224} and the very definition of $\sfd_g'''$
    given by \eqref{eq:261}.

    On the other hand, we know by \eqref{eq:224} that
    $\sfd_g=\sfd_g'$ so that the collection $(\sfd_{g,i,\eps})_{i\in
      I,\eps>0}$ is a direct set of continuous and bounded
    semidistances
    giving \eqref{eq:188}. We can then apply \eqref{eq:491} obtaining
    \begin{equation}
      \label{eq:495}
      \sfK_{d_g}(\mu_0,\mu_1)=\lim_{i\in I,\eps\downarrow0}
      \sfK_{d_{g,i,\eps}}(\mu_0,\mu_1),
    \end{equation}
    so that \eqref{eq:492} yields
    \begin{equation}
      \label{eq:494}
      \sfK_{d_g}(\mu_0,\mu_1)=\sup
      \Big\{\int \phi\,\d(\mu_0-\mu_1):
      \phi\in \Lip_{b,1}(X,\tau,\sfd_{g,i,\eps}),\ i\in I,\ \eps>0\Big\}.
    \end{equation}
    On the other hand, using \eqref{eq:486} one immediately sees that
    \begin{displaymath}
      \phi\in \Lip_{b,1}(X,\tau,\sfd_{g,i,\eps})
      \quad
      \Rightarrow\quad
      \lip_\sfd \phi\le \lip_{\sfd_i}\phi\le g. \qedhere
    \end{displaymath}
  \end{proof}
\subsection{Notes}
\label{subsec:notes4}
\begin{notes}
  \Para{\ref{subsec:length}} is standard,
  see e.g.~\cite{Burago-Burago-Ivanov01}

  \Para{\ref{subsec:Finsler}} will play a crucial role
  in the proof of the identification
  Theorem for metric Sobolev spaces of Section
  \ref{sec:Identification}.
  One of the main point here is that even in
  standard metric spaces the length-conformal construction
  may easily lead to extended distances.
  Theorem \ref{thm:Finsler} shows that
  at least in the compact case
  we can recover the length-conformal distances
  by inner approximation with $\tau$-continuous
  Lipschitz functions.
  Such kind of constructions and dual representations
  by local Lipschitz bounds are typical in the study of local
  properties of Dirichlet forms, see
  e.g.~\cite{Biroli-Mosco95,Sturm95,Stollmann10}.

  \Para{\ref{subsec:Kantorovich-duality}} contains
  the natural extension to the Kantorovich distance of the
  dual characterization $\sfd_g=\sfd_g'''$; it will play a crucial
  role
  in \S\,\ref{subsec:H=W}. Notice that
  if $\sfd_g$ is continuous Proposition \ref{prop:eventually}
  could be proved by a more direct argument based on the identity
  $\sfd_g=\sfd_g'''$ and on the classic representation \eqref{eq:493}
  for $\sfd_g$.
\end{notes}

\GGG
\part{The Cheeger energy}
In all this part we will always
refer to this basic setting:
\begin{Assumption}
  \label{ass:mainII}
  Let $\X=(X,\tau,\sfd,\mm)$ be an extended 
  metric-topological measure space
  as in \S\,\ref{subsec:extended-def}
  and let
  $\AA\subset \Lipb(X,\tau,\sfd)$ be a compatible algebra of functions,
  according to Definition \ref{def:A-compatibility}.
  For $f\in \Lip_b(X,\tau,\sfd)$
  $\lip f$ will always refer to the asymptotic Lipschitz constant $\lip_\sfd f$
  defined in \S\,\ref{subsec:aslip}.
  We fix an exponent $p\in (1,\infty)$
\end{Assumption}

\section{The strongest form of the Cheeger energy}
\label{sec:Cheeger}
\index{Cheeger energy (strong)}
Let us first define the notion of Cheeger energy $\CE_{p,\AA}$ associated to $(\X,\AA)$.
\begin{definition}[Cheeger energy]
  \label{def:Cheeger}
  For every $\kkappa\ge0$ and $p\in (1,\infty)$ we define the ``pre-Cheeger'' energy functionals 
\begin{equation}
  \label{eq:15}
  \pCE_{p}(f):=\int_X
  \big(\lipd f(x)\big)^p
  \,\d\mm\quad \text{for every }
  f\in \Lip_b(X,\tau,\sfd).
\end{equation}
The $L^p$-lower semicontinuous envelope
of the restriction to $\AA$ of $\pCE_{p,\kkappa} $
is the ``strong'' Cheeger energy
\begin{equation}
    \label{eq:59}
    \CE_{p,\sAA}(f):=
    \inf \Big\{\liminf_{n\to \infty}\int_X \big(\lipd
    f_n\big)^p\,\d\mm:
    f_n\in \AA,\ 
    f_n\to f\text{ in }L^p(X,\mm)\Big\}.
  \end{equation}
  When $\AA=\Lip_b(X,\tau,\sfd)$ we will
  simply write $\CE_p(f)$.
  %
\end{definition}
\begin{remark}[The notation $\CE$]
  \upshape 
  We used the symbol $\CE$ instead of 
  $\Ch$ (introduced by \cite{AGS14I})
  in the previous definition
  to stress three differences:
  \begin{itemize}
  \item the dependence on the strongest $\lip_d f$ instead of
    $|\rmD f|$,
  \item the restriction to functions in the algebra $\AA\subset \Lip_b(X,\tau,\sfd)$,
  \item 
    the factor $1$ instead of $1/p$ in front of the energy integral.
  \end{itemize}
\end{remark}
\noindent
It is not difficult to check that $\CE_p:L^p(X,\mm)\to [0,+\infty]$ is a convex, lower
semicontinuous and $p$-homogeneous functional;
it is the greatest $L^p$-lower semicontinuous functional ``dominated''
by $\pCE_{p}$
(extended to $+\infty$ whenever a function does not belong to $\AA$).
\index{Metric Sobolev space $\Sob^{1,p}(\X,\AA)$}
\begin{definition}
  \label{def:SobolevH}
  We denote by $\Sob^{1,p}(\X,\AA)$ the subset of $L^p(X,\mm)$ whose
  elements $f$ have finite Cheeger energy $\CE_{p,\sAA}(f)<\infty$:
  it is a Banach space with norm
  \begin{equation}
    \label{eq:60}
    \|f\|_{\Sob^{1,p}(\X,\AA)}:=\Big(\CE_{p,\sAA}(f)+\|f\|_{L^p(X,\mm)}^p\Big)^{1/p}.
  \end{equation}
  When $\AA=\Lipb(X,\tau,\sfd)$ we will simply write $\Sob^{1,p}(\X)$.
\end{definition}

\begin{remark}[$\Sob^{1,p}(\X,\AA)$ as Gagliardo completion \cite{Gagliardo60}]
  \upshape
  Recall that if $(A,\|\cdot\|_A)$ is a normed vector space
  continuously imbedded in a Banach space $(B,\|\cdot\|_B)$, 
  the \emph{Gagliardo completion} $A^{B,c}$ is the Banach
  space defined by
  \begin{equation}
    \label{eq:66}
    A^{B,c}:=\Big\{b\in B:\exists (a_n)_n\subset A, \
    \lim_{n\to\infty}\|a_n-b\|_B=0,\ \sup_n \|a_n\|_A<\infty\Big\}
  \end{equation}
  with norm
  \begin{equation}
    \label{eq:67}
    \|b\|_{A^{B,c}}:=\inf\Big\{\liminf_{n\to\infty} \|a_n\|_A:a_n\in
    A,\ \lim_{n\to\infty}\|a_n-b\|_B=0\Big\}.
  \end{equation}
  When $\supp(\mm)=X$,
  we can identify $\AA$ with a vector space $A$ with the norm induced by $\pCE_{p}$
  imbedded in $B:=L^p(X,\mm)$; it is immediate to check that 
  $\Sob^{1,p}(\X,\AA)$ coincides with the
  Gagliardo completion of $A$ in $B$.
\end{remark}
Notice that when $\mm$ has not full support, two different elements $f_1,f_2\in
\AA$ 
may give rise to the same equivalence class in $L^p(X,\mm)$. In this
case, $\CE_p$ can be equivalently defined starting
from the functional
\begin{equation}
  \label{eq:331}
  \widetilde{\pCE}_{p}(f):=
  \inf\Big\{\pCE_p(\tilde f):\tilde f\in \AA,\ \tilde f=f \text{ $\mm$-a.e.}\Big\},
\end{equation}
defined on the quotient space
\begin{equation}
  \label{eq:345}
  \tilde\AA:=\AA/\sim_{\mm},\quad
  f_1\sim_\mm f_2\text{ if }f_1=f_2\text{ $\mm$-a.e.}
\end{equation}
\subsection{Relaxed gradients and local representation of the Cheeger
  energy}
\label{subsec:relaxed}
The Cheeger energy $\CE_{p,\sAA}$ admits an integral representation in terms of the minimal
relaxed gradient $|\rmD f|_{\star,\sAA}$: we collect here a series of useful
results, which mainly follow 
by properties (\ref{eq:57}--e) of Lemma \ref{le:Locality}
arguing as in \cite[Lemma 4.3, 4.4, Prop.~4.8]{AGS14I}.
Here we have also to take into account the role of the algebra $\AA$.
\index{Relaxed gradients}
\index{Minimal relaxed gradients}
\begin{definition}[Relaxed gradients]\label{def:genuppergrad} 
We say that $G\in L^p(X,\mm)$ is a
$(p,\AA)$-relaxed gradient of $f\in L^p(X,\mm)$ if there exist
functions $f_n\in \AA$ such that:
\begin{itemize}
\item[(a)] $f_n\to f$ in $L^p(X,\mm)$ and $\lip f_n$
weakly converge to $\tilde{G}$ in $L^p(X,\mm)$;
\item[(b)] $\tilde{G}\leq G$ $\mm$-a.e. in $X$.
\end{itemize}
We say that $G$ is the minimal $(p,\AA)$-relaxed gradient of $f$ if its
$L^p(X,\mm)$ norm is minimal among relaxed gradients. We shall denote
by $\relgradA f$ the minimal relaxed gradient.
As usual, we omit the explicit dependence on $\AA$ when
$\AA=\Lip_b(X,\tau,\sfd)$.
\end{definition}
\noindent
Thanks to \eqref{eq:57} and the reflexivity of $L^p(X,\mm)$ 
one can easily check that 
\begin{equation}
  \label{eq:332}
  S:=\Big\{(f,G)\in L^p(X,\mm)\times L^p(X,\mm):\text{ $G$ is a $(p,\AA)$-relaxed gradient of $f$}\Big\}
\end{equation}
is convex.
Its closure follows by the following lemma, which also shows
that it is possible to obtain the minimal relaxed gradient as
\emph{strong} limit in $L^p$.
\begin{lemma}[Closure and strong approximation of the minimal
  relaxed gradient]\label{lem:strongchee}
  \
  \begin{enumerate}
  \item
    If $(f,G)\in S$ then there exist functions
    $f_n\in \AA$, $G_n\in \Lip_b(X,\tau,\sfd)$
    ($G_n\in \AA$ if $\AA$ is adapted)
    strongly converging to $f,\tilde G$ in $L^p(X,\mm)$ 
    with $\lip  f_n\le G_n$ and $\tilde G\le G$.
    \item
    $S$ is weakly closed in $L^p(X,\mm)\times L^p(X,\mm)$.
  \item
    The collection of all the relaxed gradients of $f$ is closed in
    $L^p(X,\mm)$; if it is not empty, it contains a unique element of minimal norm
    and there exist functions
    $f_n\in \AA$, $G_n\in \Lip_b(X,\tau,\sfd)$
    ($G_n\in \AA$ if $\AA$ is adapted)
    such that $G_n\ge \lip f_n$ and
    \begin{equation}
      \label{eq:16bis}
      f_n\to f,\quad
      G_n\to \relgradA f,\quad
      \lip f_n\to \relgradA f\quad\text{strongly in }L^p(X,\mm).
    \end{equation}
  \end{enumerate}
\end{lemma}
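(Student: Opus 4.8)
The plan is to establish the three claims of Lemma \ref{lem:strongchee} in order, using a diagonal argument built on Mazur's lemma together with the locality and calculus properties from Lemma \ref{le:Locality}, following the strategy of \cite[Lemma 4.3]{AGS14I} but keeping careful track of the role of the algebra $\AA$.

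First I would prove (a). Start from $(f,G)\in S$, so there are $f_n\in\AA$ with $f_n\to f$ in $L^p(X,\mm)$ and $\lip f_n\rightharpoonup \tilde G\le G$ weakly in $L^p(X,\mm)$. By Mazur's lemma there is a sequence of convex combinations $h_k=\sum_{j=N_k}^{M_k}\alpha_{k,j}f_j$ (with $\alpha_{k,j}\ge0$, $\sum_j\alpha_{k,j}=1$) such that $h_k\to f$ strongly in $L^p$ and, simultaneously, $\sum_j\alpha_{k,j}\lip f_j\to\tilde G$ strongly in $L^p$. By \eqref{eq:57} (subadditivity and positive $1$-homogeneity of $\lipd$ along convex combinations) we have $\lip h_k\le\sum_j\alpha_{k,j}\lip f_j=:G_k$; note $h_k\in\AA$ since $\AA$ is a vector space, and $G_k\in\Lip_b(X,\tau,\sfd)$ because $\lip f_j\in\Lip_b$ (each $\lip f_j$ is bounded, $\tau$-upper semicontinuous — here one should be slightly careful and instead observe $G_k$ is a finite nonnegative combination of $\lip$-functions of elements of $\AA$, hence lies in $\Lip_b(X,\tau,\sfd)$ by the calculus of \S\ref{subsec:aslip}; if $\AA$ is adapted then by Lemma \ref{le:density} one can further approximate each $G_k$ from just above in $L^p$ by an element of $\AA$ while preserving $\lip h_k\le G_k$, absorbing the error into a diagonal subsequence). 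Relabel $(h_k,G_k)$ as $(f_n,G_n)$ to conclude (a).

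Next, (b): closedness of $S$. Since $S$ is convex (established in \eqref{eq:332} via \eqref{eq:57}), it suffices to show it is strongly closed, and then weak closedness follows from Mazur. So take $(f^{(m)},G^{(m)})\in S$ with $f^{(m)}\to f$, $G^{(m)}\to G$ strongly in $L^p$. For each $m$, part (a) gives $f^{(m)}_n\in\AA$ and $G^{(m)}_n$ with $\lip f^{(m)}_n\le G^{(m)}_n$ and $f^{(m)}_n\to f^{(m)}$, $G^{(m)}_n\to\tilde G^{(m)}\le G^{(m)}$ strongly in $L^p$ as $n\to\infty$; choosing for each $m$ an index $n(m)$ with $\|f^{(m)}_{n(m)}-f^{(m)}\|_p\le1/m$ and $\|G^{(m)}_{n(m)}-\tilde G^{(m)}\|_p\le1/m$, the sequence $g_m:=f^{(m)}_{n(m)}\in\AA$ satisfies $g_m\to f$ strongly in $L^p$, while $\lip g_m\le G^{(m)}_{n(m)}$, which converges strongly in $L^p$ to a limit $\le G$; passing to a subsequence along which $\lip g_m$ converges weakly (the bound on $\|\lip g_m\|_p$ coming from the $L^p$-convergent majorants) shows $G$ is a relaxed gradient of $f$, i.e.\ $(f,G)\in S$.

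Finally, (c). The set of relaxed gradients of a fixed $f$ is the slice $\{G:(f,G)\in S\}$, which is convex by \eqref{eq:332} and closed by (b); since $L^p(X,\mm)$ is uniformly convex ($1<p<\infty$), a nonempty closed convex set has a unique element of minimal norm, giving $\relgradA f$. For the strong approximation \eqref{eq:16bis}: apply (a) with $G=\relgradA f$ to get $f_n\in\AA$, $G_n$ with $\lip f_n\le G_n$, $f_n\to f$ and $G_n\to\tilde G$ strongly in $L^p$ where $\tilde G\le\relgradA f$; but $\tilde G$ is itself a relaxed gradient of $f$ (it is the weak/strong limit of $\lip f_n$), so by minimality $\|\tilde G\|_p\ge\|\relgradA f\|_p\ge\|\tilde G\|_p$, forcing $\tilde G=\relgradA f$ $\mm$-a.e.; then $\lip f_n\le G_n$ together with $G_n\to\relgradA f$ and $\liminf\|\lip f_n\|_p\ge\|\relgradA f\|_p$ (lower semicontinuity of the norm under the weak limit) and $\limsup\|\lip f_n\|_p\le\lim\|G_n\|_p=\|\relgradA f\|_p$ give $\|\lip f_n\|_p\to\|\relgradA f\|_p$; combined with $\lip f_n\rightharpoonup\relgradA f$ and uniform convexity this upgrades to $\lip f_n\to\relgradA f$ strongly in $L^p$, which is \eqref{eq:16bis}.

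The main obstacle I anticipate is the bookkeeping in the adapted-algebra refinement: ensuring that when $\AA$ is adapted one can take the majorants $G_n$ inside $\AA$ without destroying the inequality $\lip f_n\le G_n$. The clean way is to first produce $G_n\in\Lip_b(X,\tau,\sfd)$ as above, then use density of $\AA$ in $L^p$ (Lemma \ref{le:density}) together with the fact that for an \emph{adapted} $\AA$ one can, via Corollary \ref{cor:adapted-is-better}, approximate the upper semicontinuous function $G_n$ from above by elements of $\AA$ in $L^p$ — the ``from above'' is exactly what preserves $\lip f_n\le G_n\le(\text{new majorant})$ — and then absorb the resulting double sequence into a single diagonal one. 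Everything else is a routine combination of Mazur's lemma, uniform convexity, and the calculus inequalities of Lemma \ref{le:Locality}.
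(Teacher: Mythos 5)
Your overall route is the paper's route (Mazur combinations, a diagonal argument for closedness, and minimality plus Radon--Riesz for the strong convergence in (c)), and parts (b) and (c) are essentially sound as written. But there is a genuine gap in part (a), precisely at the point where the statement has content beyond the routine Mazur argument: you claim that the majorants $G_k=\sum_j\alpha_{k,j}\lip f_j$ belong to $\Lip_b(X,\tau,\sfd)$ "by the calculus of the asymptotic Lipschitz constant". This is false: for $f\in\Lip_b(X,\tau,\sfd)$ the function $\lipd f$ is only bounded, nonnegative and $\tau$-upper semicontinuous, not continuous and not $\sfd$-Lipschitz (already on $\R$, $f(x)=x_+$ gives $\lipd f=\nchi_{[0,\infty)}$), so neither $\lip f_j$ nor its convex combinations lie in $\Lip_b(X,\tau,\sfd)$, let alone in $\AA$. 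Your fallback for the adapted case via Lemma \ref{le:density} does not repair this either, because that lemma gives two-sided $L^p$-density and no one-sided control, whereas the whole point is to dominate from above so that $\lip f_n\le G_n$ survives.

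The correct fix --- which the paper uses, and which you only gesture at for the adapted case in your closing paragraph --- is Corollary \ref{cor:adapted-is-better}: since each $\lip h_i$ is bounded, nonnegative and u.s.c., and since $\Lip_b(X,\tau,\sfd)$ is itself an adapted algebra (it generates $\tau$ by (X1)), one can pick $g_i\in\Lip_b(X,\tau,\sfd)$ (and $g_i\in\AA$ when $\AA$ is adapted) with $g_i\ge\lip h_i$ and $\|g_i-\lip h_i\|_{L^p(X,\mm)}\le 2^{-i}$, so that the $g_i$ still converge weakly to $\tilde G$. One then applies Mazur simultaneously to the pairs $(h_i,g_i)$: the convex combinations $f_n$ of the $h_i$ stay in $\AA$, the combinations $G_n$ of the $g_i$ stay in $\Lip_b(X,\tau,\sfd)$ (resp.\ $\AA$), converge strongly to $\tilde G$, and dominate $\lip f_n$ by \eqref{eq:57}. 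In other words, the domination-from-above step must be done with the right tool and, most cleanly, before the Mazur step; note also that this corollary is needed even in the general (non-adapted) case of the statement, applied to the canonical algebra, which your main argument does not provide. With that correction inserted, the rest of your proof of (a), and your arguments for (b) and (c), coincide with the paper's.
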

\begin{proof} 
  \textbf{(a)} Since $G$ is a relaxed gradient, we can find 
  functions $h_i\in \AA$ such that $h_i\to f$ in $L^p(X,\mm)$
  and $\lip h_i$ weakly converges to $\tilde G\le G$ in
  $L^p(X,\mm)$.
  Since $\lip h_i$ are bounded, nonnegative and upper semicontinuous,
  by Corollary \ref{cor:adapted-is-better} we can find
  functions $g_i\in \AA(\X)$ ($g_i\in \AA$ if $\AA$ is adapted)
  such that
  $g_i\ge \lip h_i$ and $\|g_i-\lip h_i\|_{L^p(X,\mm)}\le 2^{-i}$ so
  that $\tilde G$ is also the weak limit of $g_i$ in $L^p(X,\mm)$.
  By Mazur's lemma we can find a sequence of convex
  combinations $G_n$ of $g_i$ (thus belonging to $\AA$), starting from an index
  $i(n)\to\infty$, strongly convergent to $\tilde G$ in $L^p(X,\mm)$;
  the corresponding convex combinations of $h_i$, that we shall denote
  by $f_n$, still belong to $\AA$, converge in $L^p(X,\mm)$ to $f$ and $\lip f_n$
  is bounded from above by $G_n$, thanks to \eqref{eq:57}.
  \medskip

\noindent  \textbf{(b)} Let us prove now the weak closure in $L^p(X,\mm)\times
L^p(X,\mm)$ of $S$.
Since $S$ is convex, it is sufficient to prove that $S$ is strongly
closed. If $S\ni(f^i,G^i)\to (f,G)$ strongly in $L^p(X,\mm)\times
L^p(X,\mm)$, we can find sequences of functions
$(f^i_n)_n\in \AA$ and of nonnegative functions $(G^i_n)_n\in
L^p(X,\mm)$ such that
$$
  f^i_n\stackrel{n\to\infty}\longrightarrow f^i, \quad
  G^i_n \stackrel{n\to\infty}\longrightarrow  \tilde G^i
  \text{ strongly in $L^p(X,\mm)$,}
  \quad
  \lip f^i_n\le G^i_n,\quad
  \tilde G^i\le G^i.
$$
Possibly extracting a suitable subsequence, we can assume that $\tilde
G^i\weakto \tilde G$ weakly in $L^p(X,\mm)$ with $\tilde G\le G$;
by a standard diagonal argument 
we can find an increasing sequence $i\mapsto n(i)$ such that
$f^i_{n(i)}\to f$, $G^i_{n(i)}\weakto \tilde G$ in $L^p(X,\mm)$ and $\lip  f^i_{n(i)}$ is bounded in
$L^p(X,\mm)$.
By the reflexivity of $L^p(X,\mm)$ we can also assume, possibly extracting a further subsequence,
that $\lip  f^i_{n(i)}\weakto H$.
It follows that $H\le \tilde G\le G$ so that $G$ is a relaxed gradient
for $f$.
\medskip

\noindent
\textbf{(c)} The closure of the collection of the relaxed gradients of $f$
follows by the previous claim. Since the $L^p$-norm is strictly
convex, if it is not empty it contains a unique element of minimal
norm.

Let us consider now the minimal relaxed gradient $G:=\relgradA f$
and let $f_n$, $G_n$ be sequences in $L^p(X,\mm)$ as in the first
part of the present Lemma. Since $\lip f_n$ is uniformly bounded
in $L^p(X,\mm)$ it is not restrictive to assume that it is weakly
convergent to some limit $H\in L^p(X,\mm)$ with $0\le H\le \tilde
G\le G$. This implies at once that  $H=\tilde G=G$ and $\lip f_n$ 
weakly converges to $\relgradA f$ (because any limit point in
the weak topology of $\lip f_n$ is a relaxed gradient with minimal
norm) and that the convergence is strong, since
\begin{displaymath}
  \limsup_{n\to\infty}\int_X (\lip  f_n)^p\,\d\mm\leq
  \limsup_{n\to\infty}\int_X G_n^p\,\d\mm=\int_X G^p\,\d\mm=
  \int_X H^p\,\d\mm. \qedhere
\end{displaymath}
\end{proof}
\index{Cheeger energy (strong)}
\begin{corollary}[Representation of the Cheeger energy]
  A function $f\in L^p(X,\mm)$ belongs to $\Sob^{1,p}(\X,\AA)$ if and only
  if
  it admits a $p$-relaxed gradients. In this case
  \begin{equation}
    \label{eq:333}
    \CE_{p,\sAA}(f)=\int_X \relgradA f^p\,\d\mm.
  \end{equation}
\end{corollary}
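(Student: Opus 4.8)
The statement splits into two inclusions, and the equality of energies will come out of the two inequalities one proves along the way. The whole argument is a routine packaging of Lemma \ref{lem:strongchee} together with reflexivity of $L^p(X,\mm)$ and weak lower semicontinuity of the $L^p$-norm; I do not expect a genuine obstacle.

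First I would show: if $f$ admits a $(p,\AA)$-relaxed gradient, then $f\in \Sob^{1,p}(\X,\AA)$ and $\CE_{p,\sAA}(f)\le \int_X \relgradA f^p\,\d\mm$. Indeed, the set of relaxed gradients of $f$ is then nonempty, so by Lemma \ref{lem:strongchee}(c) it contains the minimal element $\relgradA f$ and there are $f_n\in\AA$ with $f_n\to f$ in $L^p(X,\mm)$ and $\lip f_n\to \relgradA f$ strongly in $L^p(X,\mm)$. Plugging this particular sequence into the definition \eqref{eq:59} of $\CE_{p,\sAA}$ gives
\begin{equation*}
  \CE_{p,\sAA}(f)\le \liminf_{n\to\infty}\int_X (\lip f_n)^p\,\d\mm=\int_X \relgradA f^p\,\d\mm<\infty,
\end{equation*}
so $f\in\Sob^{1,p}(\X,\AA)$ by Definition \ref{def:SobolevH}.

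Next I would prove the converse and the reverse energy inequality simultaneously. Suppose $f\in L^p(X,\mm)$ has $\CE_{p,\sAA}(f)<\infty$. By definition of the relaxed energy there is a sequence $f_n\in\AA$ with $f_n\to f$ in $L^p(X,\mm)$ and $\liminf_n \int_X (\lip f_n)^p\,\d\mm=\CE_{p,\sAA}(f)$. Passing to a subsequence I may assume the $\liminf$ is a genuine limit, so that $\lip f_n$ is bounded in $L^p(X,\mm)$; by reflexivity, a further subsequence gives $\lip f_n\weakto G$ weakly in $L^p(X,\mm)$ for some nonnegative $G\in L^p(X,\mm)$. Then $G$ is by Definition \ref{def:genuppergrad} a $(p,\AA)$-relaxed gradient of $f$, so $f$ admits one; in particular $\relgradA f$ is well defined. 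By weak lower semicontinuity of $\|\cdot\|_{L^p(X,\mm)}$ and minimality of $\relgradA f$,
\begin{equation*}
  \int_X \relgradA f^p\,\d\mm\le \int_X G^p\,\d\mm\le \liminf_{n\to\infty}\int_X (\lip f_n)^p\,\d\mm=\CE_{p,\sAA}(f).
\end{equation*}
Combining this with the inequality from the first step yields $\CE_{p,\sAA}(f)=\int_X \relgradA f^p\,\d\mm$. Finally, the contrapositive of this converse step shows that if $f$ admits no $(p,\AA)$-relaxed gradient then $\CE_{p,\sAA}(f)=+\infty$, i.e. $f\notin\Sob^{1,p}(\X,\AA)$, completing the equivalence. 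The only point requiring a little care is the passage to subsequences to turn the $\liminf$ defining $\CE_{p,\sAA}$ into an honest weak limit of $\lip f_n$; once that is done, Lemma \ref{lem:strongchee} does all the work.
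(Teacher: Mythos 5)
Your proof is correct and is exactly the argument the paper intends: the inequality $\CE_{p,\sAA}(f)\le\int_X\relgradA f^p\,\d\mm$ from the strong approximation of Lemma \ref{lem:strongchee}(c), and the converse by extracting a weak $L^p$-limit of $\lip f_n$ from an (almost) optimal sequence in \eqref{eq:59}, which is a relaxed gradient, followed by minimality and weak lower semicontinuity of the norm. The only cosmetic point is the claim that the infimum in \eqref{eq:59} is attained by a single sequence; this holds by a standard diagonal argument in the metric space $L^p(X,\mm)$ (or one replaces it by an $\eps$-optimal sequence and lets $\eps\downarrow0$), so there is no gap.
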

\begin{remark}[Dependence of $\relgrad f$ with respect to $p$]
\upshape
Notice that $\relgrad f$ may depend on $p$, even for Lipschitz
functions, see e.g.~\cite{AGS13}.
Since in these notes we will keep the exponent $p$ fixed,
we will omit to denote this dependence in the notation for $\relgrad f$.
\end{remark}
We want to show now that if $f\in \Sob^{1,p}(\X,\AA)$ 
satisfies the uniform bound $a\le f\le b$ $\mm$-a.e.~in $X$, then
there exists a sequence $f_n\in\AA$ satisfying \eqref{eq:16bis} and 
the same uniform bounds of $f$. This result is trivial if $\AA$ 
is the algebra of bounded Lipschitz functions, since 
truncations operates on $\AA$. In the general case we use the
approximated truncation polynomials of Corollary \ref{cor:trunc1}
.
\begin{corollary}
  \label{cor:poly-truncation}
  Let $f\in \Sob^{1,p}(\X,\AA)$ 
  be satisfying the uniform bounds $\alpha\le f\le \beta$ $\mm$-a.e.~in
  $X$.
  Then there exists a sequence $(f_n)\subset \AA$ satisfying
  \eqref{eq:16bis} such that $\alpha\le f_n\le
  \beta$ in $X$ for every $n\in \N$.
\end{corollary}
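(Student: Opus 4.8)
The plan is to take the strongly approximating sequences furnished by Lemma \ref{lem:strongchee}(c) and post-compose them with the polynomial truncations of Corollary \ref{cor:trunc1}, which preserve membership in the algebra $\AA$ and, by the chain rule \eqref{eq:309}, do not increase the asymptotic Lipschitz constant. Concretely, I would start from sequences $g_n\in \AA$ and $G_n\in \Lip_b(X,\tau,\sfd)$ as in Lemma \ref{lem:strongchee}(c) (playing the role of its $f_n$), so that $G_n\ge \lip g_n$ and
\begin{equation*}
  g_n\to f,\qquad G_n\to \relgradA f,\qquad \lip g_n\to \relgradA f\quad\text{strongly in }L^p(X,\mm).
\end{equation*}
Set $\gamma:=|\alpha|\lor|\beta|$ and $c_n:=\gamma\lor\sup_X|g_n|<\infty$, fix a sequence $\eps_n\downarrow 0$, and let $P_n:=P_{\eps_n}^{c_n,\alpha,\beta}$ be the polynomials provided by Corollary \ref{cor:trunc1}, so that $|P_n(r)-\alpha\lor r\land\beta|\le\eps_n$, $\alpha\le P_n(r)\le\beta$ and $0\le P_n'(r)\le 1$ for every $r\in[-c_n,c_n]$. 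I would then define $f_n:=P_n\circ g_n$. Since $\AA$ is a unital algebra it is closed under composition with polynomials, hence $f_n\in \AA$; and since $g_n(X)\subseteq[-c_n,c_n]$, property \eqref{eq:356} gives $\alpha\le f_n\le\beta$ everywhere on $X$.

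The two straightforward verifications are the $L^p$-convergence of $f_n$ and the domination of $\lip f_n$. For the former, $P_n$ is $1$-Lipschitz on $[-c_n,c_n]$ and $f$ takes values in $[\alpha,\beta]\subseteq[-c_n,c_n]$ $\mm$-a.e., so $\mm$-a.e.
\begin{equation*}
  |f_n-f|\le |P_n(g_n)-P_n(f)|+|P_n(f)-f|\le |g_n-f|+\eps_n,
\end{equation*}
whence $\|f_n-f\|_{L^p(X,\mm)}\le\|g_n-f\|_{L^p(X,\mm)}+\eps_n\,\mm(X)^{1/p}\to 0$. For the latter, the chain rule \eqref{eq:309} applied to the $C^1$ map $P_n$, using again $g_n(X)\subseteq[-c_n,c_n]$ and $0\le P_n'\le 1$ there, yields $\lip f_n=|P_n'\circ g_n|\,\lip g_n\le \lip g_n\le G_n$ on $X$. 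Thus each $G_n$ remains a dominating function for $\lip f_n$, and $G_n\to\relgradA f$ strongly; so the triple $(f_n,G_n)$ is a candidate realizing \eqref{eq:16bis}, and it only remains to upgrade $\lip f_n\to\relgradA f$ to strong convergence in $L^p(X,\mm)$.

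This last point is the only non-routine step. From $\lip f_n\le G_n$ the sequence $(\lip f_n)_n$ is bounded in $L^p(X,\mm)$; given an arbitrary subsequence, extract a further one along which $\lip f_n\rightharpoonup H$ weakly in $L^p(X,\mm)$. Since $\lip f_n\le \lip g_n$ and $\lip g_n\to\relgradA f$ strongly, we get $0\le H\le\relgradA f$; and since $f_n\to f$ in $L^p(X,\mm)$, Definition \ref{def:genuppergrad} shows that $H$ is a $(p,\AA)$-relaxed gradient of $f$, so minimality of $\relgradA f$ forces $H=\relgradA f$. Then $\|H\|_p\le\liminf_k\|\lip f_{n_k}\|_p\le\limsup_k\|\lip f_{n_k}\|_p\le\lim_k\|\lip g_{n_k}\|_p=\|\relgradA f\|_p=\|H\|_p$, so $\|\lip f_{n_k}\|_p\to\|H\|_p$; by the uniform convexity of $L^p(X,\mm)$ for $p\in(1,\infty)$, weak convergence together with convergence of the norms gives $\lip f_{n_k}\to\relgradA f$ strongly. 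As every subsequence admits a further subsequence converging strongly to the same limit, the whole sequence $\lip f_n$ converges strongly to $\relgradA f$. This establishes \eqref{eq:16bis} for $(f_n,G_n)$ with $\alpha\le f_n\le\beta$ on $X$, as required.
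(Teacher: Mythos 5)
Your proof is correct and follows essentially the same route as the paper: compose an optimal approximating sequence with the truncation polynomials $P_{\eps_n}^{c_n,\alpha,\beta}$ of Corollary \ref{cor:trunc1}, use the chain rule \eqref{eq:309} to get $\lip f_n\le \lip g_n$, and invoke minimality of $\relgradA f$ to upgrade to the strong convergence in \eqref{eq:16bis}. The only differences are cosmetic: you choose $c_n\ge|\alpha|\lor|\beta|$ so you can compare directly with $f$ (the paper instead introduces the auxiliary truncation $h_n$ of $f$), and you spell out the weak-compactness/uniform-convexity step that the paper leaves implicit.
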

\begin{proof}
  Let $(f_n)_{n\in \N}$ be a sequence in $\AA$ as in \eqref{eq:16bis}. Since 
  functions in $\AA$ are bounded, we can find a sequence $c_n>0$ such
  that $f_n(X)\subset [-c_n,c_n]$.
  Let us choose a vanishing sequence $\eps_n\downarrow0$ and
  consider the truncation polynomials
  $P_n=P_{\eps_n}^{c_n,\alpha,\beta}$
  of Corollary \ref{cor:trunc1}
  corresponding to $c:=c_n$ and satisfying \eqref{eq:356}.
  We can then define the functions $\tilde f_n:=P_n\circ f_n$
  taking values in $[\alpha,\beta]$ 
  and $h_n:=-c_n\lor f\land c_n$ taking values in $[\alpha,\beta]\cap [-c_n,c_n]$; since
  $|P_n(r)-P_n(s)|\le |r-s|$ for every $r,s\in [-c_n,c_n]$  we have
  as $n\to \infty$
  \begin{align*}
    \|\tilde f_n-f\|_{L^p}
    &\le 
    \|P_n\circ f_n-P_n\circ h_n\|_{L^p}+
      \|P_n\circ h_n-h_n\|_{L^p}+
      \|h_n-f\|_{L^p}
    \\&
    \le 
      \|f_n- h_n\|_{L^p}+\mm(X)^{1/p}\eps_n+
      \|h_n-f\|_{L^p}
      \\&\le \|f_n- f\|_{L^p}+\mm(X)^{1/p}\eps_n+
    2\|h_n-f\|_{L^p}\to0\quad\text{as }n\uparrow\infty.
  \end{align*}
  On the other hand \eqref{eq:309} yields
  $\lip \tilde f_n= |P_n'\circ f|\lip f_n\le \lip f_n$
  so that 
  \begin{displaymath}
    \limsup_{n\to\infty}\int_X |\lip {\tilde f_n}|^p\,\d\mm\le 
    \limsup_{n\to\infty}\int_X |\lip f_n|^p\,\d\mm=\int_X \relgradA f^p\,\d\mm.
  \end{displaymath}
  Since $\relgradA f$ is the minimal $(p,\AA)$-relaxed gradient, we also have
  \begin{displaymath}
    \liminf_{n\to\infty}\int_X |\lip {\tilde f_n}|^p\,\d\mm\ge 
    \int_X \relgradA f^p\,\d\mm,
  \end{displaymath}
  so that the sequence $\tilde f_n$ satisfies the properties stated by
  the Lemma.
\end{proof}
\begin{corollary}[Lebnitz rule]
  For every $f,g\in \Sob^{1,p}(\X,\AA)\cap L^\infty(X,\mm)$ we have
  $fg\in \Sob^{1,p}(\X,\AA)$ and
  \begin{equation}
    \label{eq:weak-leibn}
    \relgradA {(fg)}\le |f|\,\relgradA g+|g|\,\relgradA f.
  \end{equation}
\end{corollary}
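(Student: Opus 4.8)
The plan is to derive the Leibniz rule from the corresponding pointwise inequality \eqref{eq:306} for the asymptotic Lipschitz constant, combined with the strong approximation property of the minimal relaxed gradient in Lemma \ref{lem:strongchee}(c) and the truncation Corollary \ref{cor:poly-truncation}. First I would fix $f,g\in\Sob^{1,p}(\X,\AA)\cap L^\infty(X,\mm)$ and choose a constant $M>0$ with $|f|\le M$, $|g|\le M$ $\mm$-a.e. By Corollary \ref{cor:poly-truncation} I can pick sequences $(f_n),(g_n)\subset\AA$ with values in $[-M,M]$ such that $f_n\to f$, $g_n\to g$, $\lip f_n\to\relgradA f$ and $\lip g_n\to\relgradA g$ strongly in $L^p(X,\mm)$. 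Since $\AA$ is an algebra, $f_ng_n\in\AA$, and $f_ng_n\to fg$ strongly in $L^p(X,\mm)$ (using the uniform $L^\infty$ bounds: $\|f_ng_n-fg\|_p\le M\|f_n-f\|_p+M\|g_n-g\|_p$, after intermediate step $f_ng_n-fg=(f_n-f)g_n+f(g_n-g)$). Moreover $fg\in L^\infty(X,\mm)$, hence $fg\in L^p(X,\mm)$.

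Next I would estimate the asymptotic Lipschitz constants. By \eqref{eq:306},
\begin{equation*}
  \lip(f_ng_n)\le |f_n|\,\lip g_n+|g_n|\,\lip f_n\le M\big(\lip g_n+\lip f_n\big)\quad\text{in }X,
\end{equation*}
so $\big(\lip(f_ng_n)\big)_n$ is bounded in $L^p(X,\mm)$; consequently $fg\in\Sob^{1,p}(\X,\AA)$. The point requiring a little care is to pass to the limit so as to control $\relgradA{(fg)}$ by the right-hand side. Up to extracting a subsequence (not relabelled), $\lip(f_ng_n)\weakto H$ weakly in $L^p(X,\mm)$ for some $H\in L^p(X,\mm)$, and by Lemma \ref{lem:strongchee}(b) the pair $(fg,H)\in S$, so $H$ is a $(p,\AA)$-relaxed gradient of $fg$ and $\relgradA{(fg)}\le H$ $\mm$-a.e. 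On the other hand, since $\lip f_n\to\relgradA f$ and $\lip g_n\to\relgradA g$ \emph{strongly} in $L^p(X,\mm)$, the products $|f_n|\,\lip g_n$ and $|g_n|\,\lip f_n$ converge weakly in $L^p$ to functions bounded above by $|f|\,\relgradA g$ and $|g|\,\relgradA f$ respectively; more precisely one can argue along a further subsequence where $f_n\to f$ and $g_n\to g$ $\mm$-a.e., so that by a Fatou-type / Vitali argument (the factors $\lip f_n,\lip g_n$ are $L^p$-convergent and $|f_n|,|g_n|\le M$), the weak $L^p$ limit $H$ of $\lip(f_ng_n)$ satisfies $H\le |f|\,\relgradA g+|g|\,\relgradA f$ $\mm$-a.e. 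Combining the two bounds gives \eqref{eq:weak-leibn}.

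The main obstacle is the last passage to the limit: a weak limit of a product is not in general the product of the weak limits. The way around it is precisely the strong $L^p$-convergence of $\lip f_n$ and $\lip g_n$ guaranteed by Lemma \ref{lem:strongchee}(c) together with the uniform bound $|f_n|,|g_n|\le M$ from Corollary \ref{cor:poly-truncation}; this lets one upgrade to $\mm$-a.e.\ convergence of $f_n,g_n$ along a subsequence and apply dominated convergence to identify $\limsup_n |f_n|\lip g_n\le |f|\,\relgradA g$ in the $L^p$ sense, and similarly for the other term. (Alternatively, one may bypass weak compactness entirely: set $G_n:=M(\lip f_n+\lip g_n)\ge\lip(f_ng_n)$, note $G_n\to M(\relgradA f+\relgradA g)$ strongly in $L^p$, conclude $M(\relgradA f+\relgradA g)$ is a relaxed gradient of $fg$, and then refine the constant $M$ to $|f_n|,|g_n|$ using the a.e.\ convergence to obtain the sharp form.) Everything else is a routine application of the algebra property of $\AA$ and the $L^\infty$ bounds.
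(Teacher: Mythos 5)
Your proposal is correct and follows essentially the same route as the paper: approximate $f,g$ by uniformly bounded sequences in $\AA$ via Corollary \ref{cor:poly-truncation} and pass to the limit in the pointwise inequality \eqref{eq:306}. You merely spell out the limit passage (strong $L^p$ convergence of $\lip f_n,\lip g_n$ plus the uniform $L^\infty$ bounds and a.e.\ convergence along a subsequence) that the paper leaves implicit.
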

\begin{proof}
  It is sufficient to approximate $f,g$ by two uniformly bounded sequences 
  $f_n,g_n\in \AA$ 
  thanks to Corollary \ref{cor:poly-truncation}
  and then pass to the limit in \eqref{eq:306}.
\end{proof}
Let us now consider the locality property of the minimal $p$-relaxed
gradient, by adapting the proof of \cite{AGS14I} to the case of an
arbitrary
algebra $\AA$.
\begin{lemma}[Locality]\label{lem:locality}
  Let $G_1,\,G_2$ be $(p,\AA)$-relaxed gradients of $f$. Then $\min\{G_1,G_2\}$ and
$\nchi_BG_1+\nchi_{X\setminus B}G_2$,  $B\in\BorelSets{X}$, are
relaxed gradients of $f$ as well. In particular, for any
$(p,\AA)$-relaxed gradient
$G$ of $f$ it holds
\begin{equation}\label{eq:facileee}
  \relgradA  f\leq G\qquad\text{$\mm$-a.e. in $X$.}
\end{equation}
\end{lemma}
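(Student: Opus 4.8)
The plan is to follow the classical locality argument for relaxed gradients (as in \cite[Lemma~4.4]{AGS14I}), carefully adapting the cut-off functions to the algebra $\AA$. The first step is to reduce everything to one gluing claim: \emph{for every Borel set $B\subset X$ the function $\nchi_B G_1+\nchi_{X\setminus B}G_2$ is again a $(p,\AA)$-relaxed gradient of $f$.} This is exactly the second assertion of the Lemma for general $B$, and choosing $B:=\{x\in X:G_1(x)\le G_2(x)\}$ gives that $\min\{G_1,G_2\}=\nchi_B G_1+\nchi_{X\setminus B}G_2$ is a relaxed gradient. Then \eqref{eq:facileee} follows by comparing an arbitrary relaxed gradient $G$ with the minimal one $\relgradA f$ (which exists by Lemma~\ref{lem:strongchee}(c)): $\min\{\relgradA f,G\}$ is a relaxed gradient, its $L^p$-norm cannot be strictly smaller than that of $\relgradA f$, hence it must coincide with $\relgradA f$ $\mm$-a.e., i.e.\ $\relgradA f\le G$ $\mm$-a.e.

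For the gluing claim I would start from the strong approximations of Lemma~\ref{lem:strongchee}(a): sequences $f^1_n,f^2_n\in\AA$ with $f^i_n\to f$ and $\lip f^i_n\to\tilde G_i$ strongly in $L^p(X,\mm)$ for suitable $\tilde G_i\le G_i$. Using the inner and outer regularity \eqref{eq:214}, \eqref{eq:241} of $\mm$, for each $m$ I fix a $\tau$-compact $K_m$ and a $\tau$-open $V_m$ with $K_m\subset B\subset V_m$ and $\mm(V_m\setminus K_m)\le 1/m$, and then build a cut-off $\nchi_m\in\AA$, $0\le\nchi_m\le1$, which is close to $1$ on $K_m$ and close to $0$ off $V_m$, so that $\nchi_m\to\nchi_B$ in $L^p$. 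Here one uses that $\tau$ is the initial topology of $\Lip_b(X,\tau,\sfd)$ to produce a bounded $\sfd$-Lipschitz bump subordinated to $V_m$ and equal to $1$ on $K_m$ (a Urysohn-type argument on a finite subcover of $K_m$ by basic neighbourhoods \eqref{eq:245}), and then Stone--Weierstrass density on $K_m$ together with the truncation polynomials of Corollary~\ref{cor:trunc1} to land inside $\AA$ while keeping the asymptotic Lipschitz constant finite; let $L_m<\infty$ denote a bound for $\lip\nchi_m$. Setting $h_m:=f^2_{n(m)}+\nchi_m\bigl(f^1_{n(m)}-f^2_{n(m)}\bigr)\in\AA$, the key estimate \eqref{eq:307} of Lemma~\ref{le:Locality} gives
\begin{equation*}
  \lip h_m\ \le\ (1-\nchi_m)\,\lip f^2_{n(m)}+\nchi_m\,\lip f^1_{n(m)}
  \ +\ \lip\nchi_m\cdot\bigl|f^1_{n(m)}-f^2_{n(m)}\bigr|.
\end{equation*}

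The decisive point is the order of the choices: $\nchi_m$, hence $L_m$, is fixed \emph{before} $n(m)$, so one may pick $n(m)\uparrow\infty$ so large that $L_m\,\|f^1_{n(m)}-f^2_{n(m)}\|_p\le 1/m$, which makes the last term vanish in $L^p$ and also forces $h_m\to f$ in $L^p$. After passing to a subsequence with $\nchi_m\to\nchi_B$ $\mm$-a.e., dominated convergence (using $\lip f^i_{n(m)}\to\tilde G_i$ in $L^p$ and $0\le\nchi_m\le1$) shows that the first two terms converge in $L^p$ to $\nchi_B\tilde G_1+\nchi_{X\setminus B}\tilde G_2\le\nchi_B G_1+\nchi_{X\setminus B}G_2$. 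Thus $\lip h_m$ is bounded in $L^p$; any weak limit $H$ satisfies, by testing against nonnegative functions in $L^q$, $H\le\nchi_B G_1+\nchi_{X\setminus B}G_2$, so the latter is a $(p,\AA)$-relaxed gradient of $f$, completing the proof. I expect the main obstacle to be precisely the construction and bookkeeping of the cut-offs $\nchi_m$: for the canonical algebra $\AA=\Lip_b(X,\tau,\sfd)$ they are immediately available, but for a general compatible $\AA$ one must combine the Urysohn bump with Stone--Weierstrass and polynomial truncations without destroying the bounds $0\le\nchi_m\le1$, the finiteness of $\lip\nchi_m$, or the $L^p$-convergence $\nchi_m\to\nchi_B$.
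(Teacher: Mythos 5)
Your overall strategy is the right one and, except for one step, matches the paper's: reduce to the gluing claim for $\nchi_BG_1+\nchi_{X\setminus B}G_2$, build cut-offs in $\AA$, use \eqref{eq:307} with the cut-off fixed \emph{before} the index of the approximating sequences so that the error term $\lip\nchi_m\,|f^1_{n(m)}-f^2_{n(m)}|$ vanishes in $L^p$, pass to a weak limit, and finally obtain \eqref{eq:facileee} from minimality (your $\min\{\relgradA f,G\}$ argument is equivalent to the paper's contradiction argument). The gap is in the construction of the cut-offs $\nchi_m\in\AA$ for a \emph{general} compatible algebra. You require $\nchi_m$ to be close to $1$ on the compact $K_m\subset B$ \emph{and} close to $0$ off the open set $V_m\supset B$, and you propose to get this by approximating a Lipschitz Urysohn bump via Stone--Weierstrass on $K_m$ and then truncating with Corollary~\ref{cor:trunc1}. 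But Stone--Weierstrass only gives uniform closeness of the $\AA$-approximant to the bump \emph{on the compact set where you apply it}; it gives no control whatsoever on $X\setminus V_m$, which is closed but in general not compact. After truncation you only know $0\le\nchi_m\le1$ there, so $\nchi_m$ may sit near $1$ on a set of large measure outside $B$, and the convergence $\nchi_m\to\nchi_B$ in $L^p(X,\mm)$ — which your identification of the limit gradient hinges on — is not justified. (For the canonical algebra $\AA=\Lip_b(X,\tau,\sfd)$ there is no problem, since the Urysohn bump itself is the cut-off.)

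The repair is to exert the Stone--Weierstrass control on a compact set that meets \emph{both} $B$ and its complement, which is exactly how the paper proceeds: it first reduces, via inner regularity of $\mm$ and the $L^p$-closedness of the class of relaxed gradients, to the case where $X\setminus B$ is compact; then it picks compacts $B_{n,1}\subset B$ with $\mm(B\setminus B_{n,1})\to0$, applies Stone--Weierstrass on the compact union $K_n=B_{n,1}\cup(X\setminus B)$ to the continuous function equal to $1$ on $B_{n,1}$ and $0$ on $X\setminus B$, and truncates with the polynomials of Corollary~\ref{cor:trunc1} to stay in $[0,1]$. Outside $K_n$ the cut-off is uncontrolled, but that region has small measure, so after letting the approximation parameter and then $n$ go to their limits the cut-offs converge $\mm$-a.e.\ to $\nchi_B$, which is all the limiting argument needs. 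If you modify your construction to include a compact piece of $X\setminus B$ (of almost full measure in $X\setminus B$) in the set on which you invoke Stone--Weierstrass, and demand closeness to $0$ there rather than on all of $X\setminus V_m$, the rest of your proof goes through unchanged.
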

\begin{proof} 
It is sufficient to prove that if $B_i\in\BorelSets{X}$ with $B_1\cap
B_2=\emptyset$ and $B_1\cup B_2=X$ then
$\nchi_{B_1}G_1+\nchi_{B_2}G_2$ is a relaxed gradient of $f$.
If $A\in \BorelSets X$ given by Definition \ref{def:A-compatibility},
we can replace $B_2$ with $\tilde B_2:=
B_2\cap A$ (and $B_1$ by $\tilde B_1:=X\setminus B_2$) and
assume that $B_2\subset A$; moreover, by
approximation, taking into account the closure of the class of
relaxed gradients and the inner regularity of $\mm$, we can assume with no loss of generality that
$B_2$ is a compact set (and, in particular, $B_1$ is open).
We can also approximate $B_1$ by an increasing sequence of compact sets
$B_{n,1}\subset (B_1\cap A)$ such that $\mm(B_1\setminus B_{n,1})\to 0$.

Let us fix an integer $n$ and consider the compact set
$K_n:=B_{n,1}\cup B_2\subset A$;
since $\AA$ contains the constants and separates the points of $K_n$,
the restriction of $\AA$ to $K_n$ is uniformly dense in $\rmC(K_n)$ by
Stone-Weierstrass Theorem. Being $B_{n,1}$ and $B_2$ compact and
disjoint, the function
$$\nchi_n(x):=
\begin{cases}
  1&\text{if }x\in B_{n,1}\\
  0&\text{if }x\in B_2
\end{cases}
$$
belongs to $\rmC(K_n)$ 
%
so that for every $\eps>0$ we can find $\tilde\nchi_{n,\eps}\in \AA$
such that $\sup_{K_{n,1}}|\tilde\nchi_{n,\eps}-\nchi_n|\le
\eps/2$.
If we compose $\tilde\nchi_{n,\eps}$ with the truncation polynomial
$P=P_{\eps/2}^{c,0,1}$ of Corollary \ref{cor:trunc1}
corresponding 
$c:=1+\sup |\nchi_{n,\eps}|$, we obtain the function
$\nchi_{n,\eps}:=P\circ\tilde\nchi_{n,\eps}$  taking values in $[0,1]$
and satisfying
\begin{displaymath}
  \sup_{K_n}|\nchi_{n,\eps}-P\circ \nchi_n|\le \eps/2, \quad
  \sup_{K_n}|P\circ\nchi_n-\nchi_n|\le \eps/2
\end{displaymath}
since $0\lor \nchi_n\land 1=\nchi_n$ on $K_n$.
We deduce that 
\begin{equation}
  \label{eq:343}
  0\le \nchi_{n,\eps}\le 1,\quad
  0\le \nchi_{n,\eps}\le \eps\text{ on }B_2,\quad
  1-\eps\le \nchi_{n,\eps}\le 1\text{ on }B_{n,1}.
\end{equation}
Let now $h_{k,i}\in \AA$, $i=1,\,2$, 
functions converging to $f$ in $L^p$ as $k\to\infty$
with $\lip h_{k,i}$ weakly converging to $ \tilde G_i\le G_i$,
and set $f_{k,n,\eps}:=\nchi_{n,\eps}
h_{k,1}+(1-\nchi_{n,\eps})h_{k,2}\in \AA$.
Passing first to the limit as $k\up+\infty$, since $f_{k,n,\eps}\to f$,
\eqref{eq:307} immediately gives that $G_{n,\eps}:=\nchi_{n,\eps}
G_1+(1-\nchi_{n,\eps})G_2
 \ge \nchi_{n,\eps} \tilde G_1+(1-\nchi_{n,\eps})\tilde G_2$ is a
 relaxed gradient of $f$.
 
We can now select a vanishing sequence $(\eps_j)_{j\in \N}$ 
and we pass to the limit as $j\up+\infty$, obtaining (possibly extracting a further
subsequence) a limit function $\nchi_n$ taking values in $[0,1]$ 
such that 
$G_{n}:=\nchi_{n}
G_1+(1-\nchi_{n})G_2
 \ge \nchi_{n} \tilde G_1+(1-\nchi_{n})\tilde G_2$ is a
 relaxed gradient and $\nchi_n\restr{B_2}=0$,
 $\nchi_n\restr{B_{n,1}}=1$.
We can finally pass to the limit as $n\to\infty$, observing that 
$\nchi_n$ converges pointwise $\mm$-a.e.~to the characteristic
function of $B$.

For the second part of the statement we argue by contradiction: let
$G$ be a relaxed gradient of $f$ and assume that {there exists a Borel
set $B$ with $\mm(B)>0$ on which $G<\relgradA f$}. Consider the
relaxed gradient $G\nchi_B+\relgradA f\nchi_{X\setminus B}$: its $L^p$
norm is strictly less than the $L^p$ norm of $\relgradA f$, which is
a contradiction.
\end{proof}

\begin{theorem}
  \label{le:useful}
  For every $f,g\in \Sob^{1,p}(\X,\AA)$ we have
  \begin{enumerate}
   \item (Pointwise sublinearity) For 
      $|\rmD (\alpha f+\beta g)|_{\star,\sAA}\le \alpha |\rmD f|_{\star,\sAA}+\beta
      |\rmD g|_{\star,\sAA}$.
   \item (Locality) For any Borel set $N\subset \R$ with $\Leb 1(N)=0$ we have
      \begin{equation}
        \label{eq:62}
        |\rmD f|_{\star,\sAA}=0\quad\text{$\mm$-a.e.~on }f^{-1}(N).
      \end{equation}
    In particular for every constant $c\in \R$
    \begin{equation}
    \text{$|\rmD f|_{\star,\sAA}=|\rmD g|_{\star,\sAA}$ $\mm$-a.e.~on
    $\{f-g=c\}$}.
  \label{eq:63}
  \end{equation}
\item (Chain rule) If $\phi\in \Lip(\R)$ then 
  $\phi\circ f\in \Sob^{1,p}(\X,\AA)$ with
  \begin{equation}
    \label{eq:64}
    |\rmD (\phi\circ f)|_{\star,\sAA}\le |\phi'(f)|\,|\rmD f|_{\star,\sAA}.
  \end{equation}
  Equality holds in \eqref{eq:64} if $\phi$ is monotone or $\rmC^1$.
\item (Normal contractions) If $\phi:\R\to \R$ is a nondecreasing contraction 
  and $\tilde f=f+\phi(g-f)$, $\tilde g=g+\phi(f-g)$ then
  \begin{equation}
    \label{eq:65}
    |\rmD \tilde f|_{\star,\sAA}^p+
    |\rmD \tilde g|_{\star,\sAA}^p\le 
    |\rmD  f|_{\star,\sAA}^p+
    |\rmD g|_{\star,\sAA}^p.
  \end{equation}
\end{enumerate}
\end{theorem}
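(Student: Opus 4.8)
The plan follows the scheme of \cite[Lemma~4.3, 4.4, Prop.~4.8]{AGS14I}, combining the calculus rules \eqref{eq:57}, \eqref{eq:309} and \eqref{eq:308} for $\lip$ with the strong approximation of $\relgradA f$ from Lemma \ref{lem:strongchee}(c) and the polynomial approximations of \S\,\ref{subsec:compatible-A}, which keep the approximants inside $\AA$. The recurring device is that whenever $f_n\in\AA$, $f_n\to h$ in $L^p(X,\mm)$ and $\lip f_n\weakto G$ in $L^p(X,\mm)$, then $G$ is a $(p,\AA)$-relaxed gradient of $h$ and so $\relgradA h\le G$ $\mm$-a.e.\ by Lemma \ref{lem:locality}; this, together with the weak closure of the set $S$ in \eqref{eq:332}, already gives Claim (a): choosing strong approximations $f_n,g_n\in\AA$ with $\lip f_n\to\relgradA f$, $\lip g_n\to\relgradA g$ in $L^p(X,\mm)$, the functions $\alpha f_n+\beta g_n\in\AA$ converge to $\alpha f+\beta g$ while by \eqref{eq:57} $\lip(\alpha f_n+\beta g_n)\le|\alpha|\lip f_n+|\beta|\lip g_n$ converges strongly in $L^p(X,\mm)$ to $|\alpha|\,\relgradA f+|\beta|\,\relgradA g$, which is therefore a relaxed gradient of $\alpha f+\beta g$.

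The key intermediate step is the chain rule for $\phi\in\rmC^1(\R)$ with bounded derivative. For the bound $\relgradA(\phi\circ f)\le|\phi'(f)|\,\relgradA f$ I would take $f_n\in\AA$ as above with $f_n(X)\subset[-c_n,c_n]$, produce by Lemma \ref{le:polynomial} polynomials $P_n$ with $\sup_{[-c_n,c_n]}(|P_n-\phi|+|P_n'-\phi'|)\le 1/n$ and $|P_n'|\le\sup_\R|\phi'|$ on $[-c_n,c_n]$, and note that $P_n\circ f_n\in\AA$ converges to $\phi\circ f$ in $L^p(X,\mm)$ (using $\mm(X)<\infty$ and $\phi$ Lipschitz), while \eqref{eq:309} gives $\lip(P_n\circ f_n)=|P_n'\circ f_n|\,\lip f_n\to|\phi'(f)|\,\relgradA f$ strongly in $L^p(X,\mm)$ along a subsequence for which $f_n\to f$ $\mm$-a.e. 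The reverse inequality uses the usual perturbation trick: for $\eps>2\sup_\R|\phi'|$ the maps $\phi_\pm:=\phi\pm\eps\,\Id$ are $\rmC^1$ bi-Lipschitz homeomorphisms of $\R$ with $\rmC^1$, bounded-derivative inverses, so the inequality just proved applies to $\phi_\pm$ and $\phi_\pm^{-1}$ and, chained, forces $\relgradA(\phi_\pm\circ f)=|\phi_\pm'(f)|\,\relgradA f$; since $\phi_\pm\circ f=\phi\circ f\pm\eps f$, Claim (a) then yields $\relgradA(\phi\circ f)\ge|\phi'(f)|\,\relgradA f$ after letting $\eps\downarrow0$ in the bounds from the two signs. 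This is \eqref{eq:64} with equality when $\phi\in\rmC^1$.

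For the locality Claim (b) I would first reduce to compact sets: since $\mu:=f_\sharp\big(\mm\res\{\relgradA f>0\}\big)$ is a finite Borel, hence Radon, measure on $\R$, inner regularity reduces \eqref{eq:62} to the statement that $\relgradA f=0$ $\mm$-a.e.\ on $f^{-1}(K)$ for every compact $\Leb1$-negligible $K\subset\R$. For such $K$ and $\eps>0$, putting $K_\eps:=\{t:\operatorname{dist}(t,K)\le\eps\}$ (so $\Leb1(K_\eps)\downarrow0$), $g_\eps:=(1-\eps^{-1}\operatorname{dist}(\cdot,K))_+$ and $\psi_\eps(t):=\int_0^t(1-g_\eps)$ one obtains $\psi_\eps\in\rmC^1(\R)$ with $0\le\psi_\eps'\le1$, $\psi_\eps'\equiv0$ on $K$ and $\sup_\R|\psi_\eps-\Id|\le\Leb1(K_\eps)$; the $\rmC^1$ chain rule then gives $\relgradA(\psi_{1/k}\circ f)=|\psi_{1/k}'(f)|\,\relgradA f$, which is $0$ $\mm$-a.e.\ on $f^{-1}(K)$ and $\le\relgradA f$ everywhere, and since $\psi_{1/k}\circ f\to f$ in $L^p(X,\mm)$ a weak limit $G$ of $\relgradA(\psi_{1/k}\circ f)$ is a relaxed gradient of $f$ still vanishing $\mm$-a.e.\ on $f^{-1}(K)$, so $\relgradA f\le G=0$ there. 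The consequence \eqref{eq:63} follows from \eqref{eq:62} applied to $h:=f-g$ and $N:=\{c\}$ together with Claim (a). With \eqref{eq:62} available, the general chain rule (c) follows by mollifying $\phi\in\Lip(\R)$ to $\zeta_k\in\rmC^1$ with $\Lip(\zeta_k)\le\Lip(\phi)$ and $\zeta_k'\to\phi'$ pointwise off the (null) non-differentiability set $N_\phi$ of $\phi$: then $\zeta_k\circ f\to\phi\circ f$ in $L^p(X,\mm)$, $\relgradA(\zeta_k\circ f)=|\zeta_k'(f)|\,\relgradA f$ is bounded in $L^p$ and, since $\relgradA f=0$ $\mm$-a.e.\ on $f^{-1}(N_\phi)$, converges in $L^p(X,\mm)$ to $|\phi'(f)|\,\relgradA f$, giving \eqref{eq:64}; equality for monotone $\phi$ follows from the same perturbation/inversion argument, now using the Lipschitz inequality, Claim (a), \eqref{eq:62} and $\eps\downarrow0$.

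For the normal contraction property (d) I would treat first $\zeta\in\rmC^1(\R)$ with $0\le\zeta'\le1$: with strong approximations $f_n,g_n\in\AA$, choosing $c_n$ so that $f_n,g_n$ and $g_n-f_n$ all take values in $[-c_n,c_n]$, a polynomial $P_n$ with $\sup_{[-c_n,c_n]}|P_n-\zeta|\le1/n$ and $0\le P_n'\le1$ on $[-c_n,c_n]$, and a global $\rmC^1$ extension of $P_n|_{[-c_n,c_n]}$ with derivative in $[0,1]$, the functions $\tilde f_n:=f_n+P_n(g_n-f_n)\in\AA$ and $\tilde g_n:=g_n+P_n(f_n-g_n)\in\AA$ converge in $L^p(X,\mm)$ to $\tilde f$ and $\tilde g$, and \eqref{eq:308} with $\psi(t)=t^p$ gives $(\lip\tilde f_n)^p+(\lip\tilde g_n)^p\le(\lip f_n)^p+(\lip g_n)^p$ $\mm$-a.e. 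Integrating over an arbitrary Borel set $B$ and passing to the limit — the right-hand side converging by strong $L^p$-convergence of $\lip f_n,\lip g_n$, the left-hand side handled by lower semicontinuity of $u\mapsto\int_B|u|^p$ under weak convergence together with $\relgradA{\tilde f}\le G_f$, $\relgradA{\tilde g}\le G_g$ for the weak limits $G_f,G_g$ of $\lip\tilde f_n,\lip\tilde g_n$ (which are relaxed gradients of $\tilde f,\tilde g$), and superadditivity of $\liminf$ — yields $\int_B\relgradA{\tilde f}^p+\int_B\relgradA{\tilde g}^p\le\int_B\relgradA f^p+\int_B\relgradA g^p$ for every $B$, hence \eqref{eq:65} pointwise $\mm$-a.e. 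A general nondecreasing contraction $\phi$ is then handled by the mollification $\zeta_k:=\phi*\rho_{1/k}$, for which $0\le\zeta_k'\le1$ and $\sup_\R|\zeta_k-\phi|\le1/k$, so that the perturbed functions converge uniformly, the associated minimal gradients are $L^p$-bounded, and the same weak-limit argument over arbitrary $B$ applies. The main obstacles I anticipate are the bootstrap order ($\rmC^1$ chain rule $\Rightarrow$ locality $\Rightarrow$ general chain rule), the reduction of locality to compact null sets through the Radon measure $\mu$ and the explicit $\rmC^1$ cut-offs $\psi_\eps$, and extracting \eqref{eq:65} pointwise rather than merely in integrated form, which is why the limiting argument must be run over all Borel sets $B$ and not only over $B=X$.
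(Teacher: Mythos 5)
Your proof is correct, and its skeleton is the same as the paper's: sublinearity from \eqref{eq:57} and the convex, weakly closed set of relaxed gradients; the $\rmC^1$ chain-rule inequality via the strong approximation \eqref{eq:16bis} of Lemma \ref{lem:strongchee} plus the polynomial approximations of Lemma \ref{le:polynomial} and \eqref{eq:309}; locality via $\rmC^1$ cut-offs whose derivative vanishes on a compact $\Leb1$-null set, followed by inner regularity of $f_\sharp\mm$; the Lipschitz chain rule by mollification combined with \eqref{eq:62}; and the contraction property by passing to the limit in \eqref{eq:308}. The one genuinely different ingredient is your treatment of the equality cases in \eqref{eq:64}: you force equality by composing with $\phi\pm\eps\,\mathrm{Id}$ and their Lipschitz inverses and then using sublinearity, whereas the paper first proves equality for monotone $\phi$ by observing that $(1-\phi'(f))\relgradA f$ and $\phi'(f)\relgradA f$ are relaxed gradients of $f-\phi(f)$ and $\phi(f)$ and that subadditivity saturates, and then invokes locality for the $\rmC^1$ case. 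Your inversion argument is valid (for the Lipschitz monotone case one must use \eqref{eq:62} to discard the null set where $\phi_\pm^{-1}$ fails to be differentiable at $\phi_\pm(f)$, which you indeed mention), and note that no limit $\eps\downarrow0$ is actually needed: the two inequalities $(\pm\phi'(f)+\eps)\relgradA f\le\relgradA(\phi\circ f)+\eps\relgradA f$ already give the lower bound for fixed admissible $\eps$. In part (d) you are in fact more careful than the paper's proof: by replacing $\zeta$ with a polynomial (extended to keep $0\le\zeta'\le1$) you guarantee that the perturbed approximants $\tilde f_n,\tilde g_n$ stay in $\AA$ — a point the paper glosses over by working with bounded Lipschitz approximants — and your device of integrating over arbitrary Borel sets $B$ is the standard way to upgrade the limit of \eqref{eq:308} to the pointwise inequality \eqref{eq:65}, which the paper leaves implicit.
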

\begin{proof}
\textbf{(a)} follows immediately by the convexity of the set $S$
defined by \eqref{eq:332} and \eqref{eq:facileee}.
\medskip

\noindent \textbf{(b)}
We first claim that for $\phi:\R\to\R$ continuously
differentiable whose  derivative $\phi'$ is Lipschitz on the image of $f$ it holds
\begin{equation}
\label{eq:chainc1} \relgradA {\phi(f)}\leq |\phi'\circ f|\relgradA
f,\qquad\text{$\mm$-a.e. in $X$},
\end{equation}
for any $f\in \Sob^{1,p}(\X,\AA)$. 
\eqref{eq:chainc1} easily follows by approximation from \eqref{eq:309}
whenever $f$ is bounded and $\phi$ is a polynomial: it is sufficient
to apply Corollary \ref{cor:poly-truncation}.

Still assuming the boundedness of $f$, arbitrary $\rmC^1$ functions
$\phi$ can be approximated by a sequence of polynomials
$P_n$
with respect to the $\rmC^1$-norm induced by a compact interval
containing $f(X)$.
Thanks to the weak closure of \eqref{eq:332} we can pass to the limit
in \eqref{eq:chainc1} written for $P_n$ and obtain the same bound for
$\phi$. In particular, for every $f\in \AA$ we get
\begin{equation}
  \label{eq:344}
  \relgradA {(\phi\circ f)}\le |\phi'\circ f|\relgradA f\le 
  |\phi'\circ f|\lip f\quad\text{$\mm$-a.e.~in $X$}.
\end{equation}
If now $\phi\in \rmC^1(\R)\cap \Lip(\R)$ and $f\in \Sob^{1,p}(\X,\AA)$ we can
use the approximation \eqref{eq:16bis} and \eqref{eq:344} to obtain
a sequence $f_n\in \AA$ such that
\begin{align*}
  \phi\circ f_n&\to \phi\circ f&&\text{in }L^p(X,\mm),\\
  \relgradA {(\phi\circ f_n)}&\weakto G&&\text{in }L^p(X,\mm),\\
  |\phi'\circ f_n|\relgradA {f_n}&\to |\phi'\circ f|\relgradA
                                            f&&\text{in }L^p(X,\mm),
\end{align*}
so that $\relgradA {(\phi\circ f)}\le G\le 
|\phi'\circ f|\relgradA f$.

Now, assume that $N$ is compact. In this case, let $A_n\subset\R$ be
open sets such that $A_n\downarrow N$ {\nc and $\Leb{1}(A_1)<\infty$}. Also, let $\psi_n:\R\to[0,1]$
be a continuous function satisfying $\nchi_{N}\leq\psi_n\leq
\nchi_{A_n}$, and define $\phi_n:\R\to\R$ by
\[
\left\{\begin{array}{ll}
\phi_n(0)&=0,\\
\phi_n'(z)&=1-\psi_n(z).
\end{array}
\right.
\]
The sequence $(\phi_n)$ uniformly converges to the identity map, and
each $\phi_n$ is 1-Lipschitz and $C^1$. Therefore $\phi_n\circ f$
converge to $f$ in $L^2$. Taking into account that $\phi_{n}'=0$ on $N$
and \eqref{eq:chainc1} we deduce
\[
\begin{split}
\int_X \relgradA f^p\,\d\mm
&\leq\liminf_{n\to\infty}\int_X\relgradA {\phi_n(f)}^p\,\d\mm
\leq\liminf_{n\to\infty}\int_X|\phi'_n\circ f|^p\relgradA f^p\,\d\mm\\
&=\liminf_{n\to\infty}\int_{X\setminus f^{-1}(N)}|\phi'_n\circ
f|^p\relgradA f^p\,\d\mm\leq\int_{X\setminus f^{-1}(N)}\relgradA
f^p\,\d\mm.
\end{split}
\]
It remains to deal with the case when $N$ is not compact. In this
case we consider the finite measure $\mu:=f_\sharp\mm$.
Then there exists
an increasing sequence $(K_n)$ of compact subsets of $N$ such that
$\mu(K_n)\uparrow\mu(N)$. By the result for the compact case we know
that $\relgradA f=0$ $\mm$-a.e.\ on $\cup_nf^{-1}(K_n)$, and by
definition
 of push forward we know that $\mm(f^{-1}(N\setminus
\cup_nK_n))=0$.

\eqref{eq:63} then follows if $g$ is identically 0. In the
general case we notice that $\relgradA {(f-g)}+\relgradA  g$ is a
relaxed gradient of $f$, hence on $\{f-g=c\}$ we conclude that
$\mm$-a.e. it holds $\relgradA f\leq \relgradA g$. Reversing the roles
of $f$ and $g$ we conclude.
\\* {\bf (c)} By 2. and Rademacher
Theorem we know that the right hand side is well defined, so that
the statement makes sense (with the convention to define
$|\phi'\circ f|$ arbitrarily at points $x$ such that $\phi'$ does
not exist at $f(x)$). Also, by \eqref{eq:chainc1} we know that the
thesis is true if $\phi$ is $C^1$. For the general case, just
approximate $\phi$ with a sequence $(\phi_n)$ of equi-Lipschitz and
$C^1$ functions, such that $\phi_n'\to \phi'$ a.e. on the image of
$f$.

Let us now consider the monotone case;
with no loss of generality we can assume that $0\leq\phi'\leq
1$. We know that $(1-\phi'(f)){\relgradA f}$ and $\phi'(f)\relgradA f$
are relaxed gradients of $f-\phi(f)$ and $f$ respectively. Since
$$
\relgradA f\leq \relgradA {(f-\phi(f))}+\relgradA {\phi(f)}\leq
\Big( (1-\phi'(f))+\phi'(f)\Big)\relgradA f=\relgradA f
$$
it follows that all inequalities are equalities $\mm$-a.e. in $X$.

When $\phi$ is $\rmC^1$ we can use the locality property.
\\*
{\bf (d)} Applying Lemma \ref{lem:strongchee} we find
two optimal sequences $(f_n),\,(g_n)$ of bounded
Lipschitz functions satisfying \eqref{eq:16bis}
(w.r.t.~$f$ and $g$ respectively).
When $\phi$ is of class $C^1$, 
passing to the limit in the inequality
\eqref{eq:308} written for
$f_n$ and $g_n$ we easily get \eqref{eq:65}.
In the general case, we first approximate $\phi$ by a sequence
$\phi_n$ of nondecreasing contraction of class $C^1$ converging to
$\phi$ pointwise and then pass to the limit in \eqref{eq:65} 
written for $\phi_n$.
\end{proof}
\begin{corollary}
  \label{cor:trivialsup}
  If $f_1,\cdots,f_M\in \Sob^{1,p}(\X,\AA)$ then also the functions
  $f_+:=f_1\lor f_2\lor\cdots\lor f_M$ and
  $f_-:=f_1\land f_2\land\cdots\land f_M$
  belong to $\Sob^{1,p}(\X,\AA)$ and
  \begin{equation}
    \label{eq:364}
    \begin{aligned}
      \relgradA{f_+}&=\relgradA{f_j}\quad\text{on }A_j:=\{x\in
      X:f_+=f_j\},\\
      \relgradA{f_-}&=\relgradA{f_j}\quad\text{on
      }B_j:=\{x\in X:f_-=f_j\}.
    \end{aligned}
  \end{equation}
\end{corollary}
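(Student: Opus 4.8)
The plan is to reduce everything to the binary case $M=2$ and then invoke the calculus rules of Theorem~\ref{le:useful}. For the membership statement it suffices to show that $f\lor g$ and $f\land g$ lie in $\Sob^{1,p}(\X,\AA)$ whenever $f,g\in\Sob^{1,p}(\X,\AA)$, since the general case follows by iterating the identities $f_1\lor\cdots\lor f_M=(f_1\lor\cdots\lor f_{M-1})\lor f_M$ and the analogous one for $f_-$. Writing $\psi(r):=r_+$, a globally $1$-Lipschitz function, I would argue: $g-f\in\Sob^{1,p}(\X,\AA)$ by the pointwise sublinearity in Theorem~\ref{le:useful}(a); hence $\psi(g-f)\in\Sob^{1,p}(\X,\AA)$ by the chain rule Theorem~\ref{le:useful}(c); and then $f\lor g=f+\psi(g-f)$ and $f\land g=f-\psi(f-g)$ belong to $\Sob^{1,p}(\X,\AA)$ by a further application of Theorem~\ref{le:useful}(a).

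Once $f_+:=f_1\lor\cdots\lor f_M$ and $f_-:=f_1\land\cdots\land f_M$ are known to lie in $\Sob^{1,p}(\X,\AA)$, the identities \eqref{eq:364} follow directly from the locality property \eqref{eq:63} of Theorem~\ref{le:useful}(b). Indeed, $A_j=\{f_+=f_j\}=\{f_+-f_j=0\}$, so applying \eqref{eq:63} to the pair $f_+,f_j$ gives $\relgradA{f_+}=\relgradA{f_j}$ $\mm$-a.e.\ on $A_j$; the formula for $f_-$ is obtained in the same way from $B_j=\{f_--f_j=0\}$. I would also record the remark that these identities are mutually consistent on the overlaps $A_j\cap A_k$ (resp.\ $B_j\cap B_k$): there $f_j=f_k$, and \eqref{eq:63} applied to $f_j,f_k$ yields $\relgradA{f_j}=\relgradA{f_k}$ $\mm$-a.e., so the prescription does not depend on the chosen index; moreover the sets $A_j,B_j$ are well defined up to $\mm$-negligible sets, so the $\mm$-a.e.\ statements are independent of the representatives.

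I do not expect any genuine obstacle: the corollary is essentially a packaging of the chain rule together with the locality of the minimal $(p,\AA)$-relaxed gradient. The only two points deserving a line of care are that $r\mapsto r_+$ is globally Lipschitz, so that Theorem~\ref{le:useful}(c) applies to the general element $g-f\in\Sob^{1,p}(\X,\AA)$ and not merely to elements of $\AA$, and the bookkeeping on the overlaps just described. A more computational alternative would be to use Lemma~\ref{le:supremum} to build admissible sequences in $\AA$ for $f_+$ out of optimal sequences for the $f_j$ and estimate $\CE_{p,\sAA}$ directly, but the route above is considerably shorter.
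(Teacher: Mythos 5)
Your argument is correct and is exactly the (implicit) argument the paper intends: the corollary is stated without proof as a direct consequence of Theorem~\ref{le:useful}, and your reduction to $M=2$ via $f\lor g=f+(g-f)_+$, $f\land g=f-(f-g)_+$ (using items (a) and (c)) together with the locality identity \eqref{eq:63} on $A_j=\{f_+-f_j=0\}$ and $B_j=\{f_--f_j=0\}$ is precisely that proof. Your remarks on the global Lipschitz character of $r\mapsto r_+$ and on the consistency on overlaps $A_j\cap A_k$ are correct but not essential.
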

\subsection{Invariance w.r.t.~restriction and completion}
\label{subsec:invariance1}
It is obvious that the Cheeger energy and the minimal relaxed gradient
are invariant with respect to isomorphisms of e.m.t.m.~structures
$(\X,\AA)$,
according to Definition \ref{def:embeddings}.
Here we state two simple
(and very preliminary) results
concerning the behaviour of the Cheeger energy
w.r.t.~a general measure-preserving
embedding $\iota$
of $(\X,\AA)$ into $(\hhat \X,\hhat \AA)$: we keep the same notation
of Section \ref{subsec:compactification}.
We will state a much deeper result in
the last Section of these notes, see Theorem \ref{thm:invariance-i}.
\begin{lemma}
  \label{le:preliminary-embedding}
  For every $\hhat f\in \Sob^{1,p}(\hhat \X,\hhat\AA)$
  the function $f:=\iota^*\hhat f$ belongs to $\Sob^{1,p}(\X,\AA)$
  and
  \begin{equation}
    \label{eq:362}
    \relgradA f\le \iota^*(|\rmD \hhat f|_{\star,\sAA'})\quad
    \text{$\mm$-a.e.~in $X$.}
  \end{equation}
\end{lemma}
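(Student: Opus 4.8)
The plan is to pull back, through the embedding $\iota$, an optimal approximating sequence for $\hhat f$ in $\hhat\X$ and to verify that its image is an admissible competitor for the Cheeger energy of $f$ in $\X$, with an asymptotic Lipschitz constant controlled by the pullback of $|\rmD \hhat f|_{\star,\sAA'}$.

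First I would apply Lemma \ref{lem:strongchee}(c) in the space $\hhat\X$ to obtain a sequence $\hhat f_n\in \hhat\AA$ with $\hhat f_n\to \hhat f$ and $\lip_{\hhat\sfd}\hhat f_n\to |\rmD \hhat f|_{\star,\sAA'}$ strongly in $L^p(\hhat X,\hhat\mm)$, and then set $f_n:=\iota^*\hhat f_n=\hhat f_n\circ\iota$. Property (E4) of Definition \ref{def:embeddings} guarantees that each $f_n$ belongs to $\AA$, so the $f_n$ are legitimate competitors in the definition of $\CE_{p,\sAA}$.

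The key estimate will be the pointwise bound $\lip_\sfd f_n(x)\le (\lip_{\hhat\sfd}\hhat f_n)(\iota(x))$ for every $x\in X$. To prove it I would fix an open neighbourhood $\hhat V$ of $\iota(x)$ in $\hhat X$, set $U:=\iota^{-1}(\hhat V)$ — a $\tau$-open neighbourhood of $x$ by the continuity (E1) — and use the isometry property (E2) together with $\iota(U)\subset \hhat V$ to deduce $\Lip(f_n,U,\sfd)\le \Lip(\hhat f_n,\hhat V,\hhat\sfd)$; taking the infimum first over $U\in\UU_x$ and then over the neighbourhoods $\hhat V$ of $\iota(x)$ in $\hhat X$ yields the claim. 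I expect this to be the only genuinely delicate point: since $\iota$ is not assumed to be open, $\iota(U)$ is in general not a neighbourhood of $\iota(x)$ in $\hhat X$, so only this one-sided comparison is available — which is precisely the flexibility that the asymptotic Lipschitz constant $\lip$ (as opposed to the local slope $|\rmD\cdot|$) provides.

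Finally I would transport everything through $\iota$ using $\iota_\sharp\mm=\hhat\mm$ (E3): by Lemma \ref{le:measurable}(c) the map $\iota^*$ is a linear $L^p$-isometry, hence $f_n\to f$ in $L^p(X,\mm)$ and $\iota^*(\lip_{\hhat\sfd}\hhat f_n)\to \iota^*(|\rmD \hhat f|_{\star,\sAA'})$ strongly in $L^p(X,\mm)$. Combined with the pointwise bound, this shows that $(\lip_\sfd f_n)_n$ is bounded in $L^p(X,\mm)$, so, up to a subsequence and by reflexivity, $\lip_\sfd f_n\weakto G$ weakly in $L^p(X,\mm)$ for some $G\in L^p(X,\mm)$; then $G$ is a $(p,\AA)$-relaxed gradient of $f$, whence $f\in \Sob^{1,p}(\X,\AA)$. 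Testing the weak convergence of $\lip_\sfd f_n$ against an arbitrary nonnegative $\varphi\in L^q(X,\mm)$ (with $q=p/(p-1)$) and comparing with the strong $L^p$-limit $\iota^*(|\rmD \hhat f|_{\star,\sAA'})$ of the majorants gives $G\le \iota^*(|\rmD \hhat f|_{\star,\sAA'})$ $\mm$-a.e.\ in $X$; since $\relgradA f\le G$ by the minimality of the relaxed gradient (Lemma \ref{lem:locality}), the asserted inequality \eqref{eq:362} follows.
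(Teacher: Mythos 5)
Your proof is correct and follows essentially the same route as the paper: the one-sided comparison $\lip_\sfd(\iota^*\hhat f_n)\le \iota^*(\lip_{\hhat\sfd}\hhat f_n)$ obtained from (E1)–(E2) via $U=\iota^{-1}(\hhat V)$, the pullback of the optimal sequence from \eqref{eq:16bis}, the $L^p$-isometry of $\iota^*$ from (E3), and the minimality of the relaxed gradient. The only (immaterial) difference is that the paper phrases the comparison through the majorants $\hhat G_n\ge\lip_{\hhat\sfd}\hhat f_n$ and states directly that $\iota^*(|\rmD\hhat f|_{\star,\sAA'})$ is a relaxed gradient, whereas you extract the weak limit $G$ explicitly before comparing.
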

\begin{proof}
  Let us first observe that if $\hhat f\in \Lipb(\hhat X,\hhat
  \tau,\hhat \sfd)$ and $\hhat G\ge \lip_{\sfd'} \hhat f$, then
  \begin{equation}
    \label{eq:363}
    \iota^* \hhat G\ge \lip_\sfd(\iota^* \hhat f).
  \end{equation}
  In fact, setting $f:=\iota^*(\hhat f)$ and choosing arbitrary sets
  $U\in X$ and $\hhat U\in \hhat X$ containing $\iota(U)$,
  if $L=\Lip (\hhat f,\hhat U,\hhat \sfd)$ we have
  \begin{displaymath}
    |f(x)-f(y)|=
    |\hhat f(\iota(x))-\hhat f(\iota(y))|
    \le L \hhat\sfd(\iota(x),\iota(y))=L\sfd(x,y)\quad
    \text{for every }x,y\in U
  \end{displaymath}
  so that
  \begin{equation}
    \label{eq:365}
    \Lip(f,U,\sfd)\le \Lip(\hhat f,\hhat U,\hhat \sfd).
  \end{equation}
  Recalling the definition \eqref{eq:55} and considering
  the collection of all the open neighborhood of $\iota(x)$ in $\hhat
  X$, we get \eqref{eq:363}.
  
  In order to obtain \eqref{eq:362}
  it is now sufficient to take an optimal sequence $\hhat f_n,\hhat G_n$ as in
  \eqref{eq:16bis} for
  the $(p,\AA')$-minimal relaxed gradient $|\rmD f'|_{\star,\sAA'}$
  of $\hhat f\in \Sob^{1,p}(\hhat \X,\hhat \AA)$
  observing that
  \begin{displaymath}
    \iota^*\hhat f_n\to f,\quad
    \iota^*\hhat G_n\to \iota^*\relgrad {\hhat f}\quad
    \text{strongly in }L^p(X,\mm),
  \end{displaymath}
  so that $\iota^*{|\rmD \hhat f|_{\star,\AA'}}$ is a relaxed gradient for $f$.
\end{proof}
When $\iota(X)$ is $\hhat\sfd$-dense in $\hhat X$
(in particular, when $\hhat X$ is a completion according to the 
definition given in Corollary \ref{cor:completion}),
we have a better behaviour.
\begin{proposition}
  Suppose that $\iota:X\to \hhat X$
  is a measure-preserving embedding of $(\X,\AA)$
  into $(\hhat X,\hhat \AA)$ such that
  \begin{equation}
    \label{eq:536}
    \iota(X)\text{ is $\hhat\sfd$-dense in $\hhat X$},\quad
    \iota^*(\hhat \AA)=\AA.
  \end{equation}
  Then $\iota^*$ is an
  isomorphism of $\Sob^{1,p}(\hhat\X,\hhat \AA)$
  onto $\Sob^{1,p}(\X,\AA)$
  and for every $f=\iota^* \hhat f$
  \begin{equation}
    \label{eq:362bis}
    \relgradA f= \iota^*(|\rmD  \hhat f|_{\star,\hhat \sAA})\quad
    \text{$\mm$-a.e.~in $X$.}
  \end{equation}
\end{proposition}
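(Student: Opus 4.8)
The inequality ``$\le$'' in \eqref{eq:362bis} is already contained in Lemma \ref{le:preliminary-embedding}: for every $\hhat f\in \Sob^{1,p}(\hhat\X,\hhat\AA)$ the pull-back $f=\iota^*\hhat f$ belongs to $\Sob^{1,p}(\X,\AA)$ and satisfies $\relgradA f\le \iota^*(|\rmD\hhat f|_{\star,\hhat\sAA})$ $\mm$-a.e. So the heart of the matter is the \emph{reverse} inequality together with the surjectivity of $\iota^*$ from $\Sob^{1,p}(\hhat\X,\hhat\AA)$ onto $\Sob^{1,p}(\X,\AA)$. First I would use Lemma \ref{le:measurable}(c): since $\iota^*$ is a linear isometry of $L^p(\hhat X,\hhat\mm)$ onto $L^p(X,\mm)$ for every $p$, every $f\in\Sob^{1,p}(\X,\AA)\subset L^p(X,\mm)$ is of the form $\iota^*\hhat f$ for a \emph{unique} $\hhat f\in L^p(\hhat X,\hhat\mm)$; what must be shown is that $\hhat f\in\Sob^{1,p}(\hhat\X,\hhat\AA)$ and $|\rmD\hhat f|_{\star,\hhat\sAA}=\iota_*(\relgradA f)$.

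\textbf{Lifting approximating sequences.} The key step is to lift a ``recovery sequence'' from $X$ to $\hhat X$. Given $f\in\Sob^{1,p}(\X,\AA)$, apply Lemma \ref{lem:strongchee}(c) to obtain $f_n\in\AA$ with $f_n\to f$ and $\lip_\sfd f_n\to\relgradA f$ strongly in $L^p(X,\mm)$. Since $\iota^*(\hhat\AA)=\AA$ (this is where the hypothesis \eqref{eq:536} is essential), for each $n$ there is $\hhat f_n\in\hhat\AA$ with $\iota^*\hhat f_n=f_n$, i.e.\ $\hhat f_n\circ\iota=f_n$. The crucial estimate is that, because $\iota(X)$ is $\hhat\sfd$-dense in $\hhat X$ and $\hhat f_n$ is $\hhat\sfd$-continuous (it lies in $\Lipb(\hhat X,\hhat\tau,\hhat\sfd)$), the asymptotic Lipschitz constant cannot increase under restriction to a dense set: for every $\hhat x=\iota(x)$ and every $\hhat U\in\UU_{\hhat x}$, the set $\iota^{-1}(\hhat U)$ is a $\tau$-neighborhood of $x$, and by $\hhat\sfd$-density together with the lower semicontinuity \eqref{eq:243} of $\hhat\sfd$ and the $\hhat\sfd$-continuity of $\hhat f_n$ one gets $\Lip(\hhat f_n,\hhat U,\hhat\sfd)=\Lip(f_n,\iota^{-1}(\hhat U),\sfd)$, hence $\lip_{\hhat\sfd}\hhat f_n(\iota(x))=\lip_\sfd f_n(x)$, i.e.
\begin{equation*}
  \lip_{\hhat\sfd}\hhat f_n\circ\iota=\lip_\sfd f_n\quad\text{on }X.
\end{equation*}
(Here I use that $\hhat\mm=\iota_\sharp\mm$ is concentrated on $\iota(X)$ by Theorem \ref{thm:G-compactification}(d), so the identity $\hhat\mm$-a.e.\ on $\hhat X$ follows from the identity $\mm$-a.e.\ on $X$.) Then $\|\lip_{\hhat\sfd}\hhat f_n\|_{L^p(\hhat X,\hhat\mm)}=\|\lip_\sfd f_n\|_{L^p(X,\mm)}\to\|\relgradA f\|_{L^p(X,\mm)}$ and $\hhat f_n=\iota_*f_n\to\iota_* f=\hhat f$ in $L^p(\hhat X,\hhat\mm)$, so $\hhat f\in\Sob^{1,p}(\hhat\X,\hhat\AA)$ with $\CE_{p,\hhat\sAA}(\hhat f)\le\CE_{p,\sAA}(f)$; by the already-established Lemma \ref{le:preliminary-embedding} the opposite inequality holds too, giving $\CE_{p,\hhat\sAA}(\hhat f)=\CE_{p,\sAA}(f)$.

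\textbf{From energy equality to gradient equality and conclusion.} Equality of the (global) Cheeger energies on $\iota^*$-corresponding functions is not quite the pointwise statement \eqref{eq:362bis}; to upgrade it I would run the argument locally. Fix $f$ and $\hhat f=\iota_*f$; we already have $\relgradA f\le\iota^*|\rmD\hhat f|_{\star,\hhat\sAA}$. Suppose the inequality were strict on a Borel set of positive $\mm$-measure. Using locality (Lemma \ref{lem:locality}) and the fact that $\iota^*|\rmD\hhat f|_{\star,\hhat\sAA}$ is, by the first two paragraphs, a $(p,\AA)$-relaxed gradient of $f$ whose $L^p$-norm equals $\CE_{p,\hhat\sAA}(\hhat f)^{1/p}=\CE_{p,\sAA}(f)^{1/p}=\|\relgradA f\|_{L^p(X,\mm)}$, strict inequality on a set of positive measure would force $\|\relgradA f\|_{L^p}<\|\iota^*|\rmD\hhat f|_{\star,\hhat\sAA}\|_{L^p}$, a contradiction. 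Hence \eqref{eq:362bis} holds, equivalently $\iota_*(\relgradA f)=|\rmD\hhat f|_{\star,\hhat\sAA}$ $\hhat\mm$-a.e. Combined with the surjectivity noted above and with \eqref{eq:355} (giving $\|f\|_{L^p(X,\mm)}=\|\hhat f\|_{L^p(\hhat X,\hhat\mm)}$), this shows $\iota^*$ is a bijective linear isometry of $\Sob^{1,p}(\hhat\X,\hhat\AA)$ onto $\Sob^{1,p}(\X,\AA)$, i.e.\ an isomorphism.

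\textbf{Main obstacle.} The delicate point is the identity $\lip_{\hhat\sfd}\hhat f_n\circ\iota=\lip_\sfd f_n$: one must check that restricting a bounded $\hhat\sfd$-Lipschitz function from $\hhat X$ to the $\hhat\sfd$-dense image $\iota(X)$ neither raises nor lowers the asymptotic Lipschitz constant at points of $\iota(X)$. The ``$\ge$'' direction is just \eqref{eq:365} from Lemma \ref{le:preliminary-embedding}; the ``$\le$'' direction is where $\hhat\sfd$-density is used, via continuity of $\hhat f_n$ and lower semicontinuity of $\hhat\sfd$ to recover the Lipschitz bound on all of $\hhat U$ from its validity on the dense subset $\iota(\iota^{-1}(\hhat U))$. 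Everything else is a matter of assembling Lemma \ref{le:measurable}, Lemma \ref{lem:strongchee}, Lemma \ref{lem:locality}, and Lemma \ref{le:preliminary-embedding}.
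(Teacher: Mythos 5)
Your route is essentially the paper's: lift the optimal sequence $f_n\in\AA$ of \eqref{eq:16bis} to $\hat f_n\in\hat\AA$ using $\iota^*(\hat\AA)=\AA$, control $\lip_{\hat\sfd}\hat f_n$ on $\iota(X)$, deduce $\CE_{p,\hat\AA}(\hat f)\le \CE_{p,\sAA}(f)$, and combine with Lemma \ref{le:preliminary-embedding} (via the equal-$L^p$-norm argument and Lemma \ref{le:measurable}(c)) to get surjectivity and \eqref{eq:362bis}; that closing step is exactly how the paper concludes. The problem is the step you yourself single out as the crux: the identity $\lip_{\hat\sfd}\hat f_n\circ\iota=\lip_\sfd f_n$ does not follow from what you prove. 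Your density argument gives $\Lip(\hat f_n,\hat U,\hat\sfd)=\Lip(f_n,\iota^{-1}(\hat U),\sfd)$ for every neighborhood $\hat U$ of $\iota(x)$; taking the infimum over $\hat U$ you are only comparing with the infimum over the subfamily $\{\iota^{-1}(\hat U):\hat U\in\UU_{\iota(x)}\}$ of $\UU_x$, which yields $\lip_{\hat\sfd}\hat f_n(\iota(x))\ge \lip_\sfd f_n(x)$ — the direction you already have from \eqref{eq:365} and which is useless here. The inequality actually needed, $\lip_{\hat\sfd}\hat f_n(\iota(x))\le \lip_\sfd f_n(x)$ (the paper's \eqref{eq:542}), requires that every $\tau$-neighborhood $U$ of $x$ contain a set of the form $\iota^{-1}(\hat U)$, i.e.\ that $\iota$ be relatively open, a homeomorphism of $X$ onto $\iota(X)$; $\hat\sfd$-density of the image alone does not give this, since $\hat\tau$ restricted to $\iota(X)$ could a priori be strictly coarser than $\tau$.

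This is precisely the point the paper makes explicit: given $g>\lip_\sfd f_n(x)$ and $U\in\UU_x$ with $\Lip(f_n,U,\sfd)\le g$, it first uses that $\iota$ is a homeomorphism between $X$ and $\iota(X)$ to produce a neighborhood $U'$ of $\iota(x)$ with $\iota^{-1}(U')\subset U$, and only then uses $\hat\sfd$-density (together with the $\hat\sfd$-Lipschitz continuity of $\hat f_n$) to propagate the bound from $U'\cap\iota(X)\subset\iota(U)$ to all of $U'$. To repair your argument you should either invoke this relative openness explicitly — it is available in the intended applications, e.g.\ for the completion of Corollary \ref{cor:completion}, where it is property (C2), the compactification being built from the adapted canonical algebra — or explain why it follows from the hypotheses \eqref{eq:536}; as written, the ``hence'' hides the only nontrivial topological step, and the claimed equality of asymptotic Lipschitz constants is exactly the part that could fail without it. The remainder of your proof (energy inequality, surjectivity via the $L^p$-isomorphism, and the saturation argument upgrading \eqref{eq:362} to the pointwise equality \eqref{eq:362bis}) is correct and matches the paper.
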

\begin{proof}
  Let $f\in \Sob^{1,p}(\X,\AA) $ and let
  $f_n\in \AA$ 
  be an optimal approximating sequence
  as in \eqref{eq:16bis}.
  
  We want to show that $f'=\iota_* f\in \Sob^{1,p}(\X',\AA')$;
  by Lemma \ref{le:measurable} and \eqref{eq:536},
  we can find $f_n'\in \AA'$ such that $f_n=\iota^* f_n'$.
  Since $\iota_*$ is an $L^p$-isometry, we know that
  $f_n'\to f'$ strongly in $L^p(X',\mm')$.
  Since $\iota$ is a homeomorphism between $X$ and $\iota(X)$,
  if $x'=\iota(x)$ and $g>\lip_\sfd f_n(x)$, we can find an open
  neighborhood
  $U'$ of $x'$ such that setting $U:=\iota^{-1}(U')$ we have
  \begin{equation}
    \label{eq:540}
      |f_n(z)-f_n(y)|=
      |\hhat f_n(\iota(z))-\hhat f_n(\iota(y))|
      \le g \hhat\sfd(\iota(z),\iota(y))=g\sfd(z,y)\quad
    \text{for every }z,y\in U.
  \end{equation}
  On the other hand, the $\sfd$-density of $\iota(X)$ in $\hhat X$
  guarantees that for every $z',y'\in U'\setminus(\iota(U))$
  there exist $\hhat\sfd$ balls $B_\delta(z'),B_\delta(y')$
  of radius $\delta$ such that
  \begin{displaymath}
    B_\delta(z')\subset U',\     B_\delta(y')\subset U',\quad
    B_\delta(z')\cap\iota(U)\neq \emptyset,\
    B_\delta(y')\cap\iota(U)\neq \emptyset,
  \end{displaymath}
  so that 
  \eqref{eq:540} extends to $U'$ as
  \begin{equation}
    \label{eq:541}
    | f_n'(z')- f_n'(y')|\le g\sfd(y',z')
    \quad \text{for every }z,y\in U.
  \end{equation}
  We deduce that
  \begin{equation}
    \label{eq:542}
    \lip_{\sfd'} f_n'(\iota(x))\le \lipd f_n(x)\quad\text{for every }x\in X
  \end{equation}
  and therefore
  \begin{align*}
    \limsup_{n\to\infty}
    \int_{\hhat X'}\big(\lip_{\sfd'} f_n'\big)^p\,\d\mm'
    &=
    \limsup_{n\to\infty}
    \int_{\hhat X'}\big(\lip_{\sfd'} f_n'(\iota(x)\big)^p\,\d\mm(x)
    \\&\le
    \lim_{n\to\infty}
    \int_X \big(\lipd f_n(x)\big)^p\,\d\mm(x)=
    \CE_{p,\AA}(f)
  \end{align*}
  We obtain that
  \begin{displaymath}
    \hhat f\in \Sob^{1,p}(\hhat\X,\hhat \AA),\quad
    \CE_{p,\hhat \sAA'}(\hhat f)\le
    \CE_{p,\sAA}(f).
  \end{displaymath}
  Thanks to \eqref{eq:362} we also get \eqref{eq:362bis}.
\end{proof}
As an immediate application we obtain that the class of complete
e.m.t.m.~spaces is the natural setting for 
the Cheeger energy.
\begin{corollary}[Invariance of the Cheeger energy by completion]
  \label{cor:completionA}
  If $\bar \X=(\bar X,\bar \tau,\bar \sfd,\bar\mm)$
  is the completion of $\X$ induced by
  $\iota:X\to \bar X$ of Corollary \ref{cor:completion}
  and $\bar \AA=\{\bar f:f\in \AA\}$, then
  $\iota^*$ is an
  isomorphism of $\Sob^{1,p}(\bar\X,\bar \AA)$
  onto $\Sob^{1,p}(\X,\AA)$
  and 
  \begin{equation}
    \label{eq:362tris}
    \relgradA f= \iota^*(|\rmD  \bar f|_{\star,\bar \sAA})\quad
    \text{$\mm$-a.e.~in $X$}\quad
    \text{for every $f=\iota^* \bar f$}.
  \end{equation}
\end{corollary}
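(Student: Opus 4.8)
The statement follows by verifying that the completion embedding $\iota:X\to\bar X$ of Corollary~\ref{cor:completion} satisfies the hypotheses of the Proposition immediately preceding this Corollary, applied with the role of the target space played by $\bar\X$ and of the target algebra by $\bar\AA$. So the plan is: recall the properties (C1)--(C4) of the completion, check that $\bar\AA$ is a compatible algebra for $\bar\X$ and that $\iota$ is a measure-preserving embedding of $(\X,\AA)$ into $(\bar\X,\bar\AA)$ in the sense of Definition~\ref{def:embeddings}, observe that \eqref{eq:536} holds, and then invoke that Proposition, which delivers both the isomorphism and the gradient identity \eqref{eq:362tris}.

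First I would record the elementary algebraic facts. By the construction of $\bar X$ as the $\bar\sfd$-closure of $\iota(X)$ inside the Gelfand compactification $\hat X$, the extension $\bar f$ of $f$ is the restriction to $\bar X$ of $\hat f=\Gamma f$, so that $\bar f(\iota(x))=\hat f(\hat x)=f(x)$, i.e.\ $\iota^*\bar f=f$. Moreover, since the extension of a $\kappa$-Lipschitz function is again $\kappa$-Lipschitz (its restriction to the $\bar\sfd$-dense set $\iota(X)$ is $\kappa$-Lipschitz because $\bar\sfd(\iota(x),\iota(y))=\sfd(x,y)$, and $\bar f$ is $\bar\sfd$-continuous), the map $f\mapsto\bar f$ of (C4) preserves sums, products, scalar multiples, the unit, and the Lipschitz seminorm. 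Hence $\bar\AA=\{\bar f:f\in\AA\}$ is a unital subalgebra of $\Lip_b(\bar X,\bar\tau,\bar\sfd)$ with $\bar\AA_1=\{\bar f:f\in\AA_1\}$, and $\iota^*(\bar\AA)=\AA$.

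Next I would check compatibility of $\bar\AA$ with $(\bar X,\bar\tau,\bar\sfd)$, i.e.\ \eqref{eq:214bis}. For fixed $\bar x\in\bar X$ set $g_{\bar x}(\bar y):=\sup_{\bar f\in\bar\AA_1}|\bar f(\bar x)-\bar f(\bar y)|$; this is symmetric in $(\bar x,\bar y)$ and $1$-Lipschitz w.r.t.\ $\bar\sfd$ in each variable (a supremum of $1$-Lipschitz functions), hence $\bar\sfd$-continuous where finite, exactly as $\bar\sfd$ itself. On $\iota(X)\times\iota(X)$ one has $g_{\iota(x)}(\iota(y))=\sup_{f\in\AA_1}|f(x)-f(y)|=\sfd(x,y)=\bar\sfd(\iota(x),\iota(y))$, using compatibility of $\AA$ with $\X$ and the isometry property of the completion. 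Since $\iota(X)$ is $\bar\sfd$-dense in $\bar X$ by (C3), continuity along $\bar\sfd$-approximating sequences extends this identity first to $\bar x\in\bar X$, $\bar y\in\iota(X)$, then, by symmetry and a second passage to the limit, to all $\bar x,\bar y\in\bar X$; thus $\bar\AA$ is compatible with $\bar\X$. Since in addition $\iota$ is continuous and injective, $\iota_\sharp\mm=\bar\mm$, $\bar\sfd(\iota(x),\iota(y))=\sfd(x,y)$, and $\iota^*\bar f=f\in\AA$ for every $\bar f\in\bar\AA$, $\iota$ is a measure-preserving embedding of $(\X,\AA)$ into $(\bar\X,\bar\AA)$.

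Finally, the two conditions \eqref{eq:536} hold: $\iota(X)$ is $\bar\sfd$-dense in $\bar X$ by (C3), and $\iota^*(\bar\AA)=\AA$ by the first step. Applying the preceding Proposition with target $\bar\X$ and algebra $\bar\AA$ gives that $\iota^*$ is an isomorphism of $\Sob^{1,p}(\bar\X,\bar\AA)$ onto $\Sob^{1,p}(\X,\AA)$ and that $\relgradA f=\iota^*(|\rmD\bar f|_{\star,\bar\sAA})$ $\mm$-a.e.\ for every $f=\iota^*\bar f$, which is exactly \eqref{eq:362tris}. The only mildly delicate point in this argument is the density/continuity extension used to prove compatibility of $\bar\AA$; all the remaining items are direct transcriptions of properties already established for the Gelfand compactification and its $\bar\sfd$-closure.
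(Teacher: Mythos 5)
Your proof is correct and follows the paper's intended route: the Corollary is stated there as an immediate application of the preceding Proposition, and you apply exactly that Proposition, filling in the routine verifications (that $\bar\AA$ is a compatible unital subalgebra of $\Lip_b(\bar X,\bar\tau,\bar\sfd)$, that $\iota$ is a measure-preserving embedding of $(\X,\AA)$ into $(\bar\X,\bar\AA)$, and that \eqref{eq:536} holds via (C3)--(C4)). The density/continuity extension you use to check compatibility of $\bar\AA$ is sound, so nothing further is needed.
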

We conclude with another easy application
of the previous results to restrictions, as in Example
\ref{ex:embeddings}(c).
Recall that
if $Y\subset X$
 is a $\tau$-dense $\mm$-measurable
 subset satisfying $\mm(X\setminus Y)=0,$ 
 the restriction to $Y$ is an isomorphism of
 $L^p(X,\mm)$ with $L^p(Y,\mm_Y)$, so that one can compare
 the Sobolev spaces
$\Sob^{1,p}(\X,\AA)$ and $\Sob^{1,p}(\Y,\AA_Y)$.
We will denote by $|\rmD f|_{\star,\Y,\AA_Y}$ the
$(p,\AA_Y)$ minimal relaxed gradient in $\Sob^{1,p}(\Y,\AA_Y)$.
\begin{corollary}[Restriction]
  \label{cor:restriction}
  Let $\X=(X,\tau,\sfd,\mm)$ be an e.m.t.m.~space
   and let $Y\subset X$
  be a $\tau$-dense $\mm$-measurable
  subset satisfying $\mm(X\setminus Y)=0.$ 
  With the above notation, (the restriction to $Y$ of) every function
  $f\in \Sob^{1,p}(\X,\AA)$ belongs to
  $\Sob^{1,p}(\Y,\AA_Y)$
  and
  \begin{equation}
    \label{eq:534}
    |\rmD f|_{\star,{\scriptscriptstyle\Y,\AA_Y}}\le \relgradA f
    \quad\text{$\mm$-a.e.}
  \end{equation}
  If moreover $Y$ is $\sfd$-dense in $X$, then
  the converse property is also true: for every $f\in L^p(X,\mm)$
  \begin{equation}
    \label{eq:535}
    f\in \Sob^{1,p}(\X,\AA)\quad\Leftrightarrow\quad
    \Sob^{1,p}(\Y,\AA_Y),\qquad
    |\rmD f|_{\star,{\scriptscriptstyle\Y,\AA_Y}}= \relgradA f
    \quad\text{$\mm$-a.e.}
  \end{equation}
\end{corollary}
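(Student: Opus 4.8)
The plan is to obtain both halves of the statement by specializing the general results on measure-preserving embeddings to the inclusion map $\iota\colon Y\to X$, using throughout that $\mm(X\setminus Y)=0$ turns restriction into an isometric isomorphism of the relevant Lebesgue spaces.

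First I would invoke Example~\ref{ex:embeddings}(c): the triple $\Y=(Y,\tau_Y,\sfd,\mm|_Y)$ together with the algebra $\AA_Y=\{f|_Y:f\in\AA\}$ is an e.m.t.m.~space endowed with a compatible algebra, and $\iota$ is a measure-preserving embedding of $(\Y,\AA_Y)$ into $(\X,\AA)$. Then \eqref{eq:534} is a direct application of Lemma~\ref{le:preliminary-embedding} to this embedding: for every $f\in\Sob^{1,p}(\X,\AA)$, writing $g:=f|_Y$ for its restriction, one gets $g\in\Sob^{1,p}(\Y,\AA_Y)$ with $|\rmD g|_{\star,\Y,\AA_Y}\le \iota^*(\relgradA f)=(\relgradA f)|_Y$ $\mm|_Y$-a.e.~in $Y$; since $\mm(X\setminus Y)=0$, Lemma~\ref{le:measurable} identifies $L^p(X,\mm)$ with $L^p(Y,\mm|_Y)$, and this inequality reads exactly as \eqref{eq:534}.

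For the converse I would appeal to the Proposition stated immediately before Corollary~\ref{cor:completionA}. Under the extra assumption that $Y$ is $\sfd$-dense in $X$, its hypotheses \eqref{eq:536} are satisfied with $(\Y,\AA_Y)$ playing the role of the source space and $(\X,\AA)$ that of the target: indeed $\iota(Y)=Y$ is $\sfd$-dense in $X$, and $\iota^*(\AA)=\AA_Y$ by the very definition of $\AA_Y$. That Proposition then yields that $\iota^*$ is an isomorphism of $\Sob^{1,p}(\X,\AA)$ onto $\Sob^{1,p}(\Y,\AA_Y)$ and that $|\rmD g|_{\star,\Y,\AA_Y}=(\relgradA f)|_Y$ $\mm|_Y$-a.e.~for every $f\in\Sob^{1,p}(\X,\AA)$, $g=f|_Y$. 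Combining the surjectivity of $\iota^*$ with the identification $L^p(X,\mm)\cong L^p(Y,\mm|_Y)$ gives \eqref{eq:535}: a function $f\in L^p(X,\mm)$ lies in $\Sob^{1,p}(\X,\AA)$ if and only if its restriction lies in $\Sob^{1,p}(\Y,\AA_Y)$, and the two minimal relaxed gradients agree $\mm$-a.e.

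I do not anticipate a genuine obstacle here: the argument is essentially bookkeeping, matching the abstract embedding lemmas to the concrete inclusion $Y\hookrightarrow X$ and tracking the $L^p$-identification coming from $\mm(X\setminus Y)=0$. The only point that really uses the extra hypothesis in the converse is the verification that $\iota(Y)$ is $\sfd$-dense in $X$, which is precisely what allows the strong-approximation step inside the Proposition before Corollary~\ref{cor:completionA} to transfer asymptotic Lipschitz bounds $\lip_{\sfd}$ computed on $Y$ back to bounds for $\lipd$ on $X$; without $\sfd$-density only the one-sided estimate \eqref{eq:534} remains available.
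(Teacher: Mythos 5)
Your argument is correct and is exactly the route the paper intends: the corollary is stated as an ``easy application of the previous results,'' namely the inclusion $\iota\colon Y\to X$ viewed as a measure-preserving embedding (Example~\ref{ex:embeddings}, restriction item), Lemma~\ref{le:preliminary-embedding} for \eqref{eq:534}, and the Proposition on embeddings with $\sfd$-dense image for \eqref{eq:535}, together with the $L^p$-identification from $\mm(X\setminus Y)=0$. The only nitpick is the pointer to Example~\ref{ex:embeddings}(c) instead of the restriction item (d), which does not affect the mathematics.
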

\subsection{Notes}
\label{subsec:notes5}
\begin{notes}
  \Para{\ref{subsec:relaxed}}
  is strongly inspired by Cheeger's work \cite{Cheeger00}
  (where the energy is obtained starting from upper gradients instead of
  the local Lipschitz constants)
  and
  follows quite closely the
  presentation of \cite{AGS14I,AGS13},
  with the required adjustments due to the
  presence of a compatible algebra $\AA$ instead of
  $\Lip_b(X,\tau,\sfd)$.
  Corollary \ref{cor:poly-truncation}
  and the crucial locality Lemma \ref{lem:locality}
  take advantage of the approximation tools
  presented in \S\,\ref{subsec:compatible-A}.
  Even if a posteriori the Cheeger energy
  will be independent of $\AA$, the role of the algebra
  should be considered as a technique to get new
  density results. Moreover, 
  it allows for simpler constructions in many cases, where
  a distinguished algebra provides better structural properties
  of the energy, see the final Section \ref{sec:Examples}.

  \Para{\ref{subsec:invariance1}}
  contains some preliminary facts about the behaviour of the Cheeger energy
  with respect to measure-preserving embeddings of e.m.t.m.~spaces
  (in particular w.r.t.~completion). The possibility to modify the
  topological and the algebraic properties of the e.m.t.m.~setting
  is one of its strength point.
\end{notes}

\section{Invariance of the Cheeger energy with respect to
  the core algebra: the compact case}
\label{sec:Invariance}
The aim of this section is to study the
property of the Cheeger energy with respect to
the choice of the core algebra $\AA$
in the case of a compact ambient space $(X,\tau)$.
An important tool is provided by the (generalized)
Hopf-Lax flow, which we collect in the next section.

\subsection{The metric Hopf-Lax flow in compact spaces}
\label{subsec:HL}
\index{Hopf-Lax flow}
Let $(X,\tau,\sfd)$ be a compact 
extended metric-topological space and
let $\delta:X\times
X\to[0,+\infty]$
be a $\tau$-l.s.c.~continuous extended semidistance
(our main examples will be the extended distance $\sfd$
and the continuous semidistances $\sfd_i$
as in Lemma \ref{rem:monotonicity}).
For every $f\in
\rmC_b(X)$,
$x,y\in X$
and $t>0$ we
set
\begin{equation}
  \label{eq:381}
  F^{\delta}(t,x,y):=f(y)+\frac
  {\delta^q(x,y)}{q\,t^{q-1}},\quad
  F(t,x,y):=F^{\sfd}(t,x,y).
\end{equation}
$F^{\delta}$ is a l.s.c.~(continuous, if $\delta$ is continuous)
function bounded from below.

Let us also fix a compact set $K\subset X$ such that
there exists a constant
$S=S(K,\delta)\in [0,+\infty[$ 
satisfying
\begin{equation}
  \label{eq:403}
  \min_{y\in K}\delta(x,y)\le S\quad\text{for every }x\in X.
\end{equation}
\eqref{eq:403} is always satisfied if $\delta$ is continuous
or if $K=X$ (and in this case $S=0$).
%

The modified Hopf-Lax evolution is defined by the formula
\begin{equation}
  \label{eq:372}
  \sfQ^{K,\delta}_{t}f(x):=\min_{y\in K}
  F^\delta(t,x,y)
  \quad t>0,
\end{equation}
where we will omit to indicate the explicit dependence on $K$ (resp.~on $\delta$)
when $K=X$ (resp.~when $\delta=\sfd$), thus setting
\begin{equation}
\sfQ^{\delta}_{t}f:=\sfQ^{X,\delta}_tf,\qquad
\sfQ_t f:=\sfQ^{X,\sfd}_tf.\label{eq:402}
\end{equation}
Since $K$ is compact and $F^\delta(t,x,\cdot)$ takes at least one
finite value in $K$ by \eqref{eq:403}, 
the minimum in \eqref{eq:372} is attained:
for every $x\in X$ we also set
\begin{equation}
  \label{eq:374}
  \sfJ^{K,\delta}_{t}f(x):=\Big\{y\in K: f(y)+\frac
  {\delta^q(x,y)}{pt^{q-1}}=\sfQ^\delta_{t}f(x)\Big\},\quad
  \sfJ^\delta_t :=\sfJ^{X,\delta}_t,\quad
  \sfJ_t:=\sfJ^{X,\sfd}_t
\end{equation}
and
\begin{equation}
  \label{eq:373}
  \sfD^{K,\delta,+}_{t}f(x):=\max_{y\in
    \sfJ^{K,\delta}_{t}f(x)}\delta(x,y),\quad
  \sfD^{K,\delta,-}_{t}f(x):=\min_{y\in
    \sfJ^{K,\delta}_{t}f(x)}\delta(x,y).
\end{equation}
As usual, we set $\Osc fX:=\sup_X f-\inf_X f$.
\begin{lemma}[Basic estimates]
  Let $f\in \rmC_b(X)$ and let $f_t(x):=\sfQ^{K,\delta}_t f(x),$
  $J_t(x):=\ \sfJ^{K,\delta}_t f,$ 
  $D_t^\pm(x):=\sfD^{K,\delta,\pm}_t f(x)$ be defined as \eqref{eq:372}, \eqref{eq:374},
  \eqref{eq:373}
  for $t>0$.
  For every $x,y\in X$, $0<s<t$, $x'\in J_t(x)$, $y'\in J_s(y)$ we have
  \begin{equation}
    \label{eq:390}
    \min_X f\le f_t(x)\le \max_X f+\frac 1{q\,t^{q-1}}S^{q}\quad\text{for every }t>0,\ x\in X,
  \end{equation}
  \begin{equation}
    \label{eq:404}
    \Big(\frac {D_t^+(x)}t\Big)^q\le
    \min\Big(q\,t^{-1}\Osc fX, \big(q \Lip(f,X,\delta)\big)^p\Big),
  \end{equation}
  \begin{equation}
    \label{eq:405}
    \Big(\frac 1{qs^{q-1}}-\frac 1{qt^{q-1}}\Big)D_s^+(x)
    \le f_s(x)-f_t(x)\le \Big(\frac 1{qs^{q-1}}-\frac 1{qt^{q-1}}\Big)D_t^-(x),
  \end{equation}
  \begin{align}
    \label{eq:387}
    -\Big(\frac{\delta(x,y')}{t}\Big)^{q-1}\delta(x,y)
    &+ \frac
      1p\Big(\frac{\delta(y,y')}{t}\Big)^{q}(t-s)
      \le f_s(y)-f_t(x)
      \\&\le \label{eq:387bis}
    \Big(\frac{\delta(y,x')}{s}\Big)^{q-1}\delta(x,y)+ \frac
      1p\Big(\frac{\delta(x,x')}{s}\Big)^{q}(t-s).
  \end{align}
\end{lemma}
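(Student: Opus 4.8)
The plan is to obtain every inequality directly from the variational definition \eqref{eq:372} of $\sfQ^{K,\delta}_t f$, using only two mechanisms: plugging a conveniently chosen point of $K$ into the infimum as a competitor, and exploiting that a \emph{selected} minimizer $\bar y\in\sfJ^{K,\delta}_t f(x)$ turns $f(\bar y)+\delta^q(x,\bar y)/(qt^{q-1})=f_t(x)$ into an exploitable identity. The existence of such minimizers is guaranteed by the compactness of $K$ together with the escape condition \eqref{eq:403}, so $f_t$ and $D_t^\pm$ are well defined. The only real-variable inputs are the tangent-line (convexity) bound $a^q-b^q\le q\,a^{q-1}(a-b)$ valid for all $a,b\ge 0$ (the tangent to $r\mapsto r^q$ at $a$ lies below the graph at $b$), the triangle inequality in the form $|\delta(y,z)-\delta(x,z)|\le\delta(x,y)$, and the identity $\frac1{s^{q-1}}-\frac1{t^{q-1}}=(q-1)\int_s^t r^{-q}\,\d r$ together with $(q-1)/q=1/p$; these last two are what let me pass from the natural coefficient $\frac1{qs^{q-1}}-\frac1{qt^{q-1}}$ to the cleaner $(t-s)$-weighted bound appearing in \eqref{eq:387} and \eqref{eq:387bis}.

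Concretely, \eqref{eq:390} is immediate: the lower bound follows from $f_t(x)\ge\min_{y\in K}f(y)\ge\min_X f$, and for the upper bound I would choose $y_0\in K$ with $\delta(x,y_0)\le S$ (possible by \eqref{eq:403}) and estimate $f_t(x)\le f(y_0)+S^q/(qt^{q-1})\le\max_X f+S^q/(qt^{q-1})$. For \eqref{eq:404} I would pick $\bar y\in\sfJ^{K,\delta}_t f(x)$ realizing $\delta(x,\bar y)=D_t^+(x)$ and compare $f_t(x)=f(\bar y)+D_t^+(x)^q/(qt^{q-1})$ with the competitor value at $x$ itself, obtaining $D_t^+(x)^q/(qt^{q-1})\le f(x)-f(\bar y)$; since this is at once $\le\Osc fX$ and $\le\Lip(f,X,\delta)\,D_t^+(x)$, the first alternative gives $\bigl(D_t^+(x)/t\bigr)^q\le q\,t^{-1}\Osc fX$, while the second gives $\bigl(D_t^+(x)/t\bigr)^{q-1}\le q\Lip(f,X,\delta)$ and hence, raising to the power $q$ and using $q/(q-1)=p$, $\bigl(D_t^+(x)/t\bigr)^q\le\bigl(q\Lip(f,X,\delta)\bigr)^{p}$.

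For the combined monotonicity/almost-Lipschitz estimate I would prove \eqref{eq:387bis} first: fix $x'\in\sfJ^{K,\delta}_t f(x)$ so that $f_t(x)=f(x')+\delta^q(x,x')/(qt^{q-1})$ \emph{exactly}, use $x'$ as a competitor for $f_s(y)$ to get $f_s(y)\le f(x')+\delta^q(y,x')/(qs^{q-1})$, subtract, and split the difference as $\bigl(\delta^q(y,x')-\delta^q(x,x')\bigr)/(qs^{q-1})+\delta^q(x,x')\bigl(\tfrac1{qs^{q-1}}-\tfrac1{qt^{q-1}}\bigr)$; the first summand is controlled by the convexity inequality (with $a=\delta(y,x')$, $b=\delta(x,x')$) followed by the triangle inequality, producing the term $\bigl(\delta(y,x')/s\bigr)^{q-1}\delta(x,y)$, and the second by the integral identity, producing $\tfrac1p\bigl(\delta(x,x')/s\bigr)^q(t-s)$. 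The lower bound \eqref{eq:387} is the mirror-image argument, now using $y'\in\sfJ^{K,\delta}_s f(y)$ as a competitor for $f_t(x)$ and splitting off $\delta^q(y,y')/(qt^{q-1})$. Finally \eqref{eq:405} is the diagonal case $y=x$ of these two inequalities, choosing the minimizer with $\delta(x,x')=D_t^-(x)$ for the upper bound and $\delta(x,y')=D_s^+(x)$ for the lower one, and keeping the coefficient in its $\frac1{qs^{q-1}}-\frac1{qt^{q-1}}$ form. I expect the most delicate part to be purely bookkeeping — tracking the exponents $p$, $q$ and the powers of $s$ and $t$ through the conversions above — rather than conceptual; the genuine structural ingredients are only compactness of $K$ (existence of minimizers, hence good definition of $D_t^\pm$) and the escape condition \eqref{eq:403} (for \eqref{eq:390} and for the $S$-dependent corrections when $x\notin K$).
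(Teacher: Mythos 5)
Your proof is correct and follows essentially the same route as the paper's: exact identities at selected minimizers used as competitors, the tangent-line bound $a^q-b^q\le q\,a^{q-1}(a-b)$ combined with the triangle inequality, and the elementary estimate $\frac 1{qs^{q-1}}-\frac 1{qt^{q-1}}\le \frac 1{ps^{q}}(t-s)$ (your integral identity is exactly the paper's convexity argument behind \eqref{eq:407}). The only remark worth making is that your diagonal derivation of \eqref{eq:405} naturally yields $\bigl(D_s^+(x)\bigr)^q$ and $\bigl(D_t^-(x)\bigr)^q$, which is the intended form of that inequality (as used later in \eqref{eq:389} and \eqref{eq:411}).
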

\begin{proof}
  \eqref{eq:390} is immediate.
  In order to prove \eqref{eq:404} we simply observe that for every
  $x'\in J_t(x)$
  \begin{align*}
    \frac {\delta^q(x,x')}{t^{q-1}}\le q\big(f(x)-f(x')\big)\le q\min\big(\Osc fX,\Lip(f,X,\delta)\delta(x,x')\big)
  \end{align*}
  thus obtaining
  \begin{displaymath}
    \Big(\frac {\delta(x,x')}{t}\Big)^q\le \frac qt\Osc fX,\quad
    \Big(\frac {\delta(x,x')}{t}\Big)^{q-1}\le q \Lip(f,X,\delta).
  \end{displaymath}
  Let us now check \eqref{eq:387bis}: selecting $x'\in J_t(x)$
  \begin{align}
    \notag
    f_s(y)-f_t(x)
    &\le
      F^\delta(s,y,x')-F^\delta(t,x,x')
      =\frac {\delta^q(y,x')}{qs^{q-1}}-
      \frac {\delta^q(x,x')}{qt^{q-1}}
    \\&=\notag
    \frac {\delta^q(y,x')}{qs^{q-1}}-\frac {\delta^q(x,x')}{qs^{q-1}}+
    \Big(\frac 1{qs^{q-1}}-\frac 1{qt^{q-1}}\Big)\delta^q(x,x')
    \\&\le\label{eq:406}
    \Big(\frac{\delta(y,x')}s\Big)^{q-1}|\delta(y,x')-\delta(x,x')|+
    \Big(\frac 1{qs^{q-1}}-\frac 1{qt^{q-1}}\Big)\delta^q(x,x').
  \end{align}
  Applying the triangle inequality for $\delta$ and
  the elementary inequality (arising from the convexity of
  $r\mapsto 1/r^{q-1}$ in $(0,\infty)$)
  \begin{equation}
    \label{eq:407}
    \frac 1{p t^q}(t-s)\le  \frac 1{qs^{q-1}}-\frac 1{qt^{q-1}}\le
    \frac 1{p s^q}(t-s)\quad
    \text{for every }s,t\in (0,+\infty),
  \end{equation}
  we obtain \eqref{eq:387bis}.
  \eqref{eq:387} will follow by switching the role of
  $(x,t)$ and $(y,s)$.

  Concerning \eqref{eq:405}, the right inequality
  can be easily obtained by choosing $y=x$ in
  \eqref{eq:406}
  and minimizing with respect to $x'\in J_t(x)$.
  Inverting the role of $s$ and $t$ (notice that \eqref{eq:406}
  does not require $s<t$) and maximizing with respect to $x'\in
  J_s(x)$ we get the left inequality of \eqref{eq:405}.
\end{proof}
We collect further properties in the next Lemma.
\begin{lemma}
  Let us assume that $\delta$ is continuous. For every
  $f\in \rmC(X)$ we have
  \label{le:HL1}
  \begin{enumerate}
  \item The map $(x,t)\mapsto \sfQ^{K,\delta}_t f(x)$ is
    continuous in $X\times (0,+\infty)$ and
    for every $0<s<t$ 
    it satisfies the estimate
    \begin{equation}
      \label{eq:389}
      \begin{aligned}
        \frac 1p \Big(\frac{\sfD^{K,\delta,+}_sf(x)}t\Big)^q(t-s)
        &\le \sfQ_s^{K,\delta}f(x)
        - \sfQ_t^{K,\delta}f(x)
        \le \frac 1p
        \Big(\frac{\sfD^{K,\delta,-}_tf(x)}s\Big)^q(t-s).
      \end{aligned}
          \end{equation}
  \item The map $(x,t)\mapsto \sfD^{K,\delta,+}_{t}f(x)$
    (resp.~$(x,t)\mapsto \sfD^{K,\delta,-}_{t}f(x)$) is upper
    (resp.~lower) semicontinuous in $X\times (0,\infty)$ and there holds
    \begin{equation}
      \label{eq:386}
       \sfD^{K,\delta,-}_{s}f(x)\le
       \sfD^{K,\delta,+}_{s}f(x)\le
       \sfD^{K,\delta,-}_{t}f(x)\quad\text{if }0<s<t.
    \end{equation}
  \item If $(K_\lambda)_{\lambda\in \Lambda}$ is an increasing net
    with $\cup_{\lambda\in \Lambda} K_\lambda$ dense in $K$, then
    for every $x\in X$ the net $\lambda\mapsto \sfQ^{K_\lambda,\delta}_t f(x)$
    is decreasing and converging to $\sfQ^{K,\delta}_t f(x)$.
  \item If $(K_\lambda)_{\lambda\in \Lambda}$ is an increasing net
    with $K_\lambda\subset K$, then
    \begin{equation}
      \label{eq:385}
      \lim_{\lambda\in \Lambda}\sfQ^{K_\lambda,\delta}_t f(x)=
      \sfQ^{K,\delta}_t f(x)\quad
      \Rightarrow\quad
      \limsup_{\lambda\in \Lambda} \sfD^{K_\lambda,\delta,+}_tf(x)\le
      \sfD^{K,\delta,+}_t f(x).
    \end{equation}
  \end{enumerate}
\end{lemma}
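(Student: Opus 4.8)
The plan is to take part~(a) as the base and to deduce (b)--(d) from it together with a single ``stability of minimizers'' principle.

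\medskip\noindent\textbf{Part (a).} Thanks to the continuity of $f$, of the extended semidistance $\delta$ (as a map into $[0,+\infty]$), and of $t\mapsto t^{1-q}$ on $(0,\infty)$, the function $F^{\delta}(t,x,y)=f(y)+\delta^q(x,y)/(qt^{q-1})$ is jointly continuous from $(0,\infty)\times X\times X$ into $(-\infty,+\infty]$. Hence $(t,x)\mapsto \sfQ^{K,\delta}_tf(x)=\inf_{y\in K}F^{\delta}(t,x,y)$ is upper semicontinuous, being an infimum of u.s.c.\ functions, and, $K$ being compact, also lower semicontinuous: given $(t_i,x_i)\to(t,x)$ with minimizers $y_i$, a subnet $y_i\to y\in K$ gives $\liminf_iF^{\delta}(t_i,x_i,y_i)\ge F^{\delta}(t,x,y)\ge \sfQ^{K,\delta}_tf(x)$; by \eqref{eq:403} and $f\in\rmC(X)$ this infimum is finite everywhere. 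So $\sfQ^{K,\delta}f$ is continuous on $X\times(0,\infty)$. For \eqref{eq:389}, fix $0<s<t$, pick $y_s\in\sfJ^{K,\delta}_sf(x)$ maximizing $\delta(x,\cdot)$ and $y_t\in\sfJ^{K,\delta}_tf(x)$ minimizing $\delta(x,\cdot)$; testing $\sfQ^{K,\delta}_tf(x)$ with the competitor $y_s$ and $\sfQ^{K,\delta}_sf(x)$ with the competitor $y_t$, and subtracting the identities defining $\sfQ^{K,\delta}_sf(x)$ and $\sfQ^{K,\delta}_tf(x)$, yields
\[
\delta^q(x,y_s)\Bigl(\tfrac1{qs^{q-1}}-\tfrac1{qt^{q-1}}\Bigr)\le \sfQ^{K,\delta}_sf(x)-\sfQ^{K,\delta}_tf(x)\le \delta^q(x,y_t)\Bigl(\tfrac1{qs^{q-1}}-\tfrac1{qt^{q-1}}\Bigr),
\]
whence \eqref{eq:389} follows from the elementary inequality \eqref{eq:407}.

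\medskip\noindent\textbf{The stability principle and part (b).} The engine for (b) and (d) is: \emph{if $(x_i,t_i)\to(x,t)$, $y_i\in\sfJ^{K,\delta}_{t_i}f(x_i)$, and $y_i\to y$ in $X$, then $y\in\sfJ^{K,\delta}_tf(x)$ and $\delta(x_i,y_i)\to\delta(x,y)$.} Indeed, from $\sfQ^{K,\delta}_{t_i}f(x_i)=f(y_i)+\delta^q(x_i,y_i)/(qt_i^{q-1})$, part~(a) and continuity of $f$ force $\delta^q(x_i,y_i)\to qt^{q-1}\bigl(\sfQ^{K,\delta}_tf(x)-f(y)\bigr)$; lower semicontinuity of $\delta$ then gives $\delta^q(x,y)\le$ this limit, i.e.\ $F^{\delta}(t,x,y)\le\sfQ^{K,\delta}_tf(x)$, and since the reverse inequality always holds and $y\in K$, we get $y\in\sfJ^{K,\delta}_tf(x)$ and $\delta^q(x,y)=\lim_i\delta^q(x_i,y_i)$. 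Granting this: $\sfD^{K,\delta,-}_sf\le\sfD^{K,\delta,+}_sf$ is trivial; for $0<s<t$, adding $\sfQ^{K,\delta}_sf(x)\le f(y_t)+\delta^q(x,y_t)/(qs^{q-1})$ and $\sfQ^{K,\delta}_tf(x)\le f(y_s)+\delta^q(x,y_s)/(qt^{q-1})$ for arbitrary $y_s\in\sfJ^{K,\delta}_sf(x)$, $y_t\in\sfJ^{K,\delta}_tf(x)$ and subtracting the defining identities gives $(\delta^q(x,y_t)-\delta^q(x,y_s))(\tfrac1{qs^{q-1}}-\tfrac1{qt^{q-1}})\ge0$, hence $\delta(x,y_t)\ge\delta(x,y_s)$; taking the minimum over $y_t$ and the maximum over $y_s$ is \eqref{eq:386}. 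The semicontinuity assertions are then immediate: choosing $y_i$ to realize $\sfD^{K,\delta,+}_{t_i}f(x_i)$ (resp.\ $\sfD^{K,\delta,-}_{t_i}f(x_i)$), every subnet admits a further subnet along which $\delta(x_i,y_i)\to\delta(x,y)\le\sfD^{K,\delta,+}_tf(x)$ (resp.\ $\ge\sfD^{K,\delta,-}_tf(x)$), so $\sfD^{K,\delta,+}f$ is u.s.c.\ and $\sfD^{K,\delta,-}f$ is l.s.c.

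\medskip\noindent\textbf{Parts (c) and (d).} That $\lambda\mapsto\sfQ^{K_\lambda,\delta}_tf(x)$ is decreasing (hence convergent to its infimum $L$) and that $L\ge\sfQ^{K,\delta}_tf(x)$ is clear, since $K_\lambda\subseteq K_\mu\subseteq K$ for $\lambda\preceq\mu$. If $L>\sfQ^{K,\delta}_tf(x)$, take a minimizer $y^{*}\in K$ of $F^{\delta}(t,x,\cdot)$; then $\delta(x,y^{*})<\infty$, so $F^{\delta}(t,x,\cdot)$ is real-valued and continuous near $y^{*}$, hence there is a $\tau$-neighbourhood $U$ of $y^{*}$ with $F^{\delta}(t,x,z)<L$ for every $z\in U\cap K$; the density of $\bigcup_\lambda K_\lambda$ in $K$ yields some $\lambda$ and $z\in K_\lambda\cap U$, whence $\sfQ^{K_\lambda,\delta}_tf(x)\le F^{\delta}(t,x,z)<L$, a contradiction. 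Thus $L=\sfQ^{K,\delta}_tf(x)$, which is (c). Part~(d) is the stability principle with $x,t$ frozen and $K_\lambda\uparrow$: choosing $y_\lambda\in\sfJ^{K_\lambda,\delta}_tf(x)$ realizing $\sfD^{K_\lambda,\delta,+}_tf(x)$ and, along a subnet, $y_\lambda\to y\in K$, the hypothesis $\sfQ^{K_\lambda,\delta}_tf(x)\to\sfQ^{K,\delta}_tf(x)$ lets the argument of the stability principle run verbatim, giving $y\in\sfJ^{K,\delta}_tf(x)$ and $\delta(x,y_\lambda)\to\delta(x,y)\le\sfD^{K,\delta,+}_tf(x)$, hence $\limsup_\lambda\sfD^{K_\lambda,\delta,+}_tf(x)\le\sfD^{K,\delta,+}_tf(x)$.

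\medskip
The one point that needs care is the extended, $[0,+\infty]$-valued nature of $\delta$: every semicontinuity manipulation above must be read for $(-\infty,+\infty]$-valued functions, and one repeatedly uses that any (near-)minimizer of $F^{\delta}(t,x,\cdot)$ over $K$ automatically lies at finite $\delta$-distance from $x$ — because $\sfQ^{K,\delta}_tf(x)<\infty$ by \eqref{eq:403} and the boundedness of $f$ — which is what legitimizes the local continuity arguments, notably the one in part~(c) near $y^{*}$. The remaining net/subnet bookkeeping, which rests only on the $\tau$-compactness of $K$, is routine.
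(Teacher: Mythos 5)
Your proof is correct and follows essentially the same route as the paper's: the inequalities behind \eqref{eq:389} and \eqref{eq:386} are exactly the estimates the paper records as \eqref{eq:405} together with \eqref{eq:407} (which you re-derive inline), and parts (b)--(d) rest on the same mechanism of extracting $\tau$-convergent subnets of minimizers in the compact set $K$ and using the continuity of $F^\delta$ (or the lower semicontinuity of $\delta$ plus continuity of $\sfQ^{K,\delta}f$) to identify the limit as a minimizer. The only cosmetic difference is that you obtain the joint continuity in (a) by a soft upper-semicontinuity-plus-compactness argument instead of invoking the quantitative bounds \eqref{eq:387}--\eqref{eq:387bis}, which is equally valid.
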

\begin{proof}
  \textbf{(a)}
  Continuity follows by \eqref{eq:387} and \eqref{eq:387bis}.
  \eqref{eq:389} is a consequence of \eqref{eq:405} and
  \eqref{eq:407}.
  %
  \medskip
  
 \noindent\textbf{(b)}
 \eqref{eq:386} follows immediately from \eqref{eq:405}.
 It is not difficult to check that
$(t,x)\mapsto \sfD^{K,\delta,+}_{t}f(x)$ is
upper semicontinuous:
if $(x_\lambda,t_\lambda)$, $\lambda\in \Lambda$, is a net converging
to $(x,t)\in X\times (0,+\infty)$ with
$\sfD^{K,\delta,+}_{t_\lambda}(x_\lambda)
\ge c$ and
$y_\lambda\in \sfJ^{K,\delta}_{t_\lambda}(x_\lambda)$ such that
\begin{equation}
  \label{eq:375}
  F^{\delta}(t_\lambda,x_\lambda,y_\lambda)=\sfQ^{K,\delta}_{t_\lambda}f(x_\lambda),
  \quad
  \delta(x_\lambda,y_\lambda)=\sfD^{K,\delta,+}_{t_\lambda}(x_\lambda)\ge
  c,
\end{equation}
we can find a subnet $j\mapsto \lambda(j)$, $j\in J$,
such that $j\mapsto y_{\lambda(j)}$ converges to a point $y\in K$ with
\begin{displaymath}
  F^{\delta}(t,x,y)=
  \lim_{j\in J}
  F^{\delta}(t_{\lambda(j)},x_{\lambda(j)},y_{\lambda(j)})=
  \lim_{j\in J}\sfQ^{K,\delta}_{t_{\lambda(j)}}f(x_{\lambda(j)})=
  \sfQ^{K,\delta}_tf(x),
\end{displaymath}
showing that $y\in \sfJ^{K,\delta}_t f(x)$. Since
\begin{displaymath}
  \delta(x,y)=\lim_{j\in J}\delta(x_{\lambda(j)},y_{\lambda(j)})\ge c,
\end{displaymath}
we obtain that $\sfD^{K,\delta,+}_t(x)\ge c$.
A similar argument holds for the lower semicontinuity of
$\sfD^{K,\delta,-}_t f$.
\medskip

\noindent\textbf{(c)}
The decreasing property of $\sfQ$ w.r.t.~$\lambda$ is obvious; 
in particular it yields
\begin{equation}
  \label{eq:391}
  \sfQ^{K_\lambda,\delta}_t f(x)\ge   \sfQ^{K,\delta}_t f(x).
\end{equation}
On the other hand, by the density of $\cup_\lambda K_\lambda$ in $K$
and the continuity of $F$, for every $y\in \sfJ^{K,\delta}_t f(x)$ and 
$\eps>0$ we can find $\lambda_\eps\in \Lambda$ and $y_\eps\in K_{\lambda_\eps}$
such that $F(t,x,y_\eps)\le F(t,x,y)+\eps$
so that for every $\lambda\succ\lambda_\eps$
\begin{displaymath}
  \sfQ^{K_\lambda,\delta}_t f(x)\le
  F(t,x,y_\eps)\le F(t,x,y)+\eps\le \sfQ^{K,\delta}_t f(x)+\eps.
\end{displaymath}
Since $\eps>0$ is arbitrary we obtain the proof of the claim.
\medskip

\noindent\textbf{(d)}
We argue as in the proof of the second claim:
we select $y_\lambda\in \sfJ^{K_\lambda,\delta}_t f(x)\subset
K_\lambda$ so that
\begin{equation}
  \label{eq:392}
  F(t,x,y_\lambda)=\sfQ^{K_\lambda,\delta}_t f(x),\quad
  \sfD^{K_\lambda,\delta,+}_t f(x)=\delta(x,y_\lambda).
\end{equation}
We can find a subnet $j\mapsto y_{\lambda(j)}$, $j\in J$,
converging to some $y\in K$ with
$S:=\limsup_{\lambda\in \Lambda}\sfD^{K_\lambda,\delta,+}_t f(x)=
\lim_{j\in J}\delta(x,y_{\lambda(j)})=\delta(x,y)$.
It follows that
\begin{align*}
  \sfQ^{K,\delta}_tf(x)\topref{eq:385}=
  \lim_{j\in J}\sfQ^{K_{\lambda(j)},\delta}_tf(x)
  =
  \lim_{j\in J}F(t,x,y_{\lambda(j)})=
  F(t,x,y)
\end{align*}
so that $y\in \sfJ^{K,\delta}_tf(x)$. This yields
\begin{displaymath}
  \sfD^{K,\delta,+}_tf(x)\ge \delta(x,y)=\limsup_{\lambda\in \Lambda}\sfD^{K_\lambda,\delta,+}_t f(x).
\end{displaymath}
\end{proof}
\noindent
We consider now the behaviour of $\sfQ^\delta=\sfQ^{X,\delta}$ with
respect to $\delta$. 
\begin{proposition}
  \label{prop:HL2}
  Let $(\sfd_i)_{i\in I}$ be a directed family of continuous
  semidistances as in {\upshape (\ref{eq:monotone}a,b,c,d)} and let
  $f\in \rmC_b(X)$.
  For every $x\in X$
    the net $i\mapsto \sfQ^{\sfd_i}_t f(x)$ is monotonically
    converging to $\sfQ_t f(x)$ and
    \begin{equation}
    \limsup_{i\in I}\sfD^{\sfd_i,+}_tf(x)\le \sfD^{+}_t
    f(x).\label{eq:394}
  \end{equation}
  More generally, if $j\mapsto i(j)$, $j\in J$, is  an increasing net (but not
    necessarily a subnet)
    \begin{equation}
      \label{eq:393}
      \lim_{j\in J}\sfQ^{\sfd_{i(j)}}_tf(x)=
      \sfQ_t f(x)\quad\Rightarrow\quad
      \limsup_{j\in J}\sfD^{\sfd_{i(j)},+}_tf(x)\le \sfD^{+}_t f(x)
    \end{equation}
\end{proposition}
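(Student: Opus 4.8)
The plan is to reduce the whole statement — monotone convergence of $\sfQ^{\sfd_i}_tf(x)$, the bound \eqref{eq:394}, and its net version \eqref{eq:393} — to a compactness extraction of minimizers of the Hopf–Lax functional, combined with the lower semicontinuity estimate \eqref{eq:321} for the family $(\sfd_i)$. This is the same mechanism used in the proofs of Lemma \ref{le:HL1}(c),(d), the only new feature being that the ``reference'' semidistance is now the pointwise supremum $\sfd=\sup_i\sfd_i$ instead of a fixed continuous one. For the monotone convergence: since $\sfd_i\le\sfd_j\le\sfd$ whenever $i\preceq j$, we have $F^{\sfd_i}(t,x,\cdot)\le F^{\sfd_j}(t,x,\cdot)\le F^{\sfd}(t,x,\cdot)$ on $X$, so $i\mapsto\sfQ^{\sfd_i}_tf(x)$ is nondecreasing and bounded above by $\sfQ_tf(x)$ (all quantities being finite by \eqref{eq:390}, as $K=X$). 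For the converse inequality I would pick, for each $i$, a minimizer $y_i\in\sfJ^{\sfd_i}_tf(x)$, use compactness of $(X,\tau)$ to pass to a subnet $\alpha\mapsto i(\alpha)$ with $y_{i(\alpha)}\to y$, and pass to the limit in $\sfQ^{\sfd_i}_tf(x)=f(y_i)+\sfd_i^q(x,y_i)/(qt^{q-1})$: by continuity of $f$ and by \eqref{eq:321} applied to the subnet $i(\cdot)$, to the constant net $x$ and to $y_{i(\alpha)}$, the limit is $\ge f(y)+\sfd^q(x,y)/(qt^{q-1})\ge\sfQ_tf(x)$, whence $\lim_i\sfQ^{\sfd_i}_tf(x)=\sfQ_tf(x)$.

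For the bound \eqref{eq:394} I would argue exactly as in Lemma \ref{le:HL1}(d): choose for each $i$ a \emph{maximal} minimizer $y_i\in\sfJ^{\sfd_i}_tf(x)$, i.e.\ with $\sfd_i(x,y_i)=\sfD^{\sfd_i,+}_tf(x)$, and extract a subnet $\alpha\mapsto i(\alpha)$ along which $\sfD^{\sfd_{i(\alpha)},+}_tf(x)\to S:=\limsup_i\sfD^{\sfd_i,+}_tf(x)$ and, by compactness, $y_{i(\alpha)}\to y$. Passing to the limit in $\sfQ^{\sfd_{i(\alpha)}}_tf(x)=f(y_{i(\alpha)})+\sfd_{i(\alpha)}^q(x,y_{i(\alpha)})/(qt^{q-1})$ and using the monotone convergence just established, one gets
\[
  \sfQ_tf(x)=f(y)+\frac{S^q}{qt^{q-1}}.
\]
Now \eqref{eq:321} gives $S=\lim_\alpha\sfd_{i(\alpha)}(x,y_{i(\alpha)})\ge\sfd(x,y)$, while testing the infimum defining $\sfQ_tf(x)$ with $y'=y$ gives $\sfQ_tf(x)\le f(y)+\sfd^q(x,y)/(qt^{q-1})$, hence $S\le\sfd(x,y)$. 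Therefore $S=\sfd(x,y)$, the displayed identity reads $f(y)+\sfd^q(x,y)/(qt^{q-1})=\sfQ_tf(x)$, i.e.\ $y\in\sfJ_tf(x)$, and consequently $\sfD^+_tf(x)\ge\sfd(x,y)=S$, which is \eqref{eq:394}.

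For the general statement \eqref{eq:393} one runs the very same extraction for the net $j\mapsto i(j)$, now invoking the \emph{hypothesis} $\lim_{j\in J}\sfQ^{\sfd_{i(j)}}_tf(x)=\sfQ_tf(x)$ wherever the previous argument used the monotone convergence: with maximal minimizers $y_j\in\sfJ^{\sfd_{i(j)}}_tf(x)$ and a subnet $\alpha\mapsto j(\alpha)$ along which $\sfD^{\sfd_{i(j(\alpha))},+}_tf(x)\to S:=\limsup_j\sfD^{\sfd_{i(j)},+}_tf(x)$ and $y_{j(\alpha)}\to y$, one again reaches $\sfQ_tf(x)=f(y)+S^q/(qt^{q-1})$, then $S\ge\sfd(x,y)$ from \eqref{eq:321} and $S\le\sfd(x,y)$ from optimality of $y$, so $y\in\sfJ_tf(x)$ and $S\le\sfD^+_tf(x)$. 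Statement \eqref{eq:394} is then the particular case $J=I$, $i(j)=j$.

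The delicate point I expect to be the main obstacle is the passage to the limit in $\sfd_{i(\alpha)}^q(x,y_{i(\alpha)})$: since $\sfd$ is only $\tau$-lower semicontinuous and the semidistances themselves move along the net, $\sfd_{i(\alpha)}(x,y_{i(\alpha)})$ need not converge to $\sfd(x,y)$, and \eqref{eq:321} only provides the one-sided bound $\liminf\ge\sfd(x,y)$. It is precisely the two-sided squeeze — the lower bound from \eqref{eq:321} against the upper bound $S\le\sfd(x,y)$ coming from the minimality of $y$ in the formula for $\sfQ_tf(x)$, together with the limiting identity $\sfQ_tf(x)=f(y)+S^q/(qt^{q-1})$ — that simultaneously forces $S=\sfd(x,y)$ and certifies that the limit point $y$ is a genuine element of $\sfJ_tf(x)$, which is exactly what is needed to conclude $\sfD^+_tf(x)\ge S$. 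One should also take care that the subnets chosen so that the relevant $\limsup$ is attained still allow \eqref{eq:321} and the monotone convergence to be applied, which is automatic since a subnet of a convergent net converges to the same limit and a subnet of a subnet of $(\sfd_i)_{i\in I}$ is again of the type covered by \eqref{eq:321}.
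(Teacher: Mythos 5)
Your proof is correct and follows essentially the same route as the paper: maximal minimizers, compactness extraction, the squeeze between the lower bound from \eqref{eq:321} and the upper bound from the minimality of the limit point $y$, which forces $y\in\sfJ_tf(x)$ and yields \eqref{eq:393}, with \eqref{eq:394} as the special case $i(j)=j$. The only cosmetic difference is that you spell out the minimizer-extraction argument for the monotone convergence of $\sfQ^{\sfd_i}_tf(x)$, where the paper simply invokes $\Gamma$-convergence of an increasing family on a compact set.
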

\begin{proof}
  Since $i\prec j$ yields $\sfd_i\le \sfd_j$, it is
  clear that $i\mapsto \sfQ^{\sfd_i}_t f(x)$ is increasing.
  For every
  fixed $t,x,y$ we have $\lim_{i\in I}F^{\sfd_i}(t,x,y)=F^\sfd(t,x,y)$
  monotonically.
  The first statement then follows by a standard application of
  $\Gamma$-convergence
  of a family of increasing real functions in a compact set.
  \eqref{eq:394} is a particular case of \eqref{eq:393} for the
  identity map in the directed set $I$.
  
  Let us now assume that $\lim_{j\in J}\sfQ^{\sfd_{i(j)}}_tf(x)=
  \sfQ_t f(x) $ along an increasing net $j\mapsto i(j)$.
  We can select $y_j\in X$ such that
  \begin{equation}
    \label{eq:395}
    \sfQ^{\sfd_{i(j)}}_tf(x)=f(y_j)+\frac 1{q
      t^{q-1}}\sfd^q_{i(j)}(x,y_j),\quad
    \sfD^{\sfd_{i(j)},+}_t f(x)=\sfd_{i(j)}(x,y_j).
  \end{equation}
  We can find a further subnet $h\mapsto j(h)$, $h\in H$,
  such that $(y_{j(h)})_{h\in H}$ is convergent to $y\in X$ and
  \begin{equation}
    \label{eq:396}
    \limsup_{j\in J}\sfD^{\sfd_{i(j)},+}_t f(x)=
    \lim_{h\in H} \sfd_{i(j(h))}(x,y_{j(h)}).
\end{equation}
Passing to the limit in the first equation of \eqref{eq:395} and using
the assumption of \eqref{eq:393} we get
\begin{equation}
  \label{eq:397}
  \sfQ_t f(x)=f(y)+\frac 1{q
    t^{q-1}}\lim_{h\in H}\sfd^q_{i(j(h))}(x,y_{j(h)})
  \topref{eq:321}\ge f(y)+\frac 1{q
    t^{q-1}}\sfd^q(x,y)\ge \sfQ_t f(x)
\end{equation}
where the last inequality follows by the very definition of $\sfQ_t
f(x)$.
We deduce that
\begin{equation}
  \label{eq:398}
  y\in \sfJ_t f(x),\quad
  \lim_{h\in H}\sfd^q_{i(j(h))}(x,y_{j(h)})=\sfd^q(x,y).
\end{equation}
Since $\sfd(x,y)\le \sfD^+_t f(x)$, by \eqref{eq:396} we get \eqref{eq:393}.
\end{proof}
Notice that the upper semicontinuity property of \eqref{eq:394} and of
\eqref{eq:393} are not immediately obvious as in the case of, e.g.,
\eqref{eq:385},
since $\sfd$ is typically just lower semicontinuous along
$\tau$-converging sequences. In the proof we used in an essential way
the minimality of $y_j$ and the continuity of $f$.

We conclude this section with the main structural properties for the
Hopf-Lax evolution generated by $\sfd$.
\begin{theorem}
  \label{thm:HL3}
  Let $f\in \Lipb(X,\tau,\sfd)$ and let $\sfQ_t f,\ \sfJ_t f,\
  \sfD^\pm_t f$ be defined as \eqref{eq:402}, \eqref{eq:374},
  \eqref{eq:373}
  for $t>0$.
  \begin{enumerate}
  \item The functions $(x,t)\mapsto \sfQ_t f(x),\ \sfD_t^- f(x)$ are lower
    semicontinuous in $X\times (0,+\infty)$ and 
    \begin{equation}
      \label{eq:399}
      \min_X f\le \sfQ_t f(x)\le \max_X f\quad\text{for every }t>0,\
      x\in X.
    \end{equation}
  \item For every $x\in X$
    \begin{equation}
      \label{eq:410}
      \lim_{t\down0}\sfQ_t f(x)=\sfQ_0 f(x):=f(x),
    \end{equation}
    the map $t\mapsto \sfQ_t f(x)$ is
    Lipschitz in $[0,\infty)$ and satisfies
    \begin{equation}
      \label{eq:411}
      \frac\d{\d t}\sfQ_t f(x)=-\frac1p\Big(\frac{\sfD_t^\pm
        f(x)}t\Big)^q\quad\text{for  $t>0$ 
        with at most countable exceptions.}
    \end{equation}
    %
  \item
    For every $x\in X$ and $t>0$
    \begin{equation}
      \label{eq:401}
      f(x)-\sfQ_t f(x)=\frac tp\int_0^1 \Big(\frac{\sfD^+_{tr}
        f(x)}{tr}\Big)^p\,\d r,
    \end{equation}
    \begin{equation}
      \label{eq:418}
      \limsup_{t\down0}\frac{f(x)-\sfQ_t f(x)}t\le
      \frac1p|\rmD f|^p(x)\le \frac 1p \big(\lip f(x)\big)^p.
    \end{equation}
  \end{enumerate}
\end{theorem}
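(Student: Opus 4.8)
The plan is to build all three statements from the basic estimates of the preceding two lemmas, where $\delta=\sfd$ and $K=X$ (so $S=0$). The lower semicontinuity claim in (a) is immediate: by \eqref{eq:372}, $\sfQ_t f(x)=\inf_{y\in X}F(t,x,y)$ is an infimum over $y$ of functions $(x,t)\mapsto F(t,x,y)$, each of which is $\tau\times\tau$-lower semicontinuous on $X\times(0,\infty)$ because $\sfd$ is $\tau\times\tau$-lower semicontinuous by \eqref{eq:243} and $f$ is continuous; an infimum of l.s.c.\ functions need not be l.s.c.\ in general, so here I would instead argue, as in the proof of Lemma \ref{le:HL1}(b), that along a $\tau$-converging net $(x_\lambda,t_\lambda)\to(x,t)$ one extracts minimizers $y_\lambda\in\sfJ_{t_\lambda}f(x_\lambda)$, passes to a convergent subnet $y_{\lambda(j)}\to y$ (using compactness of $X$), and uses $\liminf F(t_{\lambda(j)},x_{\lambda(j)},y_{\lambda(j)})\ge F(t,x,y)\ge\sfQ_tf(x)$ by lower semicontinuity of $\sfd$. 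The same extraction argument, but maximizing $\sfd(x,y)$ over the limit set $\sfJ_tf(x)$, gives lower semicontinuity of $\sfD^-_tf$. The two-sided bound \eqref{eq:399} is exactly \eqref{eq:390} with $S=0$.

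For (b), the limit \eqref{eq:410} follows from \eqref{eq:405}: for $0<t$, $0\le f(x)-\sfQ_tf(x)\le \frac1{qt^{q-1}}(\sfD^-_tf(x))^q$, and by \eqref{eq:404} (again with $S=0$), $(\sfD^-_tf(x)/t)^{q}\le(\sfD^+_tf(x)/t)^q\le (q\Lip(f,X,\sfd))^p$, so $\sfD^-_tf(x)\le ct$ with $c$ depending only on $\Lip(f)$, whence $f(x)-\sfQ_tf(x)\le \frac1{q}c^q t\to0$. The Lipschitz character of $t\mapsto\sfQ_tf(x)$ on $[0,\infty)$ comes from \eqref{eq:389} (with $K=X$, $\delta=\sfd$), which for $0<s<t$ reads
\begin{equation}
  \frac1p\Big(\frac{\sfD^+_sf(x)}t\Big)^q(t-s)\le \sfQ_sf(x)-\sfQ_tf(x)\le\frac1p\Big(\frac{\sfD^-_tf(x)}s\Big)^q(t-s),
  \label{eq:HLplan1}
\end{equation}
together with the uniform bound on $\sfD^\pm/t$ from \eqref{eq:404}; combined with continuity at $0$ this gives global Lipschitzianity on $[0,\infty)$. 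For \eqref{eq:411}: the map $g(t):=\sfQ_tf(x)$ is monotone nonincreasing (by \eqref{eq:405}) and Lipschitz, hence differentiable off a $\Leb1$-null set; moreover $t\mapsto\sfD^-_tf(x)$ is nondecreasing by \eqref{eq:386} and $t\mapsto\sfD^+_tf(x)$ is nondecreasing too, so $\sfD^+_tf(x)=\sfD^-_tf(x)$ except at the at most countably many common jump points, and at a point of continuity $t$ one divides \eqref{eq:HLplan1} by $(t-s)$ and lets $s\uparrow t$ (and symmetrically $t'\downarrow t$ using the reversed inequality) to squeeze $g'(t)=-\frac1p(\sfD^\pm_tf(x)/t)^q$. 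The main delicacy here is matching the one-sided difference quotients from both sides of \eqref{eq:HLplan1} with the one-sided continuity of $\sfD^\pm$; I expect this to be the one point requiring genuine care, as in \cite{AGS14I}.

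For (c), the integral identity \eqref{eq:401} follows by integrating \eqref{eq:411}: since $t\mapsto\sfQ_tf(x)$ is Lipschitz, it is the integral of its a.e.\ derivative, so
\begin{equation}
  f(x)-\sfQ_tf(x)=\int_0^t\frac1p\Big(\frac{\sfD^+_\sigma f(x)}{\sigma}\Big)^q\,\d\sigma=\frac tp\int_0^1\Big(\frac{\sfD^+_{tr}f(x)}{tr}\Big)^q\,\d r
  \label{eq:HLplan2}
\end{equation}
after the substitution $\sigma=tr$, and $q=p/(p-1)$ is used to rewrite the exponent; here one must check that replacing $\sfD^-$ by $\sfD^+$ inside the integral is harmless, which holds because the two differ only on a countable (hence $\Leb1$-null) set. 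Finally \eqref{eq:418}: by \eqref{eq:405} $f(x)-\sfQ_tf(x)\le\frac1{qt^{q-1}}(\sfD^-_tf(x))^q$, so dividing by $t$ gives $(f(x)-\sfQ_tf(x))/t\le\frac1p(\sfD^-_tf(x)/t)^q$; as $t\downarrow0$ any minimizer $y_t\in\sfJ_tf(x)$ satisfies $\sfd(x,y_t)=\sfD^-_tf(x)\le ct\to0$, hence $y_t\to x$ (using \eqref{eq:244}), and from $\frac{\sfd^q(x,y_t)}{qt^{q-1}}\le f(x)-f(y_t)\le |\rmD f|(x)\,\sfd(x,y_t)+o(\sfd(x,y_t))$ as $y_t\to x$ one extracts $\limsup_{t\downarrow0}\sfD^-_tf(x)/t\le |\rmD f|(x)$ by the defining relation \eqref{eq:266} for $|\rmD_\sfd f|(x)$; plugging this into the displayed bound yields $\limsup_{t\downarrow0}(f(x)-\sfQ_tf(x))/t\le\frac1p|\rmD f|^p(x)\le\frac1p(\lip f(x))^p$, the last inequality being \eqref{eq:266}. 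The only subtlety is justifying $y_t\to x$ and the asymptotic estimate $f(x)-f(y_t)\le|\rmD f|(x)\sfd(x,y_t)+o(\cdot)$, which is where the pointwise definition of $|\rmD_\sfd f|$ and the $\tau$-continuity of $f$ enter.
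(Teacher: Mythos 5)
Your overall skeleton matches the paper's: for part (a) you prove lower semicontinuity of $\sfQ_t f$ by compactness and extraction of minimizers (the paper uses exactly this, or alternatively the representation of Proposition \ref{prop:HL2} as a supremum of continuous functions); part (b) (Lipschitz bounds from \eqref{eq:389}/\eqref{eq:404} and the squeeze at common continuity points of the monotone maps $\sfD^\pm$) and the identity \eqref{eq:401} obtained by integrating \eqref{eq:411} are also the paper's arguments (and your exponent $q$ in the integrand is indeed the one consistent with \eqref{eq:411}).

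There is, however, a genuine error in your route to \eqref{eq:410} and, more seriously, to \eqref{eq:418}. The inequality $0\le f(x)-\sfQ_tf(x)\le \frac1{qt^{q-1}}\big(\sfD^-_tf(x)\big)^q$ does \emph{not} follow from \eqref{eq:405}: letting $s\down0$ there makes the factor $\frac1{qs^{q-1}}-\frac1{qt^{q-1}}$ blow up, and the inequality is in fact false. Take $X=\{x,y\}$ with $\sfd(x,y)=d$, $f(x)=0$, $f(y)=-A$: if $A>\frac{d^q}{qt^{q-1}}$ then $\sfJ_tf(x)=\{y\}$, $\sfD^-_tf(x)=d$, and $f(x)-\sfQ_tf(x)=A-\frac{d^q}{qt^{q-1}}$ can be made arbitrarily large while your right-hand side stays fixed. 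The correct elementary bound, which is what the paper uses (see \eqref{eq:414}--\eqref{eq:415}), is $f(x)-\sfQ_tf(x)\le\Lip(f,X)\,\sfD^-_tf(x)$; combined with $\sfD^-_tf(x)\le ct$ from \eqref{eq:404} this still gives \eqref{eq:410}, so that step is repairable. Your proof of \eqref{eq:418} is not: besides resting on the false bound, the asymptotic you extract, $\limsup_{t\down0}\sfD^-_tf(x)/t\le|\rmD f|(x)$, is itself wrong unless $p=2$. Your displayed inequality $\frac{\sfd^q(x,y_t)}{qt^{q-1}}\le f(x)-f(y_t)\le\big(|\rmD f|(x)+o(1)\big)\sfd(x,y_t)$ only yields $\sfD^-_tf(x)/t\le\big(q(|\rmD f|(x)+o(1))\big)^{p/q}$ (test $f(y)=-Ly$ on the line: the minimizer sits at distance $L^{p/q}t$ from $x$), and inserting this into $\frac1p(\sfD^-_t f(x)/t)^q$ produces $\frac{q^p}{p}|\rmD f|^p(x)$, not $\frac1p|\rmD f|^p(x)$. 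The paper's proof of \eqref{eq:418} avoids any asymptotics of $\sfD^\pm_t/t$: divide \eqref{eq:414} by $t$ and apply Young's inequality $ab-\frac{a^q}{q}\le\frac{b^p}{p}$ with $a=\sfd(x,x')/t$ and $b=\frac{f(x)-f(x')}{\sfd(x,x')}$, obtaining $\frac{f(x)-\sfQ_tf(x)}t\le\frac1p\Big(\frac{f(x)-f(x')}{\sfd(x,x')}\Big)^p$ for every $x'\in\sfJ_tf(x)\setminus\{x\}$; then $\sfD^+_tf(x)\to0$ and the definition \eqref{eq:266} of $|\rmD f|$ give the claim.

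A smaller point: your one-line claim that "the same extraction argument" gives lower semicontinuity of $\sfD^-_tf$ hides a real difficulty. A limit $y$ of minimizers $y_\lambda\in\sfJ_{t_\lambda}f(x_\lambda)$ need not belong to $\sfJ_tf(x)$: lower semicontinuity of $F$ only gives $F(t,x,y)\le\liminf_\lambda\sfQ_{t_\lambda}f(x_\lambda)$, which can exceed $\sfQ_tf(x)$ since $\sfQ f$ is merely lower semicontinuous. This is precisely why the paper establishes only the conditional semicontinuity of $\sfD^\pm$ in Lemma \ref{le:HL4}(a), under the extra hypothesis $\sfQ_{t_\lambda}f(x_\lambda)\to\sfQ_tf(x)$; if you want the unconditional statement for $\sfD^-$ you need a separate argument, not the one you sketched.
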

\begin{proof}
  \textbf{(a)} Lower semicontinuity of $\sfQ f$
  is a consequence of the joint lower
  semicontinuity of $F$ and of the compactness of $(X,\tau)$;
  it can also be obtained by Proposition \ref{prop:HL2},
  which characterizes $Q_t f$ as a supremum of continuous functions.  
  The bound \eqref{eq:399} is immediate.
  \medskip

  \noindent
  \textbf{(b)}
  As for the proof of \eqref{eq:389}, we get from
  \eqref{eq:405} and
  \eqref{eq:407}
  \begin{equation}\label{eq:416}
      \frac 1p \Big(\frac{\sfD^{+}_sf(x)}t\Big)^q(t-s)
      \le \sfQ_sf(x)
        - \sfQ_t f(x)
        \le \frac 1p
        \Big(\frac{\sfD^{-}_tf(x)}s\Big)^q(t-s),
      \end{equation}
      and \eqref{eq:404} yields the uniform bound
      \begin{equation}
        \label{eq:408}
        \Big(\frac{\sfD^{+}_tf(x)}t\Big)^{q}
        \le \big(q\Lip(f,X)\big)^p.
      \end{equation}
      Since $t\mapsto \sfQ_t f(x)$ is decreasing, we obtain
      \begin{equation}
        \label{eq:409}
        \frac{|\sfQ_sf(x)-\sfQ_t f(x)|}{|t-s|}
        \le \frac 1p\left(\frac ts\right)^q
        \big(q\Lip(f,X)\big)^p\quad
        \text{for every }0<s<t.
      \end{equation}
      \eqref{eq:409} shows that $t\mapsto \sfQ_t f(x)$ is Lipschitz in
      every compact interval of $(0,\infty)$ with
      \begin{equation}
        \label{eq:412}
        \Big|\frac\d{\d t}\sfQ_t f(x)\Big|\le \frac 1p
        \big(q\Lip(f,X)\big)^p\quad\text{for a.e.~$t>0$},
      \end{equation}
      so that $t\mapsto \sfQ_t f(x)$ is Lipschitz in $(0,+\infty)$.
      In
      order to prove \eqref{eq:410} we simply observe that
      for every $x'\in \sfJ_t f(x)$, $x'\neq x$,
      \begin{equation}\label{eq:414}
        f(x)-\sfQ_t f(x)=
        f(x)-f(x')-\frac{\sfd^q(x,x')}{q\,t^{q-1}}
        \le \sfd(x,x')\Big(\frac{f(x)-f(x')}{\sfd(x,x')}-
        \frac 1q\frac{\sfd^{q-1}(x,x')}{t^{q-1}}\Big)
      \end{equation}
      so that
      \begin{equation}
        \label{eq:415}
       0\le  f(x)-\sfQ_t f(x)\le \sfD_t^- f(x)\Lip(f,X)
      \end{equation}
      and the right hand side vanishes as $t\down0$ thanks to
      \eqref{eq:408}.

      \eqref{eq:411} follows from \eqref{eq:417} and the monotonicity
      property (a consequence of \eqref{eq:405})
      \begin{equation}
        \label{eq:417}
        \sfD_s^- f(x)\le \sfD_s^+ f(x)\le \sfD_t^- f(x)\quad\text{for
          every }x\in X,\ 0<s<t,
      \end{equation}
      which in particular shows that $\sfD_t^- f(x)=\sfD_t^+ f(x)$
      for every $t>0$ with at most countable exceptions.
      \medskip

      \noindent\textbf{(c)}
      \eqref{eq:401} follows by integrating \eqref{eq:411}.

      Dividing \eqref{eq:414} by $t$ we get for every $x'\in \sfJ_t
      f(x)\setminus \{x\}$
      \begin{align*}
        \frac{f(x)-\sfQ_t f(x)}t=
        \frac{\sfd(x,x')}t\frac{f(x)-f(x')}{\sfd(x,x')}-
        \frac 1q\frac{\sfd^{q}(x,x')}{t^{q}}\le
        \frac 1p \Big(\frac{f(x)-f(x')}{\sfd(x,x')}\Big)^p;
      \end{align*}
      passing to the limit as $t\downarrow0$ and observing that
      $\lim_{t\down0}\sfD_t^+ f(x)=0$ we obtain \eqref{eq:418}.  
\end{proof}
We conclude this section with a discussion of the measurability
properties of the maps $\sfD^\pm f$.
In the case when $\sfd$ is continuous, $\sfD^+f$ (resp.~$\sfD^- f$)
is upper- (resp.~lower-) semicontinuous by Proposition \ref{prop:HL2}.
In the general case we can anyway prove that they are $\mm\times
\LL^1$ measurable.
\begin{lemma}[Conditional semicontinuity and measurability of $\sfD^\pm f$]
  \label{le:HL4}
  Under the same assumptions of Theorem \ref{thm:HL3}:
  \begin{enumerate}
  \item for every net $(x_\lambda,t_\lambda)_{\lambda\in \Lambda}$ in
    $X\times (0,\infty)$ such that $\lim_{\lambda\in
      \Lambda}(x_\lambda,t_\lambda)=(x,t)\in X\times (0,\infty)$ we have
    \begin{equation}
      \label{eq:424}
      \lim_{\lambda\in \Lambda}\sfQ_{t_\lambda} f(x_\lambda)=\sfQ_t
      f(x)\quad\Rightarrow\quad
      \left\{\begin{aligned}
        \limsup_{\lambda\in \Lambda}\sfD^+_{t_\lambda} f(x_\lambda)&\le
        \sfD_t^+ f(x),\\
        \liminf_{\lambda\in
          \Lambda}\sfD^-_{t_\lambda} f(x_\lambda)&\ge \sfD_t^-f(x).
      \end{aligned}
      \right.
    \end{equation}
  \item The maps $(x,t)\mapsto \sfD^\pm_t f(x)$ are Lusin
    $\mm\otimes \LL^1$-measurable in $X\times (0,\infty)$; moreover,
    for every $t>0$ the maps $x\mapsto \sfD^\pm_t f(x)$ are Lusin
    $\mm$-measurable in $X$.
  \end{enumerate}
\end{lemma}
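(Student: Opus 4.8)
The plan is to derive both claims of Lemma~\ref{le:HL4} from the basic one-sided estimates already established in the preceding lemmas, exploiting that $\sfd$ enters the definition of $\sfQ_t f$ only through the \emph{continuous} approximants $\sfd_i$ when $(X,\tau)$ is compact, together with a Lusin-type argument to pass from conditional semicontinuity to measurability.

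\textbf{Step 1: conditional semicontinuity.} First I would prove (a) by adapting the compactness argument used in the proof of Lemma~\ref{le:HL1}(b) to the merely lower semicontinuous distance $\sfd$. Given a net $(x_\lambda,t_\lambda)\to(x,t)$ with $\sfQ_{t_\lambda}f(x_\lambda)\to\sfQ_t f(x)$, pick minimizers $y_\lambda\in\sfJ_{t_\lambda}f(x_\lambda)$ realizing $\sfD^+_{t_\lambda}f(x_\lambda)=\sfd(x_\lambda,y_\lambda)$. By compactness extract a subnet along which $y_\lambda\to y$ and $\sfd(x_\lambda,y_\lambda)$ converges to $\limsup_\lambda\sfD^+_{t_\lambda}f(x_\lambda)$. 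Passing to the limit in $\sfQ_{t_\lambda}f(x_\lambda)=f(y_\lambda)+\sfd^q(x_\lambda,y_\lambda)/(qt_\lambda^{q-1})$ and using the lower semicontinuity of $\sfd$ in the form \eqref{eq:321} (or simply \eqref{eq:243}) gives
\begin{displaymath}
  \sfQ_t f(x)\ge f(y)+\frac{\sfd^q(x,y)}{qt^{q-1}}\ge\sfQ_t f(x),
\end{displaymath}
hence $y\in\sfJ_t f(x)$ and $\sfd(x_\lambda,y_\lambda)\to\sfd(x,y)\le\sfD^+_t f(x)$; this is exactly the argument already carried out in Proposition~\ref{prop:HL2}. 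The lower bound on $\sfD^-$ is obtained by the symmetric choice of minimizers realizing the minimal distance and the same limiting procedure. The key point, as noted right after Proposition~\ref{prop:HL2}, is that the minimality of $y_\lambda$ and the continuity of $f$ upgrade the mere $\tau$-lower semicontinuity of $\sfd$ into the needed convergence of $\sfd(x_\lambda,y_\lambda)$.

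\textbf{Step 2: measurability.} For (b), I would first record that $(x,t)\mapsto\sfQ_t f(x)$ is lower semicontinuous on $X\times(0,\infty)$ by Theorem~\ref{thm:HL3}(a), hence Borel, hence Lusin $\mm\otimes\Leb1$-measurable. By Lusin's theorem (the version for Radon measures, cf.~Theorem~5 of \cite[Chap.~I]{Schwartz73}) there is, for every $\eps>0$, a compact set $C_\eps\subset X\times(0,\infty)$ with $(\mm\otimes\Leb1)\big((X\times(0,\infty))\setminus C_\eps\big)\le\eps$ on which $(x,t)\mapsto\sfQ_t f(x)$ is continuous. I claim the restriction of $\sfD^+_t f(x)$ to $C_\eps$ is upper semicontinuous and of $\sfD^-_t f(x)$ to $C_\eps$ is lower semicontinuous: indeed, for a net in $C_\eps$ converging to a point of $C_\eps$, the hypothesis of \eqref{eq:424} holds automatically by the continuity of $\sfQ f$ on $C_\eps$, so Step~1 applies. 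Thus $\sfD^\pm_t f$ are continuous on a slightly smaller compact subset of $C_\eps$ (restrict further so that the semicontinuous functions become continuous, or simply observe a semicontinuous function on a compact set is Borel and apply Lusin again), and letting $\eps\downarrow0$ yields Lusin $\mm\otimes\Leb1$-measurability on $X\times(0,\infty)$. For the fixed-time statement, I would run the same argument on the slice $X\times\{t\}$: by Theorem~\ref{thm:HL3}(a) the map $x\mapsto\sfQ_t f(x)$ is lower semicontinuous in $x$, hence Borel, hence Lusin $\mm$-measurable, and on a compact set where it is continuous the conditional semicontinuity \eqref{eq:424} (applied with $t_\lambda\equiv t$) forces $x\mapsto\sfD^\pm_t f(x)$ to be semicontinuous there, whence Lusin $\mm$-measurable after one more application of Lusin's theorem.

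\textbf{Main obstacle.} The only delicate point is the passage from ``conditionally semicontinuous'' to ``measurable'': one must be careful that restricting $\sfQ f$ to a compact set where it is continuous genuinely activates the hypothesis of \eqref{eq:424}, and that the resulting semicontinuity of $\sfD^\pm f$ on that compact set is with respect to the \emph{relative} topology — which is exactly what a net argument internal to $C_\eps$ gives. Once this is set up, everything else is a routine invocation of Lusin's theorem for Radon measures, and the $\sigma$-finiteness of $\Leb1$ on $(0,\infty)$ poses no difficulty since one argues on compact time intervals $[1/n,n]$ and takes a countable union.
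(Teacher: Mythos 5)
Your proposal is correct and takes essentially the same route as the paper's own proof: part (a) is the subnet/minimizer argument of Proposition \ref{prop:HL2} (with the minimality of $y_\lambda$ and the continuity of $f$ upgrading the lower semicontinuity of $\sfd$ to convergence of $\sfd(x_\lambda,y_\lambda)$), and part (b) is the same double application of Lusin's theorem on sets $X\times K$ with $K\subset(0,\infty)$ compact, first making $\sfQ f$ continuous and then, via the conditional semicontinuity of $\sfD^\pm f$, restricting once more. The only cosmetic difference is that the paper argues in (a) with a fixed threshold $c$ instead of passing directly to a subnet realizing the $\limsup$, which changes nothing of substance.
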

\begin{proof}
  \textbf{(a)}
  Let us check the upper semicontinuity of $\sfD_\cdot^+ f$,
  by arguing as in the proof of Lemma \ref{le:HL1}(b)
  (the proof of the conditional lower semicontinuity of $\sfD_\cdot ^-
  f$ is
  completely analogous).
  We fix $c<\sfD^+_t f(x)$ and 
  we suppose that for some $\lambda_0\in \Lambda$
  $\sfD_{t_\lambda}^+(x_\lambda)
  \ge c$ for every $\lambda\succ \lambda_0$.
  We pick 
  $y_\lambda\in \sfJ_{t_\lambda}(x_\lambda)$ such that
\begin{equation}
  \label{eq:375bis}
  F(t_\lambda,x_\lambda,y_\lambda)=\sfQ_{t_\lambda}f(x_\lambda),
  \quad
  \sfd(x_\lambda,y_\lambda)=\sfD_{t_\lambda}^+(x_\lambda)\ge
  c.
\end{equation}
We can find a subnet $j\mapsto \lambda(j)$, $j\in J$,
such that $j\mapsto y_{\lambda(j)}$ converges to a point $y\in X$ with
\begin{displaymath}
  F(t,x,y)\le 
  \liminf_{j\in J}
  F(t_{\lambda(j)},x_{\lambda(j)},y_{\lambda(j)})=
  \liminf_{j\in J}\sfQ_{t_{\lambda(j)}}f(x_{\lambda(j)})
  \topref{eq:424}=
  \sfQ_tf(x),
\end{displaymath}
showing that $y\in \sfJ_t f(x)$ and $\lim_{j\in
  J}\sfd(x_{\lambda(j)},y_{\lambda(j)})=
\sfd(x,y)\ge c$.
It follows that $\sfD^+_t f(x)\ge \sfd(x,y)
\ge c.$
\medskip

\noindent\textbf{(b)}
Since the map $\sfQ f$ is lower semicontinuous, it is Lusin
$\mm\otimes \LL^1$ measurable
in $X\times (0,\infty)$ \cite[I.1.5, Theorem 5]{Schwartz73}.
For every compact set $K\subset (0,\infty)$ and
every $\eps>0$ we can find a compact subset $H_\eps\subset X\times K$
such that the restriction of $\sfQ f$
to $H_\eps$ is continuous and $\mm\otimes \LL^1\big((X\times
K)\setminus H_\eps\big)\le \eps/2$.
By the previous claim, we deduce that
the restriction of $\sfD^\pm f$ to $H_\eps$ are semicontinuous, and
thus Lusin $\mm\otimes \LL^1$-measurable:
therefore we can find a further
compact subset
$H_\eps'\subset H_\eps$ such that the restriction of $\sfD^\pm f$ to
$H_\eps'$ are continuous and $\mm\otimes \LL^1\big(H_\eps\setminus
H_\eps'\big)\le \eps/2$, so that
$\mm\otimes \LL^1\big((X\times
K)\setminus H_\eps'\big)\le \eps$. We conclude that $\sfD^\pm f$ are
Lusin $\mm\otimes \LL^1$-measurable.
The second statement can be proved by the same argument.
\end{proof}

\subsection{Invariance of the Cheeger energy with respect to $\AA$
when $(X,\tau)$ is compact}
\label{subsec:invariance-A-compact}
As a preliminary obvious remark, we observe that
if $\AA'\subset \AA''\subset\Lip_b(X,\tau,\sfd)$ are two algebras of Lipschitz functions
compatible with the metric-topological measure structure $\X=(X,\tau,\sfd,\mm)$ we
have
\begin{equation}
  \label{eq:360}
  \Sob^{1,p}(\X,\AA')\subset \Sob^{1,p}(\X,\AA'')\subset \Sob^{1,p}(\X),
\end{equation}
and for every $f\in \Sob^{1,p}(\X,\AA')$ 
\begin{equation}
  \label{eq:361}
  \CE_{p,\AA'}(f)\ge \CE_{p,\AA''}(f)\ge \CE_{p}(f),\quad
  |\rmD f|_{\star,\AA'}\ge
  |\rmD f|_{\star,\AA''}\ge |\rmD f|_\star\quad\text{$\mm$-a.e.~in }X.
\end{equation}
We will see that \eqref{eq:360} and \eqref{eq:361} can be considerably
refined,
obtaining the complete independence of the choice of $\AA$.
In this section we will focus on the case when $(X,\tau)$ is a compact
topological space; we suppose that $\AA$ is an algebra
compatible with $\X$,
we denote by $I$ the directed set of all the finite collections
$i\subset \AA_1$ satisfying $f\in i\ \Rightarrow\  -f\in i$
and we set
\begin{equation}
  \label{eq:370}
  \sfd_i(x,y):=\sup_{f\in i}f(x)-f(y),\quad
  \text{$i$ is a finite subset of $\AA_1$
    satisfying $f\in i\ \Rightarrow\  -f\in i$.}
\end{equation}
\begin{lemma}
  Let us suppose that $(X,\tau)$ is compact
  and $\AA$ is an algebra compatible with $\X$ generating the
  bounded continuous semidistances $(\sfd_i)_{i\in I}$
  as in \eqref{eq:370}.
  \label{le:step1}
  \begin{enumerate}
  \item For every $y\in X$ and $i\in I$ the map
    $h_i^y:x\to \sfd_i(x,y)$ belongs to $\Sob^{1,p}(\X,\AA)$ and
    $\relgradA {h_i^y}\le 1$ $\mm$-a.e.
  \item For every $f\in \rmC(X)$, $t>0$, $i\in I$, 
    the function $\sfQ_{t}^{\sfd_i} f$ belongs to
    $\Sob^{1,p}(\X,\AA)$ and
    \begin{equation}
      \label{eq:371}
      \relgradA{\sfQ_{t}^{\sfd_i} f}\le t^{-1} \sfD_{t}^{\sfd_i,+} f\quad\text{$\mm$-a.e.}.
    \end{equation}
    \item For every $f\in \rmC(X)$ and $t>0$
    the function $\sfQ_{t} f$ belongs to
    $\Sob^{1,p}(\X,\AA)$ and
    \begin{equation}
      \label{eq:371bis}
      \relgradA{\sfQ_{t} f}\le t^{-1}\sfD_{t}^{+} f\quad\text{$\mm$-a.e.}.
    \end{equation}
  \end{enumerate}
\end{lemma}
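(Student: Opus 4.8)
The three claims are nested: (a) is a special case of (b) with $f=h_i^y$ (since $\sfQ_t^{\sfd_i}h_i^y\to h_i^y$ as... actually more directly one observes $h_i^y$ itself is a finite supremum of functions in $\AA_1$, handled by Corollary \ref{cor:trivialsup}), and (c) follows from (b) by passing to the limit along the directed set $I$. So the real work is in (b). The strategy for (b) is to discretize the Hopf-Lax formula $\sfQ_t^{\sfd_i}f(x)=\min_{y\in X}F^{\sfd_i}(t,x,y)$ by replacing the infimum over $X$ by an infimum over a finite set of points, obtaining a function of the form $g(x)=\min_{1\le m\le M}\big(c_m+\lambda\,\sfd_i(x,y_m)^q\big)$ with $c_m\in\R$, $\lambda=1/(qt^{q-1})$, and then show such $g$ lies in $\Sob^{1,p}(\X,\AA)$ with the expected gradient bound, finally letting $M\to\infty$ so the finite minima converge to $\sfQ_t^{\sfd_i}f$ in $L^p(X,\mm)$ and using the weak closure of the set $S$ of relaxed gradients (Lemma \ref{lem:strongchee}(b)).

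First I would establish that for each fixed $y$ and $i$ the function $x\mapsto \sfd_i(x,y)$ belongs to $\Sob^{1,p}(\X,\AA)$ with $\relgradA{\sfd_i(\cdot,y)}\le 1$: since $\sfd_i(x,y)=\max_{f\in i}\big(f(x)-f(y)\big)$ is the maximum of finitely many functions each of which is an element of $\AA$ (translated by a constant), Corollary \ref{cor:trivialsup} applies directly and gives $\relgradA{\sfd_i(\cdot,y)}=\relgradA{f-f(y)}=\lip f\le 1$ $\mm$-a.e.\ on the set where the max is attained by $f$; this proves (a). Next, for the function $\phi\circ\sfd_i(\cdot,y)$ with $\phi(r)=\lambda r^q$ (restricted to $[0,\diam_{\sfd_i}X]$, which is bounded since $\sfd_i$ is a bounded semidistance), the chain rule Theorem \ref{le:useful}(c) gives $\phi\circ\sfd_i(\cdot,y)\in\Sob^{1,p}(\X,\AA)$ with relaxed gradient $\le |\phi'(\sfd_i(\cdot,y))|=q\lambda\,\sfd_i(\cdot,y)^{q-1}=t^{-(q-1)}\sfd_i(\cdot,y)^{q-1}$. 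Then $g=\min_m\big(c_m+\phi\circ\sfd_i(\cdot,y_m)\big)$ is a finite minimum of functions in $\Sob^{1,p}(\X,\AA)$, so by Corollary \ref{cor:trivialsup} it belongs to $\Sob^{1,p}(\X,\AA)$ and on the set $\{g=c_m+\phi\circ\sfd_i(\cdot,y_m)\}$ we have $\relgradA g\le t^{-(q-1)}\sfd_i(\cdot,y_m)^{q-1}$; but on that set, if $y_m$ is (close to) a point where the discretized minimum is attained, $\sfd_i(x,y_m)$ is (close to) $\sfD^{\sfd_i,\pm}_t f(x)$, so $\relgradA g\lesssim t^{-(q-1)}\big(\sfD^{\sfd_i,+}_tf\big)^{q-1}$. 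A short computation (using $q-1 = q/p$... wait, $q-1=q/p$ only if... actually $\frac1p+\frac1q=1$ gives $q-1=q/p$) shows $t^{-(q-1)}D^{q-1}=\big(t^{-1}D\big)^{q-1}t^{q-1-(q-1)}$; rather, one wants the bound in the form $t^{-1}\sfD^{\sfd_i,+}_tf$, and indeed from the Hopf-Lax estimates (Lemma, eq.\ \eqref{eq:404}) one has $\sfD^{\sfd_i,+}_tf/t \le q\Lip(f,X,\sfd_i)$ which controls everything; the cleanest route is to verify directly that $t^{-1}\sfD^{\sfd_i,+}_tf$ dominates $q\lambda\,\sfd_i(x,y_m)^{q-1}$ pointwise on the relevant set by the relation $\sfd_i(x,y_m)^q/(qt^{q-1})\le \Osc{f}{X}$ type bounds combined with the identity $q\lambda r^{q-1}=r^{q-1}/t^{q-1}=(r/t)^{q-1}$, so the gradient bound is $(r/t)^{q-1}\le$... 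I will sort out the exact arithmetic in the writeup, but the shape of the bound is forced and matches \eqref{eq:371}.

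To pass from finite minima to $\sfQ_t^{\sfd_i}f$ I would choose, for each $n$, a finite $1/n$-dense subset $\{y_1^n,\dots,y_{M_n}^n\}$ of $(X,\sfd_i)$ — here I use that $\sfd_i$ is a \emph{bounded continuous} semidistance on the compact space $(X,\tau)$, hence $(X,\sfd_i)$ is totally bounded — and set $g_n(x):=\min_m\big(f(y_m^n)+\lambda\,\sfd_i(x,y_m^n)^q\big)$. Since $F^{\sfd_i}(t,x,\cdot)$ is continuous and $X$ is compact, $g_n\downarrow \sfQ_t^{\sfd_i}f$ uniformly (standard argument: the $1/n$-net approximates the minimizing $y$ up to an error controlled by the modulus of continuity of $f$ and of $r\mapsto\lambda r^q$ on the bounded range of $\sfd_i$), hence $g_n\to\sfQ_t^{\sfd_i}f$ in $L^p(X,\mm)$. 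The gradient bounds $\relgradA{g_n}\le h_n$ with $h_n$ given by the pointwise formula above: the $h_n$ are uniformly bounded in $L^\infty$ (by \eqref{eq:404}/\eqref{eq:408}), hence bounded in $L^p(X,\mm)$ since $\mm$ is finite, so up to a subsequence $h_n\weakto h$ weakly in $L^p$; the weak closure of $S$ (Lemma \ref{lem:strongchee}(b)) gives $h\in$ relaxed gradients of $\sfQ_t^{\sfd_i}f$, hence $\sfQ_t^{\sfd_i}f\in\Sob^{1,p}(\X,\AA)$ and $\relgradA{\sfQ_t^{\sfd_i}f}\le h$ $\mm$-a.e. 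The last step is to identify $h\le t^{-1}\sfD^{\sfd_i,+}_tf$ $\mm$-a.e.\ — this is where I expect the main subtlety: one must show that the weak $L^p$-limit of $t^{-1}\sfd_i(\cdot,y_{m(n,x)}^n)^{q-1}$ (evaluated along near-minimizers) does not exceed $t^{-1}\sfD^{\sfd_i,+}_tf$. The point is that any near-minimizer $y_m^n$ for $g_n(x)$ converges (along subsequences, by compactness of $X$ and continuity of $F^{\sfd_i}$) to a genuine element of $\sfJ^{\sfd_i}_tf(x)$, and $\sfd_i$ is continuous, so $\sfd_i(x,y_m^n)\to\sfd_i(x,y)\le\sfD^{\sfd_i,+}_tf(x)$; combined with Egorov/Fatou-type arguments for the weak limit — or more cleanly, by a pointwise-a.e.\ argument using a measurable selection of near-minimizers plus the upper semicontinuity already recorded in Lemma \ref{le:HL1}(b) and Proposition \ref{prop:HL2} — one obtains the desired bound. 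Finally, (c) follows by taking the directed limit over $i\in I$: by Proposition \ref{prop:HL2}, $\sfQ_t^{\sfd_i}f\to\sfQ_tf$ and $\limsup_i\sfD^{\sfd_i,+}_tf\le\sfD^+_tf$; since all the $\sfd_i\le\sfd$ the functions $\sfQ_t^{\sfd_i}f$ are $\sfd$-Lipschitz-like and the gradient bounds $t^{-1}\sfD^{\sfd_i,+}_tf$ are uniformly $L^\infty$-bounded, a further weak-compactness-plus-weak-closure argument (again Lemma \ref{lem:strongchee}(b), together with $L^p(X,\mm)$ convergence of $\sfQ_t^{\sfd_i}f$ to $\sfQ_tf$ which holds by monotone convergence and dominated convergence since $\mm$ is finite and the functions are equibounded) yields $\sfQ_tf\in\Sob^{1,p}(\X,\AA)$ with $\relgradA{\sfQ_tf}\le t^{-1}\sfD^+_tf$ $\mm$-a.e., using $\limsup_i\sfD^{\sfd_i,+}_tf\le\sfD^+_tf$ from \eqref{eq:394} to control the weak limit of the bounds.
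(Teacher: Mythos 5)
Your overall skeleton is the same as the paper's proof: (a) via Corollary \ref{cor:trivialsup}; for (b), discretize the Hopf--Lax infimum over finite sets, control each competitor $c_m+\lambda\,\sfd_i(\cdot,y_m)^q$ by claim (a) plus the chain rule, use Corollary \ref{cor:trivialsup} for the finite minimum, then pass to the limit by weak $L^p$-compactness and the weak closedness of relaxed gradients (Lemma \ref{lem:strongchee}), identifying the limit bound through a conditional upper semicontinuity of $\sfD^{+}$; and (c) by a further limit along the net $(\sfd_i)_{i\in I}$ via Proposition \ref{prop:HL2}. However, your concrete discretization step fails. You take the finite sets to be $1/n$-dense for $\sfd_i$ and claim $g_n\downarrow\sfQ^{\sfd_i}_tf$ uniformly, ``controlled by the modulus of continuity of $f$''. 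But $f$ is only $\tau$-continuous, and a single semidistance $\sfd_i$ does not generate $\tau$: smallness of $\sfd_i(y,y')$ gives no control on $|f(y)-f(y')|$, and the convergence can fail outright. Example: $X=[0,1]^2$ with the Euclidean topology, $\sfd_i(x,y)=|x_1-y_1|$, $f(x)=x_2$; then $\sfQ^{\sfd_i}_tf\equiv 0$, while the points $(k/n,1)$, $0\le k\le n$, form a $\tfrac1{2n}$-net for $\sfd_i$ and the corresponding discretized minimum is $\ge 1$ everywhere. The paper avoids this by minimizing over the directed family $\Pi$ of \emph{all} finite subsets of $X$: Lemma \ref{le:HL1}(c) gives monotone pointwise convergence of the net $\pi\mapsto\sfQ^{\pi,\sfd_i}_tf$ (using $\tau$-density and the $\tau$-continuity of $F^{\sfd_i}(t,x,\cdot)$), the net version of monotone convergence \eqref{eq:Beppo_Levi_general} together with the uniform bound \eqref{eq:390} upgrades this to $L^p(X,\mm)$-convergence, one then extracts a sequence $\pi_n$ converging $\mm$-a.e., and \eqref{eq:385} (i.e.\ Lemma \ref{le:HL1}(d), not the unconditional semicontinuity of Lemma \ref{le:HL1}(b)) gives exactly the bound $\limsup_n \sfD^{\pi_n,\sfd_i,+}_tf\le \sfD^{\sfd_i,+}_tf$ that you were groping for. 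Note also that $(X,\tau)$ need not be separable, so you cannot simply replace your $\sfd_i$-nets by a countable $\tau$-dense family; the net formulation over all finite subsets is what makes the argument work.

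Two further points. First, your unease about the arithmetic is justified: the chain rule yields the bound $\big(t^{-1}\sfd_i(x,y_m)\big)^{q-1}$ for each competitor, so what the discretization actually produces is $\relgradA{\sfQ^{\pi,\sfd_i}_tf}\le \big(t^{-1}\sfD^{\pi,\sfd_i,+}_tf\big)^{q-1}$ --- this is precisely \eqref{eq:379}--\eqref{eq:380} in the paper --- and it does not reduce to the exponent-one form of \eqref{eq:371} unless $q=2$; in your write-up you should carry the exponent $q-1$ explicitly (its $p$-th power is $(\sfD^{+}_tf/t)^q$, which is what the application in Theorem \ref{thm:mainA} really needs) rather than hoping it collapses. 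Second, for (c) your outline (monotone convergence of $\sfQ^{\sfd_i}_tf$ to $\sfQ_tf$, $L^p$ convergence along an extracted increasing sequence, and the conditional bound on $\limsup_i\sfD^{\sfd_i,+}_tf$) matches the paper, but the statement you need along an extracted increasing sequence is \eqref{eq:393} of Proposition \ref{prop:HL2}, not the full-net version \eqref{eq:394}.
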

\begin{proof}
  Claim \textbf{(a)} immediately follows from Corollary \ref{cor:trivialsup}.

  \noindent \textbf{(b)}
  Let us denote by
  $\Pi$ the directed family of finite subset of $X$.
  For every $\pi\in \Pi$, the definition of $\sfQ_t^{\pi,\sfd_i}$ given in
  \eqref{eq:372},
  the chain rule \eqref{eq:64}, the previous claim, and
  Corollary \ref{cor:trivialsup} yield
  \begin{equation}
    \label{eq:379}
    \sfQ_t^{\pi,\sfd_i} f\in \Sob^{1,p}(\X,\AA),\quad
    \relgradA{\sfQ_t^{\pi,\sfd_i} f}\le
    g_{t}^{\pi,\sfd_i}:=\Big(t^{-1}\sfD^{\pi,\sfd_i+}_{t} f\Big)^{q-1}
  \end{equation}
  By Lemma \ref{le:HL1}(c), for every $x\in X$ and $t>0$
  $\pi\mapsto
  \sfQ_t^{\pi,\sfd_i} f(x)$ is a decreasing net, converging
  to
  $\sfQ_{t}^{\sfd_i}f(x)$.
  Thanks to \ref{eq:Beppo_Levi_general} and to the uniform bound
  \eqref{eq:390} (where $S:=\max_{x,y\in X}\sfd_i(x,y))$
  we have
  \begin{equation}
    \label{eq:419}
    \lim_{\pi\in \Pi}\int_X \big|\sfQ_t^{\pi,\sfd_i}
    f-\sfQ_{t}^{\sfd_i}f\big|\,\d\mm=
    \lim_{\pi\in \Pi}\int_X \sfQ_t^{\pi,\sfd_i}
    f\,\d\mm-\int_X \sfQ_{t}^{\sfd_i}f\,\d\mm=0.
  \end{equation}
  We can thus find an increasing sequence $n\mapsto \pi_n\in \Pi$ such
  that
  \begin{equation}
    \label{eq:420}
    \lim_{n\to\infty}\int_X \big|\sfQ_t^{\pi_n,\sfd_i}
    f-\sfQ_{t}^{\sfd_i}f\big|\,\d\mm
    =\lim_{n\to\infty}\int_X \big|\sfQ_t^{\pi_n,\sfd_i}
    f-\sfQ_{t}^{\sfd_i}f\big|^p\,\d\mm=0
  \end{equation}
  and a $\mm$-negligible subset $N\subset X$ such that
  \begin{equation}
    \label{eq:421}
    \lim_{n\to\infty}\sfQ_t^{\pi_n,\sfd_i}
    f(x)=\sfQ_{t}^{\sfd_i}f(x)\quad\text{for every }x\in X\setminus N.
  \end{equation}
  Applying \eqref{eq:385} we deduce 
  \begin{equation}
    \label{eq:380}
    \limsup_{n\to \infty}g_{t}^{\pi_n,\sfd_i}(x)\le
    g_{t}^{\sfd_i}(x):=\Big(t^{-1}\sfD^{\sfd_i+}_{t} f\Big)^{q-1}
    \quad\text{for every
    }x\in X\setminus N.
  \end{equation}
  By \eqref{eq:379}, it follows that any weak limit point in
  $L^p(X,\mm)$  of the (uniformly bounded) sequence
  $(\relgradA{\sfQ_t^{\pi_n,\sfd_i}f})_{n\in \N}$ will be bounded by
  $t^{-1}\sfD^{\sfd_i+}_{t} f$. By \eqref{eq:419} we obtain
  \eqref{eq:371}.
  \medskip

  \noindent\textbf{(c)}
  The argument is very similar to the previous one, but now using
  Proposition  \ref{prop:HL2} and the net $i\mapsto \sfd_i$.
\end{proof}
\begin{theorem}
  \label{thm:mainA}
  If $(X,\tau)$ is a compact space and $\AA\subset \Lip(X,\tau,\sfd)$
  is a compatible algebra
  according to definition \ref{def:A-compatibility}, then
  $\Sob^{1,p}(\X)=\Sob^{1,p}(\X,\AA)$
  with equal minimal relaxed gradient (and therefore equal Cheeger
  energy).
  Equivalently,
  for every $f\in \Sob^{1,p}(\X)$ there exists a sequence $f_n\in \AA$
  such that
  \begin{equation}
    \label{eq:422}
    f_n\to f,\quad
    \lip f_n\to \relgrad f\quad\text{strongly in }L^p(X,\mm).
  \end{equation}
\end{theorem}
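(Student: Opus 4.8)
The strategy is to show that an arbitrary $f\in\Sob^{1,p}(\X)$ can be approximated, in the sense of \eqref{eq:16bis}, by functions in $\AA$, with the $\lip$-energy controlled; by the already established inequalities \eqref{eq:360}--\eqref{eq:361} this forces $\Sob^{1,p}(\X)=\Sob^{1,p}(\X,\AA)$ and the equality of minimal relaxed gradients. The key mechanism is the metric Hopf-Lax flow $\sfQ_t$ (and its semidistance-approximated versions $\sfQ_t^{\sfd_i}$) developed in \S\,\ref{subsec:HL}, together with Lemma \ref{le:step1}, which is precisely the statement that $\sfQ_t f$ and $\sfQ_t^{\sfd_i}f$ lie in $\Sob^{1,p}(\X,\AA)$ with $\relgradA{\sfQ_t^{\sfd_i}f}\le t^{-1}\sfD_t^{\sfd_i,+}f$ and $\relgradA{\sfQ_t f}\le t^{-1}\sfD_t^+ f$. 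Since by Theorem \ref{le:useful} and the density Lemma \ref{le:density} it suffices to treat $f\in\Lip_b(X,\tau,\sfd)$ (first reduce a general Sobolev function to a bounded Lipschitz one by the usual diagonal/relaxation argument, then approximate that Lipschitz function), the heart of the matter is: for $f\in\Lip_b(X,\tau,\sfd)$, show $\sfQ_t f\to f$ in $L^p(X,\mm)$ as $t\downarrow0$ and $\limsup_{t\downarrow0}\int_X (t^{-1}\sfD_t^+ f)^p\,\d\mm\le \int_X(\lip f)^p\,\d\mm$, and that $\sfQ_t f$ itself is an $L^p$-limit of functions in $\AA$ with controlled energy.

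\textbf{Main steps.} First I would record that $\sfQ_t f\to f$ pointwise (equation \eqref{eq:410}) and $\min_X f\le \sfQ_t f\le \max_X f$ (equation \eqref{eq:399}), so $\sfQ_t f\to f$ in $L^p(X,\mm)$ by dominated convergence. Second, I would establish the energy bound: from \eqref{eq:418} we have $\limsup_{t\downarrow0}\big(t^{-1}\sfD_t^+ f(x)\big)^{q-1}\cdot(\text{something})$... more precisely, dividing \eqref{eq:414} by $t$ and using $\lim_{t\downarrow0}\sfD_t^+ f(x)=0$ gives, for each $x$, $\limsup_{t\downarrow0}\big(\tfrac{\sfD_t^+ f(x)}{t}\big)^{q}\le (\lip f(x))^p$, hence $\limsup_{t\downarrow0}\big(\tfrac{\sfD_t^+ f(x)}{t}\big)^{q-1}\le (\lip f(x))^{p-1}$; combined with the uniform bound \eqref{eq:408} this yields, via Fatou applied to $M^{p}-(\cdot)^p$ or directly dominated convergence, $\limsup_{t\downarrow0}\int_X\big(t^{-1}\sfD_t^+ f\big)^{q\cdot?}$... the clean statement is $\limsup_{t\downarrow0}\int_X \big(t^{-1}\sfD_t^+ f\big)^{p}\,\d\mm\le \int_X(\lip f)^{p}\,\d\mm$ (here one uses that the relevant exponent making the relaxed-gradient bound homogeneous of degree $p$ is handled by $\relgradA{\sfQ_t f}\le t^{-1}\sfD_t^+ f$ with the $L^p$-norm, so one needs $\int_X (t^{-1}\sfD_t^+ f)^p\to$ something $\le \int(\lip f)^p$). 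Since $\sfD_t^\pm f$ is only Lusin $\mm\times\LL^1$-measurable in general (Lemma \ref{le:HL4}), I would select a decreasing sequence $t_n\downarrow0$ along which both the $L^p$-convergence and the $\limsup$ of the energies hold. Third, using Lemma \ref{le:step1}(c), $\sfQ_{t_n}f\in\Sob^{1,p}(\X,\AA)$ with $\relgradA{\sfQ_{t_n}f}\le t_n^{-1}\sfD_{t_n}^+ f$; so $\sfQ_{t_n}f\to f$ in $L^p$ and $\limsup_n\CE_{p,\AA}(\sfQ_{t_n}f)=\limsup_n\int_X\relgradA{\sfQ_{t_n}f}^p\,\d\mm\le \limsup_n\int_X(t_n^{-1}\sfD_{t_n}^+f)^p\,\d\mm\le\int_X(\lip f)^p\,\d\mm=\pCE_p(f)$. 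This exhibits $f$ as an element of $\Sob^{1,p}(\X,\AA)$ with $\CE_{p,\AA}(f)\le\pCE_p(f)$ for every $f\in\Lip_b(X,\tau,\sfd)$; relaxing (taking the $L^p$-lower semicontinuous envelope over $\Lip_b$) gives $\CE_{p,\AA}(f)\le\CE_p(f)$ for all $f\in L^p(X,\mm)$, and with the reverse inequality \eqref{eq:361} we conclude $\CE_{p,\AA}=\CE_p$. Finally, the equality of minimal relaxed gradients follows from the equality of the energies together with the locality/chain-rule machinery: for any $f$, $\relgradA f\ge|\rmD f|_\star$ by \eqref{eq:361}, and since $\int_X\relgradA f^p=\CE_{p,\AA}(f)=\CE_p(f)=\int_X|\rmD f|_\star^p$ the two nonnegative functions with one dominating the other and equal $L^p$-norms must coincide $\mm$-a.e.; equation \eqref{eq:422} is then just the strong-approximation statement \eqref{eq:16bis} applied in $\Sob^{1,p}(\X,\AA)$.

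\textbf{The main obstacle.} The delicate point is the passage to the limit $t\downarrow0$ in the energy integral $\int_X (t^{-1}\sfD_t^+ f)^p\,\d\mm$. Pointwise we only control $\limsup$, not $\lim$, and the maps $\sfD_t^+ f$ are genuinely only Lusin measurable (not semicontinuous) when $\sfd$ fails to be $\tau$-continuous; moreover the pointwise $\limsup$ bound \eqref{eq:418} bounds by $|\rmD f|^p$ which could a priori be strictly smaller than $(\lip f)^p$, but that is fine since we only need an upper bound. The careful argument is: by the uniform bound \eqref{eq:408}, the family $\{(t^{-1}\sfD_t^+ f)^p\}_{t\in(0,1]}$ is uniformly bounded by the constant $(q\Lip(f,X))^p$, hence uniformly integrable on the finite measure space; then reverse Fatou (or dominated convergence applied to the bounded functions $(q\Lip(f,X))^p-(t^{-1}\sfD_t^+f)^p$) gives $\limsup_{t\downarrow0}\int_X(t^{-1}\sfD_t^+f)^p\,\d\mm\le\int_X\limsup_{t\downarrow0}(t^{-1}\sfD_t^+f)^p\,\d\mm\le\int_X(\lip f)^p\,\d\mm$, where to make $\limsup$ over a countable subsequence meaningful and measurable one first fixes $t_n\downarrow0$ and works with $\limsup_n$. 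One must also check that along this sequence $\sfQ_{t_n}f$ converges $\mm$-a.e. (true by \eqref{eq:410}) so that the weak-limit-point argument inside Lemma \ref{le:step1}(c) applies cleanly; this is where the countable-exception sets in Theorem \ref{thm:HL3} and the $\mm$-negligible sets are absorbed by a diagonal extraction, exactly as in the proof of Lemma \ref{le:step1}(b).
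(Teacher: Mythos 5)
Your overall skeleton (reduce to $f\in\Lip_b(X,\tau,\sfd)$, regularize by the Hopf--Lax flow, invoke Lemma \ref{le:step1}, pass to the limit $t\down0$, then relax and use \eqref{eq:361} plus strict comparison of the gradients) is the same as the paper's, but the central analytic step is not justified and, as written, does not work. You claim that dividing \eqref{eq:414} by $t$ yields the pointwise bound $\limsup_{t\down0}\big(\sfD_t^+f(x)/t\big)^{q}\le(\lip f(x))^p$. It does not: that division, combined with Young's inequality, is exactly how one obtains \eqref{eq:418}, which bounds the \emph{difference quotient} $\tfrac{f(x)-\sfQ_tf(x)}{t}$, not the quantity $\sfD_t^+f(x)/t$ itself. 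The estimate one can actually extract from the minimality of $x'\in\sfJ_tf(x)$ is $\tfrac1q\sfd^{q-1}(x,x')/t^{q-1}\le\tfrac{f(x)-f(x')}{\sfd(x,x')}$, which only gives $\limsup_{t\down0}\big(\sfD_t^+f(x)/t\big)^{q-1}\le q\,\lip f(x)$, i.e.\ a bound that is off by a factor $q$ (hence $q^p$ after raising to the relevant power); no sharp pointwise control of $\sfD_t^+f/t$ by $\lip f$ is proved in the paper, and a crude averaging of the derivative identity does not remove this constant. Moreover, even granting your pointwise claim, the exponent bookkeeping is inconsistent: from $(\sfD_t^+f/t)^q\le(\lip f)^p$ one cannot conclude $\limsup_t\int_X\big(t^{-1}\sfD_t^+f\big)^{p}\d\mm\le\int_X(\lip f)^p\d\mm$ (the powers only match when $p=q$), yet this is the estimate your reverse-Fatou step needs in order to feed the bound of Lemma \ref{le:step1}(c) into $\CE_{p,\sAA}(\sfQ_{t_n}f)$.

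The paper avoids this trap precisely by never estimating $\sfD_t^+f/t$ pointwise in the limit. Instead it uses the identity \eqref{eq:401} to rewrite $\int_B\tfrac{f-\sfQ_tf}{t}\,\d\mm$ (over an \emph{arbitrary} Borel set $B$) as a time average, via Fubini, of $\int_B\big(\sfD^+_{s}f/s\big)^p\d\mm$; it then bounds the $\limsup$ of the left-hand side from above by $\tfrac1p\int_B(\lip f)^p\d\mm$ using \eqref{eq:418} and (reverse) Fatou in $x$, and bounds the $\liminf$ of the right-hand side from below by $\tfrac1p\int_B\relgradA f^p\d\mm$ using Fatou in the time variable, \eqref{eq:371bis}, and the closure/lower semicontinuity of relaxed gradients applied to $\sfQ_sf\to f$ in $L^p$. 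Note that only a \emph{lower} bound on $\liminf_s\int_B(\sfD^+_sf/s)^p\d\mm$ is ever needed, never an upper bound in terms of $\lip f$; this is what makes the argument close. Since $B$ is arbitrary, one gets $\relgradA f\le\lip f$ $\mm$-a.e.\ for every $f\in\Lip_b(X,\tau,\sfd)$, and your concluding steps (relaxation, the reverse inequality \eqref{eq:361}, and \eqref{eq:16bis} giving \eqref{eq:422}) then go through. To repair your proof you should replace the pointwise bound on $\sfD_t^+f/t$ with this averaged, localized comparison; as it stands, that step is a genuine gap.
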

\begin{proof}
  Let us denote by $\relgradA f$ (resp.~$\relgrad f$) the minimal relaxed gradient induced
  by $\AA$ (resp.~by $\Lip(X,\tau,\sfd)$).
  It is clear that $\Sob^{1,p}(X,\AA)\subset \Sob^{1,p}(X)$ and
  $\relgrad f\le \relgradA f$ for every $f\in \Sob^{1,p}(X,\AA)$;
  in order to prove the Theorem it is sufficient to show that
  every $f\in \Lip(X,\tau,\sfd)$ belongs to $\Sob^{1,p}(\X,\AA)$ with
  $\relgradA f\le \lip f$ $\mm$-a.e.

  We select an arbitrary Borel set $B\subset X$; 
  by using the uniform bound (a consequence of \eqref{eq:412})
  \begin{displaymath}
    \frac{f(x)-\sfQ_t f(x)}t\le \big(q \Lip (f,X)\Big)^p\quad\text{for
      every $x\in X$},
  \end{displaymath}
  the superior limit \eqref{eq:418} and Fatou's Lemma, we obtain
  \begin{equation}
    \label{eq:413}
    \frac 1p\int_B \big(\lip f(x)\big)^p\,\d\mm(x)\ge
    \limsup_{t\down0} \int_B \frac{f(x)-\sfQ_t f(x)}t\,\d\mm(x).
  \end{equation}
  On the other hand, \eqref{eq:401}, the measurability of $\sfD^+ f$
  given by Lemma \ref{le:HL4}, and Fubini's Theorem yield
  \begin{align*}
    \notag
    \int_B \frac{f(x)-\sfQ_t f(x)}t\,\d\mm(x)
    &=
      \frac 1p\int_{B\times (0,1)} \Big(\frac{\sfD^+_{tr}
      f(x)}{tr}\Big)^p
      \,\d (\mm\otimes \LL^1)(x,r)
      \\&=
    \frac 1p \int_0^1 \bigg(\int_{B} \Big(\frac{\sfD^+_{tr}
      f(x)}{tr}\Big)^p\,\d\mm(x)\bigg)
    \,\d r.
  \end{align*}
  A further application of Fatou's Lemma yields
  \begin{equation}
    \label{eq:425}
    \liminf_{t\downarrow0}
    \int_B \frac{f(x)-\sfQ_t f(x)}t\,\d\mm(x)\ge
    \frac 1p \liminf_{s\downarrow0}
    \int_{B} \Big(\frac{\sfD^+_{s}
      f(x)}{s}\Big)^p\,\d\mm(x),
  \end{equation}
  where we used the fact that for every $r\in (0,1)$
  \begin{displaymath}
    \liminf_{t\downarrow0}\int_{B} \Big(\frac{\sfD^+_{tr}
      f(x)}{tr}\Big)^p\,\d\mm(x)=\liminf_{s\downarrow0}
    \int_{B} \Big(\frac{\sfD^+_{s}
      f(x)}{s}\Big)^p\,\d\mm(x).
  \end{displaymath}
  Recalling \eqref{eq:371} and the fact that $\sfQ_s f\to f$ in
  $L^p(X,\mm)$ as $s\downarrow0$ we get
  \begin{equation}
    \label{eq:426}
    \liminf_{s\downarrow0}
    \int_{B} \Big(\frac{\sfD^+_{s}
      f}{s}\Big)^p\,\d\mm
    \ge \liminf_{s\downarrow0}
    \int_{B} \relgradA{\sfQ_s f}^p\,\d\mm
    \ge
    \int_B \relgradA f^p\,\d\mm.
  \end{equation}
  Combining \eqref{eq:413}, \eqref{eq:425} and \eqref{eq:426} we
  deduce that
  \begin{equation}
    \label{eq:427}
    \int_B \big(\lip f(x)\big)^p\,\d\mm(x)\ge
    \int_B \relgradA f^p(x)\,\d\mm(x)
  \end{equation}
  for every Borel subset $B$, so that $\lip f(x)\ge \relgradA f(x)$
  for $\mm$-a.e.~$x\in X$.
\end{proof}
\subsection{Notes}
\label{subsec:notes6}
\begin{notes}
  \Para{\ref{subsec:HL}}
  collects all the basic estimates concerning the Hopf-Lax flow
  in a general extended metric-topological setting.
  We followed the approach of \cite[\S\,3]{AGS14I}
  with some differences:
  we assumed compactness of $(X,\tau)$
  (as in \cite{AGS13,GRS15}),
  we considered general exponents $q=p'\in (1,\infty)$ as
  in \cite{AGS13}, and we
  devoted some effort to study the
  dependence of the
  Hopf-Lax formula on the distance and on the minimizing set,
  a point of view that has also been
  used in \cite{AGS15,AES16}.
  In this respect, compactness plays an essential role.
  Differently from \cite{AGS14I}, the Hopf-Lax
  flow is not used as a crucial ingredient for the so-called Kuwada
  Lemma \cite[\S\,6]{AGS14I}, \cite{Kuwada13},
  but as a powerful approximation tool
  of general functions by elements in the algebra $\AA$.  

  \Para{\ref{subsec:invariance-A-compact}}
  contains the crucial results which justify
  the study of the Hopf-Lax formula in our setting:
  Lemma \ref{le:step1}
  provides a crucial estimate of $\relgradA {\sfQ_t f}$
  for the regularized functions and
  Theorem \ref{thm:mainA} shows that
  compatible algebra are dense (in energy) in
  $\Sob^{1,p}(\X)$.
  Both the results are inspired by
  the techniques of \cite[Theorem 3.12]{AGS15}
  and of \cite[\S\,12]{AES16}.
\end{notes}

\part{$p$-Modulus and nonparametric
  dynamic plans}

\begin{Assumption}
  \label{ass:mainIII}
  As in the previous section, we consider an extended
  metric-topological measure space $\X=(X,\tau,\sfd,\mm)$
  and we fix an exponent $p\in (1,+\infty)$.
  $\RA(X)=\RA(X,\sfd)$ is the space of rectifiable arcs with the
  quotient topology $\tau_\rmA$ studied in \S\,\ref{subsec:arcs}, see
  Proposition \ref{prop:A-extended}.
\end{Assumption}

\section{$p$-Modulus of a family of measures and of
  a family of rectifiable arcs}
\label{sec:Modulus}

\newcommand{\intmu}[1]{\int_X #1\,\d\mu}
\subsection{$p$-Modulus of a family of Radon measures}
\label{subsec:pModmeas}
\index{Modulus of a family of Radon measures}
Given $\Sigma \subset \cMp(X)$ we define (with the usual convention $\inf\emptyset=\infty$)
\begin{equation} \label{eqn:mod2}
\Mdm (\Sigma) := \inf \left\{ \int_X f^p \, \d \mm \, : \, f
  \in \Ldp,\,\, \intmu  f 
  \geq 1 \; \; \text{ for all } \mu \in \Sigma \right\},
\end{equation}
\begin{equation} \label{eqn:mod2c}
\Mdmc (\Sigma) := \inf \left\{  \int_X f^p \, \d \mm \, :
  \, f \in \rmC_b(X),\,\,  \intmu  f 
  \geq 1 \; \; \text{ for all } \mu \in \Sigma \right\}.
\end{equation}
Since the infimum in \eqref{eqn:mod2c} is unchanged if we restrict the
minimization to nonnegative functions $f\in \rmC_b(X)$ we get 
$\Mdmc (\Sigma) \geq \Mdm (\Sigma)$.
Also, whenever $\Sigma$ contains the null measure, 
we have $\Mdmc (\Sigma) = \Mdm (\Sigma)=\infty$,
whereas $\Mdmc(\emptyset)=0$.
\index{Negligible set of rectifiable arcs}
\begin{definition}[$\Mdm$-negligible
  sets and properties $\Mdm$-a.e.]\label{dfn:modnull} 
A set $\Sigma \subset\cMp(X)$ is said to be 
${\rm Mod}_{p}$-negligible if ${\rm Mod}_{p}(\Sigma)=0$.\\
We say that a property $P$ on $\cMp(X)$ holds ${\rm
  Mod}_{p}$-a.e. if the set 
of measures where $P$ fails is
${\rm Mod}_{p}$-negligible.
\end{definition}
%
The next result collects various well known properties of the Modulus,
see e.g.~\cite{Bjorn-Bjorn11}, \cite[\S\,5.2]{HKST15}.
\index{Fuglede's Lemma}
\begin{proposition}\label{prop:prop} The set functions $\Sigma\subset\cMp(X)\mapsto\Mdm(\Sigma)$,
$\Sigma\subset\cMp(X)\mapsto\Mdmc(\Sigma)$ satisfy the following properties:
\begin{enumerate}
\item both are monotone and subadditive.
\item If $g\in\Ldp$ then $ \intmu g < \infty$ for
  $\Mdm$-almost every $\mu$; conversely, if 
  $\Mdm(\Sigma)=0$ then there exists $g\in \Ldp$ such that 
  $\intmu g=\infty$ for every $\mu\in \Sigma$.
\item {\upshape [Fuglede's Lemma]} If $(f_n) \subset \Ldp $ converges in $L^p(X,\mm)$ seminorm to $f \in \Ldp$,  
there exists a subsequence $(f_{n(k)})$ such that
\begin{equation} \label{convqo}
  \lim_{k\to\infty}\intmu {|f_{n(k)}-f|}=0
  \qquad \Mdm \text{-a.e. in $\cMp(X)$}.
\end{equation}
\item For every $\Sigma\subset\cMp(X)$
  with $\Mdm(\Sigma)<\infty$ there exists $f\in \Ldp$, unique up to $\mm$-negligible sets, 
such that $\intmu f\geq 1$ $\Mdm$-a.e. on $\Sigma$ and
$\|f\|^p_p=\Mdm(\Sigma)$.
\item If $\Sigma_n$ are nondecreasing subsets of $\cMp(X)$ then $\Mdm (\Sigma_n) \uparrow \Mdm (\cup_n \Sigma_n)$.
\item If $K_n$ are nonincreasing compact subsets of
  $\cMp(X)$ then $\Mdmc (K_n) \downarrow \Mdmc (\cap_nK_n)$.
\end{enumerate}
\end{proposition}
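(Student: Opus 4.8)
The plan is to establish the six items in the order listed, since each builds on the previous ones. \textbf{(a)} Monotonicity of both set functions is immediate from \eqref{eqn:mod2}--\eqref{eqn:mod2c}, because enlarging $\Sigma$ only adds constraints on the admissible functions. For the countable subadditivity of $\Mdm$ I would use the classical construction: given $\Sigma\subset\bigcup_n\Sigma_n$ and $\eps>0$, pick $f_n\in\Ldp$ with $\int_X f_n\,\d\mu\ge1$ for every $\mu\in\Sigma_n$ and $\int_X f_n^p\,\d\mm\le\Mdm(\Sigma_n)+\eps\,2^{-n}$, and set $f:=\bigl(\sum_n f_n^p\bigr)^{1/p}\in\Ldp$; then $\int_X f^p\,\d\mm\le\sum_n\Mdm(\Sigma_n)+\eps$ by monotone convergence for $\mm$, and since $f\ge f_n$ the function $f$ is admissible for every $\Sigma_n$, hence for $\Sigma$. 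For $\Mdmc$ the same scheme with the \emph{finite} maxima $\max(f_1,\dots,f_N)\in\rmC_b(X)$ gives finite subadditivity, the version used below. \textbf{(b)} If $g\in\Ldp$, then for every $\eps>0$ the function $\eps g$ is admissible for $\Sigma_\infty:=\{\mu:\int_X g\,\d\mu=\infty\}$, so $\Mdm(\Sigma_\infty)\le\eps^p\|g\|_p^p$ and $\eps\downarrow0$ forces $\Mdm(\Sigma_\infty)=0$; conversely, if $\Mdm(\Sigma)=0$ I would choose admissible $g_n\in\Ldp$ with $\|g_n\|_p\le2^{-n}$ and take $g:=\sum_n g_n\in\Ldp$, so that $\int_X g\,\d\mu=\sum_n\int_X g_n\,\d\mu=\infty$ for every $\mu\in\Sigma$. \textbf{(c)} Given $f_n\to f$ in $L^p(X,\mm)$, extract a subsequence with $\|f_{n(k)}-f\|_p\le2^{-k}$, so that $g:=\sum_k|f_{n(k)}-f|\in\Ldp$; by (b) one has $\int_X g\,\d\mu<\infty$ for $\Mdm$-a.e.~$\mu$, and for such $\mu$ the tail of the convergent series $\sum_k\int_X|f_{n(k)}-f|\,\d\mu$ vanishes, which is \eqref{convqo}.

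For \textbf{(d)} and \textbf{(e)} the two key tools are the uniform convexity of $L^p(X,\mm)$ for $p\in(1,\infty)$ and Fuglede's lemma (c). In (d): a minimizing sequence $(f_n)$ of admissible functions for $\Sigma$ satisfies $\|f_n\|_p^p\to\Mdm(\Sigma)$, and since $\tfrac12(f_n+f_m)$ is again admissible, $\|\tfrac12(f_n+f_m)\|_p^p\ge\Mdm(\Sigma)$; uniform convexity then makes $(f_n)$ Cauchy, so $f_n\to f$ strongly with $\|f\|_p^p=\Mdm(\Sigma)$, and by (c) a further subsequence yields $\int_X f\,\d\mu\ge1$ for $\Mdm$-a.e.~$\mu\in\Sigma$; uniqueness comes from strict convexity. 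One also needs the elementary remark — a consequence of the converse in (b) — that admissibility only $\Mdm$-a.e.\ on $\Sigma$ does not lower the modulus: adding $\eps h$, with $h$ as in (b), converts an a.e.-admissible function into a genuinely admissible one at arbitrarily small extra cost. For (e), let $f_n$ be the optimal function of (d) for $\Sigma_n$ and $M:=\lim_n\Mdm(\Sigma_n)$, assuming $M<\infty$ (otherwise there is nothing to prove since monotonicity gives $\Mdm(\bigcup_n\Sigma_n)\le\Mdm(\Sigma_m)$... wait, $\ge$ — in that case $\Mdm(\bigcup_n\Sigma_n)=\infty$ trivially). The inclusions $\Sigma_n\subset\Sigma_m$ for $n\le m$ make $\tfrac12(f_n+f_m)$ admissible $\Mdm$-a.e.\ on $\Sigma_n$, so $\|\tfrac12(f_n+f_m)\|_p^p\ge\Mdm(\Sigma_n)$; together with $\|f_n\|_p^p=\Mdm(\Sigma_n)\to M$ and uniform convexity this forces $(f_n)$ to be Cauchy, with limit $f$ satisfying, by (c) and the countable subadditivity of (a) applied to the countably many exceptional sets, $\int_X f\,\d\mu\ge1$ for $\Mdm$-a.e.~$\mu\in\bigcup_n\Sigma_n$; hence $\Mdm(\bigcup_n\Sigma_n)\le\|f\|_p^p=M$, and the reverse inequality is monotonicity.

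For \textbf{(f)}: if $\bigcap_nK_n=\emptyset$, then since the $K_n$ are compact and nonincreasing the finite intersection property forces $K_{n}=\emptyset$ for $n$ large, and both sides equal $0$. Otherwise fix $\eps,\eta>0$ and choose $f\in\rmC_b(X)$, $f\ge0$, admissible for $K:=\bigcap_nK_n$ with $\int_X f^p\,\d\mm\le\Mdmc(K)+\eps$. The set $W:=\{\mu\in\cMp(X):\int_X f\,\d\mu>(1+\eta)^{-1}\}$ is open for the narrow topology — because $\mu\mapsto\int_X f\,\d\mu$ is continuous there — and contains $K$. Each $K_n\setminus W$ is a closed subset of the compact set $K_1\setminus W$, the sequence is nonincreasing, and $\bigcap_n(K_n\setminus W)=K\setminus W=\emptyset$, so by the finite intersection property $K_{n_0}\subset W$ for some $n_0$. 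Then $(1+\eta)f\in\rmC_b(X)$ is admissible for $K_{n_0}$, whence $\Mdmc(K_n)\le\Mdmc(K_{n_0})\le(1+\eta)^p\bigl(\Mdmc(K)+\eps\bigr)$ for all $n\ge n_0$; letting $n\to\infty$ and then $\eta,\eps\downarrow0$ gives $\lim_n\Mdmc(K_n)\le\Mdmc(K)$, the opposite inequality being monotonicity.

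The steps I expect to be the main obstacle are the two convexity arguments in (d)--(e): passing from a minimizing (resp.\ optimizing) sequence to a \emph{strong} $L^p$-limit via uniform convexity, and then correctly bookkeeping the countably many $\Mdm$-exceptional sets produced along the Fuglede subsequences (so that (a) is genuinely needed there), together with the topological manoeuvre in (f), where the point is to upgrade an approximate competitor for the intersection into an exact competitor for some $K_n$ using the narrow continuity of $\mu\mapsto\int_X f\,\d\mu$ and a compactness argument.
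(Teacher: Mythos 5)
Your proposal is correct, and items (a)--(c) and (f) run essentially as in the paper: (a)--(c) are the same scaling, series and Borel--Cantelli-type arguments (the paper proves subadditivity with the single competitor $(f^p+g^p)^{1/p}$, which works verbatim for $\Mdmc$ as well, and only states it for finitely many sets; your countable version for $\Mdm$ is true and, as you note, is what you later need for the exceptional-set bookkeeping), while your (f) is the same mechanism as the paper's — narrow continuity of $\mu\mapsto\intmu f$ plus compactness of the $K_n$ — phrased directly through the open superlevel set $W$ and the finite intersection property instead of the paper's argument by contradiction via convergence of the minima $\min_{K_n}\intmu\phi\to\min_{K}\intmu\phi$ (you should just add the one-line remark that when $\Mdmc(\cap_nK_n)=\infty$ there is nothing to prove, since your choice of a near-optimal continuous competitor presupposes finiteness). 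The genuine divergence is in (d)--(e). The paper proves (d) by observing, via (b) and Fuglede, that requiring $\intmu f\ge1$ only $\Mdm$-a.e.\ on $\Sigma$ does not change the infimum and that the resulting admissible class is a closed convex subset of $L^p(X,\mm)$, so existence and uniqueness of the extremal follow from projection/strict convexity; you instead run a Clarkson/uniform-convexity argument on a minimizing sequence of genuinely admissible functions and then correct a.e.-admissibility with the $\eps h$ trick — equivalent in substance, slightly more hands-on. For (e) the paper takes near-optimal admissible $f_n$ for $\Sigma_n$, extracts a weak $L^p$ limit, passes to strongly convergent Mazur convex combinations (which remain admissible on $\Sigma_n$ for tails), applies Fuglede, and repairs the exceptional set with $f+\eps g$; you instead use the extremal functions from (d) and uniform convexity ($\|f_n\|_p^p,\|f_m\|_p^p\le M$ together with $\|\tfrac12(f_n+f_m)\|_p^p\ge\Mdm(\Sigma_n)\to M$) to get strong convergence of the extremals themselves. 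Your route is shorter once (d) is in place but makes (e) depend on (d) and on the a.e.-relaxation identity, whereas the paper's Mazur argument needs neither; both are sound, and your convexity estimates and the countable-subadditivity bookkeeping of the Fuglede exceptional sets are carried out correctly.
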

\begin{proof} 
  We repeat almost word by word the arguments of \cite[Proposition 2.2]{ADS15}.
  \medskip

  \noindent\textbf{(a)}
  Monotonicity is an obvious consequence of the definition.
  For the subadditivity, if we take two sets $A,B\subset \cMp(X)$
  and two functions $f,g\in \Ldp$ with $\intmu f\geq 1$ for every
  $\mu\in A$ and
  $\intmu g\geq 1$ for every $\mu\in B$,
  then the function $h:=(f^p+g^p)^{1/p}\ge \max(f,g)$ still satisfies
  $\intmu h \geq 1$ for every  $\mu\in A \cup B$, hence
  $$\Mdm( A \cup B) \leq \int_X h^p\,\d\mm
  =\int_X f^p\,\d\mm+\int_Xg^p\,\d\mm.$$
  Minimizing over $f$ and $g$ we get the subadditivity.
  \medskip

  \noindent\textbf{(b)}
  If we consider the set where the property fails
  $$ \Sigma = \left\{\mu\in \cMp(X): \intmu g = \infty \right\}, $$
  then it is clear that for every $k>0$
  we have 
  $\Sigma = \big\{\mu\in \cMp(X): \intmu {kg} = \infty \big\}$
  so that
  $\Mdm(\Sigma)\leq k^p\|g\|^p_p$ for every $ \kappa >0$ and we deduce
  that $\Mdm(\Sigma)=0$. 

  Conversely, if $\Mdm(A)=0$ for every $n\in\N$ we can find $g_n\in \Ldp$
with
$\intmu {g_n}\ge1$ for every $\mu\in A$ and 
$\int_X g_n^p\,\d\mm\le 2^{-np}$. Thus $g:=\sum_n g_n$ satisfies the 
required properties.
\medskip

\noindent \textbf{(c)}
Let $f_{n(k)}$ be a subsequence such that $\|f- f_{n(k)}\|_p\leq
2^{-k}$.
If we set
$$ g (x) := \sum_{k =1}^{\infty} | f(x) - f_{n(k)} (x) |$$
we have that $g \in \Ldp$ and $\|g\|_{L^p(X,\mm)}\leq 1$;
in particular we have, for Claim (b) above, that $\intmu g $ is finite for $\Mdm$-almost every $\mu$.
For those $\mu$ we get
$$ \sum_{k=1}^\infty \intmu {|f - f_{n(k)} |}  <\infty $$
which yields  \eqref{convqo}.
\medskip

\noindent\textbf{(d)}
Claim (b) shows in particular that
\begin{equation} \label{eqn:mod2bis}
  \Mdm (\Sigma) =
  \inf \left\{  \int_X f^p \, \d \mm \, : \, \intmu  f \geq 1 \;
    \; \text{ for $\Mdm$-a.e. }
    \mu \in \Sigma \right\},
\end{equation}
so that by Claim (c) 
the class of admissible functions $f$ involved in the variational
definition od $\Mdm$ is a convex and closed subset of the Lebesgue space $L^p(X,\mm)$.
Hence, uniqueness follows by the strict convexity of the $L^p$-norm.
\medskip

\noindent\textbf{(e)}
By the monotonicity, it is clear that $\Mdm (A_n)$ is an increasing
sequence and that setting $M:=\lim_{n\to\infty} \Mdm(A_n)$ we have
$M\ge \Mdm(\cup_n A_n) $. 

If $M= \infty$ there is nothing to prove, otherwise, we need to show
that $\Mdm (\cup_nA_n) \leq M$;
let $(f_n) \subset \Ldp$ be a sequence of functions 
such that $ \intmu f_n \geq 1$ on $A_n$ and $\|f_n\|^p_{L^p(X,\mm)} \leq \Mdm (A_n) + \frac 1n$.
In particular we get that $\limsup_n \|f_n\|^p_p =M < \infty$ and so, possibly extracting a subsequence, we can assume that
$f_n$ weakly converge to some $f \in \Ldp$.
By Mazur lemma we can find convex combinations
$$ \hat{f}_n  = \sum_{k=n}^{\infty} \kappa_{k,n} f_k $$
such that $\hat{f}_n$ converge strongly to $f$ in $L^p(X, \mm)$;
furthermore we have that
$\intmu {f_k}\geq 1$ on $A_n$ if $k\geq n$ and so 
$$ \intmu{\hat{f}_n} = \sum_{k=n}^{\infty} \kappa_{k,n} \intmu
f_k\geq 1 \qquad \text{ on }A_n.$$
By Claim (c) above we obtain a subsequence $n(k)$ and a
$\Mdm$-negligible set
$\Sigma \subset \cMp(X)$ such that 
$\intmu{\hat{f}_{n(k)}}\to\intmu f$ outside $\Sigma$;
in particular $\intmu f \geq 1$ on $\cup_nA_n \setminus \Sigma$.

By Claim (b) we can find
$g \in \Ldp$ such that $ \intmu g=\infty$ on $\Sigma$, so that we
have
$ \intmu{(f+\eps g)} \geq 1$ on $\cup_n A_n$ and 
$$ \Mdm ( \cup_nA_n)^{1/p} \leq \|\eps g+ f\|_p \leq  \eps \|g\|_p+ \|f
\|_p \leq \eps \|g\| + \liminf \|f_n\|_p \leq \eps\|g\|_p + M^{1/p}. $$
Letting $\ep \to 0$ and taking the $p$-th. power the inequality $\Mdm(A)\leq\sup_n\Mdm(A_n)$ follows.
\medskip

\noindent \textbf{(f)}
By the monotonicity we get $\Mdmc ( K) \leq \Mdmc(K_n)$;
if $C:=\lim_{n\to\infty}\Mdmc(K_n)$, we only have to prove $\Mdmc(K)
\geq C$ and it is not restrictive to assume $C>0$.
We argue by contradiction: if $\Mdmc(K)<C$
we can find a nonnegative $\psi\in \rmC_b(X)$ such that
$\intmu \psi \ge1$ for every $\mu\in K$ and
$\alpha^{p}=\int_X \psi^p\,\d\mm<C$. Setting $\phi:=\alpha^{-1} \psi$
we obtain a function $\phi\in \rmC_b(X)$
with 
%
$\| \phi \|_p =1$ and
$$\inf_{\mu\in K} \intmu\phi 
>\alpha^{-1}. $$
By the compactness of $K$ the infimum above is a minimum; since $K_n$
is decreasing, 
\begin{displaymath}
  \min\limits_{\mu\in K_n} \intmu \phi
  \to \min\limits_{\mu\in K} \intmu\phi.
\end{displaymath}
It follows that there exists $\bar n\in \N$ such that 
$\min\limits_{\mu\in K_n} \intmu \phi\ge \alpha^{-1}$ for every $n\ge
\bar n$ 
so that using $\psi=\alpha\phi$ we deduce that
$\Mdmc (K_n)\le \alpha^{p}<C$ for $n\ge\bar n$, a contradiction.
\end{proof}

%
Another important property is the tightness of $\Mdm$ in $\cMp(X)$.
\index{Tightness of $\Mdm$}
\begin{lemma}[Tightness of $\Mdm$] \label{lem:tightness}
  For every $\ep>0$ there exists
  $K_{\ep}\subset\cMp(X)$ compact such that $\Mdm(
  \cMp(X)\setminus K_\ep)\leq\ep$.
\end{lemma}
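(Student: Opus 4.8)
The statement to prove is the tightness of $\Mdm$: for every $\eps>0$ there exists a $\tau_{\cMp}$-compact set $K_\eps\subset\cMp(X)$ with $\Mdm(\cMp(X)\setminus K_\eps)\le\eps$. The natural idea is to exploit that $\mm$ itself is a finite Radon measure, hence tight: for each $n$ choose a $\tau$-compact set $C_n\subset X$ with $\mm(X\setminus C_n)\le \delta_n$, where $\delta_n\downarrow 0$ will be chosen fast enough (e.g. $\delta_n:=\eps\,2^{-np}\,c_n^{-p}$ for suitable constants $c_n$ to be fixed below). First I would recall Prokhorov's theorem (Theorem \ref{thm:compa-Riesz}) which says that a uniformly bounded, equi-tight family in $\cMp(X)$ is relatively compact for the weak topology. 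So the plan is to define $K_\eps$ as (the weak closure of) the set of measures $\mu\in\cMp(X)$ satisfying a mass bound $\mu(X)\le R$ and a uniform tightness estimate $\mu(X\setminus C_n)\le \eta_n$ for all $n$, with $R$ and $(\eta_n)$ chosen so that $K_\eps$ is weakly compact, and then show that its complement is $\Mdm$-small by exhibiting an explicit admissible test function.

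The key construction is the test function. For a measure $\mu$ that is \emph{not} in $K_\eps$, it violates one of the defining constraints; I would handle the two kinds of violation (excess total mass, or excess mass on $X\setminus C_n$ for some $n$) by two families of test functions and then combine them using the subadditivity of $\Mdm$ from Proposition \ref{prop:prop}(a). For the mass constraint: the constant function $f_0\equiv 1/R$ is continuous (if $R$ is large) and satisfies $\int f_0\,\d\mu=\mu(X)/R\ge 1$ whenever $\mu(X)\ge R$, costing $\int_X f_0^p\,\d\mm=\mm(X)/R^p$, which we make $\le\eps/2$ by taking $R$ large. For the tightness constraints: since $X$ is completely regular and $C_n$ is $\tau$-compact, by \eqref{eq:324lsc} applied to the open set $X\setminus C_n$ we can find $\varphi_n\in\rmC_b(X,\tau)$ with $0\le\varphi_n\le\unit_{X\setminus C_n}$ and $\int_X\varphi_n^p\,\d\mm$ as close as we like to $\mm(X\setminus C_n)\le\delta_n$; rescaling by $1/\eta_n$ gives $g_n:=\varphi_n/\eta_n$ with $\int g_n\,\d\mu=\mu(\{\varphi_n>0\})/\eta_n\ge\mu(X\setminus C_n)/\eta_n\ge 1$ for any $\mu$ violating the $n$-th constraint, at cost $\le 2\delta_n/\eta_n^p$. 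Choosing $\eta_n$ so that $2\delta_n/\eta_n^p$ is summable with sum $\le\eps/4$ and $\eta_n\downarrow 0$ (possible since we are free to pick $\delta_n$ first), the function $g:=\big(\sum_n g_n^p\big)^{1/p}$ lies in $\mathcal L^p_+(X,\mm)$ and dominates each $g_n$, so $\int g\,\d\mu\ge 1$ for every $\mu$ outside $K_\eps$ that violates some tightness constraint. Altogether $h:=(f_0^p+g^p)^{1/p}\in\mathcal L^p_+(X,\mm)$ is admissible for $\cMp(X)\setminus K_\eps$ with $\|h\|_p^p\le\eps/2+\eps/4\le\eps$, which gives $\Mdm(\cMp(X)\setminus K_\eps)\le\eps$.

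It remains to check that $K_\eps$ (defined as this intersection of closed constraint sets, or its weak closure) is genuinely weakly compact. The mass bound $\mu(X)\le R$ gives uniform boundedness; the constraints $\mu(X\setminus C_n)\le\eta_n$ with $\eta_n\downarrow 0$ and $C_n$ $\tau$-compact give exactly the equi-tightness hypothesis \eqref{eq:22} of Prokhorov's theorem, since for any $\eps'>0$ one picks $n$ with $\eta_n\le\eps'$ and uses $C_n$. Hence by Theorem \ref{thm:compa-Riesz} the family is relatively compact, and taking weak closure (which only enlarges the set, keeping the complement smaller, and preserves the constraints since $\mu\mapsto\mu(X)$ and $\mu\mapsto\int\varphi_n\,\d\mu$ are weakly continuous for $\varphi_n\in\rmC_b$, while $\mu(X\setminus C_n)\le\liminf\mu_j(X\setminus C_n)$ by weak lower semicontinuity on open sets) we get a weakly compact $K_\eps$.

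\textbf{Main obstacle.} The only delicate point is the bookkeeping in the choice of the parameters: $\delta_n$ (controlling $\mm(X\setminus C_n)$ via tightness of $\mm$), then $\eta_n$ (the tightness thresholds defining $K_\eps$) must be chosen simultaneously so that (i) $\eta_n\downarrow 0$, so Prokhorov applies, and (ii) $\sum_n 2\delta_n\eta_n^{-p}\le\eps/4$, so the test function has small $L^p$ norm; and one must verify that the \emph{weak closure} of the constraint set does not destroy the constraints — this uses that $C_n$ is closed (indeed compact) so $X\setminus C_n$ is open and $\mu\mapsto\mu(X\setminus C_n)$ is weakly l.s.c., a standard fact for Radon measures on completely regular spaces recalled implicitly via \eqref{eq:324lsc}. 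Everything else is a routine combination of Prokhorov's theorem, complete regularity, tightness of $\mm$, and subadditivity of $\Mdm$.
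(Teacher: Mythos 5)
Your overall architecture (constraint set defined by a mass bound plus a sequence of tightness constraints, compactness via Prokhorov and weak closedness, a single admissible function built from one piece per constraint, with a parameter bookkeeping at the end) is the same as the paper's, but the construction of the functions $g_n$ controlling the tightness constraints contains a genuine error. You take $\varphi_n\in\rmC_b(X,\tau)$ with $0\le\varphi_n\le\nchi_{X\setminus C_n}$ and claim $\int g_n\,\d\mu=\mu(\{\varphi_n>0\})/\eta_n\ge\mu(X\setminus C_n)/\eta_n$. Neither step is correct: $\int\varphi_n\,\d\mu$ is not $\mu(\{\varphi_n>0\})$, and since $\{\varphi_n>0\}\subset X\setminus C_n$ and $\varphi_n\le 1$ the inequality goes the other way, $\int\varphi_n\,\d\mu\le\mu(X\setminus C_n)$. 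This cannot be repaired while keeping $\varphi_n$ continuous and dominated by $\nchi_{X\setminus C_n}$: the approximation \eqref{eq:324lsc} is only a pointwise supremum over a family, and controlling $\int\varphi_n^p\,\d\mm$ says nothing pointwise on $X\setminus C_n$. Concretely, $\varphi_n$ may vanish on a nonempty open subset $U\subset X\setminus C_n$ with $\mm(U)=0$ (consistent with your $L^p(\mm)$-closeness requirement); a measure $\mu=2\eta_n\delta_x$ with $x\in U$ then violates the $n$-th tightness constraint, so $\mu\notin K_\eps$, yet $\int h\,\d\mu$ can be far below $1$, so your $h$ is not admissible and the estimate $\Mdm(\cMp(X)\setminus K_\eps)\le\|h\|_p^p$ breaks down. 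This is precisely the distinction between $\Mdm$ and $\Mdmc$: functions that try to see the mass of $\mu$ outside a compact set must dominate (a multiple of) $\nchi_{X\setminus C_n}$ from above, which a continuous function vanishing on $C_n$ and bounded by $1$ generally cannot do uniformly in $\mu$.

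The fix is immediate once you notice that the definition \eqref{eqn:mod2} of $\Mdm$ only requires Borel admissible functions in $\Ldp$, not continuous ones: replace $g_n$ by $\eta_n^{-1}\nchi_{X\setminus C_n}$ (Borel, since $C_n$ is closed), which gives $\int g_n\,\d\mu=\mu(X\setminus C_n)/\eta_n\ge 1$ whenever the $n$-th constraint fails, at cost $\int g_n^p\,\d\mm\le\delta_n/\eta_n^p$; the rest of your argument (constant $1/R$ for the mass constraint, $\ell^p$-combination, choice of $\delta_n,\eta_n$, Prokhorov plus lower semicontinuity of $\mu\mapsto\mu(X\setminus C_n)$ on the open set) then goes through. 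This is in essence what the paper does, packaging all the indicator pieces into the single layered Borel function $f_k$ of \eqref{eq:596}, which equals $a_n$ on $K_{n+1}\setminus K_n$ and $+\infty$ off $\bigcup_n K_n$, hence dominates $a_n\nchi_{X\setminus K_n}$ for every $n\ge k$, with the quantitative choice $\delta_n=(\sqrt{m_n}+\sqrt{m_{n+1}})^{1/p}$, $a_n=\delta_n^{-1}$ making $\|f_k\|_p^p=\sqrt{m_k}$ telescope.
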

\begin{proof} Since $\mm$ is a Radon measure
  we can find an nondecreasing family of $\tau$-compact sets
  $K_n\subset X$
  such that $m_n=\mm(X\setminus K_n)>0$, 
  $\lim_{n\to\infty}\mm(X\setminus K_n)=0.$
  We set
  \begin{equation}
    \label{eq:544}
    \delta_n=(\sqrt{m_n}+\sqrt{m_{n+1}})^{1/p},\quad
    a_n:=\delta_n^{-1},
  \end{equation}
  observing that $\delta_n\down0$ and $a_n\up+\infty$ as $n\to\infty$.
  For $k\in \N$ let us now define the sets
  \begin{equation}
    E_k :=\Big\{ \mu \in \cMp(X) :  \mu(X) \leq k,\ \ 
    \mu(X\setminus K) \leq \delta_n \ \text{for every } n \geq k
    \Big\},\label{eq:545}
\end{equation}
  which are compact in $\cMp(X)$ by Theorem \ref{thm:compa-Riesz}.
  
  To evaluate $\Mdm(\cMp(X)\setminus E_k)$
  we introduce the functions
  \begin{equation}
f_k(x) : =\begin{cases} 0  & \text{ if }x \in K_k, \\ a_n & \text{ if
  }x \in K_{n+1} \setminus K_n \text{ and }n\geq k, \\ + \infty
  &\text{otherwise.} \end{cases}\label{eq:596}
\end{equation}
We observe that if $\mu \in \cMp(X)\setminus E_k$
then we have either $\mu(X)>k $
or $\mu(X\setminus
K_n) > \delta_n$ for some $n \geq k$.
In either case the integral of the function $f_k+ \frac 1k$
along $\mu$ is greater or equal to $1$:
\begin{itemize}
\item if $\mu(X)>k$ then
  $$\intmu {\Big(f_k+ \frac 1k \Big)}
  \geq \frac1k \ell(\mu) \geq 1;$$
\item if $\mu(X\setminus K_n)> \delta_n$
  for some $n\geq k$ we have that
  $$ \intmu {\Big(f_k + \frac 1k \Big)}
  \geq \int_{X\setminus K_n} f_k \, \d \mu
  \geq \int_{X\setminus K_n} a_n \, \d \mu > \delta_n a_n=1.$$
\end{itemize}
So we have that 
$\Mdm(\cMp(X)\setminus E_k)
\leq \|f_k+ \frac 1k\|^p_p \leq ( \|f_k\|_p + \| 1/k \|_p )^p$. But
\begin{align}
  \int_X f_k^p \,\d\mm &=\sum_{n=k}^{\infty}\int\limits_{K_{n+1} \setminus K_n} a_n^p \, \d \mm = 
\sum_{n=k}^{\infty}  \frac{m_{n} - m_{n+1}} { \sqrt{ m_{n} } +
  \sqrt{m_{n+1}}} =  \sum_{n=k}^{\infty} ( \sqrt{m_n} - \sqrt{m_{n+1}}
                         ) = \sqrt{m_k},\label{eq:543}\\
  \|f_k+ \frac 1k\|&\le (m_k)^{1/(2p)} + (\mm(X))^{1/p}/k
                     \label{eq:547}
\end{align}
and therefore we obtain $\Mdm(\cMp(X)\setminus E_k)
\leq \Bigl( (m_k)^{1/(2p)} + (\mm(X))^{1/p}/k \Bigr)^p \to 0$.
\end{proof}

\subsection{$p$-Modulus of a family of rectifiable arcs}

\index{Modulus of a family of rectifiable arcs}
There is a natural way to lift the notion of Modulus
for a family of Radon measures in $\cMp(X)$ to 
a corresponding Modulus for a collection of rectifiable arcs: it is sufficient to
assign a map $M:\RA(X)\to \cMp(X)$ and for every $\Gamma\subset \RA(X)$ set
$\operatorname{Mod}_{p,M} (\Gamma):=\Mdm(M(\Gamma)).$
Clearly such a notion depends on the choice of $M$; in these notes we
will consider two (slightly) different situations:
the first one correspond to the most classic and widely used choice
of the $\mathscr H^1$-measure carried by $\gamma$ of \eqref{eq:232}
\begin{equation}
  \label{eq:593}
  \rmM\gamma:= \nu_\gamma,\quad
  \nu_\gamma:=\ell(\gamma)(\Al_\gamma)_\sharp(\Leb 1\res[0,1]).
\end{equation}
In this case we will keep the standard notation of $\Md (\Gamma),\
\Mdc(\Gamma)$;
e.g.~in the case of $\Md$ \eqref{eqn:mod2} reads
\begin{equation} \label{eqn:mod2gamma}
  \Md(\Gamma) := \Mdm(\rmM(\Gamma))=\inf \left\{ \int_X f^p \, \d \mm \, : \, f
  \in \Ldp,\,\, \int_\gamma  f 
  \geq 1 \; \; \text{ for all } \gamma \in \Gamma \right\},
\end{equation}
with obvious modification for $\Mdc(\Gamma)$.
The second choice corresponds to
\begin{equation}
  \label{eq:594}
  \tilde \rmM\gamma=\tilde\nu_\gamma:=\nu_\gamma+
  \delta_{\gamma_0}+\delta_{\gamma_1},\quad
  \int_X f\,\d(\tilde\nu_\gamma)=f(\gamma_0)+f(\gamma_1)+\int_\gamma f,
\end{equation}
where as usual we write the initial and final points of $\gamma$ as
$\gamma_i=\sfe_i(\gamma)=\Al_\gamma(i)$, $i=0,1$.
We will denote by $\tMd(\Gamma)$ the corresponding modulus,
\begin{equation} \label{eqn:mod2gammabis}
  \begin{aligned}
    &\tMd (\Gamma) :={} \Mdm(\tilde\rmM(\Gamma))
    \\&={}\inf \left\{ \int_X f^p
      \, \d \mm \, : \, f \in \Ldp,\,\,
      f(\gamma_0)+f(\gamma_1)+\int_\gamma f \geq 1 \; \; \text{ for
        all } \gamma \in \Gamma \right\}.
  \end{aligned}
\end{equation}
It is clear that
\begin{equation}
  \label{eq:595}
  \tMd(\Gamma)\le \Md(\Gamma),\quad
  \tMd(\Gamma)\le 2^{-p}\mm(X)\quad
  \text{for every }\Gamma\subset \RA(X).
\end{equation}
One main difference between $\Md$ and $\tMd$
is the behaviour on constant arcs:
if $\Gamma$ contains a constant arc than it is clear that
$\Md(\Gamma)=+\infty$, whereas
for a collection $\Gamma_A=\{\gamma_x:x\in A\}$
of constant arcs parametrized by a Borel set $A\subset X$
we have $\tMd(\Gamma_A)=2^{-p}\mm(A)$.

The notions of $\Md$- or $\tMd$-negligible set of arcs
(and of
properties which hold $\Md$- or $\tMd$-a.e.)
follow accordingly from
Definition \ref{dfn:modnull}.
\index{Negligible set of rectifiable arcs}
%
Properties (a-e) of Proposition \ref{prop:prop} have an obvious
version for arcs. The only statements that require some care are
Proposition \ref{prop:prop}(f) and Lemma \ref{lem:tightness},
since compactness in $\RA(X)$ for subsets $\Gamma$ of arcs is not equivalent to compactness 
for the corresponding subsets $\rmM(\Gamma),\ \tilde\rmM(\Gamma)$ in
$\cMp(X)$.

Concerning the validity of Proposition \ref{prop:prop}(f),
it is sufficient to note that for every nonnegative $\phi\in
\rmC_b(X)$ the maps
\begin{align*}
  \gamma&\mapsto \int\phi\,\d(\rmM\gamma)=\int_\gamma\phi\\
  \gamma&\mapsto \int\phi\,\d(\tilde\rmM\gamma)=\phi(\gamma_0)+\phi(\gamma_1)+\int_\gamma\phi
\end{align*}
are lower semicontinuous, thanks to Theorem \ref{thm:important-arcs},
so that the argument of the proof works as well.
\begin{corollary}
  \label{cor:Moduno}
  If $\rmK_n$ is a nonincreasing sequence of compact sets in $\RA(X)$ we
  have 
  $\Mdc (K_n) \downarrow \Mdc (\cap_nK_n)$,
  $\tMdc (K_n) \downarrow \tMdc (\cap_nK_n)$.  
\end{corollary}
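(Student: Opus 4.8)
The plan is to re-run the proof of Proposition~\ref{prop:prop}(f) directly at the level of arcs. The only structural change is that, when we lift a nonnegative $\phi\in\rmC_b(X)$, the relevant functionals on $\RA(X)$ — namely $\gamma\mapsto\int_\gamma\phi$ for $\Md$, and $\gamma\mapsto\phi(\gamma_0)+\phi(\gamma_1)+\int_\gamma\phi$ for $\tMd$ — are only $\tau_\rmA$-lower semicontinuous (Lemma~\ref{le:reparam-arcs}(d,e)), not continuous the way $\mu\mapsto\int_X\phi\,\d\mu$ is on $\cMp(X)$; so one cannot simply appeal to compactness of the images $\rmM(\rmK_n)$, $\tilde\rmM(\rmK_n)$ in $\cMp(X)$. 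I will treat both moduli at once, writing $h_\phi$ for either functional above: each $h_\phi$ is nonnegative, finite-valued on $\RA(X)$ (the arcs being rectifiable and $\phi$ bounded), and $\tau_\rmA$-lower semicontinuous.

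Set $\rmK:=\bigcap_n\rmK_n$ and $C:=\lim_n\Mdc(\rmK_n)=\inf_n\Mdc(\rmK_n)$, the sequence being nonincreasing by the arc version of monotonicity in Proposition~\ref{prop:prop}(a); monotonicity also gives $\Mdc(\rmK)\le C$, and likewise for $\tMdc$. If some $\rmK_{n_0}$ is empty then $\rmK_n=\rmK=\emptyset$ for $n\ge n_0$ and all moduli in sight vanish, so I may assume every $\rmK_n$ is nonempty. Since $\RA(X)$ is Hausdorff — being a subspace of $\Arc(X,\tau)$, which is Hausdorff by Corollary~\ref{cor:tedious-arcs}(a) — the $\rmK_n$ are closed, hence the nested family of nonempty compacta has nonempty compact intersection $\rmK$. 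I may also assume $C>0$, since otherwise the conclusion is immediate.

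The first genuine step is an elementary minimum-passage lemma: for l.s.c.\ $h:\RA(X)\to[0,+\infty]$ and a nonincreasing sequence of nonempty compacta $\rmK_n$ with intersection $\rmK$, the minima $m_n:=\min_{\rmK_n}h$ increase to $m:=\min_{\rmK}h$. Each minimum is attained by lower semicontinuity on a nonempty compactum; $(m_n)$ is nondecreasing with $m_n\le m$; taking minimizers $\gamma_n\in\rmK_n$ and a convergent subnet $\gamma_{n(\beta)}\to\gamma_*$ inside the compact $\rmK_1$, the cofinality of the subnet forces $\gamma_*\in\rmK_m$ for every $m$, hence $\gamma_*\in\rmK$, and then $m\le h(\gamma_*)\le\liminf_\beta h(\gamma_{n(\beta)})=\sup_n m_n\le m$, so $m_n\uparrow m$.

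Finally the contradiction step, carried out identically for $\Md$ and $\tMd$: assume $\Mdc(\rmK)<C$ and pick a nonnegative $\psi\in\rmC_b(X)$ admissible for $\Mdc(\rmK)$, i.e.\ $h_\psi\ge1$ on $\rmK$, with $\int_X\psi^p\,\d\mm<C$. Then $m:=\min_{\rmK}h_\psi\ge1$, so by the lemma $m_n:=\min_{\rmK_n}h_\psi\to m\ge1$ and in particular $m_n>0$ for $n$ large; for such $n$ the rescaling $\psi_n:=\psi/m_n$ satisfies $h_{\psi_n}\ge1$ on $\rmK_n$, whence $\Mdc(\rmK_n)\le\int_X\psi_n^p\,\d\mm=m_n^{-p}\int_X\psi^p\,\d\mm\to m^{-p}\int_X\psi^p\,\d\mm\le\int_X\psi^p\,\d\mm<C$, contradicting $\Mdc(\rmK_n)\ge C$. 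Hence $\Mdc(\rmK_n)\downarrow\Mdc(\rmK)$ and, verbatim with the boundary terms included, $\tMdc(\rmK_n)\downarrow\tMdc(\rmK)$. I expect the only point needing care to be the subnet argument in the minimum-passage lemma, since $\RA(X)$ need not be metrizable in general; everything else is a routine transcription of the proof of Proposition~\ref{prop:prop}(f).
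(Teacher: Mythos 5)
Your proof is correct and follows exactly the route the paper takes: it observes that the lifted functionals $\gamma\mapsto\int_\gamma\phi$ and $\gamma\mapsto\phi(\gamma_0)+\phi(\gamma_1)+\int_\gamma\phi$ are $\tau_\rmA$-lower semicontinuous (Theorem \ref{thm:important-arcs}) and then reruns the contradiction argument of Proposition \ref{prop:prop}(f) on $\RA(X)$. Your subnet proof of the minimum-passage step is just the detailed justification of the convergence of minima that the paper's argument invokes implicitly, so no further comment is needed.
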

Concerning the tightness Lemma \ref{lem:tightness} we have:
\index{Tightness of $\Md$ and of $\tMd$}
\begin{lemma}[Tightness of $\Md$ and $\tMd$] \label{lem:tightness2}
  Let us suppose that $(X,\sfd)$ is complete.
  \begin{enumerate}
  \item
    For every $\ep>0$ there exists
    $K_{\ep}\subset\RA(X)$ compact such that $\tMd(
    \RA(X)\setminus K_\ep)\leq\ep$.
  \item
  For every $\eta,\ep>0$ there exists
  $K_{\ep}\subset\RA(X)$ compact such that $\Md(
  \RA_\eta(X)\setminus K_\ep)\leq\ep$ (where $\RA_\eta(X)$
  has been defined in \eqref{eq:RAk}).      
  \end{enumerate}
\end{lemma}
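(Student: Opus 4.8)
The plan is to adapt the proof of Lemma \ref{lem:tightness} to the arc setting, using the compactness criteria for families of arcs provided by Theorem \ref{thm:important-arcs}(f,g) in place of Prokhorov's theorem. The essential difference with the measure case is that a bound on the total mass $\nu_\gamma(X)=\ell(\gamma)$ together with equi-tightness of $\{\nu_\gamma\}$ is \emph{not} enough to get compactness in $\RA(X)$: by Theorem \ref{thm:important-arcs}(g) one also needs the arcs to meet a fixed $\tau$-compact set $K$. This is exactly why completeness of $(X,\sfd)$ is assumed, and why the two statements (a) and (b) must be handled slightly differently: for $\tMd$ the test functions see the endpoints $\gamma_0,\gamma_1$ directly, which lets us force $\sfe_0(\gamma)$ into a compact set at small $\tMd$-cost; for $\Md$ the endpoints are invisible to the functional, so we must restrict to arcs of length bounded below by $\eta$ and use the length together with the integral to trap part of the arc in a compact set.

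For part (a): as in Lemma \ref{lem:tightness} choose nondecreasing $\tau$-compact sets $K_n\subset X$ with $m_n:=\mm(X\setminus K_n)\downarrow 0$, and set $\delta_n,a_n$ as in \eqref{eq:544}. For $k\in\N$ define
\begin{equation*}
  \mathrm E_k:=\Big\{\gamma\in\RA(X):\ell(\gamma)\le k,\ \gamma_0\in K_k,\ \nu_\gamma(X\setminus K_n)\le\delta_n\ \text{for all }n\ge k\Big\}.
\end{equation*}
Its closure $K_k:=\overline{\mathrm E_k}$ is compact by Theorem \ref{thm:important-arcs}(g): condition 1 there is $\ell(\gamma)\le k$, condition 2 holds with the compact set $K_k$ since $\gamma_0\in K_k$ (and $\gamma_0$ is preserved under limits by Lemma \ref{le:reparam-arcs}(d), while $\ell$ and $\nu_\gamma(X\setminus K_n)$ are, respectively, l.s.c. and — for $K_n$ closed — such that the defining inequalities pass to limits using \eqref{eq:200}), condition 3 is the equi-tightness $\nu_\gamma(X\setminus K_n)\le\delta_n$ for $n\ge k$, noting $a_n\delta_n=1$. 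Now take $f_k$ as in \eqref{eq:596}. If $\gamma\notin\mathrm E_k$ then either $\ell(\gamma)>k$, or $\gamma_0\notin K_k$, or $\nu_\gamma(X\setminus K_n)>\delta_n$ for some $n\ge k$; in the first and third cases the integral of $f_k+\tfrac1k$ against $\tilde\nu_\gamma$ is $\ge1$ exactly as in Lemma \ref{lem:tightness}, and in the new middle case $\gamma_0\notin K_k$ forces $f_k(\gamma_0)\ge a_k\ge1$ (taking $a_k\ge1$, which holds for large $k$), so $f_k(\gamma_0)+f_k(\gamma_1)+\int_\gamma f_k\ge1$. Hence $f_k+\tfrac1k$ is admissible for $\tMd(\RA(X)\setminus K_k)$, and by \eqref{eq:543}, \eqref{eq:547} we get $\tMd(\RA(X)\setminus K_k)\le\big((m_k)^{1/(2p)}+\mm(X)^{1/p}/k\big)^p\to0$.

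For part (b): fix $\eta>0$. With the same $K_n,m_n,\delta_n,a_n$, define for $k\in\N$
\begin{equation*}
  \mathrm E_k:=\Big\{\gamma\in\RA(X):\eta<\ell(\gamma)\le k,\ \sfe(\gamma)\cap K_k\ne\emptyset,\ \nu_\gamma(X\setminus K_n)\le\delta_n\ \text{for all }n\ge k\Big\},
\end{equation*}
$K_k:=\overline{\mathrm E_k}$; again $K_k$ is compact by Theorem \ref{thm:important-arcs}(g) (now condition 2 uses $\sfe(\gamma)\cap K_k\ne\emptyset$ — this closed condition is stable under $\tau_\rmA$-limits since $\Al_{\gamma_i}\to\Al_\gamma$ uniformly whenever $\ell(\gamma_i)\to\ell(\gamma)$, by \eqref{eq:288}, and one passes to the limit along a converging sequence of points $\gamma_i(t_i)\in K_k$). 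Set $g_k:=\eta^{-1}\unit_{X\setminus K_k}\ell(\gamma)$-style: precisely let $g_k(x):=a_n$ on $K_{n+1}\setminus K_n$ for $n\ge k$, $g_k:=+\infty$ off $\cup_n K_n$, and $g_k:=\eta^{-1}$ on $K_k$. If $\gamma\notin\mathrm E_k$ with $\ell(\gamma)>\eta$ (the only arcs relevant for $\Md(\RA_\eta(X)\setminus K_k)$), then either $\ell(\gamma)>k$, giving $\int_\gamma g_k\ge\eta^{-1}\cdot\,?$ — more carefully, use $g_k+\tfrac1k$ and argue $\tfrac1k\ell(\gamma)\ge1$ as before; or $\sfe(\gamma)\cap K_k=\emptyset$, so $\gamma$ stays in $X\setminus K_k$ and $\int_\gamma g_k\ge\eta^{-1}\ell(\gamma)>1$; or $\nu_\gamma(X\setminus K_n)>\delta_n$ for some $n\ge k$, giving $\int_\gamma g_k\ge a_n\delta_n=1$ as in Lemma \ref{lem:tightness}. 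Thus $g_k+\tfrac1k$ is admissible for $\Md(\RA_\eta(X)\setminus K_k)$. Finally $\int_X g_k^p\,\d\mm\le\eta^{-p}\mm(X)$ from the piece on $K_k$ plus $\sqrt{m_k}$ from the tail \eqref{eq:543}; that first term is a fixed constant, which is not small — so the bound must be arranged so that the $K_k$-piece contributes $0$, i.e.\ one should instead put $g_k:=0$ on $K_k$ and absorb the ``$\ell(\gamma)>\eta$'' trapping into requiring $\sfe(\gamma)\cap K_k\ne\emptyset$ means the arc \emph{enters} $K_k$ but, having length $>\eta$ and lying partly outside, contributes... This is the delicate point: the main obstacle is to design a single test function of small $L^p$-norm that simultaneously handles arcs that are short-but-far, long, or untight. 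The cleanest fix is to split $\RA_\eta(X)\setminus K_k$ into the sub-family of arcs with $\sfe(\gamma)\subset X\setminus K_{k_0}$ for a fixed large $k_0$ — these have $\int_\gamma(\eta^{-1}\unit_{X\setminus K_{k_0}})\ge1$ with a fixed function of norm $\eta^{-1}m_{k_0}^{1/p}$, small for $k_0$ large — and the complementary sub-family of arcs meeting $K_{k_0}$, to which the $f_k$-type argument of Lemma \ref{lem:tightness} applies verbatim with $K:=K_{k_0}$ in Theorem \ref{thm:important-arcs}(g). Subadditivity of $\Md$ then finishes: $\Md(\RA_\eta(X)\setminus K_\eps)\le(\eta^{-1}m_{k_0}^{1/p})^p+\big((m_k)^{1/(2p)}+\mm(X)^{1/p}/k\big)^p$, which is $\le\eps$ for $k_0,k$ large.
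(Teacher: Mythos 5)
Your part (a) is correct and essentially the paper's argument: the only real difference is that you put the endpoint condition $\gamma_0\in K_k$ into the definition of $\mathrm E_k$ and pay for it with one extra case in the admissibility check ($f_k(\gamma_0)\ge a_k\ge 1$ for $k$ large), whereas the paper keeps exactly the sets of Lemma \ref{lem:tightness} pulled back through $\tilde\rmM$ and observes that $\tilde\nu_\gamma(X\setminus K_n)\le\delta_n<1$ already forces $\gamma_0,\gamma_1\in K_n$, which is precisely what makes hypothesis 2 of Theorem \ref{thm:important-arcs}(g) hold.

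For part (b) your final argument is correct but takes a different route, and the ``delicate point'' you wrestle with in the middle is not actually delicate. The paper keeps the single function $f_k$ of \eqref{eq:596} (which vanishes on $K_k$) and the single set $\rmE_k=\{\gamma:\ell(\gamma)\le k,\ \sfe(\gamma)\cap K_k\ne\emptyset,\ \nu_\gamma(X\setminus K_n)\le\delta_n\ \forall n\ge k\}$: for an arc of $\RA_\eta(X)$ with $\sfe(\gamma)\cap K_k=\emptyset$ the value of the test function on $K_k$ is irrelevant, and since $f_k\ge a_k$ on $X\setminus K_k$ one gets $\int_\gamma f_k\ge a_k\,\ell(\gamma)\ge a_k\eta\ge 1$ as soon as $a_k\ge \eta^{-1}$, which holds for $k$ large because $a_k\uparrow\infty$; thus the estimates \eqref{eq:543}--\eqref{eq:547} go through unchanged and no modification of $f_k$ on $K_k$ (your abandoned $g_k=\eta^{-1}$ there, which indeed ruins the norm) is needed. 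Your ``cleanest fix'' --- splitting $\RA_\eta(X)\setminus \rmE_k$ into the arcs lying entirely in $X\setminus K_{k_0}$, handled by the fixed admissible function $\eta^{-1}\unit_{X\setminus K_{k_0}}$ of norm $\eta^{-1}m_{k_0}^{1/p}$, and the arcs meeting $K_{k_0}$, to which the $f_k$-argument applies with $K:=K_{k_0}$ in Theorem \ref{thm:important-arcs}(g), the two pieces being combined by subadditivity of $\Md$ --- is a valid alternative; it costs an extra parameter $k_0$ and an appeal to subadditivity but buys nothing over the one-function argument, so if you keep it you should delete the false start with $g_k$ and state explicitly the compact family of arcs used for the second sub-family.
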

\begin{proof}
  \textbf{(a)}
  We can repeat verbatim the proof of Lemma \ref{lem:tightness}:
  keeping the same notation,
  the main point is that the sets
  %
  %
  \begin{equation}
    \tilde\rmM^{-1}(E_k)
    :=\Big\{ \gamma \in \RA(X) :  2+\ell(\gamma) \leq k,\ \ 
    \tilde\rmM\gamma(X\setminus K_n)
    \leq \delta_n \ \text{for every } n \geq k
    \Big\},\label{eq:545bisbis}
\end{equation}
are compact in $\RA(X)$ by Theorem \ref{thm:important-arcs}(g):
in fact, since
$\tilde\rmM\gamma=\delta_{\gamma_0}+\delta_{\gamma_1}+\nu_\gamma$,
whenever $\delta_n<1$ condition \eqref{eq:545bisbis} yields
$\gamma_0,\gamma_1\in K$, so that also the
assumption 2. of Theorem \ref{thm:important-arcs}(g)
holds.
\medskip

\noindent\textbf{(b)}
In this case the set $\rmE_k:=\rmM^{-1}(E_k)$ cannot be compact in general,
since it contains all the constant curves, so that there is no hope
to construct inner compact approximations $K_\eps$ such that
$\Md(\RA(X)\setminus K_\eps)\downarrow0$.
We thus replace $\RA(X)$ with $\RA_\eta(X)$, $\eta>0$, and
modify the definition of the sets
$E_k$ by requiring that the support of $\mu$ intersect $K_k$;
in terms of $\rmE_k=\rmM^{-1}(E_k)$
this corresponds to 
\begin{equation}
  \rmE_k :=\Big\{ \gamma \in \RA(X) :  \ell(\gamma) \leq k,\ \ 
  \sfe(\gamma)\cap K_k\neq\emptyset,\ \
  \nu_\gamma(X\setminus K_n)) \leq \delta_n \ \text{for every } n \geq k
  \Big\},\label{eq:545}
\end{equation}
which are compact in $\RA(X)$ by Theorem \ref{thm:important-arcs}(g).

To evaluate $\Md(\RA_\eta(X)\setminus E_k)$
we introduce the functions $f_k$ as in \eqref{eq:596}
and we observe that if $\gamma \in \RA_\eta(X)\setminus E_k$
then we have either $\ell(\gamma) >k $ or
$\sfe(\gamma)\subset X\setminus K_k$ or $\nu_\gamma(X\setminus
K_n) > \delta_n$ for some $n \geq k$.
In either case the integral of the function $f_k+ \frac 1k$
along $\gamma$ is greater or equal to $1$:
we have just to check the case
$\sfe(\gamma)\cap K_k=\emptyset$, for which
  \begin{displaymath}
    \int_\gamma \left(f_k+ \frac 1k \right)
    \ge a_k \ell(\gamma)\ge \eta a_k\ge 1\quad \text{if}\quad
    a_k\ge \eta^{-1};
  \end{displaymath}
%
For sufficiently big $k$ we thus obtain the same estimates \eqref{eq:543} and \eqref{eq:547}.
\end{proof}
\subsection{Notes}
\label{subsec:notes7}
\begin{notes}
  The notion of $p$-Modulus has been introduced by Fuglede \cite{Fuglede57}
  in the natural framework of
  collection of positive 
  measures, as in \cite{ADS15}. Its application to
  the metric theory of Sobolev spaces has been proposed in
  \cite{Koskela-MacManus98} and further studied in
  \cite{Shanmugalingam00},
  where the definition of Newtonian spaces has been introduced.
  We refer to \cite{Bjorn-Bjorn11,HKST15}
  for a comprehensive presentation of this topic.

  The tightness estimate for $\Mdm$ in 
  $\cMp(X)$ has been introduced by \cite{ADS15},
  where it plays a crucial role. Here we 
  used the same approach to derive 
  tightness estimates directly in $\RA(X)$, 
  for the two relevant embeddings of 
  $\RA(X)$ in $\cMp(X)$ giving raise to $\Md$ and $\tMd$.
  
\end{notes}

\section{(Nonparametric) Dynamic plans with barycenter in
  $L^q(X,\mm)$}

\label{sec:DynamicPlans}
Let us keep the main Assumption of page \pageref{ass:mainIII}.
We denote by $q=p'=p/(p-1)\in (1,\infty)$
the conjugate exponent of $p$.

\index{Nonparametric dynamic plans}
\index{Dynamic plans}
\begin{definition}[(Nonparametric) dynamic plans]
  \label{def:dynplan}
  A (nonparametric) dynamic plan is a Radon measure
  $\ppi\in \cMp(\RA(X))$ on $\RA(X)$ such that
  \begin{equation}
    \label{eq:430}
    \ppi(\ell):=\int_{\RA(X)}\ell(\gamma)\,\d\ppi(\gamma)<\infty.
  \end{equation}
\end{definition}
Since $\RA(X)$ is a $F_\sigma$-subset of $\Arc(X)$, a dynamic plan
can also be considered as the restriction of
a Radon measure $\ppi'$ on $\Arc(X)$ satisfying $\int
\ell\,\d\ppi'<\infty$;
in particular $\ppi'$ is concentrated on
$\RA(X)$,
i.e.~$\ell(\gamma)<\infty$ for $\ppi'$-a.e.~$\gamma$.
Using the universally Lusin-measurable map $G:\RA(X)\to
\BVC_c([0,1];X)$
\eqref{eq:560}
we can also lift $\ppi$ to a Radon measure
$\tilde \ppi=\Al_\sharp\ppi$ on $\rmC([0,1];X)$ concentrated on the
set
$\BVC_c([0,1];X)$ \eqref{eq:556}.
Conversely, any Radon measure $\tilde\ppi$ on $\rmC([0,1];X)$
concentrated on $\BVC([0,1];X)$ yields the Radon measure
$\ppi:=\Quot_\sharp\tilde\ppi$
on $\RA(X)$. Notice that $\Quot_\sharp(\Al_\sharp \ppi)=\ppi$.

If $\ppi$ is a dynamic plan in $\cMp(\RA(X))$,
thanks to Theorem \ref{thm:important-arcs}(e)
and Fubini's Theorem \cite[ Chap.~II-14]{Dellacherie-Meyer78},
we can define
  the Borel measure
  $\mu_\sppi:=\proj\ppi\in \cMp(X)$ by the formula
\begin{equation}
  \int f\,\d\mu_\sppi:=
  \iint_\gamma f\,\d\ppi(\gamma)
  \quad\text{for every bounded Borel function }f:X\to \R.
\end{equation}
It is not difficult to show that $\mu_\sppi$ 
is a Radon
measure with total mass $\ppi(\ell)$ given by \eqref{eq:430}: in fact,
setting $\RA^L(X):=\big\{\gamma\in \RA(X):\ell(\gamma)\le
L\big\}$, for every $\eps>0$ we can find a length $L>0$ such that
$\ppi(\RA(X)\setminus \RA^L(X))\le \eps/2$.
Since $\ppi$ is Radon and $\Al$ is Lusin $\ppi$-measurable, we can also find
a compact set $\KK_\eps\subset
\RA^L(X)$
on which $\Al$ is continuous and $\ppi(\RA(X)\setminus \KK_\eps)\le \eps/(2L)$.
We deduce that
$\mu_\sppi$ is $\eps$-concentrated on the compact $K_\eps:=\{\Al_\gamma(t):t\in [0,1],\ \gamma\in
\KK_\eps\}=\sfe([0,1]\times \Al(\KK_\eps))$, i.e.~$\mu_\sppi(X\setminus
K_\eps)\le \eps$, since
\begin{align*}
  \mu_\sppi(X\setminus
  K_\eps)
  &=\int \Big(\int_\gamma \nchi_{X\setminus K_\eps}\Big)\,\d\ppi(\gamma)
    =\int \Big(\ell(\gamma)\int_0^1 \nchi_{X\setminus K_\eps}(\Al_\gamma(t))\,\d
    t\Big)\,\d\ppi(\gamma)
  \\&\le 
  L \int \Big(\int_0^1 \nchi_{X\setminus K_\eps}(\Al_\gamma(t))\,\d
  t\Big)\,\d\ppi(\gamma)
  = L(\LL^1\otimes \ppi)\{(t,\gamma):\Al_\gamma(t)\not\in K_\eps\}
  \\&\le 
  L \ppi(\{\gamma\in \RA(X):\gamma\not\in \KK_\eps\})\le \eps/2.
\end{align*}
%
%
Notice that $\mu_\sppi$ can be considered as the integral w.r.t.~$\ppi$
of the Borel family of measures $\nu_\gamma$, $\gamma\in \RA(X)$
\cite[ Chap.~II-13]{Dellacherie-Meyer78}, in
the sense that 
\begin{equation}
  \label{eq:289}
  \int_X f\,\d\mu_\sppi(x)=
  \int_{\RA(X)}\Big(\int_X f\,\d\nu_\gamma\Big)\,\d\ppi(\gamma).
\end{equation}
\index{Barycenter of a dynamic plan}
\begin{definition}
  \label{def:barycenter}
  We say that $\ppi\in \cMp(\RA(X))$ has barycenter in $L^q(X,\mm)$ if there exists $h\in
  L^q(X,\mm)$ such that $\mu_\sppi=h\mm$, or, equivalently, if
\begin{equation}
  \label{eq:34}
  \int \int_\gamma f\,\d\ppi(\gamma)=\int fh\,\d\mm\quad \text{for
    every }f\in \rmB_b(X),
\end{equation}
and we call $\brq q (\ppi):=\|h\|_{L^q(X,\mm)}$
the barycentric $q$-entropy of $\ppi$.
We will denote by $\Bar q\mm$ the set of all plans with barycenter in
$L^q(X,\mm)$ and we will set $\brq q(\ppi):=+\infty$ if
$\ppi\not\in\Bar q\mm$.
\end{definition}
$\brq q:\cMp(\RA(X))\to[0,+\infty]$ is
a convex and positively $1$-homogeneous functional.
When $q=1$ $\ppi$ has barycenter in $L^1(X,\mm)$ if and only
if $\mu_\sppi\ll\mm$ and in this case
$\brq 1(\ppi)=\ppi(\ell)=\int \ell\,\d\ppi$.

If $q>1$ (which corresponds to our setting, when $q$ is the dual of $p$)
then $\brq q$ is also lower semicontinuous w.r.t.~the weak topology of
$\cMp(\RA(X))$,
a property which can be easily deduced by the equivalent representation
formula 
\eqref{eq:190} below.
Notice that $\brq q(\ppi)=0$ iff $\ppi$ is concentrated on the set of
constant
arcs in $\RA(X)$.

$\brq q(\ppi)$ has two equivalent representation. 
The first one is related to the $L^q$ entropy of the 
projected measure $\mu_\sppi=\proj\ppi$ with respect to $\mm$:
\begin{equation}
  \label{eq:189}
  \frac 1q\brq q^q(\ppi)=\mathscr L_q(\mu_\sppi|\mm)
  \end{equation}
where for an arbitrary $\mu\in \cMp(X)$ 
\begin{equation}
  \label{eq:290}
  \mathscr L_q(\mu|\mm):=
  \begin{cases}
    \displaystyle \frac 1q \int_X
    \Big(\frac{\d\mu}{\d\mm}\Big)^q\,\d\mm&\text{if $\mu\ll\mm$,}\\
    +\infty&\text{otherwise.}
  \end{cases}
\end{equation}
A second interpretation arises from the dual characterization of
$\LL_q$, since $q=p'\in (1,+\infty)$
\cite[Thm.~2.7, Rem.~2.8]{MLS18}
\begin{equation}
  \label{eq:190}
  \mathscr L_q(\mu|\mm)
  =
  \sup \Big\{\int_X f\,\d\mu-\frac 1p \int_X f^p\,\d\mm:
  f\in \rmC_{b}(X),\quad f\ge 0\Big\}.
\end{equation}
We immediately obtain
\begin{equation}
  \label{eq:291}
  \frac 1q\brq q^q(\ppi)=
  \sup \Big\{\int\int_\gamma f\,\d\ppi(\gamma)-\frac 1p \int_X f^p\,\d\mm:
  f\in \rmC_{b}(X),\quad f\ge 0\Big\}.
\end{equation}
Similarly (see Lemma \ref{le:obvious-2hom} in the Appendix) 
we can easily check that 
\begin{lemma}
  \label{le:bar-check}
  If $q\in (1,\infty)$, $p=q'$, 
a Radon measure $\ppi$ on $\RA(X)$ 
has barycenter
in $L^q(X,\mm)$
if there exists $c\in [0,\infty)$ such that
\begin{equation}\label{eq:defboundcomp}
\int_{\RA(X)} \int_\gamma f  \, \d \ppi(\gamma)  \leq  c \| f
\|_{L^p(X,\mm)}\qquad
\text{for every } f \in \Ldp.
\end{equation}
In this case $\brq q(\ppi)$ is the minimal constant $c$ in
\eqref{eq:defboundcomp}.
Moreover, it is equivalent to check \eqref{eq:defboundcomp} on
nonnegative functions $f\in \rmC_b(X)$.
\end{lemma}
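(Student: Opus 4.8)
\textbf{Proof plan for Lemma \ref{le:bar-check}.}

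The statement to prove is: if \eqref{eq:defboundcomp} holds for some finite $c$, then $\ppi$ has barycenter in $L^q(X,\mm)$, i.e.\ $\mu_\sppi = h\mm$ for some $h\in L^q(X,\mm)$; moreover $\brq q(\ppi)$ equals the least admissible constant $c$; and it suffices to test \eqref{eq:defboundcomp} on nonnegative $f\in \rmC_b(X)$. The plan is to deduce everything from the general duality/homogeneity lemma cited as Lemma \ref{le:obvious-2hom} in the Appendix, applied to the Radon measure $\mu_\sppi\in \cMp(X)$ constructed in the paragraphs preceding the Definition. The key observation that connects the hypothesis to that measure is the identity $\int_{\RA(X)} \int_\gamma f\,\d\ppi(\gamma) = \int_X f\,\d\mu_\sppi$, valid for bounded Borel $f$ by the very definition of $\mu_\sppi$, and extendable to nonnegative Borel $f$ by monotone convergence.

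First I would record that $\mu_\sppi$ is a well-defined finite Radon measure on $X$ with total mass $\ppi(\ell)$, exactly as shown in the discussion following Definition \ref{def:dynplan} (using Theorem \ref{thm:important-arcs}(e) for Borel measurability of $\gamma\mapsto \int_\gamma f$ and the tightness argument given there). Then \eqref{eq:defboundcomp} rewrites as
\begin{equation*}
  \int_X f\,\d\mu_\sppi \le c\,\|f\|_{L^p(X,\mm)}\quad\text{for every }f\in \Ldp.
\end{equation*}
This is precisely the hypothesis needed to invoke Lemma \ref{le:obvious-2hom}: a positive finite Radon measure $\mu$ on $X$ satisfying $\int f\,\d\mu \le c\,\|f\|_{L^p(X,\mm)}$ for all nonnegative Borel $f$ with finite $L^p$ norm is necessarily of the form $\mu = h\mm$ with $h\in L^q(X,\mm)$ and $\|h\|_{L^q(X,\mm)} \le c$; conversely any such $\mu=h\mm$ satisfies the bound with $c=\|h\|_{L^q(X,\mm)}$ by Hölder's inequality, so the minimal $c$ is exactly $\|h\|_{L^q(X,\mm)} = \brq q(\ppi)$. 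Thus the first two assertions follow immediately once the bound is transported to $\mu_\sppi$.

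For the last assertion — that it is enough to test on nonnegative $f\in \rmC_b(X)$ — I would argue in two steps. Passing from nonnegative continuous functions to nonnegative bounded Borel functions uses outer regularity of $\mm$ together with the representation \eqref{eq:324lsc} of lower semicontinuous functions as suprema of continuous ones and monotone convergence (both sides of the inequality are continuous along monotone increasing sequences: the left by the Radon monotone convergence theorem \eqref{eq:Beppo_Levi_general} applied to $\gamma\mapsto\int_\gamma f$, which is a supremum of l.s.c.\ maps by Theorem \ref{thm:important-arcs}, and the right by monotone convergence in $L^p$). Passing from nonnegative bounded Borel to general $f\in\Ldp$ uses truncation $f\wedge n$ and then $f\mapsto |f|$, noting $\int_\gamma f \le \int_\gamma |f|$. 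This is the routine part; the only mild subtlety is making sure the l.s.c.\ approximation is handled on the arc side correctly, but that is already packaged in the lower semicontinuity statements of Theorem \ref{thm:important-arcs}(e) and Lemma \ref{le:reparam-arcs}(e). There is no serious obstacle here: the entire content has been front-loaded into the construction of $\mu_\sppi$ and into the Appendix lemma on $q$-homogeneous functionals, so the proof is essentially a two-line reduction plus a standard approximation argument.
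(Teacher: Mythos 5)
Your reduction to the projected measure $\mu_\sppi$ is exactly the route the paper takes, but the step you attribute to Lemma \ref{le:obvious-2hom} is not contained in that lemma, and it is precisely the nontrivial content of the statement. Lemma \ref{le:obvious-2hom} is a purely abstract fact about $p$-homogeneous convex functionals on a cone: it converts the quantity $\sup_f\big(\la z,f\ra-\tfrac1p\int f^p\,\d\mm\big)$ into the $q$-th power of the minimal constant $c$ in $\la z,f\ra\le c\|f\|_p$, see \eqref{eq:19}. It says nothing about a Radon measure being absolutely continuous with density in $L^q(X,\mm)$. In the paper that implication comes from the identity \eqref{eq:189} together with the dual representation \eqref{eq:190} of $\LL_q(\cdot|\mm)$ (cited from [MLS18]), i.e.\ from \eqref{eq:291}; Lemma \ref{le:obvious-2hom} only does the bookkeeping between the ``entropy'' form and the ``minimal constant'' form. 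As written, your proposal asserts the key implication (uniform bound $\Rightarrow \mu_\sppi=h\mm$ with $h\in L^q$, $\|h\|_q\le c$) without proof. To close the gap you must either invoke \eqref{eq:190}--\eqref{eq:291} as the paper does, or supply the short direct argument: testing \eqref{eq:defboundcomp} on $f=\nchi_N$ for $\mm$-negligible Borel $N$ gives $\mu_\sppi(N)=0$, hence $\mu_\sppi=h\mm$ with $h\in L^1(X,\mm)$ by Radon--Nikodym, and then testing on the truncations $f=(h\land n)^{q-1}$ and letting $n\to\infty$ gives $\|h\|_{L^q}\le c$; the converse bound $c\le\|h\|_{L^q}$ is H\"older.

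A secondary remark: once you know (from the nonnegative $\rmC_b$ tests, via \eqref{eq:291} and Lemma \ref{le:obvious-2hom}) that $\mu_\sppi=h\mm$ with $\|h\|_{L^q}\le c$, the validity of \eqref{eq:defboundcomp} for all $f\in\Ldp$ is immediate from H\"older's inequality, so your final approximation paragraph is unnecessary. It is also not quite right as sketched: monotone increasing approximation starting from continuous functions only reaches lower semicontinuous functions (via \eqref{eq:324lsc}), not general nonnegative Borel functions; passing to Borel $f$ in that direction would require approximation from \emph{above} in $L^p(X,\mm)$ (a Vitali--Carath\'eodory type argument using outer regularity), which is exactly the kind of detour the H\"older argument avoids.
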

\index{Negligible set of rectifiable arcs}
\begin{definition}[$\Bar q\mm$-negligible sets]
  \label{def:Barqneg}
  We say that a set $\Gamma\subset \RA(X)$ is $\Bar q\mm$-negligible
  if $\ppi(\Gamma)=0$ for every $\ppi\in \Bar q\mm$. Similarly, a
  property $P$ on the set of arcs $\RA(X)$ holds $\Bar q\mm$-a.e.~if
  $\{\gamma\in \RA(X):P(\gamma)\text{ does not hold}\}$ is contained
  in a $\Bar q\mm$-negligible set.
\end{definition}

It is easy to check that for every Borel set $B\subset X$ with
$\mm(B)=0$
the set 
\begin{equation}
  \label{eq:91}
  \big\{\gamma\in \RA(X): \nu_\gamma(B)>0\big\}\quad \text{is $\Bar q\mm$-negligible}.
\end{equation}
There is a simple duality inequality, involving the minimization in
the definition 
\eqref{eqn:mod2} of $\Md$ and a maximization among all 
$\ppi$'s with 
barycenter in $L^q(X,\mm)$. To see it, let's take $f \in \Ldp$ such
that $\int_\gamma f \geq 1$ on 
$\Gamma\subset\mathcal{M}_+(X)$. 
Then, if $\Gamma$ is universally Lusin measurable we may take any plan $\ppi$ with barycenter in $L^q(X,\mm)$ to obtain
\begin{equation}\label{eq:sup1}
  \ppi( \Gamma) \leq \int \int_\gamma f\, \d \ppi(\gamma) \leq \brq
  q(\ppi) \|f \|_p
  \quad\text{if }\int_\gamma f\ge 1\text{ for every }\gamma\in \Gamma.
\end{equation}
By the definition of $\Md$ we obtain
\begin{equation}
  \label{eq:292}
  \ppi( \Gamma) \leq\brq
  q(\ppi) \Md^{1/p} (\Gamma).
\end{equation}
In particular we have
\begin{equation}\label{eq:supp1}
\Md(\Gamma)=0\quad\Longrightarrow\quad \ppi(\Gamma)=0\qquad\text{for
  all $\ppi\in \Bar q\mm$}.
\end{equation}
%
\begin{lemma}[An equi-tightness criterium]
  \label{le:RA-tight}
  Let us suppose that $(X,\sfd)$ is complete
  and let $\mathcal K$ be
  a subset of $\Bar q\mm$ 
  satisfying the following conditions:
  \begin{enumerate}[\rm (T1)]
  \item There exist constants $C_1,C_2>0$ such that
    \begin{equation}
      \label{eq:546}
      \ppi(\RA(X))\le C_1,\quad
      \brq q(\ppi)\le C_2\quad\text{for every }\ppi\in \cK.
      \end{equation}
    \item
      {For every $\eps>0$ there exists a $\tau$-compact
        set $H_\eps\subset X$ 
        such that}
      \begin{equation}
        \label{eq:539}
      \ppi\big(\{\gamma\in \RA(X):\sfe(\gamma)\cap H_\eps=\emptyset\}\big)\le
      \eps
      \quad\text{for every }\ppi\in \cK.
    \end{equation}
  \end{enumerate}
  Then $\cK$ is relatively compact in $\cMp(\RA(X))$.
\end{lemma}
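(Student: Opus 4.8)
The goal is to verify the hypotheses of Prokhorov's Theorem (Theorem~\ref{thm:compa-Riesz}) for the family $\cK\subset\cMp(\RA(X))$: uniform boundedness of the total masses, which is immediate from (T1), and equi-tightness, i.e.~for every $\eps>0$ a $\tau_\rmA$-compact set $\rmK_\eps\subset\RA(X)$ with $\ppi(\RA(X)\setminus \rmK_\eps)\le\eps$ for every $\ppi\in\cK$. The natural candidate for $\rmK_\eps$ is a set of the form appearing in the compactness criterion Theorem~\ref{thm:important-arcs}(g): arcs whose length is bounded, whose image meets a fixed $\tau$-compact set, and whose induced measures $\nu_\gamma$ are uniformly tight. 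So the strategy is to produce, from (T1)--(T2) and the Radon property of $\mm$, three ``bad event'' families of arcs each having small $\ppi$-measure uniformly in $\ppi$, and then take $\rmK_\eps$ to be the complement of their union intersected with the relevant parameters.

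First I would control the length. Since $\ppi(\ell)=\mu_\sppi(X)=\|h_\sppi\|_{L^1}\le \mm(X)^{1/p}\brq q(\ppi)\le \mm(X)^{1/p}C_2$ by H\"older (and $\ppi(\RA(X))\le C_1$), Markov's inequality gives a length $L=L(\eps)$ with $\ppi(\{\ell(\gamma)>L\})\le\eps/4$ for all $\ppi\in\cK$. Next, the image condition: (T2) directly furnishes a $\tau$-compact $H=H_{\eps/4}$ with $\ppi(\{\gamma:\sfe(\gamma)\cap H=\emptyset\})\le\eps/4$ for all $\ppi\in\cK$. The delicate point is the equi-tightness of $\{\nu_\gamma\}$ along the arcs charged by $\cK$: here I would mimic the proof of Lemma~\ref{lem:tightness2}. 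Choose an increasing sequence of $\tau$-compact sets $K_n\uparrow$ with $m_n:=\mm(X\setminus K_n)\down0$ and set $a_n=(\sqrt{m_n}+\sqrt{m_{n+1}})^{-1/p}$ as there. For the Borel function $f_k$ defined as in \eqref{eq:596}, the barycenter bound gives
\begin{equation*}
  \int_{\RA(X)}\Big(\int_\gamma f_k\Big)\,\d\ppi(\gamma)=\int_X f_k h_\sppi\,\d\mm
  \le \|f_k\|_{L^p(X,\mm)}\,\brq q(\ppi)\le C_2\sqrt{m_k}^{\,1/2p},
\end{equation*}
using the computation \eqref{eq:543} that $\|f_k\|_{L^p}^p=\sqrt{m_k}$. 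Hence, by Markov, the set $\Gamma_k:=\{\gamma\in\RA(X):\int_\gamma f_k\ge 1\}$ has $\ppi(\Gamma_k)\le C_2\sqrt{m_k}^{\,1/2p}$ uniformly in $\ppi\in\cK$; since $\int_\gamma f_k\ge a_n\nu_\gamma(K_{n+1}\setminus K_n)$ for $n\ge k$, on the complement of $\Gamma_k$ one has $\nu_\gamma(X\setminus K_n)\le a_n^{-1}=\delta_n$ for every $n\ge k$. Choosing $k$ large enough that $C_2\sqrt{m_k}^{\,1/2p}\le\eps/4$ takes care of this third bad event.

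Finally I would assemble the pieces. Set
\begin{equation*}
  \rmK_\eps:=\Big\{\gamma\in\RA(X):\ \ell(\gamma)\le L,\ \sfe(\gamma)\cap H\neq\emptyset,\ \nu_\gamma(X\setminus K_n)\le\delta_n\ \text{for all }n\ge k\Big\}.
\end{equation*}
By construction $\ppi(\RA(X)\setminus\rmK_\eps)\le \eps/4+\eps/4+\eps/4<\eps$ for every $\ppi\in\cK$ (the constant-arc issue that forced the restriction to $\RA_\eta(X)$ in Lemma~\ref{lem:tightness2}(b) does not arise, because the $\{\nu_\gamma\}$-tightness here comes from the barycenter bound, not from a lower bound on $\ell$). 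It remains to check that $\rmK_\eps$ is $\tau_\rmA$-compact: conditions 1, 2, 3 of Theorem~\ref{thm:important-arcs}(g) hold (the length bound, the image meeting the fixed $\tau$-compact set $H$, and equi-tightness of $\{\nu_\gamma:\gamma\in\rmK_\eps\}$, since for every $\eps'>0$ one can pick $n\ge k$ with $\delta_n\le\eps'$ and use $K_n$), and these are exactly the hypotheses guaranteeing relative compactness; one should also note $\rmK_\eps$ is closed because $\ell$ is $\tau_\rmA$-l.s.c., $\sfe(\cdot)\cap H\neq\emptyset$ is a closed condition, and $\gamma\mapsto\nu_\gamma(X\setminus K_n)=\int_\gamma\nchi_{X\setminus K_n}$ behaves lower-semicontinuously on the relevant compact envelope (alternatively, replace $\rmK_\eps$ by the closure of the above set, which is still contained in the compact envelope and still has $\ppi$-measure close to full). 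Prokhorov's Theorem then yields relative compactness of $\cK$ in $\cMp(\RA(X))$. The main obstacle I anticipate is the careful verification that the three defining conditions of $\rmK_\eps$ really match the compactness criterion of Theorem~\ref{thm:important-arcs}(g) and that $\rmK_\eps$ (or its closure) is genuinely closed in $\tau_\rmA$; the probabilistic estimates themselves are routine Markov/H\"older arguments reused from Lemma~\ref{lem:tightness2}.
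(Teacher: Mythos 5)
Your proof is correct and follows essentially the same route as the paper: both verify Prokhorov's criterion by building compact sets of arcs via Theorem~\ref{thm:important-arcs}(g) (bounded length, image meeting a fixed $\tau$-compact set from (T2), equi-tightness of $\nu_\gamma$), with the last two controlled exactly as in Lemma~\ref{lem:tightness} through the functions $f_k$ and the barycentric bound $\int\int_\gamma f\,\d\ppi\le \brq q(\ppi)\|f\|_{L^p}$. The only (cosmetic) difference is that you control the lengths by a separate Markov inequality on $\ppi(\ell)\le \mm(X)^{1/p}C_2$, whereas the paper absorbs this into the single admissible function $f_k+\tfrac1k$; also, the tail estimate $\nu_\gamma(X\setminus K_n)\le\delta_n$ on the complement of $\Gamma_k$ should be justified by $f_k\ge a_n$ on $X\setminus K_n$ for $n\ge k$ (monotonicity of $a_n$), not just by the single-annulus inequality you quote, but this is a one-line fix identical to the paper's Lemma~\ref{lem:tightness}.
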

\begin{proof}
  We want to apply Prokhorov's Theorem \ref{thm:compa-Riesz}
  so that for every $\eps>0$ we have to exhibit a compact set
  $K_\eps\subset\RA(X)$
  such that $\ppi(\RA(X)\setminus K_\eps)\le \eps$.
  
  Let $K_n,\ m_n,\ \ a_n,\ \delta_n,\ f_k$ be defined as in the proof of Lemma
  \ref{lem:tightness} and let us set $\rmE_{k,\xi}:=\rmF_k\cap \rmG_\xi$ where
  \begin{equation}
    \begin{aligned}
      \rmF_{k} :={}&\Big\{ \gamma \in \RA(X) : \ell(\gamma) \leq k,\ \
      \nu_\gamma(X\setminus
      K_n)) \leq \delta_n \ \text{for every } n \geq k \Big\},\\
      \rmG_\xi:={}&\Big\{ \gamma \in \RA(X) : 
      \sfe(\gamma)\cap H_\xi\neq\emptyset \Big\}.
    \end{aligned}
\label{eq:545bis}
  \end{equation}
  For every $k\in \N$ and $\xi>0$
  $\rmE_{k,\xi}$ are compact by Theorem \ref{thm:important-arcs}(g).
  
  Let us estimate $\ppi(\RA(X)\setminus \rmE_{k,\xi})$ for $\ppi\in\cK$.
  By (T2) we know that $\ppi(\RA(X)\setminus \rmG_{\xi})\le \xi$.
  On the other hand, since $\int_\gamma (f_k+1/k)\ge 1$
  for every $\gamma\in \RA(X)\setminus \rmF_k$ we have
  \begin{align*}
    \ppi(\RA(X)\setminus \rmF_k)
    \topref{eq:sup1}\le C_2\|f_k+1/k\|_{L^p(X,\mm)}
    \topref{eq:543}\le C_2 M_k,
  \end{align*}
  {where }
  $  M_k:=(m_k)^{1/(2p)} + (\mm(X))^{1/p}/k\down0\text{ as }k\to\infty.$
  We deduce
  $\ppi(\RA(X)\setminus \rmE_{k,\xi})\le \xi+C_2 M_k$
  and the thesis follows by choosing $K_\eps:=\rmE_{k,\xi}$
  for $\xi+C_2 M_k<\eps$.
\end{proof}
It is easy to check that (T2) is also a necessary condition for
the equi-tightness of $\cK$. In fact, if $\cK$ is equi-tight in
$\cMp(\RA(X))$
then the collection $\cK':=\{(\sfe_0)_\sharp\ppi:\ppi\in \cK\}$ is
equi-tight in $\cMp(X)$ (since $\sfe_0$ is a continuous map from
$\RA(X)$ to $X$). Therefore, for every
$\eps>0$ there exists a compact set $K_\eps\subset X$ such that
$\ppi(\{\gamma\in \RA(X):\gamma_0\not\in K_\eps\})\le \eps$,
which clearly yields \eqref{eq:539}.

It is interesting to notice that if $\cK\subset \cMp(\RA(X))$
satisfies the property (T1) above and
\begin{equation}
  \label{eq:548}
  \ell(\gamma)\ge C_3>0\quad\ppi\text{-a.e.}\quad
  \text{for every }\ppi\in \cK,  
\end{equation}
then (T2) is satisfied as well. In fact, if $K_n\subset X$ is a
compact set with $\mm(X\setminus K_n)\le m_n$
and $\rmG_n:=\{\gamma\in \RA(X):\sfe(\gamma)\cap K_n=\emptyset\}$
for every $\ppi\in \cK$ we have
\begin{align*}
  \ppi\big(\rmG_n\big)
  &\le \frac1{C_3}\int_{\rmG_n}\ell(\gamma)\,\d\ppi(\gamma)
  \le \frac 1{C_3}
  \int\Big(\int_\gamma \nchi_{X\setminus K_n}\Big)\,\d\ppi(\gamma)
  \\&=\frac 1{C_3}\mu_{\sppi}(X\setminus K_n)
  \le \frac 1{C_3}\brq q(\ppi) m_n^{1/p}\le
  \frac{C_2}{C_3}m_n^{1/p}\down0\quad\text{as }n\to\infty.
\end{align*}
%
%
\index{Content of a collection of rectifiable arcs}
The inequality \eqref{eq:sup1} motivates the next definition.
\begin{definition}[$p$-content]
  \label{def:pcontent}
  If $\Gamma\subset \RA(X)$ is a universally measurable set we
say that $\Gamma$ has finite content if there exists
a constant $c\ge0$ such that 
\begin{equation}
  \label{eq:295}
  \ppi(\Gamma)\le c
  \brq   q(\ppi)
  \quad\text{for every }\ppi\in \cMp(\RA(X)).
\end{equation}
In this case, the $p$-content of $\Gamma$ $\Cont p(\Gamma)$ is the minimal constant $c$
satisfying \eqref{eq:295}. If $\Gamma$ has not finite content we set $\Cont p(\Gamma):=+\infty$.
\end{definition}
Notice that if $\Gamma$ contains a constant arc we get
$\Cont p(\Gamma)=+\infty$; conversely, $\Cont p(\Gamma)=0$ if and only if
$\Gamma$ is $\Bar q\mm$-negligible.
We can formulate \eqref{eq:295} in the equivalent form
\begin{equation}
  \label{eq:296}
  \Cont p(\Gamma)=\sup_{\sppi\in \cMp(\RA(X)),\ \brq q(\sppi)>0}\frac{\ppi(\Gamma)}{\brq q(\ppi)},
\end{equation}
and we can also limit the $\sup$ in \eqref{eq:296} and the condition
\eqref{eq:295} to probability plans (i.e.~$\ppi(\RA(X))=1$)
concentrated on $\Gamma$.

By Lemma \ref{le:obvious-2hom} we easily find the equivalent
characterizations of $\Cont p$:
\begin{equation}
  \label{eq:294}
  \frac 1p\Cont p^p(\Gamma)=\sup_{\sppi\in \cMp(\RA(X))} \ppi(\Gamma)-\frac 1q \brq q^q(\ppi)
\end{equation}
showing that $\frac 1p \Cont p^p$ is in fact the Legendre transform of
$\frac 1q\brq q^q$.

Let us now address the question of existence of an optimal dynamic
plan
attaining the supremum in \eqref{eq:296} (or, equivalently, in
\eqref{eq:294}).
The next result corresponds to \cite[Lemma 4.4]{ADS15}, where
however the condition concerning the closure of $\Gamma$
is missing. See also the comments in the Notes \ref{subsec:notes8} at the end of this section.
\begin{lemma} \label{lem:existsppi}
  Let us suppose that $(X,\sfd)$ is complete and let
  $\Gamma\subset \RA(X)$ be a closed set such that
  $0<\Cont{p} (\Gamma)<+\infty $.
  If there exists a compact
  set $K\subset X$ such that
  $\sfe(\gamma)\cap K\neq\emptyset$ for every
  $\gamma\in \Gamma$ (in particular if $\Gamma$ is compact),
  then there exists an optimal plan $\ppi_\Gamma$ with barycenter
  in $L^q(X,\mm)$ attaining the supremum in \eqref{eq:294}.
  $\ppi_\Gamma$ is concentrated on $\Gamma$ and satisfies
  \begin{equation}
    \label{eq:297}
    \ppi_\Gamma(\Gamma)=\Cont p^p(\Gamma)=\brq q^q(\ppi_\Gamma).
  \end{equation}
  In particular, $\tilde
  \ppi_\Gamma:=(\ppi_\Gamma(\Gamma))^{-1}\ppi_\Gamma$ 
  is a probability plan and
  $$ \Cont{p} ( \Gamma) \brq q(\tilde\ppi_\Gamma)=1. $$
\end{lemma}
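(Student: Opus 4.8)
The plan is to obtain $\ppi_\Gamma$ by the direct method applied to the maximization problem \eqref{eq:294}, namely $\frac1p\Cont p^p(\Gamma)=\sup_{\sppi\in\cMp(\RA(X))}\big(\ppi(\Gamma)-\frac1q\brq q^q(\ppi)\big)$, a strictly positive number by hypothesis. First I would fix a maximizing sequence $(\ppi_n)$. Replacing each $\ppi_n$ by its restriction $\ppi_n\res\Gamma$ to the Borel set $\Gamma$ does not decrease the objective, since $\ppi_n(\Gamma)$ is unchanged while the projected measure — hence $\brq q$ — can only decrease; so I may assume every $\ppi_n$ is concentrated on $\Gamma$. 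Using the defining inequality of the $p$-content, $\ppi_n(\Gamma)\le\Cont p(\Gamma)\,\brq q(\ppi_n)$, together with the fact that $\ppi_n(\Gamma)-\frac1q\brq q^q(\ppi_n)$ is eventually $\ge\frac12\cdot\frac1p\Cont p^p(\Gamma)>0$, one gets $\frac1q\brq q^q(\ppi_n)<\Cont p(\Gamma)\,\brq q(\ppi_n)$ for large $n$, whence $\brq q(\ppi_n)$ is bounded; consequently $\ppi_n(\RA(X))=\ppi_n(\Gamma)$ is bounded and, by Hölder, $\ppi_n(\ell)=\mu_{\sppi_n}(X)\le\brq q(\ppi_n)\,\mm(X)^{1/p}$ is bounded as well.

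These bounds verify condition (T1) of Lemma \ref{le:RA-tight}, while (T2) holds trivially with $H_\eps:=K$ for every $\eps$, because $\ppi_n$ is concentrated on $\Gamma$ and every $\gamma\in\Gamma$ meets $K$. Hence $(\ppi_n)$ is relatively compact in $\cMp(\RA(X))$ for the weak topology, and I would pass to the limit along a weakly convergent subnet, $\ppi_n\weakto\ppi_\Gamma$. Since the constant function $\unit$ is an admissible test function, total masses converge: $\ppi_n(\Gamma)=\ppi_n(\RA(X))\to\ppi_\Gamma(\RA(X))$; on the other hand $\Gamma$ is closed, so the upper semicontinuity of the mass of a closed set under weak convergence (the u.s.c.\ counterpart of \eqref{eq:324lsc}) gives $\limsup_n\ppi_n(\Gamma)\le\ppi_\Gamma(\Gamma)\le\ppi_\Gamma(\RA(X))$. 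Therefore all these quantities coincide: $\ppi_\Gamma$ is concentrated on $\Gamma$ and $\ppi_n(\Gamma)\to\ppi_\Gamma(\Gamma)$. Combining this with the weak lower semicontinuity of $\brq q^q$ (valid since $q>1$, through the representation \eqref{eq:291}) yields
\begin{displaymath}
  \ppi_\Gamma(\Gamma)-\tfrac1q\brq q^q(\ppi_\Gamma)\ge\lim_n\ppi_n(\Gamma)-\liminf_n\tfrac1q\brq q^q(\ppi_n)=\limsup_n\Big(\ppi_n(\Gamma)-\tfrac1q\brq q^q(\ppi_n)\Big)=\tfrac1p\Cont p^p(\Gamma),
\end{displaymath}
and the opposite inequality is the definition of the supremum, so $\ppi_\Gamma$ is optimal; in particular $\brq q(\ppi_\Gamma)<\infty$, i.e.\ $\ppi_\Gamma\in\Bar q\mm$ (and it is automatically a dynamic plan).

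To obtain \eqref{eq:297} I would exploit homogeneity: the function $t\mapsto t\,\ppi_\Gamma(\Gamma)-\frac{t^q}{q}\brq q^q(\ppi_\Gamma)$, evaluated on the competitors $t\,\ppi_\Gamma$, $t>0$, is maximal at the interior point $t=1$, so its derivative vanishes there, forcing $\ppi_\Gamma(\Gamma)=\brq q^q(\ppi_\Gamma)$. Substituting back, $\frac1p\Cont p^p(\Gamma)=\ppi_\Gamma(\Gamma)-\frac1q\brq q^q(\ppi_\Gamma)=\frac1p\,\ppi_\Gamma(\Gamma)$, whence $\ppi_\Gamma(\Gamma)=\Cont p^p(\Gamma)=\brq q^q(\ppi_\Gamma)$. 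Since $\ppi_\Gamma(\Gamma)=\Cont p^p(\Gamma)>0$, the normalized plan $\tilde\ppi_\Gamma:=(\ppi_\Gamma(\Gamma))^{-1}\ppi_\Gamma$ is a probability plan, and using $\brq q(\ppi_\Gamma)=\ppi_\Gamma(\Gamma)^{1/q}$, $\Cont p(\Gamma)=\ppi_\Gamma(\Gamma)^{1/p}$ and the positive $1$-homogeneity of $\brq q$ one computes $\Cont p(\Gamma)\,\brq q(\tilde\ppi_\Gamma)=\Cont p(\Gamma)\,\ppi_\Gamma(\Gamma)^{-1}\brq q(\ppi_\Gamma)=\ppi_\Gamma(\Gamma)^{1/p-1+1/q}=1$.

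The main obstacle is the compactness step: one must check that the maximizing sequence may be taken concentrated on the closed set $\Gamma$ and, crucially, that the weak limit remains concentrated on $\Gamma$ — this is exactly where closedness of $\Gamma$ enters, through the portmanteau-type upper semicontinuity for closed sets, and is the reason the closedness hypothesis (missing in \cite{ADS15}) is required. The applicability of Lemma \ref{le:RA-tight} then hinges on the uniform bounds along the maximizing sequence, which themselves rest on the content bound \eqref{eq:295} and on the strict positivity of $\Cont p(\Gamma)$.
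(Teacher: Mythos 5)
Your proof is correct and follows essentially the same route as the paper's: restricting competitors to $\Gamma$, obtaining compactness from Lemma \ref{le:RA-tight} (with $H_\eps:=K$), exploiting upper semicontinuity of $\ppi\mapsto\ppi(\Gamma)$ on the closed set $\Gamma$ together with lower semicontinuity of $\brq q^q$, and using the $1$-homogeneity of $\brq q$ to get \eqref{eq:297}. The only cosmetic difference is that the paper normalizes first (via the perturbations $\kappa\ppi$, yielding the compact constraint set \eqref{eq:299}) and then applies Weierstrass, whereas you run a maximizing sequence and extract the normalization identity afterwards from the first-order condition at $t=1$.
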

\begin{proof} 
Taking perturbations of the form $\ppi\to \kappa\ppi$, $\kappa>0$, 
we immediately see that we can restrict the maximization 
to plans satisfying 
\begin{equation}
  \label{eq:299}
  \ppi(\Gamma)=\ppi(\RA(X))=\brq q^q(\ppi)\le \Cont p^p(\Gamma).
\end{equation}
It is also easy to see that it is possible to restrict
the maximization to plans 
concentrated on $\Gamma$, 
since the restriction $\ppi\mapsto \bar \ppi=\nchi_\Gamma\ppi$ 
satisfies $\bar\ppi(\Gamma)=\ppi(\Gamma)$ and 
$\brq q(\bar\ppi)\le \brq q(\ppi)$.
Since $\Gamma$ is closed the functional of \eqref{eq:294} is upper
semicontinuous and therefore it admits a maximum on the compact set
defined by \eqref{eq:299}.
\end{proof}

\subsection{Notes}
\label{subsec:notes8}
\begin{notes}
  The notions of barycentric entropy and content
  have been introduced in \cite{ADS15} for measures in $\cMp(X)$, in order to
  provide
  an equivalent measure-theoretic characterization of the modulus
  $\Mdm$.
  Here we decided to focus mainly on nonparametric
  dynamic plans and to develop the main properties in the more restrictive
  setting  characterized by the embedding $\rmM:\RA(X)\to \cMp(X)$
  of \ref{eq:593}, which is well adapted to the classic modulus $\Md$ on arcs.
  In \,\ref{subsec:link} we will also briefly discuss the notions of
  barycentric entropy and
  content related to $\tMd$ and to the embedding $\tilde\rmM$ of \eqref{eq:594}.
  
  The equi-tightness criterium \ref{le:RA-tight}
  requires slightly more restrictive assumptions than in \cite{AGS15}
  since here compactness is obtained directly in $\cMp(\RA(X))$ instead of
  $\cMp(\cMp(X))$. Notice that the class of constant arcs
  is homeomorphic to $X$ in $\RA(X)$, whereas
  it is identified with the null measure in $\cMp(X)$.

  The existence of an optimal plan attaining \eqref{eq:297} requires
  at least the closure of $\Gamma$: this condition
  should also be added to Lemma 4.4,
  Corollary 5.2(b) and Theorem 7.2 of \cite{AGS15}.
  Notice however that the main consequences
  \cite[Theorem 8.3, Corollary 8.7]{ADS15}  of Theorem 7.2 in \cite{ADS15}
  still hold, since they only require the existence of a nontrivial
  dynamic plan in $\Bar q\mm$ giving positive mass to $\Gamma$
  whenever $\Md (\Gamma)>0$: thanks to Choquet theorem
  this property holds for an arbitrary Souslin set $\Gamma$ and does not require
  its closedness. We will also discuss
  these aspects in the next Section \ref{sec:Equivalence},  see Theorem
  \ref{tmain}.
\end{notes}

\section{Equivalence between $\Cont{p}$ and $\Md$ }
\label{sec:Equivalence}
In this Section we always refer to the main Assumption
of page \pageref{ass:mainIII}.
We have seen that $\Cont p,\Md,\Mdc$ satisfy the property
\begin{equation}
\Cont {p}^p \leq \Md \leq \Mdc\qquad\text{on universally measurable subsets of $\RA(X)$.}\label{eq:300}
\end{equation}
We first prove that \eqref{eq:300} is in fact an identity if $\Gamma$
is compact.
\begin{theorem}
  \label{thm:compact-Mod}
  If $\Gamma$ is a compact subset of $\RA(X)$ we have
  \begin{equation}
    \Cont{p}^p(\Gamma) =\Md(\Gamma) =\Mdc(\Gamma).
    \label{eq:300bis}
\end{equation}
\end{theorem}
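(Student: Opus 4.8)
Since we already know $\Cont p^p(\Gamma)\le \Md(\Gamma)\le \Mdc(\Gamma)$ from \eqref{eq:300}, it suffices to establish the reverse chain $\Mdc(\Gamma)\le \Cont p^p(\Gamma)$ for $\Gamma$ compact. The natural route is a minimax argument: $\Md$ and $\Mdc$ are defined as infima of $\int_X f^p\,\d\mm$ over admissible $f$, while $\Cont p^p$ is (up to the factor $1/p$ via \eqref{eq:294}) a supremum over dynamic plans $\ppi$. So the plan is to exhibit a saddle structure and invoke a Von Neumann--type minimax theorem, then use Lemma \ref{lem:existsppi} to produce the optimal plan realizing the content.

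\textbf{First steps.} First I would reduce the problem to the nontrivial case $0<\Cont p^p(\Gamma)<\infty$; the cases $\Cont p^p(\Gamma)=0$ (then $\Gamma$ is $\Bar q\mm$-negligible and one checks directly that $\Mdc(\Gamma)=0$ by a Fuglede-type argument together with tightness) and $\Cont p^p(\Gamma)=+\infty$ (trivial) are handled separately. In the main case, observe that $\Gamma$ compact in $\RA(X)$ forces every $\gamma\in\Gamma$ to have finite length, and in fact $\sup_{\gamma\in\Gamma}\ell(\gamma)<\infty$ by lower semicontinuity of $\ell$ (Lemma \ref{le:reparam-arcs}(d)); moreover the images $\sfe(\gamma)$ all meet a fixed compact set (e.g.\ the continuous image $\sfe_0(\Gamma)$). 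Thus the hypotheses of Lemma \ref{lem:existsppi} are met once we know $\Gamma$ is closed, which it is. This yields an optimal plan $\ppi_\Gamma\in\Bar q\mm$ concentrated on $\Gamma$ with $\ppi_\Gamma(\Gamma)=\Cont p^p(\Gamma)=\brq q^q(\ppi_\Gamma)$. Normalizing, $\tilde\ppi_\Gamma$ is a probability plan with $\Cont p(\Gamma)\,\brq q(\tilde\ppi_\Gamma)=1$.

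\textbf{The minimax step.} The core is to show $\Mdc(\Gamma)\le\Cont p^p(\Gamma)$. Consider the Lagrangian-type functional on pairs $(f,\ppi)$ with $f\in\rmC_b(X)$, $f\ge0$, and $\ppi$ a probability Radon measure concentrated on $\Gamma$:
\begin{equation}
  \label{eq:minimax-L}
  \Lambda(f,\ppi):=\frac1p\int_X f^p\,\d\mm-\int_{\RA(X)}\Big(\int_\gamma f\Big)\,\d\ppi(\gamma)+\frac1q.
\end{equation}
The set of admissible $f$ (convex, closed in $L^p$ by Fuglede's Lemma, Proposition \ref{prop:prop}(c)) and the set of probability plans on $\Gamma$ (convex, and compact in the weak topology by Prokhorov: equi-tightness follows from Lemma \ref{le:RA-tight}, using that all such $\ppi$ live on the compact $\Gamma$) provide the two sides of the saddle. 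The map $f\mapsto\Lambda(f,\ppi)$ is convex and lower semicontinuous, $\ppi\mapsto\Lambda(f,\ppi)$ is linear (hence concave) and upper semicontinuous for $f\in\rmC_b(X)$ thanks to Theorem \ref{thm:important-arcs}(a) (continuity of $\gamma\mapsto\int_\gamma f$ along converging nets with converging length — note here the restriction to $\rmC_b(X)$ is what makes the plan-side continuous, which is exactly why $\Mdc$ rather than $\Md$ appears naturally). Applying the Von Neumann minimax theorem gives
\[
  \inf_{f}\sup_{\ppi}\Lambda(f,\ppi)=\sup_{\ppi}\inf_{f}\Lambda(f,\ppi).
\]
On the left: for fixed $f$, $\sup_\ppi\int_\gamma f\,\d\ppi=\max_{\gamma\in\Gamma}\int_\gamma f$, so the inner sup equals $\frac1p\int f^p\,\d\mm-\max_{\gamma\in\Gamma}\int_\gamma f+\frac1q$; minimizing over $f$, one checks by rescaling $f\rightsquigarrow tf$ that this infimum is $\le 0$ precisely when $\Mdc(\Gamma)\le 1$, and in general the left-hand side relates to $\Mdc(\Gamma)$ by homogeneity. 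On the right: for fixed $\ppi$, minimizing the convex functional $\frac1p\int f^p\,\d\mm-\int(\int_\gamma f)\d\ppi$ over nonnegative $f\in\rmC_b(X)$ gives, via \eqref{eq:291} (the dual characterization of the barycentric entropy), exactly $-\frac1q\brq q^q(\ppi)$, so $\inf_f\Lambda(f,\ppi)=\frac1q-\frac1q\brq q^q(\ppi)$, whose supremum over probability plans on $\Gamma$ is controlled by $\sup\{\frac1q(1-\brq q^q(\ppi))\}$. Matching the two sides and unwinding the homogeneity (replacing the normalization constraint $\ppi(\Gamma)=1$ by the scaling used in \eqref{eq:294}) yields $\Mdc(\Gamma)\le\Cont p^p(\Gamma)$, and combined with \eqref{eq:300} all three quantities coincide.

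\textbf{Main obstacle.} The delicate point is not the abstract minimax theorem itself but verifying its hypotheses cleanly: one must ensure the plan-side domain is weakly compact (this is where completeness of $(X,\sfd)$ and Theorem \ref{thm:important-arcs}(g) enter, to get relative compactness of probability plans concentrated on the fixed compact $\Gamma$) and, crucially, that the bilinear coupling $(f,\ppi)\mapsto\int(\int_\gamma f)\,\d\ppi$ is jointly well-behaved — upper semicontinuous in $\ppi$ for each continuous $f$, which forces us to work with $\Mdc$ and continuous test functions throughout, and only at the end use \eqref{eq:300} to transfer back to $\Md$. A secondary technical nuisance is the passage from the normalized ($\ppi(\Gamma)=1$) to the unnormalized formulation \eqref{eq:294}; this is pure positive-homogeneity bookkeeping, handled by the substitution $\ppi\rightsquigarrow\lambda\ppi$ and optimizing over $\lambda>0$, but it must be done carefully so that the factor $1/p$ vs.\ the $p$-th power in $\Cont p^p$ lines up with $\int f^p$. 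I would also double-check the edge case where $\Gamma$ contains a constant arc, in which $\Cont p^p(\Gamma)=\Md(\Gamma)=\Mdc(\Gamma)=+\infty$ and there is nothing to prove.
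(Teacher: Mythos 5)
Your overall strategy is exactly the paper's: a Von Neumann minimax argument pairing admissible functions in $\rmC_b(X)$ against plans supported on the compact set $\Gamma$, using \eqref{eq:291} to identify the inner infimum over $f$ with $-\frac 1q\brq q^q(\ppi)$ and \eqref{eq:294}/\eqref{eq:296} to close the loop, with \eqref{eq:300} supplying the easy inequalities. Two points in your write-up are genuine problems, though. First, an identification slip: for fixed $f$ the supremum over probability plans of $-\int\big(\int_\gamma f\big)\,\d\ppi$ is $-\min_{\gamma\in\Gamma}\int_\gamma f$, not $-\max_{\gamma\in\Gamma}\int_\gamma f$; the modulus constraint is $\int_\gamma f\ge 1$ for \emph{all} $\gamma\in\Gamma$, i.e.\ a bound on the minimum, so with your "max" the left-hand side of the minimax is not $\Mdc(\Gamma)$. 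This is fixable, but your normalization to probability plans plus the additive constant $1/q$ then forces nonlinear bookkeeping (the common minimax value becomes $\frac 1q\big(1-\Mdc(\Gamma)^{-q/p}\big)=\frac1q\big(1-\Cont p(\Gamma)^{-q}\big)$, with degenerate cases such as $\Cont p(\Gamma)=0$ or $\min_\gamma\int_\gamma f=0$ needing separate care — your "Fuglede-type argument" for the zero-content case is not a proof). The paper avoids all of this by letting $\ppi$ range over \emph{all} finite Radon measures supported on $\Gamma$ and taking $\cL(\ppi,f)=\frac1p\int_X f^p\,\d\mm+\int\big(1-\int_\gamma f\big)\,\d\ppi$: then $\sup_\ppi\cL$ equals $\frac1p\int_X f^p\,\d\mm$ exactly when $f$ is admissible and $+\infty$ otherwise, so $\inf_f\sup_\ppi\cL=\frac1p\Mdc(\Gamma)$ and $\sup_\ppi\inf_f\cL=\sup_\ppi\big(\ppi(\Gamma)-\frac1q\brq q^q(\ppi)\big)=\frac1p\Cont p^p(\Gamma)$, linearly and without case splitting (apart from the trivial constant-arc case $\min_\Gamma\ell=0$).

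Second, and more substantively, you lean on completeness of $(X,\sfd)$ through Lemma \ref{lem:existsppi}, Lemma \ref{le:RA-tight} and Theorem \ref{thm:important-arcs}(g), but Theorem \ref{thm:compact-Mod} has no completeness hypothesis and needs none. The optimal plan of Lemma \ref{lem:existsppi} is never required for the identity — the maximizer comes out of the minimax theorem itself — and the compactness on the plan side is free: plans concentrated on the compact $\Gamma$ with uniformly bounded mass are automatically equi-tight, hence relatively compact by Prokhorov. The mass bound on the relevant sublevel set is obtained by testing the Lagrangian with a constant function $f_\star\equiv k$ with $k\,\ell_0\ge 2$, where $\ell_0=\min_\Gamma\ell>0$; upper semicontinuity of $\ppi\mapsto\cL(\ppi,f)$ follows from lower semicontinuity of $\gamma\mapsto\int_\gamma f$ for nonnegative continuous $f$. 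If you drop the completeness-based lemmas and correct the min/max identification (or, better, pass to the unnormalized formulation), your argument becomes the paper's proof.
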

\begin{proof}
  We will set $\cMp(\Gamma):=\{\ppi\in
  \cMp(\RA(X)):\supp(\ppi)\subset \Gamma\}$.
  Since $\ell$ is a lower semicontinuous map,
  the minimum $\ell_0:=\min_{\gamma\in \Gamma}\ell(\gamma)$
  is attained. If $\ell_0=0$ $\Gamma$ contains a constant arc
  and \eqref{eq:300bis} is trivially satisfied since the common value
  is $+\infty$.
  We can thus assume $\ell_0>0$.
  
  We will prove \eqref{eq:300bis} by using a minimax argument
  by applying Von Neumann Theorem \ref{thm:VonNeumann}.

  First of all we observe that for every $f\in \rmC_b(X)$ 
  \begin{equation}
    \label{eq:301}
    \int_\gamma f\ge 1\ \text{for every }\gamma\in \Gamma\quad
    \Leftrightarrow\quad
    \sup\Big\{\int\Big(1-\int_\gamma f\Big)\,\d\ppi(\gamma):
    \ppi\in \cMp(\Gamma)\Big\}=0,
  \end{equation}
  so that 
  \begin{gather*}
    \frac 1p\Mdc(\Gamma)=
    \inf_{f\in \rmC_b(X)}
    \sup_{\sppi\in \cMp(\Gamma)}\cL(\ppi,f),\\
    \cL(\ppi,f):=\frac 1p\int_X f^p\,\d\mm+
    \int\Big(1-\int_\gamma f\Big)\,\d\ppi(\gamma).
  \end{gather*}
  By choosing $f_\star\equiv k\ge 2/\ell_0$ we clearly have
  \begin{displaymath}
    \cL(\ppi,f_*)=\frac 1p\int_X f^p\,\d\mm+
    \int\Big(1-\int_\gamma f\Big)\,\d\ppi(\gamma)\le c_k-\ppi(\Gamma),
    \quad
    c_k:=\frac 1p\mm(X)k^p,
  \end{displaymath}
  so that choosing $D_\star<\frac 1p\Mdc(\Gamma)$
  and $c_k$ sufficiently big,
  the set $\{\ppi\in \cMp(\Gamma):\cL(\ppi,f_*)\ge D_*\}$ 
  is not empty (it contains the null plan) and it is contained 
  in the compact set 
  $\{\ppi\in \cMp(\Gamma): \ppi(\Gamma)\le c_k-D_*\}$. 
  Condition \eqref{eq:115bis} is thus satisfied and we deduce
  \begin{align*}
    \frac 1p\Mdc(\Gamma)&=                          
                          \max_{\sppi\in \cMp(\Gamma)}
                          \inf_{f\in \rmC_b(X)}\cL(\ppi,f)
                          \\&=\max_{\sppi\in \cMp(\Gamma)}
    \ppi(\Gamma)-
                          \sup \int\int_\gamma f\,\d\ppi(\gamma)-\frac 1p\int_X f^p\,\d\mm\\
    &=\max_{\sppi\in \cMp(\Gamma)}
    \ppi(\Gamma)-\frac 1q\brq q^q(\ppi)\topref{eq:294}=\Cont p^p(\Gamma). \qedhere
\end{align*}
\end{proof}
Theorem \ref{thm:compact-Mod} has an important implication
in terms of Choquet capacity; we refer to
\cite[Chap.~III, \S\,2]{Dellacherie-Meyer78} and
to the brief account given in Section \ref{subsec:Choquet}
of the Appendix. Recall that $\BB(Y)$ (resp.~$\KK(Y)$)
will denote the collection of all the Borel (resp.~compact)
subsets of a Hausdorff space $Y$.The definition and the main properties
of Souslin and Analytic sets are briefly recalled in
\S\,\ref{subsec:PLS}.
\index{Analytic sets}
\index{Choquet capacity}
\begin{theorem}\label{tmain}
  Let $\X=(X,\tau,\sfd,\mm)$ be an e.m.t.m.~space.
  \begin{enumerate}
  \item $\Md$ is a Choquet $\KK(\RA(X),\tau_\rmA)$-capacity in
    $\RA(X)$.
      \item For every universally measurable $\Gamma\subset \RA(X)$
    \begin{equation}
    \label{eq:559}
    \Cont p(\Gamma)=\sup\Big\{\Cont p(\rmK):\rmK\subset \Gamma,
    \rmK\text{ compact}\Big\}.
  \end{equation}
  \item
    If $(X,\sfd)$ is complete and $(X,\tau)$ is
    Souslin, then every $\BB(\RA(X),\tau_\rmA)$-analytic set $\Gamma$
    (in particular, every Souslin set) is $\Md$-capacitable
  \item
    If $(X,\sfd)$ is complete and $(X,\tau)$ is
    Souslin, then every $\BB(\RA(X),\tau_\rmA)$-analytic set $\Gamma$
    satisfies $\Md(\Gamma) = \Cont{p}^p(\Gamma)$.
    In particular $\Gamma$
    is $\Md$-negligible if and only if it is $\Bar q\mm$-negligible.
  \end{enumerate}
\end{theorem}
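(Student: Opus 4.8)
\textbf{Proof plan for Theorem \ref{tmain}.}

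The plan is to deduce the four statements in order, with Theorem \ref{thm:compact-Mod} and the tightness estimate of Lemma \ref{lem:tightness2} as the two main inputs, and the abstract Choquet capacitability machinery recalled in the Appendix as the glue.

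\emph{Step (a): $\Md$ is a Choquet $\KK(\RA(X),\tau_\rmA)$-capacity.} By definition (see \S\,\ref{subsec:Choquet}) I must check three things: monotonicity, the ascending-sequence continuity $\Md(\Gamma_n)\uparrow\Md(\cup_n\Gamma_n)$ for increasing $\Gamma_n$, and the descending continuity $\Md(\rmK_n)\downarrow\Md(\cap_n\rmK_n)$ for decreasing compact sets $\rmK_n$. Monotonicity is immediate from the definition \eqref{eqn:mod2gamma}. The ascending property is the arc-version of Proposition \ref{prop:prop}(e), which carries over verbatim since nothing there used compactness. The descending property on compacta is Corollary \ref{cor:Moduno} (the arc-version of Proposition \ref{prop:prop}(f)), but there is a subtlety: Corollary \ref{cor:Moduno} is stated for $\Mdc$, so I first need $\Md=\Mdc$ on compact sets. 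That equality follows from Theorem \ref{thm:compact-Mod}, which gives $\Cont p^p(\Gamma)=\Md(\Gamma)=\Mdc(\Gamma)$ for $\Gamma$ compact. Hence on compacta all three quantities coincide and the descending continuity of $\Mdc$ from Corollary \ref{cor:Moduno} transfers to $\Md$.

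\emph{Step (b): inner regularity of $\Cont p$ by compact sets.} For a universally measurable $\Gamma\subset\RA(X)$, the inequality $\Cont p(\Gamma)\ge\sup\{\Cont p(\rmK):\rmK\subset\Gamma\text{ compact}\}$ is trivial from monotonicity of $\Cont p$ (which follows from \eqref{eq:296}). For the reverse, I fix $\ppi\in\cMp(\RA(X))$ with $\brq q(\ppi)>0$ realizing $\ppi(\Gamma)/\brq q(\ppi)$ close to $\Cont p(\Gamma)$; since $\ppi$ is a Radon measure, $\ppi(\Gamma)=\sup\{\ppi(\rmK):\rmK\subset\Gamma,\ \rmK\ \tau_\rmA\text{-compact}\}$ by inner regularity (Definition \ref{def:Radon}), and for each such $\rmK$ one has $\ppi(\rmK)/\brq q(\ppi)\le\Cont p(\rmK)$. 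Taking suprema yields \eqref{eq:559}.

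\emph{Steps (c) and (d): capacitability of analytic sets.} Here I invoke Choquet's capacitability theorem (\S\,\ref{subsec:Choquet}): since by (a) $\Md$ is a $\KK(\RA(X),\tau_\rmA)$-capacity, every $\BB(\RA(X),\tau_\rmA)$-analytic set $\Gamma$ satisfies $\Md(\Gamma)=\sup\{\Md(\rmK):\rmK\subset\Gamma\text{ compact}\}$; this is statement (c). Strictly, Choquet's theorem requires the ambient space to be reasonable (Hausdorff, with compacta forming a paving closed under finite intersections); completeness of $(X,\sfd)$ and the Souslin hypothesis on $(X,\tau)$ guarantee, via Lemma \ref{le:just-to-fix2}, that $(\RA(X),\tau_\rmA)$ is Lusin, so analytic/Souslin subsets are well-behaved and the theorem applies. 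For (d), combine (c) with Theorem \ref{thm:compact-Mod} (giving $\Md(\rmK)=\Cont p^p(\rmK)$ on compacta) and (b) (giving the inner regularity of $\Cont p^p$): then
\begin{equation*}
  \Md(\Gamma)=\sup_{\rmK\subset\Gamma\text{ cpt}}\Md(\rmK)
  =\sup_{\rmK\subset\Gamma\text{ cpt}}\Cont p^p(\rmK)
  =\Cont p^p(\Gamma),
\end{equation*}
which is the asserted identity. The final equivalence of negligibility notions follows: if $\Md(\Gamma)=0$ then $\Cont p^p(\Gamma)=0$, so $\ppi(\Gamma)=0$ for every $\ppi\in\Bar q\mm$ by \eqref{eq:296}; conversely $\Cont p^p\le\Md$ always, so $\Bar q\mm$-negligibility forces $\Cont p(\Gamma)=0$ hence $\Md(\Gamma)=\Cont p^p(\Gamma)=0$.

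\emph{Anticipated main obstacle.} The delicate point is verifying the hypotheses of Choquet's capacitability theorem precisely — in particular that the descending continuity in (a) really holds for \emph{all} decreasing sequences of $\tau_\rmA$-compacta (not just those with some extra structure), and that the analytic sets relevant in (c)–(d) are taken with respect to the paving of $\tau_\rmA$-compact sets. The equality $\Md=\Mdc$ on compacta, needed to bootstrap Corollary \ref{cor:Moduno} into a statement about $\Md$, is the linchpin; it is exactly what Theorem \ref{thm:compact-Mod} provides, so the argument is really a careful assembly rather than a new estimate. A secondary point is that Lemma \ref{lem:tightness2}, while not logically required for the capacitability statement itself, is what makes the capacity non-degenerate and is implicitly what underlies the usefulness of (d); I would mention it in passing but the core proof does not need it.
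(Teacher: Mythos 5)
Your steps (a), (b) and (d) coincide with the paper's argument: (a) is exactly Proposition \ref{prop:prop}(e,f) plus the identity $\Md=\Mdc=\Cont p^p$ on compacta from Theorem \ref{thm:compact-Mod}, (b) is the inner regularity of the Radon measure $\ppi$ applied to a near-optimal plan in \eqref{eq:294}, and (d) is the same three-term chain (with the remark, which you should make explicit, that $\BB$-analytic sets are universally measurable so that (b) applies to them).

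The gap is in step (c). Choquet's Theorem \ref{thm:Choquet}, with $\Md$ being a $\KK(\RA(X),\tau_\rmA)$-capacity, yields capacitability only of $\KK$-\emph{analytic} sets, i.e.\ sets obtained by the Souslin operation over the paving of compact sets. The theorem you must prove concerns $\BB(\RA(X),\tau_\rmA)$-analytic sets, and in this generality a Borel (or $\BB$-analytic) set is not a priori $\KK$-analytic: $(\RA(X),\tau_\rmA)$ is only a quotient of a path space, it need not be metrizable, and your appeal to Lemma \ref{le:just-to-fix2} does not help because that lemma gives Lusin-ness of $\RA(X)$ only when $(X,\tau)$ is Polish, whereas here $(X,\tau)$ is merely Souslin; even Lusin-ness would not convert Borel into $\KK$-analytic, since $\FF$-analytic and $\KK$-analytic differ unless closed sets are compact. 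This bridge is precisely the nontrivial content of the paper's proof (and is flagged in the Notes as the reason for introducing auxiliary topologies): one first reduces to $\Gamma_\eta=\Gamma\cap\RA_\eta(X)$ using $\Md(\Gamma)=\lim_{\eta\downarrow 0}\Md(\Gamma_\eta)$ from Proposition \ref{prop:prop}(e); then the tightness Lemma \ref{lem:tightness2} --- which requires the completeness of $(X,\sfd)$, the very hypothesis your argument never actually uses --- provides compact sets $\rmK_\eps$ with $\Md(\RA_\eta(X)\setminus\rmK_\eps)\le\eps$, so by subadditivity it suffices to treat $\Gamma_\eta\cap\rmK_\eps$; finally, the auxiliary topology $\tau_\rmA'$ of Corollary \ref{cor:auxiliary-arc} makes $\rmK_\eps$ a compact \emph{metrizable} (Polish) space on which Borel sets are $\FF$-analytic, hence $\KK$-analytic because closed subsets of $\rmK_\eps$ are compact, and only then does Choquet's theorem apply. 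In particular your closing remark that Lemma \ref{lem:tightness2} is ``not logically required for the capacitability statement itself'' is exactly backwards: it is the linchpin of (c) and the place where completeness enters.
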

\begin{proof}
  \textbf{(a)}
  Proposition~\ref{prop:prop}(e,f)
  and the fact that $\Mdc = \Md$ if the set is compact by
  Theorem \ref{thm:compact-Mod} give us that $\Md$ is a
  $\KK$-capacity in $(\RA(X),\tau_\rmA)$.
  \medskip

  \noindent \textbf{(b)}
  By \eqref{eq:294} for every $S<\Cont p(\Gamma)$ we can find $\ppi\in\Bar q\mm$
  such that
  \begin{displaymath}
    \frac 1p S^p<\ppi(\Gamma)-\frac 1q\brq q(\ppi). 
  \end{displaymath}
  Since $\Gamma$ is $\ppi$-measurable and $\ppi$ is Radon, we can find
  a compact set
  $\rmK\subset \Gamma$ such that
    \begin{displaymath}
    \frac 1p S^p<\ppi(\rmK)-\frac 1q\brq q(\ppi)\le \frac 1p \Cont p(\rmK),
  \end{displaymath}
  which eventually yields \eqref{eq:559} since $S$ is arbitrary.
 \medskip

 \noindent\textbf{(c)}
 Let us now assume that $(X,\sfd)$ is complete and
  $(X,\tau)$ is Souslin and let us prove that
  every $\BB$-analytic set is capacitable.
  By Choquet's Theorem \ref{thm:Choquet}, it is sufficient to prove
  that every Borel set is capacitable.
  By Corollary \ref{cor:auxiliary-arc}
  we know that $(\RA(X),\tau_\rmA,\sfd_\rmA)$
  admits an auxiliary topology
  $\tau_\rmA'
  $.
  
  Let $\Gamma$ be a
  Borel subset of $\RA(X)$.
  If $\Gamma$ contains a constant arc
  there is nothing to prove, so that we can assume
  that $\Gamma\subset \RA_0(X)=\{\gamma\in \RA(X):\ell(\gamma)>0\}$.
  Recalling the definition of the open sets $\RA_\eta(X)$ given in
  Lemma \ref{lem:tightness} and Proposition~\ref{prop:prop}(e),
  we know that
  $\Md(\Gamma)=\lim_{\eta\down0}\Md(\Gamma_\eta)$,
  where $\Gamma_\eta:=\Gamma\cap \RA_\eta(X)$.
  It is therefore sufficient to prove that every $\Gamma_\eta$ is
  capacitable.
  Let us fix $\eta>0$; by applying Lemma \ref{lem:tightness} for every
  $\eps>0$ we
  can find a compact set $\rmK_\eps\subset \RA(X)$
  such that $\Md(\Gamma_\eta\setminus\rmK_\eps)
  \le \Md(\RA_\eta(X)\setminus \rmK_\eps)\le \eps$.
  Since $\Md$ is subadditive, it remains to prove that
  $\Gamma_\eta\cap \rmK_\eps$ is capacitable.
  Notice that $\rmK_\eps$ is also compact with respect to the
  coarser (metrizable and separable) topology $\tau_\rmA'$ and the
  restriction
  of $\tau_\rmA'$ to $\rmK_\eps$ coincides with $\tau_\rmA$, so
  that $(\rmK_\eps,\tau_\rmA)$ is a Polish space.
  Since $\Gamma_\eta\cap \rmK_\eps$ is Borel in $\rmK_\eps$,
  it is also $\FF$-analytic and therefore (being $\rmK_\eps$ compact)
  $\KK$-analytic. By claim (a) above we deduce that
  $\Gamma_\eta\cap\rmK_\eps$ is capacitable.
   \medskip

  \noindent\textbf{(d)}
  It is an immediate consequence of the previous claims,
  recalling that every $\BB$-analytic set is universally
  measurable.
 \end{proof}
\subsection{Notes}
\label{subsec:notes9}
\begin{notes}
  Theorem \ref{thm:compact-Mod} has been proved
  in \cite{ADS15} by using a different argument based on Hahn-Banach
  theorem.
  The proof presented here, based on Von Neumann theorem, shows more
  clearly that the definitions of $\Md$ and of $\Cont p$ rely on
  dual optimization problems, so that their equality is a nice application
  of a min-max argument.

  Theorem \ref{tmain} strongly relies on Choquet's Theorem. It is
  interesting to note that the possibility to separate distance and
  topology in e.m.t.m.~space expands the range of application and
  covers
  the case of general Souslin spaces:
  the use of an auxiliary topology overcomes the difficulty related to 
  the unknown Souslin character of path spaces
  (see \cite[page 46-III]{Dellacherie-Meyer78}).

  Theorem \ref{thm:compact-Mod} and Theorem \ref{tmain}
  could be directly stated at the level of modulus and contents on
  $\cMp(X)$
  instead of $\RA(X)$, see \cite{ADS15}. We will discuss another
  important case 
  in \S\,\ref{subsec:link}.

\end{notes}

\part{Weak upper gradients and
  identification of Sobolev spaces}
\section{(Nonparametric) Weak upper gradients and
weak Sobolev spaces}
\label{sec:Wug}
In this section we introduce a notion of weak upper gradient
modeled on $\calT_q$-test plans, in the usual setting stated
at page \pageref{ass:mainIII}.

\subsection{$\calT_q$-test plans and $\calT_q$-weak upper gradients}
\label{subsec:Tq}
Recall that the (stretched) evaluation maps $\gsfe_t:\RA(X)\to X$ are
defined by $\gsfe_t(\gamma):=\Al_\gamma(t)$, see \eqref{eq:431}.
We also introduce the
restriction maps $\Restr st: \RA(X)\to\RA(X)$,
$0\le s< t\le 1$, given by
\begin{equation}
  \Restr st(\gamma):=\Quot(\Al_\gamma^{s\to t}),
  \quad
  \Al_\gamma^{s\to t}(r):=\Al_\gamma((1-r)s+rt)\quad
  r\in [0,1],
  \label{eq:432}
\end{equation}
where $\Quot$ is the projection map from $\rmC([0,1];X)$ to
$\Arc(X)$.
$\Restr st$ restricts the arc-length parametrization $\Al_\gamma$ of
the arc $\gamma$ to the
interval $[s,t]$ and then ``stretches'' it on the whole of $[0,1]$,
giving back the equivalent class in $\RA(X)$. Notice that
for every $\gamma\in \RA(X)$
\begin{equation}
  \label{eq:445}
  \int_{\Restr st(\gamma)}f=
  (t-s)\ell(\gamma)\int_0^1 f(\Al_\gamma^{s\to t}(r))\,\d r=
  \ell(\gamma)\int_s^t f(\Al_\gamma(r))\,\d r.
\end{equation}
\index{Nonparametric $\calT_q$-test plans}
\begin{definition}[Nonparametric $\calT_q$-test plans]
  \label{def:testplan}
  Let $q=p'\in (1,\infty)$.
  We call $\calT_q=\calT_q(\X)\subset \cMp(\RA(X))$ the collection of
  all
  (nonparametric) dynamic
  plan $\ppi\in \cMp(\RA(X))$ such that 
  \begin{equation}
    \label{eq:436}
    \brq q(\ppi)<\infty,\quad
    \LL_q((\sfe_i)_\sharp\ppi|\mm)<\infty\quad
    i=0,1;
  \end{equation}
  Dynamic plans in $\calT_q$ will be also called
  $\calT_q$-test plans.\\
  We call $\calT_q^*$ the subset of $\calT_q$ whose plans $\ppi$
  satisfy the following property:
  there exists a constant $c>0$ and
  a compact set
  $\rmK\subset \RA(X)$ (depending on $\ppi$) such that
  \begin{equation}
    \label{eq:562}
    (\sfe_i)_\sharp\ppi\le c \mm,\ i=0,1,\qquad
    \ell\restr{\rmK}\text{ is bounded, continuous and strictly positive,}\qquad
    \ppi(\RA(X)\setminus \rmK)=0.
  \end{equation}
  %
  We say that $\ppi$ is a \emph{stretchable} $\calT_q$-test plan if 
  \begin{equation}
    \label{eq:437}
    \brq q(\ppi)<\infty,\quad
    \LL_q((\gsfe_t)_\sharp\ppi|\mm)<\infty\quad
    \text{for every }t\in [0,1].
  \end{equation}
\end{definition}
  Notice that $\ppi$ is a stretchable $\calT_q$-test plan if and only if 
  $\Restr st(\ppi)\in \calT_q$ for every $s,t\in [0,1]$, $s<t$.
  Clearly the class of nonparametric $\calT_q$-test plans depends on the full
  e.m.t.m.~structure of $\X$; however, when there is no risk of
  ambiguity, we will simply write $\calT_q$.
  %

  %
  \index{Negligible set of rectifiable arcs}
  \begin{definition}[$\calT_q$-negligible sets of rectifiable arcs]
  Let $P$ be a property concerning nonparametric arcs in $\RA(X)$.
  We say that $P$ holds $\calT_q$-a.e.~(or for $\calT_q$-almost every 
  arc $\gamma\in \RA(X)$) if for any $\ppi\in\calT_q$
  the set
$$
N:=\left\{\gamma:\ \text{$P(\gamma)$ does not hold }\right\}
$$
is contained in a $\ppi$-negligible Borel set.
\end{definition}
Since $\calT_q\subset \Bar q\mm$, it is clear that
for every Borel set $\Gamma\subset \RA(X)$
\begin{equation}
  \label{eq:438}
  \Mod_p(\gamma)=0\quad
  \Rightarrow\quad\Cont p(\Gamma)=0\quad\Rightarrow\quad
  \Gamma\text{ is $\calT_q$-negligible}.
\end{equation}
Notice that we can revert the first implication in \eqref{eq:438}
e.g.~when $(X,\tau,\sfd)$ is a Souslin and complete
e.m.t.~space, see Theorem \ref{tmain}.

\begin{lemma}
  \label{le:nice}
  If a set $N\subset \RA_0(X)$
  is $\ppi$-negligible
  for every $\ppi\in \calT_q^*$  then $N$ is $\calT_q$-negligible.
\end{lemma}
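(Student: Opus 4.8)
The plan is to prove the lemma by showing directly that $\ppi^*(N)=0$ for every $\ppi\in\calT_q$ (where $\ppi^*$ denotes outer measure); since $\ppi$ is a finite Radon measure, this produces a $\ppi$-negligible Borel set containing $N$, which is exactly the meaning of $\calT_q$-negligibility. The heart of the matter is that a generic $\calT_q$-test plan is, up to $\ppi$-negligible sets, a countable "union" of restrictions $\ppi\restr E$ to Borel sets $E$ on which $\ell$ is bounded above and below and the endpoint marginals have bounded density; on each such piece one combines the inner regularity of the Radon measure $\ppi\restr E$ with the Lusin $(\ppi\restr E)$-measurability of the lower semicontinuous function $\ell$ (Lemma \ref{le:reparam-arcs}(d)) to exhaust $E$ by compact sets on which $\ell$ is continuous and strictly positive, obtaining restrictions that lie in $\calT_q^*$.

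First I would set up the decomposition. Write $h_i\in L^q(X,\mm)$ ($i=0,1$) for Borel densities of $(\sfe_i)_\sharp\ppi$, and note that $\{\ell=\infty\}$ and $\{h_i\circ\sfe_i=\infty\}$ are $\ppi$-negligible Borel subsets of $\RA(X)$ (the first because $\ppi$ is a dynamic plan, the others because $\int_{\{h_i=\infty\}}h_i\,\d\mm=0$; here $\sfe_i$ is continuous by Lemma \ref{le:reparam-arcs}(d), so $h_i\circ\sfe_i$ is Borel). Introduce, for $k,M\in\N$,
\[
  E_{k,M}:=\{1/k\le \ell\le k\}\cap\{h_0\circ\sfe_0\le M\}\cap\{h_1\circ\sfe_1\le M\},
\]
which are Borel. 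Since $N\subset\RA_0(X)=\{\ell>0\}$, every $\gamma\in N$ with $\ell(\gamma)<\infty$ and $h_i(\gamma_i)<\infty$ lies in some $E_{k,M}$; hence $N$ is covered by $\bigcup_{k,M}E_{k,M}$ together with the three $\ppi$-negligible sets above, and by countable subadditivity of $\ppi^*$ it suffices to prove $\ppi^*(N\cap E_{k,M})=0$ for each fixed $k,M$.

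Next I would fix $k,M$, put $\ppi_E:=\ppi\restr E_{k,M}$, and check $\ppi_E\in\calT_q$: it is Radon, $\brq q(\ppi_E)\le\brq q(\ppi)<\infty$ since $\mu_{\sppi_E}\le\mu_\sppi$, and for every Borel $B$ one has $(\sfe_i)_\sharp\ppi_E(B)=\ppi(\sfe_i^{-1}(B)\cap E_{k,M})\le\ppi(\sfe_i^{-1}(B\cap\{h_i\le M\}))=\int_{B\cap\{h_i\le M\}}h_i\,\d\mm\le M\,\mm(B)$, so $(\sfe_i)_\sharp\ppi_E\le M\,\mm$. Given $\eps>0$, Lusin $\ppi_E$-measurability of the l.s.c.\ map $\ell$ yields a compact $\rmK_\eps\subset\RA(X)$ with $\ell\restr{\rmK_\eps}$ continuous and $\ppi_E(\RA(X)\setminus\rmK_\eps)\le\eps$; then $\rmK':=\{\gamma\in\rmK_\eps:\ 1/k\le\ell(\gamma)\le k\}$ is compact (preimage of $[1/k,k]$ under the continuous $\ell\restr{\rmK_\eps}$), $\ell\restr{\rmK'}$ is bounded, continuous and strictly positive, and $\ppi_E(\RA(X)\setminus\rmK')\le\eps$ because $\ppi_E$ is concentrated on $\{1/k\le\ell\le k\}$. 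Hence $\sigma:=\ppi_E\restr\rmK'$ satisfies $(\sfe_i)_\sharp\sigma\le M\,\mm$, $\sigma(\RA(X)\setminus\rmK')=0$, $\brq q(\sigma)<\infty$, so $\sigma\in\calT_q^*$. By hypothesis $N$ is $\sigma$-negligible, i.e.\ $N\subset B$ for some Borel $B$ with $\sigma(B)=\ppi(B\cap\rmK'\cap E_{k,M})=0$; therefore $\ppi^*(N\cap\rmK'\cap E_{k,M})=0$ and
\[
  \ppi^*(N\cap E_{k,M})\le \ppi^*(N\cap\rmK'\cap E_{k,M})+\ppi_E(\RA(X)\setminus\rmK')\le \eps .
\]
Letting $\eps\downarrow0$ gives $\ppi^*(N\cap E_{k,M})=0$, and summing over $k,M$ yields $\ppi^*(N)=0$, whence $N$ is $\calT_q$-negligible.

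I expect the main obstacle — and the reason the statement is restricted to $N\subset\RA_0(X)$ — to be precisely the compactness/continuity requirement built into $\calT_q^*$: one must exhaust an arbitrary Radon dynamic plan by restrictions to compacta on which $\ell$ is continuous while simultaneously keeping the endpoint densities bounded, and strict positivity of $\ell$ on these compacta is only available because $N$ avoids the constant arcs (where $\Md$ and $\Cont p$ degenerate). The remaining work—verifying that the truncations $\ppi\restr E_{k,M}\restr\rmK'$ genuinely belong to $\calT_q^*$ and that all discarded pieces are $\ppi$-negligible—is routine bookkeeping once this scheme is fixed.
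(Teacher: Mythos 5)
Your argument is correct and is essentially the paper's proof: both decompose a given $\ppi\in\calT_q$ into restrictions on which $\ell$ and the endpoint densities are truncated and on which (by inner regularity/Lusin measurability of the l.s.c.\ map $\ell$) a compact carrier with $\ell$ continuous, bounded and strictly positive can be found, so that the restricted plans lie in $\calT_q^*$, and then apply the hypothesis to each piece and reassemble. The only differences are cosmetic bookkeeping: you secure strict positivity via the lower truncation $\ell\ge 1/k$ and conclude with an outer-measure/$\eps$ argument, while the paper takes compacts inside the open set $\RA_0(X)$ (where continuity on a compact already forces $\min\ell>0$) and intersects the Borel supersets $B_n$ supplied by the hypothesis.
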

\begin{proof}
  Let us fix $\ppi\in \calT_q$ and let $\pi_i=(\sfe_i)_\sharp\pi=h_i\mm$
  with $h_i\in L^q(X,\mm)$, $h_i$ Borel nonnegative.
  We set
  \begin{displaymath}
    H_{i,k}:=\{x\in X:h_i(x)\le k\},\quad
    \rmH_k:=\{\gamma\in \RA(X):\ell(\gamma)\le k,\ \sfe_i(\gamma)\in H_{i,k}
  \ i=0,1\}.
\end{displaymath}
Clearly
  \begin{displaymath}
    \lim_{k\to\infty}\ppi(\RA(X)\setminus \rmH_k)
  \le \lim_{k\to\infty}\big(\pi_0(X\setminus H_{0,k})+
  \pi_1(X\setminus H_{1,k})\big)=0.
\end{displaymath}
  We can also find an increasing
  sequence of compact sets $\rmK_n\subset \RA_0(X)$ such that
  the restriction of $\ell$ to $\rmK_n$ is continuous and strictly
  positive and
  $\ppi(\RA_0(X)\setminus \rmK_n)\le 2^{-n}$.
  It follows that
  $\ppi_n:=\ppi\restr{\rmK_n\cap \rmH_n}$ belongs to $\calT_q^*$ and we can find
  a Borel set $B_n$ with $\RA_0(X)\supset B_n\supset N$
  such that $\ppi_n(B_n)=0$.
  Setting $B:=\cap_n B_n$ clearly $B\supset N$ and
  $\ppi_n(B)=0$ so that $\ppi(B)=0$ as well.  
\end{proof}
Recall that
a Borel function $g:X\to [0,+\infty]$ is an \emph{upper gradient}
\cite{Cheeger00}
for $f:X\to \R$ if
\begin{equation}
  \label{eq:439}
  \Big|\int_{\partial\gamma}f\Big|\le \int_\gamma g\quad\text{for every
  }\gamma\in \RA(X)\quad\text{where}\quad
  \int_{\partial\gamma}f :=f(\gamma_1)-f(\gamma_0).
\end{equation}
\begin{definition}[$\calT_q$-weak upper gradients]\label{def:weak_upper_gradient}
Given
$f:X\to\R$, a $\mm$-measurable function $g:X\to[0,\infty]$ is a $\calT_q$-weak upper
gradient (w.u.g.) of $f$ if
\begin{equation}
\label{eq:inweak} \biggl|\int_{\partial\gamma}f\biggr|\leq
\int_\gamma g <\infty\qquad\text{for
  $\calT_q$-almost every $\gamma\in \RA(X)$.}
\end{equation}
\end{definition}
\index{Weak upper gradients}
\begin{remark}[Truncations]
  \label{rem:truncations}
  If $T:\R\to\R$ is a $1$-Lipschitz map and $g$ is a
  $\calT_q$-w.u.g.~of $f$, then $g$ is a $\calT_q$-w.u.g.~of $T\circ
  f$ as well. Conversely, if $g$ is a $\calT_q$-w.u.g.~of
  $f_k:=-k\lor f\land k$ for every $k\in \N$, it is easy to see that
  $g$ is a $\calT_q$-w.u.g.~of $f$.
  By this property, in the proof of many statements concerning
  $\calT_q$-w.u.g.~it will not be restrictive to assume $f$ bounded.
\end{remark}
The definition of weak upper gradient enjoys natural
invariance properties w.r.t.~modifications in $\mm$-negligible sets.
We will also show that if $g$ is $\mm$-measurable
the integral in \eqref{eq:inweak}
is well defined for
$\calT_q$-a.e.~arc $\gamma$.
\begin{proposition}[Measurability and invariance under modifications
  in $\mm$-negligible sets]\label{prop:invarianza}
  \
  \begin{enumerate}
  \item If $f,\,\tilde f:X\to\R$ differ in a $\mm$-negligible set
    then
    for $\calT_q$-a.e.~arc $\gamma$
    \begin{equation}
      \label{eq:587}
      f(\gamma_0)=\tilde f(\gamma_0),\quad
      f(\gamma_1)=\tilde f(\gamma_1),\quad
      f\circ\Al_\gamma=\tilde f\circ\Al_\gamma\text{ $\Leb 1$-a.e.~in }(0,1).
    \end{equation}
  \item If $g$ is $\mm$-measurable then the map
    $s\mapsto g(\Al_\gamma(s))$ is $\Leb 1$-measurable for
    $\calT_q$-a.e.~arc $\gamma$.
  \item Let
    $f,\,\tilde f:X\to\R$ and $g,\,\tilde g:X\to[0,\infty]$ be such
    that both $\{f\neq \tilde f\}$ and $\{g\neq \tilde g\}$ are
    $\mm$-negligible.  If $g$ is a $\calT_q $-weak upper gradient of
    $f$ then $\tilde g$ is a $\calT_q $-weak upper gradient of
    $\tilde f$.
  \end{enumerate}
\end{proposition}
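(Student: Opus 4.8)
The plan is to route all three claims through a single observation about $\calT_q$-test plans. Fix a Borel set $B\subset X$ with $\mm(B)=0$; I would first show that for every $\ppi\in\calT_q$ the three properties $\gamma_0\notin B$, $\gamma_1\notin B$ and $\Leb 1(\{s\in[0,1]:\Al_\gamma(s)\in B\})=0$ hold for $\ppi$-a.e.\ $\gamma\in\RA(X)$. The endpoint statements use that $\sfe_0,\sfe_1$ are $\tau_\rmA$-continuous (Lemma \ref{le:reparam-arcs}(d)), so $\sfe_i^{-1}(B)$ is Borel, together with $(\sfe_i)_\sharp\ppi=h_i\mm$, $h_i\in L^q(X,\mm)$ (which is the content of the marginal condition in Definition \ref{def:testplan}), giving $\ppi(\sfe_i^{-1}(B))=\int_B h_i\,\d\mm=0$. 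For the third property I would use $\brq q(\ppi)<\infty$, i.e.\ $\mu_\sppi=h_\sppi\mm$ with $h_\sppi\in L^q(X,\mm)$: since $\gamma\mapsto\int_\gamma\nchi_B=\nu_\gamma(B)$ is Borel (Theorem \ref{thm:important-arcs}(e)), the representation \eqref{eq:289} yields
\[
  \int_{\RA(X)}\nu_\gamma(B)\,\d\ppi(\gamma)=\int_X\nchi_B\,\d\mu_\sppi=\int_B h_\sppi\,\d\mm=0,
\]
hence $\nu_\gamma(B)=0$ for $\ppi$-a.e.\ $\gamma$; and because $\nu_\gamma=\ell(\gamma)(\Al_\gamma)_\sharp(\Leb 1\res[0,1])$ by \eqref{eq:232}, for $\ell(\gamma)>0$ this is exactly the third property, while for constant arcs it holds as soon as $\gamma_0\notin B$ — already known $\ppi$-a.e. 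Claim (a) is then immediate: if $N:=\{f\neq\tilde f\}$ is $\mm$-negligible, choose a Borel $B\supseteq N$ with $\mm(B)=0$ and apply the observation, which is precisely \eqref{eq:587} along $\calT_q$-a.e.\ arc.

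For (b) I would pass to a Borel representative of $g$. Since $\mm$ is Radon and $[0,\infty]$ is Polish, the $\mm$-measurable $g$ is Lusin $\mm$-measurable; taking an increasing sequence of compact sets $K_n$ with $\mm(X\setminus K_n)\to0$ on which $g$ is continuous, the $F_\sigma$ set $X_0:=\bigcup_n K_n$ has $\mm(X\setminus X_0)=0$ and $g\restr{X_0}$ is Borel, so extending it by $0$ outside $X_0$ produces a Borel $\bar g$ coinciding with $g$ off the $\mm$-negligible set $X\setminus X_0$. The argument of (a) — which only sees the set $\{g\neq\bar g\}$ and is therefore indifferent to the extended-real range — then gives $g(\Al_\gamma(s))=\bar g(\Al_\gamma(s))$ for $\Leb 1$-a.e.\ $s$, for $\calT_q$-a.e.\ $\gamma$; since $\Al_\gamma$ is $\tau$-continuous and $\bar g$ is Borel, $s\mapsto\bar g(\Al_\gamma(s))$ is Borel, so $s\mapsto g(\Al_\gamma(s))$ is $\Leb 1$-measurable.

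For (c) I would first note that $\tilde g$ is $\mm$-measurable, being $\mm$-a.e.\ equal to $g$, so by (b) the integrals $\int_\gamma g$, $\int_\gamma\tilde g$ make sense along $\calT_q$-a.e.\ arc. Fix $\ppi\in\calT_q$. By (a) applied to $f,\tilde f$ one has $\int_{\partial\gamma}f=\int_{\partial\gamma}\tilde f$ for $\ppi$-a.e.\ $\gamma$. Choosing Borel representatives $\bar g,\tilde{\bar g}$ of $g,\tilde g$ as in (b) and letting $B$ be a Borel $\mm$-null set containing $\{g\neq\bar g\}\cup\{\tilde g\neq\tilde{\bar g}\}\cup\{g\neq\tilde g\}$ (so that $\{\bar g\neq\tilde{\bar g}\}\subset B$ as well), the first-paragraph observation gives $\nu_\gamma(B)=0$ for $\ppi$-a.e.\ $\gamma$; for such $\gamma$, splitting each integral over $B$ and $X\setminus B$ and using that a nonnegative integrand supported on a $\nu_\gamma$-null set has zero integral, I get $\int_\gamma g=\int_\gamma\bar g=\int_\gamma\tilde{\bar g}=\int_\gamma\tilde g$. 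Intersecting these finitely many sets of full $\ppi$-measure, for $\ppi$-a.e.\ $\gamma$
\[
  \Big|\int_{\partial\gamma}\tilde f\Big|=\Big|\int_{\partial\gamma}f\Big|\le\int_\gamma g=\int_\gamma\tilde g<\infty,
\]
which is the defining property of $\tilde g$ being a $\calT_q$-weak upper gradient of $\tilde f$.

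The only real source of friction — not a genuine obstacle — is the measure-theoretic bookkeeping: systematically replacing $\mm$-measurable functions and $\mm$-negligible sets by Borel representatives and Borel supersets, and making sure that each exceptional family of arcs is contained in a $\ppi$-negligible \emph{Borel} set, which is where the $\tau_\rmA$-continuity of $\sfe_0,\sfe_1$ and the Borel measurability of $\gamma\mapsto\nu_\gamma(B)$ (Theorem \ref{thm:important-arcs}(e)) are used. All the mathematical content sits in the first paragraph's observation, which is a direct consequence of the two ingredients built into the definition of $\calT_q$: the $L^q$-integrability of the barycenter and of the two endpoint marginals.
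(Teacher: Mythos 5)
Your proof is correct and follows essentially the same route as the paper: both reduce all three claims to the fact that, for $\ppi\in\calT_q$, a Borel $\mm$-null set $B$ satisfies $\ppi(\sfe_i^{-1}(B))=(\sfe_i)_\sharp\ppi(B)=0$ and $\int\nu_\gamma(B)\,\d\ppi=\mu_\sppi(B)=0$, and then handle (b) and (c) by passing to Borel representatives and invoking (a). Your version merely spells out some bookkeeping the paper leaves implicit (the Lusin construction of the Borel representative, the constant-arc case, the finiteness of $\int_\gamma\tilde g$), so no changes are needed.
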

\begin{proof}
  \textbf{(a)}
  Let $N\supset\{f\neq\tilde{f}\}$ be a
  $\mm$-negligible Borel set and let $\ppi\in \calT_q$ be a test plan.
  We
  have
%
  \begin{displaymath}
    \int\Big( \int_\gamma \nchi_N\Big)\,\d\ppi(\gamma)
    =\mu_\sppi(N) =0\quad
    \text{since }\mu_\sppi\ll\mm\text{ and }\mm(N)=0,
\end{displaymath}
so that
$\int_\gamma \nchi_N=\ell(\gamma)\int_0^1 \nchi_N(\Al_\gamma(s))\,\d s=0$ for
$\ppi$-a.e. $\gamma$.
For any arc $\gamma$ for which the integral
is null $f(\Al(\gamma_s))$ coincides a.e.~in $[0,1]$ with 
$\tilde{f}(\Al(\gamma_s))$.
The same argument shows 
the sets
$\bigl\{\gamma:\ f(\gamma_t)\neq \tilde f(\gamma_t)\bigr\}\subset
\{\gamma:\gamma_t\in N\}$,
$t=0,\,1$
are $\ppi$-negligible
because
$(\e_t)_\sharp\ppi\ll\mm$, which implies that 
$\ppi(\{\gamma:\ \gamma_t\in N\})=({\e_t})_\sharp\ppi(N)=0$. 
\medskip

\noindent\textbf{(b)}
If $\tilde{g}$ is a Borel
modification of $g$ the set $\{g\neq\tilde{g}\}$ is a
$\mm$-negligible set; by the previous Claim (a),
$g(\Al(\gamma_s))$ coincides $\Leb 1$-a.e.~in $[0,1]$ with the Borel map
$\tilde{g}(\Al(\gamma_s))$ and it is therefore $\Leb 1$-measurable.
\medskip

\noindent\textbf{(c)} follows immediately by Claim (a) as well, since
for $\calT_q$-a.e.~arc $\gamma$
$\int_{\partial\gamma} f=\int_{\partial\gamma} \tilde f$ and
$\int_\gamma g=\int_\gamma\tilde g$.
%
\end{proof}
\begin{remark}[Local Lipschitz constants of
  $\sfd$-Lipschitz functions are weak upper gradients]
  \label{rem:lipweak}\upshape
  If $f\in \Lip_b(X,\tau,\sfd)$ 
  then the local Lipschitz constant
  $\lip f$ is an upper gradient and therefore
  it is also a $\calT_q$-weak upper gradient.
  An analogous property holds for the $\sfd$-slopes (notice that
  the topology $\tau$ does not play any role in the definition)
  \begin{displaymath}
    |\rmD^\pm f|(x):=\lim_{\sfd(y,x)\to
      0}\frac{(f(y)-f(x))_\pm}{\sfd(y,x)},\quad
    |\rmD f|(x):=\lim_{\sfd(y,x)\to 0}\frac{|f(y)-f(x)|}{\sfd(y,x)}
  \end{displaymath}
  of an arbitrary $\sfd$-Lipschitz 
  functions $f\in \rmB_b(X)$: if $ |\rmD f|$ is $\mm$-measurable
  (this property is always satisfied if, e.g., $(X,\tau)$ is Souslin,
  see \cite[Lemma 2.6]{AGS14I})
  then it is a weak upper gradient of $f$.  
\end{remark}
It is easy to check that for every $\alpha,\beta\in \R$
\begin{equation}
  \label{eq:564}
  \text{if $g_i$ is a $\calT_q$-w.u.g.~of $f_i$, $i=0,1$,
    then}\quad
  |\alpha|g_0+|\beta| g_1
  \text{ is a $\calT_q$-w.u.g.~of $\alpha f_0+\beta f_1.$}    
\end{equation}
In particular the set $S:=\{(f,g):\text{ $g$ is a $\calT_q$-w.u.g.~of
  $f$}\}$  is convex.

If we know a priori the integrability of $f$ and the $L^p$-summability
of $g$ then
Definition \ref{def:weak_upper_gradient} can be formulated in a
slightly different way:
\begin{lemma}
  \label{le:equivalent1}
  Let $f\in L^1(X,\mm)$ and $g\in L^p(X,\mm)$, $g\ge 0$.
  $ g$ is a $\calT_q$-weak upper gradient of $f$ if and only if
  \begin{equation}
    \label{eq:440}
    \int \big(f(\gamma_1)-f(\gamma_0)\big)\,\d\ppi(\gamma)\le
    \int\Big(\int_\gamma g\Big)\,\d\ppi(\gamma)
    \quad
    \text{for every $\ppi\in \calT_q^*$.}
  \end{equation}
  Equivalently, setting $\pi_i:=(\sfe_i)_\sharp \ppi$, $i=0,1$,
  \begin{equation}
    \label{eq:441}
    \int_X f\,\d(\pi_1-\pi_0)\le \int_X g\,\d\mu_\sppi \quad\text{for every $\ppi\in \calT_q^*$.}
  \end{equation}
\end{lemma}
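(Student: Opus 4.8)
The statement is a reformulation lemma: it says that for $f\in L^1(X,\mm)$ and $g\in L^p(X,\mm)$, the pointwise weak-upper-gradient condition \eqref{eq:inweak} (quantified over $\calT_q$-a.e.\ arc) is equivalent to the two integrated/averaged inequalities \eqref{eq:440} and \eqref{eq:441}. The plan is to prove the chain of implications: (i) \eqref{eq:inweak} $\Rightarrow$ \eqref{eq:440}; (ii) \eqref{eq:440} $\Leftrightarrow$ \eqref{eq:441} (a triviality, since $\int_\gamma g\,\d\ppi=\int_X g\,\d\mu_\sppi$ by the very definition of $\mu_\sppi$ in \eqref{eq:289}, and $\int(f(\gamma_1)-f(\gamma_0))\,\d\ppi=\int_X f\,\d(\pi_1-\pi_0)$ by definition of push-forward); and the substantive reverse direction (iii) \eqref{eq:440} $\Rightarrow$ \eqref{eq:inweak}.

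\textbf{The easy direction.} For (i): fix $\ppi\in\calT_q^*\subset\calT_q$. By \eqref{eq:inweak} there is a $\ppi$-negligible Borel set outside of which $|f(\gamma_1)-f(\gamma_0)|\le\int_\gamma g<\infty$; integrating the inequality $f(\gamma_1)-f(\gamma_0)\le\int_\gamma g$ against $\ppi$ gives \eqref{eq:440}. One must check both sides are genuinely integrable: the right side equals $\int_X g\,\d\mu_\sppi=\int_X g h_\sppi\,\d\mm\le\|g\|_p\brq q(\ppi)<\infty$ by H\"older and $\ppi\in\Bar q\mm$; the left side is finite because $\int|f(\gamma_i)|\,\d\ppi=\int_X|f|\,\d\pi_i\le\|f\|_{L^1}\cdot\|h_i\|_\infty$ using $\pi_i\le c\mm$ from \eqref{eq:562} (this is exactly why one restricts to $\calT_q^*$ rather than $\calT_q$ here). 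The passage \eqref{eq:440}$\Leftrightarrow$\eqref{eq:441} is then just unwinding definitions.

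\textbf{The hard direction (iii).} Here the task is: knowing only the averaged inequality \eqref{eq:440} over all $\ppi\in\calT_q^*$, recover the pointwise statement for $\calT_q$-a.e.\ arc. By Lemma \ref{le:nice} it suffices to show that for each fixed $\ppi\in\calT_q^*$ the "bad set" $N_\ppi:=\{\gamma\in\RA(X): f(\gamma_1)-f(\gamma_0)>\int_\gamma g\}$ is $\ppi$-negligible (and then run the same argument with $f$ replaced by $-f$ to control $f(\gamma_0)-f(\gamma_1)$; note $N_\ppi\subset\RA_0(X)$ up to a set where $\int_\gamma g=0$, which is handled via Remark \ref{rem:truncations}-type truncation or directly since on constant arcs $f(\gamma_1)-f(\gamma_0)=0\le\int_\gamma g=0$). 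The mechanism: suppose $\ppi(N_\ppi)>0$. The restriction $\ppi':=c_0^{-1}\ppi\restr{N_\ppi}$ (with $c_0=\ppi(N_\ppi)$) still lies in $\calT_q^*$: indeed $(\sfe_i)_\sharp\ppi'\le c_0^{-1}(\sfe_i)_\sharp\ppi\le (c/c_0)\mm$, $\brq q(\ppi')\le c_0^{-1}\brq q(\ppi)<\infty$, and $\ppi'$ is still concentrated on a compact set where $\ell$ is continuous and strictly positive (intersect the compact set for $\ppi$ with a suitable inner-regular compact subset of $N_\ppi$, possible since $\ppi$ is Radon; here one uses that $N_\ppi$ is $\ppi$-measurable, which follows because $f$ is $\mm$-measurable, hence by Proposition \ref{prop:invarianza} $f\circ\sfe_i$ and $s\mapsto g(\Al_\gamma(s))$ — and thus $\gamma\mapsto\int_\gamma g$ — agree $\ppi$-a.e.\ with Borel maps). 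But then applying \eqref{eq:440} to $\ppi'$ gives $\int_{N_\ppi}(f(\gamma_1)-f(\gamma_0))\,\d\ppi\le\int_{N_\ppi}(\int_\gamma g)\,\d\ppi$, contradicting the strict inequality $f(\gamma_1)-f(\gamma_0)>\int_\gamma g$ holding $\ppi$-everywhere on $N_\ppi$ (with the finiteness of $\int_\gamma g$ needed to make the strict inequality propagate under integration — again provided by $\brq q(\ppi)<\infty$ as above). Hence $\ppi(N_\ppi)=0$.

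\textbf{Main obstacle.} The genuinely delicate point is the measurability/selection bookkeeping in (iii): verifying that $N_\ppi$ is $\ppi$-measurable and that its compact inner approximations can be chosen to still witness membership in $\calT_q^*$ (continuity and strict positivity of $\ell$, control of the endpoint marginals by $\mm$). This rests on Proposition \ref{prop:invarianza}(b) to make $\gamma\mapsto\int_\gamma g$ a $\ppi$-measurable function, on Theorem \ref{thm:important-arcs} for Borel measurability of $\gamma\mapsto\int_\gamma f$-type maps and of $\ell$, and on the Radon property of $\ppi$ for the inner regularity. Everything else is H\"older's inequality and definition-chasing.
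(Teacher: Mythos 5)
Your proof is correct and follows essentially the same route as the paper's: the forward direction by integration (with the same integrability bookkeeping, $\pi_i\le c\,\mm$ and $\brq q(\ppi)<\infty$), and the converse by invoking Lemma \ref{le:nice} and restricting a plan $\ppi\in\calT_q^*$ to the Borel bad set, which remains in $\calT_q^*$ and contradicts \eqref{eq:440}; the paper extracts a quantitative margin $\theta$ where you use propagation of the strict inequality under integration, an immaterial difference. One point to tighten: \eqref{eq:440} is one-sided in $f$, so you cannot literally ``run the same argument with $f$ replaced by $-f$'' — the averaged inequality for $-f$ is not part of the hypothesis; the opposite sign $f(\gamma_0)-f(\gamma_1)>\int_\gamma g$ is handled by applying \eqref{eq:440} to the time-reversed plan (arc reversal preserves $\int_\gamma g$, swaps the endpoint marginals, and maps $\calT_q^*$ into itself), which yields exactly the inequality you would want for $-f$; the paper leaves this equally implicit, since its set $A'$ also records only one sign of the violation. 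Finally, your concern about choosing a compact inner approximation of $N_\ppi$ is unnecessary: the restricted plan is dominated by $\ppi$, hence already gives zero mass outside the compact set $\rmK$ of \eqref{eq:562}, so membership in $\calT_q^*$ is immediate.
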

\begin{proof}
  It is clear that \eqref{eq:inweak} yields \eqref{eq:440} simply by
  integration w.r.t.~$\ppi$; notice that the integrals in
  \eqref{eq:440} (and in \eqref{eq:441}) are well defined since
  $\pi_i=h_i\mm$ and $\mu_\sppi=h\mm$ for functions $h_i\in
  L^\infty(X,\mm)$ and $h\in
  L^q(X,\mm)$.

  Let us prove the converse implication.
  It is not restrictive to assume that $f,g$ are Borel. 
  By Lemma \ref{le:nice}, if \eqref{eq:inweak} is not
  true,
  there exists a nontrivial test plan $\ppi\in \TT_q^*$ such that
  the Borel set $A:=\Big\{\gamma\in \RA(X):|\int_{\partial\gamma}
  f|>\int_\gamma  g\Big\}$ satisfies
  $\ppi(A)>0$ (notice that \eqref{eq:inweak} is always satisfied
  on constant arcs).
  By possible reducing $A$ we can also find
  $\theta>0$ such that
  $A':=\Big\{\gamma\in \RA(X):\int_{\partial\gamma}
  f\ge \theta+\int_\gamma  g\Big\}$ satisfies
  $\ppi(A')>0$. Thus defining $\ppi':=\ppi\restr {A'}$,
  it is immediate to check that $\ppi'\in \calT_q^*$;
  a further integration with respect to $\ppi'$
  of the previous inequality yields
  \begin{displaymath}
    \int\Big(f(\gamma_1)-f(\gamma_0)\Big)\,\d\ppi'(\gamma)\ge \theta\ppi(A')+
    \int\Big(\int_\gamma g\Big)\,\d\ppi'(\gamma),
  \end{displaymath}
  which contradicts \eqref{eq:440}.  
\end{proof}
\begin{definition}[Sobolev regularity along a rectifiable arc]
\label{def:Sobolev}
We say that a 
map $f:X\to \R$ is 
\emph{Sobolev} (resp.~\emph{strictly Sobolev}) along an
arc $\gamma$ if 
$f\circ\Al_\gamma$ coincides $\Leb 1$-a.e.~in $[0,1]$
(resp.~$\Leb 1$-a.e.~in $[0,1]$ and in $\{0,1\}$) with an
absolutely continuous map $f_\gamma:[0,1]\to\R$.
In this case, we say that a map $g:X\to \R$ is a Sobolev upper
gradient (S.u.g.) for
$f$ along $\gamma$ if $\ell(\gamma)g\circ\Al_\gamma\in L^1(0,1)$
and
\begin{equation}
  \label{eq:592}
  \biggl|\frac{\d}{\dt}f_\gamma\biggr|\leq
      \ell(\gamma) g\circ\Al_\gamma\quad\text{a.e.~in $[0,1]$}.
\end{equation}
\end{definition}
We can give an intrinsic formulation of the (strict) Sobolev
regularity with Sobolev upper gradient $g$ which does not involve
the absolutely continuous representative.
\begin{lemma}
  \label{le:tedious-wug}
  Let us suppose that $\gamma\in \RA(X)$, $f,g:X\to \R$ such that $f\circ\Al_\gamma,\
  \ell(\gamma)g\circ\Al_\gamma\in L^1(0,1)$, and $\calC$
  (resp.~$\calC_c$)
  is a dense subset of $\rmC^1([0,1])$ (resp.~of $\rmC^1_c(0,1))$.
  \begin{enumerate}
  \item
    $f$ is Sobolev along $\gamma$ with Sobolev
    u.g.~$g$ if and only if
    \begin{equation}
      \label{eq:591bis}
      \Big|
      -
      \int_0^1\varphi'(t)f(\Al_\gamma(t))\,\d t\Big|\le \ell(\gamma)\int_0^1
      |\varphi(t)|\,g(\Al_\gamma(t))\,\d t
    \end{equation}
    for every $\varphi\in \calC_c$.
  \item $f$ is strictly Sobolev along $\gamma$ with Sobolev
    u.g.~$g$ if and only if
    \begin{equation}
      \label{eq:591}
      \Big|
      \varphi(1)f(\Al_\gamma(1))-\varphi(0)f(\Al_\gamma(0))-
      \int_0^1\varphi'(t)f(\Al_\gamma(t))\,\d t\Big|\le \ell(\gamma)\int_0^1
      |\varphi(t)|\,g(\Al_\gamma(t))\,\d t
    \end{equation}
    for every $\varphi\in \calC$.
  \end{enumerate}
  In particular, if $f,g$ are Borel maps,
  the sets of curves $\gamma\in \RA(X)$ along which
  $\int_\gamma (|f|+g)<\infty$ and
  $f$ is Sobolev (resp.~strictly Sobolev) with S.u.g.~$g$ is
  Borel in $\RA(X)$.
\end{lemma}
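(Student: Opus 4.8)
The proof rests on a one-dimensional characterization of weighted Sobolev functions, applied to $u:=f\circ\Al_\gamma$ and the nonnegative weight $w:=\ell(\gamma)\,g\circ\Al_\gamma$, both of which lie in $L^1(0,1)$ by the standing hypotheses; when $\ell(\gamma)=0$ the arc is constant and all assertions are trivial, so assume $\ell(\gamma)>0$. The key lemma I would first prove is: $u$ admits an absolutely continuous representative $u_\gamma$ with $|u_\gamma'|\le w$ $\Leb1$-a.e.~iff $\big|\int_0^1\varphi'(t)u(t)\,\d t\big|\le\int_0^1|\varphi(t)|\,w(t)\,\d t$ for every $\varphi\in\rmC^1_c(0,1)$. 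The ``only if'' part is integration by parts. For ``if'', the inequality says that the distribution $\varphi\mapsto-\int_0^1\varphi'u$ is a linear functional on $\rmC^1_c(0,1)$ bounded by the $L^1(w\Leb1)$-norm; since $\rmC^1_c(0,1)$ is dense in $L^1\big(w\Leb1\restr(0,1)\big)$ (a finite measure, as $w\in L^1$), it extends to a bounded functional on that space, hence is represented by a density $v_0\in L^\infty(w\Leb1)$ with $\|v_0\|_\infty\le1$; then $v:=v_0w\in L^1(0,1)$ satisfies $u'=v$ in $\mathcal D'(0,1)$ and $|v|\le w$, so $u\in W^{1,1}(0,1)$ with the desired representative. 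This step is classical but is the conceptual core.

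Claim (a) now follows by extending the displayed inequality from the dense family $\calC_c$ to all of $\rmC^1_c(0,1)$: if $\varphi_n\to\varphi$ in $\rmC^1$ then $\int_0^1\varphi_n'u\to\int_0^1\varphi'u$ and $\int_0^1|\varphi_n|w\to\int_0^1|\varphi|w$, since $u,w\in L^1(0,1)$. For claim (b), I would first extend \eqref{eq:591} from $\calC$ to all $\varphi\in\rmC^1([0,1])$ by the same approximation; specializing to $\varphi\in\rmC^1_c(0,1)$ kills the boundary terms and gives exactly the hypothesis of (a), so there is an absolutely continuous $u_\gamma=u$ ($\Leb1$-a.e.)~with $|u_\gamma'|\le w$. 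Subtracting the integration-by-parts identity $\varphi(1)u_\gamma(1)-\varphi(0)u_\gamma(0)-\int_0^1\varphi'u_\gamma=\int_0^1\varphi\,u_\gamma'$ from \eqref{eq:591} (valid now for all $\varphi\in\rmC^1([0,1])$) and using $u=u_\gamma$ a.e.~yields
$$\big|\varphi(1)\big(f(\Al_\gamma(1))-u_\gamma(1)\big)-\varphi(0)\big(f(\Al_\gamma(0))-u_\gamma(0)\big)\big|\le 2\ell(\gamma)\int_0^1|\varphi|\,g\circ\Al_\gamma\,\d t.$$
Taking $\varphi$ with $\varphi(1)=1$, $\varphi(0)=0$, $0\le\varphi\le1$, $\supp\varphi\subset[1-\delta,1]$ and letting $\delta\downarrow0$ (the right side vanishes, as $\ell(\gamma)g\circ\Al_\gamma\in L^1$) gives $u_\gamma(1)=f(\Al_\gamma(1))$; symmetrically $u_\gamma(0)=f(\Al_\gamma(0))$, so $u$ is strictly Sobolev. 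The converse implication in (b) is again integration by parts. By Remark \ref{rem:truncations} it is harmless to reduce to bounded $f$ in these arguments, and one passes from the truncations $-k\vee f\wedge k$ back to $f$ by an equi-absolute-continuity argument: their absolutely continuous representatives along $\gamma$ have derivatives uniformly dominated by $\ell(\gamma)g\circ\Al_\gamma\in L^1$, hence are equicontinuous, and a subsequence converges uniformly to an absolutely continuous representative of $u$.

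For the final measurability statement, fix countable dense families $\{\varphi_n\}\subset\rmC^1_c(0,1)$ and $\{\psi_n\}\subset\rmC^1([0,1])$. By (a)–(b) the set of arcs with $\int_\gamma(|f|+g)<\infty$ along which $f$ is Sobolev (resp.~strictly Sobolev) with S.u.g.~$g$ is
$$\Big\{\textstyle\int_\gamma|f|<\infty\Big\}\cap\Big\{\int_\gamma g<\infty\Big\}\cap\bigcap_{k,n}\Big\{\gamma:\ \big|\Phi_{k,n}(\gamma)\big|\le\Psi_n(\gamma)\Big\},$$
where $\Phi_{k,n}(\gamma):=\int_0^1\varphi_n'(t)f_k(\Al_\gamma(t))\,\d t$ with $f_k$ the $k$-truncation of $f$ (resp.~with $\varphi_n$ replaced by $\psi_n$ and the boundary term $\psi_n(1)f(\Al_\gamma(1))-\psi_n(0)f(\Al_\gamma(0))$ added) and $\Psi_n(\gamma):=\ell(\gamma)\int_0^1|\varphi_n(t)|\,g(\Al_\gamma(t))\,\d t$. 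The first two sets are Borel by Theorem \ref{thm:important-arcs}(e). For the rest it suffices to check that $\gamma\mapsto\int_a^b h(\Al_\gamma(t))\,\d t$ is Borel on $\RA(X)$ for $h$ bounded Borel (resp.~nonnegative Borel) and $0\le a\le b\le1$: by \eqref{eq:445} this equals $\ell(\gamma)^{-1}\int_{\Restr{a}{b}\gamma}h$ on $\RA_0(X)=\{\ell>0\}$ (and is trivial on the Borel complement), hence is Borel using Theorem \ref{thm:important-arcs}(e), the (lower semicontinuity, hence) Borel measurability of $\ell$, and the measurability of the restriction maps $\Restr{a}{b}$, which are built from $\Al$ and continuous reparametrization/quotient maps as in Theorem \ref{thm:important-arcs}(d). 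Finally $\Phi_{k,n}$ and $\Psi_n$ are recovered from these by approximating $\varphi_n',\psi_n'$ and $|\varphi_n|$ by step functions (monotonically from below for the factor $|\varphi_n|$, so that monotone convergence handles a possibly infinite $g$), exhibiting them as pointwise, resp.~monotone, limits of Borel functions; a standard Riemann-sum/functional-monotone-class argument also applies. I expect this reduction of the $t$-weighted path integrals to countably many Borel operations on $\RA(X)$ to be the main — though routine rather than deep — technical obstacle.
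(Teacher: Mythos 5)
Your proof is correct and follows essentially the same route as the paper: in (a)--(b) both arguments rest on the weak/distributional formulation and on representing the derivative of $f\circ\Al_\gamma$ by an integrable function dominated by $\ell(\gamma)\,g\circ\Al_\gamma$ (you do this via $L^1$--$L^\infty$ duality with respect to the weight, the paper via a finite measure plus absolute continuity --- an immaterial difference), and your localized test-function argument just fills in the boundary-value identification the paper leaves as ``not difficult to check''. The measurability part is likewise the paper's argument (countable dense test families, Borel measurability of $\Al$ from Theorem \ref{thm:important-arcs}(d) and of the integral functionals from Theorem \ref{thm:important-arcs}(e)); your extra detour through the truncations $f_k$ and the restriction maps $\Restr{a}{b}$ is dispensable but harmless, since the needed equivalence between $f$ and its truncations is covered by your equi-absolute-continuity remark.
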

\begin{proof}\textbf{(a)}
  One implication is obvious.
  If \eqref{eq:591bis} holds for every
  $\varphi\in \calC_c$ then it can be extended to every 
  $\varphi\in \rmC^1_c(0,1)$. In particular we have
  \begin{displaymath}
    \Big|
      -
      \int_0^1\varphi'(t)f(\Al_\gamma(t))\,\d t\Big|\le C\sup_{t\in
        [0,1]}|\varphi(t)|\quad\text{where}\quad
      C:=\int_\gamma g
  \end{displaymath}
  for every
  $\varphi\in \rmC^1_c(0,1)$, so that the distributional derivative of
  $f\circ\Al_\gamma$ can be represented by Radon measure
  $\mu\in\cM((0,1))$
  with finite total variation. \eqref{eq:591bis} also yields
  \begin{displaymath}
    \Big|\int_0^1 \varphi\,\d\mu\Big|\le
    \ell(\gamma)\int_0^1 |\varphi|\,g\,\d t\quad\text{for every
    }\varphi\in \rmC^1_c(0,1)
  \end{displaymath}
  so that $\mu=h\Leb 1$ is absolutely continuous w.r.t.~$\Leb 1$ with
  density
  satisfying $|h|\le \ell(\gamma)g\circ\Al_\gamma$ $\Leb 1$-a.e.
  It follows that $f\circ\Al_\gamma\in W^{1,1}(0,1)$ and its
  absolutely continuous representative $f_\gamma$ satisfies
  \eqref{eq:592}.
  \medskip

  \noindent\textbf{(b)}
  follows as in the previous claim (a); from \eqref{eq:591} it is also
  not difficult to check that $f_\gamma(i)=f\circ\Al_\gamma(i)$ for
  $i=0$ or $i=1$.
  \medskip

  \noindent
  Concerning the last statement, arguing as in the proof of Theorem \ref{thm:important-arcs}(e),
  it is not difficult to show that
  every bounded (resp.~nonnegative) function $h$ and for every
  $\psi\in
  \rmC([0,1])$ (resp.~nonnegative) the real maps
  \begin{displaymath}
    \gamma\mapsto \int_0^1 \psi(t)h(\gamma(t))\,\d t\quad\text{are
      Borel in }(\rmC([0,1];X),\tau_\rmC).
  \end{displaymath}
  Since by \ref{thm:important-arcs}(d) the map $\gamma\mapsto
  \Al_\gamma$ is Borel from $(\RA(X),\tau_\rmA)$ to
  $(\rmC([0,1];X),\tau_\rmC)$ and $\calC$, $\calC_c$ are countable, we
  deduce that the sets
  characterized by the family of inequalities \eqref{eq:591bis}
  or \eqref{eq:591} are Borel
  in $\RA(X)$.
\end{proof}

\begin{remark}[Sobolev regularity along $\calT_q$-almost every arc]
  \upshape By Proposition \ref{prop:invarianza}(a),
  it is easy to check that the properties to be Sobolev or strictly Sobolev
  along  $\calT_q $-almost every arc with S.u.g.~$g$ are invariant with respect to
  modification of $f$ and $g$ in $\mm$-negligible sets, and thus they
  make
  sense for Lebesgue classes.
  It is also not restrictive to consider only arcs $\gamma\in \RA_0(X)$.
\end{remark}
In the next Theorem we prove that existence of a $\calT_q $-weak upper gradient
yields strict Sobolev regularity along $\calT_q $-almost every arc.
This property is based on a preliminary lemma which 
provides this property for stretchable plans in $\calT_q^*$.
\begin{lemma} \label{prop:Rovereto}
Assume that $\ppi\in\calT_q^* $ is a stretchable plan 
and that $g:X\to [0,\infty]$ is a $\calT_q $-weak upper gradient of 
a $\mm$-measurable function $f:X\to\R$. Then $f$ is strictly Sobolev
along $\ppi$-almost every arc with S.u.g.~$g$, i.e.
\eqref{eq:591} or equivalently
 \begin{equation}\label{eq:pointwisewug}
   \biggl|\frac{\d}{\dt}f_\gamma\biggr|\leq
      \ell(\gamma) g\circ\Al_\gamma\quad\text{a.e.~in $[0,1]$},\quad
      f_\gamma(i)=f(\Al_\gamma(i))\quad i\in \{0,1\},
    \end{equation}
    hold for $\ppi$-almost every
        $\gamma\in\RA(X)$.
\end{lemma}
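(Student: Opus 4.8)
The plan is to exploit the restriction-and-stretch operators $\Restr st$ together with the weak upper gradient inequality, reducing the assertion to a classical one-dimensional absolute continuity lemma applied along each individual arc. By Remark \ref{rem:truncations} it suffices to treat the case of $f$ bounded, and by Proposition \ref{prop:invarianza} we may fix Borel representatives of $f$ and $g$; since $\mm$ is finite and $\ppi\in\calT_q^*$ is concentrated on a compact $\rmK\subset\RA_0(X)$ on which $\ell$ is bounded, continuous and strictly positive, all the integrands below will be $\ppi$-integrable. The first step is the \emph{localized estimate}: for every fixed $0\le s<t\le 1$,
\begin{equation*}
  \bigl|f(\Al_\gamma(t))-f(\Al_\gamma(s))\bigr|\le \ell(\gamma)\int_s^t g(\Al_\gamma(r))\,\d r<\infty\qquad\text{for $\ppi$-a.e.\ }\gamma\in\RA(X).
\end{equation*}
To prove it I would verify that $\ppi':=(\Restr st)_\sharp\ppi$ lies in $\calT_q$: it is Radon, being the push-forward of a Radon measure by the universally Lusin measurable map $\Restr st$; from $\ell(\Restr st(\gamma))=(t-s)\ell(\gamma)$ and \eqref{eq:445} one gets $\mu_{\sppi'}\le\mu_\sppi$, hence $\brq q(\ppi')\le\brq q(\ppi)<\infty$; and $(\sfe_0)_\sharp\ppi'=(\gsfe_s)_\sharp\ppi$, $(\sfe_1)_\sharp\ppi'=(\gsfe_t)_\sharp\ppi$ have finite $q$-entropy by the stretchability assumption \eqref{eq:437}. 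Applying Definition \ref{def:weak_upper_gradient} to $\ppi'$ and pulling back through $\Restr st$ --- using $\sfe_0\circ\Restr st=\gsfe_s$, $\sfe_1\circ\Restr st=\gsfe_t$ and \eqref{eq:445} --- then yields the displayed inequality.

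The second step is a Fubini argument. Let $E$ be the set of triples $(\gamma,s,t)$ with $0<s<t<1$ for which the localized estimate fails; with the measurability of $\gamma\mapsto\Al_\gamma$ and $\gamma\mapsto\ell(\gamma)$ from Theorem \ref{thm:important-arcs} one checks that $E$ is $(\ppi\otimes\Leb 2)$-measurable, and the previous step says every $(s,t)$-section of $E$ is $\ppi$-null, so $E$ is product-null and for $\ppi$-a.e.\ $\gamma$ its $\gamma$-section is $\Leb 2$-null. Fix such a $\gamma$, also with $\int_\gamma(|f|+g)<\infty$ (true $\ppi$-a.e.), and put $u:=f\circ\Al_\gamma\in L^1(0,1)$, $h:=\ell(\gamma)\,g\circ\Al_\gamma\in L^1(0,1)$, so that $|u(t)-u(s)|\le\int_s^t h$ for $\Leb 2$-a.e.\ $(s,t)$. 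A classical one-dimensional fact now applies: with $H(r):=\int_0^r h$, the functions $u+H$ and $H-u$ both admit nondecreasing representatives, hence $u$ is $\BV$ with $-h\,\Leb 1\le Du\le h\,\Leb 1$, which forces $Du=u'\,\Leb 1$ with $|u'|\le h$; thus $u$ has an absolutely continuous representative $f_\gamma$ with $|f_\gamma'|\le \ell(\gamma)\,g\circ\Al_\gamma$ a.e. This already gives Sobolev regularity along $\ppi$-a.e.\ arc with S.u.g.\ $g$.

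The last and most delicate point is to upgrade this to \emph{strict} Sobolev regularity, i.e.\ $f_\gamma(0)=f(\Al_\gamma(0))$ and $f_\gamma(1)=f(\Al_\gamma(1))$, which the $\Leb 2$-a.e.\ information on the open simplex does not detect. Here I would apply the localized estimate to the pairs $(0,t)$ and $(s,1)$: a Fubini argument over $t\in(0,1)$ gives, for $\ppi$-a.e.\ $\gamma$, $|f(\Al_\gamma(t))-f(\Al_\gamma(0))|\le\ell(\gamma)\int_0^t g(\Al_\gamma)$ for $\Leb 1$-a.e.\ $t$, and symmetrically at the endpoint $1$; since $f_\gamma$ is continuous and $f_\gamma=u$ a.e., letting $t\to0^+$ along good points and using $\int_0^t g(\Al_\gamma(r))\,\d r\to0$ forces $f_\gamma(0)=f(\Al_\gamma(0))$, and likewise $f_\gamma(1)=f(\Al_\gamma(1))$. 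Intersecting the finitely many $\ppi$-full sets built above yields \eqref{eq:pointwisewug} for $\ppi$-a.e.\ $\gamma$; since by Lemma \ref{le:tedious-wug} the set of arcs along which $f$ is strictly Sobolev with S.u.g.\ $g$ is Borel, this is exactly the claim. For unbounded $f$ one passes to the limit in the truncations $f_k$, using that the representatives $(f_k)_\gamma$ are equi-absolutely-continuous (a uniform modulus of continuity being provided by $\int_\gamma g$). The main obstacles are precisely the endpoint identification just described and the --- routine but lengthy --- measurability bookkeeping behind the Fubini steps and the membership $\ppi'\in\calT_q$.
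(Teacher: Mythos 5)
Your argument is correct and follows essentially the same route as the paper's proof: the localized estimate obtained by pushing $\ppi$ forward under $\Restr st$ and using stretchability, a Fubini argument in $(s,t)$ over the interior, the one-dimensional absolute-continuity lemma (the paper cites Lemma \ref{le:dopo} where you reprove the classical fact), and the endpoint restrictions $\Restr 0t$, $\Restr s1$ with a one-variable Fubini step to identify $f_\gamma(i)=f(\Al_\gamma(i))$, exactly as in \eqref{eq:222bis}. The preliminary reduction to bounded $f$ and the final truncation limit are unnecessary (the paper's argument runs directly for a measurable real-valued $f$), but they are harmless.
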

\begin{proof}
  Arguing as in Proposition \ref{prop:invarianza} it is not
  restrictive
  to assume that $f,g$ are Borel function. Since $\ppi\in \calT_q^*$
  we know that there exists 
  a compact set $\cK\subset
  \RA(X)$$\ppi$ satisfying \eqref{eq:562}.
  The stretchable condition \eqref{eq:437}
  and an obvious change of variables related to the maps
  $\Al_\gamma^{s\to t}$ of \eqref{eq:432} yields for every $s<t$ in $[0,1]$ and for
  $\ppi$-almost every $\gamma\in \cK$,
  \begin{equation}
      |f(\Al_\gamma(t))-f(\Al_\gamma(s))|\leq
      (t-s)\ell(\gamma)\int_0^1
      g(\Al_\gamma^{s\to t}(r))\,\d
      r =
      \ell(\gamma)\int_s^t g(\Al_\gamma(r))\,\d r,\label{eq:442}
    \end{equation}
    since $\Restr st(\ppi)\in \calT_q.$ 
    We apply Fubini's Theorem 
    to the product measure $\Leb2\otimes\ppi$ in $(0,1)^2\times
    \cK$ and we use the fact that the maps
    characterizing the inequality \eqref{eq:442} are jointly Borel
    with respect to $(s,t,\gamma)\in (0,1)^2\times \cK$: here we use
    the continuity of $\Al$ from $\cK$ to $\BVC_c([0,1];X)$ endowed
    with the topology $\tau_\rmC$.
    It follows that for $\ppi$-a.e. $\gamma$ the function
     $f$ satisfies
    \[
    |f(\Al_\gamma(t))-f(\Al_\gamma(s))|\leq \ell(\gamma)\int_s^t g(\Al_\gamma(r))\,\d
    r \qquad\text{for $\Leb{2}$-a.e. $(t,s)\in (0,1)^2$.}
    \]
    An analogous argument shows that
    for $\ppi$-a.e.~$\gamma$
    \begin{equation}
      \label{eq:222bis}
      \left\{
    \begin{aligned}
      \textstyle |f(\Al_\gamma(s))-f(\gamma_0)|&\textstyle 
      \leq \ell(\gamma)\int_0^s
      g(\Al_\gamma(r))\,\d r\\
      \textstyle |f(\gamma_1)-f(\Al_\gamma(s))|&\textstyle \leq \ell(\gamma)\int_s^1
      g(\Al_\gamma(r))\,\d r
    \end{aligned}\right.
    \qquad\text{for $\Leb{1}$-a.e. $s\in (0,1)$.}
\end{equation}
Since $g\circ \Al_\gamma\in L^1(0,1)$ for
$\ppi$-a.e.\ $\gamma\in \cK$ with $\ell(\gamma)>0$,,  
by the next Lemma \ref{le:dopo}
it follows that $f\circ\Al_\gamma\in W^{1,1}(0,1)$
for $\ppi$-a.e. $\gamma$ and (understanding the derivative of $f\circ\Al_\gamma$ as the distributional one)
\begin{equation}\label{eq:pointwisewug1}
  \biggl|\frac{\d}{\dt}(f\circ\Al_\gamma)\biggr|\leq
  \ell(\gamma) g\circ\Al_\gamma\quad\text{$\Leb1$-a.e.~in $(0,1)$,\quad for
    $\ppi$-a.e. $\gamma$.}
\end{equation}
We conclude that $f\circ\Al_\gamma\in
W^{1,1}(0,1)$ for $\ppi$-a.e. $\gamma$, and therefore it admits an
absolutely continuous representative $f_\gamma$ for which \eqref{eq:pointwisewug} holds; moreover,
by \eqref{eq:222bis}
, it is immediate to check that
$f(\gamma(t))=f_\gamma(t)$ for $t\in \{0,1\}$
and $\ppi $-a.e.\ $\gamma$.
\end{proof}
\begin{lemma}
  \label{le:dopo}
  Let $f:(0,1)\to \R$ and $g\in L^q(0,1)$ nonnegative satisfy
  \begin{equation}
    \label{eq:561}
    \big|f(t)-f(s)\big|\le \Big|\int_s^t g(r)\,\d r\Big|
    \quad
    \text{for $\Leb 2$-a.e.~$(s,t)\in (0,1)\times (0,1)$.}
  \end{equation}
  Then $f\in W^{1,q}(0,1)$ and $|f'|\le g$ $\Leb 1$-a.e.~in $(0,1)$.
\end{lemma}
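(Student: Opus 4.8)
The plan is to produce a good representative of $f$ which is absolutely continuous with the desired gradient bound. Note first that \eqref{eq:561} implicitly requires $f$ to be $\Leb1$-measurable. Set $G(t):=\int_0^t g(r)\,\d r$; since $g\in L^q(0,1)\subset L^1(0,1)$, the function $G$ is absolutely continuous and nondecreasing, with $G'=g$ $\Leb1$-a.e.\ in $(0,1)$ and $0\le G(b)-G(a)=\int_a^b g\le \|g\|_{L^1}$ for $0<a<b<1$; in particular $\|G\|_\infty\le\|g\|_{L^1}$.

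I would first show $f\in L^\infty(0,1)$. By Fubini's theorem the $\Leb2$-negligible set where the inequality in \eqref{eq:561} fails has $\Leb1$-negligible $s$-sections for $\Leb1$-a.e.~$s$; fixing one such $s_0$ we get $|f(t)|\le |f(s_0)|+\|g\|_{L^1}$ for $\Leb1$-a.e.~$t$, so $f\in L^\infty(0,1)$. Consequently $\Leb1$-a.e.~point is a Lebesgue point of $f$, and on the set $E$ of Lebesgue points I would define $\tilde f(t):=\lim_{h\down0}\frac1{2h}\int_{t-h}^{t+h}f$. Then for $s,t\in E$ and $h$ small enough that the square $(t-h,t+h)\times(s-h,s+h)$ lies in $(0,1)^2$, averaging \eqref{eq:561} (valid $\Leb2$-a.e.\ on that square) over the two variables yields
\[
\Big|\tfrac1{2h}\int_{t-h}^{t+h}f-\tfrac1{2h}\int_{s-h}^{s+h}f\Big|
\le \tfrac1{(2h)^2}\int_{t-h}^{t+h}\!\!\int_{s-h}^{s+h}|G(u)-G(v)|\,\d u\,\d v,
\]
and letting $h\down0$, using that $s,t$ are Lebesgue points of $f$ and that $G$ is continuous, one obtains $|\tilde f(t)-\tilde f(s)|\le |G(t)-G(s)|$ for all $s,t\in E$. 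Since $G$ is uniformly continuous, $\tilde f$ is uniformly continuous on the dense set $E$, hence extends to a continuous function on $(0,1)$, still denoted $\tilde f$, satisfying $|\tilde f(t)-\tilde f(s)|\le |\int_s^t g|$ for all $s,t\in(0,1)$; moreover $\tilde f=f$ $\Leb1$-a.e.

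Finally, the estimate $|\tilde f(b)-\tilde f(a)|\le G(b)-G(a)$ for $a<b$ together with the absolute continuity of the integral of $g$ shows immediately that $\tilde f$ is absolutely continuous on $(0,1)$, hence differentiable $\Leb1$-a.e.; at any point $t$ which is both a differentiability point of $\tilde f$ and a Lebesgue point of $g$ one has $|\tilde f'(t)|=\lim_{h\to0}|h|^{-1}|\tilde f(t+h)-\tilde f(t)|\le \lim_{h\to0}|h|^{-1}|\int_t^{t+h}g|=g(t)$. Thus $|\tilde f'|\le g$ $\Leb1$-a.e., so $\tilde f'\in L^q(0,1)$ and $\tilde f\in W^{1,q}(0,1)$; since $f=\tilde f$ $\Leb1$-a.e., we conclude $f\in W^{1,q}(0,1)$ with $|f'|\le g$ $\Leb1$-a.e. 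The only slightly delicate point is the Fubini/averaging step, which is what converts the merely-a.e.\ hypothesis \eqref{eq:561} into a pointwise modulus-of-continuity bound for a concrete representative; once that representative is in hand the rest is routine one-dimensional Sobolev theory.
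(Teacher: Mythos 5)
Your argument is correct. Note that the paper does not prove this lemma itself: it simply cites [AGS13, Lemma 2.1], whose proof converts the almost-everywhere hypothesis into a genuine pointwise bound by smoothing (mollifying $f$ and $g$, obtaining $|f_\eps'|\le g_\eps$, and passing to the limit). Your route is the natural ``good representative'' variant of the same idea: you average \eqref{eq:561} over small squares around pairs of Lebesgue points, pass to the limit using continuity of $G(t)=\int_0^t g$, and obtain a continuous representative $\tilde f$ with $|\tilde f(t)-\tilde f(s)|\le|\int_s^t g|$ everywhere, after which absolute continuity of $G$ and one-dimensional Sobolev theory finish the job. The two approaches buy essentially the same thing; yours has the small advantage of producing the continuous (indeed absolutely continuous) representative explicitly, which is exactly what gets used later in the paper (the curve representative $f_\gamma$), while the mollification proof is marginally shorter on the limit bookkeeping. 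All the individual steps you carry out (Fubini selection of a good $s_0$, hence $f\in L^\infty$; the square-averaging inequality; the limit $h\downarrow 0$ at Lebesgue points; uniform continuity of $\tilde f$ on the full-measure set $E$ and its extension; absolute continuity from $|\tilde f(b)-\tilde f(a)|\le G(b)-G(a)$; and the derivative bound at common Lebesgue/differentiability points) are sound.

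One small caveat on your opening remark: \eqref{eq:561} by itself does not force $f$ to be $\Leb 1$-measurable — sandwiching $f(t)$ between $f(s_0)\pm|G(t)-G(s_0)|$ bounds $f$ but does not make it measurable — so measurability is really a tacit standing hypothesis rather than a consequence. This is harmless here: the cited reference makes the same tacit assumption, the conclusion $f\in W^{1,q}(0,1)$ is anyway a statement about the Lebesgue class, and in the only application in the paper the lemma is applied to $f\circ\Al_\gamma$ with $f$ Borel, so measurability is automatic. It would be cleaner, though, to state it as an assumption rather than claim it is implied by \eqref{eq:561}.
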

We refer to \cite[Lemma 2.1]{AGS13} for the proof.

We can considerably refine Lemma \ref{prop:Rovereto},
by removing the
assumption that $\ppi$ is stretchable and by considering arbitrary
nonparametric dynamic plans in $\calT_q$.
\begin{theorem} \label{thm:Rovereto}
  Assume that 
$g:X\to [0,\infty]$ is a $\calT_q $-weak upper gradient of 
a $\mm$-measurable function $f:X\to\R$. Then $f$ is strictly Sobolev
and satisfies \eqref{eq:pointwisewug}
along $\calT_q $-almost every arc.
\end{theorem}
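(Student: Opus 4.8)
The plan is to reduce the statement about an arbitrary $\calT_q$-test plan $\ppi$ to the case of a stretchable plan in $\calT_q^*$, for which Lemma \ref{prop:Rovereto} already gives the conclusion, and then patch the results together. First I would fix $\ppi\in\calT_q$ and, using Remark \ref{rem:truncations}, assume $f$ bounded; by Proposition \ref{prop:invarianza} it is not restrictive to take $f,g$ Borel, and it suffices to work on arcs in $\RA_0(X)$ (constant arcs are automatically fine). The key idea is that even though $\ppi$ need not be stretchable, its restrictions $\Restr st(\ppi)$ to subintervals belong to $\calT_q$ \emph{only for the endpoints}; so instead I would restrict $\ppi$ to suitable ``good'' sets of arcs on which both $\ell$ and the evaluation-map densities are controlled, obtaining countably many plans in $\calT_q^*$ whose union exhausts $\ppi$.

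Concretely, writing $\pi_i=(\sfe_i)_\sharp\ppi=h_i\mm$ with $h_i\in L^q(X,\mm)$, I would set $\rmH_k:=\{\gamma\in\RA_0(X):\ell(\gamma)\le k,\ h_i(\sfe_i(\gamma))\le k,\ i=0,1\}$, so that $\ppi(\RA_0(X)\setminus\rmH_k)\to0$, and then (since $\ppi$ is Radon and $\ell$, $\Al$ are Lusin $\ppi$-measurable) choose an increasing sequence of compact sets $\rmK_n\subset\RA_0(X)$ on which $\ell$ is continuous and strictly positive with $\ppi(\RA_0(X)\setminus\rmK_n)\le 2^{-n}$. The plans $\ppi_n:=\ppi\restr{\rmK_n\cap\rmH_n}$ then belong to $\calT_q^*$. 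The remaining issue is that Lemma \ref{prop:Rovereto} requires $\ppi_n$ to be \emph{stretchable}, which $\ppi_n$ in general is not, because $(\gsfe_t)_\sharp\ppi_n$ need not be absolutely continuous with $L^q$ density for intermediate $t$. This is the main obstacle, and the way around it is to observe that for a.e.~$t\in(0,1)$ the measure $(\gsfe_t)_\sharp\ppi_n$ \emph{does} have the required integrability: by Fubini applied to $\mu_{\sppi_n}=\int_0^1(\gsfe_t)_\sharp(\ell\,\ppi_n)\,\d t$ and the uniform bound $\ell\le n$ on $\rmK_n$, the map $t\mapsto(\gsfe_t)_\sharp\ppi_n$ lands in $\Bar q\mm$-type measures for $\Leb1$-a.e.~$t$; combined with the endpoint conditions, restricting further to a conull set of parameters and using the restriction maps $\Restr st$ produces genuinely stretchable test plans whose arcs cover $\ppi_n$-almost all of $\rmK_n\cap\rmH_n$.

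Thus I would argue as follows: for each pair $(s,t)$ of dyadic rationals with $0<s<t<1$ such that $(\gsfe_s)_\sharp\ppi_n, (\gsfe_t)_\sharp\ppi_n\ll\mm$ with $L^q$ densities, the pushforward $\Restr st(\ppi_n)$ lies in $\calT_q$, so by the $\calT_q$-weak-upper-gradient hypothesis and an argument identical to the first part of the proof of Lemma \ref{prop:Rovereto} (Fubini on $\Leb2\otimes\ppi_n$, using Borel joint measurability of the relevant maps via Theorem \ref{thm:important-arcs}(d)), for $\ppi_n$-a.e.~$\gamma$ one gets $|f(\Al_\gamma(t))-f(\Al_\gamma(s))|\le\ell(\gamma)\int_s^t g(\Al_\gamma(r))\,\d r$ for $\Leb2$-a.e.~$(s,t)\in(0,1)^2$; the endpoint estimates \eqref{eq:222bis} come directly from $\ppi_n\in\calT_q$ applied to $\Restr 0t$ and $\Restr t1$. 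Then Lemma \ref{le:dopo} upgrades this to $f\circ\Al_\gamma\in W^{1,1}(0,1)$ with $|(f\circ\Al_\gamma)'|\le\ell(\gamma)g\circ\Al_\gamma$ and $f_\gamma(i)=f(\Al_\gamma(i))$, $i=0,1$, for $\ppi_n$-a.e.~$\gamma$, i.e.~$f$ is strictly Sobolev with S.u.g.~$g$ along $\ppi_n$-a.e.~arc. Finally, letting $B_n\supset\{\gamma:\ f\text{ not strictly Sobolev along }\gamma\text{ with S.u.g.~}g\}$ be the $\ppi_n$-negligible Borel set (Borel by the last statement of Lemma \ref{le:tedious-wug}), the set $B:=\bigcap_n B_n$ is Borel, contains the exceptional set, and satisfies $\ppi(B)=0$ since $\ppi(B)\le\ppi(\RA_0(X)\setminus(\rmK_n\cap\rmH_n))+\ppi_n(B)\to0$. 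As $\ppi\in\calT_q$ was arbitrary, $f$ is strictly Sobolev with S.u.g.~$g$ along $\calT_q$-a.e.~arc, which is exactly the assertion.
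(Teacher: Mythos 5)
Your overall architecture (reduce to plans in $\calT_q^*$ as in Lemma \ref{le:nice}, get the two-parameter inequality, apply Lemma \ref{le:dopo}, then exhaust $\ppi$ by the plans $\ppi_n$ and intersect the Borel exceptional sets) is sound, but the central step is not: the claim that ``for $\Leb 1$-a.e.\ $t$ the measure $(\gsfe_t)_\sharp\ppi_n$ is absolutely continuous with $L^q$ density'' does \emph{not} follow from the barycentric bound via Fubini. From $\mu_{\sppi_n}=\int_0^1(\gsfe_t)_\sharp(\ell\,\ppi_n)\,\d t\ll\mm$ one only gets, for each \emph{fixed} $\mm$-negligible set $N$, that $(\gsfe_t)_\sharp\ppi_n(N)=0$ for a.e.\ $t$, with the exceptional set of times depending on $N$; absolute continuity requires uniformity over all null sets and can fail for \emph{every} interior time. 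Concretely, take $X=[0,2]\times[0,1]$ with the Euclidean distance, $\mm=\Leb 2+\Haus1\restr{\{0\}\times[0,1]}+\Haus1\restr{\{2\}\times[0,1]}$, and $\ppi=\int_0^1\delta_{\gamma_y}\,\d y$ where $\gamma_y$ is the horizontal segment from $(0,y)$ to $(2,y)$. Then the endpoint marginals have bounded densities, $\mu_\sppi=\Leb 2\restr{[0,2]\times[0,1]}$ so $\brq q(\ppi)<\infty$ and $\ppi\in\calT_q^*$ (all lengths equal $2$), yet $(\gsfe_t)_\sharp\ppi$ is the uniform measure on the $\mm$-negligible segment $\{2t\}\times[0,1]$ for every $t\in(0,1)$: there is no good intermediate time at all. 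Consequently $\Restr st(\ppi)$ and even $\Restr 0t(\ppi)$, $\Restr t1(\ppi)$ are never test plans for $0<s<t<1$, so your derivation of the inequality \eqref{eq:442} for a.e.\ $(s,t)$ and of the endpoint estimates \eqref{eq:222bis} breaks down completely, and no amount of further restricting to compact sets $\rmK_n\cap\rmH_n$ repairs it (the counterexample already has all the regularity you impose).

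The missing idea is to \emph{average over the restriction parameter} rather than restrict at a fixed parameter: this is exactly what the paper's proof does by introducing $\ppi^+=3\int_0^{1/3}(\Restr r1)_\sharp\ppi\,\d r$ and $\ppi^-=3\int_{2/3}^1(\Restr 0s)_\sharp\ppi\,\d s$ as in \eqref{eq:444}--\eqref{eq:446}. The time-$s$ evaluation of $\ppi^+$ is then an average of $\gsfe_\theta$ over a whole $\theta$-interval, so it is controlled by the barycenter $\mu_\sppi$ together with the lower bound $\ell\ge\ell_o>0$ available on the compact set (estimate \eqref{eq:due}), while the time-$1$ evaluation coincides with $(\sfe_1)_\sharp\ppi$; hence $\ppi^\pm$ are genuinely \emph{stretchable} plans in $\calT_q^*$, Lemma \ref{prop:Rovereto} applies to them, and a Fubini argument in the parameters $r,s$ transfers the strict Sobolev property back to $\ppi$-a.e.\ arc $\gamma$ by matching the absolutely continuous representatives of $f$ along $\Restr r1(\gamma)$ and $\Restr 0s(\gamma)$ on the overlap. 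In your counterexample this averaging is precisely what produces two-dimensional (hence $\mm$-absolutely continuous) intermediate marginals, whereas the fixed-time slices stay singular. So the proof needs this replacement of your ``good dyadic times'' step; the remaining bookkeeping in your proposal can then be kept essentially as written.
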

\begin{proof}
  By Lemma \ref{le:nice} it is sufficient to prove
  the property for every $\ppi\in \calT_q^*$,
  so that we can also assume that there exists
  a compact set $\cK\subset \RA(X)$ satisfying \eqref{eq:562}.
  
  For every $r\in [0,1/3]$ and $s\in [2/3,1]$ we consider the
  rescaled plans
  \begin{equation}
    \label{eq:443}
    \ppi^+_r:=(\Restr r1)_\sharp \ppi,\quad
    \ppi^-_s:=(\Restr 0s)_\sharp (\ppi)
  \end{equation}
  which form two continuous (thus Borel) collections depending on
  $r\in [0,1/3],\ s\in [2/3,1]$. We then set
  \begin{equation}
    \label{eq:444}
    \ppi^+:=3\int_0^{1/3} \ppi_r^+\,\d r,\quad
    \ppi^-:=3\int_{2/3}^1\ppi_s^-\,\d s.
  \end{equation}
  Notice that we can 
  equivalently characterize $\ppi^+,\ppi^-$ as the push forward measures
  of
  \begin{displaymath}
    \text{$\ssigma^+:=3\Leb1\restr{(0,1/3)} \otimes \ppi$
  and $\ssigma^-:=3\Leb 1\restr{(2/3,1)}\otimes \ppi$}
\end{displaymath}
  through the continuous maps $(r,\gamma)\mapsto \Restr
  r1(\gamma)$ and  $(s,\gamma)\mapsto \Restr{0}s(\gamma)$
 respectively:
  \begin{equation}
  \ppi^+=(\Restr \cdot 1)_\sharp \big(3\Leb1\restr{(0,1/3)} \otimes \ppi\big),\quad
  \ppi^-=(\Restr 0\cdot)_\sharp\big(3\Leb 1\restr{(2/3,1)}\otimes \ppi\big).
  .\label{eq:446}
\end{equation}
  Let us check that $\ppi^\pm$ belong to $\calT_q^*$ and are
  stretchable.
  We only 
  consider $\ppi^+$, since the argument for $\ppi^-$ is completely
  analogous.
  Recalling Lemma \ref{le:bar-check}, for every nonnegative $f\in \rmC_b(X)$ we have
  \begin{align}
    \notag
    \int\int_\gamma f\,\d\ppi^+(\gamma)
    &      \topref{eq:444}=
      3\int_0^{1/3}\Big(
      \int \int_\gamma f\,\d\ppi^+_r(\gamma)\Big)\,\d r
            \topref{eq:445}=
      3\int_0^{1/3} \Big(
      \int \ell(\gamma)\int_r^1 f(\Al_\gamma(s))\,\d
      s\,\d\ppi(\gamma)\Big)\,\d r
    \\&\ \ \le
    3\int_0^{1/3} \Big(
    \int \int_\gamma f\,\d\ppi(\gamma)\Big)\,\d r
    \topref{eq:defboundcomp}\le \|f\|_{L^p}\brq q(\ppi),
    \label{eq:uno}
  \end{align}
  which shows that $\brq q(\ppi^+)\le \brq q(\ppi)$.
  
  Let us now prove that $(\gsfe_s)_\sharp \ppi^+\ll\mm$ with density
  in $L^q(X,\mm)$ for every $s\in [0,1]$;
  setting $\ell_{o}:=\min_\cK\ell>0$,  if $s<1$ we have
  \begin{align}
    \notag
    \int f\, \d (\gsfe_s)_\sharp \ppi^+
    &=\int f(\Al_\gamma(s))\,\d\ppi^+(\gamma)
      \topref{eq:444}=
      3\int_0^{1/3} \Big(
      \int f(\Al_\gamma(s))\,\d\ppi^+_r(\gamma)\Big)\,\d r
      \\    \notag&\topref{eq:446}=
      3\int_0^{1/3} \Big(\int
      f(\Al^{r\to 1}_\gamma(s))\d\ppi(\gamma)\Big)\,\d r
    =3 \int\Big(\int_0^{1/3}
    f(\Al_\gamma(r(1-s)+s))
    \,\d r\Big)\d\ppi(\gamma)
    \\    \notag&=\frac{3}{1-s}\int
    \Big(\int_s^{s+(1-s)/3}f(\Al_\gamma(\theta))\,\d \theta
    \Big)\,\d \ppi(\gamma)
    \\&\le \frac{3}{\ell_o(1-s)}
    \int\int_\gamma f\,\d\ppi(\gamma)
    \le
    \frac{3}{\ell_o(1-s)}\|f\|_{L^p}\brq q(\ppi).
    \label{eq:due}
  \end{align}
  On the other hand, if $s=1$, we can use the fact that
  $(\sfe_1)_\sharp\ppi_r=(\sfe_1)_\sharp\ppi$
  \begin{align*}
    \int f(\gamma_1)\,\d\ppi^+(\gamma)&=
    3\int_0^{1/3} \Big(
    \int f(\gamma(1))\,\d\ppi^+_r(\gamma)\Big)\,\d r
    =
    3\int_0^{1/3} \Big(
    \int f(\gamma(1))\,\d\ppi(\gamma)\Big)\,\d r
                                        \\&=\int f\,\d(\sfe_1)_\sharp \ppi,
  \end{align*}
  so that $(\sfe_1)_\sharp\ppi^+=(\sfe_1)_\sharp\ppi$ which has an
  $L^q$ density w.r.t.~$\mm$.
  
  Let us now select a Borel representative of $f$.
  Applying Lemma \ref{prop:Rovereto}
  we know that $f$ is Sobolev along $\ppi^+$ and $\ppi^-$-almost every
  arc. Recalling the representation result
  \eqref{eq:446} and applying Fubini's Theorem, we can find
  a $\ppi$-negligible set $N\subset \cK$
  such that for every $\gamma\in \cK\setminus N$
  the map $f$ is Sobolev along the arcs 
  $\Restr r1(\gamma)$ and $\Restr 0s(\gamma)$ for $\Leb 1$-a.e.~$r\in
  [0,1/3]$
  and $\Leb 1$-a.e.~$s\in [2/3,1]$ and \eqref{eq:pointwisewug} holds.
  Choosing arbitrarily $r\in [0,1/3]$ and $s\in [2/3,1]$ so that
  such a property holds, since the absolutely continuous
  representative $f_\gamma$ should
  coincide along the curve $t\mapsto \Al_\gamma(t)$ in the interval $[r,s]$,
  one immediately sees
  that $f$ is Sobolev along $\gamma$ and \eqref{eq:pointwisewug} holds
  as well. We conclude that $f$ is Sobolev along $\ppi$-a.e.~arc
  and since $\ppi$ is arbitrary in $\cT_q^*$ we get the thesis.
\end{proof}

\begin{remark}[Equivalent formulation]
  \label{re:restr}
  \upshape
  By a similar argument we obtain an equivalent formulation of the
  weak upper gradient property
  when $f$ is Sobolev along
  $\calT_q $-almost every arc: a function $g$ satisfying
  $\int_\gamma g<\infty$ for $\calT_q $-almost every
  arc $\gamma$ is a $\calT_q$-weak upper gradient
  of $f$ if and only if \eqref{eq:591} holds for
  every $\varphi$ in a dense subset of $\rmC^1([0,1])$ and $\calT_q $-almost every
  arc $\gamma$, or, equivalently, 
  the function $f_\gamma$ of Definition~\ref{def:Sobolev} satisfies 
  \eqref{eq:pointwisewug} $\calT_q $-almost everywhere.
\end{remark}
\subsection{The link with $\Md$-weak upper gradients}
\label{subsec:link}
In this section we will show that the definition of $\calT_q$-weak
upper gradient can be equivalently stated in terms of $\Md$,
as in the Newtonian approach to metric Sobolev spaces.
Part of the results stated here could also be derived as
a consequence of the identification Theorem
of Section \ref{sec:Identification}, so we will just sketch the main
ideas.

First of all we can associate a $q$-barycentric entropy to plans in
$\calT_q$: we consider the measure $\tilde\mu_\sppi\in \cMp(X)$ defined by
\begin{equation}
  \label{eq:598}
  \int_Xf\,\d\tilde\mu_\sppi:=\int\Big(f(\gamma_0)+f(\gamma_1)+\int_\gamma
  f\Big)\,\d\ppi(\gamma)=
  \int\Big(\int f\,\d\tilde\rmM\gamma\Big)\,\d\ppi(\gamma)
\end{equation}
where $\tilde\rmM:\RA(X)\to\cMp(X)$ has been defined in
\eqref{eq:594}.
We then set
\begin{equation}
  \label{eq:599}
  \frac 1q\tbrq q^q(\ppi):=\LL^q(\tilde\mu_\sppi|\mm)=\frac 1q\int_X
  h^q\,\d\mm\quad
  \text{if }\tilde\mu_\sppi=h\mm.
\end{equation}
and it is easy to check that
\begin{equation}
\text{$\ppi\in \calT_q$ if and only if $\tbrq
  q(\ppi)<\infty$.}
\label{eq:603}
\end{equation}
It is clear that $\tbrq q(\ppi)\ge \brq q(\ppi)$ for every
dynamic plan $\ppi$. Arguing as in the proof of Lemma
\ref{le:RA-tight}
one can also see that for every $k\ge0$
\begin{equation}
  \label{eq:600}
  \text{if $(X,\sfd)$ is complete then the set
    $\Big\{\ppi\in\cMp(\RA(X)):\tbrq q(\ppi)\le k\Big\}$ is compact.}
\end{equation}
By duality we obtain the corresponding notion of content
\begin{equation}
  \label{eq:601}
  \frac 1p\tCont p(\Gamma):=
  \sup\Big\{\ppi(\Gamma)-\frac 1q\tbrq q^q(\ppi):\ppi\in \calT_q\Big\},
\end{equation}
and we can obtain an important characterization of $\tMd$, as
for Theorems \ref{thm:compact-Mod} and \ref{tmain}:
\begin{theorem}
  \label{thm:pourri}
  \begin{enumerate}
  \item If $\Gamma$ is a compact subset of $\RA(X)$ then
    \begin{equation}
      \label{eq:602}
      \tCont p^p(\Gamma)=\tMd(\Gamma)=\tMdc(\Gamma).
    \end{equation}
  \item $\tMd$ is a $\KK(\RA(X),\tau_\rmA)$-Choquet capacity in
    $\RA(X)$.
  \item For every universally measurable $\Gamma\subset \RA(X)$
    \begin{equation}
      \label{eq:597}
      \tCont p(\Gamma)=\sup\Big\{\tCont p(\rmK):\rmK\subset \Gamma,\
      \rmK\text{ compact}\Big\}.
    \end{equation}
  \item If $(X,\sfd)$ is complete and $(X,\tau)$ is Souslin then
    every $\BB(\RA(X),\tau_\rmA)$-analytic set $\Gamma$ is
    $\tMd$-capacitable
    and satisfies
    $\tMd(\Gamma)=\tCont p^p(\Gamma)$.
    In particular $\Gamma$ is $\tMd$-negligible if and only if it is $\calT_q$-negligible.
  \end{enumerate}  
\end{theorem}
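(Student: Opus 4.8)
The plan is to reproduce, essentially verbatim, the proofs of Theorem~\ref{thm:compact-Mod} and Theorem~\ref{tmain}, replacing throughout the embedding $\rmM\colon\gamma\mapsto\nu_\gamma$ by the embedding $\tilde\rmM\colon\gamma\mapsto\nu_\gamma+\delta_{\gamma_0}+\delta_{\gamma_1}$ of \eqref{eq:594}, the barycentric functional $\brq q$ by $\tbrq q$ of \eqref{eq:599}, and the content $\Cont p$ by $\tCont p$ of \eqref{eq:601}. The only structural ingredient required is that for every nonnegative $\phi\in\rmC_b(X)$ the map $\gamma\mapsto\int\phi\,\d(\tilde\rmM\gamma)=\phi(\gamma_0)+\phi(\gamma_1)+\int_\gamma\phi$ be $\tau_\rmA$-lower semicontinuous on $\RA(X)$, which has already been recorded (via Theorem~\ref{thm:important-arcs}) in the discussion preceding Corollary~\ref{cor:Moduno}. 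Two harmless simplifications occur along the way: since $\tilde\rmM\gamma$ has total mass at least $2$ even on constant arcs, there is no need to treat constant arcs separately, nor to pass to the sets $\RA_\eta(X)$ as in the proof of Theorem~\ref{tmain}.

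For part (a) I would run the Von Neumann minimax argument of Theorem~\ref{thm:compact-Mod} with $\Gamma\subset\RA(X)$ compact, with $\ppi$ ranging over $\cMp(\Gamma):=\{\ppi\in\cMp(\RA(X)):\supp(\ppi)\subset\Gamma\}$, and with Lagrangian
\[
  \cL(\ppi,f):=\frac1p\int_X f^p\,\d\mm+\int\Big(1-f(\gamma_0)-f(\gamma_1)-\int_\gamma f\Big)\,\d\ppi(\gamma),\qquad f\in\rmC_b(X),\ f\ge0.
\]
The coercivity hypothesis~\eqref{eq:115bis} is in fact easier to verify than for $\Md$: choosing $f_\star\equiv k\ge 1$ one gets $\cL(\ppi,f_\star)\le\frac1p\mm(X)k^p-\ppi(\Gamma)$ because $2+\ell(\gamma)\ge 2$, so the relevant sublevel set is contained in $\{\ppi\in\cMp(\Gamma):\ppi(\Gamma)\le\frac1p\mm(X)k^p-D_\star\}$, which is compact by Prokhorov's Theorem~\ref{thm:compa-Riesz}. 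Exchanging inf and sup and computing $\inf_f\cL(\ppi,f)=\ppi(\Gamma)-\LL_q(\tilde\mu_\sppi|\mm)=\ppi(\Gamma)-\frac1q\tbrq q^q(\ppi)$ by means of the dual formula~\eqref{eq:190} (the plans $\ppi$ with $\tbrq q(\ppi)=\infty$ contributing $-\infty$ and hence being irrelevant), together with the trivial bound $\tCont p^p\le\tMd\le\tMdc$ that mimics \eqref{eq:300}, yields \eqref{eq:602}.

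For parts (b) and (c) I would transcribe the proofs of Theorem~\ref{tmain}(a) and (b). Monotonicity and subadditivity of $\tMd$, $\tMdc$, continuity of $\tMd$ along nondecreasing sequences, and Corollary~\ref{cor:Moduno} for nonincreasing sequences of compacts, combined with the equality $\tMd=\tMdc$ on compacts from (a), show that $\tMd$ is a Choquet $\KK(\RA(X),\tau_\rmA)$-capacity; and given $S<\tCont p(\Gamma)$ for $\Gamma$ universally measurable, one picks $\ppi\in\calT_q$ with $\frac1p S^p<\ppi(\Gamma)-\frac1q\tbrq q^q(\ppi)$ and, using that $\ppi$ is a Radon measure and $\Gamma$ is $\ppi$-measurable, replaces $\Gamma$ by a compact $\rmK\subset\Gamma$ with $\frac1p S^p<\ppi(\rmK)-\frac1q\tbrq q^q(\ppi)\le\frac1p\tCont p^p(\rmK)$. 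Neither of these steps uses completeness.

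Finally, for part (d) I would invoke Choquet's Theorem~\ref{thm:Choquet} for the $\KK$-capacity $\tMd$, so that it suffices to prove that every Borel $\Gamma\subset\RA(X)$ is $\tMd$-capacitable. Using the auxiliary topology $\tau_\rmA'$ on $\RA(X)$ furnished by Corollary~\ref{cor:auxiliary-arc} (valid since $(X,\tau)$ is Souslin) and the tightness estimate Lemma~\ref{lem:tightness2}(a) --- this is the one place where completeness of $(X,\sfd)$ enters, through Theorem~\ref{thm:important-arcs}(g) --- one writes $\Gamma=(\Gamma\cap K_\eps)\cup(\Gamma\setminus K_\eps)$ with $K_\eps$ compact and $\tMd(\Gamma\setminus K_\eps)\le\eps$; since $(K_\eps,\tau_\rmA)$ is a compact metrizable, hence Polish, space on which $\tau_\rmA$ and $\tau_\rmA'$ coincide, $\Gamma\cap K_\eps$ is Borel in $K_\eps$, hence $\FF$-analytic, hence $\KK$-analytic in $\RA(X)$ and therefore $\tMd$-capacitable; subadditivity and $\eps\downarrow0$ give the capacitability of $\Gamma$, and Choquet's theorem extends it to all $\BB(\RA(X),\tau_\rmA)$-analytic sets. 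The identity $\tMd(\Gamma)=\tCont p^p(\Gamma)$ for analytic $\Gamma$ then follows by squeezing both sides between their common value on compact subsets, using the capacitability of $\tMd$ just proved and the inner approximation (c) of $\tCont p$. The last assertion is immediate once one notes that $\tbrq q(\ppi)=0$ forces $\ppi=0$ (because $\tilde\mu_\sppi$ dominates $(\sfe_0)_\sharp\ppi+(\sfe_1)_\sharp\ppi$, of mass $2\,\ppi(\RA(X))$): a scaling argument in \eqref{eq:601} then gives $\tCont p(\Gamma)=0$ iff $\ppi(\Gamma)=0$ for every $\ppi\in\calT_q$, i.e.\ iff $\Gamma$ is $\calT_q$-negligible. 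The main obstacle here is bookkeeping rather than a new idea: one must check that the two ingredients of Section~\ref{sec:Equivalence} that depend on the completeness and Souslin hypotheses --- the tightness lemma and the auxiliary topology on the arc space --- carry over to $\tilde\rmM$ unchanged, which they do, since Lemma~\ref{lem:tightness2}(a) is already stated for $\tMd$ and Corollary~\ref{cor:auxiliary-arc} is insensitive to the choice of embedding.
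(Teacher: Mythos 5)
Your proposal is correct and follows exactly the route the paper indicates: the paper leaves the proof to the reader, stating that claim (a) is obtained by the min–max argument of Theorem~\ref{thm:compact-Mod} with $\nu_\gamma$ replaced by $\tilde\nu_\gamma$, and claims (b)--(d) by repeating the arguments of Theorem~\ref{tmain}, the proofs being slightly easier thanks to the better compactness of the sublevels of $\tbrq q$ and the tightness of $\tMd$. Your write-up fills in precisely these steps (including the simplification that constant arcs and the sets $\RA_\eta(X)$ need no separate treatment), so it matches the intended proof.
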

We leave the proof to the reader:
Claim (a) is based on the same min-max argument of
Theorem \ref{thm:compact-Mod} (replacing integration
w.r.t.~$\nu_\gamma$
with integration w.r.t.~$\tilde\nu_\gamma$),
Claims (b-d) can be obtained by arguing as in 
Theorem \ref{tmain}. In fact, the proofs would be slightly
easier,
since compactness of sublevels of $\tbrq q$ and tightness of $\tMd$ behave
better
than the corresponding properties for $\brq q $ and $\Md$.
It would also be possible to derive the proofs by a general duality
between $\Mdm$ and a corresponding notion of content in $\cMp(X)$,
see \cite[\S 5]{ADS15}.
\medskip

\noindent
Let us now observe that 
if we consider only Sobolev regularity along arcs, we can
improve Theorem \ref{thm:Rovereto}.
\begin{proposition}\
  \label{prop:Sobreg}
  \begin{enumerate}
    \item
    If $g\in L^p(X,\mm)$, $g\ge0$, is a $\calT_q $-weak upper gradient of a
    $\mm$-measurable function $f:X\to\R$, then $f$ is Sobolev
    with S.u.g.~$g$ along
    $\Bar q\mm$-almost every arc; \eqref{eq:592} holds
    for $\Bar q\mm$-almost every
    $\gamma\in\RA(X)$.
  \item
    If moreover $(X,\sfd)$ is complete and $(X,\tau)$ is Souslin, then
    $f$ is Sobolev with S.u.g.~$g$ along $\Md$-almost every arc and
    \eqref{eq:592} holds $\Md$-a.e.
  \end{enumerate}
\end{proposition}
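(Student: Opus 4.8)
The idea is to upgrade the conclusion of Theorem \ref{thm:Rovereto} by observing that, once $g\in L^p(X,\mm)$ is given (so that the restriction $g\notin L^p$ need not be avoided), the ``bad set'' of arcs along which $f$ fails to be Sobolev with S.u.g.\ $g$ is not merely $\calT_q$-negligible but in fact $\Bar q\mm$-negligible, and under the completeness/Souslin hypotheses even $\Md$-negligible. The main technical point is to go from a $\calT_q$-statement (where one controls the endpoint distributions $(\sfe_i)_\sharp\ppi$) to a $\Bar q\mm$-statement (where only the barycenter $\mu_\sppi$ is controlled), and this is precisely where the integrability assumption $g\in L^p(X,\mm)$ is used.

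\textbf{Step (a).} First I would fix a Borel representative of $f$ and of $g$ and, using Remark \ref{rem:truncations}, reduce to the case $f$ bounded; then $f\in L^1(X,\mm)$ automatically (since $\mm$ is finite). Next, consider any $\ppi\in \Bar q\mm$ with $\brq q(\ppi)<\infty$; I want to show $f$ is Sobolev with S.u.g.\ $g$ along $\ppi$-a.e.\ arc. The obstacle is that such a $\ppi$ need not be a $\calT_q$-test plan. To fix this I would ``regularize at the endpoints'' by integrating restriction operators, exactly as in the proof of Theorem \ref{thm:Rovereto}: set $\ssigma^+:=3\Leb1\restr{(0,1/3)}\otimes\ppi$, $\ssigma^-:=3\Leb1\restr{(2/3,1)}\otimes\ppi$, and define $\ppi^+:=(\Restr\cdot1)_\sharp\ssigma^+$, $\ppi^-:=(\Restr0\cdot)_\sharp\ssigma^-$ as in \eqref{eq:446}. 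The computation \eqref{eq:uno} gives $\brq q(\ppi^\pm)\le \brq q(\ppi)<\infty$; but now I also need $(\sfe_i)_\sharp\ppi^\pm\ll\mm$ with $L^q$ density. This is the key new estimate: for instance $(\sfe_0)_\sharp\ppi^+ $ has, for nonnegative $f\in\rmC_b(X)$,
\begin{equation*}
  \int f\,\d(\sfe_0)_\sharp\ppi^+
  =3\int_0^{1/3}\Big(\int f(\Al_\gamma(r))\,\d\ppi(\gamma)\Big)\,\d r
  =3\int\Big(\int_0^{1/3}f(\Al_\gamma(r))\,\d r\Big)\,\d\ppi(\gamma).
\end{equation*}
The inner average is \emph{not} directly dominated by $\int_\gamma f$ because a factor $\ell(\gamma)^{-1}$ appears; this is precisely the point where one cannot work with all of $\Bar q\mm$ but must first decompose. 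So I would instead localize: write $\RA_0(X)=\cup_n\{\gamma:\ell(\gamma)>1/n,\ \ell(\gamma)\le n\}$, restrict $\ppi$ to each such set, and on each piece the bound $\ell_o\le\ell(\gamma)\le n$ makes the above average comparable (up to constants) to $\int_\gamma f$, hence $(\sfe_0)_\sharp\ppi^+\le C_n\,\mu_\sppi\ll\mm$ with $L^q$ density. Thus each localized $\ppi^+$ (and $\ppi^-$) lies in $\calT_q$, and is in fact stretchable by the same computation \eqref{eq:due} with $\ell_o=1/n$. Applying Lemma \ref{prop:Rovereto} to $\ppi^\pm$ and then the Fubini / restriction argument at the end of the proof of Theorem \ref{thm:Rovereto} (choosing $r\in[0,1/3]$, $s\in[2/3,1]$ generic and patching the absolutely continuous representatives on $[r,s]$), one gets that $f$ is Sobolev with S.u.g.\ $g$ along $\ppi$-a.e.\ arc, for the localized $\ppi$; letting $n\to\infty$ covers $\RA_0(X)$, and constant arcs are trivially fine. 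Since $\ppi\in\Bar q\mm$ was arbitrary, this proves (a).

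\textbf{Step (b).} Under the extra hypotheses that $(X,\sfd)$ is complete and $(X,\tau)$ is Souslin, I would simply invoke Theorem \ref{tmain}(d): the set $\Gamma$ of arcs along which $f$ is Sobolev with S.u.g.\ $g$ and $\int_\gamma(|f|+g)<\infty$ is Borel in $\RA(X)$ by the last statement of Lemma \ref{le:tedious-wug}, hence its complement (intersected with the Borel set where $\int_\gamma(|f|+g)<\infty$, whose complement is $\Md$-negligible because $|f|,g\in L^p$ and $\Md$-a.e.\ arc has finite integral of an $L^p$ function by Proposition~\ref{prop:prop}(b)) is a $\BB$-analytic set which is $\Bar q\mm$-negligible by part (a); Theorem \ref{tmain}(d) then yields that it is also $\Md$-negligible. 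Therefore $f$ is Sobolev with S.u.g.\ $g$ along $\Md$-a.e.\ arc and \eqref{eq:592} holds $\Md$-a.e.

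\textbf{Expected main obstacle.} The delicate part is Step (a): verifying that the endpoint-regularized plans $\ppi^\pm$ really land in $\calT_q$ when one only assumes $\ppi\in\Bar q\mm$. This forces the length-localization $\{1/n<\ell\le n\}$ and a careful bookkeeping of the constants $C_n$ (which blow up, but only within each fixed slice, so this is harmless after taking the countable union). Everything else — the Fubini argument, the patching of absolutely continuous representatives, and the reduction via Theorem \ref{tmain} in part (b) — is essentially a repetition of arguments already carried out in the proofs of Theorem \ref{thm:Rovereto} and Theorem \ref{tmain}.
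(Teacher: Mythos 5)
Your Step (b) is essentially the paper's argument (Borel sets $A_0$, $B_0$ from Lemma \ref{le:tedious-wug}, then $\Cont p=0$ from part (a) and Theorem \ref{tmain}(d)), and your length-localization in Step (a) plays the same role as the paper's reduction to a compact subset of $\RA_0(X)$ on which $\ell$ is continuous and bounded away from $0$ and $\infty$. However, Step (a) has a genuine gap: the one-sided restriction plans $\ppi^{\pm}$ do \emph{not} belong to $\calT_q$ for a general $\ppi\in\Bar q\mm$, even after localizing in length. Indeed $(\sfe_1)_\sharp\ppi^{+}=(\sfe_1)_\sharp\ppi$ and $(\sfe_0)_\sharp\ppi^{-}=(\sfe_0)_\sharp\ppi$, i.e.\ the endpoint that you do not cut keeps the original distribution, and for $\ppi\in\Bar q\mm$ only the barycenter $\mu_\sppi$ is assumed to have an $L^q$ density: nothing prevents, say, all curves from ending at a single point, so that $(\sfe_1)_\sharp\ppi$ is a Dirac mass and $\ppi^+\notin\calT_q$ (nor stretchable, since $(\gsfe_1)_\sharp\ppi^+$ fails \eqref{eq:437}). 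Your localization $\{1/n<\ell\le n\}$ only repairs the factor $\ell(\gamma)^{-1}$ at the \emph{cut} endpoint, as in \eqref{eq:due}; it cannot control the uncut one, so neither Lemma \ref{prop:Rovereto} nor Theorem \ref{thm:Rovereto} applies to $\ppi^{\pm}$, and the subsequent patching on $[r,s]$ has no foundation.

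The repair is to cut \emph{both} endpoints simultaneously, which is what the paper does: after restricting $\ppi$ to a compact $\rmK\subset\RA_0(X)$ where $\ell$ is continuous (equivalently, your length slices), set $\tilde\ppi:=3\int_0^{1/3}(\operatorname{Restr})_\sharp\ppi\,\d r$ where the restriction is to the middle segment $[r,1-r]$. Then \emph{both} new endpoint distributions are averages of $(\gsfe_r)_\sharp\ppi$, $(\gsfe_{1-r})_\sharp\ppi$ over $r$, hence dominated by $\tfrac{3}{\ell_o}\mu_\sppi$ with $\ell_o=\min_{\rmK}\ell>0$, so $\tilde\ppi\in\calT_q$; Theorem \ref{thm:Rovereto} applies to $\tilde\ppi$, and Fubini plus a limit $r_n\downarrow 0$ (the absolutely continuous representatives on the nested middle segments are consistent) gives Sobolev regularity with S.u.g.\ $g$ along $\ppi$-a.e.\ arc — note that this yields Sobolev, not strictly Sobolev, regularity, which is all that part (a) claims. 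With this modification the rest of your plan goes through as written.
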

  \begin{proof}
    \textbf{(a)}
    It is not restrictive to assume that $f$ and $g$ are Borel.
    Let us show that
    for every plan $\ppi\in \Bar q\mm$
    $f$ is Sobolev along $\ppi$-a.e.~arc $\gamma$ and
    \eqref{eq:592}
    holds $\ppi$-a.e. Since $\ppi$ is Radon and both the properties
    trivially holds along constant arcs,
    it is not restrictive to assume that $\ppi$ it is concentrated
    on a compact set $\rmK\subset \RA_0(X)$ where $\ell$ is
    continuous. In particular 
    the map $T:(r,\gamma)\mapsto \Restr {1-r}r(\gamma)$ is continuous
    in $[0,1/3]\times \rmK$. Arguing as in the proof of Theorem
    \ref{thm:Rovereto} we define
    \begin{equation}\label{eq:641}
      \ppi_r:=(\Restr{1-r}r)_\sharp\ppi,\quad
      \tilde\ppi:=3\int_0^{1/3} \ppi_r\,\d r=
      T_\sharp(3\Leb 1\restr{(0,1/3)}\otimes \ppi),
    \end{equation}
    and by calculations similar to \eqref{eq:uno} and \eqref{eq:due}
    we can check that $\tilde\ppi\in \calT_q$.
    By Theorem \ref{thm:Rovereto}
    we deduce that
    $f$ is Sobolev along $\tilde \ppi$-a.e.~arc and
    \eqref{eq:592}
    holds for $\tilde\ppi$-a.e.~$\gamma$. Applying Fubini's Theorem
    we can find a $\ppi$-negligible Borel set $\rmN\subset \RA(X)$
    such that for every $\gamma\in \RA(X)\setminus \rmN$ 
    $f$ is Sobolev with S.u.g.~$g$
    along the arcs $\Restr{1-r}r(\gamma)$ for $\Leb 1$-a.e.~$r\in
    (0,1/3)$. For every $\gamma\in \RA(X)\setminus \rmN$
    we can thus find a vanishing sequence $r_n\down0$
    such that $f$ is Sobolev and \eqref{eq:592} holds along
    $\Restr{1-r_n}{r_n}(\gamma)$. We can thus pass to the limit and
    obtain
    the same properties along $\gamma$.
    \medskip

    \noindent\textbf{(b)}
    As in the previous Claim, it is not restrictive to assume
    $f,g$ Borel; by Remark \ref{rem:truncations}
    we can also suppose that $f$ is
    bounded.
    Let us consider a countable dense subset $\calC_c$ of $\rmC^1_c(0,1)$
    and let us define the sets
    \begin{align}
      \label{eq:588}
      A_0:={}&\Big\{\gamma\in \RA_0(X):\int_\gamma
             g<\infty\Big\},
      \\\label{eq:516}
      B_0:={}&\Big\{\gamma\in A_0: \Big|\int_0^1
             \varphi'(t)f(\Al_\gamma(t))\,\d t\Big|\le
             \ell(\gamma)\int_0^1|\varphi(t)|\,g(\Al_\gamma(t))\,\d t
             \quad\text{for every }\varphi\in \calC_c\Big\}.
    \end{align}
    By Theorem \ref{thm:important-arcs}(e) $A_0$ is a Borel set;
    Proposition \ref{prop:prop}(b) shows that $\Md(\RA_0(X)\setminus
    A_0)=0$. By Lemma \ref{le:tedious-wug} for every arc $\gamma\in A_0$,
    $f$ is Sobolev along $\gamma$ with S.u.g.~$g$ if and
    only
    if $\gamma\in B_0$. 
    Lemma \ref{le:tedious-wug} also shows that $A_0\setminus B_0$ is
    Borel. Since by Claim (a) we know that $\Cont p(A_0\setminus
    B_0)=0$, we get $\Md (A_0\setminus B_0)=0$
    by Theorem \ref{tmain}(d).
  \end{proof}
  According to the Definition \ref{def:Sobolev}, Proposition
  \ref{prop:Sobreg} ensures that for $\Md$-a.e.~arc $\gamma$
  a function $f$ with $\calT_q$-w.u.g.~in $L^p(X,\mm)$
  coincides $\Leb 1$-a.e.~with an absolutely continuous function
  $f_\gamma$.
  We can in fact prove a much better result, which establishes
  a strong connection with the theory of Newtonian Sobolev spaces.
  \begin{theorem}[Good representative]
    \label{thm:representative}
    Let us suppose that $(X,\sfd)$ is complete and $(X,\tau)$ is
    Souslin.
    Every $\mm$-measurable function $f$ with a $\calT_q$-w.u.g.~$g\in
    L^p(X,\mm)$ admits a Borel $\mm$-representative $\tilde f$ such
    that
    $\tilde f\circ\Al_\gamma$ is absolutely continuous with S.u.g.~$g$
    along $\Md$-a.e.~arc $\gamma$ (and a fortiori along $\calT_q$-a.e.~arc).
  \end{theorem}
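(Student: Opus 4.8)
The plan is to reduce the statement, via the content/capacity machinery of Section~\ref{sec:Equivalence}, to the construction of a pointwise ``canonical value'' of $f$ along rectifiable arcs, obtained by gluing together the absolutely continuous representatives produced by Proposition~\ref{prop:Sobreg}. First I would fix a Borel representative $f_0$ of $f$ and a Borel $\calT_q$-weak upper gradient $g\in L^p(X,\mm)$. By Proposition~\ref{prop:Sobreg}(b) the set $\rmB\subset\RA_0(X)$ of arcs along which $f_0\circ\Al_\gamma$ does \emph{not} coincide $\Leb 1$-a.e.\ with an absolutely continuous map $f_\gamma$ satisfying $|f_\gamma'|\le\ell(\gamma)\,g\circ\Al_\gamma$ (cf.\ Definition~\ref{def:Sobolev}) is $\Md$-negligible; constant arcs are irrelevant, since $\tilde f\circ\Al_\gamma$ is then constant, so it suffices to work on $\RA_0(X)$. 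Using the Borel measurability of $\gamma\mapsto\Al_\gamma$ (Theorem~\ref{thm:important-arcs}(d)), the Borel description of the Sobolev property in Lemma~\ref{le:tedious-wug}, and a measurable selection, one gets a Borel map $\gamma\mapsto f_\gamma\in\AC([0,1])$ on the ``good set'' $\rmG:=\RA_0(X)\setminus\rmB$.

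The key structural fact is that $\rmG$ is stable under the restriction maps $\Restr st$, under orientation reversal, and, given two good arcs meeting at a common point, under concatenation at that point, and that the corresponding absolutely continuous traces are compatible: $f_{\Restr st(\gamma)}(r)=f_\gamma((1-r)s+rt)$, $f_\gamma$ reverses under reversal, and the traces of a concatenation agree at the junction. A cut-and-paste argument (restrict $\gamma$ on one side of a common point, reverse and prepend $\gamma'$ restricted on the other side) then forces $f_\gamma(t)=f_{\gamma'}(t')$ whenever $\Al_\gamma(t)=\Al_{\gamma'}(t')$ with $\gamma,\gamma'\in\rmG$; hence $f^*(x):=f_\gamma(t)$ is unambiguously defined on the ``reachable'' set $Y:=\bigcup_{\gamma\in\rmG}\Al_\gamma([0,1])$, and by construction any function coinciding with $f^*$ on $Y$ makes $\tilde f\circ\Al_\gamma=f_\gamma$ hold \emph{everywhere} on $[0,1]$ for every $\gamma\in\rmG$.

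The delicate — and I expect hardest — step is to perform the modification of $f_0$ on a $\mm$-negligible set while still keeping the set of arcs on which $\tilde f$ fails to be absolutely continuous $\Md$-negligible; the case $\mm=\delta_{x_0}$ shows that one cannot simply put $\tilde f=f^*$ on all of $Y$. The idea is to split the (Borel, by Lemma~\ref{le:tedious-wug} and Theorem~\ref{thm:important-arcs}) set $W:=Y\cap\{f^*\ne f_0\}$ into a part $N$ on which the override is genuinely forced — points whose omission would leave a $\Md$-positive family of good arcs non-absolutely-continuous — and a remainder $R$ whose good arcs form a $\Md$-null family, and to set $\tilde f:=f^*$ on $N$, $\tilde f:=f_0$ on $X\setminus N$. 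Proving $\mm(N)=0$ is where the completeness of $(X,\sfd)$ and the Souslin property of $(X,\tau)$ enter: by Theorem~\ref{tmain}, $\Md$ is a Choquet $\KK$-capacity, $\Md(\Gamma)=\Cont{p}^p(\Gamma)$ on $\BB$-analytic sets, and $\Md$-negligibility coincides with $\Bar q\mm$-negligibility. Using this one shows that $\mm(N)>0$ would allow one to extract — via an inner compact approximation, the tightness/equi-tightness results (Lemma~\ref{le:RA-tight}), Prokhorov's theorem, and the restriction trick of Proposition~\ref{prop:Sobreg}(a) turning a plan in $\Bar q\mm$ into one in $\calT_q$ — a dynamic plan $\ppi\in\Bar q\mm$ with $\mu_\sppi(N)>0$; but Theorem~\ref{thm:Rovereto} together with Fubini forces $\mu_\sppi(\{f^*\ne f_0\})=0$ for every such plan, a contradiction. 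Dually, the good arcs touching $R$ form a $\Md$-null family by the very choice of $R$.

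Assembling these facts, $\tilde f$ is a Borel function with $\mm(\{\tilde f\ne f\})=0$, and for $\Md$-a.e.\ arc $\gamma$ one has $\tilde f\circ\Al_\gamma=f_\gamma\in\AC([0,1])$ with $|f_\gamma'|\le\ell(\gamma)\,g\circ\Al_\gamma$ a.e., i.e.\ $g$ is a Sobolev upper gradient of $\tilde f$ along $\gamma$; since every $\calT_q$-negligible set is $\Md$-negligible, the same holds along $\calT_q$-a.e.\ arc. The construction parallels, in the extended/Souslin setting, the one of \cite{ADS15}, the extra ingredients being the auxiliary-topology results of Section~\ref{sec:curves} needed to legitimise the capacity statements on $\RA(X)$.
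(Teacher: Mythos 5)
Your blueprint (a.e.\ Sobolev regularity along $\Md$-a.e.\ arc from Proposition~\ref{prop:Sobreg}, gluing the absolutely continuous traces at common points to define a canonical value $f^*$, then checking the modification is $\mm$-negligible) is the right one, but both crux points are left with genuine gaps. The first is the "key structural fact" that the good set $\rmG$ of arcs along which $f_0$ is Sobolev with S.u.g.~$g$ is stable under concatenation with matching traces: as stated this is false. Being Sobolev along $\gamma$ and along $\gamma'$ says nothing about the concatenation, because the a.e.-defined representatives need not fit together at the junction (think of a function jumping across a hypersurface: it is Sobolev with S.u.g.~$0$ along each arc stopping at the surface, but not along the concatenation). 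The paper's proof repairs exactly this by replacing the unstructured exceptional set with an integral condition: by Proposition~\ref{prop:prop}(b) one chooses $h\in\cL^p_+(X,\mm)$ and works with $H:=\{\gamma\in\RA_0(X):\int_\gamma h<\infty\}$, which is co-negligible and \emph{automatically} closed under restriction and concatenation (the integral is additive along pieces), so the glued arc again lies in $H$ and its single absolutely continuous representative forces the traces of the two pieces to agree. Without some such replacement your cut-and-paste step does not go through.

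The second gap is the $\mm$-negligibility of $\{\tilde f\neq f\}$. Your contradiction scheme needs the implication "$\mm(N)>0\Rightarrow$ there is $\ppi\in\Bar q\mm$ with $\mu_\sppi(N)>0$", which is unjustified and in general false — and indeed self-defeating here, since (as you yourself note via Theorem~\ref{thm:Rovereto} and Fubini) \emph{no} plan charges $\{f^*\neq f_0\}\supset N$, so this route can never detect whether $\mm(N)>0$. The missing ingredient is pointwise (endpoint) information, which a.e.-in-$t$ Sobolev regularity cannot supply: this is precisely what the \emph{strict} Sobolev property of \S\,\ref{subsec:link} provides. The paper uses the modulus $\tMd$ and Theorem~\ref{thm:pourri} (together with the strict version in Lemma~\ref{le:tedious-wug}) to get, besides $H$, a second family $H_0:=\{\gamma:\ h(\gamma_0)+h(\gamma_1)+\int_\gamma h<\infty\}$ along whose arcs $f$ is \emph{strictly} Sobolev; then for any $x$ with $h(x)<\infty$ lying on an $H$-arc one cuts the arc so that $x$ becomes an endpoint of an arc in $H_0$, and strict regularity gives $f^*(x)=f_0(x)$. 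Hence $\{f^*\neq f_0\}\subset\{h=\infty\}$ is $\mm$-negligible, the uniform definition $\tilde f:=f^*$ on the reachable set and $\tilde f:=f_0$ elsewhere works (no splitting into $N$ and $R$ is needed), and $\tilde f\circ\Al_\gamma$ is genuinely absolutely continuous along every $\gamma\in H$, i.e.\ along $\Md$-a.e.\ arc. Your proposal never invokes the endpoint/strict mechanism, and without it the passage from a.e.-along-the-arc information to the value of $f$ at a specific point cannot be made.
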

  \begin{proof}
    As usual, it is not restrictive to assume
    that $f,g$ are Borel maps, $f$ bounded.
    We will also denote by $f_\gamma$ 
    the absolutely continuous representative of $f\circ \Al_\gamma$
    whenever $f$ is Sobolev along $\gamma$.    
    \medskip

    \noindent
    \underline{Claim 1}: {\em There exists 
      $h\in \cL_+^p(X,\mm)$,
      such that
      \begin{equation}
      \text{$f$ is Sobolev with S.u.g.~$g$ along all the arcs of
        $H:=\Big\{\gamma\in \RA_0(X):\int_\gamma
        h<\infty\Big\}$.}\label{eq:604}
    \end{equation}
     \begin{equation}
        \label{eq:605}
        \begin{aligned}
          &\text{$f$ is \underline{strictly} Sobolev with S.u.g.~$g$
            along all the
            arcs of}\\
          &\qquad H_0:=\Big\{\gamma\in \RA_0(X):
          h(\gamma_0)+h(\gamma_1)+\int_\gamma h<\infty\Big\}.
        \end{aligned}
      \end{equation}
    }%
    Notice that $H_0\subset H$, $\Md(\RA_0(X)\setminus H)=0$, and
    $\tMd(\RA_0(X)\setminus H_0)=0$.\\
    By Proposition \ref{prop:Sobreg} and Proposition
    \ref{prop:prop}(b)
    we can find
    a Borel function $h'\in \Ldp$ such that
    $f$ is Sobolev with S.u.g.~$g$
    along all the arcs of
    $H':=\Big\{\gamma\in \RA_0(X):\int_\gamma
    h'<\infty\Big\}$.
    Notice that $\Md(\RA_0(X)\setminus H')=0$.

    In order to get \eqref{eq:605}
    we argue as in the proof of Proposition \ref{prop:Sobreg}(b):
    we fix a countable set $\calC$ dense
    in $\rmC^1([0,1])$ and
    we consider the sets
    \begin{align}
      \label{eq:588bis}
      A:={}&\Big\{\gamma\in
             \RA(X):
             \int_\gamma
             g<\infty\Big\},
      \\\notag
      B:={}&\Big\{\gamma\in A: \Big|
             \varphi(1)f(\Al_\gamma(1))-\varphi(0)f(\Al_\gamma(0))-\int_0^1
             \varphi'(t)f(\Al_\gamma(t))\,\d t\Big|\le
             \\&\label{eq:516bis}\qquad\qquad
             \ell(\gamma)\int_0^1|\varphi(t)|\,g(\Al_\gamma(t))\,\d t
      \quad\text{for every }\varphi\in \calC\Big\},
    \end{align}
    By Theorem \ref{thm:important-arcs}(e) $A$ is a Borel set;
    Proposition \ref{prop:prop}(b) shows that $\tMd(\RA(X)\setminus
    A)=0$. By Lemma \ref{le:tedious-wug} for every arc $\gamma\in A$,
    $f$ is Sobolev along $\gamma$ and \eqref{eq:pointwisewug} holds if and
    only
    if $\gamma\in B$, so that $\tCont p(A\setminus B)=0$.
    Lemma \ref{le:tedious-wug} also shows that $B$ is Borel,
    so that $\tMd(\RA(X)\setminus B)=0$
    by Theorem \ref{thm:pourri}(d). We can eventually apply
    Proposition \ref{prop:prop} to find $h_0'\in \Ldp$ such that
    $f$ is Sobolev with S.u.g.~$g$ along all the arcs of
    $H_0':=\big\{\gamma\in \RA_0(X):
    h_0'(\gamma_0)+h_0'(\gamma_1)+\int_\gamma h_0'<\infty\big\}$
    We can eventually set $h:=h'+h_0'$ and define the sets $H$ and
    $H_0$ accordingly.
    \medskip

    \noindent    
    \underline{Claim 2}: \emph{If $\gamma,\gamma'\in H$ and
      $\Al_\gamma(r)=\Al_{\gamma'}(r')$ for some $r,r'\in [0,1]$ then
      $f_\gamma(r)=f_\gamma(r')$.} \\
    Let us argue by contradiction assuming that there exist
    $\gamma,\gamma'\in H$ and $r,r'\in [0,1]$ such that
    $\Al_\gamma(r)=\Al_{\gamma'}(r')=x$ but
    $f_\gamma(r)\neq f_{\gamma'}(r')$. Up to a possible inversion of 
    the orientation of $\gamma$ or $\gamma'$ it is not restrictive
    to assume that $r>0$ and $r'<1$.
    We can then consider the curve
    $\gamma''$ obtained by gluing $\gamma_-=\Restr 0r(\gamma)$ and
    $\gamma_+=\Restr {r'}1(\gamma')$, with
    $\ell(\gamma'')=r\ell(\gamma)+
    (1-r')\ell(\gamma')$. 
    Clearly $\gamma''\in \RA(X)$ and
    $\int_{\gamma''}h=\int_{\gamma_-}h+\int_{\gamma_+}h<\infty$,
    so
    that $\gamma''\in H$ as well. Moreover, if
    $r''=r\ell(\gamma)/\ell(\gamma'')$ we have $R_{\gamma''}^{0\to
      r''}(t)=
    R_\gamma^{0\to r}(t)$ and
    $R_{\gamma''}^{r''\to1}(t)=R_{\gamma'}^{r'\to1}(t)$
    for every $t\in [0,1]$.
    It follows that
    $f_{\gamma''}(t)=f_\gamma(r t/r'')$ for $t\in [0,r'']$
    and $f_{\gamma''}(t)=f_{\gamma'}(r'+(1-r')(t-r'')/(1-r''))$
    so that
    $\lim_{t\up r_2}f_{\gamma''}(t)=f_\gamma(r)\neq
    \lim_{t\down r_2}f_{\gamma''}(t)=f_{\gamma'}(r')$,
    which conflicts with the fact that $f_{\gamma''}$ is absolutely
    continuous.  
    \medskip

    \noindent
    \underline{Claim 3:}
    {\em Let us set
      \begin{equation}\label{eq:589}
        \tilde f(x):=
        \begin{cases}
          f_\gamma(r)&\text{if $x=\Al_\gamma(r)$ for some $\gamma\in
            H$ and $r\in [0,1]$,}\\
          f(x)&\text{otherwise}
        \end{cases}
      \end{equation}
      Then $\tilde f$ is well defined, $\tilde f(\Al_\gamma)\equiv
      f_\gamma$ for every $\gamma\in H$, and
    $\tilde f(x)=f(x)$ in $\{x\in X:h(x)<\infty\}$. In particular
    $\{\tilde f\neq f\}$ is $\mm$-negligible and $\tilde f=f_\gamma$
    along $\Md$-a.e.~arc (and a fortiori along $\Bar q\mm$ and $\calT_q$-a.e.~arc).} \\
    The facts that $\tilde f$ is well defined and
    $\tilde f(\Al_\gamma)\equiv
    f_\gamma$ for every $\gamma\in H$ follow directly from the
    previous claim.
    Let us now argue by contradiction and let us suppose that there
    exists
    $x\in X$ with$\tilde f(x)\neq f(x)$ and $h(x)<\infty$.
    By definition of $\tilde f$ there exists an arc $\gamma\in H$ and
    $r\in [0,1]$
    such that $\Al_\gamma(r)=x$.
    Since $\gamma\in H$ we know that $\int_\gamma h<\infty$: we can
    thus find $s\in [0,1]\setminus r$ such that
    $h(\Al_\gamma(s))<\infty$.
    Assuming that $r<s$ (otherwise we switch the order of $r$ and
    $s$),
    we can consider the arc $\gamma':=\Restr rs(\gamma)$
    which satisfies $\int_{\gamma'} h\le \int_\gamma h<\infty$ and
    $h(\Al_{\gamma'}(0))=h(\Al_\gamma(r))=h(x)<\infty$ and
    $h(\Al_{\gamma'}(1))=h(\Al_\gamma(s))<\infty$. We deduce that
    $\gamma'\in H_0$ so that $f$ is \emph{strictly} Sobolev along $\gamma'$
    and therefore $\tilde f(x)=\tilde f(\Al_{\gamma'}(0))=
    f_{\gamma'}(\Al_{\gamma'}(0))=f(\Al_{\gamma'}(0))=f(x)$,
    a contradiction.
    %
    %
  \end{proof}
  Let us apply the previous representation Theorem to prove the
  equivalence of the notion of $\calT_q$-w.u.g.~with the ``Newtonian''
  one
  introduced in \cite{Shanmugalingam00}.
\begin{definition}[Newtonian weak upper gradient] Let $f\in
  \cL^p(X,\mm)$. We say that $f$ belongs to the Newtonian space
  $N^{1,p}(\X)$ if 
  $f$ is absolutely continuous along $\Md$-a.e.~arc $\gamma\in
  \RA_0(X)$  and there exists a nonnegative $g\in L^p(X,\mm)$ such
  that 
  \label{def:newtonian}
  \begin{equation}
  \label{eq:606}
  \Big|\int_{\partial\gamma} f\Big|\le \int_\gamma g\quad\text{for
    $\Md$-a.e.~arc $\gamma\in \RA_0(X)$}.
\end{equation}
In this case, we say that $g$ is a $N^{1,p}$-weak upper gradient of $f$.
\end{definition}
Functions with $\Md$-weak upper gradient 
have the important Beppo-Levi property of being absolutely continuous
along $\Md$-a.e.~arc $\gamma$.
Because of the implication \eqref{eq:438},
functions with $\Md$-weak upper gradient have also $\calT_q$-weak upper gradient. 
A priori there is an important difference between the two definitions,
since Definition \ref{def:newtonian} is not invariant
w.r.t.~modifications of 
$f$ in a $\mm$-negligible set.
However, as an application of Theorem~\ref{thm:representative},
we can show that these two notions are essentially equivalent modulo the choice
of a representative in the equivalence class:
\begin{corollary}
  \label{cor:equivalence}
  Let us suppose that $\X$ is a complete Souslin e.m.t.m.~space.
  A function $f\in L^p(X,\mm)$ 
  admits a $\calT_q$-weak upper gradient $g\in L^p(X,\mm)$ 
  if and only if there is a Borel representative $\tilde f:X\to \R$ 
  with $\mm(\{\tilde f\neq f\})=0$ 
  which belongs to the Newtonian space
  $N^{1,p}(\X)$.
  Equivalently, $\tilde f$ is absolutely continuous along
  $\Md$-a.e.~arc and
  $g$ satisfies \eqref{eq:606} $\Md$-a.e.
  In particular, the class of $\calT_q$-w.u.g.~for 
  $f$ coincides with the class of $N^{1,p}$-w.u.g.~for
  a suitable Borel representative $\tilde f$ of $f$.
\end{corollary}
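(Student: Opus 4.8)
The plan is to deduce the corollary from the good‑representative Theorem~\ref{thm:representative}, from Proposition~\ref{prop:Sobreg}(b) (which upgrades Sobolev regularity from ``$\calT_q$‑a.e.\ arc'' to ``$\Md$‑a.e.\ arc'' under the completeness and Souslin hypotheses), and from the invariance of weak upper gradients under $\mm$‑negligible modifications (Proposition~\ref{prop:invarianza}(c)), together with the elementary implication \eqref{eq:438}. Throughout one works with nonnegative $g\in L^p(X,\mm)$, which is the natural class for both notions.

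First I would prove the displayed equivalence. For the ``only if'' part, given a $\calT_q$‑weak upper gradient $g\in L^p(X,\mm)$ of $f$, Theorem~\ref{thm:representative} yields a Borel $\mm$‑representative $\tilde f$ of $f$ such that, along $\Md$‑a.e.\ arc $\gamma\in\RA_0(X)$, the function $\tilde f\circ\Al_\gamma$ is absolutely continuous on $[0,1]$ and satisfies $|(\tilde f\circ\Al_\gamma)'|\le\ell(\gamma)\,g\circ\Al_\gamma$ a.e.; integrating over $[0,1]$ and using \eqref{eq:210} gives $|\int_{\partial\gamma}\tilde f|\le\int_\gamma g$ for $\Md$‑a.e.\ $\gamma\in\RA_0(X)$, and since $\tilde f\in\cL^p(X,\mm)$ we conclude $\tilde f\in N^{1,p}(\X)$ with $N^{1,p}$‑weak upper gradient $g$. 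For the converse, assume some Borel representative $\tilde f$ with $\mm(\{\tilde f\neq f\})=0$ lies in $N^{1,p}(\X)$ with $N^{1,p}$‑weak upper gradient $g\in L^p(X,\mm)$. Then the set $E_g:=\{\gamma\in\RA_0(X):|\int_{\partial\gamma}\tilde f|>\int_\gamma g\}$ is $\Md$‑negligible by Definition~\ref{def:newtonian}, hence $\calT_q$‑negligible by \eqref{eq:438} (the map $\gamma\mapsto\int_\gamma g$ being Borel by Theorem~\ref{thm:important-arcs}(e)); moreover $\{\gamma:\int_\gamma g=\infty\}$ is $\Md$‑, hence $\calT_q$‑negligible by (the arc version of) Proposition~\ref{prop:prop}(b), and \eqref{eq:inweak} holds trivially on constant arcs. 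Thus $g$ is a $\calT_q$‑weak upper gradient of $\tilde f$, and of $f$ by Proposition~\ref{prop:invarianza}(c). The ``in particular'' statement on the coincidence of the two classes follows by fixing the $\tilde f$ produced above and observing that $\tilde f$ is absolutely continuous along $\Md$‑a.e.\ arc \emph{independently of the chosen gradient}: if $g\in L^p(X,\mm)$ is any $\calT_q$‑weak upper gradient of $f=\tilde f$ $\mm$‑a.e., then $g$ is one of $\tilde f$ (Proposition~\ref{prop:invarianza}(c)), Proposition~\ref{prop:Sobreg}(b) makes $\tilde f$ Sobolev with S.u.g.~$g$ along $\Md$‑a.e.\ arc, and since $\tilde f\circ\Al_\gamma$ is already absolutely continuous it must coincide with that representative, yielding $|\int_{\partial\gamma}\tilde f|\le\int_\gamma g$ $\Md$‑a.e., i.e.\ $g$ is an $N^{1,p}$‑weak upper gradient of $\tilde f$; the reverse inclusion is the converse implication just proved.

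The step I expect to require the most care is precisely the passage from $\calT_q$‑negligibility to $\Md$‑negligibility for the failure set of the weak‑upper‑gradient inequality — this is what makes the a priori weaker $\calT_q$‑notion coincide with the Newtonian one. That step is delegated to Proposition~\ref{prop:Sobreg}(b), which itself rests on the stretch/glue construction of Section~\ref{sec:DynamicPlans}, the equi‑tightness and compactness criteria there, and the capacitability results Theorem~\ref{tmain}(d) and Theorem~\ref{thm:pourri}(d); it is exactly here that completeness of $(X,\sfd)$ and the Souslin property of $(X,\tau)$ are used, via the auxiliary topology and the Borel regularity of $\Al$, $\ell$, $\sfe_0$, $\sfe_1$ from Theorem~\ref{thm:important-arcs}. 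What remains is routine bookkeeping: identifying $\tilde f\circ\Al_\gamma$ with its absolutely continuous representative (two continuous functions agreeing $\Leb1$‑a.e.\ coincide), checking that $E_g$ and $\{\gamma:\int_\gamma g=\infty\}$ are Borel, and disposing of constant arcs.
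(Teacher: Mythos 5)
Your argument is correct and follows essentially the route the paper intends: the forward implication is exactly an application of Theorem~\ref{thm:representative} (integrate the pointwise bound \eqref{eq:592} along $\Md$-a.e.\ arc), and the converse uses the implication \eqref{eq:438} together with Proposition~\ref{prop:invarianza}(c), with your appeal to Proposition~\ref{prop:Sobreg}(b) supplying the same mechanism the paper relies on for the coincidence of the two classes of upper gradients for the fixed representative $\tilde f$.
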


\subsection{Minimal $\calT_q$-weak upper gradient and the
  Sobolev space $W^{1,p}(\X,\calT_q)$.}
\label{subsec:calculus_weak}
\index{Minimal weak upper gradients}
We want now to characterize
the minimal $\calT_q$-w.u.g.~of a function and the
corresponding notion of Sobolev space.
We first prove two important properties.
The first one directly involves the characterization of Theorem
\ref{thm:Rovereto}.

\begin{proposition}[Locality]\label{prop:locweak}
Let $f:X\to\R$ be $\mm$-measurable
and let $g_1,\,g_2$ be weak upper gradients of $f$
 w.r.t.\ $\calT_q $. Then $\min\{g_1,g_2\}$ is a $\calT_q $-weak upper gradient of
 $f$.
\end{proposition}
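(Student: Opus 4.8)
The strategy is to reduce to the pointwise (arc-by-arc) characterization of weak upper gradients established in Theorem \ref{thm:Rovereto}, and then to use a standard set-splitting argument on the measurable set where one gradient dominates the other. First I would recall that, by Theorem \ref{thm:Rovereto}, both $g_1$ and $g_2$ being $\calT_q$-weak upper gradients of $f$ means that $f$ is strictly Sobolev along $\calT_q$-almost every arc, with $|\tfrac{\d}{\dt}f_\gamma|\le \ell(\gamma)\,g_i\circ \Al_\gamma$ a.e.\ in $[0,1]$ for $i=1,2$ and $\calT_q$-a.e.\ $\gamma$. Since the exceptional set is a countable-intersection-friendly notion (a property holds $\calT_q$-a.e.\ iff for each $\ppi\in\calT_q$ it fails on a $\ppi$-negligible Borel set), for a \emph{fixed} $\ppi\in\calT_q$ we get a single $\ppi$-negligible Borel set outside of which both inequalities hold simultaneously for the same arc $\gamma$.

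Next I would observe that along such an arc $\gamma$, for $\Leb 1$-a.e.\ $t\in[0,1]$ we have
\begin{equation}
  \Bigl|\frac{\d}{\dt}f_\gamma(t)\Bigr|\le \ell(\gamma)\,g_1(\Al_\gamma(t))
  \quad\text{and}\quad
  \Bigl|\frac{\d}{\dt}f_\gamma(t)\Bigr|\le \ell(\gamma)\,g_2(\Al_\gamma(t)),
\end{equation}
hence $|\tfrac{\d}{\dt}f_\gamma(t)|\le \ell(\gamma)\,\min\{g_1,g_2\}(\Al_\gamma(t))$ for a.e.\ $t$. Here I would need the measurability of $s\mapsto \min\{g_1,g_2\}(\Al_\gamma(s))$ for $\calT_q$-a.e.\ $\gamma$, which follows from Proposition \ref{prop:invarianza}(b) applied to the $\mm$-measurable function $\min\{g_1,g_2\}$, together with the fact that $\int_\gamma g_i<\infty$ $\calT_q$-a.e.\ gives $\int_\gamma \min\{g_1,g_2\}\le \int_\gamma g_1<\infty$ $\calT_q$-a.e. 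Integrating, $|\int_{\partial\gamma}f| = |f_\gamma(1)-f_\gamma(0)| \le \int_0^1|\tfrac{\d}{\dt}f_\gamma|\,\d t \le \ell(\gamma)\int_0^1 \min\{g_1,g_2\}(\Al_\gamma(t))\,\d t = \int_\gamma \min\{g_1,g_2\}<\infty$, which is exactly \eqref{eq:inweak} for $\min\{g_1,g_2\}$ along $\calT_q$-a.e.\ arc. Since $\ppi\in\calT_q$ was arbitrary, this proves the claim.

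\textbf{Alternative route and main obstacle.} One could also mimic the measure-splitting argument used in Lemma \ref{lem:locality} for relaxed gradients, writing $B:=\{g_1<g_2\}$ and trying to build $\calT_q$-test plans adapted to $B$; but restriction maps $\Restr st$ do not respect arbitrary Borel partitions of $X$, so this direct approach is awkward here, and the arc-by-arc characterization of Theorem \ref{thm:Rovereto} is the clean tool. The one point requiring a little care — and the only place where something could go wrong — is the measurability and the simultaneous validity of the two pointwise inequalities along a common full-measure set of arcs: this is handled exactly as in the proof of Theorem \ref{thm:Rovereto}, by fixing $\ppi\in\calT_q$, choosing Borel representatives of $f$, $g_1$, $g_2$, and using Proposition \ref{prop:invarianza}(a)–(b) to pass freely between Lebesgue classes and Borel representatives along $\ppi$-a.e.\ arc. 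No new idea beyond what is already in \S\ref{subsec:Tq} is needed.
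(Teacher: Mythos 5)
Your proposal is correct and follows essentially the same route as the paper: the paper's proof also invokes Theorem \ref{thm:Rovereto} to get the pointwise bound \eqref{eq:pointwisewug} along $\calT_q$-a.e.\ arc for both $g_1$ and $g_2$ simultaneously, and then concludes via the equivalent formulation of Remark \ref{re:restr}, which is exactly your "take the pointwise minimum and integrate" step. Your extra remarks on measurability and on $\int_\gamma\min\{g_1,g_2\}<\infty$ are fine but are implicit in that formulation.
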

\begin{proof} We know from
    Theorem \ref{thm:Rovereto} that $f$ is Sobolev along
$\calT_q $-almost every arc.
Then, the claim is a direct consequence of Remark~\ref{re:restr} and \eqref{eq:pointwisewug}.
\end{proof}
Another important property of weak upper gradients is their
stability w.r.t.~weak $L^p$ convergence.
\begin{theorem}[Stability w.r.t.~weak 
  convergence]\label{thm:stabweak}
  Assume that $f_n\in L^1(X,\mm)$ and that $g_n\in L^p(X,\mm)$
  are $\calT_q $-weak upper gradients of $f_n$.
  If 
  $f_n\weakto f$ weakly in $L^1(X,\mm)$ and
  $g_n\weakto g$ weakly in $L^p(X,\mm)$ as $n\to\infty$,
then $f$ is Sobolev along $\calT_q$-a.e.~arc and
$g$ is a $\calT_q$-weak upper gradient of $f$.
\end{theorem}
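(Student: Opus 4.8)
The plan is to reduce the statement to the integrated, ``dual'' characterization of weak upper gradients given in Lemma~\ref{le:equivalent1}, and then to pass to the limit using only the dualities $L^1$--$L^\infty$ and $L^p$--$L^q$. As a preliminary remark, $f_n\in L^1(X,\mm)$ and $g_n\in L^p(X,\mm)$ are part of the hypotheses, the weak limits $f$ and $g$ automatically lie in $L^1(X,\mm)$ and $L^p(X,\mm)$ respectively, and since each $g_n$ is by definition a \emph{nonnegative} $\mm$-measurable function, the weak $L^p$-limit $g$ is nonnegative $\mm$-a.e. Hence all pairs $(f_n,g_n)$ and $(f,g)$ fall into the range where Lemma~\ref{le:equivalent1} applies.

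Next I would fix an arbitrary plan $\ppi\in\calT_q^*$ and set $\pi_i:=(\sfe_i)_\sharp\ppi$ for $i=0,1$, together with the projected measure $\mu_\sppi=\proj\ppi$. By the very definition of $\calT_q^*$ we have $\pi_i=h_i\mm$ with $h_i\in L^\infty(X,\mm)$, and since $\ppi\in\calT_q\subset\Bar q\mm$ we also have $\mu_\sppi=h\mm$ with $h\in L^q(X,\mm)$. Because $g_n$ is a $\calT_q$-weak upper gradient of $f_n$, the equivalent inequality \eqref{eq:441} of Lemma~\ref{le:equivalent1} gives
\begin{equation*}
  \int_X f_n\,\d(\pi_1-\pi_0)=\int_X f_n\,(h_1-h_0)\,\d\mm\le\int_X g_n\,h\,\d\mm=\int_X g_n\,\d\mu_\sppi,\qquad n\in\N,
\end{equation*}
and all four integrals are finite since $f_n\in L^1(X,\mm)$, $h_1-h_0\in L^\infty(X,\mm)$, $g_n\in L^p(X,\mm)$, $h\in L^q(X,\mm)$. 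Letting $n\to\infty$, the left-hand side converges to $\int_X f\,(h_1-h_0)\,\d\mm=\int_X f\,\d(\pi_1-\pi_0)$ because $f_n\weakto f$ in $L^1(X,\mm)$ and $h_1-h_0\in L^\infty(X,\mm)=\big(L^1(X,\mm)\big)'$, while the right-hand side converges to $\int_X g\,h\,\d\mm=\int_X g\,\d\mu_\sppi$ because $g_n\weakto g$ in $L^p(X,\mm)$ and $h\in L^q(X,\mm)=\big(L^p(X,\mm)\big)'$. Thus $\int_X f\,\d(\pi_1-\pi_0)\le\int_X g\,\d\mu_\sppi$ for every $\ppi\in\calT_q^*$, and by Lemma~\ref{le:equivalent1} this means precisely that $g$ is a $\calT_q$-weak upper gradient of $f$. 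Finally, since $g$ is a $\calT_q$-weak upper gradient of the $\mm$-measurable function $f$, Theorem~\ref{thm:Rovereto} yields that $f$ is (strictly) Sobolev along $\calT_q$-almost every arc, which is the remaining assertion.

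I do not expect a genuine obstacle in this argument; the only delicate point is that one must test against plans in $\calT_q^*$ rather than against general plans in $\calT_q$, so that the marginals $\pi_i$ carry $L^\infty$ densities and the passage to the limit is compatible with the mere $L^1$-weak convergence of $(f_n)$ — this is exactly why the reduction to $\calT_q^*$ built into Lemma~\ref{le:equivalent1} is available and needed here. An alternative, more hands-on route is also possible: extract strongly convergent convex combinations $(\tilde f_n,\tilde g_n)$ via Mazur's lemma (the set of weak-upper-gradient pairs is convex by \eqref{eq:564}), pass to $\mm$-a.e.\ convergence of $\tilde f_n$ and to $L^1(\ppi)$-convergence of $\gamma\mapsto\int_\gamma|\tilde g_n-g|$ by a Fuglede-type estimate $\int\!\big(\int_\gamma|\tilde g_n-g|\big)\d\ppi=\int|\tilde g_n-g|\,h\,\d\mm\le\|\tilde g_n-g\|_p\,\|h\|_q$, use $(\sfe_i)_\sharp\ppi\ll\mm$ to control the endpoint values, and let $n\to\infty$ in the pointwise inequality $|\int_{\partial\gamma}\tilde f_n|\le\int_\gamma\tilde g_n$ along $\ppi$-a.e.\ arc; this works but is longer and essentially re-proves Lemma~\ref{le:equivalent1} along the way.
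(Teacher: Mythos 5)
Your proposal is correct and follows essentially the same route as the paper: fix a plan in $\calT_q^*$, apply the integrated characterization \eqref{eq:441} of Lemma~\ref{le:equivalent1} to each pair $(f_n,g_n)$, pass to the limit via the $L^1$--$L^\infty$ and $L^p$--$L^q$ dualities, and conclude by Lemma~\ref{le:equivalent1} again. Your explicit appeal to Theorem~\ref{thm:Rovereto} for the ``Sobolev along $\calT_q$-a.e.~arc'' part, and the remark that the weak limit $g$ is nonnegative, are natural small additions to what the paper leaves implicit.
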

\begin{proof}
  We can apply Lemma \ref{le:equivalent1}:
  we fix a test plan $\ppi\in \calT_q^*$ and set $h_0,h_1
  \in L^\infty(X,\mm)$, $h\in L^q(X,\mm)$
    such that
    \begin{displaymath}
      (\sfe_0)_\sharp\ppi=\pi_0=h_0\mm,\quad
      (\sfe_1)_\sharp\ppi=\pi_1=h_1\mm,\quad
      \mu_\sppi=h\mm.
    \end{displaymath}
    Since $g_n$ is a $\calT_q$-weak upper gradient for $f_n$ we know
    that
    \begin{displaymath}
      \int_X f_n\,\d(\pi_1-\pi_0)=
      \int_X f_n(h_1-h_0)\,\d\mm\le
      \int_X g_n\,\d\mu_\sppi=
      \int_X g_n h\,\d\mm.
    \end{displaymath}
    Passing to the limit by weak convergence in $L^1$ and $L^p$ we immediately
    get
    \begin{equation}
      \label{eq:447}
      \int_X f\,\d(\pi_1-\pi_0)\le \int_X g\,\d\mu_\sppi.
    \end{equation}
    Since $\ppi\in \calT_q$ is arbitrary, we conclude.
 \end{proof}
We can now formalize the notion of
$\calT_q$-minimal weak upper gradient.
For the sake of simplicity, here we will consider only the case
of functions with $\calT_q$-w.u.g.~in $L^p(X,\mm)$.
\begin{definition}[Minimal $\calT_q$-weak upper gradient]
  \label{def:minimalwug}
  Let  $f\in L^1(X,\mm)$ be a $\mm$-measurable function with a
  $\calT_q $-weak upper gradient in $L^p(X,\mm)$.
  The $\calT_q $-minimal weak upper gradient $\weakgradTq f$ of $f$
  is the $\calT_q $-weak upper gradient characterized, up to
  $\mm$-negligible sets, by the property
\begin{equation}\label{eq:defweakgrad}
  \weakgradTq f\leq g\qquad\text{$\mm$-a.e. in $X$, for every $\calT_q $-weak upper gradient $g$ of $f$.}
\end{equation}  
 \end{definition}
Uniqueness of the minimal weak upper gradient is obvious. For
existence, let us consider a minimizing sequence $(g_n)_{n\in
  \N}\subset L^p(X,\mm)$ for the problem
$$
\inf\left\{\int_X g^p\,\d\mm:\ \text{$g$ is a $\calT_q $-weak upper
    gradient of $f$}
\right\}.
$$
We immediately see, thanks to Theorem~\ref{thm:stabweak}, that
we can assume with no loss of generality that $g_{n}\weakto g_\infty$
in $L^p(X,\mm)$ and $g_\infty$ is the 
$\calT_q $-weak upper gradient of $f$ of minimal $L^p$-norm.
This
minimality, in conjunction with Proposition~\ref{prop:locweak},
gives \eqref{eq:defweakgrad} for $\weakgradTq f:=g_\infty$.
\index{Cheeger energy (weak)}
\index{Metric Sobolev space $W^{1,p}(\X,\calT_q)$}
\begin{definition}[The weak $(\calT_q,p)$-energy and
  the Sobolev space
  $W^{1,p}(\X,\calT_q)$]
  \label{def:weakCheeger}
  Let $f\in L^1(X,\mm)$ with
  a $\calT_q$-weak upper gradient $g\in L^p(X,\mm)$.
  The weak $(\calT_q,p)$-energy of $f$ is defined by
  \begin{equation}
    \label{eq:563}
    \wCE_{p,\calT_q}(f)=
    \wCE_p(f):=
    \int_X \weakgradTq f^p\,\d\mm.
  \end{equation}
  If moreover $f\in L^p(X,\mm)$ we say that 
  $f$ belongs to the space $W^{1,p}(\X,\calT_q) $.
  $W^{1,p}(\X,\calT_q)$ is a Banach space endowed with the norm
  \begin{equation}
    \label{eq:471}
    \|f\|_{W^{1,p}(\X,\calT_q)}^p:=\int_X
    \Big(f^p+\weakgradTq f^p\Big)\,\d\mm=
    \|f\|_{L^p(X,\mm)}^p+\wCE_{p,\calT_q}(f).
  \end{equation}
\end{definition}
\begin{remark}[The $\calT_q$-notation]
  \label{rem:notation}
  Even if we will mainly use minimal w.u.g.~induced by $\calT_q$-test
  plan,
  we will keep the explicit occurrence of $\calT_q$ in the notation
  $\weakgradTq f$ and $\wCE_{p,\calT_q}$
  in order to distinguish these notions from other definitions
  of weak upper gradients based on different class of test plan
  (also on parametric arcs), which usually
  share the symbol $|\rmD f|_w$.
  We will use the shorter notation $\wCE_p$ only when
  no risk of confusion will be possible.
\end{remark}
By using the same approach, the construction of the minimal $p$-weak
upper gradient
$|\rmD f|_{w,N^{1,p}}$ can also be
performed
for functions in the Newtonian space $N^{1,p}(\X)$, and gives raise to
the (semi)norm
\begin{equation}
  \label{eq:607}
  \|f\|_{N^{1,p}(\X)}^p:=\int_X \Big(|f|^p+|\rmD f|_{w,N^{1,p}}^p\Big)\,\d\mm.
\end{equation}
Taking Corollary \ref{cor:equivalence} into account we easily have:
\begin{corollary}[The link with the Newtonian space $N^{1,p}(\X)$]
  \label{cor:isomorphism}
  Let us suppose that $\X$ is a complete Souslin e.m.t.m.~space.
  (The Lebesgue equivalence class of) every function $f\in N^{1,p}(\X)$
  belongs to $W^{1,p}(\X,\calT_q)$. Conversely, every function $f\in
  W^{1,p}(\X,\calT_q)$ has an equivalent representative $\tilde f$ in
  $N^{1,p}(\X)$ with
  \begin{equation}
    \label{eq:608}
    \weakgradTq f=|\rmD \tilde f|_{w,N^{1,p}}
    \quad\text{a.e.},\quad
    \|f\|_{W^{1,p}(\X,\calT_q)}=\|\tilde f\|_{N^{1,p}(\X)}.
  \end{equation}
\end{corollary}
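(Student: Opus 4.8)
The plan is to obtain the statement as essentially a corollary of Corollary~\ref{cor:equivalence} and Theorem~\ref{thm:representative}, which are available because $\X$ is complete and Souslin; what remains is bookkeeping about Lebesgue classes, about minimal weak upper gradients, and about the two norms. Throughout I would use that $\mm$ is finite, so $L^p(X,\mm)\subset L^1(X,\mm)$ and the construction of $\weakgradTq f$ after Definition~\ref{def:minimalwug} applies to any $f\in L^p(X,\mm)$ admitting a $\calT_q$-weak upper gradient in $L^p(X,\mm)$.

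First I would prove the inclusion $N^{1,p}(\X)\subset W^{1,p}(\X,\calT_q)$ at the level of Lebesgue classes. Let $f\in N^{1,p}(\X)$ with an $N^{1,p}$-weak upper gradient $g\in L^p(X,\mm)$, so that \eqref{eq:606} holds for $\Md$-a.e.~$\gamma\in\RA_0(X)$. Since $\int_{\partial\gamma}f=0$ along every constant arc, and since every $\Md$-negligible set of arcs is $\calT_q$-negligible by \eqref{eq:438} (itself a consequence of $\calT_q\subset\Bar q\mm$ and \eqref{eq:supp1}), the inequality \eqref{eq:inweak} holds for $\calT_q$-a.e.~$\gamma\in\RA(X)$; thus $g$ is a $\calT_q$-weak upper gradient of $f$. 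As $f,g\in L^p(X,\mm)$, this shows $f\in W^{1,p}(\X,\calT_q)$, and minimality \eqref{eq:defweakgrad} gives $\weakgradTq f\le g$ $\mm$-a.e.; taking the infimum over $g$ yields $\weakgradTq f\le|\rmD f|_{w,N^{1,p}}$ $\mm$-a.e.

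For the converse, given $f\in W^{1,p}(\X,\calT_q)$ I would invoke Corollary~\ref{cor:equivalence}: there is a Borel representative $\tilde f$ with $\mm(\{\tilde f\neq f\})=0$ belonging to $N^{1,p}(\X)$, and the class of $\calT_q$-weak upper gradients of $f$ coincides, as a subset of $L^p(X,\mm)$, with the class of $N^{1,p}$-weak upper gradients of $\tilde f$. The one point that needs care --- the main obstacle --- is to pass from this equality of classes to equality of the corresponding minimal weak upper gradients. For this I would observe that each class contains a unique element of minimal $L^p$-norm: on the $\calT_q$ side this is exactly the existence argument after Definition~\ref{def:minimalwug}, using weak $L^p$-compactness together with the stability Theorem~\ref{thm:stabweak} and the locality Proposition~\ref{prop:locweak}; on the Newtonian side one repeats verbatim the same argument with the Newtonian counterparts of those two facts. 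Since the two classes are literally the same set, their minima coincide, i.e.~$\weakgradTq f=|\rmD\tilde f|_{w,N^{1,p}}$ $\mm$-a.e. Finally, because $\tilde f=f$ $\mm$-a.e., expanding both norms gives $\|f\|_{W^{1,p}(\X,\calT_q)}^p=\|f\|_{L^p(X,\mm)}^p+\int_X\weakgradTq f^p\,\d\mm=\|\tilde f\|_{L^p(X,\mm)}^p+\int_X|\rmD\tilde f|_{w,N^{1,p}}^p\,\d\mm=\|\tilde f\|_{N^{1,p}(\X)}^p$, which completes the plan. The only delicate bookkeeping beyond this is keeping straight the distinction between the Lebesgue class of $f$ and the pointwise-defined Borel representative $\tilde f$ along which Newtonian absolute continuity is tested, but this is precisely what Theorem~\ref{thm:representative} was set up to handle.
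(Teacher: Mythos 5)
Your proposal is correct and follows essentially the route the paper intends: the inclusion $N^{1,p}(\X)\subset W^{1,p}(\X,\calT_q)$ via \eqref{eq:438}, and the converse together with \eqref{eq:608} by reading off Corollary \ref{cor:equivalence} (equality of the two classes of weak upper gradients for the good representative $\tilde f$), which immediately identifies the minimal elements and the norms. The paper states this as an easy consequence of Corollary \ref{cor:equivalence} without further detail, and your bookkeeping about minimal-norm elements and Lebesgue classes fills in exactly that.
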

It is easy to check using \eqref{eq:564} and Theorem
\ref{thm:stabweak}
that the weak Cheeger energy $\wCE_{p,\calT_q}$ is
a convex, $p$-homogeneous, weakly lower-semicontinuous functional
in $L^1(X,\mm)$.
It is also easy to state a first comparison with the strong Cheeger
energy $\CE_{p}$ (the corresponding
inequalities for $\CE_{p,\AA}$ and $\relgradA f$
follow trivially by \eqref{eq:360} and \eqref{eq:361}).
\begin{lemma}
  \label{le:obvious-comparison}
  Every function $f\in \Sob^{1,p}(\X)$ belongs to
  $W^{1,p}(\X,\calT_q)$ and
  \begin{equation}
    \label{eq:565}
    \CE_p(f)\ge \wCE_{p,\calT_q}(f),\quad
    \relgrad f\ge \weakgradTq f\quad\text{$\mm$-a.e.~in $X$}.
  \end{equation}
\end{lemma}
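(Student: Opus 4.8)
The plan is to reduce the whole statement to a single assertion: that the minimal relaxed gradient $\relgrad f$ is itself a $\calT_q$-weak upper gradient of $f$. Once this is established, everything follows: since $f\in L^p(X,\mm)$ and it admits a $\calT_q$-weak upper gradient in $L^p(X,\mm)$, we get $f\in W^{1,p}(\X,\calT_q)$; the minimality property \eqref{eq:defweakgrad} of $\weakgradTq f$ gives $\weakgradTq f\le \relgrad f$ $\mm$-a.e.; and integrating the $p$-th powers yields $\wCE_{p,\calT_q}(f)\le \CE_p(f)$. The companion inequalities for $\CE_{p,\sAA}$ and $\relgradA f$ then follow at once from \eqref{eq:361}. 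I also note that, $\mm$ being a finite measure, $L^p(X,\mm)\subset L^1(X,\mm)$, so all the integrability hypotheses invoked below are automatic.

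First I would handle the Lipschitz case. For $h\in\Lipb(X,\tau,\sfd)$ the asymptotic Lipschitz constant $\lipd h$ is bounded (by $\Lip(h,\sfd)$) and is an everywhere-defined Borel upper gradient of $h$ in the sense of \eqref{eq:439}, because of the chain-rule estimate \eqref{eq:56}--\eqref{eq:201} applied along the arc-length parametrization; hence, as recorded in Remark \ref{rem:lipweak}, $\lipd h$ is in particular a $\calT_q$-weak upper gradient of $h$, with $\int_\gamma\lipd h<\infty$ for every rectifiable arc.

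Then I would pass to a general $f\in\Sob^{1,p}(\X)$. Applying Lemma \ref{lem:strongchee}(c) with the canonical algebra $\AA=\Lipb(X,\tau,\sfd)$ produces a sequence $f_n\in\Lipb(X,\tau,\sfd)$ with $f_n\to f$ and $\lipd f_n\to\relgrad f$ strongly in $L^p(X,\mm)$. By the previous step each $\lipd f_n$ is a $\calT_q$-weak upper gradient of $f_n$. Since $\mm(X)<\infty$, $f_n\to f$ strongly, hence weakly, in $L^1(X,\mm)$, and $\lipd f_n\to\relgrad f$ weakly in $L^p(X,\mm)$. The stability Theorem \ref{thm:stabweak} then gives that $f$ is Sobolev along $\calT_q$-a.e.~arc and that $\relgrad f$ is a $\calT_q$-weak upper gradient of $f$, which is precisely the reduction announced above.

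I do not expect a genuine obstacle here: the argument is just a combination of the strong-approximation-in-energy result (Lemma \ref{lem:strongchee}(c)) with the weak $L^p$-stability of $\calT_q$-weak upper gradients (Theorem \ref{thm:stabweak}). The only point that deserves a moment's care is the finiteness of $\mm$, which is what allows one to upgrade the $L^p$-convergence $f_n\to f$ to the $L^1$-convergence required as input to Theorem \ref{thm:stabweak}.
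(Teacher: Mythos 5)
Your proof is correct and follows essentially the same route as the paper: establish that $\lipd h$ is a $\calT_q$-weak upper gradient for $h\in\Lipb(X,\tau,\sfd)$, take the optimal approximating sequence of \eqref{eq:16bis} (Lemma \ref{lem:strongchee}(c)), and pass to the limit via the stability Theorem \ref{thm:stabweak}. The only difference is that you spell out the details the paper leaves implicit (the $L^1$-weak convergence from $\mm(X)<\infty$ and the minimality step yielding the pointwise inequality), which is fine.
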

\begin{proof}
  We already notice that for a Lipschitz function $f\in
  \Lip_b(X,\tau,\sfd)$ $\lip f$ is a $\calT_q$-w.u.g.~so that
  \begin{equation}
    \label{eq:566}
    \pCE_p(f)\ge \wCE_{p,\calT_q}(f).
  \end{equation}
  It is then sufficient to take an optimal sequence $f_n\in
  \Lip_b(X,\tau,\sfd)$ as in \eqref{eq:16bis} and
  to apply the stability Theorem \ref{thm:stabweak}.
\end{proof}

\begin{proposition}[Chain rule for minimal weak upper gradients]\label{prop:chainweak}
If  $f\in L^1(X,\mm)$ 
has a $\calT_q $-weak upper gradient in $L^p(X,\mm)$,
the following properties hold:
\begin{itemize}
\item[(a)] for any $\Leb{1}$-negligible Borel set $N\subset\R$ it holds
$\weakgradTq f=0$ $\mm$-a.e. on $f^{-1}(N)$.
\item[(b)] $\weakgradTq{\phi(f)}=\phi'(f)\weakgradTq f$ $\mm$-a.e. in $X$, with the convention $0\cdot\infty=0$,
for any nondecreasing function $\phi$, Lipschitz on an
interval containing the image of $f$.
\end{itemize}
\end{proposition}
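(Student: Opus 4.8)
The plan is to follow the classical scheme for chain rules of weak upper gradients, adapted to the $\calT_q$-setting, carrying out the argument in three stages that feed one another: a preliminary $C^1$-chain-rule inequality, then the locality statement (a), and finally the sharp identity (b). The crucial realisation is that (a) must be proved before the general case of (b), because it is precisely the tool that repairs the gap between $\Leb 1$-negligibility in the target and $\mm$-negligibility of the preimage.

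\emph{Step 1: a $C^1$ chain rule inequality.} First I would record that if $\psi\in\rmC^1(\R)$ with $\psi'$ bounded on an interval containing the image of $f$ and $g\in L^p(X,\mm)$ is a $\calT_q$-weak upper gradient of $f$, then $|\psi'\circ f|\,g$ is a $\calT_q$-weak upper gradient of $\psi\circ f$, so that $\weakgradTq{\psi(f)}\le|\psi'(f)|\weakgradTq f$ $\mm$-a.e. This is immediate from Theorem \ref{thm:Rovereto} and Remark \ref{re:restr}: $f$ is strictly Sobolev along $\calT_q$-a.e.\ arc $\gamma$ with $|f_\gamma'|\le\ell(\gamma)\,g\circ\Al_\gamma$ $\Leb1$-a.e., hence $\psi\circ f_\gamma$ is absolutely continuous with $(\psi\circ f_\gamma)'=\psi'(f_\gamma)f_\gamma'$ $\Leb1$-a.e.\ and $(\psi\circ f_\gamma)(i)=\psi(f(\gamma_i))$, so \eqref{eq:pointwisewug} holds for $\psi\circ f$ with upper gradient $|\psi'(f)|g$, and $\int_\gamma|\psi'(f)|g\le(\sup|\psi'|)\int_\gamma g<\infty$ along $\calT_q$-a.e.\ arc. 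No measurability issue arises here because $\psi'$ is continuous. (If $f$ is unbounded one may first truncate via Remark \ref{rem:truncations}, though this is unnecessary since $\mm$ is finite.)

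\emph{Step 2: proof of (a).} I would first treat $N$ compact, repeating the construction in the proof of Theorem \ref{le:useful}(b): choose open sets $A_n\downarrow N$ with $\Leb1(A_1)<\infty$, continuous $\psi_n:\R\to[0,1]$ with $\nchi_N\le\psi_n\le\nchi_{A_n}$, and $\phi_n\in\rmC^1(\R)$ with $\phi_n(0)=0$ and $\phi_n'=1-\psi_n$; then $\phi_n$ is $1$-Lipschitz, nondecreasing, and $\sup_\R|\phi_n-\mathrm{id}|\le\Leb1(A_n)\to0$, so $\phi_n\circ f\to f$ in $L^1(X,\mm)$. By Step 1, $(1-\psi_n(f))\weakgradTq f$ is a $\calT_q$-weak upper gradient of $\phi_n\circ f$; since $\psi_n(f)\to\nchi_{f^{-1}(N)}$ pointwise and is dominated, dominated convergence gives $(1-\psi_n(f))\weakgradTq f\to\nchi_{X\setminus f^{-1}(N)}\weakgradTq f$ strongly in $L^p$. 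By the stability Theorem \ref{thm:stabweak}, $\nchi_{X\setminus f^{-1}(N)}\weakgradTq f$ is a $\calT_q$-weak upper gradient of $f$, so minimality \eqref{eq:defweakgrad} forces $\weakgradTq f\le\nchi_{X\setminus f^{-1}(N)}\weakgradTq f$ $\mm$-a.e., i.e.\ $\weakgradTq f=0$ $\mm$-a.e.\ on $f^{-1}(N)$. For a general $\Leb1$-negligible Borel $N$ I would pass to $\mu:=f_\sharp\mm\in\cMp(\R)$ and use inner regularity to pick compacts $K_n\subset N$ with $\mu(N\setminus\bigcup_nK_n)=0$; then $\mm(f^{-1}(N\setminus\bigcup_nK_n))=0$ and the compact case applied to each $K_n$ yields the claim on $f^{-1}(N)$.

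\emph{Step 3: proof of (b).} By rescaling it suffices to treat $0\le\phi'\le1$. For $\phi\in\rmC^1$: applying Step 1 to $\phi$ and to $\eta:=\mathrm{id}-\phi$ (also $\rmC^1$ with $0\le\eta'\le1$), the functions $\phi'(f)\weakgradTq f$ and $(1-\phi'(f))\weakgradTq f$ are $\calT_q$-weak upper gradients of $\phi(f)$ and $f-\phi(f)$ respectively; combining with subadditivity \eqref{eq:564} gives $\weakgradTq f\le\weakgradTq{(f-\phi(f))}+\weakgradTq{\phi(f)}\le(1-\phi'(f))\weakgradTq f+\phi'(f)\weakgradTq f=\weakgradTq f$, so all inequalities are equalities $\mm$-a.e., in particular $\weakgradTq{\phi(f)}=\phi'(f)\weakgradTq f$. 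For a general nondecreasing Lipschitz $\phi$ I would approximate it by mollifications $\phi_n$, nondecreasing, equi-$1$-Lipschitz and $\rmC^1$, with $\phi_n'\to\phi'$ $\Leb1$-a.e.; then $\phi_n\circ f\to\phi\circ f$ in $L^1$, and — this is exactly where (a) is used — since $E:=\{t:\phi_n'(t)\not\to\phi'(t)\}$ is $\Leb1$-negligible, part (a) gives $\weakgradTq f=0$ $\mm$-a.e.\ on $f^{-1}(E)$, so $\phi_n'(f)\weakgradTq f\to\phi'(f)\weakgradTq f$ $\mm$-a.e.\ (with the convention $0\cdot(\text{undefined})=0$ on $f^{-1}(E)$, which makes $\phi'(f)\weakgradTq f$ unambiguous $\mm$-a.e.) and, being dominated by $\weakgradTq f\in L^p$, strongly in $L^p$. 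Applying Theorem \ref{thm:stabweak} to $f_n=\phi_n(f)$, $g_n=\weakgradTq{\phi_n(f)}=\phi_n'(f)\weakgradTq f$ shows $\phi'(f)\weakgradTq f$ is a $\calT_q$-weak upper gradient of $\phi(f)$; the same argument with $\mathrm{id}-\phi_n$ shows $(1-\phi'(f))\weakgradTq f$ is one for $f-\phi(f)$; the subadditivity/minimality squeeze of the $\rmC^1$ case then produces the equality $\weakgradTq{\phi(f)}=\phi'(f)\weakgradTq f$.

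The main obstacle is the purely measure-theoretic point buried in Step 3: because $f_\sharp\mm$ need not be absolutely continuous with respect to $\Leb1$, the naive passage $\phi_n'(f)\to\phi'(f)$ can fail on a set of positive $\mm$-measure, and likewise $|\phi'(f)|$ is a priori ill-defined there. Statement (a) is the device that resolves both issues, which is why it has to be established first, and (a) itself hinges on the interplay of the $\rmC^1$ chain rule of Step 1, the weak stability Theorem \ref{thm:stabweak}, and the minimality of $\weakgradTq f$. Everything else is routine: the chain rule for absolutely continuous functions, inner regularity of Radon measures on $\R$, and dominated convergence.
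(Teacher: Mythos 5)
Your argument is correct, but it follows a different route from the paper's. The paper disposes of both claims in a few lines by exploiting the pointwise characterization along arcs (Theorem \ref{thm:Rovereto}, Remark \ref{re:restr}, \eqref{eq:pointwisewug}) together with the \emph{one-dimensional} versions of (a) and (b) for absolutely continuous functions: for (a) it modifies the gradient directly, setting $G:=\weakgradTq f$ on $\{f\notin N\}$ and $G:=0$ on $f^{-1}(N)$, checks via the 1D locality that $G$ is still a $\calT_q$-weak upper gradient along $\calT_q$-a.e.\ arc, and concludes by minimality; for (b) the 1D chain rule along arcs gives $\weakgradTq{\phi(f)}\le \phi'(f)\weakgradTq f$ at once for \emph{every} Lipschitz nondecreasing $\phi$, and equality follows from the same subadditivity squeeze you use. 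You instead transplant the relaxation-style proofs of Theorem \ref{le:useful}(b,c) to the weak setting: a $C^1$ chain-rule inequality (itself proved, as in the paper, through the arcs), then (a) by the truncations $\phi_n'=1-\psi_n$ plus the stability Theorem \ref{thm:stabweak} and minimality, then (b) by mollification, part (a), stability, and the squeeze. Both are sound; the paper's route is shorter, needs neither \ref{thm:stabweak} nor mollification, and makes (b) logically independent of (a) — so your remark that (a) \emph{must} precede (b) is an artifact of your approximation scheme, where (a) indeed repairs the $\Leb{1}$-a.e.\ convergence $\phi_n'\to\phi'$ against $f_\sharp\mm$. What your route buys is that the only input from the arc-wise analysis is the smooth chain rule, everything non-smooth being handled by $L^p$-weak stability; what it costs is the extra approximation machinery (and minor care with $\bigcap_nA_n=N$, the global Lipschitz extension of $\phi$, and the convention on $f^{-1}(E)$, all of which you handle correctly).
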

\begin{proof}
  We use the equivalent formulation of Remark~\ref{re:restr}
and the well-known fact that both (a) and (b) are true when $X=\R$
endowed with Euclidean distance and Lebesgue measure
and $f$ is absolutely continuous.
We can prove (a) setting 
$$
G(x):=
\begin{cases}
\weakgradTq f(x) &\text{if $f(x)\in\R\setminus N$};\\
0 &\text{if $f(x)\in N$}
\end{cases}
$$
and noticing the validity of (a) for real-valued absolutely continuous maps gives that $G$ is $\calT_q $-weak upper gradient of $f$.
Then, the minimality of $\weakgradTq f$ gives $\weakgradTq f\leq G$ $\mm$-a.e. in $X$.

By a similar argument based on \eqref{eq:pointwisewug} we can prove that $\weakgradTq{\phi(f)}\leq\phi'(f)\weakgradTq f$
$\mm$-a.e. in $X$. Then, the same subadditivity argument of
Theorem \ref{le:useful}(c)
provides the equality $\mm$-a.e. in $X$.
%
\end{proof}

\subsection{Invariance properties of weak Sobolev spaces}
\label{subsec:invariance-weak}
In this section we will state a few useful results on the behaviour of
weak Sobolev spaces with respect to some basic operations.
\subsubsection*{Restriction}
\begin{lemma}
  \label{le:simple-but-crucial}
  Let $\X=(X,\tau,\sfd,\mm)$ be an e.m.t.m.~space
  and let $Y\subset X$ be a $\sfd$-closed set
  such that $\mm(X\setminus Y)=0$.
  Then every dynamic plan $\ppi\in \Bar q\mm$ is concentrated on
  $\RA(Y)$.
\end{lemma}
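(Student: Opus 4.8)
The plan is to show that for any dynamic plan $\ppi \in \Bar q\mm$, the set of arcs $\Gamma := \{\gamma \in \RA(X) : \sfe(\gamma) \not\subset Y\}$ is $\ppi$-negligible, which is precisely the statement that $\ppi$ is concentrated on $\RA(Y)$. The key observation is that $\sfe(\gamma) \not\subset Y$ means the image of $\gamma$ meets the Borel set $B := X \setminus Y$, and the natural quantity to control is $\nu_\gamma(B)$.

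First I would reduce to showing that $\nu_\gamma(B) = 0$ for $\ppi$-a.e.~arc $\gamma$. Since $\ppi \in \Bar q\mm$, its projected measure $\mu_\sppi = h_\sppi\mm$ is absolutely continuous with respect to $\mm$, hence $\mu_\sppi(B) = \int_B h_\sppi\,\d\mm = 0$ because $\mm(B) = \mm(X \setminus Y) = 0$. Using the identity $\mu_\sppi(B) = \int_{\RA(X)} \nu_\gamma(B)\,\d\ppi(\gamma)$ (this is \eqref{eq:289} applied to $f = \nchi_B$, valid since $B$ is Borel and $\gamma \mapsto \nu_\gamma(B) = \int_\gamma \nchi_B$ is Borel by Theorem \ref{thm:important-arcs}(e)), and since the integrand is nonnegative, we conclude $\nu_\gamma(B) = 0$ for $\ppi$-a.e.~$\gamma$. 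Equivalently, $\eqref{eq:91}$ already records exactly this: $\{\gamma \in \RA(X) : \nu_\gamma(B) > 0\}$ is $\Bar q\mm$-negligible.

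The main obstacle is then bridging the gap between ``$\nu_\gamma(B) = 0$'' and ``$\sfe(\gamma) \subset Y$'': a priori the image of an arc could hit $B$ on a set of $\nu_\gamma$-measure zero. This is where the hypothesis that $Y$ is $\sfd$-closed (equivalently $B$ is $\sfd$-open) enters crucially. Recall $\nu_\gamma = \ell(\gamma)(\Al_\gamma)_\sharp(\Leb 1\res[0,1])$ and the arc-length parametrization $\Al_\gamma$ is $\ell(\gamma)$-Lipschitz with respect to $\sfd$. If $\ell(\gamma) = 0$ the arc is constant and $\nu_\gamma(B) = 0$ forces that constant point into $Y$ (using $\mm(B)=0$ and that the single point, if in $B$, would have... actually one must be slightly careful here and treat constant arcs: a constant arc at a point $x \in B$ has $\nu_\gamma = 0$, so this case is not covered by the $\nu_\gamma(B)=0$ argument). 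For this I would instead argue: if $\ell(\gamma) > 0$ and $\nu_\gamma(B) = 0$, then $\Leb 1(\{t : \Al_\gamma(t) \in B\}) = 0$; since $B$ is $\sfd$-open and $\Al_\gamma$ is $\sfd$-continuous, $\{t : \Al_\gamma(t) \in B\}$ is open in $[0,1]$, and an open set of Lebesgue measure zero is empty, so $\sfe(\gamma) = \Al_\gamma([0,1]) \subset Y$. For constant arcs, I would additionally invoke that $\ppi$ has barycenter in $L^q$, but more simply observe that if $\cK$ denotes the (Borel) set of constant arcs, then $(\sfe_0)_\sharp(\ppi\restr\cK) \le (\sfe_0)_\sharp\ppi$ need not be absolutely continuous in general — so instead I would handle this by noting the statement as phrased concerns concentration on $\RA(Y)$, and a constant arc at $x$ lies in $\RA(Y)$ iff $x \in Y$; here one uses that any $\ppi \in \Bar q\mm$ restricted to constant arcs still projects to a measure $\ll \mm$, hence is concentrated on constant arcs at points of $Y$. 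Combining both cases, $\ppi(\RA(X) \setminus \RA(Y)) = 0$, which completes the proof. I expect the constant-arc bookkeeping and the precise invocation of $\sfd$-continuity of $\Al_\gamma$ (from Lemma \ref{le:surprising-BVC}) to be the only genuinely delicate points.
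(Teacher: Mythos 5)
For arcs of positive length your argument is exactly the paper's: you integrate $\nchi_{X\setminus Y}$ against $\ppi$, identify the result with $\mu_\sppi(X\setminus Y)=0$ (absolute continuity of the barycenter plus $\mm(X\setminus Y)=0$), deduce $\int_\gamma\nchi_{X\setminus Y}=0$ for $\ppi$-a.e.\ $\gamma$, and then use that $X\setminus Y$ is $\sfd$-open and $\Al_\gamma$ is $\sfd$-Lipschitz, so that $\{t:\Al_\gamma(t)\in X\setminus Y\}$ is an open subset of $[0,1]$ of zero Lebesgue measure, hence empty, i.e.\ $\Al_\gamma([0,1])\subset Y$. Up to the explicit citation of \eqref{eq:91} and Theorem \ref{thm:important-arcs}(e), this is word for word the proof in the text.

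Where your proposal goes astray is the constant-arc bookkeeping that you rightly flag as delicate. Your closing claim --- that a plan $\ppi\in\Bar q\mm$ restricted to the constant arcs ``still projects to a measure $\ll\mm$, hence is concentrated on constant arcs at points of $Y$'' --- is unjustified, and you had in fact already identified the obstruction two lines earlier: the barycentric projection of the constant-arc part is the \emph{zero} measure (since $\nu_\gamma=0$ when $\ell(\gamma)=0$), so it carries no information about where those arcs sit, while the endpoint marginals $(\sfe_i)_\sharp\ppi$ of a general plan in $\Bar q\mm$ need not be $\ll\mm$ --- that extra requirement is precisely what distinguishes $\calT_q$ from $\Bar q\mm$. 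Indeed, if $x_0\in X\setminus Y$, the Dirac mass on the constant arc at $x_0$ is a dynamic plan with $\brq q(\ppi)=0$, hence lies in $\Bar q\mm$, yet charges no arc of $\RA(Y)$; so no argument can close this degenerate case, and your patch cannot be repaired. To be fair, the paper's own proof has the same blind spot: the step ``$\int_\gamma\nchi_Z=0$, i.e.\ $\Leb 1(\{t:\Al_\gamma(t)\in Z\})=0$'' tacitly assumes $\ell(\gamma)>0$, and in the lemma's applications constant arcs are harmless (the upper-gradient inequality is trivial on them, and in Theorem \ref{thm:main-duality-complete} the marginal constraints force their base points into $Y$). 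The clean statement you can actually prove --- and should have settled for --- is that $\ppi\res\RA_0(X)$ is concentrated on $\RA(Y)$ for every $\ppi\in\Bar q\mm$, or the full statement under the stronger hypothesis $\ppi\in\calT_q$, where $(\sfe_0)_\sharp\ppi\ll\mm$ disposes of the constant arcs exactly in the way you first hoped.
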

\begin{proof}
  Let $\ppi\in \cMp(\RA(X))$ with $\brq
  q(\ppi)<\infty$. Setting $Z:=X\setminus Y$ we have
  \begin{align*}
    \int \int_\gamma \nchi_Z\,\d\ppi(\gamma)=
    \int_X \nchi_Z\,\d\mu_{\sppi}=0
  \end{align*}
  since $\mmu_\sppi\ll\mm$ and $\mm(Z)=0$. We deduce that
  for $\ppi$-a.e.~$\gamma$
  $\int_\gamma \nchi_Z =0$, i.e.~$\LL^1(\{t\in [0,1]:\Al_\gamma(t)\in
  Z\})=0$. Since $Z$ is $\sfd$-open, it follows that
  $\Al_\gamma([0,1])\subset Y$, i.e.~$\ppi$-a.e.~$\gamma$ belongs to $\RA(Y)$.
\end{proof}
\begin{corollary}[Invariance of $W^{1,p}$ by restriction]
  \label{cor:restriction}
  Let $Y\subset X$ be a $\sfd$-closed set
  such that $\mm(X\setminus Y)=0$.
  Setting $\Y:=(Y,\tau,\sfd,\mm)$, we have
  $W^{1,p}(\X,\calT_q)=W^{1,p}(\Y;\calT_q)$
  and for every Sobolev function $f$
  the minimal $\calT_q(\X)$-weak upper gradient
  coincides with the minimal $\calT_q(\Y)$-weak upper gradient.
\end{corollary}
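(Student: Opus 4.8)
The plan is to reduce everything to Lemma~\ref{le:simple-but-crucial} together with the elementary identifications forced by $\mm(X\setminus Y)=0$. First I would record that, since $Y$ is $\sfd$-closed, an arc $\gamma\in\RA(X,\sfd)$ has image $\sfe(\gamma)\subset Y$ if and only if its arc-length representative $\Al_\gamma$ takes all its values in $Y$; hence $\RA(Y,\sfd)$ is canonically identified, via the inclusion $\rmC([0,1];Y)\hookrightarrow\rmC([0,1];X)$ and the quotient maps, with the subset $\{\gamma\in\RA(X,\sfd):\sfe(\gamma)\subset Y\}$ of $\RA(X,\sfd)$, carrying the restricted topology $\tau_\rmA$ and the restricted extended distance $\sfd_\rmA$. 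Moreover, because $\mm(X\setminus Y)=0$, restriction to $Y$ is an isometric isomorphism of $L^p(X,\mm)$ onto $L^p(Y,\mm)$ and, more generally, identifies $\mm$-measurable functions on $X$ with $\mm$-measurable functions on $Y$, so that comparing $W^{1,p}(\X,\calT_q)$ and $W^{1,p}(\Y,\calT_q)$ makes sense.

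Next I would show that, under these identifications, $\calT_q(\X)=\calT_q(\Y)$ inside $\cMp(\RA(X,\sfd))$. If $\ppi\in\calT_q(\X)$ then in particular $\ppi\in\Bar q\mm$, so by Lemma~\ref{le:simple-but-crucial} $\ppi$ is concentrated on $\RA(Y,\sfd)$; the barycenter $\mu_\sppi$, the endpoint marginals $(\sfe_i)_\sharp\ppi$ and the length integral $\ppi(\ell)$ are unaffected by viewing $\ppi$ as a measure on $\RA(Y,\sfd)$ (the corresponding measures on $X$ being carried, up to the $\mm$-null set $X\setminus Y$, by $Y$), so the defining conditions \eqref{eq:436} are preserved and $\ppi\in\calT_q(\Y)$. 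Conversely, for $\ppi'\in\calT_q(\Y)\subset\cMp(\RA(Y,\sfd))$, the push-forward of $\ppi'$ along the inclusion $\RA(Y,\sfd)\hookrightarrow\RA(X,\sfd)$ is a Radon measure on $\RA(X,\sfd)$ with the same length integral, barycenter and endpoint marginals, hence lies in $\calT_q(\X)$. Consequently a property holds $\calT_q(\X)$-a.e.\ on $\RA(X,\sfd)$ if and only if it holds $\calT_q(\Y)$-a.e.\ on $\RA(Y,\sfd)$.

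Finally I would note that the notion of $\calT_q$-weak upper gradient (Definition~\ref{def:weak_upper_gradient}) involves $f$ and $g$ only through the quantities $\int_{\partial\gamma}f$ and $\int_\gamma g$, and only through the class of admissible test plans; for arcs $\gamma\in\RA(Y,\sfd)$ these quantities are computed identically in $\X$ and in $\Y$, and the test-plan classes agree by the previous step, so $g\in L^p(X,\mm)$ is a $\calT_q(\X)$-weak upper gradient of $f$ if and only if its restriction to $Y$ is a $\calT_q(\Y)$-weak upper gradient of the restriction of $f$. Therefore $f\in W^{1,p}(\X,\calT_q)$ iff $f\in W^{1,p}(\Y,\calT_q)$, the norms \eqref{eq:471} coincide, and the minimal weak upper gradients coincide $\mm$-a.e.\ by their characterization \eqref{eq:defweakgrad}. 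The only point needing genuine care is the measure-theoretic bookkeeping in the second step — that a Radon dynamic plan on $\RA(X,\sfd)$ concentrated on the $\sfd$-closed slice $\RA(Y,\sfd)$ restricts to, and is recovered from, a Radon dynamic plan on $\RA(Y,\sfd)$ with all structural quantities preserved — but this is exactly what Lemma~\ref{le:simple-but-crucial} delivers, combined with the standard behaviour of Radon measures under continuous injections and restrictions to measurable subsets.
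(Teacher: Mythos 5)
Your argument is correct and is exactly the route the paper intends: the corollary is stated as an immediate consequence of Lemma~\ref{le:simple-but-crucial} (every plan in $\Bar q\mm$, hence every $\calT_q$-test plan, is concentrated on $\RA(Y)$), combined with the identification of $L^p(X,\mm)$ with $L^p(Y,\mm)$ forced by $\mm(X\setminus Y)=0$, which is precisely the bookkeeping you carry out. Your write-up merely makes explicit the identification $\RA(Y,\sfd)\cong\{\gamma\in\RA(X,\sfd):\sfe(\gamma)\subset Y\}$ and the two-way correspondence of test plans, which the paper leaves implicit.
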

\subsubsection*{Measure preserving isometric embeddings}
Let $\X=(X,\tau,\sfd,\mm)$ and $\hhat\X=(\hhat X,\hhat \tau,\hhat \sfd,\hhat\mm)$
be e.m.t.m.~spaces and let suppose that
$\iota:X\to\hhat X$ is a measure-preserving embedding according to
Definition \ref{def:embeddings}.
We will call $\hhat{\calT_q}=\calT_q(\hhat \X)$ the class of nonparametric test plans
in $\cMp(\RA(\hhat X))$.

Starting from $\iota$ we can define a continuous injective map $J:\rmC([0,1];X)\to\rmC([0,1];\hhat X)$ by setting
$J(\gamma):=\iota\circ \gamma$. Thanks to the isometric property of $\iota$,
  $J(\BVC([0,1];X)\subset \BVC([0,1];\hhat X)$ and
  clearly $J$ is preserves equivalence classes of curves, so that $J$
  induces a continuous injective map from $\RA(X)$ to $\RA(\hhat X)$
    satisfying
  \begin{equation}
    \label{eq:498}
    \int_{J\gamma}\hhat f=
    \int_\gamma \hhat f\circ \iota,\quad
    \ell(J\gamma)=\ell(\gamma)\quad\text{for every $\gamma\in \RA(X),\
      \hhat f\in \rmB_b(\hhat X)$.}
  \end{equation}
  It is interesting to notice that
  \begin{equation}
    \label{eq:502}
    \text{$\iota$ is surjective}\quad\Rightarrow\quad
    J\text{ is surjective.}
  \end{equation}
  In fact, given an arc $\hhat\gamma\in \RA(\hhat X)$ we can
  consider the curve $R:=\iota^{-1}\circ\Al_{\hhat\gamma}$ which satisfies
  \begin{displaymath}
    \sfd(R(s),R(t))=\hhat\sfd(\Al_{\hhat \gamma}(s),\Al_{\hhat
      \gamma}(t))=
    \ell(\hhat\gamma)|t-s|
  \end{displaymath}
  so that $R\in \Lip_c([0,1];(X,\sfd))\subset \BVC([0,1];(X,\sfd))$
  and
  $\gamma=\sfq (R)\in \RA(X)$ with $J\gamma=\hhat \gamma$.
  \begin{lemma}
    \label{le:equivalent-dynamic}
  For every dynamic plan $\ppi\in \cMp(\RA(X))$ the push forward 
  $\hhat\ppi:=J_\sharp \ppi$ is a dynamic plan in $\cMp(\RA(\hhat X))$
  satisfying
  \begin{equation}
    \label{eq:500}
    \mu_{\hhat \sppi}=\iota_\sharp \mu_\sppi,\quad
    \int\ell(\gamma)\,\d\ppi(\gamma)=
    \int \ell(\hhat \gamma)\,\d\hhat\ppi(\hhat \gamma),\quad
    (\sfe_i)_\sharp \hhat\ppi=
    \iota_\sharp(    (\sfe_i)_\sharp\ppi)\quad
    i=0,1.
  \end{equation}
  In particular
  \begin{equation}
    \label{eq:501}
    \brq q(\hhat\ppi)=\brq q(\ppi)
  \end{equation}
  and $\hhat\ppi$ belongs to
  $\hhat{\calT_q}$ if and only if $\ppi$ belongs to
  ${\calT_q}$.
\end{lemma}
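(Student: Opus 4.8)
The plan is to verify the three listed identities in \eqref{eq:500} by testing against bounded Borel functions, then deduce \eqref{eq:501} and the final equivalence as immediate consequences. Throughout I would use the transfer formula $\int_{J\gamma} \hhat f = \int_\gamma \hhat f\circ\iota$ from \eqref{eq:498} together with the change-of-variables rule for push-forwards, $\int \Phi\,\d(J_\sharp\ppi) = \int \Phi\circ J\,\d\ppi$, valid because $J$ is continuous (hence Lusin $\ppi$-measurable) and $\ppi$ is Radon, so that $\hhat\ppi = J_\sharp\ppi$ is a well-defined Radon measure on $\RA(\hhat X)$.

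First I would check that $\hhat\ppi$ is a dynamic plan: by \eqref{eq:498}, $\ell(J\gamma) = \ell(\gamma)$, so
\begin{displaymath}
  \int \ell(\hhat\gamma)\,\d\hhat\ppi(\hhat\gamma)
  = \int \ell(J\gamma)\,\d\ppi(\gamma)
  = \int \ell(\gamma)\,\d\ppi(\gamma) < \infty,
\end{displaymath}
which gives the second identity in \eqref{eq:500} and Definition \ref{def:dynplan}. For the first identity, I would take an arbitrary bounded Borel $\hhat f:\hhat X\to\R$ and compute, using \eqref{eq:289} (the defining property of $\mu_\sppi$) and \eqref{eq:498},
\begin{displaymath}
  \int_{\hhat X} \hhat f\,\d\mu_{\hhat\sppi}
  = \int \Big(\int_{\hhat\gamma} \hhat f\Big)\,\d\hhat\ppi(\hhat\gamma)
  = \int \Big(\int_{J\gamma} \hhat f\Big)\,\d\ppi(\gamma)
  = \int \Big(\int_\gamma \hhat f\circ\iota\Big)\,\d\ppi(\gamma)
  = \int_X \hhat f\circ\iota\,\d\mu_\sppi
  = \int_{\hhat X} \hhat f\,\d(\iota_\sharp\mu_\sppi),
\end{displaymath}
so $\mu_{\hhat\sppi} = \iota_\sharp\mu_\sppi$. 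For the third identity, I would note that $\sfe_i\circ J = \iota\circ\sfe_i$ as maps $\RA(X)\to\hhat X$ (the initial/final point of $\iota\circ\gamma$ is $\iota$ applied to the initial/final point of $\gamma$, using $\Al$-invariance of $\sfe_i$ from Lemma \ref{le:reparam-arcs}(d)), whence $(\sfe_i)_\sharp\hhat\ppi = (\sfe_i)_\sharp J_\sharp\ppi = (\sfe_i\circ J)_\sharp\ppi = (\iota\circ\sfe_i)_\sharp\ppi = \iota_\sharp\big((\sfe_i)_\sharp\ppi\big)$.

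Given these, \eqref{eq:501} follows because by property (E3) of Definition \ref{def:embeddings} we have $\iota_\sharp\mm = \hhat\mm$, and pushing forward a measure by a measure-preserving embedding preserves absolute continuity and densities: if $\mu_\sppi = h\mm$ with $h\in L^q(X,\mm)$, then $\mu_{\hhat\sppi} = \iota_\sharp(h\mm) = (\iota^{-1})^*h\cdot\hhat\mm$ in the appropriate sense, with the same $L^q$-norm by the isometry of $\iota^*$ established in Lemma \ref{le:measurable}(c); conversely if $\ppi\notin\Bar q\mm$ then $\mu_\sppi$ is not absolutely continuous with $L^q$ density and neither is its push-forward. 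Concretely one checks $\brq q(\hhat\ppi)^q = \int_{\hhat X} h_{\hhat\sppi}^q\,\d\hhat\mm = \int_X h_\sppi^q\,\d\mm = \brq q(\ppi)^q$ via \eqref{eq:355}. Finally, combining \eqref{eq:501} with the third identity of \eqref{eq:500} and the fact that $\iota_\sharp$ likewise preserves the $L^q$-entropy of the endpoint marginals (same argument), Definition \ref{def:testplan} gives that $\hhat\ppi\in\hhat{\calT_q}$ iff $\brq q(\hhat\ppi)<\infty$ and $\LL_q((\sfe_i)_\sharp\hhat\ppi|\hhat\mm)<\infty$ for $i=0,1$, which holds iff the corresponding conditions hold for $\ppi$, i.e.\ iff $\ppi\in\calT_q$.

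I do not anticipate a serious obstacle here; the statement is essentially a bookkeeping lemma. The one point requiring mild care is the measurability of $\hhat\ppi$ as a Radon measure on the non-metrizable space $\RA(\hhat X)$ — but this is covered by the fact that $J$ is continuous and $\ppi$ is Radon, so $J_\sharp\ppi$ is automatically Radon (push-forward of a Radon measure under a continuous map into a Hausdorff space, as used repeatedly in the excerpt, e.g.\ Theorem \ref{thm:G-compactification}(d)). A second minor point is ensuring that all the integrals $\int_\gamma \hhat f\circ\iota$ make sense for $\ppi$-a.e.\ $\gamma$ and are Borel in $\gamma$; this follows from Theorem \ref{thm:important-arcs}(e) applied to the Borel function $\hhat f\circ\iota$ on $X$ (Borel since $\iota$ is continuous). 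The isometry \eqref{eq:501} is the substantive content, and it rests entirely on the already-established identity \eqref{eq:355}/Lemma \ref{le:measurable}(c).
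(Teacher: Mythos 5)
Your proof is correct and follows essentially the same route as the paper's: the identities in \eqref{eq:500} are obtained by testing against (bounded continuous/Borel) functions via \eqref{eq:498} and the relation $\sfe_i\circ J=\iota\circ\sfe_i$, and \eqref{eq:501} together with the equivalence $\ppi\in\calT_q\Leftrightarrow\hhat\ppi\in\hhat{\calT_q}$ follows from the invariance of the $L^q$-entropy $\LL^q(\cdot|\cdot)$ under push-forward by the injective measure-preserving map $\iota$ (the paper phrases this as a general property of relative entropy functionals, you phrase it via the isometry of $\iota^*$ from Lemma \ref{le:measurable}(c) and \eqref{eq:355} — the same fact). Your extra remarks on Radon-ness of $J_\sharp\ppi$ and Borel measurability of $\gamma\mapsto\int_\gamma \hhat f\circ\iota$ are sound and merely make explicit what the paper leaves implicit.
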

\begin{proof}
  For every nonnegative $\hhat f\in \rmC_b(\hhat X)$ we have by \eqref{eq:498}
  \begin{align*}
    \int f'\,\d\mu_{\hhat \sppi}&=\int \int_{\hhat\gamma} \hhat f\,\d\hhat\ppi(\hhat\gamma)=
                                  \int \int_{J\gamma} \hhat f\,\d\ppi(\gamma)
                                      \topref{eq:498}=
    \int \int_{\gamma} \hhat f\circ\iota\,\d\ppi(\gamma)
    =
    \int \hhat f\circ\iota \,\d\mu_\sppi,
  \end{align*}
  which shows the first identity of \eqref{eq:500}. The second follows
  easily by choosing $f\equiv 1$ and the third identity
  is a consequence of the relation
  $\sfe_i\circ J=\iota\circ\sfe_i$, $i=0,1$.
  Since $\iota$ is injective, \eqref{eq:501} is a consequence of the
  general properties
  of relative entropy functionals
  \begin{equation}
    \label{eq:499}
    \brq q^q(\hhat\ppi)=\LL^q(\mu_{\hhat\sppi}|\hhat\mm)=
    \LL^q(\iota_\sharp\mu_{\hhat\sppi}|\iota_\sharp\hhat\mm)=
    \LL^q(\mu_{\sppi}|\mm)=
    \brq q^q(\ppi).
  \end{equation}
  A similar argument shows that $\LL^q((\sfe_i)_\sharp\hhat\ppi|\hhat
  \mm)=
  \LL^q((\sfe_i)_\sharp\ppi|
  \mm).$
\end{proof}
A simple but important application of the previous two Lemma yields the
following result.
\begin{theorem}
  \label{thm:invariance-weak}
  Let $\iota:X\to X'$ be a measure-preserving isometric imbedding of
  $\X$ into $\X'$.
  For every $\hhat f\in W^{1,p}(\hhat \X,\calT_q')$ the function
  $f:=\iota^* \hhat f$ belongs to $ W^{1,p}(\X,\calT_q)$
  and
  \begin{equation}
    \label{eq:497}
    \weakgradTq f\le \iota^*(\weakgradTqhat{\hhat f})\quad
    \text{$\mm$-a.e.~in $X$}.
  \end{equation}
  If moreover $\iota$ is surjective or $(X,\sfd)$ is complete then
  $\iota^*$ is an isomorphism between
  $W^{1,p}(\hhat \X,\hhat{\calT_q})$ and
  $ W^{1,p}(\X,\calT_q)$ whose inverse is $\iota_*$ and
  \begin{equation}
    \label{eq:497is}
    \weakgradTq f= \iota^*(\weakgradTqhat{\hhat f})\quad
    \text{$\mm$-a.e.~in $X$}.
  \end{equation}
\end{theorem}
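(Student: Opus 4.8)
The plan is to reduce the statement to the characterization of $\calT_q$-weak upper gradients by integrated inequalities against test plans in $\calT_q^*$ (Lemma~\ref{le:equivalent1}), transported along the continuous injection $J\colon\RA(X)\to\RA(\hhat X)$ induced by $\iota$ through Lemma~\ref{le:equivalent-dynamic}, and to use that $\iota^*$ is already an isometric isomorphism of $L^p(\hhat X,\hhat\mm)$ onto $L^p(X,\mm)$ with inverse $\iota_*$ (Lemma~\ref{le:measurable}(c)).

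\emph{The inequality \eqref{eq:497}.} Given $\hhat f\in W^{1,p}(\hhat\X,\calT_q')$, set $\hhat g:=\weakgradTqhat{\hhat f}$, $g:=\iota^*\hhat g\in L^p(X,\mm)$ and $f:=\iota^*\hhat f\in L^p(X,\mm)\subset L^1(X,\mm)$. To prove that $g$ is a $\calT_q$-weak upper gradient of $f$ I would apply Lemma~\ref{le:equivalent1}: given $\ppi\in\calT_q^*$, put $\hhat\ppi:=J_\sharp\ppi$. The isometry property (E2), the fact that $J$ is a homeomorphism of the compact set $\rmK$ witnessing $\ppi\in\calT_q^*$ onto the compact set $J(\rmK)$, the identity $\ell\circ J=\ell$ and the relation $(\sfe_i)_\sharp\hhat\ppi=\iota_\sharp\big((\sfe_i)_\sharp\ppi\big)\le c\,\hhat\mm$ together show $\hhat\ppi\in(\calT_q')^*$. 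Since $\hhat g$ is a $\calT_q'$-weak upper gradient of $\hhat f$, Lemma~\ref{le:equivalent1} yields $\int\hhat f\,\d(\hhat\pi_1-\hhat\pi_0)\le\int\hhat g\,\d\mu_{\hhat\sppi}$; substituting the identities $\hhat\pi_i=\iota_\sharp\pi_i$ and $\mu_{\hhat\sppi}=\iota_\sharp\mu_\sppi$ of \eqref{eq:500} and using the change-of-variables formula for push-forwards gives $\int f\,\d(\pi_1-\pi_0)\le\int g\,\d\mu_\sppi$. As $\ppi\in\calT_q^*$ is arbitrary, $g$ is a $\calT_q$-weak upper gradient of $f$, hence $f\in W^{1,p}(\X,\calT_q)$ and $\weakgradTq f\le g=\iota^*(\weakgradTqhat{\hhat f})$.

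\emph{The isomorphism.} Assume $\iota$ surjective or $(X,\sfd)$ complete. When $(X,\sfd)$ is complete, $\iota(X)$ is $\hhat\sfd$-closed and, since $\iota_\sharp\mm=\hhat\mm$ forces $\hhat\mm(\hhat X\setminus\iota(X))=0$, the invariance of $W^{1,p}$ under restriction (Corollary~\ref{cor:restriction}, which itself rests on Lemma~\ref{le:simple-but-crucial}) allows me to replace $\hhat\X$ by the e.m.t.m.~space carried by $\iota(X)$, so I may assume $\iota$ surjective; then $J$ is a continuous bijection by \eqref{eq:502}. Since $\iota^*$ is already an isometric isomorphism of the $L^p$ spaces, it is enough to prove that for $f\in W^{1,p}(\X,\calT_q)$ with minimal weak upper gradient $g$ the function $\hhat g:=\iota_*g$ is a $\calT_q'$-weak upper gradient of $\hhat f:=\iota_*f$: combining this with \eqref{eq:497} applied to $\hhat f$ gives $\weakgradTq f=\weakgradTq(\iota^*\hhat f)\le\iota^*(\weakgradTqhat{\hhat f})\le\iota^*(\iota_*g)=g$, so that all inequalities are equalities, $\hhat f\in W^{1,p}(\hhat\X,\calT_q')$, and $\iota^*$, $\iota_*$ exchange the two Sobolev spaces with equal minimal weak upper gradients. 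By Lemma~\ref{le:equivalent1} in $\hhat\X$, the claim about $\hhat g$ amounts to the inequality $\int\hhat f\,\d(\hhat\pi_1-\hhat\pi_0)\le\int\hhat g\,\d\mu_{\hhat\sppi}$ for every $\hhat\ppi\in(\calT_q')^*$, and the natural route is to pull $\hhat\ppi$ back to $\ppi:=(J^{-1})_\sharp\hhat\ppi$: if this is a genuine Radon measure, then $J_\sharp\ppi=\hhat\ppi$, Lemma~\ref{le:equivalent-dynamic} forces $\ppi\in\calT_q$, Lemma~\ref{le:equivalent1} applied to $f,g$ against $\ppi$ produces the inequality over $X$, and the push-forward identities of \eqref{eq:500} carry it back to $\hhat\X$.

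\emph{Main obstacle.} The delicate point is the legitimacy of this pull-back, i.e.\ the Borel measurability of $J^{-1}$ on the compact sets carrying $\hhat\ppi$ and the fact that its push-forward is again Radon. This is where all the structural hypotheses enter: membership $\hhat\ppi\in(\calT_q')^*$ confines $\hhat\ppi$ to a compact $\rmK'\subset\RA(\hhat X)$ on which $\ell$ is continuous, bounded and bounded away from $0$; the isometry makes $J^{-1}$ explicit on $\RA(\iota(X))$ as $\hhat\gamma\mapsto[\iota^{-1}\circ\Al_{\hhat\gamma}]$, which still lands in $\RA(X,\sfd)$ because $\iota^{-1}$ is $\hhat\sfd$-to-$\sfd$ isometric and $\sfd$-converging nets are $\tau$-converging by \eqref{eq:244}; and the rigidity of Theorem~\ref{thm:important-arcs}(a) together with the compactness criteria of Theorem~\ref{thm:important-arcs}(f)--(g) — using surjectivity, or completeness plus Lemma~\ref{le:simple-but-crucial} (which confines $\hhat\ppi$ to $\RA(\iota(X))$) — is what should make $J^{-1}(\rmK')$ compact, so that $J\restr{J^{-1}(\rmK')}$ is a homeomorphism and $(J^{-1})_\sharp\hhat\ppi$ is Radon. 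I expect the careful handling of the possible mismatch between the topologies $\tau$ and $\hhat\tau$ along converging sequences of arcs to be the technically demanding step; once it is settled, the rest is the bookkeeping described above.
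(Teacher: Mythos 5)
Your proof of the inequality \eqref{eq:497}, and your reduction of the complete case to the surjective one via Corollary \ref{cor:restriction} and Lemma \ref{le:simple-but-crucial}, coincide with the paper's argument: test plans are pushed forward through $J_\sharp$, Lemma \ref{le:equivalent-dynamic} transports marginals and barycenters, and Lemma \ref{le:equivalent1} turns the weak-upper-gradient property into the integrated inequality. That half is correct.

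The genuine gap is in the surjective case, which is exactly the step producing \eqref{eq:497is}: you must show that $\iota_*g$ is a $\calT_q'$-weak upper gradient of $\iota_*f$, and your route is to pull an arbitrary plan $\ppi'\in(\calT_q')^*$ back to $(J^{-1})_\sharp\ppi'$; but you only argue ``if this is a genuine Radon measure, then\ldots'' and explicitly defer the continuity of $J^{-1}$ on the compact sets carrying $\ppi'$ to an expectation. That deferred step is the whole content of the second half of the theorem, not bookkeeping, so as written the proposal does not prove it. Moreover, the tools you invoke cannot close it under the hypotheses as split in the statement: Theorem \ref{thm:important-arcs}(f) needs $(X,\tau)$ compact and (g) needs $(X,\sfd)$ complete, so under the bare assumption that $\iota$ is surjective neither applies, and surjectivity alone yields no relative compactness of $J^{-1}(\rmK')$ in $\RA(X)$ --- $\iota^{-1}$ need not be continuous nor map compact sets to compact sets, so neither hypothesis 2 nor the equi-tightness hypothesis 3 of (g) is inherited from $\rmK'$. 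Even when completeness is available, those hypotheses only hold after discarding subsets of $\rmK'$ of small $\ppi'$-measure, using $(\sfe_i)_\sharp\ppi'\le c\,\iota_\sharp\mm$, the absolute continuity of the barycenter of $\ppi'$ with respect to $\iota_\sharp\mm$, and the inner regularity of $\mm$; your sketch does not carry out this verification. The paper does not set up the pull-back as a separate measure-theoretic construction: once $J$ is a bijection by \eqref{eq:502} preserving lengths, curvilinear integrals, endpoint marginals and barycentric entropies (Lemma \ref{le:equivalent-dynamic} and \eqref{eq:500}), it reads the chain of identities of the first half in the reverse direction, so that every $\calT_q$-weak upper gradient of $f$ transfers to a $\calT_q'$-weak upper gradient of $\iota_*f$ with equal norm, and then obtains the complete case by restriction exactly as you do. To complete your write-up you must either supply the omitted measurability/compactness argument (and find a mechanism that also covers the purely surjective alternative, where no compactness is available) or replace the pull-back by this symmetry argument.
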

\begin{proof}
  Let $\hhat g\in L^p(\hhat X,\hhat \mm)$ be a $\hhat{\calT_q}$-weak upper gradient
  of $\hhat f$ in $\hhat X$ and let $\ppi\in \calT_q$.
  We want to show that $g:=\iota^*\hhat g\in L^p(X,\mm)$ is a
  $\calT_q$-weak upper gradient for $f$: we use
  the equivalent characterization \eqref{eq:441} of Lemma \ref{le:equivalent1}.
  
  For every plan $\ppi\in \calT_q$ 
  Lemma \ref{le:equivalent-dynamic} shows that
  $\hhat\ppi:=J_\sharp\ppi\in \hhat{\calT_q}$
  with $\mu_{\hhat \sppi}=\iota_\sharp \mu_\sppi$ and
  $(\sfe_i)_\sharp\hhat\ppi=\iota_\sharp (\sfe_i)_\sharp\ppi$.
  We then obtain
  \begin{align*}
    \int_X f\,\d\pi_0-\int_X f\,\d\pi_1
    &=
      \int_X \hhat f\circ\iota\,\d\pi_0-\int_X \hhat
      f\circ\iota\,\d\pi_1
      =
      \int_{\hhat X} \hhat f\,\d(\iota_\sharp\pi_0)-\int_{\hhat X} 
      \hhat f\,\d(\iota_\sharp \pi_1)
    \\&=
    \int_{\hhat X} \hhat f\,\d\hhat \pi_0-\int_{\hhat X}
    \hhat f\,\d\hhat\pi_1\le
    \int_{\hhat X} \hhat g\,\d\mu_{\hhat \sppi}
    \\&=
    \int_X \hhat g\,\d(\iota_\sharp \mu_{\sppi})=
    \int \hhat g\circ\iota \,\d\mu_{\sppi}=
    \int g \,\d\mu_{\sppi}.
  \end{align*}
  If $\iota$ is surjective, then $J$ is surjective by \eqref{eq:502},
  so that the very same argument shows that any
  $\calT_q$-weak upper gradient for $f\in L^p(X,\mm)$ yields
  a weak $\hhat{\calT_q}$ weak upper gradient $\hhat g:=\iota_* g$ for
  $\iota_* f$ in $L^p(\hhat X,\hhat \mm)$ with
  $\|\hhat g\|_{L^q(\hhat X,\hhat \mm)}=\|g\|_{L^q(X,\mm)}$
  thus showing \eqref{eq:497is}.

  When $(X,\sfd)$ is complete then $(\iota(X),\hhat \sfd)$
  is complete (and therefore $\hhat \sfd$-closed) in $\hhat X$,
  so that by Corollary \ref{cor:restriction}
  $W^{1,p}(\iota(X),\hhat \tau,\hhat\sfd,\hhat \mm;\hhat{\calT_q})=
  W^{1,p}(\hhat X,\hhat \tau,\hhat\sfd,\hhat \mm;\hhat{\calT_q})$
  with equality of minimal $\hhat {\calT_q}$-weak upper gradients.
  On the other hand, $\iota:X\to \iota(X)$ is a measure preserving
  surjective embedding and we can apply the previous statement.
\end{proof}
\subsubsection*{Length distances and conformal invariance}
\index{Length distance}
We refer to the definitions and notation of Section \ref{sec:length-Finsler}.
\begin{lemma}
  \label{le:invariance-length}
  Let $\X=(X,\tau,\sfd,\mm)$ be an e.m.t.m.~space
  and let $\delta:X\times X\to [0,+\infty]$ an extended distance
  such that
  \begin{equation}
    \label{eq:504}
    (X,\tau,\delta)\text{ is an extended metric-topological space},\quad
    \sfd\le \delta\le \sfd_\ell\quad\text{in }X\times X.
  \end{equation}
  Then $W^{1,p}(X,\tau,\sfd,\mm;\calT_q)=
  W^{1,p}(X,\tau,\delta,\mm;\calT_q)$ and the corresponding minimal
  weak $\calT_q$-upper gradients coincide.
\end{lemma}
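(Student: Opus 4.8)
The plan is to observe that every ingredient entering the definition of $W^{1,p}(\cdot\,;\calT_q)$ --- the space $\RA(X)$ of rectifiable arcs together with its quotient topology $\tau_\rmA$, the length functional $\ell$, the arc-length reparametrization $\Al_\gamma$, the measures $\nu_\gamma$ of \eqref{eq:232}, and hence the line integrals $\int_\gamma(\cdot)$ and the endpoint maps $\sfe_0,\sfe_1$ --- is left unchanged when $\sfd$ is replaced by any extended distance $\delta$ with $\sfd\le\delta\le\sfd_\ell$. Granting this, the class $\calT_q$ of nonparametric test plans of Definition~\ref{def:testplan}, being built solely from $\ell$, $\nu_\gamma$ and the $\sfe_i$, is the same for $(X,\tau,\sfd,\mm)$ and $(X,\tau,\delta,\mm)$, and so is the notion of $\calT_q$-weak upper gradient of Definition~\ref{def:weak_upper_gradient}; the identification of the two Sobolev spaces then follows at once.

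The only substantive step is the identity
\[
  \Var_\sfd(\gamma;J)=\Var_\delta(\gamma;J)=\Var_{\sfd_\ell}(\gamma;J)
\]
for every $\gamma\in\rmC([0,1];(X,\tau))$ and every subinterval $J\subset[0,1]$. The inequalities $\Var_\sfd\le\Var_\delta\le\Var_{\sfd_\ell}$ are immediate from $\sfd\le\delta\le\sfd_\ell$. For the reverse inequality $\Var_{\sfd_\ell}(\gamma;J)\le\Var_\sfd(\gamma;J)$ I would dispose of the trivial case $\Var_\sfd(\gamma;J)=+\infty$ and, otherwise, use that the affine reparametrization of the restriction of $\gamma$ to each piece $[t_{j-1},t_j]$ of a partition of $J$ is, by Lemma~\ref{le:surprising-BVC} and the definition \eqref{eq:208} of the variation, a $\sfd$-rectifiable arc joining $\gamma(t_{j-1})$ to $\gamma(t_j)$ of length $\Var_\sfd(\gamma;[t_{j-1},t_j])$; hence, by \eqref{eq:233} and the additivity of $\Var_\sfd$ over consecutive intervals,
\[
  \sum_{j=1}^N\sfd_\ell(\gamma(t_{j-1}),\gamma(t_j))\le\sum_{j=1}^N\Var_\sfd(\gamma;[t_{j-1},t_j])=\Var_\sfd(\gamma;J),
\]
and taking the supremum over partitions gives the claim.

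From the variation identity I would deduce, using again Lemma~\ref{le:surprising-BVC}, that $\BVC([0,1];(X,\sfd))=\BVC([0,1];(X,\delta))$ inside $\rmC([0,1];(X,\tau))$, so passing to the quotient yields $\RA(X,\sfd)=\RA(X,\delta)$, with the same quotient topology $\tau_\rmA$ (which depends only on $\tau_\rmC$, hence on $\tau$), the same length $\ell$, the same variation maps $V_\gamma$ and therefore --- by \eqref{eq:560} --- the same arc-length parametrizations $\Al_\gamma$ and the same measures $\nu_\gamma=\ell(\gamma)(\Al_\gamma)_\sharp(\Leb1\res[0,1])$; in particular $\int_\gamma f=\int_X f\,\d\nu_\gamma$ and the endpoints $\gamma_0=\sfe_0(\gamma),\gamma_1=\sfe_1(\gamma)$ are the same objects for $\sfd$ and $\delta$ (one may alternatively invoke the remark of \S\ref{subsec:length} that rectifiable arcs, lengths and integrals for $\sfd$ and $\sfd_\ell$ coincide, and then squeeze). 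Consequently a nonparametric dynamic plan (Definition~\ref{def:dynplan}) is the same notion for both distances, and since $\mu_\sppi$, $\brq q(\ppi)$ and $(\sfe_i)_\sharp\ppi$ depend only on $\ell$, $\nu_\gamma$ and $\sfe_i$, the classes $\calT_q$ and $\calT_q^*$ attached to $\sfd$ and to $\delta$ coincide.

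Finally, inspecting Definition~\ref{def:weak_upper_gradient}, a $\mm$-measurable $g\colon X\to[0,\infty]$ satisfies $\big|\int_{\partial\gamma}f\big|\le\int_\gamma g<\infty$ for $\calT_q$-a.e.\ $\gamma$ with respect to $\sfd$ precisely when it does so with respect to $\delta$, since the two conditions involve the same arcs, the same boundary $\partial\gamma$, the same integrals and the same $\ppi$-null sets. Thus the families of $\calT_q$-weak upper gradients of $f$ in $L^p(X,\mm)$ coincide, so do their minimal elements $\weakgradTq f$ (Definition~\ref{def:minimalwug}), and by \eqref{eq:471} we obtain $W^{1,p}(X,\tau,\sfd,\mm;\calT_q)=W^{1,p}(X,\tau,\delta,\mm;\calT_q)$ with identical norms. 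The main --- and essentially the only --- point requiring genuine work is the variation identity of the second paragraph; everything afterwards is forced by the fact that the whole rectifiable-arc machinery is a function of the arc data alone.
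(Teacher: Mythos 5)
Your proposal is correct and follows essentially the same route as the paper: the paper's proof simply notes that $\sfd_\ell=\delta_\ell$ forces $\RA(X,\sfd)=\RA(X,\delta)$ with the same lengths (hence the same integrals, endpoints and measures $\nu_\gamma$), so the classes $\calT_q$ of test plans coincide and the Sobolev spaces are identified. Your only addition is to write out, via the variation identity $\Var_\sfd=\Var_\delta=\Var_{\sfd_\ell}$, the "not difficult to check" fact from \S\,\ref{subsec:length} that the paper invokes without proof, which is fine.
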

\begin{proof}
  We know that the class of rectifiable arcs $\RA(X,\sfd)$ and
  $\RA(X,\delta)$ coincide, since $\sfd_\ell=\delta_\ell$,
  with the same length. Therefore, the corresponding classes of dynamic plans in
  $\calT_q$ coincide.
\end{proof}
By the previous result, we can always replace $\sfd$ with
$\sfd_\ell'$, $\sfd_\ell''$ or $\sfd_\ell'''$ in the definition of the
Sobolev spaces.
We can also use $\sfd_\ell$
whenever $\sfd_\ell$ is $\tau$-continuous or when $(X,\tau)$ is
compact.
\begin{remark}
  \label{rem:pedante}
  The (easy) proof of the previous Lemma shows that the
  definition of the Sobolev space $W^{1,p}(X,\tau,\delta,\mm;\calT_q)$
  can be extended to a slightly more general setting:
  in fact, the condition
  that
  $(X,\tau,\delta)$ is an e.t.m.~space
  can be relaxed by asking that there exists an extended distance
  $\sfd:X\times X\to[0,+\infty]$ such that
  \begin{equation}
    \label{eq:637}
    (X,\tau,\sfd)\text{ is an e.t.m.~space\quad and\quad}\sfd\le \delta\le \sfd_\ell.
  \end{equation}  
\end{remark}

We now discuss the case of a conformal distance $\sfd_g$
induced by a continuous function $g\in \rmC_b(X)$ with $\inf_X g>0$.
\begin{proposition}
  \label{prop:invariance-conformal}
  Let $\X=(X,\tau,\sfd,\mm)$ be an e.m.t.m.~space,
  let $g\in \rmC_b(X)$ with $0<m_g\le g\le M_g<\infty$,
  and let $\delta:X\times X\to [0,+\infty]$ be an extended distance
  such that
  \begin{equation}
    \label{eq:504bis}
    \X'=(X,\tau,\delta,\mm)\text{ is an e.m.t.m.~space},\quad
    \sfd_g'\le \delta\le \sfd_g\quad\text{in }X\times X.
  \end{equation}
  Then $\calT_q(\X)$ coincides with $\calT_q':=\calT_q(\X')$,
  a function $f\in L^p(X,\mm)$ belongs to the
  Sobolev space
  $W^{1,p}(\X';\calT_q')$ if and only if
  $f\in W^{1,p}(\X,\calT_q)$,
  and the corresponding minimal
  $\calT_q$-weak upper gradients in $\X$ and in $\X'$
  (which we call  $|\rmD f|_{w,\X}$ and $|\rmD f|_{w,\X'}$ respectively) satisfy
  \begin{equation}
    \label{eq:506}
    |\rmD f|_{w,\X}=g^{-1}|\rmD f|_{w,\X'}.
  \end{equation}
\end{proposition}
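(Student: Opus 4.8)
The plan is to reduce everything to a statement about rectifiable arcs and their integrals, exploiting that the two distances $\sfd$ and $\delta$ generate the \emph{same} class of rectifiable arcs with the \emph{same} notion of length and integral. First I would observe that, since $\sfd_g$ is a length distance (indeed $(\sfd_g)_\ell = \sfd_g$ by Theorem \ref{thm:Finsler}(b)) and $\sfd_g'\le \delta\le \sfd_g$ with $(\sfd_g')_\ell = \sfd_g$, the extended distance $\delta$ satisfies $\delta_\ell = \sfd_g$. Consequently the rectifiable arcs for $\delta$ coincide with those for $\sfd_g$, and — comparing with the original $\sfd$, which satisfies $\sfd\le \sfd_g\le M_g\,\sfd_\ell$ and $\sfd_g = M_g\,\sfd_\ell$ up to the conformal factor — the $\sfd$-rectifiable arcs and the $\delta$-rectifiable arcs are the same set $\RA(X)$, but with two different length functionals: writing $\ell$ for the $\sfd$-length and $\ell'$ for the $\delta$-length of an arc $\gamma$, one has
\begin{equation}
  \ell'(\gamma) = \int_\gamma g,
\end{equation}
and more generally $\int_\gamma^{(\delta)} f = \int_\gamma f\,g$ for every bounded Borel $f$ (here $\int_\gamma^{(\delta)}$ denotes integration along the $\delta$-arc-length reparametrization). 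This is the key computational identity, and it follows from the very definition \eqref{eq:207} of $\sfd_g$ together with the invariance of integrals under reparametrization from Lemma \ref{le:reparam-arcs}(e) and Lemma \ref{le:trivial}.

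Next I would translate this into a statement about dynamic plans. Given $\ppi\in\cMp(\RA(X))$, let $\mu_\sppi$ (resp.\ $\mu_\sppi'$) be the barycenter measure computed with $\nu_\gamma = \ell(\gamma)(\Al_\gamma)_\sharp\Leb1$ (resp.\ with the $\delta$-version $\nu_\gamma'$). The identity above gives $\mu_\sppi' = g\,\mu_\sppi$, hence $\ppi$ has barycenter in $L^q$ for the $\X$-structure if and only if it does for the $\X'$-structure (as $m_g\le g\le M_g$), and similarly the entropies of $(\sfe_i)_\sharp\ppi$ are unchanged since the evaluation maps $\sfe_0,\sfe_1$ do not see the distance; thus $\calT_q(\X) = \calT_q(\X')$. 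For the weak upper gradients, I would use the intrinsic characterization of Theorem \ref{thm:Rovereto} / Remark \ref{re:restr}: $g_0$ is a $\calT_q(\X)$-weak upper gradient of $f$ iff, for $\calT_q$-a.e.\ $\gamma$, $f$ is Sobolev along $\gamma$ (w.r.t.\ $\sfd$-arc-length) with $|\tfrac{\d}{\d t}f_\gamma|\le \ell(\gamma)\,g_0\circ\Al_\gamma$. Reparametrizing by $\delta$-arc-length (the reparametrization is an absolutely continuous, strictly increasing change of variable governed by $s\mapsto \int$ of $g$ along $\gamma$), this inequality becomes $|\tfrac{\d}{\d r}f_\gamma'|\le \ell'(\gamma)\,(g^{-1}g_0)\circ\Al_\gamma'$, i.e.\ $g^{-1}g_0$ is a weak upper gradient in the $\X'$-sense — and conversely. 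Taking minimal weak upper gradients and using their pointwise minimality (Proposition \ref{prop:locweak} and the uniqueness in Definition \ref{def:minimalwug}) yields $|\rmD f|_{w,\X} = g^{-1}|\rmD f|_{w,\X'}$ $\mm$-a.e., and then $f\in L^p$ has $\calT_q$-w.u.g.\ in $L^p$ for $\X$ iff the same holds for $\X'$ since $m_g\le g\le M_g$, giving $W^{1,p}(\X,\calT_q) = W^{1,p}(\X',\calT_q')$.

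The main obstacle I anticipate is the careful bookkeeping of the two arc-length reparametrizations and verifying that the change of variable from $\sfd$-arc-length to $\sfd_g$-arc-length is measurable as a function of the arc and preserves Sobolev regularity in both directions. Concretely, for a rectifiable arc $\gamma$ one has the $\sfd$-arc-length parametrization $\Al_\gamma$ with $|\Al_\gamma'| = \ell(\gamma)$ a.e., and the $\delta$-arc-length parametrization $\Al_\gamma'$; the map relating them is $t\mapsto \ell'(\gamma)^{-1}\int_0^t g(\Al_\gamma(\sigma))\,\ell(\gamma)\,\d\sigma$, which is a strictly increasing bi-Lipschitz homeomorphism of $[0,1]$ because $m_g\le g\le M_g$. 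Composition with a bi-Lipschitz change of variables preserves absolute continuity and transforms derivatives by the chain rule, so the Sobolev-along-$\gamma$ property and the pointwise gradient bound transfer cleanly; the only genuine care needed is to do this simultaneously for $\ppi$-a.e.\ $\gamma$ over a compact set where $\ell$ and $g\circ\Al$ are continuous, invoking Theorem \ref{thm:important-arcs}(d) for Borel measurability of $\gamma\mapsto\Al_\gamma$ and a Fubini argument as in the proof of Theorem \ref{thm:Rovereto}. Once this is in place, the rest is the routine entropy/integrability comparison using $m_g\le g\le M_g$ and an appeal to Lemma \ref{le:invariance-length} to absorb the freedom $\sfd_g'\le\delta\le\sfd_g$ (note $\delta_\ell = \sfd_g$, so $\delta$ and $\sfd_g$ define the same $\RA(X)$ and the same length, hence the same $\calT_q$ and the same weak upper gradients).
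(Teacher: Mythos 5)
Your proposal is correct and shares the paper's skeleton for the first two thirds: the key identity $\int_{\gamma'}f=\int_\gamma g\,f$ (equivalently $\ell'(\gamma)=\int_\gamma g$), the consequent relation $\mu_{\sppi}'=g\,\mu_\sppi$, and the observation that $m_g\le g\le M_g$ together with the metric-independence of the endpoint evaluations gives $\calT_q(\X)=\calT_q(\X')$ — this is exactly the paper's argument, which also leaves the verification of the key identity at the level of ``one can easily check''. Where you genuinely diverge is the weak-upper-gradient step. The paper does not reparametrize anything: it invokes the integrated characterization of Lemma \ref{le:equivalent1} and simply computes, for $h$ a $\calT_q$-w.u.g.\ of $f$ in $\X$, $\int f\,\d(\pi_1-\pi_0)\le\int h\,\d\mu_\sppi=\int g\,(g^{-1}h)\,\d\mu_\sppi=\int g^{-1}h\,\d\mu_{\sppi}'$, so that $g^{-1}h$ is a w.u.g.\ in $\X'$; the converse is symmetric and minimality finishes. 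Your route through the pointwise Sobolev-along-arcs characterization (Theorem \ref{thm:Rovereto}, Remark \ref{re:restr}) and the bi-Lipschitz change of variables between the $\sfd$- and $\delta$-arc-length parametrizations is also valid and yields the pointwise bound along individual arcs directly; the price is precisely the measurability/Fubini bookkeeping you anticipate, all of which the integrated formulation makes unnecessary. (A small imprecision on the way: $\sfd\le\sfd_g$ need not hold when $m_g<1$; only the two-sided comparability $m_g\,\sfd_\ell\le\sfd_g\le M_g\,\sfd_\ell$ is available, but comparability is all your argument actually uses.)

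One consistency point you should notice. Your own intermediate step ($g_0$ a w.u.g.\ in $\X$ implies $g^{-1}g_0$ a w.u.g.\ in $\X'$, and conversely $g\,h'$ is a w.u.g.\ in $\X$ when $h'$ is one in $\X'$) yields, after minimization, $|\rmD f|_{w,\X'}=g^{-1}|\rmD f|_{w,\X}$, i.e.\ $|\rmD f|_{w,\X}=g\,|\rmD f|_{w,\X'}$, and not \eqref{eq:506} as printed: the displayed identity in the statement has the roles of $\X$ and $\X'$ interchanged. This is a slip in the statement rather than in your reasoning — the paper's own proof establishes exactly the implication you derived, and the elementary check $X=\R$, $g\equiv 2$, $f(x)=x$ (where the minimal gradients are $1$ in $\X$ and $1/2$ in $\X'$) confirms the corrected orientation. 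So your derivation is right; only your final line transcribes the statement's typo, and you should record the conclusion in the form consistent with what you proved.
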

\begin{proof}
  Denoting by $\int_{\gamma'}$ the integration of a function $f$ along an arc $\gamma$ with respect to
  the $\delta$ arc-length, we can easily check that
  \begin{equation}
    \label{eq:505}
    \int_{\gamma'}f =\int_\gamma gf.
  \end{equation}
  It follows that if 
  $\ppi\in \calT_q$ 
  \begin{align*}
    \int\int_{\gamma'}f\,\d\ppi(\gamma)
    =\int\int_{\gamma}gf\,\d\ppi(\gamma)=
    \int gf\,\d\mu_\sppi=
    \int f\,\d\mu_\sppi',\quad\mu_\sppi':=g\mu_\sppi.
  \end{align*}
  We deduce that
  \begin{displaymath}
    m_g^q\LL^q(\mu_\sppi|\mm) \le \LL^q(\mmu_\sppi'|\mm)\le M_g^q\LL^q(\mu_\sppi|\mm) 
  \end{displaymath}
  so that $\calT_q=\calT_q'$.
  If $h\in L^p(X,\mm)$ is a weak $\calT_q$-upper gradient for $f$ in
  $\X$ we have
  \begin{align*}
    \int f\,\d(\pi_0-\pi_1)\le
    \int h\,\d\mu_\sppi=
    \int g(g^{-1}h)\,\d\mu_\sppi
    =\int g^{-1}h\,\d\mu_\sppi',
  \end{align*}
  which shows that $g^{-1}h$ is a $\calT_q'$-weak upper gradient for $f$ in $\X'$.
\end{proof}
\subsection{The approach by parametric dynamic plans}
\label{subsec:parametric}
Let us give a brief account of the definition of the
Sobolev space $W^{1,p}$ by parametric dynamic plans
\cite{AGS14I,ADS15}, i.e.~Radon
measures
on suitable subsets of $\rmC([0,1];(X,\tau))$, and their relations
with the notions we introduced in the previous Sections.

We first define the space $\mathrm{AC}^q([0,1];X)$ as the
collection of curves $\gamma\in \BVC([0,1];(X,\sfd))$ such that
$V_\gamma$ is absolutely continuous with derivative $|\dot \gamma|:=
V_\gamma'\in L^q(0,1)$. The $q$-energy of a curve $\gamma$ is defined by
\begin{equation}
  \label{eq:629}
  \rmE_q(\gamma):=\int_0^1|\dot\gamma|^q\,\d t\quad\text{if }
  \gamma \in
  \mathrm{AC}^q([0,1];X)\qquad
  \rmE_q(\gamma):=+\infty\text{ otherwise};
\end{equation}
it defines a $\tau_\rmC$-lower semicontinuous map. It follows in
particular that
$\mathrm{AC}^q([0,1];X)$ is a $F_\sigma$ (thus Borel) subset of
$\rmC([0,1];(X,\tau))$.

Recall that $\sfe_t: \rmC([0,1];(X,\tau))\to X$ is the
evaluation map $\sfe_t(\gamma)=\gamma(t)$.
\begin{definition}[Parametric $q$-test plan, \cite{AGS14I,AGS13}]
  We denote by $\rmT_q$ 
  the collection of all the Radon probability
  measures $\ssigma$ on
  $\rmC([0,1];(X,\tau))$ satisfying the following two properties:
  \begin{enumerate}[(T1)]
  \item there exists $M_\sssigma>0$ such that
  \begin{equation}
    \label{eq:628}
    (\sfe_t)_\sharp\ssigma\le M_\sssigma\,\mm\quad
    \text{for every }t\in [0,1].
  \end{equation}
\item $\ssigma$ is concentrated on
  $\mathrm{AC}^q([0,1];X)$,
  i.e.~$\ssigma\big(\rmC([0,1];(X,\tau)\setminus
  \mathrm{AC}^q([0,1];X)\big)=0$;
\end{enumerate}
We will call $\rmT_q^*$ the subset of dynamic plans in $\rmT_q$
with finite $q$-energy:
\begin{equation}
  \label{eq:631}
  \cE_q(\ssigma):=\int \rmE_q(\gamma)\,\d\ssigma(\gamma)<\infty.
\end{equation}
We will say that a set $\Sigma\subset \rmC([0,1];(X,\tau))$
is $\rmT_q$-negligible (resp.~$\rmT_q^*$-negligible) if $\ssigma(\Sigma)=0$ for every $\ssigma\in
\rmT_q$ (resp.~$\ssigma\in \rmT_q^*$).
\end{definition}
As usual, we will say that a property $P$ on curves of
$\rmC([0,1];(X,\tau))$
holds $\rmT_q$-a.e.~if the set where $P$ does not hold is $\rmT_q$-negligible.

Notice that if a set $\Sigma$ is $\rmT^*_q$-negligible 
then it is also $\rmT_q$ negligible: it is sufficient to approximate
every plan $\ssigma\in \rmT_q$ by an increasing sequence of 
plans satisfying \eqref{eq:631}.

Starting from the notion of $\rmT_q$-exceptional sets, we can
introduce
the corresponding definition of $\rmT_q$-weak upper gradient and
Sobolev space.
\begin{definition}[$\rmT_q$-weak upper gradient]
  We say that a function $f\in L^p(X,\mm)$ belongs to
  the Sobolev space
  $W^{1,p}(\X,\rmT_q)$ if there exists a function $g\in \Ldp$ such
  that
  \begin{equation}
    \label{eq:632}
    |f(\gamma(1))-f(\gamma(0))|\le \int_0^1 g(\gamma(t))|\dot
    \gamma|(t)\,\d t
  \end{equation}
  for $\rmT_q$-a.e.~$\gamma\in \mathrm{AC^q}([0,1];X)$.
  Every function $g$ with the stated property is called a
  $\rmT_q$-w.u.g.~of $f$.
\end{definition}
The properties of Sobolev functions  in $W^{1,p}(\X,\rmT_q)$
can be studied by arguments similar to the ones we presented in
\S\,\ref{subsec:Tq} and \S\,\ref{subsec:calculus_weak},
obtaining corresponding results adapted to the parametric
$\rmT_q$-setting: we refer to \cite{AGS14I,AGS13} for the
precise statements and proofs.
  

However, by adapting the arguments of \cite{ADS15}, it is possible to
prove
directly that the notions of $\calT_q$ and $\rmT_q$ weak upper
gradient coincide, obtaining the equivalence of the
corresponding Sobolev spaces
$W^{1,p}(\X,\calT_q)$ and $W^{1,p}(\X,\rmT_q)$.

First of all, it is not difficult to check that
 for every $f\in L^p(X,\mm)$ and $g\in \Ldp$
\begin{equation}
  \label{eq:633}
  \text{$g$ is a $\calT_q$-w.u.g. of $f$}\quad
  \Rightarrow\quad
  \text{$g$ is a $\rmT_q$-w.u.g. of $f$},
\end{equation}
since
for every parametric dynamic plan $\ssigma\in \rmT_q^*$
the corresponding nonparametric version
$\ppi:=\Quot_\sharp \ssigma$ belongs to $\calT_q$;
recall that we denoted by $\Quot:\rmC([0,1];(X,\tau))\to
\Arc(X,\tau)$ the quotient map.
In fact
\begin{displaymath}
  (\sfe_i)_\sharp\ppi=(\sfe_i)_\sharp\ssigma\le M_\sssigma\mm\quad i=0,1,
\end{displaymath}
and for every bounded Borel function $f:X\to \R$
\begin{align*}%
  \int \int_\gamma f\,\d\ppi(\gamma)
  &=
    \int \int_{\sfq(\eta)} f\,\d\ssigma(\eta)=
    \int \int_0^1  f(\eta(t))|\dot \eta|(t)\,\d\ssigma(\eta)
    \\&\le
    \int \Big(\int_0^1  f^p(\eta(t))\,\d
    t\Big)^{1/p}\rmE_q^{1/q}(\eta)\,\d\ssigma(\eta)
  \le
  \cE_{p}^{1/p}
  \Big(\int \int_0^1  f^p(\eta(t))\,\d t\,\d\ssigma(\eta)\Big)^{1/p}
  \\&\le
  \cE_{p}^{1/p}
  \Big(\int_0^1 \int_0^1  f^p(\sfe_t(\eta))\,\d\ssigma(\eta)\,\d
  t\Big)^{1/p}
  \le (M_\sssigma\,\cE_{p})^{1/p}\Big(\int_X f^p\,\d\mm\Big)^{1/p}.
\end{align*}
We can deduce that for every $\Gamma\subset \RA(X)$
\begin{equation}
  \label{eq:630}
  \Gamma\ \ 
  \text{is $\calT_q$-negligible}
  \quad\Rightarrow\quad
  \Quot^{-1}(\Gamma)\ \ \text{is $\rmT_q$-negligible}
\end{equation}
and therefore we get \eqref{eq:633}.
In order to prove the converse property we introduce the notion of
parametric barycenter of a Radon measure $\ssigma\in
\cMp(\rmC([0,1];(X,\tau)))$: it is the image measure
$\varrho_\sssigma:=\sfe_\sharp(\ssigma\otimes \Leb 1)\in \cMp(X)$, which
satisfies
\begin{equation}
  \label{eq:634}
  \int_X f\,\d\varrho_\sssigma=
  \int\int_0^1 f(\sfe_t(\gamma))\,\d t\,\d\ssigma(\gamma)
  \quad\text{for every $f\in \rmB_b(X)$}.
\end{equation}
We say that $\ssigma$ has \emph{parametric barycenter in $L^q(X,\mm)$}
if $\varrho_\sssigma=h_\sssigma \mm\ll\mm$ for a density
$h_\sssigma\in L^q(X,\mm)$.

The proof of the converse implication of \eqref{eq:633} is based on
the following two technical Lemmata.
\begin{lemma}
  \label{le:technical1}
  Let us suppose that $g\in \Ldp$ is a $\rmT_q$-w.u.g.~of $f\in
  L^p(X,\mm)$ and let $\ssigma\in \cMp(\rmC([0,1];(X,\tau)))$
  be a dynamic plan satisfying \eqref{eq:631} and
  \begin{equation}
    \label{eq:635}
    (\sfe_i)_\sharp \ssigma\le M\mm\ll\mm\quad i=0,1,\qquad
    \varrho_\sssigma\le M\mm\qquad\text{for a constant }M>0.
  \end{equation}
  Then \eqref{eq:632} holds for $\ssigma$-a.e.~$\gamma$.
\end{lemma}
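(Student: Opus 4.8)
The plan is to manufacture genuine $\rmT_q$-test plans out of $\ssigma$ by restricting and averaging the parameter interval, to transfer the $\rmT_q$-weak upper gradient inequality for $g$ to $\ssigma$-almost every curve on the interior of $[0,1]$, and finally to identify the boundary values of the resulting absolutely continuous representative; the last identification is where I expect the real work to lie. As preliminary reductions I would fix a Borel representative of $f$ and, by truncation invariance of $\rmT_q$-weak upper gradients (immediate from the definition, as for $\calT_q$ in Remark~\ref{rem:truncations}), assume $f$ bounded; moreover on the Borel set $\{\gamma:\int_\gamma g=+\infty\}$ the inequality \eqref{eq:632} is vacuous, so replacing $\ssigma$ by its restriction to $\{\int_\gamma g<\infty\}$ — which still satisfies \eqref{eq:631} and \eqref{eq:635} — and normalising, we may assume $\ssigma$ is a probability with $\int_\gamma g<\infty$ for $\ssigma$-a.e.\ $\gamma$. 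For $0<\delta<1/2$ set
\[
  \ssigma_\delta:=\frac1{\delta^2}\int_0^\delta\!\!\int_{1-\delta}^1 (\Restr st)_\sharp\ssigma\,\d t\,\d s,\qquad
  \ssigma'_\delta:=\frac1{1-\delta}\int_0^{1-\delta}(\Restr s{s+\delta})_\sharp\ssigma\,\d s.
\]
Since $\rmE_q(\Restr st\gamma)=(t-s)^{q-1}\int_s^t|\dot\gamma|^q\le\rmE_q(\gamma)$, both plans have $q$-energy at most $\cE_q(\ssigma)<\infty$ and are concentrated on $\mathrm{AC}^q([0,1];X)$; and for every $r\in[0,1]$ an elementary change of variables exhibits $(\sfe_r)_\sharp\ssigma_\delta$ and $(\sfe_r)_\sharp\ssigma'_\delta$ as averages of $(\sfe_v)_\sharp\ssigma$ over $v$-windows of length comparable to the normalising prefactor, or else as one of the endpoint marginals $(\sfe_i)_\sharp\ssigma$; combined with $(\sfe_i)_\sharp\ssigma\le M\mm$ and $\varrho_\sssigma=\int_0^1(\sfe_v)_\sharp\ssigma\,\d v\le M\mm$ this gives that both $(\sfe_r)_\sharp\ssigma_\delta$ and $(\sfe_r)_\sharp\ssigma'_\delta$ are $\le C_{M,\delta}\,\mm$ for all $r$, so $\ssigma_\delta,\ssigma'_\delta\in\rmT_q^*$.

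I would then apply the defining inequality \eqref{eq:632} for the $\rmT_q$-weak upper gradient $g$ along the curves of $\ssigma_\delta$ and $\ssigma'_\delta$, unwind it through $\Restr st$, and use Fubini with respect to $\Leb^2\otimes\ssigma$ (all the relevant maps being jointly Borel in $(s,t,\gamma)$); arguing as in the proof of Lemma~\ref{prop:Rovereto}, but now with these $\rmT_q$-plans in place of the stretchable $\calT_q$-plans used there, one obtains, for $\ssigma$-a.e.\ $\gamma$,
\[
  |f(\gamma(t))-f(\gamma(s))|\le\int_s^t g(\gamma(u))\,|\dot\gamma|(u)\,\d u\qquad\text{for $\Leb^2$-a.e.\ $(s,t)\in(0,1)^2$, $s<t$.}
\]
Since $g(\gamma(\cdot))\,|\dot\gamma|(\cdot)\in L^1(0,1)$ for $\ssigma$-a.e.\ $\gamma$, Lemma~\ref{le:dopo} (in the $L^1$ form, cf.\ Lemma~\ref{le:tedious-wug}) yields that $f\circ\gamma$ agrees $\Leb^1$-a.e.\ with an absolutely continuous $\bar f_\gamma$ on $[0,1]$ satisfying $|\bar f_\gamma'|\le g(\gamma)\,|\dot\gamma|$ a.e., whence in particular $|\bar f_\gamma(1)-\bar f_\gamma(0)|\le\int_0^1 g(\gamma(u))\,|\dot\gamma|(u)\,\d u$ for $\ssigma$-a.e.\ $\gamma$.

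It remains to prove that $\bar f_\gamma(i)=f(\gamma(i))$ for $i=0,1$ and $\ssigma$-a.e.\ $\gamma$, and then \eqref{eq:632} is immediate. This is the delicate step, and the main obstacle of the whole lemma: it is precisely here that one needs the endpoint bounds $(\sfe_i)_\sharp\ssigma\le M\mm$ of \eqref{eq:635} and not merely the barycentric bound, because no $\rmT_q$-plan built by restriction and averaging from $\ssigma$ can keep the true endpoint distributions $(\sfe_i)_\sharp\ssigma$ while having all of its intermediate marginals dominated by $\mm$, so the values of $f$ at $\gamma(0),\gamma(1)$ have to be recovered indirectly. The route I would pursue is to invoke the $\rmT_q$-analogue of the good-representative Theorem~\ref{thm:representative} to obtain a representative $\tilde f$ of $f$ which is absolutely continuous with Sobolev upper gradient $g$ along $\Md$-a.e.\ arc and which coincides, at the endpoints of such arcs, with the boundary values of its own a.c.\ restriction; then $f(\gamma(i))=\tilde f(\gamma(i))$ for $\ssigma$-a.e.\ $\gamma$ because $(\sfe_i)_\sharp\ssigma\ll\mm$, while $\tilde f(\gamma(i))=\bar f_\gamma(i)$ for $\ssigma$-a.e.\ $\gamma$ by the defining property of $\tilde f$, after one further restriction of $\ssigma$ to arcs passing through points reachable by a curve of finite $g$-integral (which again preserves \eqref{eq:631}--\eqref{eq:635}). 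A purely analytic alternative replaces this by the identity $\bar f_\gamma(0)=\lim_{h\downarrow0}\tfrac1h\int_0^h f(\gamma(s))\,\d s$ together with the claim $\tfrac1h\int_0^h\!\big(\int|f(\gamma(s))-f(\gamma(0))|\,\d\ssigma(\gamma)\big)\,\d s\to0$, split via $f=f_\eps+(f-f_\eps)$ with $f_\eps\in\rmC_b(X)$ close to $f$ in $L^1(\mm)$: the $f_\eps$-part vanishes by the weak convergence of $(\sfe_0,\sfe_s)_\sharp\ssigma$ to the diagonal pushforward and continuity of $f_\eps$, and the remainder is controlled by $(\sfe_0)_\sharp\ssigma\le M\mm$ and $\varrho_\sssigma\le M\mm$; the genuinely subtle point — which I would expect to be the crux — is to make this last estimate uniform as $h\downarrow0$.
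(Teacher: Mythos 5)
Your construction of the averaged plans $\ssigma_\delta,\ssigma'_\delta$ and the resulting interior inequality $|f(\gamma(t))-f(\gamma(s))|\le\int_s^t g(\gamma(u))|\dot\gamma|(u)\,\d u$ for $\Leb^2$-a.e.\ $(s,t)$ is sound, but the proof is not complete: everything hinges on the endpoint identification $\bar f_\gamma(i)=f(\gamma(i))$, and neither of your two proposed closings works as stated. Route (a) is circular in the context where this lemma is used: Theorem \ref{thm:representative} (and the chain Theorem \ref{thm:Rovereto} $\to$ Proposition \ref{prop:Sobreg} leading to it) takes a \emph{$\calT_q$}-weak upper gradient as hypothesis, while here you only have a $\rmT_q$-w.u.g., and the whole purpose of this lemma (together with Lemma \ref{le:technical2}) is precisely to deduce the $\calT_q$-property from the $\rmT_q$-one; a ``$\rmT_q$-analogue'' of the good-representative theorem is not available and would moreover import completeness and Souslin assumptions absent from the statement. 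Route (b) fails quantitatively for the reason you yourself flag: at interior times you only control the averaged marginal, $\frac1h\int_0^h(\sfe_s)_\sharp\ssigma\,\d s\le \frac{M}{h}\mm$, so the remainder term is bounded by $\frac{M}{h}\|f-f_\eps\|_{L^1(\mm)}$ plus $M\|f-f_\eps\|_{L^1(\mm)}$, which is not uniform as $h\downarrow0$; no choice of $\eps=\eps(h)$ rescues this without a rate for the continuous part, which you do not have.

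The missing idea, and the way the paper's proof sidesteps the endpoint problem entirely, is to replace the restrict-and-stretch maps $\Restr st$ (whose images have \emph{interior} points of $\gamma$ as endpoints) by shift-and-freeze maps: $D^+[\gamma,r](t):=\gamma((r+t)\land1)$ and $D^-[\gamma,s](t):=\gamma((t-s)\lor0)$, averaged over $r,s\in(1/3,2/3)$. The curve $D^+[\gamma,r]$ still \emph{ends at the true endpoint} $\gamma(1)$ and starts at $\gamma(r)$ (symmetrically $D^-$ starts at $\gamma(0)$); its $q$-energy does not exceed $\rmE_q(\gamma)$, its marginal at times $t\ge 2/3$ is exactly $(\sfe_1)_\sharp\ssigma\le M\mm$, and at earlier times it is bounded by $4M\mm$ using $\varrho_\sssigma\le M\mm$ together with the endpoint bound. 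Hence both averaged plans lie in $\rmT_q$, and the $\rmT_q$-w.u.g.\ inequality along them \emph{already contains the genuine values} $f(\gamma(1))$, resp.\ $f(\gamma(0))$ — no absolutely continuous representative and no boundary identification are ever needed. Fubini then gives, for $\ssigma$-a.e.\ $\gamma$, a common parameter $r=1-s$ at which $|f(\gamma(1))-f(\gamma(r))|\le\int_r^1 g(\gamma)|\dot\gamma|$ and $|f(\gamma(1-s))-f(\gamma(0))|\le\int_0^{1-s}g(\gamma)|\dot\gamma|$ both hold, and adding the two inequalities yields \eqref{eq:632}. If you want to keep your own framework, you must replace $\Restr st$ by maps of this frozen-endpoint type (or supply an argument of equivalent strength for the boundary values); as it stands the crux is still open.
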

\begin{proof}
  The argument is similar (but simpler) than the one used for
  the proof of Theorem \ref{thm:Rovereto}.
  For $0\le r<s\le 1$ we consider the Borel maps
  $D^+_r,D^-_s:\rmC([0,1],X)\times [0,1]\to \rmC([0,1];(X,\tau))$ defined by
  \begin{displaymath}
    D^+[\gamma,r](t):=\gamma((r+t)\land 1),\quad
    D^-[\gamma,s](t):=\gamma((t-s)\lor 0).
  \end{displaymath}
  We then set $\lambda:=3\Leb 1\res(1/3,2/3)$ and
  which can also be characterized as
  \begin{displaymath}
    \ssigma^+=(D^+)_\sharp(\ssigma\otimes \lambda),\quad
    \ssigma^-=(D^-)_\sharp(\ssigma\otimes \lambda).
  \end{displaymath}
  We easily get for every $t\ge 2/3$
  $(\sfe_t)_\sharp\ssigma^+=(\sfe_1)_\sharp\ssigma\le
  M\mm$, whereas for every $t\in [0,2/3)$ and every nonnegative Borel $f:X\to\R$
 \begin{align*}
   \int f(\sfe_t(\gamma))\,\d\ssigma^+(\gamma)
     &=
         3\int_{1/3}^{2/3}\Big(
      \int f(\gamma((t+r)\land 1))\,\d\ssigma(\gamma)\Big)\,\d r
    \\&  =
   3\int_{1/3}^{(1-t)\land 2/3} \Big(
   \int f(\gamma(r+t))\,\d\ssigma(\gamma)\Big)\,\d r
   +3(1/3-t)_+\int f(\gamma(1))\,\d\ssigma(\gamma)
   \\&\le 
   3\int_X f\,\d\varrho_\sssigma+\int_X f\,\d(\sfe_1)_\sharp\ssigma
   \le 4M \int_X f\,\d\mm
 \end{align*}
 so that $(\sfe_t)_\sharp\ssigma^+\le 4M\mm$ for every $t\in [0,1]$.
 An analogous calculation holds for $\ssigma^-$, so that both satisfy
 \eqref{eq:268}.
 Since $\rmE_q(D^\pm(\gamma,r))\le \rmE_q(\gamma)$ for every $r\in
 [0,1]$ we also get \eqref{eq:631}.
 We deduce that $\ssigma^+,\ssigma^-$ belong to $\rmT_q$
 so that \eqref{eq:632} holds for $\ssigma^+$ and
 $\ssigma^-$-a.e.~curve $\gamma$. Applying Fubini's theorem, we can
 find
 a common Borel and $\ssigma$-negligible set $N\subset \rmC([0,1];(X,\tau))$
 such that \eqref{eq:632} for every $\gamma\in \AC^q([0,1];X)\setminus
 N$
 \begin{align*}
   |f(\gamma(1-s))-f(\gamma(0)|
   &=|f(D^-[\gamma,s](1))-f(D^-[\gamma,s](0))|
   \\&  \le \int_0^1
     g(\gamma((t-s)\lor 0))|\dot\gamma((t-s)\lor 0)|\,\d t
   \\&=
   \int_0^{1-s}g(\gamma(t))|\dot \gamma|(t)\,\d t\quad
   \text{for a.e.~$s\in (1/2,3/2)$}
 \end{align*}
 and similarly
  \begin{align*}
   |f(\gamma(1))-f(\gamma(r)|
   &=|f(D^+[\gamma,r](1))-f(D^+[\gamma,r](0))|
   \\&  \le \int_0^1
    g(\gamma((t+r)\land 1))|\dot\gamma((t-s)\land 1)|\,\d t
   \\&=
    \int_r^{1}g(\gamma(t))|\dot \gamma|(t)\,\d t\quad
    \text{for a.e.~$r\in (1/2,3/2)$}
  \end{align*}
  For every $\gamma\in \mathrm{AC}^q([0,1];X)$
  we can thus find a common value $r=1-s\in
  (1/2,3/2)$ such that the previous inequality hold, obtaining
  \begin{displaymath}
    |f(\gamma(1))-f(\gamma(0)|\le \int_0^{1-s}g(\gamma(t))|\dot
    \gamma|(t)\,\d t
    +\int_r^{1}g(\gamma(t))|\dot \gamma|(t)\,\d t=
    \int_0^{1}g(\gamma(t))|\dot \gamma|(t)\,\d t,
  \end{displaymath}
  which yields \eqref{eq:632}.
\end{proof}
The second Lemma is a reparametrization technique
taken from \cite[Theorem 8.5]{ADS15}.
\begin{lemma}
  \label{le:technical2}
  For every nonparametric dynamic plan
  $\ppi\in \calT_q^*$ 
  there exists a parametric dynamic plan $\ssigma$
  satisfying \eqref{eq:635} such that
  \begin{equation}
    \label{eq:636}
    \ppi\ll \Quot_\sharp\ssigma.
  \end{equation}
\end{lemma}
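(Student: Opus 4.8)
The plan is to produce the parametric plan $\ssigma$ by \emph{time‑reparametrizing} the arc‑length lift of $\ppi$ so that the reparametrized curves move fast through the regions where the barycentric density $h_\sppi$ of $\mu_\sppi$ is large. Since a time‑reparametrization does not change the underlying arc, $\Quot_\sharp$ of the new plan will still ``see'' $\ppi$, so the absolute continuity $\ppi\ll\Quot_\sharp\ssigma$ will come essentially for free; the whole point is to arrange the bounds in \eqref{eq:635} — in particular the \emph{bounded} density of the parametric barycenter $\varrho_\sssigma$ — together with finite $q$‑energy \eqref{eq:631}.

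First I would fix the structure of $\ppi\in\calT_q^*$ from Definition \ref{def:testplan}: there are $c>0$ and a compact $\rmK\subset\RA(X)$ with $\ppi(\RA(X)\setminus\rmK)=0$, $(\sfe_i)_\sharp\ppi\le c\mm$ ($i=0,1$), and $\ell\restr\rmK$ bounded, continuous and strictly positive; write $\ell_o:=\min_\rmK\ell>0$, $\ell_1:=\max_\rmK\ell<\infty$, so $\ppi$ is concentrated on $\rmK\subset\RA_0(X)$ and $\Al$ is continuous on $\rmK$ by Theorem \ref{thm:important-arcs}(a). Since $\brq q(\ppi)<\infty$ we have $\mu_\sppi=h_\sppi\mm$ with a nonnegative $h_\sppi\in L^q(X,\mm)$, which I fix as a Borel function, and I set $v:=1+h_\sppi\ge1$. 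For $\ppi$‑a.e.\ $\gamma$ the map $t\mapsto v(\Al_\gamma(t))$ is a.e.\ finite (the set $\{v=\infty\}$ is $\mm$‑null, hence $\nu_\gamma$‑null for $\ppi$‑a.e.\ $\gamma$, cf.\ \eqref{eq:91}), so that $Z_\gamma:=\int_0^1 v(\Al_\gamma(t))^{-1}\,\d t\in(0,1]$, and $\psi_\gamma(s):=Z_\gamma^{-1}\int_0^s v(\Al_\gamma(t))^{-1}\,\d t$ is an increasing absolutely continuous homeomorphism of $[0,1]$. I then define $\eta_\gamma:=\Al_\gamma\circ\psi_\gamma^{-1}$, $S(\gamma):=\eta_\gamma$; standard measurability arguments (as in Theorem \ref{thm:important-arcs}(e), using continuity of $\Al$ on $\rmK$) make $\gamma\mapsto Z_\gamma$ and $S$ Borel on $\rmK$. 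A change of variables $u=\psi_\gamma(t)$ together with \eqref{eq:210} yields the two identities that are the computational core:
\[
 |\dot\eta_\gamma|(u)=\ell(\gamma)\,Z_\gamma\,v(\eta_\gamma(u)),\qquad
 \rmE_q(\eta_\gamma)=\ell(\gamma)^{q-1}Z_\gamma^{q-1}\!\int_\gamma v^{q-1},\qquad
 \int_0^1 f(\eta_\gamma(u))\,\d u=\frac{1}{Z_\gamma\ell(\gamma)}\int_\gamma \frac fv
\]
for $f\ge 0$ Borel. Integrating the middle identity against $\ppi$ and using $\ell\le\ell_1$, $Z_\gamma\le1$ and $\int_\gamma v^{q-1}\le\int_X v^{q-1}\,\d\mu_\sppi=\int_X(1+h_\sppi)^{q-1}h_\sppi\,\d\mm\le 2^{q-1}\big(\mm(X)+\brq q^q(\ppi)\big)<\infty$ shows $\cE_q(S_\sharp\ppi)<\infty$; in particular $\eta_\gamma\in\AC^q([0,1];X)$ for $\ppi$‑a.e.\ $\gamma$. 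Moreover $\eta_\gamma(i)=\Al_\gamma(i)=\gamma_i$, so the endpoint marginals of $S_\sharp\ppi$ are $(\sfe_i)_\sharp\ppi\le c\mm$.

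The main difficulty is the barycenter bound, because the factor $Z_\gamma^{-1}$ is \emph{not} uniformly bounded in $\gamma$. To handle it I would split $\rmK$ (up to a $\ppi$‑null set) into the Borel pieces $\rmK^j:=\{\gamma\in\rmK:2^{-j-1}<Z_\gamma\le2^{-j}\}$, $j\in\N$, put $\ssigma^j:=S_\sharp(\ppi\restr{\rmK^j})$, and compute, for $f\ge0$ Borel, using the third identity above, $Z_\gamma^{-1}\le2^{j+1}$ and $\ell(\gamma)^{-1}\le\ell_o^{-1}$ on $\rmK^j$, and $h_\sppi/v\le1$:
\[
 \int_X f\,\d\varrho_{\ssigma^j}=\int_{\rmK^j}\frac{1}{Z_\gamma\ell(\gamma)}\Big(\int_\gamma\frac fv\Big)\d\ppi(\gamma)
 \le\frac{2^{j+1}}{\ell_o}\int_X\frac fv\,\d\mu_\sppi
 =\frac{2^{j+1}}{\ell_o}\int_X f\,\frac{h_\sppi}{1+h_\sppi}\,\d\mm\le\frac{2^{j+1}}{\ell_o}\int_X f\,\d\mm,
\]
so $\varrho_{\ssigma^j}\le(2^{j+1}/\ell_o)\mm$. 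Together with $(\sfe_i)_\sharp\ssigma^j\le c\mm$ and $\cE_q(\ssigma^j)<\infty$ (uniformly in $j$, by the previous paragraph), each $\ssigma^j$ satisfies \eqref{eq:635} and \eqref{eq:631}. Finally I would set $\ssigma:=\sum_{j}c_j\ssigma^j$ with $c_j>0$ chosen summable and with $\sum_j c_j 2^{j+1}/\ell_o<\infty$ (e.g.\ $c_j:=\ell_o 2^{-2j-1}$); then $\ssigma$ is a finite Radon measure, $\varrho_\sssigma=\sum_j c_j\varrho_{\ssigma^j}$ has bounded density, $(\sfe_i)_\sharp\ssigma\le c\big(\sum_j c_j\big)\mm$, and $\cE_q(\ssigma)=\sum_j c_j\cE_q(\ssigma^j)<\infty$, so \eqref{eq:635} and \eqref{eq:631} hold. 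Since $\psi_\gamma^{-1}\in\Sigma'$ we have $\Quot(\eta_\gamma)=[\Al_\gamma]=\gamma$, i.e.\ $\Quot\circ S=\mathrm{id}$ on $\rmK$, whence $\Quot_\sharp\ssigma=\sum_j c_j\,\ppi\restr{\rmK^j}$; as every $c_j>0$ and $\ppi$ is concentrated on $\bigsqcup_j\rmK^j$, this measure vanishes exactly on the $\ppi$‑null Borel sets, i.e.\ $\ppi\ll\Quot_\sharp\ssigma$, which is \eqref{eq:636}. The essential obstacle is precisely this $Z_\gamma^{-1}$ non‑uniformity, which forces the dyadic decomposition and recombination; everything else is the bounded change of variables above plus routine measurability bookkeeping.
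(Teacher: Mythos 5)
Your construction is correct and is essentially the approach the paper itself relies on: the paper gives no proof of this lemma, deferring to \cite[Theorem 8.5]{ADS15}, and that argument is precisely your reparametrization of each arc with speed proportional to $1+h_\sppi$ along the curve, so that the change of variables turns the time integral $\int_0^1 f(\eta_\gamma(u))\,\d u$ into $\tfrac{1}{Z_\gamma\ell(\gamma)}\int_\gamma f/(1+h_\sppi)$ and the barycentric identity gives the bounded parametric barycenter. Your dyadic splitting in $Z_\gamma$ is a perfectly workable way to absorb the unbounded normalization $1/(Z_\gamma\ell(\gamma))$ (equivalently, one can weight each curve by $Z_\gamma\ell(\gamma)$, i.e.\ take $\ssigma:=S_\sharp\big(w\,\ppi\big)$ with $w(\gamma):=Z_\gamma\ell(\gamma)\le \ell_1$, which removes the factor in one stroke and still gives $\Quot_\sharp\ssigma=w\,\ppi\gg\ppi$); the only step you should spell out a bit more is the Lusin $\ppi$-measurability of $\gamma\mapsto\eta_\gamma$ needed to form the push-forward, which is indeed routine in the Polish/Souslin setting of \cite{ADS15} and is left at the same implicit level by the paper in this subsection.
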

Combining Lemma \ref{le:technical1} and \ref{le:technical2} we obtain
the following result, which shows the
equivalence of the parametric and nonparametric approaches.
\begin{corollary}
  \label{cor:paravsnonpara}
  For every $f\in L^p(X,\mm)$ and $g\in \Ldp$
\begin{equation}
  \label{eq:633bis}
  \text{$g$ is a $\calT_q$-w.u.g. of $f$}\quad
  \Longleftrightarrow\quad
  \text{$g$ is a $\rmT_q$-w.u.g. of $f$}.
\end{equation}
In particular $W^{1,p}(\X,\calT_q)=W^{1,p}(\X,\rmT_q)$.
\end{corollary}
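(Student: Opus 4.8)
The plan is to establish the two implications in \eqref{eq:633bis} separately and then read off the identity of the Sobolev spaces. One direction, namely that every $\calT_q$-weak upper gradient is a $\rmT_q$-weak upper gradient, has already been obtained in \eqref{eq:633} as a consequence of the inclusion $\Quot_\sharp\ssigma\in\calT_q$ for $\ssigma\in\rmT_q^*$ and of the implication \eqref{eq:630}. It therefore only remains to prove the converse: if $g\in\Ldp$ is a $\rmT_q$-weak upper gradient of $f\in L^p(X,\mm)$, then $g$ is a $\calT_q$-weak upper gradient of $f$, i.e.\ $|\int_{\partial\gamma}f|\le\int_\gamma g$ for $\calT_q$-a.e.\ $\gamma\in\RA(X)$.

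For the converse I would argue as follows. Since the inequality is satisfied trivially along constant arcs (both sides vanish), by Lemma \ref{le:nice} it suffices to show that for every $\ppi\in\calT_q^*$ the set of arcs where it fails is $\ppi$-negligible. Fix such a $\ppi$. By Lemma \ref{le:technical2} there exists a parametric dynamic plan $\ssigma$ satisfying \eqref{eq:635} with $\ppi\ll\Quot_\sharp\ssigma$; since $\ppi$ is concentrated on a compact set on which $\ell$ is bounded and the reparametrization underlying Lemma \ref{le:technical2} can be taken with constant-speed curves, the resulting $\ssigma$ also satisfies the finite $q$-energy condition \eqref{eq:631}. Lemma \ref{le:technical1} then yields that \eqref{eq:632} holds for $\ssigma$-a.e.\ $\gamma$. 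Now for any $\gamma\in\mathrm{AC}^q([0,1];X)$ the right-hand side of \eqref{eq:632} equals $\int_{\Quot(\gamma)}g$ by the definition of the line integral and its invariance under reparametrization, cf.\ \eqref{eq:210} and \eqref{eq:285}, while the left-hand side equals $|\int_{\partial\Quot(\gamma)}f|$ since the initial and final points of $\gamma$ and of $\Quot(\gamma)$ coincide. Hence the weak upper gradient inequality holds for $\Quot_\sharp\ssigma$-a.e.\ arc; the exceptional set is Borel (the relevant maps on $\RA(X)$ being Borel by Theorem \ref{thm:important-arcs}(e), after passing to a Borel representative of $f$ as in Proposition \ref{prop:invarianza}), and $\ppi\ll\Quot_\sharp\ssigma$, so it is also $\ppi$-negligible. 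As $\ppi\in\calT_q^*$ was arbitrary, Lemma \ref{le:nice} gives that $g$ is a $\calT_q$-weak upper gradient of $f$.

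Combining the two implications, a nonnegative $g\in\Ldp$ is a $\calT_q$-weak upper gradient of $f\in L^p(X,\mm)$ if and only if it is a $\rmT_q$-weak upper gradient of $f$. In particular the two classes of Sobolev functions coincide as subsets of $L^p(X,\mm)$, the minimal weak upper gradients coincide $\mm$-a.e.\ (being in each case the element of least $L^p$-norm in the common class of weak upper gradients, cf.\ the discussion after Definition \ref{def:minimalwug}), and therefore $W^{1,p}(\X,\calT_q)=W^{1,p}(\X,\rmT_q)$ with equal norms by \eqref{eq:471}. I expect the only delicate point to be the reverse implication, and within it the verification that the plan $\ssigma$ furnished by Lemma \ref{le:technical2} simultaneously meets the energy bound \eqref{eq:631} and the marginal/barycenter bounds \eqref{eq:635} needed to invoke Lemma \ref{le:technical1}, together with the careful passage between the parametric inequality \eqref{eq:632} and its nonparametric counterpart via the arc-length reparametrization; once this is in place, the absolute-continuity relation $\ppi\ll\Quot_\sharp\ssigma$ transfers the $\mathrm{a.e.}$ property for free.
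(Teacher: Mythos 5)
Your argument is correct and follows essentially the same route as the paper: Lemma \ref{le:technical2} produces a parametric plan $\ssigma$ with $\ppi\ll\Quot_\sharp\ssigma$, Lemma \ref{le:technical1} gives \eqref{eq:632} for $\ssigma$-a.e.\ curve, and absolute continuity transfers the (reparametrization-invariant) inequality to $\ppi$-a.e.\ arc, the only cosmetic difference being that you conclude via Lemma \ref{le:nice} while the paper integrates and invokes Lemma \ref{le:equivalent1}. Your explicit check that the plan furnished by Lemma \ref{le:technical2} also satisfies the energy bound \eqref{eq:631} required by Lemma \ref{le:technical1} addresses a point the paper leaves implicit, and is handled correctly.
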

\begin{proof}
  We have only to prove the converse implication of \eqref{eq:633}.
  Let $g\in \Ldp$ be a $\rmT_q$-w.u.g.~and let $\ppi\in \calT_q^*$.
  By Lemma \ref{le:technical2} there exists
  a parametric dynamic plan $\ssigma$ satisfying \eqref{eq:635}
  such that $\ppi\ll\Quot_\sharp\ssigma$.
  By Lemma \ref{le:technical1} we know that
  \eqref{eq:632} holds for $\ssigma$-a.e.~curve, i.e.
  \begin{displaymath}
    |f(\gamma_1)-f(\gamma_0)|\le \int_{\gamma}g
    \quad\text{for $\Quot_\sharp\ssigma$-a.e.~$\gamma\in \RA(X)$}.
  \end{displaymath}
  Since $\ppi\ll\Quot_\sharp\ssigma$ we deduce that \eqref{eq:440}
  holds as well, so that we can apply Lemma \ref{le:equivalent1}.
\end{proof}

\begin{remark}\upshape As for Cheeger's energy and the relaxed gradient, if no additional
assumption on $(X,\tau,\sfd,\mm)$ is made, it is well possible that
the weak upper gradient is trivial. We will discuss this issue in the
next Theorem \ref{thm:triviality}.
\end{remark}

\subsection{Notes}
\label{subsec:notes10}
\begin{notes}
  \Para{\ref{subsec:Tq}} and \Para{\ref{subsec:calculus_weak}}
  contain new definitions of weak upper gradient and weak Sobolev spaces based on the
  class of $\calT_q$-weak upper gradients. It has some useful
  characteristics:
  \begin{itemize}[-]\itemsep-3pt
  \item it involves measures on nonparametric arcs; notice that
    the notion of upper gradient is inherently invariant
    w.r.t.~parametrization,
    so arcs provide a natural setting;
  \item it is invariant w.r.t.~modification on $\mm$-negligible sets;
  \item it seems quite close to the class $\Bar q\mm$: one has only
    to
    add the control of the initial and final points of the arcs
  \item the corresponding Modulus $\tMd$ is strictly related to $\Md$,
    so that via the selection of a ``good representative'' the
    Sobolev class $W^{1,p}(\X,\calT_q)$ coincides with $N^{1,p}(\X)$;
  \item it is directly connected with the dual of the Cheeger energy.
  \end{itemize}
  Of course, the study of the properties of the $\calT_q$
  w.u.g.~retains
  many ideas of the corresponding analysis based on
  Radon measures on parametric curves \cite{AGS14I,AGS13}
  as the stability, the Sobolev property along $\calT_q^*$-a.e.~arc,
  the chain rule.
  The rescaling technique of Theorem \ref{thm:Rovereto}
  has been also used in \cite{ADS15}.\\
  It is worth noticing that Corollary \ref{cor:isomorphism}
  could also be derived as a consequence of
  Theorem \ref{thm:main-identification-complete},
  as in \cite{AGS14I,AGS13}. Here we followed the
  more direct approach of \cite{ADS15}, which shows
  the closer link between $W^{1,p}$ and $N^{1,p}$.

  \Para{\ref{subsec:link}} combines various methods introduced by
  \cite{ADS15}: apart from some topological aspects,
  Theorem \ref{thm:pourri} is a particular case of the identity
  between Modulus and Content at the level of collection of Radon
  measures,
  Proposition \ref{prop:Sobreg} uses the invariance of the Sobolev
  property
  by restriction and Theorem \ref{thm:representative}
  is strongly inspired by \cite[Theorem 10.3]{ADS15}.

  \Para{\ref{subsec:invariance-weak}} contains
  natural invariance properties of weak Sobolev spaces:
  the most important one is \eqref{eq:497is}
  of Theorem \ref{thm:invariance-weak},
  which will play a crucial role in the final part of the proof
  of the identification Theorem \ref{thm:main-identification-complete}.

  \Para{\ref{subsec:parametric}}
    contains a brief discussion of the
    equivalence between the nonparametric and
    parametric approaches to weak upper gradients and weak Sobolev
    spaces.
    It uses some of the arguments of \cite{ADS15}
    to show that the two approaches lead to equivalent definitions.
\end{notes}

\section{Identification of Sobolev spaces}
\label{sec:Identification}
\GGG
\newcommand{\Path}[1]{\operatorname{ArcL}(#1)}
\newcommand{\ArcL}[1]{\operatorname{ArcL}(#1)}

In this Section we will prove the main identification Theorem for
the Sobolev spaces $\Sob^{1,p}(\X,\AA)$ and $W^{1,p}(\X,\calT_q)$
when $(X,\sfd)$ is complete.
As a first step we study a dual characterization of the weak
$(\calT_q,p)$-energy.
\index{Dual Cheeger energy}
\subsection{Dual Cheeger energies}
\label{subsec:dual-Cheeger}
For every $\mu_0,\mu_1\in \cMp(X)$ we
will introduce the (possibly empty) set
\begin{equation}
  \label{eq:480}
  \Pi(\mu_0,\mu_1):=\Big\{\ppi\in \cMp(\RA(X)),\quad
  (\sfe_{i})_\sharp\ppi=\mu_i\Big\},
\end{equation}
and we define the 
cost functional 
\begin{equation}
  \label{eq:191}
  \DD_q(\mu_0,\mu_1):=\inf\Big\{\brq q^q(\ppi):\ppi\in
  \Pi(\mu_0,\mu_1)\Big\},
  \quad
  \DD_q(\mu_0,\mu_1)=+\infty\quad\text{if }\Pi(\mu_0,\mu_1)=\emptyset.
\end{equation}
Notice that $\Pi(\mu_0,\mu_1)$ is surely empty if $\mu_0(X)\neq
\mu_1(X)$.

Let us check that if $\DD_q(\mu_0,\mu_1)<+\infty$
and $(X,\sfd)$ is complete, then the infimum in \eqref{eq:191} is
attained. Notice that $\Pi(\mu_0,\mu_1)$ is 
a closed convex subset of $\cMp(\Arc(X))$.
\begin{lemma}
  \label{le:attained}
  Let us suppose that $(X,\sfd)$ is complete.
  For every $\mu_0,\mu_1\in \cMp(X)$, if $\DD_q(\mu_0,\mu_1)<\infty$ then
  there exists a minimizer $\ppi_{\rm min}\in \Pi(\mu_0,\mu_1)$ which realizes
  the infimum
  in \eqref{eq:191}.
  The set $\Pi_o(\mu_0,\mu_1)$ of optimal plans is a compact convex
  subset of $\cMp(\RA(X))$ and for every $\ppi\in \Pi_o(\mu_0,\mu_1)$ 
  the induced measure $\mu_\sppi$ 
  is uniquely determined and is independent of the choice of the minimizer.
\end{lemma}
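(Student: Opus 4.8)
The plan is a standard direct–method argument: produce $\ppi_{\rm min}$ as a weak limit point of a minimizing sequence, extracting such a limit via the equi-tightness criterion of Lemma~\ref{le:RA-tight}. First I would fix a minimizing sequence $(\ppi_n)_{n\in\N}\subset\Pi(\mu_0,\mu_1)$ with $\brq q^q(\ppi_n)\to\DD_q(\mu_0,\mu_1)=:D<\infty$; such a sequence exists since $\DD_q<\infty$ forces $\Pi(\mu_0,\mu_1)\neq\emptyset$ by the convention in \eqref{eq:191}. Then I check that $\{\ppi_n\}$ satisfies hypotheses (T1)--(T2) of Lemma~\ref{le:RA-tight}. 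Condition (T1) holds because $\ppi_n(\RA(X))=(\sfe_0)_\sharp\ppi_n(X)=\mu_0(X)$ is constant and $\brq q(\ppi_n)$ is bounded (the sequence being minimizing). For (T2), since $\mu_0$ is Radon it is tight, so for every $\eps>0$ there is a $\tau$-compact $H_\eps\subset X$ with $\mu_0(X\setminus H_\eps)\le\eps$; as $\gamma_0=\sfe_0(\gamma)\in\sfe(\gamma)$ we have $\{\gamma:\sfe(\gamma)\cap H_\eps=\emptyset\}\subseteq\{\gamma:\gamma_0\notin H_\eps\}$, hence $\ppi_n(\{\gamma:\sfe(\gamma)\cap H_\eps=\emptyset\})\le\mu_0(X\setminus H_\eps)\le\eps$ for all $n$. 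Here the completeness of $(X,\sfd)$ is used, through Theorem~\ref{thm:important-arcs}(g) inside Lemma~\ref{le:RA-tight}. Thus $\{\ppi_n\}$ is relatively compact in $\cMp(\RA(X))$ and admits a weak limit point $\ppi_{\rm min}$, obtained along a suitable subnet.

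The second step is to show $\ppi_{\rm min}\in\Pi_o(\mu_0,\mu_1)$. The set $\Pi(\mu_0,\mu_1)$ is weakly closed, because the maps $\ppi\mapsto(\sfe_i)_\sharp\ppi$, $i=0,1$, are continuous from $\cMp(\RA(X))$ to $\cMp(X)$ (indeed $f\circ\sfe_i\in\rmC_b(\RA(X))$ whenever $f\in\rmC_b(X)$, by the continuity of $\sfe_i$, Lemma~\ref{le:reparam-arcs}(d)); hence $(\sfe_i)_\sharp\ppi_{\rm min}=\mu_i$. Moreover $\brq q^q$ is weakly lower semicontinuous on $\cMp(\RA(X))$ (recall $q>1$; this is the lower semicontinuity property recorded after Definition~\ref{def:barycenter}, which can also be read off from \eqref{eq:291} using that $\gamma\mapsto\int_\gamma f$ is nonnegative and $\tau_\rmA$-lower semicontinuous for $f\in\rmC_b(X)$, $f\ge0$, by Lemma~\ref{le:reparam-arcs}(e), together with \eqref{eq:324lsc} and \eqref{eq:Beppo_Levi_general}). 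Consequently $\brq q^q(\ppi_{\rm min})\le\liminf_n\brq q^q(\ppi_n)=D$, and since the reverse inequality is immediate from the definition of $\DD_q$, equality holds, so $\ppi_{\rm min}$ is a minimizer.

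For the structural assertions on $\Pi_o(\mu_0,\mu_1)$: convexity holds since $\Pi(\mu_0,\mu_1)$ is convex and $\brq q^q$ is convex (as $\brq q$ is convex and $t\mapsto t^q$ is convex and nondecreasing on $[0,\infty)$). Closedness follows by writing $\Pi_o(\mu_0,\mu_1)=\Pi(\mu_0,\mu_1)\cap\{\ppi:\brq q^q(\ppi)\le D\}$ as the intersection of a weakly closed set with a sublevel set of the lsc functional $\brq q^q$; and $\Pi_o(\mu_0,\mu_1)$ is contained in the relatively compact family furnished by Step~1 with $C_2:=D^{1/q}$, so being a closed subset of a relatively compact set it is compact. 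For uniqueness of $\mu_\sppi$, given $\ppi_0,\ppi_1\in\Pi_o(\mu_0,\mu_1)$ the midpoint $\bar\ppi:=\tfrac12(\ppi_0+\ppi_1)$ again lies in $\Pi_o(\mu_0,\mu_1)$, and since $\ppi\mapsto\mu_\sppi$ is linear (by \eqref{eq:289}), writing $\mu_{\ppi_i}=h_i\mm$, $\mu_{\bar\ppi}=\bar h\mm$ with $\bar h=\tfrac12(h_0+h_1)$, all in $L^q(X,\mm)$ since the entropies are finite. Using $\tfrac1q\brq q^q(\ppi)=\LL_q(\mu_\sppi|\mm)=\tfrac1q\int h_\sppi^q\,\d\mm$ and the identity $\brq q^q(\bar\ppi)=\tfrac12\brq q^q(\ppi_0)+\tfrac12\brq q^q(\ppi_1)$ forced by optimality and convexity, one gets $\int\bar h^q\,\d\mm=\tfrac12\int h_0^q\,\d\mm+\tfrac12\int h_1^q\,\d\mm$; since $t\mapsto t^q$ is strictly convex for $q\in(1,\infty)$ and $\bar h^q\le\tfrac12 h_0^q+\tfrac12 h_1^q$ pointwise, equality of the integrals forces $h_0=h_1$ $\mm$-a.e., whence $\mu_{\ppi_0}=\mu_{\ppi_1}$.

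The argument is largely routine; the point demanding the most care is the verification of the equi-tightness hypotheses (T1)--(T2) of Lemma~\ref{le:RA-tight}, in particular transferring the tightness of $\mu_0$ into a control of the ``escaping'' arcs, together with the weak lower semicontinuity of $\brq q^q$ on $\cMp(\RA(X))$. Beyond these I expect no serious obstacle; completeness of $(X,\sfd)$ enters only to guarantee the compactness statement of Lemma~\ref{le:RA-tight} (ultimately Theorem~\ref{thm:important-arcs}(g)).
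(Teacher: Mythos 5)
Your proposal is correct and follows essentially the same route as the paper: equi-tightness of a minimizing family via Lemma \ref{le:RA-tight} (with (T1) from the fixed mass $\mu_0(X)$ and (T2) from the tightness of $\mu_0$ through $\sfe_0$), weak lower semicontinuity of $\brq q$ together with the weak closedness of $\Pi(\mu_0,\mu_1)$ for existence, and convexity of $\Pi_o(\mu_0,\mu_1)$ plus strict convexity of the $L^q$-norm for uniqueness of $\mu_\sppi$. The only difference is cosmetic: you extract a limit from a minimizing sequence, while the paper minimizes the lower semicontinuous functional directly on a compact sublevel set, which is the same direct-method argument.
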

\begin{proof}
  Let $\ppi'\in \Pi(\mu_0,\mu_1)$ with $\brq q\ppi'=E<\infty$ and
  define $\cK:=\big\{\ppi\in \Pi(\mu_0,\mu_1):
  \brq q(\ppi)\le E\big\}$. We can apply Lemma \ref{le:RA-tight}:
  for every $\ppi\in \Pi(\mu_0,\mu_1)$
  $\ppi(\RA(X))=\mu_0(X)$ so that condition (T1) is satisfied.
  Concerning (T2) it is sufficient to we use the tightness of $\mu_0$ to find
  compact sets $H_\eps\subset X$ such that $\mu_0(X\setminus
  H_\eps)\le \eps$; clearly
  \begin{displaymath}
    \ppi(\{\gamma:\sfe(\gamma)\cap H_\eps=\emptyset\})
    \le \ppi(\{\gamma:\sfe_0(\gamma)\cap H_\eps=\emptyset\})=
    \mu_0(X\setminus H_\eps)\le \eps.
  \end{displaymath}
  Since the functional $\brq q$
  is lower semicontinuous with respect to weak convergence,
  we conclude that the minimum is attained. The convexity and the
  compactness of $\Pi_o(\mu_0,\mu_1)$ are also immediate; 
  the uniqueness of $\mu_\sppi$ 
  when $\ppi$ varies in $\Pi_o(\mu_0,\mu_1) $ depends on the strict
  convexity of the $L^q(X,\mm)$-norm and on the convexity of
  $\Pi_o(\mu_0,\mu_1) $.
\end{proof}
We want to compare $\DD_q$ with the dual of the pre-Cheeger energy:
\begin{equation}
  \label{eq:448}
  \frac 1q\pCE_{p}^*(\mu):=\sup\Big\{\int_X f\,\d\mu-\frac 1p
  \pCE_{p}(f):f\in \Lip_b(X,\tau,\sfd)\Big\},\quad
  \mu=\mu_0-\mu_1\in \cM(X).
\end{equation}
Notice that by Lemma \ref{le:obvious-2hom} we have the equivalent representation
\begin{equation}
  \label{eq:449}
  \pCE_{p}^*(\mu)=
  \sup\Big\{\int_X f\,\d\mu:
  f\in \Lip_b(X,\tau,\sfd),\ 
      \pCE_{p}(f)\le 1\Big\}.
    \end{equation}
    Whenever $\mu=h\mm$ with $h\in L^q(X,\mm)$,
    we can also consider the dual of
    the Cheeger energy
    \begin{equation}
      \label{eq:450}
      \frac 1q\CE_{p}^*(h):=\sup\Big\{\int_X f\,h\,\d\mm-\frac 1p
      \CE_{p}(f):f\in \Sob^{1,p}(\X)\Big\},
    \end{equation}
    and of the weak $(\calT_q,p)$-energy $\wCE_{p}$
    (defined by a formula analogous to \eqref{eq:450})
    that we will denote
    by $\wCE_p^*$.
    An obvious necessary condition for the finiteness of $\pCE_p^*$
    and of $\CE_{p}^*$ is given by
    \begin{equation}
      \label{eq:323}
      \pCE_p^*(\mu)<+\infty\quad\Rightarrow\quad
      \mu(X)=0;\qquad
      \CE_{p}^*(h)<+\infty\quad\Rightarrow\quad
      \int_X h\,\d\mm=0.
    \end{equation}
    Since $\wCE_{p}(f)\le \CE_{p}(f)$ for every $f\in L^p(X,\mm)$
    and
    $\CE_{p}(f)\le \pCE_{p}(f)$ for every $f\in \Lip_b(X,\tau,\sfd)$, 
    it is clear that
    \begin{equation}
      \label{eq:453}
      \wCE_{p}^*(h)\ge\CE_{p}^*(h)\ge
      \pCE_{p}^*(h\mm)\quad
      \text{for every }h\in L^p(X,\mm).
    \end{equation}
\begin{lemma}
  For every $\mu_0,\mu_1\in \cMp(X)$ we have
  \begin{equation}
    \label{eq:192}
    \DD_q(\mu_0,\mu_1)\ge \pCE_{p}^*(\mu_0-\mu_1).
  \end{equation}
  If moreover $\mu_i=h_i\mm$ with $h_i\in L^p(X,\mm)$, $h_i\ge0$, then
  \begin{equation}
    \label{eq:192bis}
    \DD_q(h_0\mm,h_1\mm)\ge \wCE_{p}^*(h)\quad
    h=h_0-h_1.
  \end{equation}
\end{lemma}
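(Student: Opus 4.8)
The statement asserts two inequalities, $\DD_q(\mu_0,\mu_1)\ge \pCE_p^*(\mu_0-\mu_1)$ in general, and $\DD_q(h_0\mm,h_1\mm)\ge \wCE_p^*(h_0-h_1)$ when the marginals are absolutely continuous with $L^p$ densities. Both follow from the same duality mechanism: if $\ppi\in \Pi(\mu_0,\mu_1)$ is an admissible plan, then for any suitable test function $f$ we estimate $\int_X f\,\d(\mu_0-\mu_1)$ from above by a quantity controlled by $\brq q^q(\ppi)$, and then take suprema and infima. If $\Pi(\mu_0,\mu_1)=\emptyset$ there is nothing to prove, so we may assume $\DD_q(\mu_0,\mu_1)<\infty$ and fix $\ppi\in \Pi(\mu_0,\mu_1)$.

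\textbf{First inequality.} Let $f\in \Lip_b(X,\tau,\sfd)$. Since $\lip f$ is an upper gradient of $f$ (Remark \ref{rem:lipweak}), we have $|f(\gamma_1)-f(\gamma_0)|\le \int_\gamma \lip f$ for every $\gamma\in \RA(X)$. Integrating against $\ppi$ and using $(\sfe_i)_\sharp\ppi=\mu_i$ together with the definition \eqref{eq:34} of the barycenter,
\begin{equation*}
  \int_X f\,\d(\mu_0-\mu_1)
  = \int \big(f(\gamma_0)-f(\gamma_1)\big)\,\d\ppi(\gamma)
  \le \int \int_\gamma \lip f\,\d\ppi(\gamma)
  = \int_X (\lip f)\,h_\sppi\,\d\mm,
\end{equation*}
where $\mu_\sppi=h_\sppi\mm$ with $h_\sppi\in L^q(X,\mm)$ and $\|h_\sppi\|_{L^q}=\brq q(\ppi)$ (note that $\brq q(\ppi)^q<\infty$ is exactly the condition $\ppi\in \Bar q\mm$, which holds since $\brq q^q(\ppi)\le \DD_q(\mu_0,\mu_1)<\infty$ after passing to the infimum — so for the plans we care about the barycenter exists). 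By Hölder's inequality and Young's inequality with exponents $p,q$,
\begin{equation*}
  \int_X (\lip f)\,h_\sppi\,\d\mm
  \le \|\lip f\|_{L^p}\,\|h_\sppi\|_{L^q}
  \le \frac 1p\int_X (\lip f)^p\,\d\mm + \frac 1q\brq q^q(\ppi)
  = \frac 1p\pCE_p(f) + \frac 1q\brq q^q(\ppi).
\end{equation*}
Rearranging gives $\int_X f\,\d(\mu_0-\mu_1)-\frac 1p\pCE_p(f)\le \frac 1q\brq q^q(\ppi)$; taking the supremum over $f\in \Lip_b(X,\tau,\sfd)$ yields $\frac 1q\pCE_p^*(\mu_0-\mu_1)\le \frac 1q\brq q^q(\ppi)$, and then the infimum over $\ppi\in \Pi(\mu_0,\mu_1)$ gives \eqref{eq:192}. (One should handle separately plans with $\brq q(\ppi)=\infty$, for which the inequality is vacuous, so that the infimum defining $\DD_q$ is effectively over $\Bar q\mm$.)

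\textbf{Second inequality.} Now assume $\mu_i=h_i\mm$ with $h_i\in L^p(X,\mm)$, $h_i\ge 0$, and set $h=h_0-h_1$. Let $f\in W^{1,p}(\X,\calT_q)$ with $\calT_q$-minimal weak upper gradient $\weakgradTq f\in L^p(X,\mm)$. The key point is that $\ppi\in \Pi(\mu_0,\mu_1)$ with $\brq q(\ppi)<\infty$ has marginals $(\sfe_i)_\sharp\ppi=h_i\mm$ with $h_i\in L^p\subset L^q$ is false in general ($L^p\not\subset L^q$ for finite measure unless $p\le q$) — so one must be slightly careful: we want $\ppi\in \calT_q$, which requires $\LL_q((\sfe_i)_\sharp\ppi|\mm)<\infty$, i.e. $h_i\in L^q(X,\mm)$. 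This is where the hypothesis $h_i\in L^p$ with $p>1$ is not by itself enough; however, by a truncation argument one reduces to bounded $f$ (Remark \ref{rem:truncations}) and uses Lemma \ref{le:equivalent1}: it suffices to test against plans $\ppi\in \calT_q^*$, and one can exhaust a general optimal $\ppi$ by restrictions $\ppi\restr{\rmK_n\cap \rmH_n}$ lying in $\calT_q^*$, exactly as in the proof of Lemma \ref{le:nice}. For each such restricted plan $\ppi'$, Lemma \ref{le:equivalent1} (specifically \eqref{eq:441}) gives $\int_X f\,\d(\pi_1'-\pi_0')\le \int_X (\weakgradTq f)\,\d\mu_{\sppi'}$, whence, repeating the Hölder–Young step,
\begin{equation*}
  \int_X f\,\d(\pi_0'-\pi_1')
  \le \int_X (\weakgradTq f)\,h_{\sppi'}\,\d\mm
  \le \frac 1p\int_X (\weakgradTq f)^p\,\d\mm + \frac 1q\brq q^q(\ppi')
  = \frac 1p\wCE_p(f) + \frac 1q\brq q^q(\ppi').
\end{equation*}
Passing to the limit along the exhausting sequence (monotone convergence controls $\int_X f\,\d\pi_i'$ since $f$ is bounded and $\pi_i'\uparrow \mu_i$, while $\brq q^q(\ppi')\le \brq q^q(\ppi)$), we recover $\int_X f\,\d(\mu_0-\mu_1)-\frac 1p\wCE_p(f)\le \frac 1q\brq q^q(\ppi)$; removing the truncation by the standard approximation in Remark \ref{rem:truncations} and taking suprema over $f$ and infima over $\ppi$ yields \eqref{eq:192bis}.

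\textbf{Expected main obstacle.} The routine part is the Hölder–Young chain. The delicate point is the integrability/measurability bookkeeping in the second inequality: ensuring that the plans relevant to $\DD_q(h_0\mm,h_1\mm)$ genuinely lie in (or can be approximated by plans in) $\calT_q^*$ so that Lemma \ref{le:equivalent1} applies, and that the limiting procedure along $\ppi\restr{\rmK_n\cap\rmH_n}$ is legitimate — in particular that the marginals of the truncated plans converge to $\mu_i$ and that the barycentric entropies do not blow up. This is essentially a repackaging of Lemma \ref{le:nice} and should go through without difficulty, but it is the step that requires genuine care rather than a one-line estimate.
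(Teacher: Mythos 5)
Your proof is correct and takes essentially the same route as the paper: for both inequalities one integrates the (weak) upper-gradient inequality $|f(\gamma_1)-f(\gamma_0)|\le\int_\gamma g$ against an admissible plan $\ppi\in\Pi(\mu_0,\mu_1)$ and compares with the dual representation \eqref{eq:451} of $\tfrac1q\brq q^q(\ppi)$ — your explicit H\"older--Young step is exactly the restriction of that supremum to $g=\lip f$, resp.\ $g=\weakgradTq f$. The only divergence is in the second inequality, where the paper simply asserts that every $\ppi\in\Pi(h_0\mm,h_1\mm)$ with $\brq q(\ppi)<\infty$ lies in $\calT_q$ (immediate when $h_i\in L^q(X,\mm)$, which is the setting in which the lemma is actually applied later, e.g.\ in Corollary \ref{cor:identification-compact2}), whereas you rightly observe that under the literal hypothesis $h_i\in L^p$ with $p<2$ this needs justification and you supply it by restricting to plans with truncated marginal densities as in Lemma \ref{le:nice} and passing to the limit; this extra layer is sound (up to splitting off the constant arcs, whose contributions to $\pi_0$ and $\pi_1$ coincide and hence cancel in the limit) and does not change the substance of the argument.
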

\begin{proof}
  We observe that for every $\ppi\in \Pi(\mu_0,\mu_1)$ 
  we have
  \begin{equation}
    \label{eq:451}
    \frac 1q\brq q^q(\ppi)=
    \sup_{g\in \cL^p_+(X,\mm)} \iint_\gamma g\,\d\ppi-
    \frac 1p \int g^p\,\d\mm.
  \end{equation}
  Restricting the supremum to the functions $g:=\lip f$
  for some $f\in \Lip_b(X,\sfd,\mm)$ 
  and observing that 
  in this case {for every }$\gamma\in \RA(X)$,
  \begin{equation}
    \label{eq:452}
    f(\gamma_0)-f(\gamma_1)\le 
    \int_\gamma g
      \end{equation}
  we get
  \begin{align*}
    &\iint_\gamma g\,\d\ppi
      -
      \frac 1p \int g^p\,\d\mm
      \ge
      \int \big(f(\gamma_0)-f(\gamma_1)\big)\,\d\ppi-
      \frac1p \pCE_p(f)
    \\&
        =
    \int_X f\,\d(\mu_0-\mu_1)-\frac1p \pCE_p(f)
  \end{align*}
  so that 
  \eqref{eq:192} follows by taking the supremum w.r.t.~$f$
  and the infimum w.r.t.~$\ppi$.

  When $\mu_i=h_i\mm$ with $h_i\in L^p(X,\mm)$ nonnegative,
  any dynamic plan $\ppi\in \Pi(\mu_0,\mu_1)$ with
  $\brq q(\ppi)<\infty$ belongs to $\calT_q$.
  Restricting the supremum of \eqref{eq:451} to (the Borel
  representative of) functions $g
  =\weakgradTq
    f$ for some $f\in W^{1,p}(\X,\calT_q) $
  it follows that \eqref{eq:452} holds for $\calT_q$-a.e.~curve,
  in particular for $\ppi$-a.e.~curve $\gamma$.
  We can then perform the same integration with respect to $\ppi$
  and obtain \eqref{eq:192bis}.
\end{proof}
\subsection[$H=W$]{\texorpdfstring{$\boldsymbol{H=W}$}{H=W}}
\label{subsec:H=W}
\subsubsection*{The compact case}
Let us first consider the case when $(X,\tau)$ is compact.
For every strictly positive function $g\in \rmC_b(X)$
(we will still use the notation $\rmC_b(X)$ even if
the subscript $_b$ is redundant, being $X$ compact)
we will denote
by $\sfd_g$ the conformal distance we studied in \S\,\ref{subsec:Finsler}
and by $\sfK_{\sfd_g}$ the Kantorovich-Rubinstein distance
induced by $\sfd_g$, see
\S\,\ref{subsec:KR}. Notice that $(X,\tau,\sfd_g)$ is
a geodesic e.m.t.~space thanks to Theorem \ref{thm:Finsler}.
\begin{theorem}
  \label{thm:identification-compact1}
  Let us suppose that $(X,\tau)$ is compact; then for every $\mu_0,\mu_1\in
  \cMp(X)$ with $\mu_0(X)=\mu_1(X)$ 
  we have
  \begin{equation}
    \label{eq:177pre}
    \DD_q(\mu_0,\mu_1)=\sup\Big\{
    \sfK_{\sfd_g}(\mu_0,\mu_1)-\frac 1p\int_X g^p\,\d\mm:
    g\in \rmC_b(X),\ g>0\Big\}.
  \end{equation}
\end{theorem}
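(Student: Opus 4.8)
The plan is to prove \eqref{eq:177pre} by a Von Neumann minimax argument, exactly in the spirit of the proof of Theorem~\ref{thm:compact-Mod}. Introduce the Lagrangian
$$\cL(\ppi,g):=\int_X g\,\d\mu_\sppi-\frac1p\int_X g^p\,\d\mm,\qquad \ppi\in\Pi(\mu_0,\mu_1),\quad g\in\rmC_b(X),\ g\ge 0,$$
which is affine (hence convex) in $\ppi$ and concave in $g$. By the dual characterization \eqref{eq:291} of the barycentric entropy one has $\sup_{g\ge0}\cL(\ppi,g)=\tfrac1q\brq q^q(\ppi)$, so that $\inf_{\ppi\in\Pi(\mu_0,\mu_1)}\sup_{g\ge0}\cL(\ppi,g)$ is, up to the normalization of \eqref{eq:177pre}, exactly $\DD_q(\mu_0,\mu_1)$ (if $\mu_0(X)\neq\mu_1(X)$ both $\Pi(\mu_0,\mu_1)$ and the relevant $\Gamma(\mu_0,\mu_1)$ are empty and both sides of \eqref{eq:177pre} are $+\infty$). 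The point is to interchange $\inf_\ppi$ and $\sup_g$ and to identify the inner infimum with $\sfK_{\sfd_g}(\mu_0,\mu_1)-\tfrac1p\int_X g^p\,\d\mm$.

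One inequality is elementary: for every $\ppi\in\Pi(\mu_0,\mu_1)$ the push-forward $(\sfe_0,\sfe_1)_\sharp\ppi$ lies in $\Gamma(\mu_0,\mu_1)$, and since $\sfd_g(\gamma_0,\gamma_1)\le\int_\gamma g$ one gets $\sfK_{\sfd_g}(\mu_0,\mu_1)\le\int\sfd_g(\gamma_0,\gamma_1)\,\d\ppi(\gamma)\le\int\big(\int_\gamma g\big)\,\d\ppi(\gamma)=\int_X g\,\d\mu_\sppi$, so $\cL(\ppi,g)\ge\sfK_{\sfd_g}(\mu_0,\mu_1)-\tfrac1p\int_X g^p\,\d\mm$, and taking the sup over $g>0$ and the inf over $\ppi$ yields one half of \eqref{eq:177pre}. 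The converse hinges on the geometric identity
$$\inf_{\ppi\in\Pi(\mu_0,\mu_1)}\int_X g\,\d\mu_\sppi=\sfK_{\sfd_g}(\mu_0,\mu_1)\qquad\text{for }g\in\rmC_b(X)\text{ with }\inf_X g>0,$$
whose ``$\ge$'' is the estimate just made; for ``$\le$'' one uses that, $(X,\tau)$ being compact, $(X,\sfd_g)$ is a geodesic space and $\sfd_g$ a bounded lower semicontinuous distance (Theorem~\ref{thm:Finsler}(c)), so $\sfK_{\sfd_g}(\mu_0,\mu_1)$ is attained by some $\mmu\in\Gamma(\mu_0,\mu_1)$. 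A measurable selection $(x_0,x_1)\mapsto\gamma_{x_0,x_1}\in\RA(X)$ of arcs joining $x_0$ to $x_1$ with $\int_{\gamma_{x_0,x_1}}g\le\sfd_g(x_0,x_1)+\eps$ (existing by the topological regularity of $\RA(X)$ and Theorem~\ref{thm:important-arcs}) then produces $\ppi_\eps:=(\gamma_{\cdot,\cdot})_\sharp\mmu\in\Pi(\mu_0,\mu_1)$ with $\int_X g\,\d\mu_{\sppi_\eps}\le\int\sfd_g\,\d\mmu+\eps=\sfK_{\sfd_g}(\mu_0,\mu_1)+\eps$.

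With these ingredients I would run the minimax on $\Pi(\mu_0,\mu_1)\times\{g\in\rmC_b(X):g\ge 0\}$: the first factor is convex, $\ppi\mapsto\cL(\ppi,g)$ is affine and narrowly lower semicontinuous for fixed continuous $g\ge 0$ (being the $\ppi$-integral of the $\tau_\rmA$-lower semicontinuous map $\gamma\mapsto\int_\gamma g$, Lemma~\ref{le:reparam-arcs}(e)), and $g\mapsto\cL(\ppi,g)$ is concave and $\|\cdot\|_\infty$-continuous on the convex cone $\{g\ge 0\}$. Since $\Pi(\mu_0,\mu_1)$ is not compact, I would close the argument exactly as in Theorem~\ref{thm:compact-Mod} by verifying the coercivity condition \eqref{eq:115bis}: testing with the constant $g_\star\equiv\eps$ gives $\cL(\ppi,g_\star)=\eps\int\ell\,\d\ppi-\tfrac1p\eps^p\mm(X)$, so every set $\{\ppi\in\Pi(\mu_0,\mu_1):\sup_{g\ge0}\cL(\ppi,g)\le D\}$ is contained in some $\{\ppi:\int\ell\,\d\ppi\le L\}$, which is convex (sublevel set of the convex functional $\tfrac1q\brq q^q$) and compact by Markov's inequality together with Theorem~\ref{thm:important-arcs}(f) and Prokhorov's Theorem~\ref{thm:compa-Riesz} (tightness being inherited from $(\sfe_0)_\sharp\ppi=\mu_0$). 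Von Neumann's Theorem~\ref{thm:VonNeumann} then gives $\inf_\ppi\sup_{g\ge0}\cL(\ppi,g)=\sup_{g\ge0}\inf_\ppi\cL(\ppi,g)$, and by the geometric identity (and the substitution $g\mapsto g+\eps$, which reduces the sup over $g\ge 0$ to that over $g$ with $\inf_X g>0$) the right-hand side is the right-hand side of \eqref{eq:177pre}, which completes the proof. I expect the main obstacle to be this minimax bookkeeping --- in particular the interplay between the ``continuous $g$'' required for the narrow lower semicontinuity of $\ppi\mapsto\int_X g\,\d\mu_\sppi$ and the lack of compactness of the plan space, handled via \eqref{eq:115bis} --- together with the measurable selection of almost-geodesics used in the geometric identity. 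An alternative route would first show $\DD_q(\mu_0,\mu_1)=\pCE_p^*(\mu_0-\mu_1)$ and then convert this into \eqref{eq:177pre} via the Kantorovich--Rubinstein duality of Proposition~\ref{prop:eventually}.
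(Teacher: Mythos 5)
Your overall architecture (a Von Neumann minimax with compactness on the plan side obtained by testing with constant functions) is close in spirit to the paper's, and your ``easy'' inequality as well as the compactness/lower-semicontinuity bookkeeping for the sublevel sets $\{\ppi\in\Pi(\mu_0,\mu_1):\int\ell\,\d\ppi\le L\}$ are fine. The genuine gap is the step you call the geometric identity, namely the inequality $\inf_{\sppi\in\Pi(\mu_0,\mu_1)}\int_X g\,\d\mu_{\sppi}\le\sfK_{\sfd_g}(\mu_0,\mu_1)$, which you obtain by pushing an optimal coupling $\mmu$ forward through a ``measurable selection'' $(x_0,x_1)\mapsto\gamma_{x_0,x_1}$ of $\eps$-optimal arcs. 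Nothing you cite provides such a selection: $(X,\tau)$ is only assumed compact Hausdorff (not metrizable or Souslin), $\sfd_g$ is merely a $\tau\times\tau$-lower semicontinuous \emph{extended} distance, and $(\RA(X),\tau_\rmA)$ is known to be Lusin or Souslin only under extra hypotheses on $X$ (Lemma \ref{le:just-to-fix2}), so the standard selection theorems (which require a Polish/Souslin target and a measurable graph) are not available, and Theorem \ref{thm:important-arcs} gives compactness and measurability of fixed maps, not selections producing a Radon measure $\ppi_\eps=(\gamma_{\cdot,\cdot})_\sharp\mmu$. This missing step is not peripheral: after the minimax swap it is exactly what carries the hard inequality (that $\DD_q(\mu_0,\mu_1)$ does not exceed the right-hand side of \eqref{eq:177pre}), and it also silently underlies the degenerate case in which $\Pi(\mu_0,\mu_1)$ is empty or contains no plan with finite length-integral: there the nonemptiness/compactness hypothesis \eqref{eq:115} of Theorem \ref{thm:VonNeumann} cannot be verified for your constrained Lagrangian, and one must still show that the right-hand side of \eqref{eq:177pre} is $+\infty$, which again amounts to constructing plans from couplings.

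The paper's proof is designed precisely to avoid any selection: it minimizes over \emph{all} of $\cMp(\RA(X))$ and dualizes the marginal constraints with continuous multipliers $\psi_0,\psi_1\in\rmC_b(X)$, so that after the swap the inner infimum is attained at $\ppi=0$, the surviving constraint is $\psi_0(x_0)-\psi_1(x_1)\le\sfd_g(x_0,x_1)$, and the conclusion follows from the Kantorovich--Rubinstein duality \eqref{eq:490dg} for the geodesic space $(X,\tau,\sfd_g)$ (Theorem \ref{thm:Finsler}(c), Proposition \ref{prop:eventually}) --- the route you mention only as an afterthought. Note also that your proposed alternative (prove $\DD_q(\mu_0,\mu_1)=\pCE_p^*(\mu_0-\mu_1)$ first and then invoke Proposition \ref{prop:eventually}) is circular relative to the paper's logic, since Theorem \ref{thm:identification-compact2} is deduced from the present theorem. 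To repair your argument you would either have to establish a genuine Lusin-measurable selection of $\eps$-geodesics for $\sfd_g$ under the stated hypotheses, or restructure the duality as the paper does by dualizing the marginal constraints.
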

\begin{proof}
  Let us introduce the convex set
  \begin{equation}
    \label{eq:458}
    \calC:=\Big\{(g,\psi_0,\psi_1)\in \big(\rmC_b(X)\big)^3:
    g(x)>0\quad\text{for every }x\in X\Big\}
  \end{equation}
  and the dual representation of the convex set $\Pi(\mu_0,\mu_1)$
  given by two Lagrange multipliers $\psi_0,\psi_1\in
  \rmC_b(X)$:
  $\ppi\in \Pi(\mu_0,\mu_1)$ if and only if (here
  $\gamma_i=\sfe_i(\gamma)$, $i=0,1$)
  \begin{equation}
    \label{eq:459}
    \sup_{\psi_0,\psi_1\in \rmC_b(X)}
    \int_X\psi_0\,\d\mu_0-\int \psi_0(\gamma_0)\,\d\ppi(\gamma)
    -
    \Big(    \int_X\psi_1\,\d\mu_1-\int \psi_1(\gamma_1)\,\d\ppi(\gamma)\Big)<+\infty;
  \end{equation}
  Notice that whenever the supremum in \eqref{eq:459} is finite, it vanishes.
  We first observe that 
  \begin{displaymath}
    \frac 1q\DD_q(\mu_0,\mu_1)=
    \inf_{\sppi\in \cMp(\RA(X))}\sup_{(g,\psi_0,\psi_1)\in \calC}
    \cL((g,\psi_0,\psi_1);\ppi)
  \end{displaymath}
  where
  the Lagrangian function $\cL$ is given by
  \begin{equation}
    \label{eq:454}
    \begin{aligned}
      \cL((g,\psi_0,\psi_1);\ppi):=&\int\Big(\int_\gamma g+
      \psi_1(\gamma_1)-\psi_0(\gamma_0)\Big)\,\d\ppi(\gamma)
      \\&\qquad+ \int_X
      \psi_0\,\d\mu_0-\int_X \psi_1\,\d\mu_1-\frac 1p\int_X
      g^p\,\d\mm,
    \end{aligned}
  \end{equation}
  and it is clearly convex w.r.t.~$\ppi$ and concave
  w.r.t.~$(g,\psi_0,\psi_1)$.
  We want to apply Von Neumann Theorem
  \ref{thm:VonNeumann} and to invert the order of
  $\inf$ and $\sup$.

  Selecting $\ g_\star\equiv 1,\ \psi_{1,\star}\equiv 1,\
  \psi_{0,\star}\equiv 0$
  we see that for every $C\ge0$ the sublevel
  \begin{equation}
    \label{eq:455}
    \cK_C:=\Big\{\ppi\in \RA(X):\cL((g_\star,\psi_{0,\star},\psi_{1,\star});\ppi)\le C\Big\}
  \end{equation}
  is not empty (it contains the null plan) and compact, since
  for every $\ppi\in \cK_C$ we have
  \begin{equation}
    \label{eq:456}
    \ppi(\RA(X))+
    \int \ell(\gamma)\,\d\ppi\le C+\frac 1p\mm(X)+\mu_1(X),
  \end{equation}
  so that $\cK_C$ is equi-tight, thanks to Theorem
  \ref{thm:important-arcs}(g)
  (here we use the compactness of $(X,\tau)$).

  We therforetherefore obtain
  \begin{equation}\label{eq:462}
    \DD_q(\mu_0,\mu_1)=
    \sup_{(g,\psi_0,\psi_1)\in \calC}\,\inf_{\sppi\in \cMp(\RA(X))}
    \cL((g,\psi_0,\psi_1);\ppi).
  \end{equation}
  We can introduce the conformal (extended) distance generated by $g$
  \begin{equation}
    \label{eq:460}
    \sfd_g(x_0,x_1):=\inf \Big\{\int_\gamma g:\gamma\in \RA(X),\
    \gamma_0=x_0,\ \gamma_1=x_1\Big\}
  \end{equation}
  observing that if the triple $(g,\psi_0,\psi_1)$ does not belong to
  the subset
  of $\calC$
  \begin{equation}
    \label{eq:461}
    \Sigma:=\Big\{(g,\psi_0,\psi_1)\in \rmC_b(X)^3
    :
    g>0, \ \psi_0(x_0)-\psi_1(x_1)\le \sfd_g(x_0,x_1)
    \quad\text{for every
    }x_0,x_1\in X\Big\}
  \end{equation}
  we would have
  \begin{displaymath}
    \inf_{\sppi}\cL((g,\psi_0,\psi_1);\ppi)=-\infty.
  \end{displaymath}
  On the other hand, if $(g,\psi_0,\psi_1)\in \Sigma$
  the infimum in \eqref{eq:462} is attained at $\ppi=0$ so that
  \begin{displaymath}
    \inf_{\sppi}\cL((g,\psi_0,\psi_1);\ppi)=
    \int_X
      \psi_0\,\d\mu_0-\int_X \psi_1\,\d\mu_1-\frac 1p\int_X
      g^p\,\d\mm    
  \end{displaymath}
  and therefore \eqref{eq:462} reads
  \begin{equation}
    \label{eq:463}
    \DD_q(\mu_0,\mu_1)=\sup\Big\{
    \int_X \psi_0\,\d\mu_0-
    \int_X \psi_1\,\d\mu_1-\frac 1p \int_X g^p\,\d\mm:
    (g,\psi_0,\psi_1)\in \Sigma,\ g>0\Big\},
  \end{equation}
  which coincides with \eqref{eq:177pre} thanks to
  \eqref{eq:490dg}.
\end{proof}

\begin{theorem}
  \label{thm:identification-compact2}
  Let us suppose that $(X,\tau)$ is compact; then for every $\mu_0,\mu_1\in
  \cMp(X)$ 
  we have
  \begin{equation}
    \label{eq:177}
    \DD_q(\mu_0,\mu_1)=\pCE_{p}^*(\mu_0-\mu_1).
  \end{equation}
\end{theorem}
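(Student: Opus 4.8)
The plan is to combine the lower bound $\DD_q(\mu_0,\mu_1)\ge\pCE_p^*(\mu_0-\mu_1)$ already proved in \eqref{eq:192} (which holds on any e.m.t.m.~space) with a matching upper bound extracted from the conformal--Kantorovich representation of Theorem \ref{thm:identification-compact1}; only the reverse inequality will use the compactness of $(X,\tau)$. Before that one disposes of the degenerate case: if $\mu_0(X)\neq\mu_1(X)$ then $\Pi(\mu_0,\mu_1)=\emptyset$, so $\DD_q(\mu_0,\mu_1)=+\infty$, while testing the supremum defining $\pCE_p^*$ against constant functions (on which $\pCE_p$ vanishes) gives $\pCE_p^*(\mu_0-\mu_1)=+\infty$ as well; hence equality holds trivially and we may assume $\mu_0(X)=\mu_1(X)$.

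For the main inequality $\DD_q(\mu_0,\mu_1)\le\pCE_p^*(\mu_0-\mu_1)$ I would use \eqref{eq:177pre} in the form $\tfrac1q\DD_q(\mu_0,\mu_1)=\sup\{\sfK_{\sfd_g}(\mu_0,\mu_1)-\tfrac1p\int_Xg^p\,\d\mm:g\in\rmC_b(X),\ g>0\}$ (the normalization consistent with \eqref{eq:291}). Since $(X,\tau)$ is compact, $\sfd_g=\sfd_g'$ by Theorem \ref{thm:Finsler}(c), so Proposition \ref{prop:eventually} applies and \eqref{eq:493dg} yields
\[
\sfK_{\sfd_g}(\mu_0,\mu_1)=\sup\Big\{\int_X\phi\,\d(\mu_0-\mu_1):\phi\in\Lip_b(X,\tau,\sfd),\ \lip_\sfd\phi\le g\Big\}.
\]
Now fix $g>0$ and a competitor $\phi$ with $\lip_\sfd\phi\le g$. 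By monotonicity of the integral $\pCE_p(\phi)=\int_X(\lip_\sfd\phi)^p\,\d\mm\le\int_Xg^p\,\d\mm$, whence
\[
\int_X\phi\,\d(\mu_0-\mu_1)-\frac1p\int_Xg^p\,\d\mm\ \le\ \int_X\phi\,\d(\mu_0-\mu_1)-\frac1p\pCE_p(\phi)\ \le\ \frac1q\pCE_p^*(\mu_0-\mu_1),
\]
the last inequality being the very definition \eqref{eq:448} of $\pCE_p^*$. Taking the supremum over admissible $\phi$ and then over $g>0$, \eqref{eq:177pre} gives $\tfrac1q\DD_q(\mu_0,\mu_1)\le\tfrac1q\pCE_p^*(\mu_0-\mu_1)$, and together with \eqref{eq:192} this proves the theorem.

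I do not expect a genuine analytic obstacle: the Von Neumann min--max argument and the reduction to Kantorovich duality have already been carried out in Theorem \ref{thm:identification-compact1}, and the dual representations of conformal distances by local Lipschitz bounds were set up in Section \ref{sec:length-Finsler}. The two points demanding a little attention are (i) checking that Proposition \ref{prop:eventually} is applicable, i.e.\ that $\sfd_g$ and $\sfd_g'$ agree, which is precisely the compact case of Theorem \ref{thm:Finsler}(c), and (ii) the bookkeeping of the factors $1/p$ and $1/q$ when passing between $\brq q^q(\ppi)$, $\pCE_p(\phi)$, $\sfK_{\sfd_g}$ and their Legendre-type conjugates, where the homogeneity identities of Lemma \ref{le:obvious-2hom} keep everything consistent.
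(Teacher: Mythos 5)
Your proof is correct and follows essentially the same route as the paper: both combine the lower bound \eqref{eq:192} with the representation of Theorem \ref{thm:identification-compact1}, insert the conformal Kantorovich duality \eqref{eq:493dg} of Proposition \ref{prop:eventually} (legitimate by compactness via Theorem \ref{thm:Finsler}(c)), and conclude the upper bound from the pointwise implication $\lip_\sfd\phi\le g\Rightarrow\pCE_p(\phi)\le\int_X g^p\,\d\mm$ together with the definition \eqref{eq:448}. Your careful handling of the $1/q$, $1/p$ normalizations and of the degenerate case $\mu_0(X)\neq\mu_1(X)$ is sound (and in fact tidier than the paper's bookkeeping), but it does not change the substance of the argument.
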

\begin{proof}
  Combining \eqref{eq:177pre} with
  \eqref{eq:493dg} we easily get
  \begin{equation}
    \label{eq:322}
    \begin{aligned}
      \DD_q(\mu_0,\mu_1)= \sup\Big\{& \int_X \varphi\,\d(\mu_0-\mu_1)- \frac 1p \int_X
      g^p\,\d\mm:\\& g\in \rmC_{b}(X),\ g>0,\ \varphi\in
      \Lip_b(X,\tau,\sfd),
      \ \lip_{\sfd}\varphi\le g\Big\},
    \end{aligned}
  \end{equation}
  so that
  \begin{align*}
    \DD_q(\mu_0,\mu_1)&\topref{eq:322}\le 
                        \sup_{\varphi\in
                        \Lip_b(X,\tau,\sfd)}\int\varphi\,\d(\mu_0-\mu_1)
                        -\frac 1p
              \int \lip^p( \varphi)\,\d\mm=
                        \pCE^*_{p}(\mu_0-\mu_1).
  \end{align*}
  Since we already proved that $\DD_q(\mu_0,\mu_1)\ge \pCE_{p}^*(\mu_0-\mu_1)$ we conclude.
\end{proof}
\begin{corollary}
  \label{cor:identification-compact2}
  Let us suppose that $(X,\tau)$ is compact. For every
  $h\in L^q(X,\mm)$ with $\int_X h\,\d\mm=0$ we have
  \begin{equation}
    \label{eq:470}
    \DD_q(h_+\mm,h_-\mm)=\CE_{p}^*(h)=\wCE_{p}^*(h)=\pCE_p^*(h\mm).
  \end{equation}
\end{corollary}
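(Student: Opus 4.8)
The plan is to chain together the inequalities already established and close the loop using the compact identification Theorem \ref{thm:identification-compact2}. Recall that by \eqref{eq:453} we always have $\wCE_p^*(h)\ge\CE_p^*(h)\ge\pCE_p^*(h\mm)$, and that \eqref{eq:192bis} gives $\DD_q(h_+\mm,h_-\mm)\ge\wCE_p^*(h)$ (since $h_\pm\in L^p(X,\mm)$ because $h\in L^q$ and $X$ is compact with $\mm$ finite, so $L^q\subset L^p$; here $q>p$ as $q=p'$ and... actually one must be slightly careful: $h\in L^q$ with $q=p'$, and since $\mm(X)<\infty$, if $q\ge p$ then $h\in L^p$, which holds iff $p\le 2$; for $p>2$ one has $q<p$, so $h_\pm$ need not be in $L^p$). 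I would first address this integrability point: restrict attention to bounded $h$ by a truncation argument, or observe that \eqref{eq:192bis} as stated only requires the dynamic plan interpretation which makes sense for $h\in L^q$ directly through $\DD_q$, while $\wCE_p^*$ is evaluated against $f\in W^{1,p}$ so the pairing $\int fh\,\d\mm$ is well defined for $h\in L^q$; the proof of \eqref{eq:192bis} should extend verbatim. So the chain gives
\begin{equation*}
  \DD_q(h_+\mm,h_-\mm)\ge \wCE_p^*(h)\ge \CE_p^*(h)\ge \pCE_p^*(h\mm).
\end{equation*}

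Next I would invoke Theorem \ref{thm:identification-compact2} with $\mu_0=h_+\mm$, $\mu_1=h_-\mm$: since $\mu_0(X)=\int h_+\,\d\mm=\int h_-\,\d\mm=\mu_1(X)$ (using $\int h\,\d\mm=0$), we get $\DD_q(h_+\mm,h_-\mm)=\pCE_p^*(h_+\mm-h_-\mm)=\pCE_p^*(h\mm)$. Combining this with the chain of inequalities above forces every inequality to be an equality, yielding \eqref{eq:470}. This is the whole argument in outline; the only genuine content beyond bookkeeping is verifying that \eqref{eq:192bis} indeed applies to $h\in L^q(X,\mm)$ (not just $h$ with $h_\pm\in L^p$), and that the various dual energies are finite or infinite simultaneously so that the sandwich argument is not vacuous — but since $\pCE_p^*(h\mm)=\DD_q(h_+\mm,h_-\mm)$ from the start, finiteness on one end propagates to the other automatically.

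The step I expect to require the most care is the integrability/measurability check for \eqref{eq:192bis}: its stated hypothesis is $\mu_i=h_i\mm$ with $h_i\in L^p(X,\mm)$, $h_i\ge0$, and I want to apply it with $h_i=h_\pm$ which lie only in $L^q$. I would handle this by the standard truncation device: for $k\in\N$ set $h^{(k)}:=(-k)\vee h\wedge k$, which is bounded hence in $L^p$, apply the already-proven identities to $h^{(k)}$, and then pass to the limit $k\to\infty$ using that $\pCE_p^*$, $\CE_p^*$, $\wCE_p^*$ are each lower semicontinuous and monotone under such truncations (being suprema of affine functionals $h\mapsto\int fh\,\d\mm - \text{const}$), together with $h^{(k)}\to h$ in $L^q(X,\mm)$ and the lower semicontinuity of $\DD_q$ under weak convergence of the marginals (via Lemma \ref{le:attained} and the lower semicontinuity of $\brq q$). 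Alternatively, and perhaps more cleanly, one rereads the proof of \eqref{eq:192bis}: it only uses that $\ppi\in\Pi(h_+\mm,h_-\mm)$ with $\brq q(\ppi)<\infty$ belongs to $\calT_q$ and that one may restrict the supremum in \eqref{eq:451} to $g=\weakgradTq f$ for $f\in W^{1,p}(\X,\calT_q)$; the pairing $\int f\,\d(h_+\mm-h_-\mm)=\int fh\,\d\mm$ is finite for $f\in L^p$, $h\in L^q$, so the argument goes through without any extra integrability on $h_\pm$. I would present the second route since it avoids a limiting argument entirely.
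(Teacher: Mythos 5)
Your argument is exactly the paper's proof: combine \eqref{eq:192bis} and \eqref{eq:453} to obtain $\DD_q(h_+\mm,h_-\mm)\ge \wCE_p^*(h)\ge \CE_p^*(h)\ge \pCE_p^*(h\mm)$, and then force equality throughout via Theorem \ref{thm:identification-compact2} applied to $\mu_0=h_+\mm$, $\mu_1=h_-\mm$ (balanced since $\int_X h\,\d\mm=0$). Your side remark on the $L^p$ versus $L^q$ hypothesis in \eqref{eq:192bis} is well taken, and your second resolution is the right one: the proof of \eqref{eq:192bis} only uses that any $\ppi\in\Pi(h_+\mm,h_-\mm)$ with $\brq q(\ppi)<\infty$ lies in $\calT_q$, which requires precisely $h_\pm\in L^q(X,\mm)$, and the pairing $\int_X fh\,\d\mm$ is finite for $f\in L^p$, $h\in L^q$, so no truncation argument is needed.
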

\begin{proof}
  Combining \eqref{eq:192bis} and \eqref{eq:453} we know that for every
  $h\in L^q(X,\mm)$
  \begin{displaymath}
    \DD_q(h_+\mm,h_-\mm)\ge \wCE_{p}^*(h)\ge \CE_{p}^*(h)\ge \pCE_{p}^*(h\mm).
  \end{displaymath}
  Equality then follows by Theorem \ref{thm:identification-compact2}.
\end{proof}
By Fenchel-Moreau duality we can now
recover for every $f\in L^p(X,\mm)$ 
\begin{align*}
  \frac 1p\CE_p(f)&=\sup_{h\in L^q(X,\mm)}\int_X hf\,\d\mm-\frac 1q
                    \CE_p^*(h)
                    \\&=\sup_{h\in L^q(X,\mm)}\int_X hf\,\d\mm-\frac 1q
  \wCE_p^*(h)
  \\&=\frac 1p\wCE_p(f),
\end{align*}
and we obtain the identification of the strong and weak Cheeger energy and
of the Sobolev spaces, including the case of a compatible algebra
$\AA$,
thanks to Theorem \ref{thm:mainA}.
\begin{corollary}
  \label{cor:main-identification-compact}
  Let us suppose that $(X,\tau)$ is compact.
  Then for every algebra $\AA$ compatible with $\X$
  we have
  \begin{equation}
    \label{eq:467}
    \Sob^{1,p}(\X,\AA)=\Sob^{1,p}(\X)=W^{1,p}(\X,\calT_q)
  \end{equation}
  with equality of norms; in particular 
  \begin{equation}
    \label{eq:468}
    \CE_{p,\sAA}(f)=\CE_{p}(f)=\wCE_{p,\calT_q} (f)\quad
    \text{for every }f\in L^p(X,\mm),
  \end{equation}
  and for every $f\in W^{1,p}(\X,\calT_q)$
  \begin{equation}
    \label{eq:469}
    \relgradA f=\relgrad f=\weakgradTq f\quad\text{$\mm$-a.e.~in $X$}.
  \end{equation}
\end{corollary}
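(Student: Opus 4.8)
The plan is to assemble this corollary from three facts already available upstream: the independence of the Cheeger energy from the compatible algebra (Theorem \ref{thm:mainA}), the coincidence of the dual energies in the compact case (Corollary \ref{cor:identification-compact2}), and Fenchel--Moreau duality in $L^p(X,\mm)$. First I would dispose of the role of $\AA$: Theorem \ref{thm:mainA} already yields $\Sob^{1,p}(\X,\AA)=\Sob^{1,p}(\X)$ with $\relgradA f=\relgrad f$ $\mm$-a.e., hence $\CE_{p,\sAA}=\CE_p$ and equality of the norms \eqref{eq:60} with the one built from $\CE_p$. Thus everything reduces to proving the identity $\CE_p=\wCE_{p,\calT_q}$ on $L^p(X,\mm)$.

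The key step is this last identity. Both $\CE_p$ and $\wCE_{p,\calT_q}$, extended by $+\infty$ outside their natural domains, are convex, $p$-homogeneous and (weakly) lower semicontinuous functionals on $L^p(X,\mm)$ — this is recorded after Definition \ref{def:Cheeger} and in \S\ref{subsec:calculus_weak} respectively. By Corollary \ref{cor:identification-compact2}, their $L^p$--$L^q$ Legendre transforms agree, $\CE_p^*(h)=\wCE_{p,\calT_q}^*(h)$, for every $h\in L^q(X,\mm)$ with $\int_X h\,\d\mm=0$, while by the necessary condition \eqref{eq:323} both conjugates equal $+\infty$ when $\int_X h\,\d\mm\neq0$; hence $\CE_p^*\equiv\wCE_{p,\calT_q}^*$ on all of $L^q(X,\mm)$. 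Applying the Fenchel--Moreau theorem to the two functionals, i.e.\ computing
\[
  \frac1p\CE_p(f)=\sup_{h\in L^q(X,\mm)}\Big(\int_X fh\,\d\mm-\frac1q\CE_p^*(h)\Big)
  =\sup_{h\in L^q(X,\mm)}\Big(\int_X fh\,\d\mm-\frac1q\wCE_{p,\calT_q}^*(h)\Big)=\frac1p\wCE_{p,\calT_q}(f),
\]
gives $\CE_p(f)=\wCE_{p,\calT_q}(f)$ for every $f\in L^p(X,\mm)$. This is precisely \eqref{eq:467} and \eqref{eq:468}, together with the equality of the norms \eqref{eq:471} and \eqref{eq:60}.

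It remains to upgrade the equality of energies to the pointwise statement \eqref{eq:469}. For $f\in W^{1,p}(\X,\calT_q)$ Lemma \ref{le:obvious-comparison} gives $\weakgradTq f\le\relgrad f$ $\mm$-a.e., while
\[
  \int_X \weakgradTq f^p\,\d\mm=\wCE_{p,\calT_q}(f)=\CE_p(f)=\int_X \relgrad f^p\,\d\mm ;
\]
since a nonnegative integrable function that is dominated $\mm$-a.e.\ by another one with the same integral must coincide with it $\mm$-a.e., we get $\weakgradTq f=\relgrad f$, and combined with Theorem \ref{thm:mainA} also $\weakgradTq f=\relgradA f=\relgrad f$ $\mm$-a.e.

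The genuinely hard content — the Von Neumann min-max behind Theorems \ref{thm:identification-compact1} and \ref{thm:identification-compact2}, the conformal-distance duality of \S\ref{subsec:Finsler}, and the Hopf--Lax approximation behind Theorem \ref{thm:mainA} — is all upstream, so the corollary itself is essentially bookkeeping. The one point deserving care is the legitimacy of the Fenchel--Moreau step: one must use that $\CE_p$ and $\wCE_{p,\calT_q}$ are the biconjugates of their $L^p$--$L^q$ duals, which requires convexity and weak lower semicontinuity on all of $L^p(X,\mm)$ (not merely on the Sobolev domain), and that the mass constraint \eqref{eq:323} is what makes the equality of conjugates global on $L^q(X,\mm)$ rather than only on the zero-mean subspace.
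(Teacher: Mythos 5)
Your proof is correct and follows essentially the same route as the paper: equality of the dual energies (Corollary \ref{cor:identification-compact2}) plus Fenchel--Moreau biconjugation gives $\CE_p=\wCE_{p,\calT_q}$, and Theorem \ref{thm:mainA} handles the algebra $\AA$. Your additional bookkeeping — the extension of the conjugate identity to all of $L^q(X,\mm)$ via \eqref{eq:323} and \eqref{eq:453}, and the upgrade from equal energies to the $\mm$-a.e.\ identity \eqref{eq:469} via Lemma \ref{le:obvious-comparison} — correctly fills in steps the paper leaves implicit.
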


\nc
\subsubsection*{The complete case}
Let us now extend the previous result to the case when
$(X,\sfd)$ is complete, by removing the compactness assumption.
\begin{theorem}
  \label{thm:main-identification-complete}
  Let us suppose that $(X,\sfd)$ is complete
  and let $\AA$ be an algebra compatible with $\X$.
  Then the same conclusions \eqref{eq:467}, \eqref{eq:468}
  and \eqref{eq:469} hold.
\end{theorem}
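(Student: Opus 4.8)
The plan is to reduce the complete case to the already-established compact case (Corollary \ref{cor:main-identification-compact}) by means of the Gelfand compactification of \S\,\ref{subsec:compactification}, using the invariance results proved for both the Cheeger energy (Corollary \ref{cor:completionA}, Lemma \ref{le:preliminary-embedding}) and the weak Sobolev space (Theorem \ref{thm:invariance-weak}).

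\textbf{Step 1: reduce to the canonical algebra.} By Theorem \ref{thm:mainA} we would like to drop the dependence on $\AA$, but Theorem \ref{thm:mainA} is stated only for compact $(X,\tau)$. Instead I would argue directly: it suffices to prove $\Sob^{1,p}(\X)=W^{1,p}(\X,\calT_q)$ with equal energies, since always $\Sob^{1,p}(\X,\AA)\subset\Sob^{1,p}(\X)$ with $\relgradA f\ge\relgrad f\ge\weakgradTq f$ by \eqref{eq:360}, \eqref{eq:361} and Lemma \ref{le:obvious-comparison}; once the two extremes agree we get equality throughout. So the real goal is the identity $\Sob^{1,p}(\X)=W^{1,p}(\X,\calT_q)$ together with $\relgrad f=\weakgradTq f$.

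\textbf{Step 2: compactify.} Fix the Gelfand compactification $\iota:X\to\hat X$ of $(\X,\AA(\X))$ obtained from the canonical algebra $\Lip_b(X,\tau,\sfd)$, as in Theorem \ref{thm:G-compactification}, giving the compact e.m.t.m.~space $\hat\X=(\hat X,\hat\tau,\hat\sfd,\hat\mm)$ with compatible algebra $\hat\AA=\Gamma(\AA(\X))$ and the linear isometry $\iota_*:L^p(X,\mm)\to L^p(\hat X,\hat\mm)$. Since $(X,\sfd)$ is complete, $\iota(X)$ is $\hat\sfd$-closed in $\hat X$, and by Remark \ref{rem:better-interpretation} we may identify $X$ with the $\hat\sfd$-closed, $\bar\mm$-full subset $\iota(X)\subset\bar X\subset\hat X$, where $\bar X$ is the $\hat\sfd$-completion; but in fact working directly with $\hat X$ is cleaner. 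For the strong side, the Proposition following Lemma \ref{le:preliminary-embedding} (applied to the measure-preserving embedding $\iota$ with $\iota^*(\hat\AA)=\AA(\X)$ and $\iota(X)$ $\hat\sfd$-dense in $\bar X$, combined with Corollary \ref{cor:completionA}) gives that $\iota_*$ is an isomorphism of $\Sob^{1,p}(\hat\X,\hat\AA)$ onto $\Sob^{1,p}(\X)$ with $\relgrad f=\iota^*(|\rmD\iota_*f|_{\star,\hat\AA})$ $\mm$-a.e. For the weak side, Theorem \ref{thm:invariance-weak} applied to the measure-preserving isometric embedding $\iota$ into the \emph{complete} ambient $\hat X$ (or rather into $\bar X$, which is complete, and then using Corollary \ref{cor:restriction} to pass between $\bar X$ and $\hat X$) gives that $\iota_*$ is an isomorphism of $W^{1,p}(\hat\X,\calT_q(\hat\X))$ onto $W^{1,p}(\X,\calT_q)$ with $\weakgradTq f=\iota^*(|\rmD\iota_*f|_{w,\calT_q(\hat\X)})$ $\mm$-a.e.

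\textbf{Step 3: invoke the compact case and transport back.} Since $(\hat X,\hat\tau)$ is compact, Corollary \ref{cor:main-identification-compact} yields $\Sob^{1,p}(\hat\X,\hat\AA)=\Sob^{1,p}(\hat\X)=W^{1,p}(\hat\X,\calT_q(\hat\X))$ with equal norms and $|\rmD\hat f|_{\star,\hat\AA}=|\rmD\hat f|_{w,\calT_q(\hat\X)}$ $\hat\mm$-a.e.~for every Sobolev function $\hat f$. Composing the two isomorphisms of Step 2, a function $f\in L^p(X,\mm)$ lies in $\Sob^{1,p}(\X,\AA)$ iff $\iota_*f$ lies in $\Sob^{1,p}(\hat\X,\hat\AA)$ iff $\iota_*f$ lies in $W^{1,p}(\hat\X,\calT_q(\hat\X))$ iff $f$ lies in $W^{1,p}(\X,\calT_q)$, and the corresponding minimal gradients agree $\mm$-a.e., giving \eqref{eq:467}, \eqref{eq:468}, \eqref{eq:469}. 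The same chain applied to an arbitrary compatible algebra $\AA$ (replacing $\AA(\X)$ by $\AA$ and $\hat\AA$ by $\Gamma(\AA)$, using Corollary \ref{cor:main-identification-compact}'s freedom in the compatible algebra on $\hat X$) disposes of the dependence on $\AA$.

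\textbf{The main obstacle} is ensuring that the invariance statements of Step 2 genuinely apply: the Proposition after Lemma \ref{le:preliminary-embedding} requires $\iota(X)$ to be $\hat\sfd$-dense in the target, whereas the compactification target $\hat X$ is generally larger than the $\hat\sfd$-completion $\bar X$. The clean fix is to factor $\iota$ through $\bar X=$ (the $\hat\sfd$-closure of $\iota(X)$ in $\hat X$): by Corollary \ref{cor:completion} and Corollary \ref{cor:completionA} the strong energies on $\X$ and $\bar\X$ coincide, and since $\bar X$ is $\hat\sfd$-closed in $\hat X$ with $\hat\mm(\hat X\setminus\bar X)=0$, Corollary \ref{cor:restriction} (for the strong side) and Corollary \ref{cor:restriction}/Lemma \ref{le:simple-but-crucial} (for the weak side, since every dynamic plan in $\Bar q{\hat\mm}$ is concentrated on $\RA(\bar X)$) show that both Sobolev spaces on $\hat\X$ restrict isometrically to those on $\bar\X$. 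Thus all three spaces on $\hat X$, $\bar X$ and $X$ are isometrically identified, and the compact-case identity on $\hat X$ propagates down. One must also check that $\calT_q(\hat\X)$ restricted via $\iota$ matches $\calT_q(\X)$, which is exactly the content of Lemma \ref{le:equivalent-dynamic}.
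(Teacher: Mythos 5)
Your route is the paper's own: Gelfand-compactify with respect to the compatible algebra, apply the compact-case Corollary \ref{cor:main-identification-compact} upstairs, and transport back via Theorem \ref{thm:invariance-weak} on the weak side (valid exactly because $(X,\sfd)$ is complete) and Lemma \ref{le:preliminary-embedding} on the strong side, closing with \eqref{eq:565}. So the core argument is correct and coincides with the paper's proof.

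Two remarks on the write-up. First, the Step 1 ``reduction'' is not a reduction: knowing $\Sob^{1,p}(\X)=W^{1,p}(\X,\calT_q)$ with $\relgrad f=\weakgradTq f$ says nothing about $\Sob^{1,p}(\X,\AA)$, since by \eqref{eq:361} the gradient $\relgradA f$ sits \emph{above} both ends of that chain; what actually disposes of a general $\AA$ is your closing remark in Step 3 (run the whole argument with the compactification induced by $\AA$ itself), which is precisely what the paper does, so Step 1 should simply be dropped. Second, you neither need, nor do your cited tools give, a two-sided isomorphism on the strong side: $\bar X$ is in general not $\hat \sfd$-dense in $\hat X$, so the restriction statement with equality \eqref{eq:535} does not apply, and the claimed ``isometric restriction'' of the strong Sobolev space from $\hat X$ to $\bar X$ is the kind of statement that only becomes available \emph{after} the identification theorem (via Theorem \ref{thm:invariance-i}); asserting it here is unproved. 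The loop closes with one-directional estimates alone: for $f\in W^{1,p}(\X,\calT_q)$, Theorem \ref{thm:invariance-weak} gives $\hat f:=\iota_* f\in W^{1,p}(\hat\X,\hat\calT_q)$ with equal gradient, the compact case gives $\hat f\in \Sob^{1,p}(\hat\X,\hat\AA)$ with $|\rmD \hat f|_{\star,\hat\AA}=|\rmD \hat f|_{w,\hat\calT_q}$, and Lemma \ref{le:preliminary-embedding} (which requires no density) gives $\relgradA f\le \iota^*\big(|\rmD \hat f|_{\star,\hat\AA}\big)=\weakgradTq f$; combined with \eqref{eq:361} and \eqref{eq:565} this yields \eqref{eq:467}--\eqref{eq:469}. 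With that simplification the density/completion detour in your ``main obstacle'' paragraph can be deleted entirely.
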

\begin{proof}
  Let us consider the Gelfand compactification $\hat \X=(\hat X,\hat
  \tau,\hat \sfd,\hat \mm)$ of Theorem
  \ref{thm:G-compactification}
  induced by $\AA$.
  Since $(X,\sfd)$ is complete, we can apply Theorem
  \ref{thm:invariance-weak}
  and we obtain that
  $\iota_*$ induces an isomorphism of
  $W^{1,p}(\X,\calT_q)$ onto $W^{1,p}(\hat \X,\hat \calT_q)$
  with
  \begin{equation}
    \label{eq:507}
    \weakgradTq f=\iota^*(|\rmD \hat f|_{w,\hat \calT_q}),\quad
    \hat f:=\iota_* f.
  \end{equation}
  Since $(\hat X,\hat \tau)$ is compact,
  by Corollary \ref{cor:main-identification-compact} we know that
  $\hat f\in H^{1,p}(\hat X,\hat \AA)$ with
  \begin{equation}
    \label{eq:508}
    |\rmD \hat f|_{\star,\hat \sAA}=|\rmD \hat f|_{w,\hat\calT_q}\quad \text{$\hat \mm$-a.e.}
  \end{equation}
  Finally, applying Lemma \ref{le:preliminary-embedding} we obtain
  that $f=\iota^*\hat f$ belongs to $H^{1,p}(\X,\AA)$ with
  \begin{equation}
    \label{eq:509}
    |\rmD f|_{\star,\sAA}\le \iota^*\big(|\rmD \hat f|_{\star,\hat \sAA}\big).
  \end{equation}
  Combining the previous inequalities we obtain
  \begin{equation}
    \label{eq:510}
    |\rmD f|_{\star,\sAA} \le \weakgradTq f \quad \text{$\mm$-a.e.}
  \end{equation}
  Recalling \eqref{eq:565} we conclude.
\end{proof}
We can also extend to the complete case the dual characterizations of
Theorem \ref{thm:identification-compact2} and Corollary
\ref{cor:identification-compact2}.
\begin{theorem}
  \label{thm:main-duality-complete}
  Let us suppose that $(X,\sfd)$ is complete. Then
  for every $\mu_0,\mu_1\in
  \cMp(X)$ 
  we have
  \begin{equation}
    \label{eq:177bis}
    \DD_q(\mu_0,\mu_1)=\pCE_{p}^*(\mu_0-\mu_1).
  \end{equation}
  and whenever $\mu_i=h_i\mm$ with $h_i\in L^q(X,\mm)$
  and $h=h_0-h_1$
  \begin{equation}
    \label{eq:470bis}
    \DD_q(h_0\mm,h_1\mm)=\CE_{p}^*(h)=\wCE_{p}^*(h)=\pCE_p^*(h\mm).
  \end{equation}
\end{theorem}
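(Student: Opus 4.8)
The plan is to reduce the complete case to the compact case via the Gelfand compactification, exactly in the spirit of the proof of Theorem \ref{thm:main-identification-complete} but now transporting the cost functional $\DD_q$ and the dual energies along the embedding $\iota$. First I would fix a compatible algebra, say $\AA=\Lip_b(X,\tau,\sfd)$ (any adapted algebra would do, and by Theorem \ref{thm:main-identification-complete} the resulting Cheeger energy is independent of this choice), and form the Gelfand compactification $\hhat\X=(\hhat X,\hhat\tau,\hhat\sfd,\hhat\mm)$ of Theorem \ref{thm:G-compactification}, with $\iota:X\to\hhat X$ the canonical measure-preserving embedding. The key structural inputs are: $\iota$ is an isometry and $\iota_\sharp\mm=\hhat\mm$; $\iota(X)$ is $\hhat\tau$-dense and (since $(X,\sfd)$ is complete) $\hhat\sfd$-closed in $\hhat X$; and the induced map $J:\RA(X)\to\RA(\hhat X)$, $J(\gamma)=\iota\circ\gamma$, is a continuous injection that preserves length and integrals (equation \eqref{eq:498}), is surjective onto $\RA(\hhat X)$ when $\iota$ is surjective, and in general (using $\hhat\sfd$-closedness of $\iota(X)$ together with Lemma \ref{le:simple-but-crucial}) satisfies: every $\hhat\ppi\in\Bar q{\hhat\mm}$ with $(\sfe_i)_\sharp\hhat\ppi$ concentrated on $\iota(X)$ is concentrated on $J(\RA(X))$.

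The core step is to show $\DD_q(\mu_0,\mu_1)=\hhat\DD_q(\iota_\sharp\mu_0,\iota_\sharp\mu_1)$, where $\hhat\DD_q$ is the cost functional \eqref{eq:191} built on $\hhat\X$. The inequality ``$\ge$'' is immediate from Lemma \ref{le:equivalent-dynamic}: if $\ppi\in\Pi(\mu_0,\mu_1)$ then $\hhat\ppi:=J_\sharp\ppi\in\Pi(\iota_\sharp\mu_0,\iota_\sharp\mu_1)$ with $\brq q(\hhat\ppi)=\brq q(\ppi)$ (the entropy is preserved because $\mu_{\hhat\sppi}=\iota_\sharp\mu_\sppi$ and relative entropies are invariant under push-forward by injective maps). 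For ``$\le$'', given $\hhat\ppi\in\Pi(\iota_\sharp\mu_0,\iota_\sharp\mu_1)$ with $\brq q(\hhat\ppi)<\infty$, note that $(\sfe_i)_\sharp\hhat\ppi=\iota_\sharp\mu_i$ is concentrated on $\iota(X)$, so by the observation above $\hhat\ppi$ is concentrated on $J(\RA(X))$; since $J$ is a Borel injection (hence admits a Borel inverse on its image, by \ref{thm:important-arcs}) we can pull $\hhat\ppi$ back to a plan $\ppi$ on $\RA(X)$ with $J_\sharp\ppi=\hhat\ppi$, $\ppi\in\Pi(\mu_0,\mu_1)$, and $\brq q(\ppi)=\brq q(\hhat\ppi)$. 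This gives the equality of costs. Then I would invoke Theorem \ref{thm:identification-compact2} and Corollary \ref{cor:identification-compact2} on the compact space $\hhat\X$: $\hhat\DD_q(\iota_\sharp\mu_0,\iota_\sharp\mu_1)=\widehat{\pCE}_p^*(\iota_\sharp\mu_0-\iota_\sharp\mu_1)$, and when $\mu_i=h_i\mm$, the three dual energies $\widehat{\CE}_p^*(\hhat h)=\widehat{\wCE}_p^*(\hhat h)=\widehat{\pCE}_p^*(\hhat h\hhat\mm)$ all agree with this common value, where $\hhat h=\iota_*h$.

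It remains to transfer the dual-energy identities back to $\X$. Here the point is that the three Cheeger-type energies transform correctly under $\iota$: by Corollary \ref{cor:main-identification-compact} (on $\hhat\X$) and Theorem \ref{thm:main-identification-complete} (on $\X$), plus the invariance results Lemma \ref{le:measurable}, Theorem \ref{thm:invariance-weak} and the observation that $\iota^*$ is an $L^p$-isometry, one gets $\CE_p(f)=\widehat{\CE}_p(\iota_*f)$, $\wCE_{p,\calT_q}(f)=\widehat{\wCE}_{p,\hhat\calT_q}(\iota_*f)$, and for $\pCE_p$ the inequality $\widehat{\pCE}_p(\iota_*f)\le\pCE_p(f)$ for $f\in\Lip_b(X,\tau,\sfd)$ together with the fact (Theorem \ref{thm:G-compactification}(c)) that the pull-back algebra is the whole of $\AA$. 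Taking Legendre transforms in the $L^p$–$L^q$ duality, and using that $\iota_*:L^q(X,\mm)\to L^q(\hhat X,\hhat\mm)$ is a bijective isometry with $\int h\,\d\mm=\int\iota_*h\,\d\hhat\mm$, these yield $\CE_p^*(h)=\widehat{\CE}_p^*(\iota_*h)$, $\wCE_p^*(h)=\widehat{\wCE}_p^*(\iota_*h)$, and $\pCE_p^*(h\mm)=\widehat{\pCE}_p^*(\iota_*h\,\hhat\mm)$; combined with the compact-case identities and the cost equality, \eqref{eq:177bis} and \eqref{eq:470bis} follow. (The general chain \eqref{eq:453} $\wCE_p^*\ge\CE_p^*\ge\pCE_p^*$ holds on $\X$ unconditionally, so once one of the endpoints is pinned down via the compactification the squeeze closes things up; \eqref{eq:192} and \eqref{eq:192bis} give the reverse inequality $\DD_q\ge\pCE_p^*$, $\DD_q\ge\wCE_p^*$, so only one direction really needs the compactification argument.)

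The main obstacle I anticipate is the careful handling of the pull-back of dynamic plans in the ``$\le$'' direction of the cost equality: one must verify that a plan $\hhat\ppi$ on $\RA(\hhat X)$ with finite barycentric entropy and marginals on $\iota(X)$ is genuinely concentrated on the image $J(\RA(X))$ and not merely on arcs whose traces lie $\hhat\mm$-a.e. in $\iota(X)$. This is where completeness of $(X,\sfd)$ is essential — it makes $\iota(X)$ $\hhat\sfd$-closed, so Lemma \ref{le:simple-but-crucial} forces such arcs to stay in $\iota(X)$ entirely — and one should phrase this argument with the same care as in the proof of Theorem \ref{thm:important-arcs}(g) and Lemma \ref{le:simple-but-crucial}. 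Everything else is bookkeeping: matching the definitions of $\DD_q$, $\pCE_p^*$, $\CE_p^*$, $\wCE_p^*$ on $\X$ and $\hhat\X$ through the functorial properties of $\iota$, $\iota_*$, $\iota^*$ and $J$ already established in Sections \ref{subsec:compactification}, \ref{subsec:invariance1} and \ref{subsec:invariance-weak}.
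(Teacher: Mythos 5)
Your overall strategy coincides with the paper's proof: compactify via Theorem \ref{thm:G-compactification} with the canonical algebra, establish $\DD_q(\mu_0,\mu_1)=\DD_q(\hat\mu_0,\hat\mu_1)$ (push-forward of plans through $J$ with conserved barycentric entropy, Lemma \ref{le:equivalent-dynamic}, for one inequality; concentration of every competitor on arcs lying in $\iota(X)$, via the $\hat\sfd$-closedness of $\iota(X)$ coming from completeness and Lemma \ref{le:simple-but-crucial}, for the other), then invoke the compact-case duality (Theorem \ref{thm:identification-compact2}, Corollary \ref{cor:identification-compact2}) on $\hat\X$ and close the squeeze with the unconditional inequalities \eqref{eq:192}, \eqref{eq:192bis}, \eqref{eq:453}. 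Your handling of the cost equality — including the remark that the endpoint marginals, not just Lemma \ref{le:simple-but-crucial}, are what force ($\hat\ppi$-a.e.) constant arcs into $\iota(X)$, and the pull-back of plans through the injection $J$ — is at least as careful as the paper's rather terse statement of this step.

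There is, however, one concrete defect in the step that must deliver the nontrivial inequality $\DD_q\le\pCE_p^*$. You assert $\widehat{\pCE}_p(\iota_*f)\le\pCE_p(f)$ for $f\in\Lip_b(X,\tau,\sfd)$; the true comparison goes the other way: by \eqref{eq:365}/\eqref{eq:363} in Lemma \ref{le:preliminary-embedding} one has $\lip_{\hat\sfd}\hat f(\iota(x))\ge\lip_\sfd(\iota^*\hat f)(x)$, hence $\widehat{\pCE}_p(\hat f)\ge\pCE_p(\iota^*\hat f)$. With your sign, Legendre duality only yields $\widehat{\pCE}_p^*\ge\pCE_p^*$, which through the cost equality reproduces the trivial direction $\DD_q\ge\pCE_p^*$ already contained in \eqref{eq:192}. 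The correct mechanism — the one the paper uses — is to take an \emph{arbitrary} $\hat f\in\Lip_b(\hat X,\hat\tau,\hat\sfd)$ (not merely elements of $\Gamma(\AA)$, so the fact that the pull-back algebra is $\AA$ is not the relevant point), observe that $\iota^*\hat f\in\Lip_b(X,\tau,\sfd)$ with smaller asymptotic Lipschitz constant along $\iota(X)$, and conclude $\widehat{\pCE}_p^*(\hat\mu_0-\hat\mu_1)\le\pCE_p^*(\mu_0-\mu_1)$; combined with $\DD_q(\mu_0,\mu_1)=\DD_q(\hat\mu_0,\hat\mu_1)=\widehat{\pCE}_p^*(\hat\mu_0-\hat\mu_1)$ this gives \eqref{eq:177bis}, and \eqref{eq:470bis} then follows from \eqref{eq:453} and \eqref{eq:192bis}. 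Relatedly, your claimed exact equality $\pCE_p^*(h\mm)=\widehat{\pCE}_p^*(\iota_*h\,\hat\mm)$ would require extending each $f\in\Lip_b(X,\tau,\sfd)$ to $\hat X$ without increasing the asymptotic Lipschitz constant, which is neither available nor needed: only the one-sided comparison enters. The exact transfers you invoke for $\CE_p^*$ and $\wCE_p^*$ (via Theorems \ref{thm:main-identification-complete} and \ref{thm:invariance-weak}) are correct, but they only cover the absolutely continuous case \eqref{eq:470bis}; for general measures in \eqref{eq:177bis} the $\pCE_p^*$ comparison is indispensable, so the sign must be fixed as indicated.
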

\begin{proof}
  It is sufficient to prove that
  $\DD_q(\mu_0,\mu_1)\le\pCE_{p}^*(\mu_0-\mu_1).$
  Keeping the same notation of the previous proof and
  using the compactification $\hat \X$ induced by
  the canonical algebra $\AA=\Lip_b(X,\tau,\sfd)$, we consider the Radon measures
  $\hat\mu_i:=\iota_\sharp\mu_i\in \cMp(\hat X)$.
  It is easy to check that for every plan $\ppi\in \Pi(\mu_0,\mu_1)$
  the push forward $J_\sharp \ppi$
  (where $J(\gamma)=\iota\circ\gamma$)
  belongs to
  $\Pi(\hat\mu_0,\hat\mu_1)$
  in $\RA(\hat X)$, so that $\DD_q(\mu_0,\mu_1)\le
  \DD_q(\hat \mu_0,\hat\mu_1)$.
  On the other hand, by Lemma \ref{le:simple-but-crucial} and the
  completeness
  of $(\iota(X),\hat \sfd)$,
  every plan $\hat\ppi\in \Pi(\hat\mu_0,\hat\mu_1)$ is concentrated
  on curves in $J(\RA(X))$ so that
  $\DD_q(\mu_0,\mu_1)=
  \DD_q(\hat \mu_0,\hat\mu_1)$.
  Recalling that
  for every $f\in \Lip_b(X,\tau,\sfd)$ we have
  $\hat f=\Gamma(f)\in \Lip_b(\hat X,\hat \tau,\hat \sfd)$
  with
  $\lip_{\hat\sfd} \hat f(\iota( x))\ge \lip_\sfd f(x)$ we get
  \begin{align*}
    \DD_q(\mu_0,\mu_1)
    &=\DD_q(\hat \mu_0,\hat\mu_1)
      =\sup_{\hat f\in \Lip_b(\hat X,\hat \tau,\hat\sfd)}
    \int_{\hat X}\hat f\,\d(\hat\mu_0-\hat \mu_1)-
    \frac 1p\int_{\hat X}\big(\Lip_{\hat \sfd} \hat f(\hat
      x)\big)^p\,\d\hat\mm(\hat x)
    \\&=
    \sup_{\hat f\in \Lip_b(\hat X,\hat \tau,\hat\sfd)}
    \int_{X}\hat f\circ\iota\,\d(\mu_0- \mu_1)-
    \frac 1p\int_{X}\big(\Lip_{\hat \sfd} \hat f(\iota(x)\big)^p\,\d\mm(x)
    \\&\le
    \sup_{f\in \Lip_b(X,\tau,\sfd)}
    \int_{X}f\,\d(\mu_0-\mu_1)-
    \frac 1p\int_{X}\big(\Lip_{\sfd}  f(x)
    \big)^p\,\d\mm(x)=\pCE^*_p(\mu_0-\mu_1).\qedhere
  \end{align*}
  \end{proof}
  As a consequence of the above result, we can
  prove a simple characterization of nontriviality
  for the Cheeger energy.
  \begin{theorem}
    \label{thm:triviality}
    Let us suppose that $(X,\sfd)$ is complete. The following
    properties are equivalent:
    \begin{enumerate}
    \item The Cheeger energy is trivial: $\CE_p(f)\equiv 0$ for every
      $f\in L^p(X,\mm)$.
    \item The Cheeger energy $\CE_p(f)$ is finite for every $f\in
      L^p(X,\mm)$.
    \item $\RA_0(X)$ is $\calT_q$-negligible.
    \item $\RA_0(X)$ is $\Bar q\mm$-negligible
      (equivalently, if $(X,\tau)$ is Souslin, $\RA_0(X)$ is $\Md$-negligible).
    \end{enumerate}
  \end{theorem}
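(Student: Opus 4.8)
The plan is to prove the four equivalences by establishing a cycle of implications, most of which follow from the duality machinery just developed. The key observation is that triviality of the Cheeger energy is dual to the statement that the cost functional $\DD_q$ carries no ``mass'' on nonconstant arcs, and the latter is in turn equivalent to the $\calT_q$- and $\Bar q\mm$-negligibility of $\RA_0(X)$.

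First I would prove $(a)\Rightarrow(b)$, which is trivial, and $(b)\Rightarrow(a)$: if $\CE_p$ is everywhere finite, then by convexity, $p$-homogeneity, and lower semicontinuity it is bounded on the unit ball of $L^p(X,\mm)$, hence continuous; but a $p$-homogeneous continuous convex functional that is continuous must already be identically zero unless it is unbounded on rays, so $\lambda^p\CE_p(f)=\CE_p(\lambda f)$ staying finite for all $\lambda$ forces... actually the cleaner route is: $(b)$ says $\Sob^{1,p}(\X)=L^p(X,\mm)$, so the closed graph / uniform boundedness principle applied to the identity $\Sob^{1,p}(\X)\hookrightarrow L^p(X,\mm)$ (both Banach) gives a constant $C$ with $\CE_p(f)\le C\|f\|_p^p$ for all $f$; replacing $f$ by $\lambda f$ and letting $\lambda$ vary does nothing new, but replacing $f$ by $f+c$ for constants $c$, using $\CE_p(f+c)=\CE_p(f)$ (from locality, Theorem~\ref{le:useful}(b)) and $\|f+c\|_p\to\infty$ is not quite it either; the correct argument is that $\CE_p^{1/p}$ is a seminorm, so $\CE_p(f)\le C\|f\|_p^p$ together with $p$-homogeneity and the fact that $\CE_p$ vanishes on a dense set (constants, or more generally one shows the null space is dense) forces $\CE_p\equiv 0$. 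I would lean on Theorem~\ref{thm:main-duality-complete}: $\CE_p\equiv 0$ iff $\CE_p^*(h)=0$ for $h$ with mean zero iff (by \eqref{eq:470bis}) $\DD_q(h_+\mm,h_-\mm)=0$ for all such $h$, and finiteness of $\CE_p$ everywhere is equivalent, via Fenchel--Moreau, to $\CE_p^*$ having bounded sublevels, which combined with $q$-homogeneity of $\CE_p^*$ gives $\CE_p^*\equiv 0$, closing $(a)\Leftrightarrow(b)$.

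Next, the bridge to arcs. For $(a)\Rightarrow(c)$: if $\CE_p\equiv 0$ then $\wCE_{p,\calT_q}\equiv 0$ by \eqref{eq:565}, so every $f\in L^p(X,\mm)$ has $0$ as a $\calT_q$-weak upper gradient, meaning $f(\gamma_1)=f(\gamma_0)$ and $f$ is constant along $\calT_q$-a.e.~arc. Taking a countable family $(f_n)$ separating points of $X$ — which exists because $\AA$ is compatible, hence separates points, and by density we may pick it inside $L^p$ — we conclude that $\calT_q$-a.e.~arc is constant, i.e.\ $\RA_0(X)$ is $\calT_q$-negligible. Then $(c)\Rightarrow(d)$ needs care since a priori $\calT_q\subset\Bar q\mm$ gives the implication the wrong way; the right tool is the rescaling/stretching argument: given any $\ppi\in\Bar q\mm$ concentrated on $\RA_0(X)$, one restricts and stretches it as in the proof of Proposition~\ref{prop:Sobreg}(a) (or Theorem~\ref{thm:Rovereto}) to produce $\tilde\ppi\in\calT_q$ with $\tilde\ppi(\RA_0(X))>0$ whenever $\ppi(\RA_0(X))>0$, contradicting $(c)$. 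The parenthetical equivalence with $\Md$-negligibility when $(X,\tau)$ is Souslin is Theorem~\ref{tmain}(d). Finally $(d)\Rightarrow(a)$: if $\RA_0(X)$ is $\Bar q\mm$-negligible then for any $\mu_0,\mu_1$ with equal mass and $\DD_q(\mu_0,\mu_1)<\infty$, an optimal plan (Lemma~\ref{le:attained}) is concentrated on constant arcs, forcing $\mu_0=\mu_1$ and $\DD_q(\mu_0,\mu_1)=0$; by \eqref{eq:470bis}, $\CE_p^*(h)=0$ for all mean-zero $h$, hence $\CE_p\equiv 0$ by Fenchel--Moreau.

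The main obstacle I expect is the direction $(c)\Rightarrow(d)$ (equivalently keeping the logic clean around $\calT_q$ versus $\Bar q\mm$): one must be careful that the stretching construction genuinely lands in $\calT_q$, which requires checking the $L^q$-integrability of the pushforwards $(\sfe_i)_\sharp\tilde\ppi$, and this is exactly the bookkeeping carried out in the proof of Theorem~\ref{thm:Rovereto}; the estimates \eqref{eq:uno} and \eqref{eq:due} type bounds are what make it work, and I would invoke them rather than redo them. A secondary subtlety is $(b)\Rightarrow(a)$, where one must avoid the temptation of a slick-but-wrong uniform boundedness argument and instead go through the dual side; this is short once one recognizes that $\CE_p^*$ is $q$-homogeneous and lower semicontinuous, so bounded sublevel sets (which finiteness of $\CE_p$ everywhere forces, by Fenchel--Moreau) is incompatible with $\CE_p^*\not\equiv 0$.
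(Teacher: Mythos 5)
Your overall architecture (duality $\CE_p^*=\DD_q$, stretching to pass between $\Bar q\mm$ and $\calT_q$, zero weak upper gradients forcing constancy along a.e.\ arc) is the right circle of ideas, and your $(a)\Rightarrow(b)$ and $(c)\Rightarrow(d)$ steps are fine. But the duality bookkeeping that drives your $(b)\Rightarrow(a)$ and $(d)\Rightarrow(a)$ is backwards: $\CE_p\equiv 0$ corresponds to $\CE_p^*(h)=+\infty$ for every $h\neq 0$ with zero mean (the conjugate of the zero functional is the indicator of $\{0\}$), \emph{not} to $\CE_p^*\equiv 0$; with your reading, $\CE_p^*\equiv0$ on mean-zero $h$ would give via Fenchel--Moreau $\CE_p(f)=+\infty$ for every nonconstant $f$, the opposite extreme. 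For $(d)\Rightarrow(a)$ the repair is small and is exactly what the paper does for $(c)\Rightarrow(a)$: negligibility of $\RA_0(X)$ forces any plan in $\Pi(h_+\mm,h_-\mm)$ with $\brq q<\infty$ to sit on constant arcs, hence $h_+\mm=h_-\mm$; so for $h\neq0$ the class is effectively empty, $\DD_q(h_+\mm,h_-\mm)=\CE_p^*(h)=+\infty$ by \eqref{eq:470bis}, and duality gives $\CE_p\equiv0$ (your ``$\DD_q=0$, hence $\CE_p^*=0$, hence $\CE_p\equiv0$'' does not follow). For $(b)\Rightarrow(a)$, however, the dual route genuinely fails: finiteness everywhere gives (closed graph) $\CE_p(f)\le C\|f\|_p^p$, whence $\CE_p^*(h)\ge c\|h\|_q^q$, i.e.\ bounded sublevels; together with $q$-homogeneity this only excludes zeros of $\CE_p^*$ away from the origin and says nothing about $\CE_p$ vanishing, while ``the null space is dense'' is precisely what must be proved (constants are not dense). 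The paper's missing ingredient here is locality: composing with the $2$-periodic sawtooth $\phi_n(r)=\phi(nr)$ gives $\relgrad{\phi_n\circ f}=n\relgrad f$ and $\|\phi_n\circ f\|_\infty\le 1$, so $\CE_p(f)=n^{-p}\CE_p(\phi_n\circ f)\le C\,n^{-p}\mm(X)\to0$.

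There is a second gap in your $(a)\Rightarrow(c)$: a countable family of functions in $\Lip_b(X,\tau,\sfd)$ separating the points of $X$ (or recovering $\sfd$) need not exist, since the theorem assumes neither that $(X,\tau)$ is Souslin nor any separability of $(X,\sfd)$, and $\sfd$ is an extended distance which is typically nonseparable even on $\tau$-compact sets (e.g.\ the Cameron--Martin distance on a compact subset of a Wiener space). The paper's proof is structured precisely to avoid this: fix $\ppi\in\calT_q$ concentrated on a compact $\Gamma$ with compact image $K=\sfe(\Gamma)$, use for each \emph{continuous} semidistance $\sfd_i$ a countable $\sfd_i$-dense subset of $K$ (available because $(K,\sfd_i)$ is totally bounded) and the vanishing of the weak gradients of $\sfd_i(\cdot,y)$ to obtain the single integrated identity $\int\int_\gamma\sfd_i(x,\gamma_0)\,\d\ppi(\gamma)=0$, and then pass to the limit over the possibly uncountable directed set $I$ by the monotone convergence property \eqref{eq:Beppo_Levi_general}; one can neither intersect uncountably many $\ppi$-negligible sets nor pick a countable separating family, so your shortcut only works under extra separability hypotheses.
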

  \begin{proof}
    The implication $(a)\Rightarrow (b)$ is obvious.
    \medskip

    \noindent$\boldsymbol{(b)\Rightarrow(a)}$
    If the Cheeger energy is always finite then the Sobolev
    norm of $\Sob^{1,p}(\X)\subset L^p(X,\mm)$ is equivalent to
    the $L^p$-norm \cite[Corollary 2.8]{Brezis11}, so that there
    exists a constant $C>0$ such that
    \begin{equation}
      \label{eq:638}
      \CE_p(f)\le C\|f\|_{L^p(X,\mm)}^p\quad\text{for every }f\in L^p(X,\mm).
    \end{equation}
    Let us show that \eqref{eq:638} implies $\CE_p(f)\equiv 0$ for
    every $f\in \Sob^{1,p}(\X)$. We consider the $2$ periodic
    Lipschitz function
    $\phi:\R\to\R$ satisfying $\phi(r)=|r|$ for $r\in [-1,1]$
    and we set 
    \begin{displaymath}
      \phi_n(r):=\phi(n r),\quad
      f_n(x):=\phi_n(f(x)).
    \end{displaymath}
    Thanks to the locality of the minimal relaxed gradient we have
    $
      \relgrad {f_n}(x)= n\relgrad {f}(x)
    $
    so that
    \begin{displaymath}
      \CE_p(f)=\frac 1{n^p}\CE_p(f_n)\le
      \frac C{n^p}\|f_n\|_{L^p(X,\mm)}^p\le \frac
      C{n^p}\mm(X)\to0\quad\text{as }n\to\infty.
    \end{displaymath}
    $\boldsymbol{(c)\Rightarrow(a)}$
    If (c) holds then for every nonvanishing
    $h\in L^p(X,\mm)$
    the class $\Pi(h_+\mm,h_-\mm)$ is empty so that
    $\CE_p^*(h)=\DD_q(h_+\mm,h_-\mm)=+\infty$. By duality we obtain
    $\CE_p\equiv 0$.
    \medskip

    \noindent
    $\boldsymbol{(a)\Rightarrow(c)}$
    Let $\ppi\in \calT_q$ with $\ppi(\RA(X))>0$ and let
    $(\sfd_i)_{i\in I}$ be a directed family of continuous
    semidistances as in {\upshape (\ref{eq:monotone}a,b,c,d)}.
    It is not restrictive to assume that
    $\ppi$ is concentrated on a compact set $\Gamma$
    on which $\ell$ is continuous.
    The image $K=\sfe(\Gamma)$ is compact in $(X,\tau)$: for every
    $i\in I$ we can find a countable set $K_i\subset K$
    such that for every $x\in K$ $\inf_{y\in K_i}\sfd_i(x,y)=0$.
    For every $y\in K_i$ the minimal relaxed gradient of the function  $x\mapsto \sfd_i(x,y)$
    vanishes, so that
    there exists a $\ppi$-negligible set $N_i\subset \Gamma$ such that
    the function $t\mapsto \sfd_i(\Al_\gamma(t),y)$ is constant for
    every $y\in K_i$ and $\gamma\in \Gamma\setminus N_i$. By
    continuity we deduce that
    $t\mapsto \sfd_i(\Al_\gamma(t),y)$ is constant for every $y\in K$
    so that $\sfd_i(\Al_\gamma(t),\gamma_0)=0$ for every $\gamma\in
    \Gamma\setminus N_i$;
    by integration we obtain
    \begin{equation}
      \label{eq:639}
      \int\Big(\int_\gamma \sfd_i(x,\gamma_0)\Big)\,\d\ppi(\gamma)=0.
    \end{equation}
    On the other hand, for every $\gamma\in \Gamma$
    Beppo Levi's Monotone Convergence Theorem yields $\lim_{i\in I}\int_\gamma \sfd_i(x,\gamma_0)=
    \int_\gamma \sfd(x,\gamma_0)$. A further application of
    the same theorem  thanks to the fact that the function $\gamma\mapsto \int_\gamma
    \sfd_i(x,\gamma_0)$ is continuous on $\Gamma$ with respect to the
    $\tau_\rmA$ topology yields
    \begin{equation}
      \label{eq:640}
      0=\lim_{i\in I}\int\Big(\int_\gamma
      \sfd_i(x,\gamma_0)\Big)\,\d\ppi(\gamma)=
      \int\Big(\int_\gamma \sfd(x,\gamma_0)\Big)\,\d\ppi(\gamma)
    \end{equation}
    which shows that $\ppi$-a.e.~$\gamma$ is constant, a contradiction.
    \medskip

    \noindent
    $\boldsymbol{(d)\Leftrightarrow(c)}$
    The implication $(d)\Rightarrow (c)$ is obvious.
    In order to prove the converse one, we argue by contradiction and
    we suppose that
    there exists a plan $\ppi\in \Bar q\mm$ with $\ppi(\RA_0(X))>0$.
    We can then argue as in the proof of Proposition \ref{prop:Sobreg}
    and
    define a new plan $\tilde\ppi\in \calT_q$ according to
    \eqref{eq:641}.
    It is clear that $\tilde\ppi(\RA_0(X))>0$ as well.
    \end{proof}
\subsection{Notes}
\label{subsec:notes11}
\begin{notes}
  The representation theorems \ref{thm:identification-compact1},
  \ref{thm:identification-compact2}, and \ref{thm:main-duality-complete} are new.
  The proof of Theorem $H=W$ has been given in \cite{Shanmugalingam00}
  in the case of doubling, $p$-Poincar\'e spaces
  \cite[Theorem 5.1]{Bjorn-Bjorn11} and
  in \cite{AGS14I,AGS13} for general spaces by
  a completely different
  method: it relies on three basic ingredients:
  \begin{itemize}[-]\itemsep-3pt
  \item the properties of the $L^2$-gradient flow of the Cheeger
    energy
    (in particular the comparison principle),
  \item the estimate of the Wasserstein velocity of the evolution
    curve, by means of a suitable version of the Kuwada's Lemma,
  \item the representation of the solution as the evaluation at time
    $t$
    of a dynamic plan concentrated on curves with finite $q$-energy,
  \item the derivation of the Shannon-Reny entropy along
  the flow, by using the weak upper gradients of the solutions.
  \end{itemize}
  It is curious that the refined estimates of the
  Hopf-Lax flow play a crucial role in the second step.

  A different proof of Theorem \ref{thm:triviality} in the context of
  Newtonian spaces can be found in \cite[Prop.~7.1.33]{HKST15}.
\end{notes}
\section{Examples and applications}
\label{sec:Examples}
\GGG
\subsection{Refined invariance of the (strong) Cheeger energy}

\subsubsection*{Invariance w.r.t.~the algebra $\AA$}
\begin{theorem}[Invariance of the Cheeger energy w.r.t.~$\AA$]
  \label{thm:trivialA}
  For every e.m.t.m.~space $\X$
  and every compatible algebra $\AA$
  the Sobolev space $\Sob^{1,p}(\X,\AA)$
  is independent of the compatible algebra $\AA$
  and coincides with $\Sob^{1,p}(\X)$.
\end{theorem}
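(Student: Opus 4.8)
The plan is to reduce the general statement to the case of a complete space, where it is already contained in Theorem \ref{thm:main-identification-complete}, by transporting everything through the completion of $\X$. If $(X,\sfd)$ is already complete there is nothing to do: Theorem \ref{thm:main-identification-complete} asserts precisely that $\Sob^{1,p}(\X,\AA)=\Sob^{1,p}(\X)=W^{1,p}(\X,\calT_q)$, with equality of the norms and of the minimal relaxed gradients, for \emph{every} compatible algebra $\AA$, so in that case $\Sob^{1,p}(\X,\AA)$ is independent of $\AA$ and equals $\Sob^{1,p}(\X)$.

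For a general e.m.t.m.~space $\X$ I would pass to its completion $\bar\X=(\bar X,\bar\tau,\bar\sfd,\bar\mm)$ together with the canonical embedding $\iota:X\to\bar X$ of Corollary \ref{cor:completion}, and to the extended algebra $\bar\AA:=\{\bar f:f\in\AA\}$. The first point to record is that $\bar\AA$ is again a compatible algebra for $\bar\X$: by property (C4) of Corollary \ref{cor:completion} the extension map $f\mapsto\bar f$ is an algebra isomorphism of $\Lip_b(X,\tau,\sfd)$ onto $\Lip_b(\bar X,\bar\tau,\bar\sfd)$, so $\bar\AA$ is a unital subalgebra of $\Lip_b(\bar X,\bar\tau,\bar\sfd)$ and $\bar\AA_1=\{\bar f:f\in\AA_1\}$; and the compatibility identity $\bar\sfd(\bar x,\bar y)=\sup_{g\in\bar\AA_1}|g(\bar x)-g(\bar y)|$ follows by approximating $\bar x,\bar y$ with $\bar\sfd$-convergent sequences $\iota(x_n),\iota(y_n)$ in the $\bar\sfd$-dense set $\iota(X)$ (property (C3)) and using that on $\iota(X)$ the distance $\bar\sfd$ restricts to $\sfd$, that $\bar\AA_1$ restricts to $\AA_1$ there, and that each $g\in\bar\AA_1$ is $1$-Lipschitz with respect to $\bar\sfd$, so that the limit can be passed on both sides of the supremum. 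This is exactly the situation in which Corollary \ref{cor:completionA} applies.

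I would then invoke Corollary \ref{cor:completionA} twice, through the same pull-back map $\iota^*$. Applied to the given algebra $\AA$, it gives that $\iota^*$ is an isomorphism of $\Sob^{1,p}(\bar\X,\bar\AA)$ onto $\Sob^{1,p}(\X,\AA)$ with $\relgradA f=\iota^*\bigl(|\rmD\bar f|_{\star,\bar \sAA}\bigr)$; applied to the canonical algebra $\AA_0:=\Lip_b(X,\tau,\sfd)$, whose extension is $\bar\AA_0=\Lip_b(\bar X,\bar\tau,\bar\sfd)$ by (C4), it gives that $\iota^*$ is an isomorphism of $\Sob^{1,p}(\bar\X)$ onto $\Sob^{1,p}(\X)$ with matching minimal relaxed gradients. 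Since $\bar\X$ is complete, the complete case settled in the first paragraph gives $\Sob^{1,p}(\bar\X,\bar\AA)=\Sob^{1,p}(\bar\X)$ with equal norms and $|\rmD\bar f|_{\star,\bar \sAA}=\relgrad{\bar f}$ $\bar\mm$-a.e. Composing the two isomorphisms, both realised by the one fixed map $\iota^*$ whose restriction to Lebesgue spaces is the fixed isometric isomorphism of $L^p(\bar X,\bar\mm)$ onto $L^p(X,\mm)$, we conclude that $\Sob^{1,p}(\X,\AA)$ and $\Sob^{1,p}(\X)$ coincide as subspaces of $L^p(X,\mm)$, with equal norms and $\relgradA f=\relgrad f$ $\mm$-a.e.; as $\AA$ was arbitrary among the compatible algebras, this yields both the independence of $\Sob^{1,p}(\X,\AA)$ of $\AA$ and its identification with $\Sob^{1,p}(\X)$.

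The only genuinely new ingredient in this argument is the verification that $\bar\AA$ stays compatible after completion, and even that can be read off Corollary \ref{cor:completionA}, whose statement already presupposes it; all the rest is bookkeeping of isomorphisms already provided by Corollary \ref{cor:completionA} and Theorem \ref{thm:main-identification-complete}. I expect the density/limit argument for $\bar\sfd=\sup_{g\in\bar\AA_1}|g(\cdot)-g(\cdot)|$ to be the most delicate (although routine) step: one should pick the competitor $g\in\bar\AA_1$ realising $\bar\sfd(\bar x,\bar y)$ up to $\varepsilon$ \emph{after} fixing the approximating points $\iota(x_n),\iota(y_n)$, and absorb the errors $\bar\sfd(\bar x,\iota(x_n))$, $\bar\sfd(\bar y,\iota(y_n))$ using the $1$-Lipschitz bound on $g$.
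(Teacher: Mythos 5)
Your proposal is correct and follows essentially the same route as the paper, whose proof consists precisely in combining Theorem \ref{thm:main-identification-complete} (the complete case) with Corollary \ref{cor:completionA} (invariance under completion), applied through the single pull-back $\iota^*$ both for $\AA$ and for the canonical algebra. Your extra verification that $\bar\AA$ remains compatible after completion is a routine detail already implicit in the statement of Corollary \ref{cor:completionA}, so nothing is missing.
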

\begin{proof}
  It is sufficient to combine 
  Theorem \ref{thm:main-identification-complete}
  with Corollary \ref{cor:completionA}.
\end{proof}
We can rephrase the previous statement as a density result:
if $\AA$ is a compatible algebra for $\X$,
\begin{equation}
\begin{gathered}
  \text{for every $f\in \Sob^{1,p}(\X)$
 there exists a sequence
  $f_n\in \AA$
  such that}\\
  f_n\to f,\quad
  \lip f_n\to \relgrad f\quad\text{strongly in }L^p(X,\mm).
\end{gathered}\label{eq:569}
\end{equation}
We can also slightly relax the assumption that $\AA$ is unital.
\begin{proposition}
  \label{prop:nonunital}
  Let $\X=(X,\tau,\sfd,\mm)$ be an e.m.t.m.~space and
  let $\AA\subset \Lip_b(X,\tau,\sfd)$ be an algebra of functions
  satisfying \eqref{eq:214bis} (we do not assume that $\unit\in\AA$).
  If there exists a sequence
  of compact sets $K_n\subset X$ and functions $f_n\in \AA$ such that
  \begin{equation}
    \label{eq:570}
    f_n(x)\ge 1\quad\text{for every $x\in K_n$},\quad
    \lim_{n\to\infty} \int_{X\setminus K_n}\Big(1+|\lip f_n(x)|^p\Big)\,\d\mm(x)=0
  \end{equation}
  then $\AA$ satisfies \eqref{eq:569}.
\end{proposition}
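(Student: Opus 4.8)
The plan is to reduce the statement for a non-unital algebra $\AA$ to the already established invariance result (Theorem \ref{thm:trivialA}) for the canonical unital algebra. The key observation is that the approximation property \eqref{eq:214bis} together with the exhaustion condition \eqref{eq:570} is precisely what is needed to run the density argument, once we have produced suitable ``cutoff'' elements out of $\AA$. So first I would introduce the unital algebra $\AA^+$ generated by $\AA$ and the constants, i.e.~$\AA^+ := \{f + c : f\in \AA,\ c\in \R\} \cdot (\text{finite sums and products})$; more precisely the smallest unital subalgebra of $\Lip_b(X,\tau,\sfd)$ containing $\AA$. Since $\AA^+ \supset \AA$, the approximation identity \eqref{eq:214bis} holds for $\AA^+$ as well, so $\AA^+$ is a compatible unital algebra in the sense of Definition \ref{def:A-compatibility}. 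By Theorem \ref{thm:trivialA} we have $\Sob^{1,p}(\X,\AA^+) = \Sob^{1,p}(\X)$ with equal minimal relaxed gradients, and in particular every $f\in\Sob^{1,p}(\X)$ admits a sequence $g_n\in\AA^+$ with $g_n\to f$ and $\lip g_n\to\relgrad f$ strongly in $L^p(X,\mm)$.

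The second step is to upgrade such an approximating sequence from $\AA^+$ to $\AA$ itself, using \eqref{eq:570}. An element $g\in \AA^+$ is a polynomial (without constant term is false in general — it is an arbitrary polynomial) in finitely many elements of $\AA$; the obstruction to $g$ lying in $\AA$ is precisely the presence of the constant term. Fix $g\in\AA^+$ and write $g = h + c$ is too naive when $g$ is a genuine polynomial, so instead I would argue as follows. Let $f_n\in\AA$, $K_n$ be the data of \eqref{eq:570}. For a bounded $g\in\AA^+$ with $|g|\le M$ and $c$ its value ``at infinity'' — more carefully, I would first reduce to approximating $f\in\Sob^{1,p}(\X)\cap L^\infty(X,\mm)$ (using truncations and Corollary \ref{cor:poly-truncation}, or rather the locality and chain rule of Theorem \ref{le:useful}, noting these apply to $\Sob^{1,p}(\X)=\Sob^{1,p}(\X,\AA^+)$). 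For such $f$, given $\eps>0$, pick $g\in\AA^+$ with $\|g-f\|_{L^p}+\|\lip g-\relgrad f\|_{L^p}<\eps$. Now write $g$ as $g = P(u_1,\dots,u_k)$ with $u_j\in\AA$ and $P$ a polynomial; set $c:=P(0,\dots,0)$ and $\tilde g := g - c$, which lies in the (non-unital) subalgebra generated by $\AA$ — and since $\AA$ is an algebra, $\tilde g$ is a sum of products of the $u_j$'s, hence $\tilde g\in\AA$. The function $\tilde g$ differs from $g$ by a constant, so $\lip \tilde g = \lip g$ and $\|\tilde g - f\|_{L^p}\le \eps + |c|\,\mm(X)^{1/p}$, which is not yet small. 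To kill the constant $c$, multiply by the cutoffs: set $\hat g_n := \tilde g + c\,f_n$. Then $\hat g_n\in\AA$ (as $\AA$ is a vector space and an algebra), and on $K_n$ we have $\hat g_n = \tilde g + c f_n$, which agrees with $g$ wherever $f_n\equiv 1$; more honestly, $\hat g_n - g = c(f_n - 1)$, so $\|\hat g_n - g\|_{L^p}^p \le |c|^p\int_{X\setminus K_n}|f_n-1|^p\,\d\mm \to 0$ by \eqref{eq:570} (on $K_n$, $f_n\ge 1$, but we actually need $f_n$ close to $1$; here one may first replace $f_n$ by $1\wedge f_n$, observing $\lip(1\wedge f_n)\le \lip f_n$, so that $0\le f_n\le 1$ everywhere and $f_n\equiv 1$ on $K_n$, whence $\int_{X}|f_n-1|^p\,\d\mm = \int_{X\setminus K_n}|f_n-1|^p\,\d\mm\le \mm(X\setminus K_n)\to 0$ — but $1\wedge f_n$ may not be in $\AA$; instead use Lemma \ref{le:polynomial}/Corollary \ref{cor:trunc1} to replace $f_n$ by $P_{\eps_n}\circ f_n\in\AA$ with values in $[0,1]$ and $\lip$ no larger). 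For the gradients, $\lip \hat g_n \le \lip\tilde g + |c|\,\lip f_n = \lip g + |c|\,\lip f_n$, and by \eqref{eq:57}/\eqref{eq:306} and \eqref{eq:570} the extra term $\|\,|c|\,\lip f_n\,\|_{L^p}^p \le |c|^p\int_{X\setminus K_n}|\lip f_n|^p\,\d\mm\to 0$ (using again that on $K_n$ the truncated $f_n\equiv 1$ so $\lip f_n = 0$ there). Hence $\limsup_n \int_X (\lip \hat g_n)^p\,\d\mm \le \int_X(\lip g)^p\,\d\mm \le \CE_p(f) + C\eps$, and $\hat g_n\to g$ in $L^p$, so $\hat g_n\to f$ in $L^p$ up to $O(\eps)$.

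Putting the two limits together via a diagonal argument over $\eps = \eps_m\downarrow 0$ and $n=n(m)\to\infty$, I obtain a sequence in $\AA$ converging to $f$ in $L^p$ with $\lip$-energies converging to $\CE_p(f) = \int_X \relgrad f^p\,\d\mm$. Since $\relgrad f$ is the minimal relaxed gradient, any weak $L^p$-limit point of $\lip \hat g_{n(m)}$ is a relaxed gradient with norm at most $\|\relgrad f\|_{L^p}$, hence equals $\relgrad f$, and the convergence of the norms forces strong convergence in $L^p$ — this is exactly the mechanism used in Lemma \ref{lem:strongchee}(c). Finally one removes the auxiliary assumption $f\in L^\infty$: for general $f\in\Sob^{1,p}(\X)$ apply the construction to the truncations $f^{(k)}:=(-k)\vee f\wedge k\in\Sob^{1,p}(\X)$ (whose relaxed gradients are $\nchi_{\{|f|<k\}}\relgrad f$ by Theorem \ref{le:useful}(c), hence converge to $\relgrad f$ in $L^p$, and $f^{(k)}\to f$ in $L^p$), and diagonalize once more. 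This yields \eqref{eq:569} for $\AA$, i.e.~$\Sob^{1,p}(\X,\AA)=\Sob^{1,p}(\X)$ with equal minimal relaxed gradients. The main obstacle I anticipate is purely bookkeeping: keeping the cutoff functions $f_n$ inside $\AA$ while also truncated to $[0,1]$ and vanishing in $\lip$ on $K_n$, which requires replacing $f_n$ by a polynomial image $P_{\eps_n}\circ f_n$ and carefully tracking that $\lip(P_{\eps_n}\circ f_n) = |P_{\eps_n}'\circ f_n|\,\lip f_n$ is supported in $X\setminus K_n$ up to an error controlled by $\eps_n$ — but this is exactly the content of Corollary \ref{cor:trunc1} and \eqref{eq:309}, so no genuinely new difficulty arises beyond what is already developed in \S\,\ref{subsec:compatible-A}.
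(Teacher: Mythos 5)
Your proposal is essentially the paper's own proof: pass to the unital algebra $\AA\oplus\R\unit$ (the paper's $\tilde\AA$), invoke \eqref{eq:569} for it via Theorem \ref{thm:trivialA}, write the approximants as $f_k+a_k\unit$, and replace the constant $a_k$ by $a_k u_k$ where $u_k\in\AA$ is a polynomial truncation of the cutoffs from \eqref{eq:570}, then diagonalize.

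Two bookkeeping points require the paper's precise choice $u=2\,P_{\eps}^{c_n,-1/2,1/2}\circ f_n$ rather than your $[0,1]$-truncation at level $1$. First, to have $P_\eps\circ f_n\in\AA$ when $\AA$ has no unit you need $P_\eps(0)=0$, which Corollary \ref{cor:trunc1} only guarantees in the symmetric case $\alpha=-\beta$ (hence the paper truncates at $\pm 1/2$ and doubles); with $\alpha=0,\beta=1$ the polynomial generally has a constant term and the composition leaves $\AA$ (fixable by subtracting $P_\eps(0)$, but it must be said). Second, your step ``on $K_n$ the truncated $f_n\equiv 1$ so $\lip f_n=0$ there'' is not literally true for the polynomial truncation: $\lip(P_\eps\circ f_n)=|P_\eps'\circ f_n|\,\lip f_n$, and by \eqref{eq:356} $P_\eps'(r)\to 0$ only for $r$ strictly above the upper truncation level. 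With the level equal to $1=\inf_{K_n}f_n$ this can fail on $\{f_n=1\}\cap K_n$, where $\lip f_n$ is \emph{not} controlled by \eqref{eq:570}; taking the level $1/2<1$, as in the paper, gives $P_\eps'\circ f_n\to 0$ pointwise on all of $K_n$, and then $\int_{K_n}|\lip u|^p\,\d\mm\to 0$ follows by dominated convergence (using $\lip u\le \lip f_n\in L^\infty$ and $\mm(X)<\infty$), which is the estimate your argument actually needs. With these two adjustments your proof coincides with the paper's.
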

Since $\mm$ is tight, \eqref{eq:570} is surely satisfied if
for every compact $K\subset X$ there exists a function $f\in \AA$
such that
\begin{equation}
  \label{eq:571}
  f(x)\ge 1\quad\text{for every $x\in K$},\quad
  \Lip(f,X)\le C\quad
  \text{for a constant $C$ independent of $K$.}
\end{equation}
\begin{proof}[Proof of Proposition \ref{prop:nonunital}]
  Let $K_n,f_n$ be satisfying \eqref{eq:570} and let
  $[-c_n,c_n]\supset f_n(X)$.
  Choosing a sequence $i\mapsto \eps_i\downarrow0$
  we can consider the
  polynomial
  $P_{n,i}=2
  P_{\eps_i}^{c_n,-1/2,1/2}$
  where $P_\eps^{c,\alpha,\beta}$ is given by Corollary
  \ref{cor:trunc1}.
  Notice that
  $P_{n,i}(0)=0$ 
  so that the functions $h_{n,i}:=P_{n,i}\circ f_n$ belong to $\AA$.
  It is easy to check that
  \begin{gather}
    \label{eq:572}
    \lim_{i\to\infty} h_{n,i}= f_{n}':= (-1\lor 2f_n\land 1),\quad
    \lip h_{n,i}\le 2\lip f_n,\\
    \label{eq:574}
    \lim_{i\to\infty} \lip h_{n,i}(x)=
    \lim_{i\to\infty} P_{n,i}'(f_n(x))\lip f_n(x)=0\quad
    \text{for every $x$ in a neighborhood of }K_n.
  \end{gather}
  By Lebesgue Dominated Convergence Theorem we get
  \begin{equation}
    \label{eq:576}
    \begin{gathered}
      \lim_{i\to\infty} \int_{K_n}|h_{n,i}-1|^p\,\d\mm=0,\quad
      \lim_{i\to\infty} \int_{K_n}|\lip h_{n,i}|^p\,\d\mm=0, \\
      \int_{X\setminus K_n}\Big(|h_{n,i}-1|^p +|\lip h_{n,i}|^p
      \Big)\,\d\mm \le 2^p
      \int_{X
        \setminus K_n}\Big(1+|\lip f_n|^p\Big)\,\d\mm.
    \end{gathered}
  \end{equation}
  We can now introduce the algebra $\tilde \AA=\AA\oplus \{a\unit\}=
  \{\tilde f=f+c\unit:f\in \AA,\ c\in \R\}$ 
  which is clearly unital and compatible with $\X$ according to
  definition \ref{def:A-compatibility}.
  Applying \eqref{eq:569} to $\tilde\AA$, for every $f\in
  \Sob^{1,p}(\X)$
  we can find a sequence $\tilde f_k=f_k+a_k\unit\in \tilde \AA$,
  $k\in \N$,
  such that
  \begin{displaymath}
      \tilde f_k\to f,\quad
      \lip \tilde f_k\to \relgrad f\quad\text{strongly in }L^p(X,\mm)
      \quad\text{as }k\to\infty.
    \end{displaymath}
    For every $k>0$, by \eqref{eq:576} and \eqref{eq:570}
    we can find $i=i(k)$ and $n=n(k)$ sufficiently big,
    such that $u_k:=h_{n,i}\in \AA$ such that
  \begin{equation}
    \label{eq:575}
    -1\le u_k\le 1,\quad
    a_k^p \int_X\Big(|u_k-1|^p+|\lip u_k|^p\Big)\,\d\mm\le 1/k^p
  \end{equation}
  We can then consider $f_k':=f_k+a_k u_k\in \AA$, observing that
  \begin{align*}
    \|f_k'-\tilde f_k\|_p&\le a_k\|u_k-\unit\|_{L^p(X,\mm)}\le
    1/k,\\
    \|\lip f_k'\|_{L^p(X,\mm)}&\le \|\lip \tilde
    f_k \|_{L^p(X,\mm)}+
                           a_k\|\lip u_k\|_{L^p(X,\mm)}\le
                           \|\lip \tilde
    f_k \|_{L^p(X,\mm)}+1/k.
  \end{align*}
  We conclude that the sequence $(f_k')_{k\in \N}\subset \AA$ satisfies
  \begin{displaymath}
    \lim_{k\to\infty}\|f_k'-f\|_{L^p(X,\mm)}=0,\quad
    \lim_{k\to\infty}\int_X |\lip f_k'|^p\,\d\mm=\CE_p(f).\qedhere
  \end{displaymath}
\end{proof}
Let us show two simple examples of applications of
Proposition \ref{prop:nonunital} and condition \eqref{eq:571}:
\begin{enumerate}[1.]
\item If $\sfd$ is $\tau$-continuous, one can always consider the algebra
  \begin{equation}
    \label{eq:609}
    \Lip_{bs}(X,\tau,\sfd):=\Big\{f\in \Lip(X,\tau,\sfd):\text{ $f$ has
      $\sfd$-bounded support}\Big\}
  \end{equation}
\item If $(X,\sfd)$ is locally compact (and thus $\tau$ is the
  topology induced by $\sfd$) then the algebra
  \begin{equation}
    \label{eq:609}
    \Lip_c(X,\tau,\sfd):=\Big\{f\in \Lip(X,\tau,\sfd):\text{ $f$ has
      compact support}\Big\}
  \end{equation}
  satisfies \eqref{eq:569}.
\end{enumerate}
\subsubsection*{Invariance w.r.t.~measure-preserving embeddings}
Let us now consider the invariance
of the strong Cheeger energy w.r.t.~measure preserving
embeddings.
Thanks to the previous Theorem \ref{thm:trivialA} it is
sufficient to consider the case of the canonical algebra.
\begin{theorem}[Invariance of the (strong) Cheeger energy
  w.r.t.~measure preserving embeddings]
  \label{thm:invariance-i}
  Let $\X=(X,\tau,\sfd,\mm)$ and $\X'=(X',\tau',\sfd',\mm')$ be
  two e.m.t.m.~spaces and let $\iota:X\to X'$ be a measure preserving
  embedding of $\X$ into $\X'$ according to Definition
  \ref{def:embeddings}.
  Then $\iota^*$ is an isomorphism between $\Sob^{1,p}(\X')$
  onto $\Sob^{1,p}(\X)$ and 
  \begin{equation}
    \label{eq:567}
    \text{for every $f=\iota^* f'\in
      \Sob^{1,p}(\X)$}\quad
    \relgrad f=\iota^*\big(\relgrad f'\big).
  \end{equation}
\end{theorem}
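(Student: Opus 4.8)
The plan is to reduce the statement to the complete case, where the identification $H=W$ is available, and then to transport everything through the completions of Corollary~\ref{cor:completion}; since the theorem is phrased with the canonical algebras there is nothing to reduce on the algebraic side (for general compatible algebras one would simply invoke Theorem~\ref{thm:trivialA}). First I would introduce the completions $\bar\X=(\bar X,\bar\tau,\bar\sfd,\bar\mm)$ of $\X$ and $\bar\X'=(\bar X',\bar\tau',\bar\sfd',\bar\mm')$ of $\X'$, with completion embeddings $j\colon X\to\bar X$ and $j'\colon X'\to\bar X'$ and the canonical algebras. The composition $j'\circ\iota\colon X\to\bar X'$ is an isometry; since $(\bar X',\bar\sfd')$ is complete by (C1) and $j(X)$ is $\bar\sfd$-dense in $\bar X$ by (C3), $j'\circ\iota$ extends uniquely to a $\bar\sfd$-to-$\bar\sfd'$ isometry $\bar\iota\colon\bar X\to\bar X'$ with $\bar\iota\circ j=j'\circ\iota$, and $\bar\iota$ is injective because $\bar\sfd'$ is a genuine extended distance.

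The only point that is not routine is to check that $\bar\iota$ is a measure-preserving embedding of $\bar\X$ into $\bar\X'$ in the sense of Definition~\ref{def:embeddings}; by Remark~\ref{rem:canonical} it suffices to verify (E1)--(E3). Property (E2) is built into the construction, and (E3) follows from $\bar\mm=j_\sharp\mm$, $\bar\mm'=j'_\sharp\mm'$ and $\bar\iota\circ j=j'\circ\iota$, which give $\bar\iota_\sharp\bar\mm=(j'\circ\iota)_\sharp\mm=j'_\sharp\mm'=\bar\mm'$. For (E1) I would show that $\bar\iota$ is $\bar\tau$-to-$\bar\tau'$-continuous. Fix $f'\in\Lip_b(X',\tau',\sfd')$; since $\X'$ carries the canonical algebra, property (E4) for $\iota$ gives $f'\circ\iota\in\Lip_b(X,\tau,\sfd)$, and by Corollary~\ref{cor:completion}(d) its continuous extension $\overline{f'\circ\iota}$ belongs to $\Lip_b(\bar X,\bar\tau,\bar\sfd)$. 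On the $\bar\sfd$-dense set $j(X)$ the two $\bar\sfd$-Lipschitz maps $\bar f'\circ\bar\iota$ and $\overline{f'\circ\iota}$ coincide, hence they agree on all of $\bar X$, so $\bar f'\circ\bar\iota\in\Lip_b(\bar X,\bar\tau,\bar\sfd)$, in particular $\bar\tau$-continuous. Since $\{\bar f'\colon f'\in\Lip_b(X',\tau',\sfd')\}=\Lip_b(\bar X',\bar\tau',\bar\sfd')$ generates $\bar\tau'$ (Corollary~\ref{cor:completion}(d) and property (X1) for $\bar\X'$), this proves (E1).

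Once $\bar\iota$ is known to be a measure-preserving embedding between the complete spaces $\bar\X$ and $\bar\X'$, I would conclude as follows. By Theorem~\ref{thm:main-identification-complete} applied to $\bar\X$ and to $\bar\X'$, $\Sob^{1,p}(\bar\X)=W^{1,p}(\bar\X,\calT_q)$ and $\Sob^{1,p}(\bar\X')=W^{1,p}(\bar\X',\calT_q)$, with coincidence of the minimal relaxed and $\calT_q$-weak upper gradients. Since $(\bar X,\bar\sfd)$ is complete, Theorem~\ref{thm:invariance-weak} applies to $\bar\iota$ and gives that $\bar\iota^*$ is an isomorphism of $W^{1,p}(\bar\X',\calT_q)$ onto $W^{1,p}(\bar\X,\calT_q)$ with $\weakgradTq f=\bar\iota^*(\weakgradTq{f'})$ whenever $f=\bar\iota^*f'$; by the previous identifications the same holds with $\relgrad{\cdot}$ in place of $\weakgradTq{\cdot}$ and the strong Sobolev spaces in place of the weak ones. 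Finally Corollary~\ref{cor:completionA} gives that $j^*$ and $j'^*$ are isomorphisms $\Sob^{1,p}(\bar\X)\to\Sob^{1,p}(\X)$ and $\Sob^{1,p}(\bar\X')\to\Sob^{1,p}(\X')$ preserving the minimal relaxed gradients. From $\bar\iota\circ j=j'\circ\iota$ one gets $\iota^*\circ j'^*=j^*\circ\bar\iota^*$, so $\iota^*=j^*\circ\bar\iota^*\circ(j'^*)^{-1}$ is an isomorphism of $\Sob^{1,p}(\X')$ onto $\Sob^{1,p}(\X)$; writing $f'=j'^*\bar f'$ and $f=\iota^*f'=j^*(\bar\iota^*\bar f')$ and chasing the three gradient identities yields $\relgrad f=\iota^*(\relgrad{f'})$, which is~\eqref{eq:567}.

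The main obstacle is the verification of (E1) for $\bar\iota$, i.e.\ that the $\bar\sfd$-continuous extension of $\iota$ remains continuous for the weak topologies of the completions; the argument above circumvents it by reading $\bar\tau'$ off the canonical algebra and using that $\iota$ already pulls back Lipschitz functions. Every other step is just bookkeeping of the previously established invariances ($H=W$ in the complete case, invariance of the weak Sobolev space under embeddings, and invariance of the Cheeger energy under completion).
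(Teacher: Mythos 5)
Your proposal is correct and follows essentially the same route as the paper: pass to the completions, extend $\iota$ to a measure-preserving embedding $\bar\iota$ of $\bar\X$ into $\bar\X'$ (the paper also checks continuity via (C4) of Corollary~\ref{cor:completion}), reduce to the complete case via Corollary~\ref{cor:completionA}, and conclude from Theorem~\ref{thm:main-identification-complete} together with Theorem~\ref{thm:invariance-weak}. Your verification of (E1) through pullbacks of the canonical Lipschitz algebra just fills in the detail the paper leaves to the reader.
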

\begin{proof}
  Let $\bar \X$ and $\bar \X'$ be the completion of $\X$
  and $\X'$ (where $X$ and $X'$ can be identified as
  $\sfd$ and $\sfd'$ dense subsets of $\bar X$ and $\bar X'$
  respectively, see Remark \ref{rem:better-interpretation}).
  Since $\iota:X\to X'$ is an isometry and $X$ is $\sfd$-dense in
  $\bar X$,
  $\iota$ can be extended to an isometric embedding $\bar \iota$ of $\bar X$ into
  $\bar X'$.
  Using property (C4) of Corollary \ref{cor:completion}
  one can check that $\bar\iota$ is also continuous from $(\bar X,\bar \tau)$ to
  $(\bar X',\bar \tau'$) and since $\bar X\setminus X$ and
  $\bar X'\setminus X'$ are $\bar\mm$ and $\bar \mm'$ negligible
  subsets
  respectively, we also see that $\bar \iota$ is
  measure-preserving. We conclude that $\iota$ is a measure-preserving
  imbedding of $\bar X$ into $\bar X'$.

  Since the Cheeger energy is invariant w.r.t.~completion
  by Corollary \ref{cor:completionA}, the above argument
  shows that it is not restrictive to assume that $\X$ and $\X'$ are
  complete.
  By Theorem \ref{thm:main-identification-complete}
  out thesis follows by the property
  for the spaces $W^{1,p}(\X,\calT_q)$
  and $W^{1,p}(\X',\calT_q')$ and the corresponding weak upper
  gradients, proved in Theorem \ref{thm:invariance-weak}.
\end{proof}
Recalling the examples of \ref{ex:embeddings}, we obtain two useful
properties:
\begin{corollary}[Invariance w.r.t.~the topology]
  \label{cor:topology-I}
  Let $\X=(X,\tau,\sfd,\mm)$ be an e.m.t.m.~space and let $\tau'$
  be a coarser topology such that $(X,\tau',\sfd)$ is an e.m.t.~space.
  Then $\Sob^{1,p}(X,\tau,\sfd,\mm)$ is isomorphic to
  $\Sob^{1,p}(X,\tau',\sfd,\mm)$ with equal minimal relaxed gradients.
\end{corollary}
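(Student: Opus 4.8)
The plan is to obtain the statement as an immediate consequence of the invariance of the strong Cheeger energy under measure-preserving embeddings (Theorem~\ref{thm:invariance-i}), applied to the identity map.

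First I would check that $\X':=(X,\tau',\sfd,\mm)$ is a bona fide e.m.t.m.~space. By hypothesis $(X,\tau',\sfd)$ is an e.m.t.~space, so the only point to verify is that $\mm$ is Radon with respect to the coarser topology $\tau'$. This is immediate: the identity map is continuous from $(X,\tau)$ to $(X,\tau')$ since $\tau'\subseteq\tau$, hence every $\tau$-compact subset of $X$ is $\tau'$-compact, and the inner regularity \eqref{eq:214} of $\mm$ by $\tau$-compact sets upgrades to inner regularity by $\tau'$-compact sets; thus $\mm\in\cMp(X,\tau')$ (this is the same observation already used in Remark~\ref{rem:coarser}). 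Then, as in Example~\ref{ex:embeddings}(a), the identity map $\iota:=\mathrm{Id}_X$ is a measure-preserving embedding of $\X=(X,\tau,\sfd,\mm)$ into $\X'$ in the sense of Definition~\ref{def:embeddings}: (E1) is precisely the continuity of $\mathrm{Id}:(X,\tau)\to(X,\tau')$, i.e.\ $\tau'\subseteq\tau$, together with injectivity; (E2) and (E3) hold trivially because $\sfd$ and $\mm$ are unchanged; and (E4) is automatic for the canonical Lipschitz algebras by Remark~\ref{rem:canonical}, since $\tau'\subseteq\tau$ forces $\Lipb(X,\tau',\sfd)\subseteq\Lipb(X,\tau,\sfd)$.

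Finally I would invoke Theorem~\ref{thm:invariance-i}, which then gives that $\iota^*$ is an isomorphism of $\Sob^{1,p}(\X')$ onto $\Sob^{1,p}(\X)$ with $\relgrad f=\iota^*(\relgrad{f'})$ for every $f=\iota^* f'$. Since $\iota=\mathrm{Id}_X$, the pull-back $\iota^*$ is nothing but the canonical identification of $L^p(X,\tau',\mm)$ with $L^p(X,\tau,\mm)$ supplied by Lemma~\ref{le:measurable}; through this identification the two Sobolev spaces coincide and have the same minimal relaxed gradient, which is exactly the claim.

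I do not expect any genuine obstacle here: the corollary is a direct specialization. The only mildly technical points, namely that $\mm$ remains Radon under the coarser topology and that the two ambient $L^p$-spaces are canonically identified, are both settled by results already established (inner regularity of Radon measures and Lemma~\ref{le:measurable}); all the substance of the assertion is contained in Theorem~\ref{thm:invariance-i}.
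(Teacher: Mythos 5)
Your proposal is correct and follows essentially the same route as the paper: the corollary is presented there as an immediate consequence of Theorem~\ref{thm:invariance-i} applied to the identity map, viewed as a measure-preserving embedding of $\X$ into $(X,\tau',\sfd,\mm)$ exactly as in Example~\ref{ex:embeddings}(a). Your additional verifications (Radonness of $\mm$ for the coarser topology and the inclusion $\Lipb(X,\tau',\sfd)\subseteq\Lipb(X,\tau,\sfd)$ giving (E4)) are precisely the points the paper leaves implicit, and they are handled correctly.
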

\begin{corollary}[Restriction]
  \label{cor:restriction-I}
  Let $\X=(X,\tau,\sfd,\mm)$ be an e.m.t.m.~space and let $Y\subset X$
  be a $\mm$-measurable subset of $X$ with $\mm(X\setminus Y)=0$.
  If $\Y$ is the associated e.m.t.m.~space according to Example
  \ref{ex:embeddings}(d), $\Sob^{1,p}(\X)$ is isomorphic to
  $\Sob^{1,p}(\Y)$ with equal minimal relaxed gradients.
  In particular, $\Sob^{1,p}(X,\tau,\sfd,\mm)$ is always isomorphic to
  $\Sob^{1,p}(\supp(\mm),\tau,\sfd,\mm)$.
\end{corollary}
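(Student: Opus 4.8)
The plan is to exhibit the restriction operation as a particular instance of a measure-preserving embedding and then invoke the invariance Theorem~\ref{thm:invariance-i}. By Example~\ref{ex:embeddings}(d), $\Y=(Y,\tau_Y,\sfd_Y,\mm\restr Y)$ is a genuine e.m.t.m.~space and the inclusion map $\iota\colon Y\to X$ is a measure-preserving embedding of $\Y$ into $\X$. Since the conclusion concerns the canonical Cheeger energies, I would work with the canonical Lipschitz algebras on both spaces, so that by Remark~\ref{rem:canonical} it suffices to verify conditions (E1)--(E3) of Definition~\ref{def:embeddings}: (E1) is clear because $\iota$ is the inclusion of $Y$ equipped with the relative topology $\tau_Y$; (E2) holds because $\sfd_Y$ is by construction the restriction of $\sfd$ to $Y\times Y$; and (E3) follows from $\iota_\sharp(\mm\restr Y)(B)=\mm(B\cap Y)=\mm(B)$ for every Borel $B\subset X$, since $\mm(B\setminus Y)\le\mm(X\setminus Y)=0$.

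First I would apply Theorem~\ref{thm:invariance-i} to $\iota$, with $\Y$ in the role of $\X$ and $\X$ in the role of $\X'$. This yields at once that $\iota^*$ is an isomorphism of $\Sob^{1,p}(\X)$ onto $\Sob^{1,p}(\Y)$ and that $\relgrad f=\iota^*\big(\relgrad {f'}\big)$ $\mm$-a.e.~for every $f=\iota^* f'\in\Sob^{1,p}(\Y)$. Since $\iota^*$ is merely restriction to $Y$ and $\mm(X\setminus Y)=0$, the pull-back $\iota^*$ descends to the canonical isometric isomorphism between $L^p(X,\mm)$ and $L^p(Y,\mm\restr Y)$; under this identification the displayed equality of minimal relaxed gradients is precisely the claimed statement, the only residual ambiguity being confined to the $\mm$-negligible set $X\setminus Y$. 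For the final assertion I would take $Y=\supp(\mm)$, which is $\tau$-closed, hence Borel, hence $\mm$-measurable, and satisfies $\mm(X\setminus\supp(\mm))=0$ by the inner regularity of the Radon measure $\mm$ (see p.~\pageref{page:support}); the first part then applies verbatim.

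I do not expect a serious obstacle: the statement is essentially a direct corollary of Theorem~\ref{thm:invariance-i}. The one point deserving care is purely bookkeeping — making sure the invariance theorem is legitimately invoked with the canonical Lipschitz algebras (which is admissible because, via Theorem~\ref{thm:trivialA}, that theorem is formulated for the canonical algebra) and that the pull-back map $\iota^*$ is correctly identified with the tautological isomorphism of Lebesgue classes. In particular, no completeness or extra topological hypothesis on $\X$ is needed, since the passage to completions is already absorbed into the proof of Theorem~\ref{thm:invariance-i}.
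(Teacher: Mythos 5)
Your proof is correct and is exactly the argument the paper intends: the corollary is stated as an immediate consequence of Theorem~\ref{thm:invariance-i} applied to the measure-preserving inclusion $\iota\colon Y\to X$ of Example~\ref{ex:embeddings}(d) (with the canonical algebras, so that (E4) follows from (E1)--(E2) by Remark~\ref{rem:canonical}), and the case $Y=\supp(\mm)$ follows since $\mm(X\setminus\supp(\mm))=0$. Your bookkeeping on the roles of $\X$ and $\Y$, on $\iota^*$ as the tautological identification of Lebesgue classes, and on the absence of any completeness hypothesis matches the paper's reasoning.
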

\subsubsection*{Invariance w.r.t.~the length and the conformal constructions}
Thanks to Theorem \ref{thm:main-identification-complete},
we can extend the results of Lemma \ref{le:invariance-length} and
Proposition \ref{prop:invariance-conformal} to the Cheeger energy.
\begin{corollary}
  \label{cor:invariance-length}
  Let $\X=(X,\tau,\sfd,\mm)$ be a \emph{complete} e.m.t.m.~space
  and let $\delta:X\times X\to [0,+\infty]$ be an extended distance
  such that
  \begin{equation}
    \label{eq:504bis}
    (X,\tau,\delta)\text{ is an extended metric-topological space},\quad
    \sfd\le \delta\le \sfd_\ell\quad\text{in }X\times X.
  \end{equation}
  Then $\Sob^{1,p}(X,\tau,\sfd,\mm)=
  \Sob^{1,p}(X,\tau,\delta,\mm)$ and the corresponding minimal
  relaxed gradients coincide.
\end{corollary}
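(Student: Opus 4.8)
The plan is to reduce the statement, via the identification theorem $H=W$ already established in Theorem \ref{thm:main-identification-complete}, to the corresponding invariance result for the weak Sobolev spaces $W^{1,p}(\X,\calT_q)$, which in turn follows from Lemma \ref{le:invariance-length}. First I would observe that the hypothesis is exactly the one of Lemma \ref{le:invariance-length}, with $(X,\tau,\delta)$ an e.m.t.~space and $\sfd\le\delta\le\sfd_\ell$; that lemma gives $W^{1,p}(X,\tau,\sfd,\mm;\calT_q)=W^{1,p}(X,\tau,\delta,\mm;\calT_q)$ with equality of the minimal $\calT_q$-weak upper gradients. The key point there is that since $\sfd_\ell=\delta_\ell$ (because $\delta$ is squeezed between $\sfd$ and its induced length distance), the classes of rectifiable arcs $\RA(X,\sfd)$ and $\RA(X,\delta)$ coincide together with their lengths and their arc-length parametrizations, so the associated classes of nonparametric test plans $\calT_q$ and the integrals $\int_\gamma(\cdot)$ are literally the same objects; hence $\calT_q$-weak upper gradients are the same for the two distances.

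Second, I would invoke completeness of $(X,\sfd)$, which is assumed, together with the fact that $\sfd\le\delta\le\sfd_\ell$ implies $(X,\delta)$ is also complete (a $\delta$-Cauchy sequence is $\sfd$-Cauchy, hence $\sfd$-converges, and since $\delta$ is $\tau$-lower semicontinuous and dominated by $\sfd_\ell$ one upgrades to $\delta$-convergence; alternatively one notes $\sfd_\ell$-completeness follows from $\sfd$-completeness by the approximate-midpoint/dyadic construction recalled in \S\ref{subsec:length}). This means that Theorem \ref{thm:main-identification-complete} applies to \emph{both} e.m.t.m.~spaces $(X,\tau,\sfd,\mm)$ and $(X,\tau,\delta,\mm)$: with the canonical compatible algebras we get
\begin{equation*}
  \Sob^{1,p}(X,\tau,\sfd,\mm)=W^{1,p}(X,\tau,\sfd,\mm;\calT_q),\qquad
  \Sob^{1,p}(X,\tau,\delta,\mm)=W^{1,p}(X,\tau,\delta,\mm;\calT_q),
\end{equation*}
with equality of the corresponding energies and of the minimal relaxed gradients with the minimal weak upper gradients ($\relgrad f=\weakgradTq f$ $\mm$-a.e.~on each side).

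Third, I would chain these three identities: the strong Sobolev space for $\sfd$ equals the weak Sobolev space for $\sfd$ (by $H=W$), which equals the weak Sobolev space for $\delta$ (by Lemma \ref{le:invariance-length}), which equals the strong Sobolev space for $\delta$ (by $H=W$ again), and at the level of gradients $\relgrad[\sfd] f=\weakgradTq[\sfd] f=\weakgradTq[\delta] f=\relgrad[\delta] f$ $\mm$-a.e. This proves $\Sob^{1,p}(X,\tau,\sfd,\mm)=\Sob^{1,p}(X,\tau,\delta,\mm)$ with coinciding minimal relaxed gradients, which is the claim. I do not expect a serious obstacle here: the statement is essentially a transport of Lemma \ref{le:invariance-length} through the already-proved $H=W$ identification, and the only mildly delicate point is the verification that $(X,\delta)$ is complete so that Theorem \ref{thm:main-identification-complete} may be legitimately applied on the $\delta$-side; if one prefers to avoid even this, one can instead apply $H=W$ only on the $\sfd$-side and work with $W^{1,p}(X,\tau,\delta,\mm;\calT_q)$ directly, noting that Lemma \ref{le:invariance-length} already delivers it, and then observe that $\Sob^{1,p}(X,\tau,\delta,\mm)\supseteq W^{1,p}(X,\tau,\delta,\mm;\calT_q)$ is not automatic in general — so the clean route really is to secure $\delta$-completeness and use the full identification on both sides.
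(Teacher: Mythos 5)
Your proof is correct and follows essentially the same route as the paper, which presents this corollary as a direct consequence of combining the identification Theorem \ref{thm:main-identification-complete} ($H=W$, applied with both distances) with Lemma \ref{le:invariance-length} for the weak spaces. Your explicit verification that $(X,\delta)$ is complete (a $\delta$-Cauchy sequence is $\sfd$-Cauchy, hence $\sfd$- and $\tau$-convergent, and the $\tau\times\tau$-lower semicontinuity of $\delta$ then upgrades the Cauchy estimate to $\delta$-convergence) supplies the one detail the paper leaves implicit and correctly legitimizes the use of the identification theorem on the $\delta$-side.
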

\begin{corollary}
  \label{cor:invariance-conformal}
  Let $\X=(X,\tau,\sfd,\mm)$ be a \emph{complete} e.m.t.m.~space,
  let $g\in \rmC_b(X)$ with $0<m_g\le g\le M_g<\infty$,
  and let $\delta:X\times X\to [0,+\infty]$ be an extended distance
  such that
  \begin{equation}
    \label{eq:504bis}
    \X'=(X,\tau,\delta,\mm)\text{ is an e.m.t.m.~space},\quad
    \sfd_g'\le \delta\le \sfd_g\quad\text{in }X\times X.
  \end{equation}
   A function $f\in L^p(X,\mm)$ belongs to the
  Sobolev space
  $\Sob^{1,p}(\X')$ if and only if
  $f\in \Sob^{1,p}(\X)$,
  and the corresponding minimal
  relaxed gradients in $\X$ and in $\X'$
  (which we call  $|\rmD f|_{\star,\X}$ and $|\rmD f|_{\star,\X'}$ respectively) satisfy
  \begin{equation}
    \label{eq:506bis}
    |\rmD f|_{\star,\X}=g^{-1}|\rmD f|_{\star,\X'}.
  \end{equation}
\end{corollary}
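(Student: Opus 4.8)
The plan is to reduce the statement, exactly as in the proof of Corollary~\ref{cor:invariance-length}, to the identification Theorem~\ref{thm:main-identification-complete} together with the already established conformal invariance of the \emph{weak} Sobolev space, Proposition~\ref{prop:invariance-conformal}. The only point that is not a direct citation is the verification that $(X,\delta)$ is again complete, which is what makes the identification theorem applicable to $\X'$ as well.

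First I would check completeness of $(X,\delta)$. From the elementary bounds collected in \S\ref{subsec:Finsler} (in particular $m_g\sfd_i\le \sfd_{g,i,\eps}$, hence $m_g\sfd=\sup_i m_g\sfd_i\le \sup_{i,\eps}\sfd_{g,i,\eps}=\sfd_g'$) and the hypothesis $\sfd_g'\le \delta$ we obtain $m_g\sfd\le \delta$. Consequently every $\delta$-Cauchy sequence $(x_n)$ is $\sfd$-Cauchy and, by completeness of $(X,\sfd)$, $\sfd$-converges to some $x\in X$; by \eqref{eq:244} this forces $x_n\to x$ with respect to $\tau$. Since $(X,\tau,\delta)$ is an e.m.t.~space, $\delta$ is $\tau\times\tau$-lower semicontinuous by \eqref{eq:243}, so passing to the $\liminf$ in $n$ in a Cauchy estimate $\delta(x_n,x_m)<\eps$ (valid for $n,m\ge N_\eps$) gives $\delta(x,x_m)\le\eps$ for $m\ge N_\eps$, i.e.\ $x_n\xrightarrow{\delta}x$. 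Thus $(X,\delta)$ is complete.

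Next I would apply Theorem~\ref{thm:main-identification-complete} once to $\X$ and once to $\X'$ (both admissible, being complete), obtaining $\Sob^{1,p}(\X)=W^{1,p}(\X,\calT_q)$ with $|\rmD f|_{\star,\X}=|\rmD f|_{w,\X}$ $\mm$-a.e., and $\Sob^{1,p}(\X')=W^{1,p}(\X',\calT_q')$ with $|\rmD f|_{\star,\X'}=|\rmD f|_{w,\X'}$ $\mm$-a.e. Then Proposition~\ref{prop:invariance-conformal} supplies $\calT_q(\X)=\calT_q(\X')$, the coincidence of the function classes $W^{1,p}(\X,\calT_q)$ and $W^{1,p}(\X',\calT_q')$, and the identity $|\rmD f|_{w,\X}=g^{-1}|\rmD f|_{w,\X'}$ $\mm$-a.e. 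Chaining these three facts yields $\Sob^{1,p}(\X')=W^{1,p}(\X',\calT_q')=W^{1,p}(\X,\calT_q)=\Sob^{1,p}(\X)$ as subsets of $L^p(X,\mm)$, and on this common domain $|\rmD f|_{\star,\X}=|\rmD f|_{w,\X}=g^{-1}|\rmD f|_{w,\X'}=g^{-1}|\rmD f|_{\star,\X'}$, which is \eqref{eq:506bis}.

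The main (and essentially only) obstacle is the completeness check in the first step; once $(X,\delta)$ is known to be complete, the remainder is a purely formal assembly of Theorem~\ref{thm:main-identification-complete} and Proposition~\ref{prop:invariance-conformal}, mirroring the argument used for Corollary~\ref{cor:invariance-length}. A minor bookkeeping point is to make sure that the symbols $|\rmD f|_{w,\X}$, $|\rmD f|_{w,\X'}$ of Proposition~\ref{prop:invariance-conformal} are correctly matched, through the two applications of the identification theorem, with the minimal relaxed gradients $|\rmD f|_{\star,\X}$, $|\rmD f|_{\star,\X'}$ appearing in the statement.
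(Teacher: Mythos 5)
Your proof is correct and follows exactly the route the paper intends (the corollary is stated right after the remark ``Thanks to Theorem~\ref{thm:main-identification-complete}, we can extend \dots Proposition~\ref{prop:invariance-conformal} to the Cheeger energy'', with no further argument printed): apply the identification theorem to both $\X$ and $\X'$ and chain it with the weak conformal invariance. Your verification that $(X,\delta)$ is complete, via $m_g\sfd\le\sfd_g'\le\delta$, $\sfd$-completeness, \eqref{eq:244} and the $\tau$-lower semicontinuity \eqref{eq:243}, is sound and supplies a detail the paper leaves implicit but genuinely needs in order to invoke Theorem~\ref{thm:main-identification-complete} for $\X'$.
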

\subsection{Examples}
\label{subsec:examples-space}
\begin{example}[Sobolev spaces in $\R^d$ or in a Finsler-Riemannian manifold]
  Let us consider the space $X:=\R^d$
  with the usual topology $\tau$, the distance $\sfd$ induced by a norm
  $\|\cdot\|$ with dual norm $\|\cdot\|_*$, and a finite positive
  Borel measure
  $\mm$.

  Being $(\R^d,\|\cdot\|)$ complete the weak and strong Sobolev spaces
  coincide. By Proposition \ref{prop:nonunital} we can choose
  \begin{equation}
    \label{eq:496}
    \AA:=\rmC^\infty_c(\R^d),\quad
    \lip f(x)=\|\rmD f(x)\|_*\quad\text{for every }f\in \AA.
  \end{equation}
  We thus obtain
  \begin{equation}
    \label{eq:273}
    \begin{aligned}
      \Sob^{1,p}(\R^d,\tau,\|\cdot\|,\mm)=
      {}\Big\{&f\in L^p(\R^d,\mm):
      \exists\, f_n\in \rmC^\infty_c(\R^d)\\
      & f_n\to f\text{ in
      }L^p(X,\R^d),\ \sup_n \int_{\R^d} \|\rmD
      f_n\|_*^p\,\d\mm<\infty\Big\}.
    \end{aligned}
  \end{equation}
  It is not difficult to check that this space is always reflexive
  (see also \cite{Ambrosio-Colombo-DiMarino15} and Corollary \ref{cor:reflexive})
  and it is an Hilbert space if $\|\cdot\|$ is induced by a scalar
  product and $p=2$, since $\pCE_2$ is a quadratic form on $\AA$.
  In this case we obtain the Sobolev space introduced by
  \cite{Bouchitte-Buttazzo-Seppecher97}.
  At least when the gradient operator is closable in $L^p(X,\mm)$,
  the present metric approach also coincides with
  the definition of weighted Sobolev spaces given in
  \cite{HKM93} (a proof of the equivalence
  under doubling and Poincar\'e assumptions has been given in
  \cite[Appendix 2]{Bjorn-Bjorn11})
  
  A completely analogous approach can be used in a complete
  Finsler or Riemannian manifold.
\end{example}
\begin{example}[Sobolev space on a separable Banach space]
  Let $(B,\|\cdot\|)$ be a separable Banach space
  endowed with the strong topology $\tau_s$ and the distance
  $\sfd$ induced by the norm. Let $\mm$ be a finite positive Borel
  measure in $B$ and $\B:=(B,\tau_s,\sfd,\mm)$.
  We can consider the algebra $\AA=\operatorname{Cyl}(B)$
  of smooth cylindrical functions
  (see Example \ref{ex:cylindrical}) so that
  \index{Cylindrical functions}
  \begin{equation}
    \label{eq:568}
    \lip f(x)=\|\rmD f(x)\|_*\quad \text{for every }f\in \AA
  \end{equation}
  and therefore
  \begin{equation}
    \label{eq:273-Banach}
    \begin{aligned}
      \Sob^{1,p}(\B)=
      {}\Big\{&f\in L^p(H,\mm):
      \exists\, f_n\in \AA\\
      & f_n\to f\text{ in
      }L^p(B,\mm),\ \sup_n \int_{B} \|\rmD
      f_n\|_*^p\,\d\mm<\infty\Big\}.
    \end{aligned}
  \end{equation}
  We can give an equivalent intrinsic characterization in terms of
  vector valued Sobolev differentials, in the case $B$ is also
  reflexive. Some of the results below could be extended
  to the case when $B$ has the Radon-Nikodym property
  \cite{Diestel-Uhl77}.
  %
  %
  %
  If $\hh:B\to B'$ is a Borel map
  (recall the
  definition
  given in \S\,\ref{subsec:measure} and \S\,\ref{subsec:B-valued} in
  the Appendix) 
  it is not difficult to check that
  for every $\gamma\in \RA(B)$ we have
  $t\mapsto \la \hh(\Al_\gamma(t)),\Al_\gamma'(t))$ 
  is Lebesgue-measurable.
  If $\int_\gamma\|\hh\|_*<\infty$ we can thus consider the
  curvilinear integral
  \begin{equation}
    \label{eq:610}
    \int_\gamma \la\hh,\dot\gamma\ra:=
    \int_0^1 \la \hh(\Al_\gamma(t)),\Al_\gamma'(t)\ra\,\d t
  \end{equation}
  We will denote by $L^p(B,\mm;B')$
  the Bochner space of Borel $\mm$-measurable maps $\hh:B\to B'$
  such that 
  \begin{equation}
    \label{eq:573}
    \int_B \|\hh(x)\|_*^p\,\d\mm(x)<\infty,
  \end{equation}
  which is the dual of the Bochner space $L^q(B,\mm;B)$
  \cite[Theorem 8.20.3]{Edwards95}.
  
  Given a function $f\in L^p(B,\mm)$ we say
  that a Borel map $\gg\in L^p(B,\mm;B')$
  is a $\calT_q$-weak gradient of $f$ if
  \begin{equation}
    \label{eq:578}
    f(\gamma_1)-f(\gamma_0)=
    \int_\gamma \la \gg,\dot\gamma\ra
    \quad\text{for $\calT_q$-a.e.~$\gamma\in \RA(B)$}.
  \end{equation}
  Notice that the integral in \eqref{eq:578} is well defined since
  the fact that $\|\gg\|_*\in L^q(B;\mm)$ yields
  $\int_\gamma \|\gg\|_*<\infty$ for $\calT_q$-a.e.$\gamma\in \RA(B)$.
  Arguing as in \ref{prop:invarianza}, we
  can show that the class of weak gradients
  is invariant w.r.t.~modifications in a $\mm$-negligible subset.
  We will use the symbols
  \begin{equation}
    \label{eq:577}
    \begin{aligned}
      \mathrm{WG}_p(f):={}&\Big\{\gg\in L^p(B,\mm;B'):
      \gg\text{ is a weak gradient of }f\Big\},\\
      \mathrm{WG}_p:={}&\Big\{(f,\gg)\in L^p(B,\mm)\times L^p(B,\mm;B'):
      \gg\in \mathrm{WG}_p(f)\Big\}.
    \end{aligned}
  \end{equation}
  Every curve $\gamma\in \RA(B)$ induces a vector measure
  $\nnu_\gamma\in \cM(B;B)$
  defined by
  \begin{displaymath}
    \int_B f\,\d\nnu_\gamma:=\int_0^1 f(\Al_\gamma(t))\Al_\gamma'(t)\,\d t
    \quad\text{for every }f\in \rmB_b(B),
  \end{displaymath}
  whose total variation is bounded by $\nu_\gamma$:
  $|\nnu_\gamma|\le \nu_\gamma$.
  If $\ppi\in \cMp(\RA(B))$ is a dynamic plan
  we can then consider the vector measure
  \begin{displaymath}
    \mmu_\sppi:=\int_B\nnu_\gamma\,\d\ppi(\gamma),\quad
    |\mmu_\sppi|\le \mu_\sppi.
  \end{displaymath}
  If $\ppi\in \Bar q\mm$ then there exists a function $\hh_\sppi\in
  L^q(B,\mm;B)$ such that
  \begin{equation}
    \label{eq:580}
    \mmu_\sppi=\hh_\sppi \mm,\quad
    \int_B f\,\d\mmu_\sppi=\int_B f(x)\hh_\sppi(x)\,\d\mm.
  \end{equation}
  \begin{theorem}
    \label{prop:B-equivalent}
    Let us suppose that $B$ is a separable and reflexive Banach space
    and let $f\in L^p(B,\mm)$.
    \begin{enumerate}
    \item If $f\in \rmC^1(B)\cap \Lip(B)$ then $\rmD f\in \mathrm{WG}_p(f)$.
    \item A function $\gg\in L^p(B,\mm;B')$ belongs to
      $\mathrm{WG}_p(f)$ if and only if
      for every $\ppi\in \Bar q\mm$ with $\mmu_\sppi=
      \hh_\sppi\mm$
      \begin{equation}
        \label{eq:581}
        \int_B f\,\d(\pi_1-\pi_0)=
        \int_B \la \gg(x),\hh_\sppi(x)\ra\,\d\mm.
      \end{equation}
    \item The set $\mathrm{WG}_p$ is a (weakly) closed
      linear
      space of $L^p(B,\mm)\times L^p(B,\mm;B')$.
    \item If $(f,\gg)\in \mathrm{WG}_p$ then
      $g:=\|\gg\|_*$ is a $\calT_q$-weak upper gradient of $f$.
      Conversely, if $g$ is a $(p,\operatorname{Cyl}(B))$-relaxed gradient of $f$
      then there exists $\gg\in \mathrm{WG}_p(f)$ such that
      $\|\gg\|_*\le g$.
      \item A function $f$ belongs to the Sobolev space
        $\Sob^{1,p}(\B)$
        if and only if there exists a weak gradient $\gg\in
        L^p(B,\mm;B')$.
        In this case $\mathrm{WG}_p(f)$ has a unique
        element of minimal norm $\rmD_\smm f$,
        \begin{equation}
          \label{eq:579}
          \relgrad f=\|\rmD_\smm f\|_*\quad\text{$\mm$-a.e.},\quad
          \CE_p(f)=\int_B \|\rmD_\smm f\|_*^p\,\d\mm,
        \end{equation}
        and there exists a sequence $f_n\in \operatorname{Cyl}(B)$
        such that
        \begin{equation}
          \label{eq:582}
          \lim_{n\to\infty}\|f_n-f\|_{L^p(B,\mm)}+
          \|\rmD f_n-\rmD_\smm f\|_{L^p(B,\mm;B')}=0.
        \end{equation}
    \end{enumerate}
  \end{theorem}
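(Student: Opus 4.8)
The plan is to prove the five statements of Theorem~\ref{prop:B-equivalent} by reducing the vector-valued theory to the scalar theory of $\calT_q$-weak upper gradients already developed, using reflexivity of $B$ at exactly two points: to identify $L^p(B,\mm;B')$ as the dual of $L^q(B,\mm;B)$, and to extract weak limits of difference quotients. I would treat the statements roughly in the order (a), (c), (b), (d), (e), since (c) and (b) are the structural backbone and (d), (e) then follow by combining them with the identification Theorem~\ref{thm:main-identification-complete}.

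First I would dispatch (a): for $f\in\rmC^1(B)\cap\Lip(B)$, for every rectifiable arc $\gamma$ the composition $t\mapsto f(\Al_\gamma(t))$ is Lipschitz and the chain rule gives $\tfrac{\d}{\d t}f(\Al_\gamma(t))=\la \rmD f(\Al_\gamma(t)),\Al_\gamma'(t)\ra$ for a.e.\ $t$, so $f(\gamma_1)-f(\gamma_0)=\int_\gamma\la\rmD f,\dot\gamma\ra$ holds for \emph{every} $\gamma\in\RA(B)$, a fortiori $\calT_q$-a.e.; the integrability $\rmD f\in L^p(B,\mm;B')$ follows from $\|\rmD f\|_*\le\Lip(f,B)$ and finiteness of $\mm$. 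For (c), linearity is clear from linearity of \eqref{eq:578}; for weak closedness I would argue as in Theorem~\ref{thm:stabweak}: given $(f_n,\gg_n)\in\mathrm{WG}_p$ with $f_n\weakto f$ in $L^1$ and $\gg_n\weakto\gg$ in $L^p(B,\mm;B')$, integrate \eqref{eq:578} against an arbitrary $\ppi\in\calT_q^*$ to get $\int_B f_n\,\d(\pi_1-\pi_0)=\int_B\la\gg_n,\hh_\sppi\ra\,\d\mm$ — this is where (b) is needed — and pass to the limit using that $h_1-h_0\in L^\infty$ and $\hh_\sppi\in L^q(B,\mm;B)$. The equivalence (b) itself is the vector analogue of Lemma~\ref{le:equivalent1}: one direction is integration of \eqref{eq:578} with respect to $\ppi$ together with Fubini to rewrite $\int\int_\gamma\la\gg,\dot\gamma\ra\,\d\ppi(\gamma)=\int_B\la\gg,\hh_\sppi\ra\,\d\mm$; the converse uses the restriction/localization trick (restrict $\ppi$ to the bad Borel set where \eqref{eq:578} fails, check it stays in $\calT_q^*$, and reach a contradiction), exactly as in the scalar proof, after first reducing to Borel representatives and noting the bad set is Borel by the measurability arguments of Theorem~\ref{thm:important-arcs}(e) adapted to the vector integrand.

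With (b) and (c) in hand, (d) and (e) are mostly bookkeeping. The forward half of (d) is immediate: if $(f,\gg)\in\mathrm{WG}_p$, then $|f(\gamma_1)-f(\gamma_0)|=|\int_\gamma\la\gg,\dot\gamma\ra|\le\int_\gamma\|\gg\|_*$ $\calT_q$-a.e., so $g=\|\gg\|_*$ is a $\calT_q$-w.u.g. The converse half, and the construction of the minimal element in (e), is where reflexivity does real work: starting from an optimal sequence $f_n\in\operatorname{Cyl}(B)$ with $f_n\to f$ and $\lip f_n=\|\rmD f_n\|_*\to\relgrad f$ in $L^p$ (available by Proposition~\ref{prop:nonunital}, since $\operatorname{Cyl}(B)$ satisfies \eqref{eq:569}), the sequence $\rmD f_n$ is bounded in $L^p(B,\mm;B')$; by reflexivity of $B$ (hence reflexivity of the Bochner space $L^p(B,\mm;B')$, being the dual of $L^q(B,\mm;B)$) we extract a weak limit $\gg$, and (a)+(c) give $\gg\in\mathrm{WG}_p(f)$ with $\|\gg\|_*\le\liminf\|\rmD f_n\|_*=\relgrad f$ $\mm$-a.e.\ after a Mazur-type argument; combined with the forward half of (d) this forces $\|\gg\|_*=\relgrad f$ a.e.\ and yields \eqref{eq:579}. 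Uniqueness of $\rmD_\smm f$ follows from strict convexity of the dual norm (using reflexivity again, strict convexity can be arranged, or one argues with the midpoint of two minimizers via the convexity of $\mathrm{WG}_p(f)$ from (c) plus the Clarkson-type inequality); the strong approximation \eqref{eq:582} then comes from a standard Mazur convex-combination argument upgrading weak to strong convergence, exactly as in Lemma~\ref{lem:strongchee}(c).

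The main obstacle I expect is the rigorous treatment of measurability of the vector curvilinear integral and the Fubini step underlying (b): one must check that $(x,\gamma)\mapsto$ (the relevant pairing) is jointly measurable so that $\int\int_\gamma\la\gg,\dot\gamma\ra\,\d\ppi(\gamma)$ makes sense and equals $\int_B\la\gg,\hh_\sppi\ra\,\d\mm$, which requires care because $\Al_\gamma'$ is only defined a.e.\ and $\gg$ is merely Borel $\mm$-measurable. I would handle this by first proving the identity for $\gg$ of the form $\varphi\cdot\ell'$ with $\varphi\in\rmC_b(B)$ and $\ell'\in B'$ (where everything is explicit and continuity of $\gamma\mapsto\Al_\gamma$ into $\BVC_c([0,1];B)$ with $\tau_\rmC$ can be invoked), then extending by linearity and a monotone-class/density argument in $L^q(B,\mm;B)$, using that $\mmu_\sppi\ll\mm$ with $L^q$ density. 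A secondary, smaller point is confirming that $\operatorname{Cyl}(B)$ genuinely satisfies the hypotheses \eqref{eq:570}–\eqref{eq:571} of Proposition~\ref{prop:nonunital} in a general separable Banach space — but this is routine, using bump functions built from a single coordinate functional and the norm, and is essentially already observed after that Proposition.
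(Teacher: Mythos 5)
Your proposal is correct and follows essentially the same route as the paper: (a) by the chain rule, (b) by repeating the argument of Lemma \ref{le:equivalent1}, (c) as a consequence of (b), the converse of (d) by extracting a weak limit of $\rmD f_n$ in the reflexive Bochner space $L^p(B,\mm;B')$ and using weak lower semicontinuity, and (e) via the identification Theorem \ref{thm:main-identification-complete} plus a Mazur argument. The only deviations are cosmetic — you route the converse of (d) through the minimal relaxed gradient and the strong approximation \eqref{eq:16bis} (the defining sequence of an arbitrary relaxed gradient already suffices, and $\operatorname{Cyl}(B)$ is unital, so Theorem \ref{thm:trivialA} rather than Proposition \ref{prop:nonunital} is the relevant density fact) — and your extra care about measurability in the Fubini step, as well as your hedging on uniqueness of $\rmD_\smm f$, only make explicit points the paper itself glosses.
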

  \begin{proof}
    \textbf{(a)} is an obvious consequence of the chain rule of $f$ along a
    Lipschitz curve.
    \medskip

    \noindent\textbf{(b)} follows by the same argument of the proof
    of Lemma \ref{le:equivalent1}.
    \medskip

    \noindent\textbf{(c)}
    is an immediate consequence of \eqref{eq:581}.
    \medskip

    \noindent\textbf{(d)}
    The first statement is a consequence of \eqref{eq:581}, which
    yields
    \begin{align*}
      \int_B f\,\d(\pi_1-\pi_0)\le
      \int_B \|\gg\|_*\|\hh_\sppi\|\,\d\mm
      =
      \int_B \|\gg\|_*\,\d(|\mmu_\sppi|)
      \le \int_B \|\gg\|_*\,\d\mu_\sppi
    \end{align*}
    so that $\|\gg\|_*$ is a $\calT_q$-weak upper gradient by
    Lemma \ref{le:equivalent1}.

    Conversely, let $g$ be a $(p,\operatorname{Cyl}(B))$-relaxed
    gradient of $f$. By definition, there exists
    a sequence $f_n$ of cylindrical functions such that
    $f_n\to f$ in $L^p(B,\mm)$ and
    $\lip f_n\weakto \tilde g$ in $L^p(B,\mm)$ with $\tilde g\le g$.
    Since $f_n$ are cylindrical, $\lip f_n(x)=\|\rmD f_n(x)\|_*$;
    since $L^p(B,\mm;B')$ is reflexive, there exists a subsequence
    (still denoted by $f_n$) such that
    $\rmD f_n\weakto \gg$ in $L^p(B,\mm;B')$.
    Thanks to claim (c), $(f,\gg)$ belongs to $\mathrm{WG}_p$ and
    the weak lower semicontinuity of continuous convex functionals
    in a reflexive space
    yields for every Borel set $A\subset B$
    \begin{align*}
      \int_A \|\gg\|_*\,\d\mm
      &\le
        \liminf_{n\to\infty}\int_A \|\rmD f_n\|_*\,\d\mm
        =\int_A \tilde g\,\d\mm\le \int_A g\,\d\mm                                 
    \end{align*}
    so that $\|\gg\|_*\le g$ $\mm$-a.e.
    \medskip

    \noindent\textbf{(e)}
    The first statement follows by Claim (d) and the identification
    Theorem
    \ref{thm:main-identification-complete} between $\Sob^{1,p}(\B)$
    and $W^{1,p}(\B,\calT_q)$.
    Claim (d) and the strict convexity of the $L^p(B,\mm;B')$ norm
    yields \eqref{eq:579}. The proof of Claim (d) also shows
    that there exists a sequence $f_n\in \operatorname{Cyl}(B)$
    such that $(f_n,\rmD f_n)$ weakly converges to $(f,\rmD_\smm f)$
    in
    $L^p(B,\mm)\times L^p(B,\mm;B')$. We can now apply Mazur Theorem.
  \end{proof}
  \begin{corollary}
  \label{cor:reflexive}
  If $B$ is a reflexive Banach space then $\Sob^{1,p}(\B)$ is
  reflexive.
  If moreover $B$ is an Hilbert space then $\Sob^{1,2}(\B)$ is an
  Hilbert space.
\end{corollary}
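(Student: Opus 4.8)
The plan is to realize $\Sob^{1,p}(\B)$ isometrically as a closed subspace of the product $L^p(B,\mm)\times L^p(B,\mm;B')$ and then invoke the standard facts that closed subspaces of reflexive (resp.~Hilbert) spaces are reflexive (resp.~Hilbert). The only preliminary point that needs a little care is that the assignment $\rmD_\smm\colon f\mapsto \rmD_\smm f$ is \emph{linear}, not merely norm-subadditive. To see this I would note that by Theorem \ref{prop:B-equivalent}(c) the set $\mathrm{WG}_p$ is a linear subspace of $L^p(B,\mm)\times L^p(B,\mm;B')$, so if $\gg_1,\gg_2$ are two weak gradients of the same $f$ then $\gg_1-\gg_2$ is a weak gradient of $0$; by Theorem \ref{prop:B-equivalent}(d) the function $\|\gg_1-\gg_2\|_*$ is then a $\calT_q$-weak upper gradient of $0$, hence $\|\gg_1-\gg_2\|_*\le \weakgradTq 0=0$ $\mm$-a.e., so $\gg_1=\gg_2$. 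Thus $\mathrm{WG}_p(f)$ is the singleton $\{\rmD_\smm f\}$, and linearity of $\rmD_\smm$ is inherited from the linearity of $\mathrm{WG}_p$. Consequently the map
\[
\Phi\colon \Sob^{1,p}(\B)\to L^p(B,\mm)\times L^p(B,\mm;B'),\qquad \Phi(f):=(f,\rmD_\smm f),
\]
is linear, and by \eqref{eq:579} and Definition \ref{def:SobolevH} it is isometric when the product carries the norm $\|(u,\vv)\|^p:=\|u\|_{L^p(B,\mm)}^p+\|\vv\|_{L^p(B,\mm;B')}^p$, since $\|\Phi(f)\|^p=\|f\|_{L^p(B,\mm)}^p+\CE_p(f)=\|f\|_{\Sob^{1,p}(\B)}^p$.

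Next I would check that the image $\Phi(\Sob^{1,p}(\B))$ is closed. If $\Phi(f_n)\to (u,\vv)$ in the product, then $f_n\to u$ in $L^p(B,\mm)$ and $\rmD_\smm f_n\to \vv$ in $L^p(B,\mm;B')$; both convergences are in particular weak, so by the weak closedness of $\mathrm{WG}_p$ (Theorem \ref{prop:B-equivalent}(c)) the pair $(u,\vv)$ lies in $\mathrm{WG}_p$, i.e.~$\vv$ is a weak gradient of $u$. By Theorem \ref{prop:B-equivalent}(e) this gives $u\in \Sob^{1,p}(\B)$, and by the uniqueness established above $\vv=\rmD_\smm u$, so $(u,\vv)=\Phi(u)$. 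Hence $\Sob^{1,p}(\B)$ is isometrically isomorphic to a closed subspace of $L^p(B,\mm)\times L^p(B,\mm;B')$.

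It then remains to recall standard functional analysis. When $B$ is reflexive, so is $B'$; for $1<p<\infty$ the Lebesgue space $L^p(B,\mm)$ is reflexive, and the Bochner space $L^p(B,\mm;B')$ is reflexive as well, being the dual of the reflexive space $L^q(B,\mm;B)$ (cf.~the duality already used in Theorem \ref{prop:B-equivalent}). A finite product of reflexive Banach spaces is reflexive and a closed subspace of a reflexive space is reflexive, so $\Sob^{1,p}(\B)$ is reflexive. For the Hilbert case take $p=2$ and $B$ a Hilbert space: then $B'$ is Hilbert, $L^2(B,\mm)$ and $L^2(B,\mm;B')$ are Hilbert, their $\ell^2$-product is Hilbert, and a closed subspace of a Hilbert space is Hilbert; thus $\Sob^{1,2}(\B)$ carries the (equal, by isometry) Hilbert norm. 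Equivalently one may verify directly that the quadratic form $\pCE_2$ on $\operatorname{Cyl}(B)$ passes to a quadratic form $\CE_2$ on $\Sob^{1,2}(\B)$, using the pointwise parallelogram identity for $\|\cdot\|_*$ together with the linearity of $\rmD_\smm$.

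I do not expect a serious obstacle here: everything is essentially bookkeeping once Theorem \ref{prop:B-equivalent} is in hand. The one step deserving attention is precisely the linearity of $f\mapsto\rmD_\smm f$, which is why I would first upgrade ``unique element of minimal norm'' to ``$\mathrm{WG}_p(f)$ is a singleton''; the remaining ingredient outside the paper is the reflexivity of Bochner spaces over reflexive targets, which is classical.
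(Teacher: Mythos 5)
There is a genuine gap, and it sits exactly at the step you flagged as the crucial one. From "$\gg_1-\gg_2$ is a weak gradient of $0$" and Theorem \ref{prop:B-equivalent}(d) you get that $\|\gg_1-\gg_2\|_*$ is a $\calT_q$-weak upper gradient of $0$, but minimality of $\weakgradTq 0$ gives the inequality the other way around: $\weakgradTq 0\le \|\gg_1-\gg_2\|_*$, i.e.\ $0\le\|\gg_1-\gg_2\|_*$, which is vacuous. You cannot conclude $\gg_1=\gg_2$. Indeed uniqueness genuinely fails: the paper's remark right after the corollary introduces $\mathrm{WG}_{p,o}=\{0\}\times\mathrm{WG}_p(0)$ and points out that it vanishes if and only if the cylindrical gradient operator is closable, which is an extra hypothesis (think of $\mm$ concentrated on a proper closed subspace: any $\gg$ annihilating all barycentric fields $\hh_\sppi$ is a weak gradient of $0$). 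Without uniqueness, $\mathrm{WG}_p(f)$ is a coset of the closed subspace $N:=\mathrm{WG}_p(0)$, and the map $f\mapsto\rmD_\smm f$ is the minimal-norm selection in that coset; for $p\neq2$ such metric selections in $L^p(B,\mm;B')$ are in general \emph{not} linear, so your map $\Phi(f)=(f,\rmD_\smm f)$ need not be linear and its image need not be a linear (let alone closed) subspace. The isometric-closed-subspace argument therefore does not go through for general $p\in(1,\infty)$. (For $p=2$ the minimal selection is the orthogonal projection onto $N^\perp$, which is linear, so your argument does salvage the Hilbert statement.)

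The paper's proof avoids this by never selecting gradients linearly: it uses that $\mathrm{WG}_p$ itself is a weakly closed linear subspace of the reflexive product $L^p(B,\mm)\times L^p(B,\mm;B')$, that the linear projection $\rmp(f,\gg)=f$ maps it onto $\Sob^{1,p}(\B)$ with $\|f\|_{\Sob^{1,p}(\B)}=\min\{\|(f,\gg)\|:\rmp(f,\gg)=f\}$, and then checks weak sequential compactness of bounded sequences in $\Sob^{1,p}(\B)$ by lifting them to $(f_n,\rmD_\smm f_n)$, extracting a weak limit in $\mathrm{WG}_p$, and composing functionals with $\rmp$. Equivalently, as the subsequent remark states, $\Sob^{1,p}(\B)\cong \mathrm{WG}_p/\mathrm{WG}_{p,o}$, and a quotient of a reflexive space by a closed subspace is reflexive (and a quotient of a Hilbert space is Hilbert), which is the clean repair of your argument: replace "closed subspace via the nonlinear selection $\rmD_\smm$" by "quotient of the closed subspace $\mathrm{WG}_p$". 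The remaining ingredients you use (reflexivity of the Bochner space $L^p(B,\mm;B')$ over a reflexive target, products, etc.) are fine.
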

\begin{proof}
  The proof that $\Sob^{1,p}(\B)$ is reflexive is standard:
  we first notice that $\mathrm{WG}_p$ is a weakly
  closed subset of the reflexive space $L^p(B,\mm)\times
  L^p(B,\mm;B')$ and
  the projection on the first component $\rmp:(f,\gg)\to f$
  is a continuous and surjective map from $\mathrm{WG}_p$ onto
  $\Sob^{1,p}(\B)$ satisfying
  $\|f\|_{\Sob^{1,p}(\B)}=\min\big\{\|(f,\gg)\|_{\mathrm{WG}_p}:
  \rmp(f,\gg)=f\big\}.$
  If $L$ is a bounded linear functional on $\Sob^{1,p}(\B)$
  then $L\circ \rmp$ belongs to $\mathrm{WG}_p'$.
  If $f_n$ is a bounded sequence in $\Sob^{1,p}(\B)$
  then there exists a subsequence $k\mapsto f_{n(k)}$
  and limits $(f,\gg)\in \mathrm{WG}_p$ such that
  $(f_{n(k)},\rmD_\smm f_{n(k)})\weakto (f,\gg)$ in
  $L^p(B,\mm)\times
  L^p(B,\mm;B')$. It follows that
  \begin{displaymath}
    \lim_{k\to\infty} L(f_{n(k)}) 
    =\lim_{k\to\infty}L\circ\rmp (f_{n(k)},\rmD_\smm f_{n(k)}))=
    \lim_{k\to\infty}L\circ\rmp (f,\gg)=
    L(f).\qedhere
\end{displaymath}
\end{proof}
\begin{remark}
  The same conclusion of the previous Corollary holds
  even if $X$ is a \emph{closed subset} of a reflexive and separable Banach (or
  Hilbert) space $\B$ endowed with the induced length distance
  $\sfd_\ell$ (and, e.g., the strong topology $\tau_s$). In this case
  we have
  $W^{1,p}(X,\tau_s,\sfd_\ell,\mm)=
  W^{1,p}(X,\tau_s,\sfd,\mm)$ by Lemma \ref{le:invariance-length} (see
  also \ref{rem:pedante}),
  $W^{1,p}(X,\tau_s,\sfd,\mm)=\Sob^{1,p}(X,\tau_s,\sfd,\mm)$
  by Theorem \ref{thm:main-identification-complete}, and eventually
  $\Sob^{1,p}(X,\tau_s,\sfd,\mm)=\Sob^{1,p}(\B,\tau_s,\sfd,\mm)$
  by Corollary \ref{cor:restriction-I}. We can then apply
  Corollary \ref{cor:reflexive}.
\end{remark}
\begin{remark}
  If we consider the closed subspace
  \begin{equation}
    \label{eq:583}
    \mathrm{WG}_{p,o}:=\{0\}\times
    \mathrm{WG}_p(0)=\Big\{(0,\gg):\gg\in L^p(B,\mm;B'):
    \int_B \la \gg,\hh_\sppi\ra\,\d \mm=0
    \quad\text{for every }\ppi\in \calT_q\Big\}
  \end{equation}
  it would not be difficult to see that $\Sob^{1,p}(\B)$ is isomorphic
  to the quotient space
  $\mathrm{WG}_p/\mathrm{WG}_{p,o}$.
  The operator $f\mapsto \rmD f$ from $\mathrm{Cyl}(B)$ to
  $L^p(B,\mm;B')$ is closable if and only if $\mathrm{WG}_{p,o}=0$.
  As typical example one can consider the case of an Hilbert space $H$
  endowed with a log-concave probability measure $\mm$
  (in particular a Gaussian measure), see
  e.g.~\cite{Ambrosio-Savare-Zambotti09},
  \cite{DPZ02}.
\end{remark}
\end{example}
\begin{example}[Wiener space]
  Let $(X,\|\cdot\|)$ be a separable Banach space
  endowed with its strong topology $\tau$
  and let $\mm$ be a centered non-degenerate Radon Gaussian measure.
  For every bounded linear functional $v\in X'$ let us set
  \begin{equation}
    \label{eq:584}
    R_\mm(v):=\int_X |\la v,x\ra|^2\,\d\mm(x)
  \end{equation}
  $R_\mm$ is a nondegenerate continuous quadratic form on $X'$, whose
  dual
  characterizes the Cameron-Martin space $H(\mm)$
  as the subset of $X$ where the functional 
  \begin{equation}
    \label{eq:585}
    |x|_{H(\mm)}=\sup\{\la v,x\ra:v\in X',\ R_\mm(v)\le 1\},
  \end{equation}
  is finite, and thus defines a Hilbertian norm. We also set $\sfd(x,y):=|x-y|_{H(\mm)}$.
  As we have seen in Example \ref{ex:Wiener}, $\X=(X,\tau,\sfd,\mm)$
  is an Polish e.m.t.m.~space.
  By using the algebra $\AA=\mathrm{Cyl}(X)$
  of smooth cylindrical functions it is not difficult to see that
  for every $f\in \AA$ we have $\rmD f(x)\in X'$ and
  \begin{equation}
    \label{eq:586}
    \lip_\sfd f(x)=\sup_{v\in H(\mm),\, |v|\le 1}\la \rmD f(x),v\ra=
    (R_\mm(\rmD f(x))^{1/2}=|\rmD f(x)|_{H(\mm)'}
  \end{equation}
  so that the metric Sobolev space $H^{1,p}(\X)$ coincides with
  the usual Sobolev space $W^{1,p}(\mm)$ \cite{Bogachev98} defined as the completion of
  the cylindrical functions with respect to the norm
  \begin{displaymath}
    \|f\|_{W^{1,p}(\mm)}^p:=\int_X \Big(|f(x)|^p+|\rmD f(x)|_{H(\mm)'}^p\Big)\,\d\mm(x)
  \end{displaymath}

\end{example}
\subsection{Distinguished representations of metric Sobolev spaces}
\label{subsec:representation}
We have already seen that the strong Cheeger energy is invariant
w.r.t.~completion of the underlying space.
We can now use Theorem \ref{thm:invariance-i}
to obtain isomorphic realizations of the Sobolev space
$\Sob^{1,p}(\X)$
with special e.m.t.m.~space $\X$.
Let us first fix the property we are interested in.
\begin{definition}[Isomorphic representations of Sobolev spaces]
  \label{def:iso}
  Let $\X$, $\X'$ be two e.m.t.m.~spaces. We say that
  $\Sob^{1,p}(\X')$ is an isomorphic representation of
  $\Sob^{1,p}(\X)$ if there exists
  a linear isomorphism $\iota^*: \Sob^{1,p}(\X')\to \Sob^{1,p}(\X) $
  satisfying \eqref{eq:567} induced by
  a measure preserving embedding $\iota:X\to X'$
  from $\X$ into $\X'$.  
\end{definition}
All the statements below refers to an arbitrary e.m.t.m.~space
$\X=(X,\tau,\sfd,\mm)$
and to the strong Sobolev space $\Sob^{1,p}(\X)$. Starting from
a complete space, they also provide equivalent
representations for the weak Sobolev space $W^{1,p}(\X,\calT_q)$
thanks to Theorem \ref{thm:main-identification-complete}.

A first example has already been used in the proof of Theorem
\ref{thm:main-identification-complete}. It is sufficient to
use the compactification Theorem \ref{thm:G-compactification}.
\begin{corollary}[Compact representation]
  \label{cor:compact-r}
  Every Sobolev space $\Sob^{1,p}(\X)$ admits an isomorphic
  representation $\Sob^{1,p}(\hat \X)$ where
  $\hat X$ is a compact e.m.t.m.~space.
\end{corollary}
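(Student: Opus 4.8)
The plan is to invoke the Gelfand compactification machinery of Theorem \ref{thm:G-compactification} together with the invariance of the Cheeger energy under measure-preserving embeddings established in Theorem \ref{thm:invariance-i}. First I would fix the e.m.t.m.~space $\X = (X,\tau,\sfd,\mm)$ together with its canonical compatible algebra $\AA(\X) = \Lip_b(X,\tau,\sfd)$ (or any other compatible algebra; by Theorem \ref{thm:trivialA} the choice is immaterial). Applying Theorem \ref{thm:G-compactification} to $(\X,\AA(\X))$ produces a compact Hausdorff space $(\hat X,\hat\tau)$, an extended distance $\hat\sfd$, a Radon measure $\hat\mm = \iota_\sharp\mm$, and the Gelfand algebra $\hat\AA = \Gamma(\AA(\X))$, such that $\hat\X := (\hat X,\hat\tau,\hat\sfd,\hat\mm)$ is a compact e.m.t.m.~space and $\iota:X\to\hat X$ is a measure-preserving embedding of $(\X,\AA(\X))$ into $(\hat\X,\hat\AA)$ by claim (f) of that theorem.

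Next I would check that $\iota$ is in fact a measure-preserving embedding of $\X$ into $\hat\X$ in the sense of Definition \ref{def:embeddings} when both spaces carry their canonical Lipschitz algebras. Conditions (E1), (E2), (E3) are exactly the content of Theorem \ref{thm:G-compactification}(a,e,f), and by Remark \ref{rem:canonical} condition (E4) is automatic once (E1)-(E2) hold, so there is nothing extra to verify — the embedding is canonical. Then Theorem \ref{thm:invariance-i} applies directly with $\X' = \hat\X$: it yields that $\iota^*$ is an isomorphism of $\Sob^{1,p}(\hat\X)$ onto $\Sob^{1,p}(\X)$ with $\relgrad f = \iota^*(\relgrad{f'})$ for every $f = \iota^* f'$. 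This is precisely the assertion that $\Sob^{1,p}(\hat\X)$ is an isomorphic representation of $\Sob^{1,p}(\X)$ in the sense of Definition \ref{def:iso}, since $\hat X$ is compact.

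I do not anticipate a serious obstacle here: the statement is essentially a dictionary translation of two already-proved results. The only mild subtlety is making sure that the algebra used in the compactification is compatible with $\X$ (so that Theorem \ref{thm:G-compactification} applies) and that afterwards one is comparing the \emph{strong} Cheeger energies built from the canonical algebras on both sides — but the algebra-independence Theorem \ref{thm:trivialA} removes any ambiguity, since $\Sob^{1,p}(\X,\AA) = \Sob^{1,p}(\X)$ regardless of which compatible algebra is chosen. Thus the proof reduces to: (i) form the Gelfand compactification induced by $\AA(\X)$; (ii) note $\iota$ is a measure-preserving embedding in the canonical sense via Remark \ref{rem:canonical}; (iii) quote Theorem \ref{thm:invariance-i} to get the isomorphism and the identity of minimal relaxed gradients; (iv) observe $\hat X$ is compact, so Definition \ref{def:iso} is satisfied. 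If one wants the conclusion for $W^{1,p}(\X,\calT_q)$ instead of $\Sob^{1,p}(\X)$, one additionally invokes Theorem \ref{thm:main-identification-complete} (when $(X,\sfd)$ is complete, and $X$ may be replaced by its completion via Corollary \ref{cor:completionA}).
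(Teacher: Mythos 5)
Your proposal is correct and follows essentially the same route as the paper: the corollary is obtained by forming the Gelfand compactification $\hat\X$ of Theorem \ref{thm:G-compactification} (induced by the canonical algebra), noting via Remark \ref{rem:canonical} that $\iota$ is then a measure-preserving embedding of $\X$ into $\hat\X$, and quoting the invariance Theorem \ref{thm:invariance-i} to get the isomorphism of Sobolev spaces with equality of minimal relaxed gradients, exactly as in Definition \ref{def:iso}. Your closing remarks on algebra-independence (Theorem \ref{thm:trivialA}) and on passing to $W^{1,p}(\X,\calT_q)$ via Theorem \ref{thm:main-identification-complete} are consistent with the paper's use of this construction.
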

\begin{corollary}
  Suppose that $(X,\tau)$ is a Souslin space.
  Then there exists a separable Banach space $(B,\|\cdot\|_B)$
  and a weakly$^*$ compact convex subset $\Sigma$ of the dual unit ball of
  $B'$ such that 
  $\Sob^{1,p}(\X)$ admits an isomorphic
  representation as 
  $\Sob^{1,p}(\Sigma,\tau_{w*},\sfd_{B'},\mm_B)$ where
  $\tau_{w*}$ is the weak$^*$ topology of $B'$ ($(\Sigma,\tau_{w*})$ is a
  compact geodesic metric space) and $\sfd_{B'}(v,w):=\|v-w\|_{B'}$.
  Moreover, we can choose the compatible algebra $\AA$
  of the smooth cylindrical functions generated by the 
  elements of $B$ (as linear functional on $B'$).
\end{corollary}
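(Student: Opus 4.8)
The plan is to combine three ingredients that are already at our disposal: the reduction of a Souslin e.m.t.m.~space to a \emph{countably generated} compatible algebra, the Gelfand compactification of Theorem~\ref{thm:G-compactification}, and the invariance of the Cheeger energy under measure-preserving embeddings (Theorem~\ref{thm:invariance-i}). First I would reduce to a countably generated algebra. Since $(X,\tau)$ is Souslin, Remark~\ref{rem:compact-case} and Corollary~\ref{cor:auxiliary-Souslin} furnish a countable family $(f_n)_{n\in\N}\subset \Lip_{b,1}(X,\tau,\sfd)$ with $\sfd(x,y)=\sup_n|f_n(x)-f_n(y)|$; let $\AA_0\subset\Lip_b(X,\tau,\sfd)$ be the unital algebra it generates, a countably generated unital algebra compatible with $(X,\tau,\sfd)$ (Remark~\ref{rem:coarser}). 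By the invariance Theorem~\ref{thm:trivialA}, $\Sob^{1,p}(\X)=\Sob^{1,p}(\X,\AA_0)$ with the same minimal relaxed gradient, so it is enough to represent $(\X,\AA_0)$.

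Next I would set up the Gelfand picture for $(\X,\AA_0)$. The uniform closure $B:=\overline{\AA_0}\subset\rmC_b(X,\tau)$ is a separable Banach space (separable because $\AA_0$ is countably generated), and, identifying $(\AA_0)^*$ with $B'$ by density, the characters $\hat X$ form a weak$^*$-compact subset of the weak$^*$-compact convex state set $\Sigma:=\{\psi\in B':\|\psi\|_{B'}\le1,\ \psi(f)\ge0\text{ for }f\ge0,\ \psi(\unit)=1\}$ of the dual unit ball (Proposition~\ref{prop:iota-description}). Because $B$ is separable, $(\Sigma,\tau_{w*})$ is compact and metrizable; by the ``universal model'' discussion following Example~\ref{ex:model} (with $L=\overline{(\AA_0)_1}$), $(\Sigma,\tau_{w*},\hat\sfd)$ is an e.m.t.~space whose extended distance is $\hat\sfd(v,w)=\theta(v-w)$ with $\theta(z)=\sup_{f\in\overline{(\AA_0)_1}}\la z,f\ra$, and the smooth cylindrical functions generated by the elements of $B$ form a compatible algebra of it (Example~\ref{ex:cylindrical}). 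Moreover $\Sigma$ is geodesic for $\hat\sfd$: whenever $\hat\sfd(v,w)<\infty$ the affine segment $t\mapsto(1-t)v+tw$ stays in $\Sigma$ by convexity and has $\hat\sfd$-length $|t-s|\,\hat\sfd(v,w)$ by the $1$-homogeneity and translation invariance of $\theta$ (alternatively one may invoke Theorem~\ref{thm:Finsler}(c)); together with weak$^*$-compactness this is the asserted compact geodesic structure. Finally $\hat\mm:=\iota_\sharp\mm$ is concentrated on $\iota(X)\subset\hat X\subset\Sigma$ and is Radon.

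Then I would transport the Sobolev space. The Gelfand map $\iota:X\to\Sigma$ is $\tau$-to-$\tau_{w*}$ continuous and injective, it is an isometry $\hat\sfd(\iota(x),\iota(y))=\sfd(x,y)$ (Theorem~\ref{thm:G-compactification}(e)), and $\iota_\sharp\mm=\hat\mm$; since $\iota$ is an isometry and the smooth cylindrical functions on $\Sigma$ are norm-Lipschitz there, the pull-back algebra $\iota^*(\hat{\AA}_\Sigma)$ consists of bounded, $\tau$-continuous, $\sfd$-Lipschitz functions, hence is a compatible algebra of $(X,\tau,\sfd)$, so condition (E4) of Definition~\ref{def:embeddings} holds tautologically. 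Thus $\iota$ is a measure-preserving embedding of $(\X,\AA_0)$ into $(\Sigma,\tau_{w*},\hat\sfd,\hat\mm)$ endowed with the cylindrical algebra (cf.\ Example~\ref{ex:embeddings}), and Theorem~\ref{thm:invariance-i} yields an isomorphism $\iota^*:\Sob^{1,p}(\Sigma,\tau_{w*},\hat\sfd,\hat\mm)\to\Sob^{1,p}(\X,\AA_0)=\Sob^{1,p}(\X)$ with $|\rmD(\iota^*f')|_\star=\iota^*(|\rmD f'|_\star)$. Setting $\mm_B:=\hat\mm$ this is precisely an isomorphic representation in the sense of Definition~\ref{def:iso}, refining Corollary~\ref{cor:compact-r} by making the compact space a weak$^*$-compact convex subset of the dual ball of a separable Banach space.

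The step I expect to be the main obstacle is matching the ambient distance with the \emph{honest} norm $\sfd_{B'}(v,w)=\|v-w\|_{B'}$: the construction above naturally produces the extended norm $\theta=\sup_{f\in\overline{(\AA_0)_1}}\la\cdot,f\ra$, and $\theta=\|\cdot\|_{B'}$ only when the $1$-Lipschitz elements of $\AA_0$ generate (a normalizing translate of) the unit ball of $B$, which forces $\diam(X,\sfd)<\infty$. When $\diam(X,\sfd)$ is finite one gets the literal statement by replacing $B=\overline{\AA_0}$ with the separable subspace $B:=\overline{\operatorname{span}}\{f_n-f_n(x_0)\unit\}$ of $\mathrm{Lip}_0(X,\sfd)$ (closed in the Lipschitz norm; its elements are $\tau$-continuous because Lipschitz-bounded families vanishing at $x_0$ are uniformly bounded on a bounded $X$), embedding $X$ into the (suitably scaled) unit ball of $B'$ by $\iota(x)=\delta_x-\delta_{x_0}$, so that $\|\iota(x)-\iota(y)\|_{B'}=\sfd(x,y)$ exactly, and then running the third step with this $B$ and $\Sigma:=\overline{\mathrm{co}}^{\,\tau_{w*}}(\iota(X))$. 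For a general, possibly unbounded or $+\infty$-valued, $\sfd$ the literal norm formulation cannot hold — no isometric copy of $(X,\sfd)$ fits inside a norm-bounded, hence weak$^*$-compact, convex subset of a dual — and the correct formulation is the one of Example~\ref{ex:model} with the extended norm $\theta$ in place of $\|\cdot\|_{B'}$, which is exactly what the first three steps establish in full generality.
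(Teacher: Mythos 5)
Your argument is correct and follows essentially the same route as the paper's (very terse) proof: Souslin-ness yields a countably generated compatible algebra and auxiliary topology, the Gelfand compactification of Theorem \ref{thm:G-compactification} together with Proposition \ref{prop:iota-description} produces the separable space $B=\overline{\AA_0}$ and the weak$^*$-compact convex state set $\Sigma$ in the dual unit ball, and the measure-preserving embedding combined with Theorem \ref{thm:invariance-i} (and the independence of the compatible algebra) transports $\Sob^{1,p}(\X)$. Your closing caveat is also well taken: as the paper's own ``universal model'' remark after Theorem \ref{thm:G-compactification} confirms, the distance the construction induces on $\Sigma$ is the extended norm $\theta(v-w)=\sup_{f\in\overline{(\AA_0)_1}}\la v-w,f\ra$ of Example \ref{ex:model} with $L=\overline{(\AA_0)_1}$, not literally $\|v-w\|_{B'}$, so the statement's $\sfd_{B'}$ has to be read in that sense (or one restricts to $\diam(X,\sfd)<\infty$ and renormalizes, as you indicate).
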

\begin{proof}
  Since $(X,\tau)$ is Souslin, we can find a metrizable and separable
  auxiliary topology $\tau'$ and a compatible 
  algebra $\AA\subset \Lip(X,\tau',\sfd)$ which is countably
  generated.
  We can then apply the Gelfand compactification
  Theorem \ref{thm:G-compactification}
  with the construction described by Proposition
  \ref{prop:iota-description}. Since $B$ is the closure of $\AA$ in
  $\rmC_b(X,\tau')$, $B$ is a separable Banach space and $\Sigma$ is a
  compact convex subset of the unit ball of $B'$.  
\end{proof}

\appendix
\GGG
\section{Appendix}
\label{sec:appendis}
\subsection{Nets}
\label{subsec:nets}
We recap here a few basic facts about nets
(see e.g.~\cite[p.187-188]{Munkres00}).
Let $I$ be a directed set, i.e.~a set endowed with a partial order
$\preceq$ satisfying
\begin{equation}
  \label{eq:325}
  i\preceq i;\quad
  i\preceq j,\ j\preceq k\quad\Rightarrow
  \quad i\preceq k\quad\text{for every }i,j,k\in I,
\end{equation}
\begin{equation}
  \label{eq:227}
  \forall\,i,j\in I\quad \exists\,k\in I:\quad
  i\preceq k,\ j\preceq k.
\end{equation}
As subset $J\subset I$ is called cofinal if for every $i\in I$ there
exists $j\in J$ such that $i\preceq j$.

If $(Y,\tau_Y)$ is a Hausdorff topological space,
a net in $Y$ is a map $y:I\to Y$ 
defined in some directed set $I$; the notation $(y_i)_{i\in I}$
(or simply $(y_i)$) is often used to denote a net.

The net $(y_i)_{i\in I}$
converges 
to an element $y\in Y$ and we write $y_i\to y$ or $\lim_{i\in I}y_i=y$
if for every neighborhood $U$ of $y$ 
there exists $i_0\in I$ such that $i_0\preceq i\ \Rightarrow\ y_i\in
U$.

$y$ is an accumulation point of $(y_i)$ if 
for every neighborhood $U$ of $y$ the set of indexes
$\{i\in I:y_i\in U\}$ is cofinal.

A \emph{subnet} $(y_{\mathfrak i(j)})_{j\in J}$ of $(y_i)$ is obtained by a
composition $y\circ \mathfrak i$
where $\fri :J\to I$ is a map defined in a directed set $J$ satisfying
\begin{displaymath}
  j_1\preceq j_2\ \Rightarrow\ \fri(j_1)\preceq \fri(j_2),\quad
  \fri(J)\text{ is cofinal in $I$}.
\end{displaymath}
Nets are a useful substitution of the notion of sequences, when
the topology $\tau_Y$ does not satisfy the first countable axiom.
In particular we have the following properties:
\begin{enumerate}
\item A point $y$ belongs to the closure of a subset $A\subset Y$ if
  and only if there exists a net of points of $A$ converging to $y$.
\item A function $f:Y\to Z$ between Hausdorff topological spaces
  is continuous if and only if for every
  net $(y_i)_{i\in I}$ converging to $y$ in $Y$ 
  we have $\lim_{i\in I}f(y_i)=f(y)$.
\item $y$ is an accumulation point of $(y_i)$ if and only if there
  exists
  a subnet $(y_{\fri(j)})_{j\in J}$ such that 
  $\lim_{j\in J}y_{\fri(j)}=y$.
\item $(Y,\tau_Y)$ is compact if and only if
  every net in $Y$ has a convergent subnet.
\end{enumerate}
\subsection{Initial topologies}
\label{subsec:initial}
\index{Initial topology}
Let $(Y,\tau_Y)$ be a Hausdorff topological space and 
let $\cF\subset \rmC(Y)$ be a collection of real continuous functions
separating the points of $Y$. 
We say that $\tau_Y$ is generated by $\cF$ 
if it is the coarsest topology for which all the functions
of $\cF$ are continuous 
(thus $\tau_Y$ coincides with the \emph{initial} or \emph{weak}
topology induced by $\cF$).
A basis for the topology $\tau_Y$ is generated by the 
finite intersections of sets of the form 
$\big\{f^{-1}(U): f\in \cF,\ U\text{ open in
  $\R$}\big\}$.

An important property of topologies generated by a separating family
of functions is the characterization of convergence:
for every net $(y_i)_{i\in I}$ in $Y$
\begin{equation}
  \label{eq:327}
  \lim_{i\in I}y_i=y\text{ in $Y$}\quad\Leftrightarrow\quad
  \lim_{i\in I}f(y_i)=f(y)\quad\text{for every }f\in \cF.
\end{equation}
It is also easy to check that such topologies are completely regular:
if $F$ is a closed set and $y\in Y\setminus F$, we can find 
$f_1,\cdots,f_N\in \cF$ and open sets $U_1,\cdots U_N\in \R$ 
such that $y\in\cap_{n=1}^Nf_n^{-1}(U_n)\subset Y\setminus F$.
Up to compositions with affine maps, it is not restrictive to assume that 
$f_n(y)=1$ and $U_n\supset (0,2)$ so that the function
$f(x):=0\lor \min_{1\le n\le N}f_n(x)(2-f_n(x))$ satisfies $f(y)=1$
and $f\restr{Y\setminus F}\equiv0$.

\subsection{Polish, Lusin, Souslin and Analytic sets.}
\label{subsec:PLS}
\index{Polish space}
\index{Lusin space}
\index{Souslin space}
\index{Analytic sets}
Denote by $\N^\infty$ the collection of all infinite sequences of natural numbers and by $\N^\infty_0$
the collection of all finite sequences $(n_0,\ldots,n_i)$,
with $i\geq
0$ and $n_i$ natural numbers.
Let $\cA\subset \mathfrak P(Y)$ containing the empty set (typical examples
are, in a topological space $(Y,\tau_Y)$, the classes
$\FF(Y)$, $\mathscr K(Y)$, $\BB(Y)$ of
closed, compact, and Borel sets respectively). 
We call \emph{table of sets} (or \emph{Souslin scheme}) in $\cA$
\cite[Definition 1.10.1]{Bogachev07}
a 
map $\rmA$ associating to each
finite sequence $(n_0,\ldots,n_i)\in\N_0^\infty$ a set
$\rmA_{(n_0,\ldots,n_i)}\in \cA$. 
 \begin{definition} [$\cA$-analytic sets] 
 $S\subset Y$ is said to be 
  $\cA$-analytic
if there exists a table $\rmA$ of sets in $\cA$ such that
$$S= \bigcup_{(n)\in\N^\infty } \bigcap_{i=0}^{\infty}
\rmA_{(n_0,\ldots, n_i)}. $$
The collection of all the $\cA$-analytic sets will be denoted by $\rmS(\cA)$.
  \end{definition}
  Let us recall a list of useful properties (see \cite[\S\,1.10]{Bogachev07})
  \begin{enumerate}[({A}1)]
  \item Countable unions and countable intersections of elements of
    $\cA$
    belongs to $\rmS(\cA)$.
  \item $\rmS(\rmS(\cA))=\rmS(\cA)$
  \item If the complement of every set of $\cA$ belongs to $\rmS(\cA)$
    then $\rmS(\cA)$ contains the $\sigma$-algebra generated
    by $\cA$. In particular, in a metrizable space $Y$
    $\BB(Y)$-analytic sets are $\FF(Y)$-analytic.
  \item In a topological space $(E,\tau)$, $\BB(E)$-analytic sets are \emph{universally measurable}
\cite[Theorem~1.10.5]{Bogachev07}, i.e.~they are $\mu$-measurable 
for any finite Borel measure $\mu$.
  \end{enumerate}
\begin{definition}[{\cite[Chap.~II]{Schwartz73}}]
  An Hausdorff topological space $(Y,\tau_Y)$
  (in particular, a subset of a topological space $(X,\tau)$ with the
  relative topology) is
  \emph{a Polish space} if it is separable and
  $\tau_Y$ is induced by a complete 
  metric $\sfd_Y$ on $Y$.\\
  $(Y,\tau_Y)$ is said to be \emph{Souslin} (resp.~\emph{Lusin})
  if it is the image of a Polish space
  under a continuous (resp.~injective and continuous) map.
\end{definition}
Differently from the Borel property,
notice that the Souslin and Lusin properties for subsets of a topological space are intrinsic, i.e. they depend only
on the induced topology.

We recall a few important properties
of the class of Souslin and Lusin sets.
\begin{proposition}\label{proplusin}
The following properties hold:
\begin{enumerate}
\item  In a Hausdorff topological space $(Y,\tau_Y)$,
  Souslin sets are $\FF(Y)$-analytic; if $\SS(Y)$ denotes the class
  of Souslin sets, $\rmS(\SS(Y))=\SS(Y)$.
\item if $(Y,\tau_Y)$ is a Souslin space (in particular 
  if it is a Polish or a Lusin space),
  the notions of Souslin and $\FF(Y)$-analytic sets coincide
and in this case Lusin sets are Borel and Borel sets are Souslin; 
\item if $Y$, $Z$ are Souslin spaces and $f:Y\to Z$
  is a Borel injective map, then $f^{-1}$ is Borel;
\item if $Y$, $Z$ are Souslin spaces and $f:Y\to Z$ is a Borel map,
  then $f$ maps Souslin sets to Souslin sets.
\item If $(Y,\tau_Y)$ is Souslin then every
  finite nonnegative Borel measure in $Y$ is Radon.
\end{enumerate}
\end{proposition}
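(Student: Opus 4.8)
The plan is to recognize all five items as classical facts about Souslin and Lusin spaces and to reduce each of them to a precise statement in Schwartz \cite{Schwartz73} and Bogachev \cite{Bogachev07}, using the elementary properties (A1)--(A3) of the Souslin operation recalled above to glue things together. There is no new argument here; the work is purely bibliographical, plus a small amount of checking that the hypotheses match.

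\emph{Item (a).} By definition a Souslin set $S\subset Y$ is $g(P)$ with $P$ Polish and $g:P\to Y$ continuous; since every Polish space is a continuous image of the Baire space $\N^\infty$, I may assume $P=\N^\infty$, whose standard complete metric makes the cylinders $\{\nu:\nu_0=n_0,\dots,\nu_i=n_i\}$ have vanishing diameter along each branch. Then the closed-valued table $\rmA_{(n_0,\dots,n_i)}:=\overline{g(\{\nu\in\N^\infty:\nu_0=n_0,\dots,\nu_i=n_i\})}$ recovers $S$ through the Souslin operation, so $S\in\rmS(\FF(Y))$; this is the standard ``every Souslin set in a Hausdorff space is obtained by the Souslin operation over closed sets'', and I would cite \cite[\S\,1.10]{Bogachev07}, \cite[Ch.~II]{Schwartz73}. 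The identity $\rmS(\SS(Y))=\SS(Y)$ is the stability of the class of continuous images of Polish spaces under the Souslin operation --- a countable branching family of Polish spaces is again indexed by the Polish space $\N^\infty$ --- which I would cite from \cite[Ch.~II]{Schwartz73}; alternatively combine $\SS(Y)\subset\rmS(\FF(Y))$ just proved with $\rmS(\rmS(\FF(Y)))=\rmS(\FF(Y))$ from (A2).

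\emph{Items (b), (c), (d), (e).} When $(Y,\tau_Y)$ is a Souslin space, both closed subsets (continuous images of closed, hence Polish, subsets of the source) and open subsets (open subsets of Polish spaces are Polish) are Souslin, hence $\FF(Y)$-analytic by (a); property (A3) then yields $\BB(Y)\subset\rmS(\FF(Y))=\SS(Y)$, so Borel sets are Souslin and, by (A2) and (a), the notions of Souslin and $\FF(Y)$-analytic set coincide. For (d) I would use two classical facts: the graph $\operatorname{graph}(f)\subset Y\times Z$ of a Borel map between Souslin spaces is Borel --- hence Souslin, products of Souslin spaces being Souslin --- and the projection onto $Z$ of a Souslin subset of $Y\times Z$ is Souslin; thus for $A\in\SS(Y)$ the set $f(A)$ is the projection of $(A\times Z)\cap\operatorname{graph}(f)$. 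Item (c), and the Borel character of Lusin subsets of Souslin spaces asserted in (b), are the injective refinement: if $f$ is injective and $B\in\BB(Y)$, then both $f(B)$ and $f(Y\setminus B)=f(Y)\setminus f(B)$ are Souslin by (d), so $f(B)$ is analytic and coanalytic in $Z$, hence Borel by the Lusin separation (Souslin) theorem; running this over all Borel $B$ gives that $f^{-1}$ is Borel, and running it on a Lusin parametrization $g:P\to Y$ gives that a Lusin set is Borel. The relevant statements are \cite[Ch.~II]{Schwartz73} and \cite[\S\,6.6--6.8]{Bogachev07}. Finally, item (e) is precisely the statement that Souslin spaces are Radon spaces, which is \cite[Ch.~II, Sect.~3]{Schwartz73}, the same reference already invoked in the discussion following Definition~\ref{def:Radon}.

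The hardest part will not be any original reasoning but the bookkeeping: the whole proposition rests on two nontrivial classical theorems --- the stability of the Souslin operation and the Lusin separation theorem --- so the real task is to pin down which printed statement in \cite{Schwartz73} or \cite{Bogachev07} supplies each item under exactly the hypotheses used here (a general Hausdorff $Y$ in (a), Souslin ambient spaces in (b)--(d)). The one spot deserving an explicit check is that the closed-valued-table reduction in (a) uses only continuity of $g$ and completeness of the source metric, hence goes through for an arbitrary Hausdorff $Y$ with no countability assumption; everything else is a routine assembly of citations.
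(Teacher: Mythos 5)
Your proposal is correct and follows essentially the same route as the paper, whose proof of this proposition is purely bibliographical (it cites \cite[Theorems~6.6.6, 6.6.8]{Bogachev07} for (a), \cite[Theorems~6.7.2, 6.6.7, 6.8.6]{Bogachev07} for (b), \cite[Theorem~6.7.3]{Bogachev07} for (c) and (d), and \cite[p.~122]{Schwartz73} for (e)); your additional sketches of the classical arguments (closed-valued table over the Baire space, graph-projection for (d), Lusin separation for (c)) are sound. Only a minor caveat: the ``alternative'' route you mention for $\rmS(\SS(Y))=\SS(Y)$ in (a) would only yield $\rmS(\SS(Y))\subset\rmS(\FF(Y))$, which in a general Hausdorff space is not known to be contained in $\SS(Y)$, so one should rely on your primary citation of the stability theorem instead.
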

\begin{proof}
  \textbf{(a)}
  is proved in
  \cite[Theorems~6.6.6, 6.6.8]{Bogachev07}.
  In connection with
  \textbf{(b)}, the equivalence
  between Souslin and $\FF(E)$-analytic sets is proved in
  \cite[Theorem~6.7.2]{Bogachev07},
  the fact that Borel sets are Souslin
  in \cite[Corollary~6.6.7]{Bogachev07} and the fact that Lusin sets are Borel in
  \cite[Theorem~6.8.6]{Bogachev07}.
\textbf{(c)} and \textbf{(d)} are proved in
\cite[Theorem~6.7.3]{Bogachev07}.
For \textbf{(e)} we refer to \cite[Thm.~9 \& 10,
p.~122]{Schwartz73}.
\end{proof}
Since in Souslin
spaces $(Y,\tau_Y)$
we have at the same time tightness of finite Borel measures
and coincidence of Souslin and $\FF(E)$-analytic sets, the
measurability of $\BB(E)$-analytic sets yields in particular that
for every $\mu\in \cMp(Y)$
\begin{equation}\label{eq:inner1}
  \mu(B)=\sup\big\{\mu(K)\;:\; K\in\mathscr{K}(Y),\,\,K\subset B\big\}
  \quad\text{for every $B\in \SS(Y)$.}
\end{equation}
We will also recall another useful property \cite[Pages 103-105]{Schwartz73}.
\begin{lemma}
  \label{le:Souslin-useful}
  Let us suppose that $(Y,\tau_Y)$ is a Souslin space.
  \begin{enumerate}
  \item $Y$ is strongly Linde\"of, i.e.~every open cover of an open
    set
    has a countable sub-cover.
  \item Every family $\cF$ of lower semicontinuous 
    real functions defined in $Y$ 
    has a countable subfamily
    $(f_n)_{n\in \N}\subset \cF$ such that $\sup_{f\in\cF}f(x)=\sup_{n\in
      \N}f_n(x)$ for every $x\in Y$.
  \item If $Y$ is regular, every open set is an $F_\sigma$
    (countable intersection of closed set), thus in particular is
    $\FF(Y)$-analytic.
    \item If $Y$ is completely regular, there exists a 
      metrizable and separable topology $\tau'$ coarser than $\tau_Y$.
  \end{enumerate}
\end{lemma}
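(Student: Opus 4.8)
The plan is to prove the four claims in the stated order, since (b)--(d) all rest on (a). Throughout I use that $Y$, being Souslin, is the image of a Polish space $P$ under a continuous surjection $\theta$, and that $P$, being separable and metrizable, is second countable, hence \emph{hereditarily Lindelöf} (every subspace is second countable, so Lindelöf).

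For (a), given an open $U\subset Y$ and an open cover $\{V_i\}_{i\in I}$ of $U$, I would pass to $P$: the set $\theta^{-1}(U)$ is open, hence Lindelöf as a subspace of $P$, and it is covered by the relatively open sets $\theta^{-1}(V_i)\cap\theta^{-1}(U)$, so a countable subfamily $\{\theta^{-1}(V_{i_n})\}_{n\in\N}$ already covers $\theta^{-1}(U)$; surjectivity of $\theta$ then yields $U\subset\bigcup_n V_{i_n}$. Running the same argument with an arbitrary subspace of $Y$ in place of $U$ shows $Y$ itself is hereditarily Lindelöf, which is the form of (a) I will use below.

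For (b), set $g:=\sup_{f\in\cF}f$, with values in $(-\infty,+\infty]$. For each $q\in\Q$ the set $\{g>q\}=\bigcup_{f\in\cF}\{f>q\}$ is open and is covered by the open sets $\{f>q\}$, $f\in\cF$; by (a) there is a countable $\cF_q\subset\cF$ with $\{g>q\}=\bigcup_{f\in\cF_q}\{f>q\}$. Then $\cF_0:=\bigcup_{q\in\Q}\cF_q$ is countable, and with $h:=\sup_{f\in\cF_0}f$ one has $h\le g$; if $h(x)<g(x)$, choosing a rational $q$ with $h(x)<q<g(x)$ gives $x\in\{g>q\}$, hence $f(x)>q>h(x)$ for some $f\in\cF_q\subset\cF_0$, a contradiction. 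So $h=g$. For (c), with $Y$ regular, I would take an open $U$, pick for each $x\in U$ an open $V_x$ with $x\in V_x\subset\overline{V_x}\subset U$, extract a countable subcover $\{V_{x_n}\}$ by (a), and conclude $U=\bigcup_n V_{x_n}\subset\bigcup_n\overline{V_{x_n}}\subset U$, so $U$ is a countable union of closed sets and therefore $\FF(Y)$-analytic by property (A1).

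For (d), assuming $Y$ completely regular, the plan is first to reduce the claim to producing a \emph{countable} family $\{h_n\}_{n\in\N}\subset\rmC_b(Y,\tau_Y)$ separating the points of $Y$: then the initial topology $\tau'$ generated by $\{h_n\}$ is coarser than $\tau_Y$, and the evaluation map $y\mapsto(h_n(y))_n$ is a $\tau'$-homeomorphism of $Y$ onto a subspace of the separable metrizable space $\R^\N$, which gives the conclusion. To obtain such a family I would work in $Y\times Y$, which is again Souslin (continuous image of the Polish space $P\times P$) and hence regular and hereditarily Lindelöf by (a): its open subset $Y\times Y\setminus\Delta$ ($\Delta$ the diagonal) is covered, using complete regularity and the fact that singletons are closed, by the open sets $G_{y,y'}:=\{(a,b):h_{y,y'}(a)<1/2<h_{y,y'}(b)\}$ with $h_{y,y'}\in\rmC_b(Y)$ chosen so that $h_{y,y'}(y)=0$, $h_{y,y'}(y')=1$; countably many $G_{y_n,y_n'}$ suffice, and then $h_n:=h_{y_n,y_n'}$ separates points. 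I expect (d) to be the only genuinely delicate step: the two moves — reducing submetrizability to a countable point-separating family, and harvesting that family from the hereditary Lindelöf property of $Y\times Y$ together with complete regularity — require care, whereas (a)--(c) amount to routine bookkeeping with the Lindelöf property once (a) is established.
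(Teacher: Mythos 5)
Your proof is correct: all four steps check out, including the delicate point (d), where the reduction to a countable point-separating family of bounded continuous functions and its extraction from a countable subcover of $Y\times Y\setminus\Delta$ (legitimate, since $Y\times Y$ is again Souslin and (a) applies) are carried out properly. The paper itself gives no argument for this lemma — it simply cites Schwartz, pp.~103--105 — and your proof follows essentially the same standard route as that reference: the (hereditary) Lindel\"of property obtained by pulling covers back to a Polish preimage, plus the diagonal trick for submetrizability, so there is nothing to add.
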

\subsection{Choquet capacities}
\label{subsec:Choquet}
\index{Choquet capacity}
Let us recall the definition of a \emph{Choquet capacity} in
related to a collection
$\cA$ of subsets of $Y$ containing the empty set and
closed under 
finite unions and countable intersections.
\cite[Chap.~III, \S\,2]{Dellacherie-Meyer78}.
\begin{definition}
  A function $\cI:\mathfrak P(Y)\to [0,+\infty]$ is a Choquet $\cA$-capacity
  if it satisfies the properties
  \begin{enumerate}[(C1)]
  \item $\cI$ is increasing:
    $A\subset B\quad\Rightarrow\quad \cI(A)\le \cI(B)$.
  \item For every increasing sequence $A_n\subset Y$:
    $\cI\big(\cup_nA_n\big)=\lim_{n\to\infty}\cI(A_n)$.
  \item For every decreasing sequence 
    $K_n\in \cA$: $\cI\big(\cap_n K_n\big)=\lim_{n\to\infty}\cI(K_n)$.
  \end{enumerate}
  A subset $A\subset Y$ is called \emph{capacitable} if
  $\cI(A)=\sup\big\{\cI(K):K\subset A,\ K\in \cA\big\}$.
\end{definition}
\begin{theorem}[Choquet, {\cite[Chap.~III, 28]{Dellacherie-Meyer78}}]
  \label{thm:Choquet}
  If $\cI$ is a $\cA$-capacity then
  every $\cA$-analytic set is capacitable.
\end{theorem}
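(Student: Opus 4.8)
The plan is to reproduce the classical branch–extraction argument. Fix an $\cA$-analytic set $S\subset Y$, so that there is a Souslin scheme $\rmA$ in $\cA$ with $S=\bigcup_{(n)\in\N^\infty}\bigcap_{i\ge 0}\rmA_{(n_0,\dots,n_i)}$. Since $\cA$ is closed under countable (hence finite) intersections, I would first \emph{regularize} the scheme: replacing each $\rmA_{(n_0,\dots,n_i)}$ by $\bigcap_{j\le i}\rmA_{(n_0,\dots,n_j)}$, which is again a member of $\cA$, changes neither the associated Souslin set $S$ nor the class $\cA$, and makes the scheme monotone, i.e.\ $\rmA_{(n_0,\dots,n_i,n_{i+1})}\subseteq \rmA_{(n_0,\dots,n_i)}$. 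By (C1) it suffices to show $\sup\{\cI(K):K\in\cA,\ K\subseteq S\}\ge \cI(S)$; so fix a real number $t<\cI(S)$ (if $\cI(S)=+\infty$, fix $t$ arbitrarily large) and produce $K\in\cA$ with $K\subseteq S$ and $\cI(K)\ge t$.

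For a finite sequence $\ell=(n_0,\dots,n_i)$ put $S(\ell):=\bigcup\{\bigcap_{j}\rmA_{(m_0,\dots,m_j)}:m_k=n_k\text{ for }k\le i\}$ and $S(\emptyset):=S$, and write $\ell\ast k:=(n_0,\dots,n_i,k)$. By monotonicity of the scheme one has $S(\ell)\subseteq\rmA_\ell$, and $S(\ell)=\bigcup_{k\in\N}S(\ell\ast k)$ is the increasing union of its finite sub-unions $\bigcup_{k\le N}S(\ell\ast k)$. Using (C2) repeatedly I would build integers $N_0,N_1,N_2,\dots$ such that $\cI\big(\bigcup_{m_0\le N_0,\dots,m_i\le N_i}S((m_0,\dots,m_i))\big)>t$ for every $i$: the choice of $N_0$ comes from applying (C2) to the increasing union $S=\bigcup_{N}\bigcup_{m_0\le N}S((m_0))$, and, having chosen $N_0,\dots,N_i$, the choice of $N_{i+1}$ comes from applying (C2) to the increasing union of the sets $\bigcup_{m_0\le N_0,\dots,m_i\le N_i,\ m_{i+1}\le N}S((m_0,\dots,m_{i+1}))$ over $N$, whose union over all $N$ equals $\bigcup_{m_0\le N_0,\dots,m_i\le N_i}S((m_0,\dots,m_i))$. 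Now set $K_i:=\bigcup\{\rmA_{(m_0,\dots,m_i)}:m_0\le N_0,\dots,m_i\le N_i\}$, a finite union of members of $\cA$ and hence in $\cA$. Since $S(\ell)\subseteq\rmA_\ell$, we get $K_i\supseteq\bigcup_{m_0\le N_0,\dots,m_i\le N_i}S((m_0,\dots,m_i))$, so $\cI(K_i)>t$ by (C1); monotonicity of the scheme gives $K_{i+1}\subseteq K_i$, so the $K_i$ form a decreasing sequence in $\cA$. Let $K:=\bigcap_i K_i\in\cA$; by (C3) and (C1), $\cI(K)=\lim_i\cI(K_i)\ge t$.

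It remains to check $K\subseteq S$, which is the heart of the matter. Given $x\in K$, for each $i$ there is a sequence $(m_0^i,\dots,m_i^i)$ with $m_j^i\le N_j$ for $j\le i$ and $x\in\rmA_{(m_0^i,\dots,m_i^i)}$. Consider the set $\mathcal T$ of all finite sequences $(m_0,\dots,m_k)$ with $m_j\le N_j$ for $j\le k$ and $x\in\rmA_{(m_0,\dots,m_k)}$, together with the empty sequence. By monotonicity of the scheme, $\mathcal T$ is closed under prefixes, so it is a tree; it has a node at every height (namely $(m_0^k,\dots,m_k^k)$), and it is finitely branching, a node of height $k$ having at most $N_k+1$ immediate successors. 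By K\"onig's lemma $\mathcal T$ has an infinite branch $(m_0,m_1,\dots)$, and then $x\in\rmA_{(m_0,\dots,m_k)}$ for all $k$, i.e.\ $x\in\bigcap_k\rmA_{(m_0,\dots,m_k)}\subseteq S$. Hence $K\subseteq S$, and letting $t\uparrow\cI(S)$ yields capacitability.

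The main obstacle, beyond the somewhat delicate bookkeeping in the inductive construction of the $N_i$ via (C2), is the K\"onig's lemma step together with the requirement that everything stay inside $\cA$: this is exactly why the scheme must be regularized — so that the $K_i$ are finite unions of its members — and why the closure of $\cA$ under finite unions and countable intersections, together with the restriction of hypothesis (C3) to decreasing sequences drawn from $\cA$, are invoked precisely where they are.
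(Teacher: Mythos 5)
Your proof is correct: the regularization of the Souslin scheme, the inductive choice of the bounds $N_i$ via (C2), the decreasing sets $K_i\in\cA$ handled by (C3), and the K\"onig's-lemma tree argument for $\bigcap_i K_i\subseteq S$ together give exactly the classical Choquet capacitability argument. The paper itself gives no proof but cites Dellacherie--Meyer, and your argument is essentially the one found there (and in standard descriptive set theory texts), so there is nothing to add.
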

\subsection{Measurable maps with values in separable Banach spaces}
\label{subsec:B-valued}
Let $(Y,\tau_Y)$ be a Hausdorff topological space
endowed with a Radon measure $\mu\in \cMp(Y)$
and let $(V,\|\cdot\|_V)$ be a separable Banach space with dual $V'$.
Since $V$ is a Polish space, the classes of strong 
and weak Borel sets coincide.

A map $\hh:Y\to V$ is Borel $\mu$-measurable (recall the
definition
given in \S\,\ref{subsec:measure}) then it is also
Lusin $\mu$-measurable, since $V$ is metrizable;
in particular, $\hh$ admits a Borel representative $\tilde \hh$
such that $\mm(\tilde\hh\neq \hh)=0$.
If $\int_Y\|\hh\|\,\d\mm<\infty$ then $\hh$ is also
Bochner integrable, i.e.~there exists a sequence
$\hh_n:Y\to V$ of simple Borel functions such that
$$\lim_{n\to\infty} \int_Y\|\hh_n-\hh\|\,\d\mm=0.$$
We can then define its Bochner integral $\int_Y\hh\,\d\mu$
as the limit $\lim_{n\to\infty}\int_Y \hh_n\,\d\mm$ 
and the corresponding vector measure $\mmu_\shh:=\hh\mu$
defined by
\begin{displaymath}
  \mmu_\shh(A):=\int_A \hh\,\d\mmu\quad
  \text{for every $\mu$-measurable set }A\subset Y.  
\end{displaymath}

\subsection{Homogeneous convex functionals}
Let us first recall a simple property of $p$-homogeneous convex functionals.
\begin{lemma}[Dual of $p$-homogeneous functionals]
  \label{le:obvious-2hom}
  Let $C$ be a convex cone of some vector space $V$, $p>1$, and
  $\phi,\psi:C\to [0,\infty]$ with $\psi=\phi^{1/p}$, $\phi=\psi^p$. 
  We have the following properties:
  \begin{enumerate}
  \item $\phi$ is convex and $p$-homogeneous (i.e.~$\phi(\kkappa
    v)=\kkappa^p\phi(v)$ for every $\kkappa\in \R$ and $v\in C$) in $C$
    if and only if $\psi$ is
    convex and $1$-homogeneous on $C$ (a seminorm, if $C$ is a vector
    space and $\psi$ is finite).
  \item Under one of the above equivalent assumptions, setting for
    every linear functional $z:V\to\R$
    \begin{displaymath}
      \frac 1q\phi^*(z):=\sup_{v\in C}\la z,v\ra-\frac 1p\phi(v),\quad
      \psi_*(z):=\sup\Big\{\la z,v\ra:v\in C,\ \psi(v)\le 1\Big\},
    \end{displaymath}
    we have 
    \begin{equation}
      \label{eq:19}
      \psi_*(z)=\inf\Big\{c\ge 0: \la z,v\ra\le c\,\psi(v)\quad
      \text{for every }v\in C\Big\},
      \quad
      \phi^*(z)=(\psi_*(z))^q,
    \end{equation}
    where in the first infimum we adopt the convention $\inf
    A=+\infty$ if $A$ is empty.
  \end{enumerate}
\end{lemma}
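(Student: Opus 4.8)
The final statement (Lemma \ref{le:obvious-2hom}) is an elementary convex-analysis fact, so the plan is to verify each claim by a direct computation, handling the convexity/homogeneity equivalence first and then the two dual formulas.

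\textbf{Step 1: the equivalence in part (a).} I would prove this by unwinding definitions. If $\phi$ is $p$-homogeneous then $\psi=\phi^{1/p}$ satisfies $\psi(\kappa v)=(\kappa^p\phi(v))^{1/p}=\kappa\psi(v)$ for $\kappa\ge 0$ (and $C$ being a cone, only nonnegative scalars need be considered unless $C$ is a vector space, in which case one also checks $\psi(-v)=\psi(v)$ — but this uses $p$-homogeneity with $\kappa=-1$, i.e.\ $\phi(-v)=\phi(v)$, which is part of the stated hypothesis). For convexity, the subtle direction is $\phi$ convex $+$ $p$-homogeneous $\Rightarrow$ $\psi$ convex: given $v_0,v_1\in C$ and $t\in[0,1]$, one wants $\psi((1-t)v_0+tv_1)\le (1-t)\psi(v_0)+t\psi(v_1)$. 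The trick is to rescale: writing $s_i=\psi(v_i)$, assume $s_0,s_1>0$ (the degenerate cases are easy), set $w_i=v_i/s_i$ so $\psi(w_i)=1$, $\phi(w_i)=1$, and use the convex combination with weights $\lambda_i=(1-t)^{\delta_{i0}}t^{\delta_{i1}} s_i/\big((1-t)s_0+ts_1\big)$; convexity and $p$-homogeneity of $\phi$ then give $\phi\big((1-t)v_0+tv_1\big)\le \big((1-t)s_0+ts_1\big)^p$, and taking $p$-th roots yields the claim. The converse direction ($\psi$ convex, $1$-homogeneous $\Rightarrow$ $\phi=\psi^p$ convex) is immediate since $r\mapsto r^p$ is convex and nondecreasing on $[0,\infty)$ and $\psi\ge 0$.

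\textbf{Step 2: the first formula in part (b).} The identity
\[
\psi_*(z)=\inf\Big\{c\ge 0:\ \langle z,v\rangle\le c\,\psi(v)\ \text{for every }v\in C\Big\}
\]
is the standard ``operator norm equals sup on the unit ball'' statement. If $\langle z,v\rangle\le c\,\psi(v)$ for all $v$, then in particular $\langle z,v\rangle\le c$ whenever $\psi(v)\le 1$, so $\psi_*(z)\le c$; conversely, if $\psi_*(z)=M<\infty$, then for any $v\in C$ with $\psi(v)>0$ apply the definition to $v/\psi(v)$ (which lies in $C$ and has $\psi\le 1$) to get $\langle z,v\rangle\le M\psi(v)$, while for $v$ with $\psi(v)=0$ one notes $\kappa v\in C$ for all $\kappa>0$ has $\psi(\kappa v)=0\le 1$, forcing $\langle z,\kappa v\rangle\le M$ for all $\kappa$, hence $\langle z,v\rangle\le 0=M\psi(v)$. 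So the infimum equals $M$.

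\textbf{Step 3: the second formula $\phi^*(z)=(\psi_*(z))^q$.} I would compute $\frac1q\phi^*(z)=\sup_{v\in C}\langle z,v\rangle-\frac1p\psi(v)^p$ by the substitution $v=tw$ with $t\ge 0$, $w\in C$, $\psi(w)\le 1$ (again the case $\psi_*(z)=\infty$ and the rays where $\psi=0$ being treated separately). Then $\langle z,tw\rangle-\frac1p t^p\psi(w)^p = t\langle z,w\rangle -\frac1p t^p\psi(w)^p$; optimizing over $w$ first gives $t\psi_*(z)-\frac1p t^p\sup\{\psi(w)^p\}$ — here one must be slightly careful since the two suprema interact, but because $\langle z,w\rangle\le \psi_*(z)$ and, on a maximizing sequence, $\psi(w)\to 1$ can be arranged without decreasing $\langle z,w\rangle$ (replace $w$ by $w/\psi(w)$ when $\psi(w)>0$), one gets the value $\sup_{t\ge 0}\big(t\psi_*(z)-\tfrac1p t^p\big)$. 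This scalar optimization has maximizer $t=\psi_*(z)^{q-1}$ and optimal value $\tfrac1q\psi_*(z)^q$, using $\tfrac1p+\tfrac1q=1$ and $q=p'$. Multiplying by $q$ gives $\phi^*(z)=\psi_*(z)^q$. I expect the only real care-points to be the bookkeeping when $\psi_*(z)=+\infty$ (both sides are $+\infty$) and the interchange of the two suprema in Step 3; these are the ``main obstacle'' only in the sense of requiring a clean argument that rescaling $w\mapsto w/\psi(w)$ does not lose optimality, which follows from $1$-homogeneity of both $\langle z,\cdot\rangle$ and $\psi$.
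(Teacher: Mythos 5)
Your proposal is correct and follows essentially the same route as the paper: part (a) by rescaling onto the unit sublevel set of $\phi$ (the paper uses an $\eps$-shift $r_i=\psi(v_i)+\eps$ to cover the degenerate case $\psi(v_i)=0$, which is the same bookkeeping you defer to "the degenerate cases are easy"), and part (b) by the standard operator-norm identity together with the radial decomposition $v=tw$, $\psi(w)\le 1$, reducing to the scalar optimization $\sup_{t\ge0}\bigl(t\,\psi_*(z)-\tfrac1p t^p\bigr)=\tfrac1q\psi_*(z)^q$. No gaps worth flagging.
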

\begin{proof}
  By setting $\phi(v)=\psi(v)=+\infty$ if $v\in V\setminus C$, it is
  not restrictive to assume that $C=V$.
  1. Let us assume that $\phi$ is convex and $p$-homogeneous: we want
  to prove that $\psi$ is a seminorm (this is the only nontrivial
  implication). Since $\psi$ is $1$-homogeneous, it is sufficient to
  prove that it is convex.
  Let $v_i\in V$, $i=0,1$, with $r_i:=\psi(v_i)+\eps$ for $\eps>0$, so that $\tilde
  v_i:=v_i/r_i$ satisfies
  $\psi(\tilde v_i)<1$. We fix $\alpha_i\ge 0$ with $\sum_i
  \alpha_i=1$
  and we set $r:=\sum_i \alpha_i r_i$ and
  $\beta_i:=\alpha_i r_i/r$
  which still satisfy $\beta_i\ge0$ and $\sum_i \beta_i=1$.
  Since the set $K:=\{\psi(v)\le
  1\}=\{\phi(v)\le 1\}$ is convex we have $\sum \beta_i \tilde v_i\in
  K$.
  It follows that $\psi(\sum_i \beta_i\tilde v_i)\le 1$; 
  on the other hand 
  $\sum_i \beta_i\tilde v_i=\frac 1 r\sum_i \alpha_i v_i$ and therefore
  $\psi(\sum_i \alpha_i v_i)=r \psi(\sum_i\beta_i\tilde v_i)\le r=\eps+\sum_i\alpha_i\psi(v_i)$. 
  Since $\eps>0$ is arbitrary, we conclude.
  \medskip

  \noindent
  2. We set $K_a:=\{v\in V: \psi(v)=
  a\}$, $a\in \{0,1\}$, and observe that 
  \begin{displaymath}
    \psi_*(z)=\delta_{K_0}(z)+\sup_{v\in K_1} \la z,v\ra\quad
  \end{displaymath}
  where
  \begin{displaymath}
    \delta_{K_0}(z)=\sup_{v\in K_0}\la z,v\ra=
    \begin{cases}
      0&\text{if }\la z,v\ra\equiv0\ \forall\, v\in K_0\\
      +\infty&\text{otherwise}.
    \end{cases}
  \end{displaymath}
  Similarly
  \begin{displaymath}
    \frac 1q\phi^*(z)=\delta_{K_0}(z)+\sup_{v\in V\setminus K_0}\Big(\la
    z,v\ra-\frac 1p\phi(v)\Big).
  \end{displaymath}
  Since $V\setminus K_0=\bigcup_{\kkappa\in \R}\kkappa K_1$ we have
  \begin{align*}
    \frac 1q\phi^*(z)
    &=\delta_{K_0}(z)+
      \sup_{v\in K_1,\kkappa \in \R}
      \kkappa \la z,v\ra-\frac {\kkappa^p}p\phi(v)
      =
      \delta_{K_0}(z)+
      \sup_{v\in K_1}\sup_{\kkappa \in \R}
      \Big(\kkappa \la z,v\ra-\frac {\kkappa^p}p\Big)
      \\&=\delta_{K_0}(z)+\frac 1p\sup_{v\in K_1}\Big(\la
          z,v\ra\Big)^p=
          \big(\psi_*(z)\big)^p. \qedhere
  \end{align*}
\end{proof}


\subsection{Von Neumann theorem}

Let $\A,\B$ be convex sets of some vector spaces and let $\cL:\A\times \B\to \R$
be
a saddle function satisfying
\begin{align}
  \label{eq:112}
  a\mapsto \cL(a,b)&\quad\text{is concave in $\A$ for every
                     $b\in \B$},\\
  \label{eq:113}
  b\mapsto\cL(a,b)&\quad\text{is convex in $\B$ for every $a\in \A$.}
\end{align}
It is always true that
\begin{equation}
  \label{eq:114}
  \adjustlimits\inf_{b\in \B}\sup_{a\in \A}\cL(a,b)\ge
  \adjustlimits\sup_{a\in \A}\inf_{b\in \B }\cL(a,b).
\end{equation}

The next result provides an important sufficient condition to
guarantee the equality in \eqref{eq:114}:
we use a formulation which is slightly more general than the statement
of \cite[Thm. 3.1]{Simons98}, but it follows by the same argument.
\begin{theorem}[Von Neumann]
  \label{thm:VonNeumann}
  Let us suppose that \eqref{eq:112}, \eqref{eq:113} hold,
  that
  $\B$ is endowed with some Hausdorff topology and that
  there exists $a_\star\in \A$ and $C_\star>\adjustlimits\sup_{a\in \A}\inf_{b\in \B}\cL(a,b)$ such that
    \begin{gather}
          \label{eq:115}
      \B_\star:=\Big\{b\in \B:\cL(a_\star,b)\le
      C_\star\Big\}\quad\text{is not empty and compact in $\B$,}\\
      \label{eq:140}
      \text{$b\mapsto\cL(a,b)$ is lower semicontinuous in $\B_\star$ for
        every $a\in \A$}.
    \end{gather}
  Then
  \begin{equation}
  \label{eq:114bis}
  \adjustlimits\min_{b\in \B}\sup_{a\in \A}\cL(a,b)=
  \adjustlimits\sup_{a\in \A}\inf_{b\in \B }\cL(a,b).
\end{equation}
Similarly, if  $\A$ is endowed with a Hausdorff topology and
there exists $b_\star\in \B$ and $D_\star<\adjustlimits\inf_{b\in \B}\sup_{a\in \A}\cL(a,b)$ such that
    \begin{gather}
          \label{eq:115bis}
          \A_\star:=\Big\{a\in \A:\cL(a,b_\star)\ge
          D_\star\Big\}\quad\text{is not empty and compact in $\A$,}\\
      \label{eq:140bis}
      \text{$a\mapsto\cL(a,b)$ is upper semicontinuous in $\A_\star$ for
        every $b\in \B$}.
    \end{gather}
  Then
  \begin{equation}
  \label{eq:114tris}
  \adjustlimits\inf_{b\in \B}\sup_{a\in \A}\cL(a,b)=
  \adjustlimits\max_{a\in \A}\inf_{b\in \B }\cL(a,b).
\end{equation}
\end{theorem}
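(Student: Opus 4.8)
The plan is to prove the first assertion \eqref{eq:114bis}; the second, \eqref{eq:114tris}, then follows by applying it to $(b,a)\mapsto -\cL(a,b)$ on $\B\times\A$, under which \eqref{eq:115bis}--\eqref{eq:140bis} become exactly \eqref{eq:115}--\eqref{eq:140}. Set $\alpha:=\sup_{a\in\A}\inf_{b\in\B}\cL(a,b)$ and $\beta:=\inf_{b\in\B}\sup_{a\in\A}\cL(a,b)$; by \eqref{eq:114} we have $\alpha\le\beta$, and the hypothesis $C_\star>\alpha$ forces $\alpha<+\infty$ (degenerate cases, e.g.\ $\A$ or $\B$ empty, are trivial). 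It then suffices to exhibit $\bar b\in\B$ with $\sup_{a\in\A}\cL(a,\bar b)\le\alpha$: this yields $\beta\le\alpha$, hence $\alpha=\beta$, and simultaneously shows that the infimum defining $\beta$ is attained.

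The first step is a reduction to finite subsets of $\A$. For a finite $F\subseteq\A$ with $a_\star\in F$ and a real $c$ with $\alpha<c\le C_\star$, put $\B(F,c):=\{b\in\B:\cL(a,b)\le c\ \text{for all }a\in F\}$. Since $a_\star\in F$ and $c\le C_\star$, the constraint at $a_\star$ forces $b\in\B_\star$, so $\B(F,c)$ is a finite intersection of sub-level sets of the maps $\cL(a,\cdot)$; these are closed in $\B_\star$ by \eqref{eq:140}, hence $\B(F,c)$ is compact by \eqref{eq:115}. With $c$ fixed the family $\{\B(F,c)\}_F$ is downward directed, so by the finite intersection property $\bigcap_F\B(F,c)\ne\emptyset$ as soon as each $\B(F,c)$ is nonempty, and any element of the intersection satisfies $\sup_{a\in\A}\cL(a,\cdot)\le c$. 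Letting $c=c_n\downarrow\alpha$ with $c_1\le C_\star$, the compact sets $D_n:=\{b\in\B_\star:\sup_{a\in\A}\cL(a,b)\le c_n\}=\bigcap_{F\ni a_\star}\B(F,c_n)$ are nonempty and decreasing, so $\bigcap_n D_n\ne\emptyset$ and any $\bar b$ in it works. Thus everything reduces to the algebraic claim that for every finite $F=\{a_1,\dots,a_n\}\subseteq\A$ one has $\inf_{b\in\B}\max_{1\le i\le n}\cL(a_i,b)\le\alpha$; indeed, choosing $F$ to contain $a_\star$, the infimum is then $<c$, so some $b$ has $\max_i\cL(a_i,b)<c\le C_\star$, placing it in $\B(F,c)$.

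I would prove this finite claim by finite-dimensional separation, using only convexity of $\A$ and the concave--convex hypotheses \eqref{eq:112}--\eqref{eq:113}; no topology on $\B$ enters here. Suppose it fails and pick $c$ with $\alpha<c<\inf_{b\in\B}\max_i\cL(a_i,b)$, so $\max_i\cL(a_i,b)>c$ for every $b$. Consider $E:=\{y\in\R^n:\exists\,b\in\B\ \text{with}\ \cL(a_i,b)\le y_i\ \text{for all }i\}$. Convexity of each $\cL(a_i,\cdot)$ makes $E$ convex, and plainly $E+\R^n_{\ge0}=E$. The diagonal point $\mathbf c:=(c,\dots,c)$ is not in $\overline E$: if $E\ni y^k\to\mathbf c$ with witnesses $b^k$, then $\max_i\cL(a_i,b^k)\le\max_i y^k_i\to c$, contradicting $c<\inf_{b}\max_i\cL(a_i,b)$. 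Separating $\mathbf c$ strictly from the closed convex set $\overline E$ yields $\lambda\in\R^n\setminus\{0\}$ and $\gamma$ with $\langle\lambda,y\rangle\ge\gamma>\langle\lambda,\mathbf c\rangle$ on $E$; the relation $E+\R^n_{\ge0}=E$ forces $\lambda\ge0$, and after normalising $\sum_i\lambda_i=1$ and evaluating at $y=(\cL(a_1,b),\dots,\cL(a_n,b))\in E$ we obtain $\sum_i\lambda_i\cL(a_i,b)\ge\gamma>c$ for every $b$. Putting $\bar a:=\sum_i\lambda_i a_i\in\A$ and using concavity of $\cL$ in the first variable, $\cL(\bar a,b)\ge\sum_i\lambda_i\cL(a_i,b)\ge\gamma$ for all $b$, whence $\inf_b\cL(\bar a,b)\ge\gamma>c>\alpha=\sup_{a\in\A}\inf_b\cL(a,b)$, a contradiction.

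The delicate point, and the one I would write out most carefully, is precisely this finite claim: the separation must be made \emph{strict} even though $E$ need not be closed, which is exactly why $c$ is chosen strictly between $\alpha$ and the infimum rather than equal to $\alpha$. Everything else is bookkeeping — compactness of $\B_\star$ and the lower semicontinuity \eqref{eq:140} are used only in the finite-intersection reduction, the convexity of $\A$ and the concave--convex structure only to form $\bar a$ and invoke concavity, and the second conclusion \eqref{eq:114tris} is recovered from \eqref{eq:114bis} by the symmetry noted at the start.
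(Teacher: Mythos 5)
Your proof is correct. Its overall architecture is in fact the same as the paper's: both arguments reduce, via compactness of $\B_\star$, the semicontinuity \eqref{eq:140} and the finite intersection property, to the purely algebraic claim that $\inf_{b\in\B}\max_{1\le i\le n}\cL(a_i,b)\le \sup_{a\in\A}\inf_{b\in\B}\cL(a,b)$ for every finite family $\{a_1,\dots,a_n\}\subset\A$ containing $a_\star$, and both then exploit concavity in the first variable through a convex combination $\bar a=\sum_i\lambda_i a_i$. Where you genuinely differ is in how the coefficients $\lambda_i$ are produced and in the level-set bookkeeping: the paper obtains them by invoking Simons' minimax lemma \cite[Lemma 2.1]{Simons98}, which gives $\inf_b\max_i\cL(a_i,b)=\inf_b\sum_i\chi_i\cL(a_i,b)$ for suitable convex coefficients, and then works at the exact level $s=\sup_a\inf_b\cL$, using compactness of $\B_\star$ and \eqref{eq:140} a second time to show that the infimum of $\sup_i\cL(a_i,\cdot)$ over $\B_\star$ is attained; you instead prove the finite claim from scratch by strict separation in $\R^n$ of the diagonal point $\mathbf c$ from the closed convex hull of the upward-saturated set $E$, and you sidestep the attainment step entirely by working at levels $c_n\downarrow\alpha$ strictly above $\alpha$ and intersecting the resulting nonempty nested compacta. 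The paper's route is shorter given the external reference; yours is self-contained and makes explicit that no topology on $\B$ enters the finite-dimensional step, with compactness and semicontinuity used only once, in the final intersection. Your deduction of \eqref{eq:114tris} from \eqref{eq:114bis} via $(b,a)\mapsto-\cL(a,b)$ is exactly the reduction indicated in the paper. One cosmetic remark: the case $\alpha=-\infty$, which you did not single out alongside $\alpha<+\infty$, is automatically excluded, since compactness of $\B_\star$ together with \eqref{eq:140} forces $\inf_{b\in\B}\cL(a_\star,b)>-\infty$; your argument is unaffected.
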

We reproduce here the main part of the proof of \eqref{eq:114bis};
\eqref{eq:114tris} follows simply by considering the Lagrangian
$\tilde\cL(b,a):=-\cL(a,b)$ in $\B\times \A$ and inverting the
role of $\A$ and $\B$.
\begin{proof}
  Let $s:=\adjustlimits\sup_{a\in \A}\inf_{b\in \B }\cL(a,b)$ and
  let $\B_a:=\{b\in \B:\cL(a,b)\le s\}$,
  {$\B_{a\star}:=\{b\in \B:\cL(a_\star,b)\le s\}$}.
  We notice that $\B_{a\star}\subset \B_\star$ and that
  for every $a\in \A$ the set $\B_{a} \cap \B_{\star}=
  \{b\in \B_\star:\cL(a,b)\le s\}$ is compact thanks to \eqref{eq:115}
  and \eqref{eq:140}. \nc
  If $A\subset \A$ is a collection
  containing $a_\star$ then 
  \begin{displaymath}
    \B_A=\bigcap_{a\in A}\B_a=
    {
      \bigcap_{a\in A} \big(\B_{a} \cap \B_{a\star}\big)=
      \bigcap_{a\in A} \big(\B_{a} \cap \B_{\star}\big)\nc}
  \end{displaymath}
  so that $\B_A$ is a (possibly empty) compact set.
  The thesis follows if we check that
  {$\B_\A$} contains a point $\bar
  b$, since in that case
  $  \adjustlimits
\inf_{b\in \B}\sup_{a\in \A}\nc\cL(a,b)\le 
  \sup_{a\in \A}\cL(a,\bar b)\le s$ by construction;
  on the other
  hand, \eqref{eq:114} shows that
  $\sup_{a\in \A}\cL(a,\bar b)=s\le \sup_{a\in \A}\cL(a,b)$
  for every $b\in \B$, so that the minimum in the left-hand side of
  \eqref{eq:114bis} is attained at $\bar b$. \nc

  Since $\B_A$ are compact whenever $a_\star\in A$, it is sufficient
  to prove that for every finite collection $A=\{a_1,\cdots,a_n\}$
  containing $a_\star$ the intersection $\B_A$ is not empty.
  To this aim, since $b\mapsto \cL(a_k,b)$ are convex functions,
  \cite[Lemma 2.1]{Simons98} yields
  \begin{align*}
    {\adjustlimits\inf_{b\in \B}\sup_{1\le k\le n} \nc \cL(a_k,b)=}
    \inf_{b\in \B}\nc\sum_{k=1}^N \nchi_k \cL(a_k,b)
  \end{align*}
  for a suitable choice of nonnegative coefficients $\nchi_k\in
  [0,1]$ with $\sum_{k=1}^n\nchi_k=1$. We thus get by concavity
  \begin{align*}
    {\inf_{b\in \B}\sum_{k=1}^N\nchi_k\cL(a_k,b)\le }
    \inf_{b\in \B}\cL(\sum_{k=1}^N\nchi_k
    a_k,b)\le s,
  \end{align*}
   so that $\inf_{b\in \B}\sup_{1\le k\le n} \nc \cL(a_k,b)\le
  s$. 
  On the other hand, since $\sup_{1\le k\le n} \cL(a_k,b) \ge
  \cL(a_\star,b)$,
  every $b\in \B$ such that $\sup_{1\le k\le n} \cL(a_k,b)\le C_\star$
  belongs to $\B_\star$ so that $C_\star>s$ yields
  \begin{displaymath}
    s=\adjustlimits\inf_{b\in \B}\sup_{1\le k\le n} \cL(a_k,b)=
    \adjustlimits\inf_{b\in \B_\star}\sup_{1\le k\le n} \cL(a_k,b)=
    \adjustlimits\min_{b\in \B_\star}\sup_{1\le k\le n} \cL(a_k,b),
  \end{displaymath}
  where in the last identity we used the fact that $\B_\star$ is
  compact and that the restriction of the function $b\mapsto \sup_{1\le k\le n} \cL(a_k,b)$
  to $\B_\star$ is lower semicontinuous. We conclude that  
  $\bigcap_{k=1}^N \{ b \in \B: \cL(a_k,b) \leq s \}$ is not empty.
\end{proof}

\printindex

\def\cprime{$'$}

\end{document}